\newtheorem{theorem}{Theorem}[section]
\newtheorem{lemma}[theorem]{Lemma}
\newtheorem{observation}[theorem]{Observation}
\newtheorem{proposition}[theorem]{Proposition}
\newtheorem{corollary}[theorem]{Corollary}
\numberwithin{equation}{section}
\theoremstyle{remark}
\newtheorem{remark}[theorem]{Remark}
\theoremstyle{definition}
\newtheorem{definition}[theorem]{Definition}
\newtheorem{notation}[theorem]{Notation}
\newcommand\R{\mathbb{R}}
\newcommand\C{\mathbb{C}}
\newcommand\N{\mathbb{N}}
\newcommand\h{\mathbb{H}}
\newcommand\xx{\text{\ding{75}}}
\DeclareMathOperator{\Tr}{Tr}
\DeclareMathOperator{\im}{Im}
\DeclareMathOperator{\id}{id}
\DeclareMathOperator{\Boch}{Boch}
\DeclareMathOperator{\diag}{diag}
\DeclareMathOperator{\inv}{inv}
\DeclareMathOperator{\rad}{rad}
\DeclareMathOperator{\alg}{alg}
\DeclarePairedDelimiter{\norm}{\lVert}{\rVert}
\DeclarePairedDelimiter{\ip}{\langle}{\rangle}
\begin{document}

\title{Operator-Valued Chordal Loewner Chains \\ and Non-Commutative Probability}

\author{David Jekel}
\address{Department of Mathematics, University of California, Los Angeles}
\ead{davidjekel@math.ucla.edu}
\ead[url]{http://www.math.ucla.edu/$\sim$davidjekel/}

\begin{abstract}
We adapt the theory of chordal Loewner chains to the operator-valued matricial upper-half plane over a $C^*$-algebra $\mathcal{A}$.  We define an $\mathcal{A}$-valued chordal Loewner chain as a subordination chain of analytic self-maps of the $\mathcal{A}$-valued upper half-plane, such that each $F_t$ is the reciprocal Cauchy transform of an $\mathcal{A}$-valued law $\mu_t$, such that the mean and variance of $\mu_t$ are continuous functions of $t$.

We relate $\mathcal{A}$-valued Loewner chains to processes with $\mathcal{A}$-valued free or monotone independent independent increments just as was done in the scalar case by Bauer \cite{Bauer2004} and Schei{\ss}inger \cite{Schleissinger2017}.

We show that the Loewner equation $\partial_t F_t(z) = DF_t(z)[V_t(z)]$, when interpreted in a certain distributional sense, defines a bijection between Lipschitz mean-zero Loewner chains $F_t$ and vector fields $V_t(z)$ of the form $V_t(z) = -G_{\nu_t}(z)$ where $\nu_t$ is a generalized $\mathcal{A}$-valued law.

Based on the Loewner equation, we derive a combinatorial expression for the moments of $\mu_t$ in terms of $\nu_t$.  We also construct non-commutative random variables on an operator-valued monotone Fock space which realize the laws $\mu_t$.  Finally, we prove a version of the monotone central limit theorem which describes the behavior of $F_t$ as $t \to +\infty$ when $\nu_t$ has uniformly bounded support.
\end{abstract}

\begin{keyword}
chordal Loewner chain \sep chordal Loewner equation \sep operator-valued non-commutative probability \sep Cauchy transform \sep monotone independence \sep free independence \sep %
\MSC[2010] 46L52 \sep 46L53 \sep 46L54 \sep 46E50
\end{keyword}

\maketitle

\section{Introduction} \label{sec:intro}

\subsection{Chordal Loewner Chains}

Loewner chains were introduced by Karl Loewner in 1923 \cite{Loewner1923} and further developed by Kufarev and Pommerenke \cite{Pommerenke1975}.  One of the main applications of the theory was to use differential equations to prove estimates on the power series coefficients of univalent analytic functions on the unit disk (the Bieberbach conjecture).  Loewner used this technique to prove a special case of the Bieberbach conjecture, and it was a key ingredient in the full conjecture's eventual resolution \cite{deBranges1985}.

We shall focus here on \emph{chordal} Loewner chains, that is, Loewner chains defined on the upper half-plane $\h = \{\im z > 0\}$.  Our terminology follows Bauer's general treatment of chordal Loewner chains \cite{Bauer2005}.  A \emph{chordal Loewner chain} is a family of conformal maps $f(\cdot,t): \h \to \Omega_t \subseteq \h$ satisfying $F(z,t) = z - t/z + O(1/z^2)$ and $\Omega_s \supseteq \Omega_t$ for $s \leq t$.  Every chordal Loewner chain satisfies the (generalized) Loewner equation
\begin{equation} \label{eq:basicLoewner}
\partial_t F(z,t) = \partial_z F(z,t) \cdot V(z,t) \text{ for a.e.\ } t,
\end{equation}
where $V(\cdot,t): \h \to \h$ is an analytic function satisfying $V(z,t) = -1/z + O(1/z^2)$ \cite[Theorem 5.3]{Bauer2005} (the function $V(z,t)$ is known as a \emph{Herglotz vector field}).  Conversely, given a Herglotz vector field $V(z,t)$, there exists a unique chordal Loewner chain satisfying the Loewner equation \cite[Theorem 5.6]{Bauer2005}.

Since we will be working exclusively with chordal Loewner chains, for the sake of brevity, we will drop the adjective ``chordal'' when discussing Loewner chains and the Loewner equation.

If $F_t(z) = F(z,t)$ is a Loewner chain, then $F_t$ is \emph{analytically subordinated to} $F_s$ for $s < t$, that is, there exists $F_{s,t}: \h \to \h$ such that $F_t = F_s \circ F_{s,t}$.  Conversely, if $F_t$ satisfies $F_t(z) = z - t/z + O(1/z^2)$, then $(F_t)_{t \geq 0}$ is a Loewner chain if and only if $F_t$ is subordinated to $F_s$ for $s < t$.  These subordination functions satisfy $F_{s,t} \circ F_{t,u} = F_{s,u}$ for $s \leq t \leq u$.  The standard way to solve the Loewner equation is first to construct the subordination functions $F_{s,t}$ by solving the ODE $\partial_s F_{s,t} = V(F_{s,t},s)$ as $s$ runs backwards from $t$ to $0$ \cite[Theorem 5.5]{Bauer2005}.

The theory of Loewner chains has been extended to domains in $\C^n$ and even reflexive Banach spaces \cite{GHKK2013}, motivated by similar complex-analytic applications.  Our adaptation of chordal Loewner chains to the operator-valued upper half-space is motivated instead by its applications to non-commutative probability.

\subsection{Operator-Valued Non-Commutative Probability}

In non-commutative probability, one considers algebras of random variables which do not commute under multiplication.  These random variables are represented by an algebra $\mathcal{B}$ of operators on a Hilbert space, and the expectation is a linear map $E: \mathcal{B} \to \C$.  Here we take $\mathcal{B}$ to be a $C^*$-algebra; for background, see \S \ref{subsec:Cstaralgebras}.

A non-commutative version of independence called \emph{free independence} was defined by Voiculescu \cite{Voiculescu1986}.  One of the main tools for studying the distributions of non-commutative random variables is an analytic function called the Cauchy transform.  If $X$ is a self-adjoint random variable, then the \emph{Cauchy transform} of (the law of) $X$ is given by $G_X(z) = E[(z - X)^{-1}]$ and the \emph{$F$-transform} $F_X(z)$ is given by $F_X(z) = G_X(z)^{-1}$.  The $F$-transform is a self-map of the upper half-plane.

In 2004, Bauer \cite{Bauer2004} connected Loewner chains to non-commutative probability by observing that if $(X_t)_{t \geq 0}$ is a process with freely independent increments, then $F_{X_t}(z)$ is a chordal Loewner chain. This follows from a fundamental theorem in free probability that if $X$ and $Y$ are freely independent, then $F_{X + Y} = F_X \circ F$ for some analytic function $F: \h \to \h$ (see \cite[Proposition 4.4]{Voiculescu1993}, \cite[Theorem 3.1]{Biane1998}).  Thus, if $(X_t)$ is a process with freely independent increments, then $F_{X_t} = F_{X_s} \circ F_{s,t}$ for some $F_{s,t}: \h \to \h$.  This implies that the evolution of the laws associated to a process with free increments is described by a Loewner chain.  However, not all Loewner chains arise in this way (see \S \ref{subsec:freeincrements} for an explicit counterexample).

Furthermore, in the case of a process with freely independent and stationary increments, if $F_t(z) = F_{X_t}(z)$, then we have
\begin{equation}
\partial_t F_t(z) = -\partial_z F_t(z) \Phi(F_t(z)),
\end{equation}
where $\Phi$ is a function from the upper half-plane to the lower half-plane.  Thus, in this case, the vector field for the Loewner equation is given by $V(z,t) = -\Phi(F_t(z))$.  For further discussion, see \S \ref{subsec:semigroups} as well as \cite[Theorem 4.3]{Voiculescu1986} and \cite[\S 3.5]{Schleissinger2017}.

Loewner chains are more directly connected to another type of non-commutative independence called \emph{monotone independence}, defined by Muraki \cite{Muraki2000}, \cite{Muraki2001}.  If two random variables $X$ and $Y$ are monotone independent, then $F_{X+Y} = F_X \circ F_Y$.  Thus, if $(X_t)_{t \geq 0}$ is a process with monotone independent increments, then $F_{X_t}$ is subordinated to $F_{X_s}$ for $s < t$, and hence $1/G_{X_t}$ is a Loewner chain in this case as well.  Conversely, every Loewner chain arises in this way from a process with monotone independent increments, as shown by Schlei{\ss}inger \cite[Theorem 3.6]{Schleissinger2017}.

In the case of a monotone convolution semigroup (a process with monotone independent and stationary increments), the differential equation for the evolution of $F_t = 1/G_{X_t}$ was studied much earlier by Muraki \cite[\S Theorem 4.5]{Muraki2000} and Hasebe \cite[\S 3.1]{Hasebe2010-1}, who showed that
\begin{equation} \label{eq:monotonesemigroup}
\partial_t F_t(z) = F_t'(z) A(z)
\end{equation}
for a function $A(z)$ which is a Herglotz vector field in the case where $X_t$ has mean zero.  The function $A(z)$ serves as a generating function for the monotone cumulants \cite[Concluding remark]{HS2011-2} \cite[Concluding remark]{HS2014}.  Schlei{\ss}inger \cite{Schleissinger2017} recognized \eqref{eq:monotonesemigroup} as a special case of the Loewner equation.

We refer to \cite{Schleissinger2017} for a summary of the connections between Loewner chains and non-commutative probability in the scalar case.  Our goal is to generalize these results to the operator-valued setting, in which the scalar field $\C$ is replaced by a $C^*$-algebra $\mathcal{A}$ and the expectation is $\mathcal{A}$-valued; the setup is described in detail in \S \ref{subsec:Cstaralgebras} - \ref{subsec:Avaluedlaws}.

The development of operator-valued non-commutative probability, initiated in \cite{Voiculescu1995}, has several motivations.  First, even scalar-valued non-commutative probability would motivate us to consider the case $\mathcal{A} = M_n(\C)$ because the law of a tuple of operators can be analyzed by putting them together into a single matrix.  Similarly, a polynomial $p$ applied to an operator $X$ can be expressed as one entry of $P(X \otimes 1_n)$ where $P$ is a non-commutative polynomial with coefficients in $M_n(\C)$ and $X \otimes 1_n$ is the diagonal matrix of $X$'s.  For background on these linearization tricks, see \cite{Speicher1998}, \cite[\S 2]{HT2005}, \cite[\S 2.6]{Anderson2013}, \cite[Lemma 3.2]{Williams2017}, \cite{HMS2015}.  Furthermore, operator-valued non-commutative independence is a natural non-commutative analogue of conditional independence.

In the operator-valued setting, in order to get an analytic characterization of operator-valued Cauchy transforms, one must work not only with functions taking values in the algebra $\mathcal{A}$, but with so-called \emph{fully matricial} or \emph{non-commutative} functions (see \S \ref{subsec:matricial} - \S \ref{subsec:Ftransform}).  In other words, the Cauchy transform $G_\mu(z)$ must be viewed as a function which is defined not only when $z$ is in $\mathcal{A}$, but also when $z$ is an $n \times n$ matrix over $\mathcal{A}$.  Voiculescu introduced the matricial Cauchy transform into free probability \cite{Voiculescu2000} \cite{Voiculescu2004} \cite{Voiculescu2010}, and this was later recognized as a special case of non-commutative function theory (see \cite{KVV2014}).  An analytic characterization of operator-valued Cauchy transforms was recently given by Williams \cite[Theorem 3.1]{Williams2017}; this will be a key ingredient in our analysis.

We refer to \cite{Voiculescu1995,VDN1992,Speicher1998,MS2013,BMS2013,PV2013} for background on operator-valued free independence (a.k.a.\ free independence with amalgamation), and to \cite{Popa2008a,BPV2013,HS2014,AW2016} for background on operator-valued monotone independence.

\subsection{Main Results}

Suppose that $\mathcal{A}$ is a $C^*$-algebra (see \S \ref{subsec:Cstaralgebras} for definition).  An \emph{$\mathcal{A}$-valued fully matricial function} is, roughly speaking, a collection of analytic functions $F^{(n)}(z)$ from an open set $U^{(n)}$ of $M_n(\mathcal{A})$ to another open set of $M_n(\mathcal{A})$, such that the domains $U^{(n)}$ and the functions $F^{(n)}$ behave consistently under direct sums of matrices and conjugation by scalar matrices (see \S \ref{subsec:matricial} for precise definitions).  The matricial upper half-plane $\h(\mathcal{A})$ consists of the matrices $z \in M_n(\mathcal{A})$ such that $\im z := -\frac{1}{2} i(z - z^*)$ satisfies $\im z \geq \epsilon$ for some $\epsilon > 0$ depending on $z$ (see Definition \ref{def:halfspace}).

We define a \emph{Loewner chain} as a family $(F_t)_{t \in [0,T]}$ of fully matricial functions $\h(\mathcal{A}) \to \h(\mathcal{A})$ such that
\begin{enumerate}[(1)]
	\item $F_0 = \id$.
	\item $F_t(z) = F_{\mu_t}(z)$ for some law $\mu_t$ with ``bounded support''.
	\item For each $s < t$, we have $F_t = F_s \circ F_{s,t}$ for some matricial-analytic $F_{s,t}: \h(\mathcal{A}) \to \h(\mathcal{A})$
	\item The mean $\mu_t(X)$ and the variance $\mu_t(X^2)$ are continuous functions of $t$.
\end{enumerate}
Furthermore, we say that a Loewner chain $(F_t)$ is \emph{normalized} if $\mu_t(X) = 0$, and it is \emph{Lipschitz} if $\mu_t(X^2)$ is a Lipschitz function of $t$.  (See Definitions \ref{def:CLC}, \ref{def:LNCLC})  We have the following results concerning operator-valued Loewner chains:

{\bf Regularity of Loewner Chains:} Proposition \ref{prop:biholomorphic} shows that if $(F_t)_{t \in [0,T]}$ is a Loewner chain, then each $F_t$ is a biholomorphic map with a fully matricial inverse.  We also show that the subordination map $F_{s,t}$ is automatically a reciprocal Cauchy transform, and the laws $\mu_t$ automatically have ``support radius'' which is uniformly bounded for $s, t \in [0,T]$, in fact, less than or equal to a constant times the support radius of $\mu_T$.

{\bf Loewner Chains and Monotone Probability:} Theorem \ref{thm:processLoewnerchain} shows that $F_t(z) = G_{X_t}(z)^{-1}$ provides a correspondence between Loewner chains $F_t$ and processes $X_t$ with $\mathcal{A}$-valued monotone independent increments.  This generalizes \cite[Theorem 3.5]{Schleissinger2017}.

{\bf Loewner Chains and Free Probability:} Theorem \ref{thm:freeLoewnerchain} shows that if $X_t$ is a process with $\mathcal{A}$-valued free increments, then $F_{X_t}(z)$ is an $\mathcal{A}$-valued Loewner chain.  This result is motivated by \cite{Bauer2004} and \cite[\S 3.5]{Schleissinger2017}.  We give a simple example to show that not every Loewner chain arises from a process with free increments.

{\bf Differentiation of Loewner Chains:} Theorem \ref{thm:Loewnerdifferentiation} shows that every Lipschitz normalized Loewner chain is a locally Lipschitz function of $t$, and it satisfies
\begin{equation}
\partial_t F(z,t) = DF(z,t)[V(z,t)],
\end{equation}
where $DF(z,t)$ is the Fr{\'e}chet derivative with respect to $z$ and $V(z,t) = -G_{\nu_t}(z)$ for some generalized law $\nu_t$.  Here the differentiation with respect to $t$ occurs in a distributional sense defined in \S \ref{subsec:distderiv}.  The vector field $V(z,t)$ and the generalized law $\nu_t$ also depend on $t$ in a distributional sense (see Definitions \ref{def:Herglotz} and \ref{def:distributionallaw}).  This is the operator-valued analogue of \cite[Theorem 5.3]{Bauer2005}.

{\bf Integration of the Loewner Equation:} Theorem \ref{thm:Loewnerintegration} shows that conversely, given a vector field $V(z,t)$, there exists a unique Loewner chain $F(z,t)$ satisfying the Loewner equation, generalizing \cite[Theorem 5.6]{Bauer2005}.

{\bf Combinatorial Formula for $F(z,t)$:} Theorem \ref{thm:combinatorics} provides a combinatorial formula for the coefficients of the power series of $F(z,t)$ at $\infty$ in terms of the measures $\nu_t$.  This generalizes moment formulas from monotone probability theory, e.g.\ \cite[p.\ 33-34]{Muraki2000} \cite[Corollary 5.2, Theorem 5.3, Remark 6.4]{HS2011-2}, \cite[concluding paragraph]{HS2014}, \cite[Theorem 2.5]{BPV2013}, \cite[Definition 4.4 and Proposition 4.8]{AW2016}.

{\bf Fock-space Realization of the Laws $\mu_t$:} Theorem \ref{thm:Fockspace} describes how to realize the laws $\mu_t$ corresponding to a Lipschitz Loewner chain $F_t$.  We construct operators $Y_t$ on a monotone Fock space (related to constructions in \cite{Lu1997}, \cite{Muraki1997}, \cite[\S 4.7]{Speicher1998}) such that $(Y_t)$ is a process with monotone independent increments and $Y_t$ has the law $\mu_t$.

{\bf Central Limit Theory for Loewner Chains:} Theorem \ref{thm:CLTcoupling} proves a central limit theorem describing the behavior of $F(z,t)$ for large $t$, assuming that the measures $\nu_t$ maintain bounded support.  Using the Fock space of Theorem \ref{thm:Fockspace}, we explicitly construct a coupling between the process $Y_t$ and another process $Z_t$ with monotone independent increments, such that $Z_t$ has the central-limit arcsine distribution.  Theorem \ref{thm:CLT} is another version of the central limit theorem which estimates the difference between $G_{Y_t}$ and $G_{Z_t}$ using the Loewner equation.

To complete our overview of the paper, let us also summarize the earlier sections which lay the technical groundwork for the theory of operator-valued Loewner chains.

{\bf \S \ref{sec:locallyLipschitz} Locally Lipschitz Families:} This section will discuss analytic functions over Banach spaces, and develop a theory of distributional differentiation (with respect to $t$) of a family $F(z,t)$ which is analytic in $z$ and Lipschitz in $t$.

{\bf \S \ref{sec:NCP} $C^*$-valued Non-Commutative Probability:} This section reviews background on operator-valued probability spaces and laws in \S \ref{subsec:Cstaralgebras} - \S \ref{subsec:Avaluedlaws}.  Next, in \S \ref{subsec:matricial} - \S \ref{subsec:Cauchytransform}, we describe fully matricial functions and explain Williams and Anshelevich's analytic characterization of operator-valued Cauchy transforms \cite[Theorem 3.1]{Williams2017} \cite[Theorem A.1]{AW2016}.  We also state some basic estimates for Cauchy transforms which we will frequently use.  In \S \ref{subsec:Ftransform}, we analyze the behavior of $F$-transforms (reciprocal Cauchy transforms) at $\infty$, showing that if $F_3 = F_1 \circ F_2$ and two of the functions are $F$-transforms, then so is the third.  We also relate the support radii of the associated laws.  We also observe that for every law $\mu$, there exists a generalized $\nu$ such that $F_\mu(z) = z - a - G_\nu(z)$.

This paper is restricted to the case of self-adjoint random variables for brevity, but further research should investigate analogous questions for unitary operators and Loewner chains on the operator-valued matricial unit ball (see \cite{Bauer2004} for motivation).  We have not tried to remove the assumption that our laws have ``bounded support'' because a good theory of Cauchy transforms for unbounded laws is not yet available.  We have treated processes with free increments briefly, but we plan to discuss them in greater detail in later work.

\section{Locally Lipschitz Families of Analytic Maps} \label{sec:locallyLipschitz}

For a scalar-valued Loewner chain $F(z,t)$, the dependence on $z$ is analytic and the dependence on $t$ is locally Lipschitz.  Loewner theory relies on the regularity theory for such locally Lipschitz families of analytic functions.  For instance, a locally Lipschitz family can be differentiated a.e.\ with respect to $t$, and we have $\partial_t \partial_z^k F = \partial_z^k \partial_t F$.  Moreover, for two such families $F$ and $G$, the chain rule holds for computation of $\partial_t [G(F(z,t),t)]$.  Our goal in this section is to prove analogous results for locally Lipschitz familes of analytic functions $F: \mathcal{X} \times [0,T] \to \mathcal{X}$, where $\mathcal{X}$ is a Banach space.

\subsection{Analytic Functions between Banach Spaces}

We recall some standard definitions and facts about analytic functions between Banach spaces.  For background, see \cite{Zorn1945a}, \cite{Zorn1945b}, \cite{Zorn1946}.  For the reader's convenience, we include only the level of generality that will be used in this paper and give sketches of the proofs.

\begin{notation}
For a Banach space $\mathcal{X}$, we denote by $B_{\mathcal{X}}(0,R)$ the open ball in $\mathcal{X}$ of radius $R$ centered at $x$.  We denote by $\mathcal{L}(\mathcal{X}, \mathcal{Y})$ the space of bounded linear transformations from $\mathcal{X}$ to $\mathcal{Y}$.
\end{notation}

\begin{definition} \label{def:analytic}
Let $\mathcal{X}$ and $\mathcal{Y}$ be Banach spaces and let $\mathcal{U} \subseteq \mathcal{X}$ be open.  We say that $F: \mathcal{U} \to \mathcal{Y}$ is \emph{analytic} if
\begin{enumerate}[(1)]
	\item $F$ is \emph{locally bounded}, that is, for each $x \in \mathcal{U}$, there exists $R > 0$ and $M > 0$ such that $\norm{x - x_0} < R$ implies $y \in \mathcal{U}$ and $\norm{f(x)} \leq M$.
	\item $F$ is \emph{G{\^a}teaux-differentiable}, that is, for each $x_0 \in \mathcal{U}$ and $h \in \mathcal{X}$,
	\[
	\lim_{\zeta \to 0} \frac{F(x + \zeta h)}{\zeta} \text{ exists.}
	\]
\end{enumerate}
\end{definition}

\begin{theorem} \label{thm:analytic}
Let $F: \mathcal{U} \to \mathcal{Y}$ be analytic.  For each $x_0 \in \mathcal{U}$ and $h \in \mathcal{X}$ and $k \in \N$, the iterated complex G{\^a}teaux derivative
\[
\delta^k F(x_0;h) := \frac{d^k}{d \zeta^k}\bigr|_{\zeta = 0} F(x_0 + \zeta h)
\]
is defined.  Moreover, $\delta^k F(x_0; \alpha h) = \alpha^k \delta^k F(x_0,h)$ for $\alpha \in \C$.  If $F(x)$ is defined and $\norm{F(x)} \leq M$ for $x \in B_{\mathcal{X}}(x_0,R)$, then we have the Cauchy estimate
\begin{equation} \label{eq:Cauchyestimate}
\frac{1}{k!} \norm{\delta^k F(x_0;h)} \leq \frac{\norm{h}^kM}{R^k}.
\end{equation}
and the local power series expansion
\begin{equation} \label{eq:powerseries}
F(x_0+h) = \sum_{k=0}^\infty \delta^k F(x_0;h) \text{ for } \norm{h} < R.
\end{equation}
\end{theorem}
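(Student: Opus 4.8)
My plan is to reduce everything to the classical one-variable theory by restricting $F$ to the complex line through $x_0$ in the direction $h$. Fix $x_0 \in \mathcal{U}$ and $h \in \mathcal{X}$; we may assume $h \neq 0$, the case $h = 0$ being trivial. Since $\mathcal{U}$ is open and $\zeta \mapsto x_0 + \zeta h$ is continuous from $\C$ into $\mathcal{X}$, the set $\Omega := \{\zeta \in \C : x_0 + \zeta h \in \mathcal{U}\}$ is open and contains $0$, hence contains some disk $\{|\zeta| < \rho\}$. Define the $\mathcal{Y}$-valued function of one complex variable $g(\zeta) := F(x_0 + \zeta h)$ for $\zeta \in \Omega$. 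By hypothesis (2), $g$ is complex-differentiable at every point of $\Omega$, with $g'(\zeta)$ equal to the complex G\^ateaux derivative of $F$ at $x_0 + \zeta h$ in the direction $h$; in particular $g$ is continuous on $\Omega$.

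The key step is to verify that $g$ is holomorphic as a $\mathcal{Y}$-valued function, i.e.\ that it is represented locally by a norm-convergent power series. This is classical, and can be seen in two equivalent ways. One way: the proofs of Goursat's theorem and of the Cauchy integral formula use only the triangle inequality, local estimates of the form $g(z) = g(z_0) + g'(z_0)(z - z_0) + o(|z - z_0|)$, and completeness of the target space, all of which are available for $\mathcal{Y}$-valued functions; thus $\oint_{\partial T} g = 0$ for triangles $T \subset \Omega$, and on any closed disk $\overline{B}(0,r) \subset \Omega$ one has $g(\zeta) = \frac{1}{2\pi i}\oint_{|z| = r} \frac{g(z)}{z - \zeta}\, dz$ for $|\zeta| < r$. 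Expanding $\frac{1}{z - \zeta} = \sum_{k \geq 0} \zeta^k z^{-k-1}$ uniformly in $z$ on the circle gives
\[
g(\zeta) = \sum_{k = 0}^\infty a_k \zeta^k, \qquad a_k := \frac{1}{2\pi i}\oint_{|z| = r} \frac{g(z)}{z^{k+1}}\, dz \in \mathcal{Y},
\]
the series converging in norm for $|\zeta| < r$ since $\|a_k\| \leq r^{-k}\sup_{|z| = r}\|g(z)\|$. (Alternatively, compose with functionals $\phi \in \mathcal{Y}^*$: each $\phi \circ g$ is scalar-holomorphic, and Hahn--Banach together with the same coefficient bound recovers the vector-valued series; this is Dunford's weak-to-strong holomorphy theorem.)

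The remaining assertions follow by term-by-term differentiation of this power series. Since $\sum_k a_k \zeta^k$ and its formal derivative both converge locally uniformly on $\{|\zeta| < \rho\}$, $g$ is infinitely complex-differentiable near $0$ with $g^{(k)}(0) = k!\, a_k$; hence $\delta^k F(x_0;h) = \frac{d^k}{d\zeta^k}\big|_{\zeta=0} g(\zeta)$ exists and equals $k!\, a_k$. Replacing $h$ by $\alpha h$ replaces $g(\zeta)$ by $g(\alpha\zeta)$, whose $k$-th derivative at $0$ is $\alpha^k g^{(k)}(0)$, which gives the homogeneity $\delta^k F(x_0;\alpha h) = \alpha^k \delta^k F(x_0;h)$. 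If moreover $\|F(x)\| \leq M$ on $B_{\mathcal{X}}(x_0,R)$, then $g$ is defined and bounded by $M$ on $\{|\zeta| < R/\|h\|\}$, so for every $r < R/\|h\|$ we get $\|a_k\| \leq M r^{-k}$, and letting $r \uparrow R/\|h\|$ yields $\tfrac{1}{k!}\|\delta^k F(x_0;h)\| = \|a_k\| \leq M\|h\|^k/R^k$, which is \eqref{eq:Cauchyestimate}. Finally, for $\|h\| < R$ we have $1 < R/\|h\|$, so the series converges at $\zeta = 1$ and $F(x_0 + h) = g(1) = \sum_{k \geq 0} a_k = \sum_{k \geq 0} \tfrac{1}{k!}\delta^k F(x_0;h)$, which is \eqref{eq:powerseries}.

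The only point needing care is the transfer of one-variable complex analysis to the $\mathcal{Y}$-valued setting --- that is, confirming Goursat's theorem and the Cauchy integral formula (equivalently, Dunford's theorem) genuinely apply to $g$ --- but this is classical and uses nothing beyond the norm and completeness of $\mathcal{Y}$; the rest is bookkeeping of constants. It is worth noting that only G\^ateaux-differentiability (hypothesis (2)) is needed for the existence of the derivatives $\delta^k F(x_0;h)$, the homogeneity, and the power series \eqref{eq:powerseries}; local boundedness (hypothesis (1)) enters solely to supply the uniform bound $M$ in the Cauchy estimate \eqref{eq:Cauchyestimate}.
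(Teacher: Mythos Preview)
Your proof is correct and follows the same overall strategy as the paper: reduce to the one-variable function $g(\zeta) = F(x_0 + \zeta h)$ and invoke classical complex analysis. The difference is in how the $\mathcal{Y}$-valued holomorphy of $g$ is established. You appeal directly to the Banach-valued Goursat/Cauchy integral formula (or equivalently Dunford's theorem) to obtain the norm-convergent power series in one stroke. The paper instead pairs with functionals $y^* \in \mathcal{Y}^*$, applies scalar Goursat to each $y^* \circ g$, and then runs an explicit induction on $k$ to show that the weak Taylor coefficients $\alpha_{x_0,h,y^*,k}$ are actually of the form $y^*[f_{x_0,h,k}]$ for some $f_{x_0,h,k} \in \mathcal{Y}$, using the uniform (in $y^*$) Cauchy bounds to pass the limit through. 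Your route is cleaner if one is willing to take vector-valued contour integration as known; the paper's route is more self-contained in that it only uses scalar holomorphy plus a direct limit argument. Both lead to exactly the same estimates, and your final remark that local boundedness is used only for the explicit constant $M$ in \eqref{eq:Cauchyestimate} is a fair observation (continuity of $g$ already follows from pointwise complex differentiability). Note also that your final display correctly includes the factor $\tfrac{1}{k!}$, matching what the paper's proof actually establishes.
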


\begin{proof}
Assume that $F(x)$ is defined and bounded by $M$ on $B_{\mathcal{X}}(x_0,R)$ and let $h \in \mathcal{X}$.  Suppose that $y^* \in \mathcal{Y}^*$ and $\norm{y^*} \leq 1$.  Then $y^*[F(x_0+\zeta h)]$ is a scalar-valued complex-differentiable function on $B_{\C}(0, R / \norm{h})$ which is bounded by $M$.  Thus, by Goursat's theorem, $y^*[F(x_0+\zeta h)]$ is analytic, and we have a power series expansion
\begin{equation}
y^*[F(x_0+\zeta h)] = \sum_{k=0}^\infty \frac{1}{k!} \alpha_{x,h,y^*k} \zeta^k,
\end{equation}
where $\alpha_{x_0,h,y^*} \in \C$.  Using the Cauchy integral formula and the resulting Cauchy estimates, we have
\begin{equation} \label{eq:initialCauchyestimate}
\frac{1}{k!} |\alpha_{x_0,h,y^*}| \leq \frac{M \norm{h}^k}{R^k}.
\end{equation}
Next, one argues by induction on $k$ that there exists $f_{x,h,n} \in \mathcal{Y}$ such that $\alpha_{x_0,h,y^*,n} = y^*[f_{x,h,n}]$ for all $y^* \in B_{\mathcal{Y}^*}(0,1)$.  The base case $n = 0$ is trivial since $f_{x_0,h,0} = F(x_0)$.  For the inductive step, note that by inductive hypothesis,
\[
F(x+\zeta h) = \sum_{k=0}^{n-1} y^*[f_{x_0,h,k}] \zeta^k + \alpha_{x_0,h,y^*,n} \zeta^n + O_{M,R,h}(\zeta^{n+1}),
\]
where, because of \eqref{eq:initialCauchyestimate}, the error estimate is uniform for $y^* \in B_{\mathcal{Y}^*}(0,1)$.  Thus,
\[
\alpha_{x_0,h,y^*,n} = \lim_{\zeta \to 0} \frac{1}{\zeta^n} y^* \left[ F(x+\zeta h) - \sum_{k=0}^{n-1} f_{x_0,h,k} \zeta^k \right]
\]
with the rate of convergence independent of $y^*$.  It follows that $\lim_{\zeta \to 0} (F(x+\zeta h) - \sum_{k=0}^{n-1} f_{x_0,h,k} \zeta^k)$ exists in $\mathcal{Y}$, and we call this limit $f_{x_0,h,n}$.  Then we have $y^*[f_{x_0,h,n}] = \alpha_{x_0,h,y^*,n}$.  Because the Cauchy estimate \eqref{eq:initialCauchyestimate} holds for all $y^* \in B_{\mathcal{Y}^*}(0,1)$, we obtain $(1/k!) \norm{f_{x_0,h,n}} \leq M \norm{h}^n / R^n$.  From this, one checks that for $|\zeta| < R / \norm{h}$,
\[
F(x_0+\zeta h) = \sum_{k=0}^\infty \frac{1}{k!} f_{x,h,k} \zeta^k.
\]
It follows that $(d / d \zeta)^k F(x_0 + \zeta h)|_{\zeta = 0}$ exists and equals $f_{x_0,h,k}$, so that $\delta^k F(x_0;h)$ is defined and equals $f_{x_0,h,k}$.  The theorem now follows from the claims we proved about $f_{x_0,h,k}$.
\end{proof}

As a consequence of Theorem \ref{thm:analytic} we have the following local continuity estimates.  If $F: B_{\mathcal{X}}(x_0,R) \to \mathcal{Y}$ is analytic and bounded by $M$, then we have
\begin{equation}
\norm{F(x) - F(x_0)} \leq \frac{M \norm{x - x_0}}{R - \norm{x - x_0}},
\end{equation}
which follows from estimating the power series \eqref{eq:powerseries} term by term.  Moreover, $F$ is Lipschitz on $B_{\mathcal{X}}(x_0,r)$ for every $r < R / 2$.  Indeed, if $x_1$, $x_2 \in B_{\mathcal{X}}(x_0,r)$, then we can apply the previous estimate with $x_1$ in place of $x_0$ and $R - r$ in place of $R$ to obtain
\begin{equation}  \label{eq:aprioriLipschitz}
\norm{F(x_1) - F(x_2)} \leq \frac{M\norm{x_1 - x_2} }{R - r - \norm{x_1 - x_2}} \leq \frac{M\norm{x_1 - x_2}}{R - 2r} \text{ whenever } r < R/2 \text{ and } x, x' \in B_\mathcal{X}(x_0,r).
\end{equation}
In particular, $F$ is continuous and locally Lipschitz.

Continuing with $x_0$, $x_1$, $x_2$, $r$, $R$ as above, we may apply the Cauchy estimate to $x \mapsto F(x_1 + x) - F(x_2 + x)$ on $B_{\mathcal{X}}(0,R-r)$ to obtain
\begin{equation}  \label{eq:aprioriLipschitz2}
\frac{1}{k!} \norm{\delta^k F(x_1;h) - \delta^k F(x_2;h)} \leq \frac{M\norm{x_1 - x_2} \norm{h}^k}{(R - 2r)(R - r)^k} \text{ whenever } r < R/2 \text{ and } x, x' \in B_\mathcal{X}(x_0,r).
\end{equation}
In particular, $\delta F(x;h)$ is continuous in $x$.

\begin{lemma}
A function $F: \mathcal{U} \to \mathcal{Y}$ is analytic on $\mathcal{U}$ if and only if it is complex Fr{\'e}chet differentiable at each point $x_0 \in \mathcal{U}$, that is, there exists a bounded linear transformation $DF(x_0): \mathcal{X} \to \mathcal{Y}$ such that
\[
F(x) = F(x_0) + DF(x_0)[x - x_0] + o(\norm{x - x_0}).
\]
Moreover, in this case $DF(x_0)[h] = \delta F(x_0;h)$.
\end{lemma}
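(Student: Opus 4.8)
The plan is to prove both implications directly from Theorem~\ref{thm:analytic} and the a priori estimates \eqref{eq:aprioriLipschitz}--\eqref{eq:aprioriLipschitz2}, using the homogeneity relation $\delta^k F(x_0;\alpha h) = \alpha^k \delta^k F(x_0;h)$ to handle all but one routine point. The easy direction is Fréchet $\Rightarrow$ analytic: if $F$ is complex Fréchet differentiable at every $x_0 \in \mathcal{U}$, then it is continuous, hence locally bounded, so condition~(1) of Definition~\ref{def:analytic} holds; and for condition~(2), fixing $x_0$ and $h$ and writing $F(x_0 + \zeta h) = F(x_0) + \zeta\, DF(x_0)[h] + o(|\zeta|\,\norm{h})$ (using $\C$-linearity of $DF(x_0)$ to pull out $\zeta$) shows $\lim_{\zeta \to 0}\zeta^{-1}\bigl(F(x_0+\zeta h) - F(x_0)\bigr) = DF(x_0)[h]$, so $F$ is Gâteaux differentiable and moreover $\delta F(x_0;h) = DF(x_0)[h]$. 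This also settles the identification of the two derivatives.

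For the converse, assume $F$ is analytic, fix $x_0$, and pick $R, M$ with $\norm{F(x)} \le M$ on $B_{\mathcal{X}}(x_0, R)$. The power series expansion \eqref{eq:powerseries} together with the Cauchy estimate \eqref{eq:Cauchyestimate} bounds the Taylor remainder by a geometric series in $\norm{h}/R$ beginning at the quadratic term, so
\[
\bigl\| F(x_0 + h) - F(x_0) - \delta F(x_0;h)\bigr\| \;\le\; M \sum_{k \ge 2}\Bigl(\frac{\norm{h}}{R}\Bigr)^{k} \;=\; O(\norm{h}^2)
\]
for $\norm{h} < R/2$, which is $o(\norm{h})$. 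It then remains only to verify that $h \mapsto \delta F(x_0;h)$ is a bounded $\C$-linear map, for then $DF(x_0) := \delta F(x_0;\cdot)$ witnesses complex Fréchet differentiability. Boundedness is the $k = 1$ case of \eqref{eq:Cauchyestimate}, namely $\norm{\delta F(x_0;h)} \le M\norm{h}/R$, and $\C$-homogeneity is the $k = 1$ case of $\delta^k F(x_0;\alpha h) = \alpha^k \delta^k F(x_0;h)$.

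The one step that needs genuine care is additivity of $\delta F(x_0;\cdot)$. My plan is to fix $h_1, h_2$, set $x_1 := x_0 + \zeta h_1$, and split
\[
F(x_0 + \zeta h_1 + \zeta h_2) - F(x_0) = \bigl[F(x_1 + \zeta h_2) - F(x_1)\bigr] + \bigl[F(x_0 + \zeta h_1) - F(x_0)\bigr].
\]
By the remainder estimate above the second bracket equals $\zeta\,\delta F(x_0;h_1) + O(\zeta^2)$; applying the same estimate at the base point $x_1$ (valid uniformly for $\zeta$ small since then $B_{\mathcal X}(x_1, R - r) \subseteq B_{\mathcal X}(x_0,R)$) the first bracket equals $\zeta\,\delta F(x_1;h_2) + O(\zeta^2)$, and $\delta F(x_1;h_2) \to \delta F(x_0;h_2)$ as $\zeta \to 0$ because $x \mapsto \delta F(x;h_2)$ is continuous by \eqref{eq:aprioriLipschitz2}. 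Dividing by $\zeta$ and letting $\zeta \to 0$ gives $\delta F(x_0;h_1+h_2) = \delta F(x_0;h_1) + \delta F(x_0;h_2)$, completing the proof. (An alternative for this step is to restrict the separately-holomorphic map $(\zeta_1,\zeta_2) \mapsto F(x_0 + \zeta_1 h_1 + \zeta_2 h_2)$ to the diagonal and invoke the finite-dimensional chain rule, but the splitting argument avoids appealing to Hartogs-type results; I expect this additivity point, rather than any of the estimates, to be the only real obstacle.)
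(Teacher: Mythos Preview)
Your proof is correct and follows essentially the same route as the paper's sketch: both directions are handled identically, and your additivity argument via the splitting $F(x_0+\zeta h_1+\zeta h_2)-F(x_0) = [F(x_1+\zeta h_2)-F(x_1)] + [F(x_0+\zeta h_1)-F(x_0)]$ together with the continuity estimate \eqref{eq:aprioriLipschitz2} is exactly the two-step expansion the paper uses. The only difference is presentational.
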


\begin{proof}[Sketch of proof]
We claim that $\delta F(x_0;h)$ is complex linear in $h$.  We have already shown it is homogeneous.  Fix $x_0$ and suppose that $F$ is defined and bounded by $M$ on $B_{\mathcal{X}}(x_0,R)$.  Fix $h_1$ and $h_2$ in $\mathcal{X}$.  Then
\[
F(x_0 + \zeta h_1 + \zeta h_2) = F(x_0) + \delta F(x_0; h_1 + h_2) \zeta + O_{M,R}\left( |\zeta|^2 \norm{h_1 + h_2}^2  \right),
\]
where the error estimate only depends on $M$ and $R$.  On the other hand, if $r < R / 2$, we also have
\[
F(x_0 + \zeta h_1 + \zeta h_2) = F(x_0) + \zeta \delta F(x_0;h_1) + \zeta \delta F(x_0+\zeta h_1; h_2) + O_{M,r}\left( |\zeta|^2 \norm{h_1}^2 + |\zeta|^2 \norm{h_2}^2 \right).
\]
provided that $|\zeta| < 2 \min(\norm{h_1}, \norm{h_2}, \norm{h_1 + h_2}) / r$.  Because of \eqref{eq:aprioriLipschitz2}, $\delta F(x_0 + \zeta h_1; h_2) \to \delta F(x_0; h_2)$ as $\zeta \to 0$.  Thus, $\delta F(x_0;h_1 + h_2) = \delta F(x_0;h_1) + \delta F(x_0;h_2)$.  If we define $DF(x_0)[h] = \delta F(x_0;h)$, then $DF(x_0)$ is a bounded linear transformation and given that $F(x_0 + h) \approx F(x_0) + \delta F(x_0;h) + O_R(\norm{h}^2)$, it follows that $F$ is Fr{\'e}chet differentiable.  Conversely, if $F$ is Fr{\'e}chet differentiable, then it is locally bounded and G{\^a}teaux differentiable, and hence analytic.
\end{proof}

As a consequence of the last lemma and the chain rule for Fr{\'e}chet differentiation, we have the following.

\begin{lemma}
The composition of two analytic functions is analytic.
\end{lemma}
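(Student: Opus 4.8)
The statement to prove is that the composition of two analytic functions between Banach spaces is analytic. The plan is to reduce everything to the characterization of analyticity via complex Fr\'echet differentiability established in the preceding lemma, and then invoke the classical chain rule for Fr\'echet derivatives, which holds verbatim in the Banach space setting.

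In detail: suppose $F \colon \mathcal{U} \to \mathcal{Y}$ and $G \colon \mathcal{V} \to \mathcal{Z}$ are analytic, where $\mathcal{U} \subseteq \mathcal{X}$ and $\mathcal{V} \subseteq \mathcal{Y}$ are open and $F(\mathcal{U}) \subseteq \mathcal{V}$. First I would note that the composition $G \circ F$ is well-defined on the open set $\mathcal{U}$. By the previous lemma, $F$ and $G$ are complex Fr\'echet differentiable at every point of their domains, with $DF(x_0)$ and $DG(y_0)$ bounded complex-linear operators. The classical chain rule for Fr\'echet derivatives — whose proof is the usual estimate $\norm{(G\circ F)(x) - (G\circ F)(x_0) - DG(F(x_0))[DF(x_0)[x-x_0]]} = o(\norm{x-x_0})$, obtained by inserting $G(F(x)) - G(F(x_0)) = DG(F(x_0))[F(x)-F(x_0)] + o(\norm{F(x)-F(x_0)})$ and using that $F$ is locally Lipschitz by \eqref{eq:aprioriLipschitz} so that $\norm{F(x)-F(x_0)} = O(\norm{x-x_0})$ — shows that $G \circ F$ is complex Fr\'echet differentiable at $x_0$ with derivative $DG(F(x_0)) \circ DF(x_0)$, which is a composition of bounded complex-linear maps and hence bounded and complex-linear. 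Applying the previous lemma in the reverse direction, complex Fr\'echet differentiability at every point of $\mathcal{U}$ implies that $G\circ F$ is analytic.

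Since the paper explicitly says this lemma follows ``as a consequence of the last lemma and the chain rule for Fr\'echet differentiation,'' I expect the intended proof to be a one-line appeal to exactly this. I would only add a remark that the sole point requiring the complex (as opposed to real) theory is that $DG(F(x_0)) \circ DF(x_0)$ is complex-linear, which is immediate since each factor is; everything else is the standard real-variable chain-rule estimate. There is no genuine obstacle here: the work has already been done in establishing the equivalence ``analytic $\iff$ complex Fr\'echet differentiable,'' and local boundedness of $G \circ F$ is automatic from continuity of $G$ composed with local boundedness of $F$ (or simply from Fr\'echet differentiability, which implies local boundedness). Thus the only thing to be careful about is citing the right pieces in the right order; the proof is essentially immediate.
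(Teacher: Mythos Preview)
Your proposal is correct and follows exactly the approach the paper intends: reduce to complex Fr\'echet differentiability via the preceding lemma, apply the standard chain rule, and use the reverse implication. The paper states this lemma without proof, merely noting it is ``a consequence of the last lemma and the chain rule for Fr\'echet differentiation,'' which is precisely what you have written out.
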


\begin{lemma} \label{lem:analyticlimit}
Suppose that $(F_n)$ is a sequence of analytic functions $\mathcal{U} \to \mathcal{Y}$ where $\mathcal{U} \subseteq \mathcal{X}$ is open.  Assume that $\sup_n \norm{F_n}$ is locally bounded and suppose for each $x_0$, we have $F_n(x) \to F(x)$ uniformly on some neighborhood of $x_0$.  Then $F$ is analytic.
\end{lemma}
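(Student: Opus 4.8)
The plan is to verify directly the two conditions of Definition~\ref{def:analytic} for $F$. Local boundedness of $F$ is immediate: near any $x_0$ one has $\norm{F(x)} = \lim_n \norm{F_n(x)} \le \sup_n \norm{F_n(x)}$, and the right-hand side is locally bounded by hypothesis. The substance is G{\^a}teaux differentiability, which I would obtain by transporting the local power series of the $F_n$ furnished by Theorem~\ref{thm:analytic} to $F$, using the uniform Cauchy estimate \eqref{eq:Cauchyestimate} to control everything in $n$.

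Fix $x_0 \in \mathcal{U}$ and $h \in \mathcal{X}$, with $h \ne 0$ (the case $h = 0$ being trivial). Choose $R, M > 0$ so small that $\sup_n \sup_{B_{\mathcal{X}}(x_0,R)} \norm{F_n} \le M$ and $F_n \to F$ uniformly on $B_{\mathcal{X}}(x_0,R)$; both are possible by the hypotheses (shrinking the neighborhood on which the convergence is uniform to a ball, and then shrinking it further to fit inside a ball on which $\sup_n \norm{F_n}$ is bounded). Since each $F_n$, and hence each difference $F_n - F_m$, is analytic, the Cauchy estimate \eqref{eq:Cauchyestimate} applied on $B_{\mathcal{X}}(x_0,R)$ gives
\[
\tfrac{1}{k!}\norm{\delta^k F_n(x_0;h) - \delta^k F_m(x_0;h)} = \tfrac{1}{k!}\norm{\delta^k (F_n - F_m)(x_0;h)} \le \frac{\norm{h}^k}{R^k}\, \sup_{B_{\mathcal{X}}(x_0,R)}\norm{F_n - F_m},
\]
and the supremum on the right tends to $0$ as $n, m \to \infty$ by uniform convergence. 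Hence for each $k$ the sequence $(\delta^k F_n(x_0;h))_n$ is Cauchy in $\mathcal{Y}$; let $f_k$ denote its limit, which satisfies $\tfrac{1}{k!}\norm{f_k} \le M\norm{h}^k / R^k$ upon letting $n \to \infty$ in \eqref{eq:Cauchyestimate} applied to $F_n$.

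Now set $G(\zeta) := \sum_{k \ge 0} \tfrac{1}{k!} f_k \zeta^k$, which converges for $|\zeta| < R/\norm{h}$. By Theorem~\ref{thm:analytic} we have $F_n(x_0 + \zeta h) = \sum_{k\ge 0} \tfrac{1}{k!}\delta^k F_n(x_0;h)\zeta^k$ there, and since $\tfrac{1}{k!}\norm{\delta^k F_n(x_0;h) - f_k}\,|\zeta|^k \le 2M(\norm{h}\,|\zeta|/R)^k$ is summable in $k$ and independent of $n$, dominated convergence in the series yields $F_n(x_0 + \zeta h) \to G(\zeta)$ for $|\zeta| < R/\norm{h}$. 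Comparing with $F_n(x_0 + \zeta h) \to F(x_0 + \zeta h)$ gives $F(x_0 + \zeta h) = G(\zeta)$ for all such $\zeta$. Thus $\zeta \mapsto F(x_0 + \zeta h)$ is represented near $0$ by a convergent power series, hence is complex-differentiable at $\zeta = 0$; in particular $\delta F(x_0;h) = \lim_{\zeta \to 0} (F(x_0 + \zeta h) - F(x_0))/\zeta = f_1$ exists. Together with local boundedness this shows that $F$ satisfies both conditions of Definition~\ref{def:analytic}, so $F$ is analytic.

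The step I expect to require the most care is the passage from the $F_n$ to the coefficients $f_k$ and the identification $F(x_0 + \zeta h) = G(\zeta)$: the whole argument rests on having the Cauchy estimate \eqref{eq:Cauchyestimate} with constants $M, R$ independent of $n$, which is exactly what local boundedness of $\sup_n \norm{F_n}$ provides, combined with the hypothesis of local uniform convergence, which is what makes the coefficient sequences Cauchy. Once these two points are in place the remaining manipulations (interchanging the limit with the summation, and reading off the derivative at $\zeta = 0$) are routine.
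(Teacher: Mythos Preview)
Your argument is correct. It differs in style from the paper's sketch, which simply reduces to the scalar Weierstrass theorem by composing with linear functionals $y^* \in \mathcal{Y}^*$: since $\zeta \mapsto y^*[F_n(x_0+\zeta h)]$ is scalar-analytic, locally uniformly bounded, and converges locally uniformly, the classical result makes $\zeta \mapsto y^*[F(x_0+\zeta h)]$ analytic, and one then feeds this weak analyticity together with local boundedness back into the mechanism of Theorem~\ref{thm:analytic} to recover norm G{\^a}teaux differentiability.

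Your approach instead stays in the Banach space throughout: you use the Cauchy estimate \eqref{eq:Cauchyestimate} uniformly in $n$ to show the coefficient sequences $(\delta^k F_n(x_0;h))_n$ are Cauchy, pass to the limit coefficients $f_k$, and identify the resulting power series with $F(x_0+\zeta h)$ by dominated convergence. This is more explicit and entirely self-contained (no appeal to the scalar theorem), at the cost of being longer. The paper's route is terser but leans on the reader unpacking how weak analyticity plus local boundedness upgrade to the norm statement via the proof of Theorem~\ref{thm:analytic}. Both are sound; yours makes the uniformity in $n$ do all the work directly.
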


\begin{proof}[Sketch of proof]
In the case of scalar analytic functions, this result is a standard theorem of Weierstrass.  We can apply the scalar result to $\zeta \mapsto y^*[F_n(x + \zeta h)]$ for each $y^* \in \mathcal{Y}^*$, $x \in \mathcal{U}$, and $h \in \mathcal{X}$.
\end{proof}

\begin{lemma} \label{lem:analyticcontinuation}
Let $\mathcal{U}$ be open and connected.  If $F$ and $G: \mathcal{U} \to \mathcal{Y}$ are equal on an open subset of $\mathcal{U}$, then $F = G$ on $\mathcal{U}$.
\end{lemma}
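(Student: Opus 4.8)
The plan is to run the classical ``clopen'' connectedness argument, with the local power series expansion \eqref{eq:powerseries} and the continuity estimates of this section standing in for the scalar identity theorem. Put $H = F - G$, which is analytic on $\mathcal{U}$; it suffices to show that if $H$ vanishes on a nonempty open subset of $\mathcal{U}$, then $H \equiv 0$ on $\mathcal{U}$. Let
\[
S = \{x \in \mathcal{U} : H \equiv 0 \text{ on some neighborhood of } x\}.
\]
Then $S$ is open by construction and nonempty by hypothesis, so — since $\mathcal{U}$ is connected — it is enough to show that $S$ is relatively closed in $\mathcal{U}$.

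The main work is the closedness of $S$, and this is where I would use the estimates. Fix $x_0 \in \mathcal{U}$ lying in the closure of $S$. Since $H$ is analytic, hence locally bounded, I would choose $R > 0$ with $B_{\mathcal{X}}(x_0, R) \subseteq \mathcal{U}$ and $\norm{H} \leq M$ on $B_{\mathcal{X}}(x_0, R)$, and set $r = R/3 < R/2$. For any $x \in S$, the function $H$ is identically zero near $x$, so every iterated G{\^a}teaux derivative $\delta^k H(x;h)$ (with $k \in \N$, $h \in \mathcal{X}$) vanishes. Now the continuity estimate \eqref{eq:aprioriLipschitz2}, applied with base ball $B_{\mathcal{X}}(x_0, R)$ and radius $r$, shows that $x \mapsto \delta^k H(x; h)$ is (Lipschitz, hence) continuous on $B_{\mathcal{X}}(x_0, r)$. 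Choosing a sequence $x_n \in S \cap B_{\mathcal{X}}(x_0, r)$ with $x_n \to x_0$ — possible because $x_0$ is in the closure of $S$ — and passing to the limit yields $\delta^k H(x_0; h) = 0$ for every $k \in \N$ and $h \in \mathcal{X}$. Plugging this into the power series expansion \eqref{eq:powerseries} at $x_0$ gives $H(x_0 + h) = 0$ for all $h$ with $\norm{h} < R$, so $H$ vanishes on a neighborhood of $x_0$; that is, $x_0 \in S$. Thus $S$ is closed in $\mathcal{U}$, and by connectedness $S = \mathcal{U}$, i.e.\ $F = G$.

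The only real care needed is to invoke \eqref{eq:aprioriLipschitz2} with a single fixed base ball, so that the continuity of $x \mapsto \delta^k H(x;h)$ near $x_0$ is genuinely uniform along the approximating sequence; this is bookkeeping rather than a genuine obstacle, and there is no step I expect to be hard. (Throughout, ``an open subset'' is understood to be nonempty, since the statement is otherwise vacuous.) As an aside, one could instead reduce to the scalar case using that an open connected subset of a Banach space is polygonally connected, together with the identity theorem for analytic functions of two complex variables applied to $(\zeta,\eta) \mapsto y^*[H(x_0 + \zeta(x_1 - x_0) + \eta h)]$ for $x_1 \in S$ near $x_0$ and $y^* \in \mathcal{Y}^*$; but the estimate-based argument above is more economical given the machinery already in place.
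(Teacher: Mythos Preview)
Your argument is correct and is essentially the paper's own proof: a clopen connectedness argument using continuity of the iterated G{\^a}teaux derivatives (via \eqref{eq:aprioriLipschitz2}) for closedness and the local power series expansion \eqref{eq:powerseries} for openness. The only cosmetic difference is that the paper works with the set $\mathcal{V} = \{x : \delta^k F(x;h) = \delta^k G(x;h)\text{ for all }k,h\}$, which is manifestly closed and shown open via \eqref{eq:powerseries}, whereas you work with the set $S$ where $H$ vanishes on a neighborhood, which is manifestly open and shown closed; since \eqref{eq:powerseries} makes $S = \mathcal{V}$, the two presentations are equivalent.
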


\begin{proof}[Sketch of proof]
Consider the set
\[
\mathcal{V} = \{x \in \mathcal{U}: \delta^k F(x;h) = \delta^k G(x;h) \text{ for all } k \in \N \text{ and } h \in \mathcal{X}\}.
\]
This is closed relative to $\mathcal{U}$ by continuity of $\delta^k F$ and $\delta^k G$ and it is open because of the local power series expansion \eqref{eq:powerseries}.  Hence, $\mathcal{V} = \mathcal{U}$ by connectedness.
\end{proof}

\begin{lemma} \label{lem:analyticcontinuationofconvergence}
Suppose that $F_n$ is analytic $\mathcal{U} \to \mathcal{Y}$, where $\mathcal{U} \subseteq \mathcal{X}$ is open and connected.  Assume that $\sup_n \norm{F_n(x)}$ is locally bounded.  If $F_n \to F$ with respect to $\norm{\cdot}_\mathcal{Y}$ uniformly on $B_\mathcal{X}(x_0,R)$ for some $x_0 \in \mathcal{U}$ and some $R > 0$, then $F$ extends to be analytic on $\mathcal{U}$ and $F_n \to F$ locally uniformly on $\mathcal{U}$.
\end{lemma}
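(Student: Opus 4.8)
The plan is to run the classical proof of Vitali's convergence theorem, substituting the two-sided Cauchy estimate \eqref{eq:Cauchyestimate} for the normal-family compactness used in the scalar case. Let $\mathcal{V}$ denote the set of $x \in \mathcal{U}$ that have a neighborhood $N_x \subseteq \mathcal{U}$ on which $(F_n)$ converges uniformly in $\mathcal{Y}$. By hypothesis $x_0 \in \mathcal{V}$ (take $N_{x_0} = B_\mathcal{X}(x_0, R)$), and $\mathcal{V}$ is open by construction. On the overlap of two such neighborhoods the two uniform limits coincide, so the local limits glue to a single function, still called $F$, on $\mathcal{V}$; by Lemma \ref{lem:analyticlimit} (applied with $\mathcal{V}$ in place of $\mathcal{U}$) this $F$ is analytic on $\mathcal{V}$. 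Thus the entire content of the lemma reduces to showing that $\mathcal{V}$ is closed in $\mathcal{U}$: once $\mathcal{V} = \mathcal{U}$ by connectedness, $F$ is defined on all of $\mathcal{U}$, extends the original $F$, is analytic by Lemma \ref{lem:analyticlimit}, and $F_n \to F$ locally uniformly by the definition of $\mathcal{V}$.

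The key quantitative input is a \emph{propagation} estimate, which is the mechanism behind scalar Vitali. Suppose $g$ is analytic on a ball $B_\mathcal{X}(a, \rho_2)$ with $\norm{g} \le 2M$ there, and in addition $\norm{g} \le \varepsilon$ on the smaller ball $B_\mathcal{X}(a, \rho_0)$, where $0 < \rho_0 < \rho_1 < \rho_2$. For a unit vector $h \in \mathcal{X}$ put $a_k := \tfrac{1}{k!}\delta^k g(a; h) \in \mathcal{Y}$, so that by Theorem \ref{thm:analytic} and homogeneity of $\delta^k$ we have $g(a + \zeta h) = \sum_k a_k \zeta^k$ for $|\zeta| < \rho_2$. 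Applying the Cauchy estimate \eqref{eq:Cauchyestimate} on $B_\mathcal{X}(a,\rho_0)$ and then on $B_\mathcal{X}(a,\rho_2)$ gives the two bounds $\norm{a_k} \le \varepsilon/\rho_0^{\,k}$ and $\norm{a_k} \le 2M/\rho_2^{\,k}$. Hence for $|\zeta| \le \rho_1$, splitting the series at an index $N$ and using the first bound for $k \le N$ and the second for $k > N$,
\[
\norm{g(a + \zeta h)} \;\le\; \varepsilon \sum_{k=0}^{N}\Bigl(\tfrac{\rho_1}{\rho_0}\Bigr)^{k} + 2M \sum_{k=N+1}^{\infty}\Bigl(\tfrac{\rho_1}{\rho_2}\Bigr)^{k} \;\le\; C_1 \,\varepsilon \Bigl(\tfrac{\rho_1}{\rho_0}\Bigr)^{N} + C_2\, M \Bigl(\tfrac{\rho_1}{\rho_2}\Bigr)^{N},
\]
where $C_1, C_2$ depend only on the ratios $\rho_0/\rho_1$ and $\rho_1/\rho_2$. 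As $h$ ranges over unit vectors and $|\zeta| \le \rho_1$, the points $a + \zeta h$ fill out $\overline{B_\mathcal{X}(a,\rho_1)}$, so the right-hand side bounds $\sup_{B_\mathcal{X}(a,\rho_1)}\norm{g}$. Now take $g = F_n - F_m$: given $\delta > 0$, first pick $N$ so large that the second term is $< \delta/2$ (possible because $\rho_1 < \rho_2$), and then use that $\varepsilon_{n,m} := \sup_{B_\mathcal{X}(a,\rho_0)} \norm{F_n - F_m} \to 0$ to make the first term $< \delta/2$ for all large $n, m$. Thus $(F_n)$ is uniformly Cauchy, hence uniformly convergent, on $B_\mathcal{X}(a, \rho_1)$.

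It remains to deduce closedness of $\mathcal{V}$. Let $a \in \mathcal{U}$ be a limit point of $\mathcal{V}$. By local boundedness choose $r > 0$ with $B_\mathcal{X}(a, 3r) \subseteq \mathcal{U}$ and $\sup_n \norm{F_n} \le M$ on $B_\mathcal{X}(a, 3r)$. Pick $a' \in \mathcal{V}$ with $\norm{a - a'} < r$; then $B_\mathcal{X}(a', 2r) \subseteq B_\mathcal{X}(a, 3r)$, so $\sup_n \norm{F_n} \le M$ on $B_\mathcal{X}(a', 2r)$, and there is some $0 < \rho_0 < r$ with $(F_n)$ uniformly convergent on $B_\mathcal{X}(a', \rho_0)$. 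The propagation estimate with center $a'$ and radii $\rho_0 < r < 2r$ then gives uniform convergence of $(F_n)$ on $B_\mathcal{X}(a', r)$, and since $\norm{a - a'} < r$ this is a neighborhood of $a$; hence $a \in \mathcal{V}$, and $\mathcal{V}$ is closed in $\mathcal{U}$.

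The one point that genuinely needs care — the analogue of the only subtle step in scalar Vitali — is the order of quantifiers in the propagation estimate: $N$ must be fixed (depending on $\delta$ and on the geometric ratios) \emph{before} letting $n, m \to \infty$, so that the uniform bound on the outer ball controls the tail of the series while the smallness on the inner ball controls the head. Everything else — the gluing of local limits, openness of $\mathcal{V}$, and the final appeals to connectedness and to Lemma \ref{lem:analyticlimit} — is routine.
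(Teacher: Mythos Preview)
Your argument is correct. Both your proof and the paper's use the open-and-closed connectedness trick, but the choice of $\mathcal{V}$ and the division of labor differ. The paper takes $\mathcal{V} = \{x: \lim_n \delta^k F_n(x;h) \text{ exists for all } k,h\}$; then closedness is immediate from the equicontinuity estimate \eqref{eq:aprioriLipschitz2} for each fixed $k$, while openness requires invoking the local power series \eqref{eq:powerseries} and Cauchy estimates to propagate convergence of all Taylor coefficients at a point to convergence at nearby points. You instead take $\mathcal{V}$ to be the set of points admitting a neighborhood of uniform convergence, so openness is tautological and the work goes into closedness, which you handle with the two-radius Vitali/Hadamard interpolation. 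Your propagation estimate is a clean, reusable quantitative statement (and makes explicit the order-of-quantifiers issue that the paper's sketch leaves implicit); the paper's route is slightly shorter because it recycles \eqref{eq:aprioriLipschitz2} directly rather than rederiving a two-scale bound. Either way the appeal to Lemma~\ref{lem:analyticlimit} at the end is the same.
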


\begin{proof}[Sketch of proof]
Consider the set
\[
\mathcal{V} = \{x \in \mathcal{U}: \lim_{n \to \infty} \delta^k F_n(x;h) \text{ exists for all } k \in \N \text{ and } h \in \mathcal{X}\}.
\]
Because $\sup_n \norm{F_n(x)}$ is locally bounded, \eqref{eq:aprioriLipschitz2} implies that $\delta^k F_n(x;h)$ is equicontinuous in $x$ with respect to $n$ for each fixed $k$, hence $\mathcal{V}$ is closed.  On the other hand, if $x \in \mathcal{V}$, one can use the local boundedness of $F_n$, which is uniform in $n$, together with \eqref{eq:powerseries} and \eqref{eq:Cauchyestimate}, to show that for $x'$ in some open neighborhood of $x$, the sequence $\delta^k F_n(x';h)$ converges for each $k$ and $h$.  So $\mathcal{V}$ is both open and closed, hence $\mathcal{V} = \mathcal{U}$.  The limit function is analytic by Lemma \ref{lem:analyticlimit}.
\end{proof}

\subsection{Banach-valued Measurability and Integration}

We recall some terminology regarding Bochner integration of Banach-valued functions on an interval $[0,T]$.  For background, see \cite{DP1940} \cite{Phillips1940} \cite{MU1971} \cite{DU1977}.

Let $\mathcal{X}$ be a Banach space.  A function $\gamma: [0,T] \to \mathcal{X}$ is said to be \emph{norm measurable} or \emph{strongly measurable} if it is a measurable function with respect to Borel $\sigma$-algebra on $\mathcal{X}$ in the norm topology.  It is said to be \emph{weakly measurable} if $(\phi, \gamma(t))$ is measurable for every $\phi \in \mathcal{X}^*$, where $(\cdot,\cdot)$ denotes the dual pairing.  A map $\gamma: [0,T] \to \mathcal{X}^*$ is \emph{weak-$*$ measurable} if $(\gamma(t), x)$ is measurable for every $x \in \mathcal{X}$.

A \emph{simple function} $[0,T] \to \mathcal{X}$ is a function of the form $\sum_{j=1}^\infty x_j \cdot \chi_{E_j}(t)$, where $y_j \in \mathcal{Y}$ and the $E_j$'s are disjoint and measurable, and its $L^p$ norm is
\begin{align}
\norm*{\sum_{j=1}^\infty x_j \cdot \chi_{E_j} }_{L^p} &= \left(\sum_{j=1}^\infty |E_j| \norm{x_j}^p\right)^{1/p} \text{ for } 1 \leq p < \infty \\
\norm*{\sum_{j=1}^\infty x_j \cdot \chi_{E_j} }_{L^\infty} &= \sup_{j, |E_j| > 0} \norm{x_j},
\end{align}
where $|E_j|$ denotes the Lebesgue measure.  The \emph{Bochner $L^p$ space} $L_{\text{Boch}}^p([0,T],\mathcal{X})$ is the completion with respect to this norm of the space of simple functions with finite $L_{\Boch}^p$ norm modulo equality almost everywhere.

The space $L^1([0,T], \mathcal{X})$ can equivalently be characterized as the space of norm-measurable functions $\gamma: [0,T] \to \mathcal{X}$ such that $\int \norm{\gamma(t)}\,dt < +\infty$ and $f$ is \emph{almost separably valued}, that is, there exists a separable subspace $\mathcal{Y} \subseteq \mathcal{X}$ such that $f(t) \in \mathcal{Y}$ for a.e.\ $t$.

A (norm-) continuous function $\gamma: [0,T] \to \mathcal{X}$ is in $L_{\Boch}^1([0,T], \mathcal{X})$.  Moreover, continuous functions and step functions are both dense in $L_{\Boch}^1([0,T], \mathcal{X})$.

\subsection{Distributional Derivatives} \label{subsec:distderiv}

This section examines distributional derivatives of a Lipschitz functions $\gamma: [0,T] \to \mathcal{X}$, where $\mathcal{X}$ is some Banach space.  In particular, we will describe how to perform various ``pointwise'' operations with elements of $\mathcal{L}(L^1[0,T], \mathcal{X})$, including nonlinear operations involving composition.

As motivation, recall that if $\gamma: [0,T] \to \C$ is Lipschitz, then the distributional derivative $\dot{\gamma}: C_c^\infty(0,T) \to \C$ is represented by a function in $L^\infty(0,T) = L^1(0,T)^*$.  In general, if $\gamma: [0,T] \to \mathcal{X}$ is Lipschitz, then the distributional derivative $\dot{\gamma}: C_c^\infty(0,T) \to \mathcal{X}$ is \emph{not} necessarily represented by a function in $L_{\Boch}^\infty([0,T], \mathcal{X})$.  However, we claim that $\dot{\gamma}$ does extend to a bounded map $L^1[0,T] \to \mathcal{X}$.  In the following, we denote by $\mathcal{L}(L^1[0,T], \mathcal{X})$ the space of bounded linear maps $L^1[0,T] \to \mathcal{X}$.

\begin{observation} \label{obs:weakderivative}
If $\gamma: [0,T] \to \mathcal{X}$ is Lipschitz, then there exists a unique $\dot{\gamma} \in \mathcal{L}(L^1[0,T], \mathcal{X})$ satisfying
\begin{equation}
\dot{\gamma}[\chi_{[a,b]}] = \gamma(b) - \gamma(a).
\end{equation}
Conversely, if $\rho \in \mathcal{L}(L^1[0,T], \mathcal{X})$, then the function
\begin{equation}
\gamma(t) = \rho[\chi_{[0,t]}]
\end{equation}
is Lipschitz and satisfies $\dot{\gamma} = \rho$.  Also, $\norm{\dot{\gamma}}_{\mathcal{L}(L^1,\mathcal{X})}$ equals the Lipschitz seminorm of $\gamma$.
\end{observation}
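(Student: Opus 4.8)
The plan is to build $\dot{\gamma}$ first on the dense subspace of (scalar) step functions and then extend by continuity. Given a Lipschitz $\gamma$ with Lipschitz seminorm $L$, I would start with a step function $f = \sum_{k=1}^n c_k \chi_{[t_{k-1},t_k]}$ subordinate to a partition $0 = t_0 < t_1 < \dots < t_n = T$ and define $\dot{\gamma}[f] := \sum_{k=1}^n c_k (\gamma(t_k) - \gamma(t_{k-1}))$. A short telescoping computation shows that this value is unchanged when the partition is refined, so $\dot{\gamma}$ is a well-defined linear map on the space of step functions. The Lipschitz bound then yields $\norm{\dot{\gamma}[f]} \le \sum_k |c_k|\,\norm{\gamma(t_k) - \gamma(t_{k-1})} \le L \sum_k |c_k|(t_k - t_{k-1}) = L \norm{f}_{L^1}$, so $\dot{\gamma}$ extends uniquely to a bounded linear map $L^1[0,T] \to \mathcal{X}$ with $\norm{\dot{\gamma}}_{\mathcal{L}(L^1,\mathcal{X})} \le L$; uniqueness of the extension is automatic because step functions are dense in $L^1[0,T]$. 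By construction $\dot{\gamma}[\chi_{[a,b]}] = \gamma(b) - \gamma(a)$, and any bounded linear map satisfying this identity agrees with $\dot{\gamma}$ on all step functions and hence everywhere, which gives the uniqueness assertion.

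For the converse, given $\rho \in \mathcal{L}(L^1[0,T],\mathcal{X})$ I would set $\gamma(t) = \rho[\chi_{[0,t]}]$ and note that for $a \le b$, linearity of $\rho$ gives $\gamma(b) - \gamma(a) = \rho[\chi_{[0,b]} - \chi_{[0,a]}] = \rho[\chi_{[a,b]}]$, whence $\norm{\gamma(b) - \gamma(a)} \le \norm{\rho}\,\norm{\chi_{[a,b]}}_{L^1} = \norm{\rho}(b-a)$; thus $\gamma$ is Lipschitz with seminorm at most $\norm{\rho}$. Since $\dot{\gamma}$ and $\rho$ agree on $\chi_{[a,b]}$ for every subinterval $[a,b] \subseteq [0,T]$, and finite linear combinations of such indicators are dense in $L^1[0,T]$, we conclude $\dot{\gamma} = \rho$.

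Finally, the norm identity follows by combining the two inequalities already obtained: the forward construction gives $\norm{\dot{\gamma}}_{\mathcal{L}(L^1,\mathcal{X})} \le L$, while the estimate $\norm{\gamma(b) - \gamma(a)} = \norm{\dot{\gamma}[\chi_{[a,b]}]} \le \norm{\dot{\gamma}}_{\mathcal{L}(L^1,\mathcal{X})}(b-a)$ shows $L \le \norm{\dot{\gamma}}_{\mathcal{L}(L^1,\mathcal{X})}$. I do not expect any serious obstacle here: the whole argument is a standard density-and-extension argument, and the only step requiring a little care is checking that the value assigned to a step function is independent of the representing partition (so that $\dot{\gamma}$ is genuinely well-defined before extension), together with invoking the density of scalar step functions in $L^1[0,T]$.
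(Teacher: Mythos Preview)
Your proof is correct and follows exactly the approach the paper takes: define $\dot{\gamma}$ on step functions, establish the $L^1$ bound from the Lipschitz constant, and extend by density. The paper's proof is in fact just a sketch (it leaves ``the other claims as exercises''), so your write-up is strictly more detailed---in particular, you explicitly check well-definedness under refinement and spell out the converse and the norm equality, all of which the paper omits.
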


\begin{proof}
Suppose $\gamma: [0,T] \to \mathcal{X}$ is $C$-Lipschitz.  The action of $\dot{\gamma}$ on step functions is defined by $\dot{\gamma}[\chi_{[a,b]}] = \gamma(b) - \gamma(a)$.  For any step function $\phi$, we have $\norm{\dot{\gamma}[\phi]} \leq C \norm{\phi}_{L^1[0,T]}$, hence the $\dot{\gamma}$ extends to bounded linear map $L^1[0,T] \to \mathcal{X}$.  The other claims are left as exercises.
\end{proof}

The following fact will be handy for proving identities and estimates involving distributional derivatives.

\begin{lemma} \label{lem:smallerror}
If $\rho \in \mathcal{L}(L^1[0,T], \mathcal{X})$, then
\begin{equation}
\norm{\rho}_{\mathcal{L}(L^1, \mathcal{X})} = \sup_{0 \leq a < b \leq T} \frac{\norm{\rho[\chi_{a,b}]}}{b - a} = \lim_{\epsilon \to 0} \sup_{0 < b - a \leq \epsilon} \frac{\norm{\rho[\chi_{a,b}]}}{b - a}.
\end{equation}
As a consequence, if $\rho$ and $\tilde{\rho}$ are bounded maps $L^1[0,T] \to \mathcal{X}$ and $\rho[\chi_{[a,b]}] = \tilde{\rho}[\chi_{[a,b]}] + o(|b - a|)$, then $\rho = \tilde{\rho}$.
\end{lemma}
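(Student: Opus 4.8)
The plan is to sandwich the three quantities between one another, the only substantive direction being that the supremum over \emph{short} intervals already controls the full operator norm. Write $C_\epsilon := \sup_{0 < b - a \le \epsilon} \norm{\rho[\chi_{[a,b]}]}/(b-a)$ for $\epsilon > 0$. Shrinking $\epsilon$ shrinks the index set, so $C_\epsilon$ is non-decreasing in $\epsilon$; moreover $\norm{\chi_{[a,b]}}_{L^1} = b-a$ gives $C_\epsilon \le \norm{\rho}$, so $C := \lim_{\epsilon \to 0^+} C_\epsilon = \inf_{\epsilon > 0} C_\epsilon$ exists and is finite. The middle quantity in the lemma is exactly $C_\epsilon$ for $\epsilon \ge T$, hence it equals $\sup_{0 \le a < b \le T}\norm{\rho[\chi_{[a,b]}]}/(b-a) \le \norm{\rho}$, and it dominates $C$. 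So the chain $C \le (\text{middle}) \le \norm{\rho}$ is automatic, and it suffices to prove $\norm{\rho} \le C_\epsilon$ for every fixed $\epsilon > 0$; letting $\epsilon \to 0$ then forces equality throughout.

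To prove $\norm{\rho} \le C_\epsilon$, I would first reduce to testing $\rho$ against scalar step functions, which are dense in $L^1[0,T]$ (as noted in the excerpt) while $\rho$ is bounded and $\phi \mapsto \norm{\phi}_{L^1}$ is continuous. Given a step function, write it as $\phi = \sum_{j=1}^n c_j \chi_{[a_{j-1}, a_j)}$ for a partition $0 = a_0 < a_1 < \dots < a_n = T$, and refine the partition so that every subinterval has length at most $\epsilon$; this changes only the representation of $\phi$, not $\phi$ itself, since the new endpoints form a null set. By linearity $\rho[\phi] = \sum_j c_j \rho[\chi_{[a_{j-1},a_j)}]$, and each term satisfies $\norm{\rho[\chi_{[a_{j-1},a_j)}]} \le C_\epsilon (a_j - a_{j-1})$ by definition of $C_\epsilon$. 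The triangle inequality then yields $\norm{\rho[\phi]} \le C_\epsilon \sum_j |c_j|(a_j - a_{j-1}) = C_\epsilon \norm{\phi}_{L^1}$, which is what we wanted.

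For the consequence, I would apply the identity just established to $\sigma := \rho - \tilde{\rho} \in \mathcal{L}(L^1[0,T], \mathcal{X})$. The hypothesis $\rho[\chi_{[a,b]}] = \tilde{\rho}[\chi_{[a,b]}] + o(|b-a|)$ means precisely that $\lim_{\epsilon \to 0}\sup_{0 < b - a \le \epsilon}\norm{\sigma[\chi_{[a,b]}]}/(b-a) = 0$, so by the main identity $\norm{\sigma}_{\mathcal{L}(L^1,\mathcal{X})} = 0$, i.e.\ $\rho = \tilde{\rho}$.

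There is no serious obstacle here; the one point that must be handled correctly is the refinement step, namely that because an arbitrary step function can be expressed using a partition into arbitrarily short intervals, the operator norm of $\rho$ restricted to step functions is bounded by $C_\epsilon$ for \emph{every} $\epsilon$ and not merely by $C_T = \norm{\rho}$. Everything else is bookkeeping with the triangle inequality, monotonicity of $C_\epsilon$, and the density of step functions in $L^1[0,T]$.
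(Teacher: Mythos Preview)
Your proof is correct and follows essentially the same approach as the paper: both reduce to testing $\rho$ on a dense class and exploit step functions with arbitrarily small mesh, the paper going via continuous functions approximated by fine step functions while you work with step functions directly and refine their partitions. Your route is if anything slightly more direct, but the key idea---that refining the partition lets the bound $C_\epsilon$ control $\rho$ on all step functions for every $\epsilon$---is the same.
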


\begin{proof}
The nontrivial part of the proof is to show that
\begin{equation}
\norm{\rho}_{\mathcal{L}(L^1, \mathcal{X})} \leq \liminf_{\epsilon \to 0} \sup_{0 < b - a \leq \epsilon} \frac{\norm{\rho[\chi_{a,b}]}}{b - a}.
\end{equation}
If $C$ is the right hand side, then it is sufficient to show that $\norm{\rho[\phi]} \leq C \norm{\phi}_{L^1[0,T]}$ when $\phi$ is continuous.  This can be proved by approximating $\phi$ uniformly by a sequence of step functions, such that mesh size of the partition also approaches zero.
\end{proof}

\begin{remark}
Note that by the previous lemma and some basic results on $L_{\Boch}^\infty$, there is an isometric inclusion $\iota: L_{\Boch}^\infty([0,T], \mathcal{X}) \to \mathcal{L}(L^1[0,T], \mathcal{X})$ given by
\[
\iota(\rho): \phi \mapsto \int_0^T \rho(t) \phi(t)\,dt,
\]
for $\rho \in L_{\Boch}^\infty([0,T],\mathcal{X})$, so in the sequel we will regard $L_{\Boch}^\infty([0,T],\mathcal{X})$ as a subspace of $\mathcal{L}(L^1[0,T],\mathcal{X})$.
\end{remark}

If we had a bounded function $R: [0,T] \times [0,T] \to \mathcal{X}$ denoted $R(s,t)$, then could define the diagonal restriction $R(t,t)$.  We claim that under appropriate hypotheses, this operation still makes sense when $R(s,\cdot)$ is an element of $\mathcal{L}(L^1[0,T], \mathcal{X})$ rather than a bounded function $[0,T] \to \mathcal{X}$.  For this to be rigorous, we must view $R$ as a map $[0,T] \to \mathcal{L}(L^1[0,T], \mathcal{X})$.

\begin{lemma}[Diagonal restriction] \label{lem:diagonal}
There exists a unique linear map
\[
\diag: L_{\Boch}^\infty([0,T], \mathcal{L}(L^1[0,T], \mathcal{X})) \to \mathcal{L}(L^1[0,T],\mathcal{X})
\]
such that
\begin{enumerate}[(1)]
	\item If $R(s) = \sum_{j=1}^\infty \chi_{E_j}(s) \cdot \rho_j$ where the sets $E_j$ are disjoint measurable sets and $\sup_j \norm{\rho_j}_{\mathcal{L}(L^1[0,T],\mathcal{X})} < +\infty$, and if $\phi \in L^1[0,T]$, we have
	\begin{equation} \label{eq:diagsimple}
	(\diag R)[\phi] = \sum_{j=1}^\infty \rho_j[\chi_{E_j} \phi].
	\end{equation}
	\item We have
	\begin{equation} \label{eq:stupidestimate0}
	\norm{\diag R}_{\mathcal{L}(L^1[0,T], \mathcal{X})} \leq \norm{R}_{L_{\Boch}^\infty([0,T],\mathcal{L}(L^1[0,T], \mathcal{X}))}.
	\end{equation}
\end{enumerate}
Furthermore, this map $\diag$ satisfies the estimate
\begin{equation} \label{eq:stupidestimate}
\norm*{ (\diag R)[\phi] } \leq \int_0^T |\phi(t)| \norm{R(t,\cdot)}_{\mathcal{L}(L^1[0,T],\mathcal{X})}\,dt.
\end{equation}
\end{lemma}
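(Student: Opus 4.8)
The plan is to construct $\diag$ first on the dense subspace of simple functions, taking \eqref{eq:diagsimple} as the definition, and then to extend it by continuity; the estimate \eqref{eq:stupidestimate} will be checked by hand on simple functions and obtained in general by a limiting argument. Recall that in the terminology used here, simple functions (with countably many values allowed) are by construction dense in $L_{\Boch}^\infty([0,T], \mathcal{L}(L^1[0,T],\mathcal{X}))$. Consequently any linear map satisfying (1) is prescribed on a dense subspace, and if it also satisfies (2) it is bounded and hence unique; so it is enough to build $\diag$ on simple functions, verify the operator bound $\norm{\diag R}_{\mathcal{L}(L^1[0,T],\mathcal{X})} \le \norm{R}_{L_{\Boch}^\infty}$ together with \eqref{eq:stupidestimate} there, and then pass to the closure.

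For a simple function $R = \sum_j \chi_{E_j}\rho_j$ with the $E_j$ disjoint and $\sup_j \norm{\rho_j} < +\infty$, and $\phi \in L^1[0,T]$, I would first note that $\norm{\rho_j[\chi_{E_j}\phi]} \le \norm{\rho_j}\int_{E_j}|\phi|$ and $\sum_j \int_{E_j}|\phi| \le \norm{\phi}_{L^1[0,T]}$, so the series defining $(\diag R)[\phi]$ in \eqref{eq:diagsimple} converges absolutely in $\mathcal{X}$; summing these bounds (and observing that a term with $|E_j| = 0$ is zero, since then $\chi_{E_j}\phi = 0$ in $L^1[0,T]$) gives at once both $\norm{\diag R} \le \sup_{j:\,|E_j|>0}\norm{\rho_j} = \norm{R}_{L_{\Boch}^\infty}$ and the estimate \eqref{eq:stupidestimate} for simple $R$, because $\norm{R(t,\cdot)} = \norm{\rho_j}$ for $t \in E_j$. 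To see that $(\diag R)[\phi]$ is independent of the representation, I would take a second one $R = \sum_k \chi_{F_k}\sigma_k$ and pass to the common refinement $\{E_j \cap F_k\}$: when $|E_j \cap F_k| > 0$ one has $\rho_j = \sigma_k$ in $\mathcal{L}(L^1[0,T],\mathcal{X})$, hence $\rho_j[\chi_{E_j \cap F_k}\phi] = \sigma_k[\chi_{E_j \cap F_k}\phi]$, and when $|E_j \cap F_k| = 0$ both sides vanish; absolute convergence of the double sum $\sum_{j,k}\rho_j[\chi_{E_j \cap F_k}\phi]$ (same bound) then allows summing in either order, so $\sum_j \rho_j[\chi_{E_j}\phi] = \sum_k \sigma_k[\chi_{F_k}\phi]$. (Linearity on simple functions is immediate from the formula and is preserved under the extension.)

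With this bound in hand, $\diag$ extends uniquely to a bounded (norm $\le 1$) linear map on all of $L_{\Boch}^\infty([0,T], \mathcal{L}(L^1[0,T],\mathcal{X}))$ satisfying \eqref{eq:stupidestimate0}. To get \eqref{eq:stupidestimate} for an arbitrary $R$, I would pick simple $R_n \to R$ in $L_{\Boch}^\infty$: then $\diag R_n \to \diag R$ in $\mathcal{L}(L^1[0,T],\mathcal{X})$, so $(\diag R_n)[\phi] \to (\diag R)[\phi]$ in $\mathcal{X}$, while $\norm{R_n - R}_{L_{\Boch}^\infty} \to 0$ says exactly that $\norm{R_n(t,\cdot)} \to \norm{R(t,\cdot)}$ uniformly for a.e.\ $t$ (note $t \mapsto \norm{R(t,\cdot)}$ is measurable, since $R$ may be represented by a strongly measurable function, being an $L_{\Boch}^\infty$-limit of simple functions), whence $\int_0^T |\phi(t)|\norm{R_n(t,\cdot)}\,dt \to \int_0^T |\phi(t)|\norm{R(t,\cdot)}\,dt$. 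Letting $n \to \infty$ in \eqref{eq:stupidestimate} for $R_n$ yields \eqref{eq:stupidestimate} for $R$; and \eqref{eq:stupidestimate0} is a special case, since $\int_0^T |\phi(t)|\norm{R(t,\cdot)}\,dt \le \norm{R}_{L_{\Boch}^\infty}\norm{\phi}_{L^1[0,T]}$.

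The only place that needs real care is the bookkeeping with the series in $\mathcal{X}$: one must confirm absolute convergence so that rearrangement across a common refinement of two simple representations is valid, but this reduces to the triangle inequality together with $\sum_j \int_{E_j}|\phi| \le \norm{\phi}_{L^1[0,T]}$. Everything else is a routine density and limit argument, the one conceptual point being the identification of $L_{\Boch}^\infty$-convergence of operator-valued functions with uniform a.e.\ convergence of their representatives, which is precisely what lets the pointwise-in-$t$ bound \eqref{eq:stupidestimate} survive the passage to the limit.
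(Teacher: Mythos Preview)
Your proof is correct and follows essentially the same approach as the paper's: define $\diag$ on simple functions by \eqref{eq:diagsimple}, verify well-definedness and the estimates \eqref{eq:stupidestimate0}, \eqref{eq:stupidestimate} there, extend by density using the operator bound, and then pass \eqref{eq:stupidestimate} to the limit by continuity of both sides in the $L_{\Boch}^\infty$ norm. You have simply written out in full the details the paper leaves implicit (absolute convergence of the series, the common-refinement argument for well-definedness, and the limiting step for \eqref{eq:stupidestimate}).
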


\begin{proof}
For a simple function $R$, we can define $\diag R$ unambiguously by \eqref{eq:diagsimple}, that is, it is independent of the decomposition of the simple function.  We check that \eqref{eq:stupidestimate} and hence \eqref{eq:stupidestimate0} hold for simple functions.  Then \eqref{eq:stupidestimate0} implies that $\diag R$ has a unique extension to $L_{\Boch}^\infty([0,T], \mathcal{L}(L^1[0,T], \mathcal{X}))$.  The inequality \eqref{eq:stupidestimate} extends to $L_{\Boch}^\infty([0,T], \mathcal{L}(L^1[0,T], \mathcal{X}))$ because both sides are continuous in the Bochner $L^\infty$ norm.
\end{proof}

In the rest of the paper, we will often use more suggestive notation which treats the elements of $\mathcal{L}(L^1[0,T], \mathcal{X})$ like pointwise defined functions.  Although using function notation for distributions has some drawbacks, the ultimate benefit will be a more intuitive statement of identities such Lemma \ref{lem:chainrule} below, and more generally a compact notation for constructing and transforming such distributions.

\begin{notation}
For a function $\rho \in \mathcal{L}(L^1[0,T], \mathcal{X})$, we will use the notation $\rho(t)$ where $t$ is formal or ``dummy'' variable.  For $\phi \in L^1[0,T]$, we will denote
\begin{equation}
\int_0^T \phi(t) \rho(t)\,dt := \rho[\phi]
\end{equation}
as well as
\begin{equation}
\int_a^b \rho(t)\,dt := \rho[\chi_{[a,b]}].
\end{equation}
\end{notation}

To obviate potential confusion, when we apply $\rho$ as a linear map to a function $\phi$ in $L^1[0,T]$, we will use square brackets and not write the dummy variable $t$.  For instance, the application of $\rho$ to the identity function $t$ on $[0,T]$ would be denoted by $\rho[\id_{[0,T]}]$ or $\int_0^T \rho(t) t\,dt$ and \emph{not} by $\rho(t)$ or $\rho[t]$.  Similarly, $\rho(2t)$ would denote the element of $\mathcal{L}(L^1[0,T/2], \mathcal{X})$ defined by
\[
\int_0^{T/2} \rho(2t) \phi(t)\,dt := \frac{1}{2} \int_0^T \rho(t) \phi(2t)\,dt
\]
but on the other hand $\int \rho(t) \cdot 2t \,dt$ would denote the application of $\rho$ as a linear map to the function $2t$ on $[0,T]$.

\begin{notation}
If $R$ is in $L_{\Boch}^\infty([0,T],\mathcal{L}(L^1[0,T], \mathcal{X}))$, then we will write $R$ formally as a function of two variables $(s,t)$, where the $s$ corresponds to the first ``$[0,T]$'' and the $t$ corresponds to the second ``$[0,T]$'' in ``$L_{\Boch}^\infty([0,T],\mathcal{L}(L^1[0,T], \mathcal{X}))$''; in other words, the distributional dependence occurs in the second variable $t$.  We will denote $(\diag R)(t)$ as $R(t,t)$.
\end{notation}

Thus, for example, if $R(s,t) = \sum_{j=1}^\infty \chi_{E_j}(s) \rho_j(t)$, then \eqref{eq:diagsimple} becomes
\begin{equation}
\int_0^T \phi(t) R(t,t)\,dt = \sum_{j=1}^\infty \int_{E_j} \phi(t) \rho_j(t)\,dt,
\end{equation}
and hence in a formal sense
\begin{equation}
R(t,t) = \sum_{j=1}^\infty \chi_{E_j}(t) \rho_j(t).
\end{equation}
Also, \eqref{eq:stupidestimate} becomes
\begin{equation} \label{eq:stupidestimate2}
\norm*{ \int_0^T \phi(t) R(t,t)\,dt } \leq \int_0^T |\phi(t)| \norm{R(t,\cdot)}_{\mathcal{L}(L^1[0,T],\mathcal{X})}\,dt.
\end{equation}

We will mainly use two special cases of the diagonal restriction.

\begin{definition} \label{def:multiplication}
Suppose that $\rho \in \mathcal{L}(L^1[0,T], \mathcal{X})$ and $A \in L_{\Boch}^\infty([0,T], \mathcal{L}(\mathcal{X}, \mathcal{Y}))$.  Then we define $(A \rho)(t) = A(t) \rho(t)$ in $\mathcal{L}(L^1[0,T], \mathcal{Y})$ as the diagonal restriction $R(t,t)$ of the function $R(s,t) = A(s) \rho(t)$, which is in $L_{\Boch}^\infty([0,T], \mathcal{L}(L^1[0,T], \mathcal{Y}))$.
\end{definition}

\begin{observation}~ \label{obs:multiplicationproperties}
\begin{enumerate}[(1)]
	\item The product $A \cdot \rho$ defined above is bilinear in $A$ and $\rho$.
	\item If $I$ is a subinterval of $[0,T]$, then we have $A|_I \cdot \rho|_I = (A \rho)|_I$.
	\item $\norm{A \cdot \rho}_{\mathcal{L}(L^1[0,T], \mathcal{Y})} \leq \norm{A}_{L_{\Boch}^\infty([0,T], \mathcal{L}(\mathcal{X}, \mathcal{Y}))} \norm{\rho}_{\mathcal{L}(L^1[0,T], \mathcal{X})}$.
\end{enumerate}
\end{observation}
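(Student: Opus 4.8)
The plan is to deduce all three statements from the properties of the diagonal-restriction map in Lemma~\ref{lem:diagonal}, applied to the two-variable kernel $R(s,t) = A(s)\rho(t)$; here, for a.e.\ $s$, $R(s)$ is the composition $A(s)\circ\rho \in \mathcal{L}(L^1[0,T],\mathcal{Y})$ of $\rho\colon L^1[0,T]\to\mathcal{X}$ with $A(s)\colon\mathcal{X}\to\mathcal{Y}$. The first thing to verify is that $R$ indeed lies in $L_{\Boch}^\infty([0,T],\mathcal{L}(L^1[0,T],\mathcal{Y}))$, as claimed in Definition~\ref{def:multiplication}: since $A\in L_{\Boch}^\infty([0,T],\mathcal{L}(\mathcal{X},\mathcal{Y}))$ is an a.e.\ limit of simple functions, post-composing each value with the fixed operator $\rho$ exhibits $s\mapsto A(s)\circ\rho$ as an a.e.\ limit of simple functions, and $\norm{A(s)\circ\rho}_{\mathcal{L}(L^1,\mathcal{Y})}\le\norm{A(s)}_{\mathcal{L}(\mathcal{X},\mathcal{Y})}\norm{\rho}_{\mathcal{L}(L^1,\mathcal{X})}$ is essentially bounded. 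Thus $A\cdot\rho := \diag R$ is well defined.

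Statements (1) and (3) are then immediate. For (1), composition of bounded operators is bilinear, so for each fixed $s$ the value $A(s)\circ\rho$ depends linearly on $A$ and on $\rho$; since both the passage to an element of $L_{\Boch}^\infty$ and the map $\diag$ are linear, $A\cdot\rho$ is bilinear in $(A,\rho)$. For (3), the estimate \eqref{eq:stupidestimate0} gives $\norm{A\cdot\rho}_{\mathcal{L}(L^1,\mathcal{Y})}\le\norm{R}_{L_{\Boch}^\infty}$, and the essential-supremum bound $\norm{R}_{L_{\Boch}^\infty}\le\norm{A}_{L_{\Boch}^\infty}\norm{\rho}$ noted above finishes the job (alternatively one reads this directly off \eqref{eq:stupidestimate}).

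Statement (2) is where the only genuine bookkeeping lies, and I expect it to be the main obstacle --- not for depth, but because one must first fix what ``restriction to a subinterval $I$'' means for an element of $\mathcal{L}(L^1[0,T],\mathcal{X})$, namely $\rho|_I[\phi]:=\rho[\phi\chi_I]$ under the isometric inclusion $L^1(I)\hookrightarrow L^1[0,T]$ by extension by zero, and similarly for $A\cdot\rho$ and for $A$. By Lemma~\ref{lem:smallerror} it suffices to test the identity $A|_I\cdot\rho|_I = (A\cdot\rho)|_I$ against $\chi_{[a,b]}$ with $[a,b]\subseteq I$. Using \eqref{eq:stupidestimate0}, the obvious bounds $\norm{\rho|_I}_{\mathcal{L}(L^1(I),\mathcal{X})}\le\norm{\rho}$ and $\norm{A|_I}_{L_{\Boch}^\infty(I)}\le\norm{A}_{L_{\Boch}^\infty([0,T])}$, and density of simple functions in $L_{\Boch}^\infty$, I would reduce to the case $A=\sum_j\chi_{E_j}A_j$ simple; then \eqref{eq:diagsimple} gives $(A\cdot\rho)[\chi_{[a,b]}]=\sum_j A_j\,\rho[\chi_{E_j\cap[a,b]}]$, while $A|_I=\sum_j\chi_{E_j\cap I}A_j$ and $\rho|_I[\chi_{E_j\cap I}\chi_{[a,b]}]=\rho[\chi_{E_j\cap I\cap[a,b]}]=\rho[\chi_{E_j\cap[a,b]}]$ because $[a,b]\subseteq I$, so the two sides coincide; passing back to general $A$ by the same continuity estimate completes the argument.
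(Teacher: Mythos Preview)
Your verification is correct. The paper itself gives no proof for this observation, leaving it as routine; your argument via the diagonal-restriction map of Lemma~\ref{lem:diagonal} is exactly the intended unpacking, and your handling of part (2) by reduction to simple $A$ and testing against $\chi_{[a,b]}$ with $[a,b]\subseteq I$ is the natural way to make the bookkeeping precise.
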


\begin{definition} \label{def:composition}
Suppose that $W$ is a metric space, $F(w,t)$ is a continuous map $W \to \mathcal{L}(L^1[0,T], \mathcal{X})$, and $w: [0,T] \to W$ is continuous.  Then $R(s,t) = F(w(s),t)$ is a continuous map $[0,T] \to \mathcal{L}(L^1[0,T], \mathcal{X})$.  We define $F(w(t),t)$ to be the diagonal restriction of $R$.
\end{definition}

\begin{observation} \label{obs:compositionerror}
Suppose that $F$ is uniformly continuous as a map $W \to \mathcal{L}(L^1[0,T], \mathcal{X})$ with modulus of continuity $\omega_F$, and let $d_\infty$ be the supremum metric on $C([0,T], W)$.  Then for $w, \tilde{w} \in C([0,T],W)$, we have
\begin{align*}
\norm{F(w(t),t) - F(\tilde{w}(t),t)}_{\mathcal{L}(L^1[0,T],\mathcal{X})_{dt}} &\leq \norm{F \circ w - F \circ \tilde{w}}_{L_{\Boch}^\infty([0,T], \mathcal{L}(L^1[0,T], \mathcal{X}))} \\
&\leq \omega_F( d_\infty(w,\tilde{w}) ).
\end{align*}
\end{observation}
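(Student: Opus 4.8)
The plan is to reduce everything to the linearity of the diagonal-restriction map and the basic estimate \eqref{eq:stupidestimate0} from Lemma \ref{lem:diagonal}. First I would record that both $F \circ w$ and $F \circ \tilde{w}$ are continuous maps $[0,T] \to \mathcal{L}(L^1[0,T],\mathcal{X})$, being compositions of the continuous map $F$ with the continuous paths $w$, $\tilde{w}$; hence they are bounded on the compact interval $[0,T]$ and therefore lie in $L_{\Boch}^\infty([0,T], \mathcal{L}(L^1[0,T],\mathcal{X}))$. In particular the diagonal restrictions $F(w(t),t)$ and $F(\tilde{w}(t),t)$ are well-defined by Definition \ref{def:composition}.

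Next, writing $R(s,t) = F(w(s),t)$ and $\tilde{R}(s,t) = F(\tilde{w}(s),t)$, the two-variable function associated to $F \circ w - F \circ \tilde{w}$ is exactly $R - \tilde{R}$, and since $\diag$ is linear (Lemma \ref{lem:diagonal}) we have $F(w(t),t) - F(\tilde{w}(t),t) = (\diag(R - \tilde{R}))(t)$. Applying \eqref{eq:stupidestimate0} to $R - \tilde{R}$ gives
\[
\norm{F(w(t),t) - F(\tilde{w}(t),t)}_{\mathcal{L}(L^1[0,T],\mathcal{X})} \leq \norm{R - \tilde{R}}_{L_{\Boch}^\infty} = \norm{F \circ w - F \circ \tilde{w}}_{L_{\Boch}^\infty([0,T],\mathcal{L}(L^1[0,T],\mathcal{X}))},
\]
which is the first asserted inequality. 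For the second inequality I would argue pointwise in $s$: writing $d_W$ for the metric on $W$ and taking $\omega_F$ non-decreasing (as usual), uniform continuity of $F$ gives
\[
\norm{F(w(s),\cdot) - F(\tilde{w}(s),\cdot)}_{\mathcal{L}(L^1[0,T],\mathcal{X})} \leq \omega_F\bigl(d_W(w(s),\tilde{w}(s))\bigr) \leq \omega_F(d_\infty(w,\tilde{w}))
\]
for every $s \in [0,T]$, since $d_W(w(s),\tilde{w}(s)) \leq \sup_{s} d_W(w(s),\tilde{w}(s)) = d_\infty(w,\tilde{w})$. Taking the supremum over $s$ (legitimate since $F \circ w$ and $F \circ \tilde{w}$ are continuous, so the $L_{\Boch}^\infty$ norm of the difference is an honest supremum) bounds $\norm{F \circ w - F \circ \tilde{w}}_{L_{\Boch}^\infty}$ by $\omega_F(d_\infty(w,\tilde{w}))$, completing the argument.

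I do not expect a genuine obstacle here: the content is entirely carried by Lemma \ref{lem:diagonal}, which is already established, and the rest is the definition of uniform continuity together with monotonicity of $\omega_F$. The only point deserving a moment's care is the bookkeeping identification that the two-variable function underlying $F \circ w - F \circ \tilde{w}$ is $R - \tilde{R}$ and that $\diag$ respects this difference; this is immediate from the linearity clause of Lemma \ref{lem:diagonal} and the definition of the formal two-variable notation $R(s,t)$.
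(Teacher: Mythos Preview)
Your proposal is correct. The paper states this as an observation without proof, and your argument---reducing to the linearity and norm bound of $\diag$ from Lemma~\ref{lem:diagonal}, then bounding the $L_{\Boch}^\infty$ norm pointwise in $s$ via the modulus of continuity---is exactly the intended justification.
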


\subsection{Locally Lipschitz Families} \label{subsec:LLF}

\begin{definition}[Locally Lipschitz Family] \label{def:Lipschitzfamily}
Let $\mathcal{X}$ and $\mathcal{Y}$ be Banach spaces.  Let $\mathcal{U} \subseteq \mathcal{X}$ be open and $T > 0$.  A map $F: \mathcal{U} \times [0,T] \to \mathcal{Y}$ for $t \in [0,T]$ is called a \emph{locally Lipschitz family of analytic maps} if $F(\cdot,t)$ is analytic for each $t$, and for each $x_0 \in \mathcal{U}$ there exist $r > 0$ and $L > 0$ such that
\begin{equation} \label{eq:LLF}
\norm{f(x,s) - f(x,t)} \leq L|s - t| \text{ for all } s, t \in [0,T] \text{ for all } x \in B_{\mathcal{X}}(x_0,r).
\end{equation}
\end{definition}

Here the word ``locally'' refers to the variable $x$ but ``Lipschitz'' refers to the variable $t$, analytic functions being automatically locally Lipschitz in the space variable by \eqref{eq:aprioriLipschitz}.

\begin{lemma}[Differentiation and Integration]~
\begin{enumerate}[(1)]
	\item Let $\partial_t F(x,\cdot) \in \mathcal{L}(L^1[0,T],\mathcal{Y})$ denote the distributional time derivative.  Then $x \mapsto \partial_t F(x,\cdot)$ is an analytic map $\mathcal{U} \to \mathcal{L}(L^1[0,T], \mathcal{Y})$.
	\item Conversely, if $f: \mathcal{U} \to \mathcal{L}(L^1[0,T], \mathcal{Y})$ is analytic, then we can define a locally Lipschitz family of analytic functions $F(x,t)$ by
	\begin{equation*}
	F(x,t) = \int_0^t f(x,s)\,ds.
	\end{equation*}
\end{enumerate}
\end{lemma}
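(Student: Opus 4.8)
The strategy is to check each statement directly against Definition~\ref{def:analytic}, using the power-series machinery of Theorem~\ref{thm:analytic} together with the dictionary between Lipschitz paths and elements of $\mathcal{L}(L^1[0,T],\mathcal{Y})$ from Observation~\ref{obs:weakderivative}. I would dispose of part~(2) first, as it is the softer direction. Writing $F(x,t)=\int_0^t f(x,s)\,ds:=f(x,\cdot)[\chi_{[0,t]}]$, for each fixed $t$ the map $F(\cdot,t)$ is the composition of the analytic map $f$ with the bounded linear (hence analytic) evaluation $\rho\mapsto\rho[\chi_{[0,t]}]$ on $\mathcal{L}(L^1[0,T],\mathcal{Y})$, so $F(\cdot,t)$ is analytic. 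For the time regularity, fix $x_0$, choose $r>0$ with $\norm{f(x,\cdot)}_{\mathcal{L}(L^1,\mathcal{Y})}\le M$ on $B_{\mathcal{X}}(x_0,r)$, and note that for $x$ in that ball and $s<t$ one has $\norm{F(x,t)-F(x,s)}=\norm{f(x,\cdot)[\chi_{(s,t]}]}\le M(t-s)$; hence $F$ is a locally Lipschitz family. Comparing the two sides on indicators $\chi_{[a,b]}$ shows in addition that $\partial_t F(x,\cdot)=f(x,\cdot)$, so that (1) and (2) are mutually inverse once the value at $t=0$ is fixed.

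For part~(1), fix $x_0\in\mathcal{U}$. First I would pick $\rho>0$ and $M,L<\infty$ so that $F$ is bounded by $M$ and $L$-Lipschitz in $t$ on $B_{\mathcal{X}}(x_0,\rho)\times[0,T]$; this follows from local boundedness of the analytic map $F(\cdot,0)$ on a small ball together with the locally Lipschitz hypothesis, via $\norm{F(x,t)}\le\norm{F(x,0)}+LT$. Local boundedness of $x\mapsto\partial_t F(x,\cdot)$ is then immediate from Observation~\ref{obs:weakderivative}, which identifies $\norm{\partial_t F(x,\cdot)}_{\mathcal{L}(L^1,\mathcal{Y})}$ with the Lipschitz seminorm of $F(x,\cdot)$, namely $\le L$ for $x\in B_{\mathcal{X}}(x_0,\rho)$. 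It remains to produce, for each $h\in\mathcal{X}$, a convergent power series for $\zeta\mapsto\partial_t F(x_0+\zeta h,\cdot)$ near $\zeta=0$ in the Banach space $\mathcal{L}(L^1[0,T],\mathcal{Y})$; this supplies the complex G{\^a}teaux derivative of $x\mapsto\partial_t F(x,\cdot)$ at $x_0$ and hence analyticity by Definition~\ref{def:analytic}.

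To build this expansion, for $|\zeta|<\rho/\norm{h}$ let $c_k(t)\in\mathcal{Y}$ be defined by $F(x_0+\zeta h,t)=\sum_{k\ge0}c_k(t)\zeta^k$, the local power series \eqref{eq:powerseries} of $F(\cdot,t)$ along the complex line through $x_0$ in direction $h$. Applying the Cauchy estimate \eqref{eq:Cauchyestimate} to the analytic function $\zeta\mapsto F(x_0+\zeta h,t)-F(x_0+\zeta h,t')$, which is bounded by $L|t-t'|$ on $B_{\C}(0,\rho/\norm{h})$, yields $\norm{c_k(t)-c_k(t')}\le L\norm{h}^k|t-t'|/\rho^k$; in particular each $c_k\colon[0,T]\to\mathcal{Y}$ is Lipschitz, so $\partial_t[c_k(\cdot)]\in\mathcal{L}(L^1[0,T],\mathcal{Y})$ exists with norm $\le L\norm{h}^k/\rho^k$ by Observation~\ref{obs:weakderivative}. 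Therefore $\sum_{k\ge0}\zeta^k\,\partial_t[c_k(\cdot)]$ converges absolutely in $\mathcal{L}(L^1[0,T],\mathcal{Y})$ for $|\zeta|<\rho/\norm{h}$, and since it agrees with $\partial_t F(x_0+\zeta h,\cdot)$ on every indicator $\chi_{[a,b]}$ (using the pointwise-in-$t$ power series for $F$), the uniqueness clause of Observation~\ref{obs:weakderivative} forces the two to coincide. Thus $\zeta\mapsto\partial_t F(x_0+\zeta h,\cdot)$ is analytic near $0$, completing part~(1).

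The main obstacle is exactly this last point: one must pass from the trivial fact that $\partial_t F(x,\cdot)[\phi]$ depends analytically on $x$ for each fixed $\phi\in L^1[0,T]$ to genuine analyticity as a map valued in the operator space $\mathcal{L}(L^1[0,T],\mathcal{Y})$, and it is the uniformity in $t$ of the Cauchy estimates — which forces the Taylor coefficients $c_k(t)$ to be Lipschitz in $t$ with geometrically decaying constants — that makes the $\mathcal{L}(L^1,\mathcal{Y})$-valued power series converge in norm. Everything else is routine bookkeeping with the definitions.
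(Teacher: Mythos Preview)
Your proof is correct. For part~(2) both you and the paper treat this as routine, deferring to Observation~\ref{obs:weakderivative}. For part~(1), local boundedness is handled identically, but the arguments for G\^ateaux differentiability diverge in style. The paper argues more tersely: it observes that for each fixed $\phi\in L^1[0,T]$ the map $x\mapsto \partial_t F(x,\cdot)[\phi]$ is analytic $\mathcal{U}\to\mathcal{Y}$ (obvious for step functions $\phi$, then by approximation using the uniform bound $L$), and asserts this suffices. Implicitly this invokes the weak-to-strong principle underlying Theorem~\ref{thm:analytic}: a locally bounded map into a Banach space is analytic once its composition with a norming family of functionals is. You instead bypass that principle by constructing the $\mathcal{L}(L^1,\mathcal{Y})$-valued Taylor series directly, applying the Cauchy estimate to the difference $F(\cdot,t)-F(\cdot,t')$ to force the coefficients $c_k(t)$ to be Lipschitz with geometrically decaying constants, and then assembling $\sum_k\zeta^k\,\partial_t[c_k(\cdot)]$. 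Your route is more self-contained and makes explicit exactly the uniformity that the paper's one-line reduction hides; the paper's route is shorter but leans on a general fact that is only implicit in the proof of Theorem~\ref{thm:analytic}.
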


\begin{proof}
For (1), to check that $x \mapsto \partial_t F(x,\cdot)$ is locally bounded, suppose $x_0 \in \mathcal{X}$.  Then there exist $L$ and $R > 0$ such that $\norm{F(x,s) - F(x,t)} \leq L|s - t|$ for $\norm{x - x_0} \leq R$. This implies that $\norm{\partial_t F(x,t)}_{\mathcal{L}(L^1[0,T],\mathcal{Y})} \leq L$.

To prove analyticity of $x \mapsto \partial_t F(x,\dot)$, it suffices to show that $\int_0^T \phi(t) \partial_t F(x,t)\,dt$ is an analytic function $\mathcal{X} \to \mathcal{Y}$ for each $\phi \in L^1[0,T]$.  This clearly holds when $\phi$ is a step function, and hence it holds for all $\phi \in L^1[0,T]$ by approximation.

The verification of (2) is left to the reader (see Observation \ref{obs:weakderivative}).
\end{proof}

\begin{lemma}[Mixed Partials]
Let $F: \mathcal{U} \times [0,T] \to \mathcal{Y}$ be a locally Lipschitz family and let $h \in \mathcal{X}$.  Then $\delta^k F(x,t;h)$ is a locally Lipschitz family and we have $\partial_t \delta^k F(\cdot,\cdot;h) = \delta^k \partial_t F(\cdot,\cdot;h)$ for every $k > 0$.
\end{lemma}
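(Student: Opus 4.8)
The goal is to prove the Mixed Partials lemma: for a locally Lipschitz family $F$ and $h \in \mathcal{X}$, the function $\delta^k F(x,t;h)$ is again a locally Lipschitz family and $\partial_t \delta^k F = \delta^k \partial_t F$ (distributionally in $t$).

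\textbf{Plan.} The plan is to reduce everything to the case $k=1$ and to the defining property of the distributional time derivative, namely its action on indicator functions $\chi_{[a,b]}$ (Observation \ref{obs:weakderivative}). First I would verify that $(x,t) \mapsto \delta^k F(x,t;h)$ is itself a locally Lipschitz family of analytic maps: analyticity in $x$ for fixed $t$ is immediate since $\delta^k F(\cdot,t;h)$ is obtained from the analytic function $F(\cdot,t)$ by a complex Gâteaux derivative (and $\delta^k F(\cdot,t;h)$ is analytic because it is a coefficient in the local power series \eqref{eq:powerseries}, or by differentiating under the integral in the Cauchy integral formula). For the Lipschitz-in-$t$ bound, fix $x_0$ and pick $r$, $L$, $R$ with $r < R/2$ so that $F$ is bounded by some $M$ on $B_{\mathcal{X}}(x_0,R) $ for every $t$ and $L$-Lipschitz in $t$ on $B_{\mathcal{X}}(x_0,r)$. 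Applying the Cauchy-type estimate \eqref{eq:aprioriLipschitz2} — but with the roles of the space and time variables arranged so that the ``perturbation'' is $F(x,s) - F(x,t)$ — gives
\begin{equation*}
\tfrac{1}{k!}\norm{\delta^k F(x,s;h) - \delta^k F(x,t;h)} \leq \frac{L|s-t|\,\norm{h}^k}{(R-2r)(R-r)^k},
\end{equation*}
which is the required local Lipschitz bound (here I would apply \eqref{eq:Cauchyestimate}/\eqref{eq:aprioriLipschitz2} to the analytic function $x \mapsto F(x,s) - F(x,t)$ on $B_{\mathcal{X}}(x_0, R-r)$, which is bounded by $L|s-t|$ there). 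This already shows $\delta^k F$ has a distributional time derivative $\partial_t \delta^k F(\cdot,\cdot;h) \in \mathcal{L}(L^1[0,T], \mathcal{Y})$.

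\textbf{Identifying the two distributions.} By Lemma \ref{lem:smallerror}, two elements of $\mathcal{L}(L^1[0,T], \mathcal{Y})$ agree once they agree on all indicators $\chi_{[a,b]}$ up to $o(|b-a|)$; in fact here I can just check exact equality on indicators. On one hand, by the definition of $\partial_t$ applied to the family $\delta^k F(\cdot,\cdot;h)$,
\begin{equation*}
(\partial_t \delta^k F(x,\cdot;h))[\chi_{[a,b]}] = \delta^k F(x,b;h) - \delta^k F(x,a;h).
\end{equation*}
On the other hand, $\delta^k \partial_t F(x,\cdot;h)$ means: take the analytic map $g: x \mapsto \partial_t F(x,\cdot) \in \mathcal{L}(L^1[0,T],\mathcal{Y})$ (analytic by the Differentiation and Integration lemma), form its $k$-th Gâteaux derivative $\delta^k g(x;h) \in \mathcal{L}(L^1[0,T],\mathcal{Y})$, and evaluate at $\chi_{[a,b]}$. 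Since $\phi \mapsto (\cdot)[\phi]$ is, for fixed $\phi$, a bounded linear functional on $\mathcal{L}(L^1[0,T],\mathcal{Y})$ with values in $\mathcal{Y}$, and Gâteaux differentiation commutes with bounded linear maps, we get
\begin{equation*}
(\delta^k g(x;h))[\chi_{[a,b]}] = \delta^k_{\,} \big( g(x;h)[\chi_{[a,b]}] \big) = \delta^k\big( F(x,b) - F(x,a) \big)(\,\cdot\,;h) = \delta^k F(x,b;h) - \delta^k F(x,a;h),
\end{equation*}
using that $g(x)[\chi_{[a,b]}] = F(x,b)-F(x,a)$ and that $\delta^k$ is linear in the function being differentiated. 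The two expressions coincide on every indicator, so by Lemma \ref{lem:smallerror} the two distributions are equal.

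\textbf{Main obstacle.} The only real subtlety is making the interchange ``$\delta^k$ commutes with evaluation at $\chi_{[a,b]}$'' airtight: this needs that Gâteaux derivatives commute with bounded linear operators between Banach spaces (here the evaluation map $\mathcal{L}(L^1,\mathcal{Y}) \to \mathcal{Y}$), which follows directly from Definition \ref{def:analytic} and Theorem \ref{thm:analytic} applied after composing with functionals $y^* \in \mathcal{Y}^*$, together with the fact (Differentiation and Integration lemma) that $g(x) = \partial_t F(x,\cdot)$ is genuinely analytic into $\mathcal{L}(L^1[0,T],\mathcal{Y})$ so that $\delta^k g(x;h)$ is defined in the first place. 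The induction on $k$ is then trivial, either reducing $\delta^k$ to iterating $\delta^1$, or just noting all arguments above already handle general $k$ verbatim. No genuinely hard estimate is required beyond \eqref{eq:aprioriLipschitz2}, which is quoted from the preceding material.
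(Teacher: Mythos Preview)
Your proposal is correct and follows essentially the same approach as the paper: verify that $\delta^k F$ is a locally Lipschitz family via the Cauchy estimates from Theorem \ref{thm:analytic}, then check that the two distributions agree on indicators $\chi_{[a,b]}$ and extend by density to all of $L^1[0,T]$. Your write-up simply fleshes out the details (the explicit Lipschitz bound and the observation that $\delta^k$ commutes with the bounded linear evaluation map at $\chi_{[a,b]}$) that the paper leaves implicit.
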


\begin{proof}
The fact that $\delta^kF(x,t;h)$ is a locally Lipschitz family follows from Theorem \ref{thm:analytic}.  The equality
\begin{equation}
\int_0^T \phi \partial_t D^k F\,dt = \int_0^T \phi D^k \partial_t F\,dt
\end{equation}
is immediate when $\phi$ is the indicator function of an interval, hence holds when $\phi$ is a step function, and therefore it holds for every $\phi \in L^1[0,T]$ by density.
\end{proof}

\begin{lemma}[Chain Rule] \label{lem:chainrule}
Let $\mathcal{X}$, $\mathcal{Y}$, $\mathcal{Z}$ be Banach spaces and let $\mathcal{U} \subseteq \mathcal{X}$ and $\mathcal{V} \subseteq \mathcal{Y}$ be open.  Let $F: \mathcal{U} \times [0,T] \to \mathcal{V} \subseteq \mathcal{Y}$ and $G: \mathcal{V} \times [0,T] \to \mathcal{Z}$ be locally Lipschitz families.  Then $F(G(x,t),t)$ is a locally Lipschitz family.  Moreover,
\begin{equation} \label{eq:chainrule}
\partial_t [F(G(x,t),t))] = DF(G(x,t),t)[\partial_t G(x,t)] + \partial_t F(G(x,t),t).
\end{equation}
Here $DF(G(x,t),t)[\partial_t G(x,t)]$ is given by Definition \ref{def:multiplication} with $A(t) = DF(G(x,t),t)$ and $\rho(t) = \partial_t F(x,t)$.  The other term $\partial_t F(G(x,t),t)$ is given by Definition \ref{def:composition} by taking $W$ to be an appropriate open subset of $\mathcal{V}$ and setting $w(t) = G(x,t)$.
\end{lemma}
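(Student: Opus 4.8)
The plan is to treat $F(G(x,t),t)$ first as a locally Lipschitz family in its own right, so that its distributional time derivative is a well-defined element of $\mathcal{L}(L^1[0,T],\mathcal{Z})$, and then to identify that derivative with the right-hand side of \eqref{eq:chainrule} via the uniqueness criterion of Lemma~\ref{lem:smallerror}: it suffices to show that, \emph{uniformly in $a$} as $b-a\to 0$, the increment $F(G(x,b),b)-F(G(x,a),a)$ agrees to order $o(|b-a|)$ with the value of the proposed right-hand side on $\chi_{[a,b]}$. Throughout I would fix $x$, write $g(t)=G(x,t)$ (an $L_G$-Lipschitz path into the open set where $F(\cdot,t)$ is defined, with $L_G$ the local Lipschitz-in-$t$ constant of $G$ near $x$), and set $K=g([0,T])$, which is compact.

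\textbf{The composite is a locally Lipschitz family.} A routine continuity-and-compactness argument --- cover $K$ by balls on which $F$ is bounded uniformly in $t$ (for each center, $F(\cdot,t)$ is locally bounded for each $t$ and $F$ is Lipschitz in $t$), then extract a finite subcover --- yields $\rho,M>0$ and a neighborhood of $x$ so that $G(x',t)$ lies in the open $\rho$-neighborhood $N$ of $K$ for all $x'$ in that neighborhood and all $t$, with $F(\cdot,t)$ bounded by $M$ on $N$ uniformly in $t$. By \eqref{eq:aprioriLipschitz}, $F(\cdot,t)$ is then Lipschitz in the space variable on a smaller neighborhood with a constant $C$ independent of $t$, so
\[
\norm{F(g(s),s)-F(g(t),t)}\le\norm{F(g(s),s)-F(g(t),s)}+\norm{F(g(t),s)-F(g(t),t)}\le(CL_G+L_F)|s-t|,
\]
where $L_F$ is a uniform Lipschitz-in-$t$ constant for $F$ near $K$; together with analyticity of $F(G(\cdot,t),t)$ as a composition of analytic maps this gives the claim. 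The two right-hand terms of \eqref{eq:chainrule} are also legitimate: using \eqref{eq:aprioriLipschitz2} at $k=1$ (uniform in $t$) and $\partial_t\,\delta F=\delta\,\partial_t F$ together with the Cauchy estimate for the analytic map $y\mapsto\partial_t F(y,\cdot)$, one checks that $t\mapsto DF(g(t),t)$ is Lipschitz in operator norm, hence lies in $L_{\Boch}^\infty([0,T],\mathcal{L}(\mathcal{Y},\mathcal{Z}))$ and Definition~\ref{def:multiplication} applies; and $y\mapsto\partial_t F(y,\cdot)$ is analytic into $\mathcal{L}(L^1[0,T],\mathcal{Z})$, hence continuous, so Definition~\ref{def:composition} applies with $w(t)=g(t)$.

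\textbf{The increment estimate.} I would split $F(g(b),b)-F(g(a),a)=[F(g(b),b)-F(g(a),b)]+[F(g(a),b)-F(g(a),a)]$. By Observation~\ref{obs:weakderivative} the second bracket is exactly $\partial_t F(g(a),\cdot)[\chi_{[a,b]}]$; unwinding Definition~\ref{def:composition}, $\partial_t F(G(x,t),t)$ is the diagonal restriction of $(s,t)\mapsto\partial_t F(g(s),\cdot)(t)$, so its value on $\chi_{[a,b]}$ differs from $\partial_t F(g(a),\cdot)[\chi_{[a,b]}]$ by the diagonal restriction of $(s,t)\mapsto\partial_t F(g(s),\cdot)(t)-\partial_t F(g(a),\cdot)(t)$, which by \eqref{eq:stupidestimate2} has norm at most $\int_a^b\norm{\partial_t F(g(t),\cdot)-\partial_t F(g(a),\cdot)}_{\mathcal{L}(L^1,\mathcal{Z})}\,dt\le(b-a)\,\omega(L_G(b-a))$, where $\omega$ is a modulus of uniform continuity of $y\mapsto\partial_t F(y,\cdot)$ on $K$; this is $o(|b-a|)$ uniformly in $a$. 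For the first bracket, complex Fréchet differentiability of $F(\cdot,b)$ at $g(a)$ together with the uniform quadratic remainder from \eqref{eq:Cauchyestimate}--\eqref{eq:powerseries} on $N$ gives $F(g(b),b)-F(g(a),b)=DF(g(a),b)[g(b)-g(a)]+O(|b-a|^2)$ (using $\norm{g(b)-g(a)}\le L_G|b-a|$), and switching the time argument from $b$ to $a$ costs $\norm{DF(g(a),b)-DF(g(a),a)}_{\mathcal{L}(\mathcal{Y},\mathcal{Z})}\norm{g(b)-g(a)}=O(|b-a|^2)$ by the operator-norm Lipschitz bound above. Finally $g(b)-g(a)=\partial_t G(x,\cdot)[\chi_{[a,b]}]$, and unwinding Definition~\ref{def:multiplication} shows that $\bigl(DF(G(x,t),t)[\partial_t G(x,t)]\bigr)[\chi_{[a,b]}]-DF(g(a),a)[g(b)-g(a)]$ is the diagonal restriction of $(s,t)\mapsto\bigl(DF(g(s),s)-DF(g(a),a)\bigr)\circ\partial_t G(x,\cdot)(t)$ evaluated on $\chi_{[a,b]}$, which by \eqref{eq:stupidestimate} is at most $\norm{\partial_t G(x,\cdot)}_{\mathcal{L}(L^1,\mathcal{Y})}\int_a^b\norm{DF(g(t),t)-DF(g(a),a)}_{\mathcal{L}(\mathcal{Y},\mathcal{Z})}\,dt=O(|b-a|^2)$. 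Adding the two brackets, $F(g(b),b)-F(g(a),a)$ equals $\bigl(DF(G(x,t),t)[\partial_t G(x,t)]+\partial_t F(G(x,t),t)\bigr)[\chi_{[a,b]}]$ up to an error that is $o(|b-a|)$ uniformly in $a$, so Lemma~\ref{lem:smallerror} yields \eqref{eq:chainrule}; analyticity of $x\mapsto\partial_t[F(G(x,t),t)]$ then follows from the differentiation-and-integration lemma above.

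\textbf{Expected main obstacle.} I do not expect the pointwise Fréchet chain rule to be the hard part. The delicate point is the bookkeeping in the last step: correctly unwinding Definitions~\ref{def:multiplication} and~\ref{def:composition} so that the two distributional terms, tested on a short interval $[a,b]$, reduce to the ``frozen-coefficient'' expressions $DF(g(a),a)[g(b)-g(a)]$ and $\partial_t F(g(a),\cdot)[\chi_{[a,b]}]$ with errors governed by \eqref{eq:stupidestimate}, and then verifying that \emph{every} error term is $o(|b-a|)$ \emph{uniformly in $a$}, as Lemma~\ref{lem:smallerror} demands. This uniformity is precisely where the operator-norm Lipschitz continuity of $t\mapsto DF(g(t),t)$ and the uniform continuity of $y\mapsto\partial_t F(y,\cdot)$ on the compact set $K$ are essential; a subsidiary nuisance is the uniform-in-$t$ boundedness of $F$ near $K$ used in the first step, which needs the compactness argument rather than a bare appeal to local boundedness.
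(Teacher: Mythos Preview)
Your proposal is correct and follows essentially the same approach as the paper: establish that the composite is a locally Lipschitz family via compactness and the a priori Lipschitz estimate \eqref{eq:aprioriLipschitz}, split the increment on $[a,b]$ into a space piece and a time piece, reduce each to the corresponding ``frozen-coefficient'' expression with small error using the diagonal-restriction estimates \eqref{eq:stupidestimate}--\eqref{eq:stupidestimate2}, and conclude via Lemma~\ref{lem:smallerror}. The only cosmetic differences are that the paper freezes $DF$ at $(G(x_0,a),b)$ rather than $(g(a),a)$ before comparing, and uses the local Lipschitz bound for $y\mapsto\partial_t F(y,\cdot)$ (from analyticity) to get $O(|b-a|^2)$ where you invoke only a modulus of continuity to get $o(|b-a|)$; either suffices.
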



\begin{proof}
Using the a priori Lipschitz estimate for analytic functions \eqref{eq:aprioriLipschitz} together with equation \eqref{eq:LLF}, we see that for each $x_0 \in \mathcal{X}$, there exist $R$, $L_1$, and $L_2 > 0$ such that
\begin{multline}
\norm{F(x,t) - F(x',t')} \leq L_1\norm{x - x'} + L_2|t - t'| \\
\text{ for } x, x' \in B_\mathcal{X}(x_0,R) \text{ and } t, t' \in [0,T].
\end{multline}
From here, a straightforward argument using compactness of $[0,T]$ shows that there exist constants such that
\begin{multline}
\norm{F(G(x,s),t) - F(G(x',s'),t')} \leq L_1^* \norm{x - x'} + L_2^* |s - s'| + L_3^*|t - t'| \\ \text{ for } x, x' \in B_\mathcal{X}(x_0,R^*).
\end{multline}
In particular, by taking $s = t$ and $s' = t'$, we see that $F(G(x,t),t)$ is a locally Lipschitz family.

Now let us prove the chain rule identity \eqref{eq:chainrule}.  Fix $x_0$ and consider an interval $[a,b] \subseteq [0,T]$.  Then 
\begin{align}
F&(G(x_0,b),b) - F(G(x_0,a),a) \nonumber \\
=& [F(G(x_0,b),b) - F(G(x_0,a),b)] + [F(G(x_0,a),b) - F(G(x_0,a),a)] \nonumber \\
=& DF(G(x_0,a),b)[G(x_0,b) - G(x_0,a)] + [F(G(x_0,a),b) - F(G(x_0,a),a)] \nonumber \\
&\qquad + O(|b - a|^2)
\end{align}
In other words,
\begin{align}
\int_a^b &\partial_t [F(G(x_0,t),t))]\,dt \nonumber \\
&= \int_a^b DF(G(x_0,a),b) \partial_t G(x_0,t)\,dt + \int_a^b \partial_t F(G(x_0,a),t)\,dt + O(|b - a|^2). \label{eq:chainrulecomputation1}
\end{align}
Consider the first term on the right-hand side of \eqref{eq:chainrulecomputation1}.  We showed earlier that $F(G(x,s),t)$ is Lipschitz with respect to $(s,t)$ for $x$ in an open neighborhood of $x_0$, and the same holds for $DF(G(x,s),t)$ by \eqref{eq:Cauchyestimate}.  Therefore,
\begin{equation}
\sup_{t \in [a,b]} \norm{DF(G(x,a),b) - DF(G(x,t),t)} = O(|b - a|).
\end{equation}
By Observation \ref{obs:multiplicationproperties}, this implies
\begin{equation} \label{eq:chainrulecomputation2}
\int_a^b DF(G(x_0,a),b) \partial_t G(x_0,t)\,dt = \int_a^b DF(G(x_0,t),t) \partial_t G(x_0,t)\,dt + O(|b - a|^2),
\end{equation}
where the error bound comes from multiplying $O(|b - a|)$ by $\norm{\chi_{[a,b]}}_{L^1}$.

Now consider the second term on the right hand side of \eqref{eq:chainrulecomputation1}.  Because $\partial_t G$ is an analytic function $\mathcal{V} \to \mathcal{L}(L^1[0,T],\mathcal{Z})$, we have
\begin{equation}
\norm{\partial_t F(y, \cdot) - \partial_t F(y',\cdot)}_{\mathcal{L}(L^1[0,T],\mathcal{Z})} \leq C \norm{y - y'}
\end{equation}
for $y$ and $y'$ in an open neighborhood $\mathcal{W}$ of $F(x_0,a)$.  Thus, by applying Observation \ref{obs:compositionerror} on the interval $[a,b]$, we have that if $b$ is sufficiently small, then for all $t \in [a,b]$,
\begin{align}
\norm{\partial_t F(G(x_0,t),t) - \partial_t F(G(x_0,a),t)}_{\mathcal{L}(L^1[a,b],\mathcal{Z})_{dt}} &\leq C \sup_{t \in [a,b]} \norm{G(x_0,t) - G(x_0,a)} \nonumber \\
&= O(|b - a|),
\end{align}
where the error estimate is independent of $t$ and only depends on $x_0$, $a$, $F$, and $G$.  Hence,
\begin{equation}
\int_a^b \partial_t F(G(x_0,a),t)\,dt = \int_a^b \partial_t F(G(x_0,t),t)\,dt + O(|b - a|^2).  \label{eq:chainrulecomputation3}
\end{equation}

Overall, by substituting \eqref{eq:chainrulecomputation2} and \eqref{eq:chainrulecomputation3} into \eqref{eq:chainrulecomputation1}, we have
\begin{align}
\int_a^b &\partial_t [F(G(x_0,t),t))]\,dt \nonumber \\
&= \int_a^b DF(G(x_0,t),t) \partial_t G(x_0,t)\,dt + \int_a^b \partial_t F(G(x_0,t),t)\,dt + O(|b - a|^2).
\end{align}
By Lemma \ref{lem:smallerror}, the chain rule identity holds.
\end{proof}

Next, we will check that the two terms on the right hand side of the chain rule equation \eqref{eq:chainrule} are analytic functions $\mathcal{U} \to \mathcal{L}(L^1[0,T], \mathcal{Z})$.

\begin{lemma} \label{lem:multiplicationanalytic}
Let $g$ be an analytic function $\mathcal{U} \to \mathcal{L}(L^1[0,T], \mathcal{Y})$, and let $A: \mathcal{U} \times [0,T] \to \mathcal{L}(\mathcal{Y}, \mathcal{Z})$ be a locally Lipschitz analytic family.  Then $A(x,t) g(x,t)$ is an analytic function $\mathcal{U} \to \mathcal{L}(L^1[0,T], \mathcal{Z})$.
\end{lemma}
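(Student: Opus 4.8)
The plan is to realize $x \mapsto A(x,\cdot)g(x,\cdot)$ as a locally uniform limit of analytic maps $\mathcal{U} \to \mathcal{L}(L^1[0,T],\mathcal{Z})$ and then invoke Lemma~\ref{lem:analyticlimit}. Recall that by Definition~\ref{def:multiplication} the element $A(x,\cdot)g(x,\cdot)$ is the diagonal restriction $\diag R(x)$ of the function $R(x)(s,t) = A(x,s)g(x,t)$, whose slice at fixed $s$ is $R(x)(s,\cdot) = A(x,s)\circ g(x,\cdot) \in \mathcal{L}(L^1[0,T],\mathcal{Z})$ and which lies in $L_{\Boch}^\infty([0,T],\mathcal{L}(L^1[0,T],\mathcal{Z}))$. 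The point requiring care — and the reason for the approximation below — is that the diagonal restriction is not obviously analyticity-preserving, so we replace the $s$-dependence of $A(x,s)$ by a step function, which turns $\diag R(x)$ into a finite algebraic combination of manifestly analytic pieces.

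First I would fix $x_0 \in \mathcal{U}$ and choose $r > 0$ with $B_{\mathcal{X}}(x_0,r) \subseteq \mathcal{U}$ together with constants $L, M, M'$ such that for every $x \in B_{\mathcal{X}}(x_0,r)$ the function $t \mapsto A(x,t)$ is $L$-Lipschitz on $[0,T]$ and bounded in norm by $M'$, while $\norm{g(x,\cdot)}_{\mathcal{L}(L^1[0,T],\mathcal{Y})} \le M$; such constants exist because $A$ is a locally Lipschitz analytic family and $g$, being analytic, is locally bounded. For $N \ge 1$, partition $[0,T]$ into intervals $E_k^N = [(k-1)T/N, kT/N)$ for $k = 1, \dots, N$ (adjoining the endpoint $T$ to $E_N^N$), put $s_k^N = (k-1)T/N$, and define the simple function $R_N(x) \in L_{\Boch}^\infty([0,T],\mathcal{L}(L^1[0,T],\mathcal{Z}))$ by $R_N(x)(s,\cdot) = A(x,s_k^N) \circ g(x,\cdot)$ for $s \in E_k^N$. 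By \eqref{eq:diagsimple} we then have $(\diag R_N(x))[\phi] = \sum_{k=1}^N A(x,s_k^N)\bigl(g(x,\cdot)[\chi_{E_k^N}\phi]\bigr)$ for $\phi \in L^1[0,T]$; equivalently, $\diag R_N(x) = \sum_{k=1}^N A(x,s_k^N) \circ \rho_k(x)$, where $\rho_k(x) = g(x,\cdot)\circ M_{E_k^N} \in \mathcal{L}(L^1[0,T],\mathcal{Y})$ and $M_{E_k^N}$ denotes multiplication by $\chi_{E_k^N}$ on $L^1[0,T]$.

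The next step is to check that $x \mapsto \diag R_N(x)$ is analytic $\mathcal{U} \to \mathcal{L}(L^1[0,T],\mathcal{Z})$ for each fixed $N$. It is a finite sum, and in the $k$-th summand: $x \mapsto A(x,s_k^N)$ is analytic into $\mathcal{L}(\mathcal{Y},\mathcal{Z})$ since $A(\cdot,s_k^N)$ is analytic; $x \mapsto \rho_k(x)$ is analytic into $\mathcal{L}(L^1[0,T],\mathcal{Y})$, being the composition of the analytic map $g$ with the bounded linear map $\sigma \mapsto \sigma \circ M_{E_k^N}$; hence $x \mapsto (A(x,s_k^N),\rho_k(x))$ is analytic into the product $\mathcal{L}(\mathcal{Y},\mathcal{Z}) \times \mathcal{L}(L^1[0,T],\mathcal{Y})$; and the $k$-th summand is the image of this map under the bounded bilinear composition map $(B,\sigma) \mapsto B \circ \sigma$ into $\mathcal{L}(L^1[0,T],\mathcal{Z})$. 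Since bounded multilinear maps between Banach spaces are analytic (immediate from Definition~\ref{def:analytic}) and compositions of analytic maps are analytic (proved earlier in this section), each summand, and therefore $\diag R_N$, is analytic.

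Finally I would establish the locally uniform convergence. By linearity of $\diag$ and the bound \eqref{eq:stupidestimate0}, for $x \in B_{\mathcal{X}}(x_0,r)$ we have $\norm{A(x,\cdot)g(x,\cdot) - \diag R_N(x)}_{\mathcal{L}(L^1[0,T],\mathcal{Z})} = \norm{\diag(R(x) - R_N(x))} \le \norm{R(x) - R_N(x)}_{L_{\Boch}^\infty}$; and for $s \in E_k^N$ the slice $R(x)(s,\cdot) - R_N(x)(s,\cdot) = (A(x,s) - A(x,s_k^N))\circ g(x,\cdot)$ has norm at most $L\,|s - s_k^N|\,M \le LMT/N$, so the right-hand side is $\le LMT/N \to 0$ uniformly in $x \in B_{\mathcal{X}}(x_0,r)$. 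The same slice estimate gives $\sup_N \norm{\diag R_N(x)} \le \sup_N \norm{R_N(x)}_{L_{\Boch}^\infty} \le M'M$ on $B_{\mathcal{X}}(x_0,r)$, so $\sup_N \norm{\diag R_N}$ is locally bounded. Since $x_0 \in \mathcal{U}$ was arbitrary, Lemma~\ref{lem:analyticlimit}, applied with the target Banach space $\mathcal{L}(L^1[0,T],\mathcal{Z})$, shows that $x \mapsto A(x,\cdot)g(x,\cdot)$ is analytic. The main obstacle is precisely the first sentence of the last paragraph: controlling $\diag$ through the approximation, which is where the operator-norm estimate \eqref{eq:stupidestimate0} on $\diag$ and the Lipschitz control on $t \mapsto A(x,t)$ are used in tandem.
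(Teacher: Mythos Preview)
Your proof is correct and follows essentially the same route as the paper: freeze the $t$-dependence of $A$ on a partition to get a step-function approximant whose diagonal is a finite sum of manifestly analytic pieces, estimate the error via the Lipschitz bound on $A(x,\cdot)$ and the operator bound on $\diag$, and pass to the limit using Lemma~\ref{lem:analyticlimit}. Your justification of the analyticity of each approximant (via bounded bilinear composition) is in fact a bit more explicit than the paper's, which simply asserts that $\int_0^T h_P(x,t)\phi(t)\,dt$ is analytic for each $\phi$ and deduces analyticity into $\mathcal{L}(L^1[0,T],\mathcal{Z})$ from there.
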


\begin{proof}
Fix $x_0 \in \mathcal{U}$.  Then there exists $R > 0$ such that for $x \in B_\mathcal{X}(x_0,R)$ we have
\begin{align}
\norm{A(x,t)} &\leq C_1 \\
\norm{A(x,t) - A(x,t')} &\leq L|t - t'| \\
\norm{g(x,\cdot)}_{\mathcal{L}(L^1[0,T], \mathcal{Z})} &\leq C_2.
\end{align}
Consider a partition $P$ given by $0 = t_0 < t_1 < \dots < t_m = T$, and let
\begin{equation}
h_P(x,t) = \sum_{j=1}^m A(x,t_{j-1}) \chi_{[t_{j-1},t_j)}(t) g(x,t)
\end{equation}
is a function $B_\mathcal{X}(x_0,R) \to \mathcal{L}(L^1[0,T], \mathcal{Z})$ which is bounded by $C_1C_2$. One checks easily that $\int_0^T h_P(x,t) \phi(t)\,dt$ is analytic for each $\phi \in L^1[0,T]$ and hence $x \mapsto h_P(x,\cdot)$ is analytic.  On the other hand, we have
\[
\norm{A(x,t) g(x,t) - h_P(x,t)}_{\mathcal{L}(L^1[0,T], \mathcal{Z})_{dt}} \leq LC_2 \max |t_j - t_{j-1}|,
\]
which can be made arbitrarily small by choosing a fine enough partition.  Thus, $A(x,t) g(x,t)$ is the uniform limit of a sequence of analytic functions on $B_\mathcal{X}(x_0,R)$ and hence is analytic.
\end{proof}

\begin{lemma} \label{lem:compositionanalytic}
Let $G: \mathcal{U} \times [0,T] \to \mathcal{V} \subseteq \mathcal{Y}$ be a locally Lipschitz analytic family, and let $\theta: \mathcal{V} \to \mathcal{L}(L^1[0,T], \mathcal{Z})$ be analytic.  Then $\theta(G(x,t),t)$ defines an analytic function $\mathcal{U} \to \mathcal{L}(L^1[0,T], \mathcal{Z})$.
\end{lemma}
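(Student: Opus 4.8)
The plan is to reduce Lemma \ref{lem:compositionanalytic} to the combination of Definition \ref{def:composition} (which already makes sense of $\theta(G(x,t),t)$ as the diagonal restriction of $R(s,t) = \theta(G(x,s),t)$, since $s \mapsto G(x,s)$ is continuous into $\mathcal{V}$ and $\theta$ is continuous into $\mathcal{L}(L^1[0,T],\mathcal{X})$ by analyticity) together with an approximation by simple functions in $s$, exactly as in the proof of Lemma \ref{lem:multiplicationanalytic}. The point is that analyticity in $x$ is a ``one-dimensional'' condition: by the scalar Weierstrass-type argument behind Lemma \ref{lem:analyticlimit}, it suffices to exhibit $x \mapsto \theta(G(x,t),t)$ as a locally uniform limit of analytic maps $\mathcal{U} \to \mathcal{L}(L^1[0,T],\mathcal{Z})$, and to check that $\int_0^T \phi(t)\,\theta(G(x,t),t)\,dt$ is analytic $\mathcal{U} \to \mathcal{Z}$ for each $\phi \in L^1[0,T]$.

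First I would fix $x_0 \in \mathcal{U}$ and choose $R > 0$ so small that $G(B_{\mathcal{X}}(x_0,R) \times [0,T])$ lies in a set on which $\theta$ is bounded, say $\norm{\theta(\cdot,\cdot)}_{\mathcal{L}(L^1[0,T],\mathcal{Z})} \le C$, and $G(x,\cdot)$ is $L$-Lipschitz in $t$ uniformly for $x \in B_{\mathcal{X}}(x_0,R)$ (possible by the locally Lipschitz hypothesis on $G$). Since $\theta$ is analytic hence locally Lipschitz (by \eqref{eq:aprioriLipschitz}), after possibly shrinking $R$ we get a modulus of continuity: $\norm{\theta(y,\cdot) - \theta(y',\cdot)}_{\mathcal{L}(L^1[0,T],\mathcal{Z})} \le C' \norm{y - y'}$ for $y,y'$ in the relevant neighborhood. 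Next, for a partition $P: 0 = t_0 < \dots < t_m = T$ I would set
\[
h_P(x,t) = \sum_{j=1}^m \chi_{[t_{j-1},t_j)}(t)\,\theta(G(x,t_{j-1}),t),
\]
which is a simple function of the first variable with values in the fixed bounded set, so by Definition \ref{def:composition}/the simple-function formula \eqref{eq:diagsimple} it is a well-defined element of $\mathcal{L}(L^1[0,T],\mathcal{Z})$; and $x \mapsto h_P(x,\cdot)$ is analytic because for each $\phi$ we have $\int_0^T \phi(t) h_P(x,t)\,dt = \sum_j \theta(G(x,t_{j-1}),\cdot)[\chi_{[t_{j-1},t_j)}\phi]$, a finite sum of compositions of the analytic maps $x \mapsto G(x,t_{j-1})$, $y \mapsto \theta(y,\cdot)$, and the bounded linear functional ``evaluate at $\chi_{[t_{j-1},t_j)}\phi$''.

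Then I would estimate the error using \eqref{eq:stupidestimate2} (the ``$\diag$'' estimate): for $x \in B_{\mathcal{X}}(x_0,R)$,
\[
\norm{\theta(G(x,t),t) - h_P(x,t)}_{\mathcal{L}(L^1[0,T],\mathcal{Z})_{dt}} \le \sup_{t} \norm{\theta(G(x,t),\cdot) - \theta(G(x,\lfloor t\rfloor_P),\cdot)}_{\mathcal{L}(L^1,\mathcal{Z})} \le C' L \cdot \operatorname{mesh}(P),
\]
where $\lfloor t\rfloor_P$ denotes the left endpoint of the subinterval of $P$ containing $t$; here I used the Lipschitz bound on $\theta$ and the Lipschitz bound on $G(x,\cdot)$. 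Since this bound is uniform in $x \in B_{\mathcal{X}}(x_0,R)$ and $\to 0$ as $\operatorname{mesh}(P) \to 0$, the map $x \mapsto \theta(G(x,t),t)$ is a uniform limit on $B_{\mathcal{X}}(x_0,R)$ of the analytic maps $x \mapsto h_P(x,\cdot)$, with all of them bounded by $C$; by Lemma \ref{lem:analyticlimit} the limit is analytic on $B_{\mathcal{X}}(x_0,R)$. As $x_0$ was arbitrary, $\theta(G(x,t),t)$ is analytic on $\mathcal{U}$.

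I expect no serious obstacle here; this lemma is the ``composition'' counterpart of Lemma \ref{lem:multiplicationanalytic} and the argument is essentially identical. The one point requiring a little care is making sure the diagonal restriction $\theta(G(x,t),t)$ really is the object produced by Definition \ref{def:composition} for each fixed $x$ — i.e.\ that $s \mapsto \theta(G(x,s),\cdot)$ is continuous (indeed Lipschitz) into $\mathcal{L}(L^1[0,T],\mathcal{Z})$, which follows from continuity of $G(x,\cdot)$ and of $\theta$ — and that the error estimate is measured in the right norm, namely the $\mathcal{L}(L^1[0,T],\mathcal{Z})$-norm of the diagonal restriction, for which \eqref{eq:stupidestimate2} is exactly the tool needed.
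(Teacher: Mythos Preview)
Your proposal is correct and takes essentially the same approach as the paper: the paper's proof is a two-line sketch saying ``similar to the previous lemma'' with the same approximation $h_P(x,t) = \sum_{j=1}^m \chi_{[t_{j-1},t_j)}(t)\,\theta(G(x,t_{j-1}),t)$, and you have simply filled in the details (the local Lipschitz bounds, the error estimate via \eqref{eq:stupidestimate2}, and the appeal to Lemma~\ref{lem:analyticlimit}) that the paper leaves to the reader.
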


\begin{proof}
The argument is similar to the previous lemma.  We use the approximation
\begin{equation}
h_P(x,t) = \sum_{j=1}^m \chi_{[t_{j-1},t_j)}(t) f(G(x,t_{j-1}),t),
\end{equation}
and each term in the sum is analytic because it is the composition of two analytic functions.
\end{proof}

\begin{remark}
We suspect that the results of this section may have other applications (e.g.\ to PDE), even the author did not find these results in the literature.
\end{remark}

\section{$C^*$-valued Non-Commutative Probability} \label{sec:NCP}

\subsection{Basics of $C^*$-algebras} \label{subsec:Cstaralgebras}

For the readers who are not familiar with operator algebras, we review without proof some standard results in the theory of $C^*$-algebras.  We refer to Blackadar \cite[Chapter II]{Blackadar2006} for an encyclopedic list of results, proof sketches, and references.

\begin{definition} \label{def:Cstaralgebra}
A \emph{$C^*$-algebra} $\mathcal{A}$ is a Banach space equipped with a multiplication operation and a conjugate-linear involution $a \mapsto a^*$ such that
\[
\norm{xy} \leq \norm{x} \norm{y}
\]
and
\[
\norm{x^*x} = \norm{x}^2 = \norm{x^*}^2.
\]
A $C^*$-algebra is called \emph{unital} if it has a multiplicative identity $1 \neq 0$.  In this case, we have $1^* = 1$ automatically.
\end{definition}

\begin{definition}
Given two $C^*$-algebras, $\mathcal{A}$ and $\mathcal{B}$, a $*$-homomorphism $\pi: \mathcal{A} \to \mathcal{B}$ is $\C$-linear map which respects the multiplication and $*$-operations.  If $\mathcal{A}$ and $\mathcal{B}$ are unital, then we say $\pi$ is \emph{unital} if $\pi(1) = 1$.
\end{definition}

\begin{definition}
We denote by $B(\mathcal{H})$ the space of bounded operators on a Hilbert space $\mathcal{H}$, which is a unital $C^*$-algebra with respect to the operator norm and the involution given by taking the adjoint.
\end{definition}

\begin{theorem} ~ \label{thm:Cstarbasics}
Let $\mathcal{A}$ and $\mathcal{B}$ be $C^*$-algebras.
\begin{enumerate}
	\item If $\pi: \mathcal{A} \to \mathcal{B}$ is a $*$-homomorphism, then $\norm{\pi(x)} \leq \norm{x}$.
	\item If $\pi: \mathcal{A} \to \mathcal{B}$ is an injective $*$-homomorphism, then $\pi$ is an isometry.
	\item For every $C^*$-algebra $\mathcal{A}$, there exists a Hilbert space $\mathcal{H}$ and an injective $*$-homomorphism $\mathcal{A} \to B(\mathcal{H})$.
\end{enumerate}
\end{theorem}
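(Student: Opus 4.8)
The plan is to reduce all three parts to the spectral radius formula for self-adjoint elements, and for part (3) to the GNS construction; since these are textbook facts (see Blackadar), in the paper I would simply cite them, but here is the route. If $\mathcal{A}$ or $\mathcal{B}$ fails to be unital, adjoin a unit to form $\tilde{\mathcal{A}} = \mathcal{A} \oplus \C$ with its unique $C^*$-norm and extend $\pi$ to a unital $*$-homomorphism $\tilde{\pi}(a + \lambda) = \pi(a) + \lambda$; since neither the norm nor the spectrum of an element of $\mathcal{A}$ changes in $\tilde{\mathcal{A}}$, it suffices to treat unital algebras and unital homomorphisms. The key lemma is that for self-adjoint $a$ the $C^*$-identity gives $\norm{a^{2^n}} = \norm{a}^{2^n}$, so Gelfand's spectral radius formula yields $\norm{a} = \lim_n \norm{a^n}^{1/n} = r(a)$, where $r(a) := \sup\{|\lambda| : \lambda \in \Spec(a)\}$.

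\textbf{Parts (1) and (2).} If $\lambda - x$ is invertible in $\mathcal{A}$ then $\lambda - \pi(x) = \pi(\lambda - x)$ has inverse $\pi((\lambda - x)^{-1})$ in $\mathcal{B}$, so $\Spec(\pi(x)) \subseteq \Spec(x)$ and hence $r(\pi(x)) \le r(x)$. Applying this to the self-adjoint element $x^*x$ and using the key lemma,
\[
\norm{\pi(x)}^2 = \norm{\pi(x)^*\pi(x)} = \norm{\pi(x^*x)} = r(\pi(x^*x)) \le r(x^*x) = \norm{x^*x} = \norm{x}^2,
\]
which is (1). For (2), note first that $\pi$ intertwines the continuous functional calculus of a self-adjoint element, i.e.\ $\pi(f(a)) = f(\pi(a))$: this is clear for polynomials and passes to continuous $f$ by uniform approximation (Stone--Weierstrass) together with the continuity established in (1). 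Now suppose $\pi$ is injective but not isometric; by the displayed computation there is a positive $b = x^*x$ with $\norm{\pi(b)} < \norm{b} =: M$, while $M \in \Spec(b)$. Pick a continuous $f \colon [0,M] \to [0,1]$ vanishing on $[0,\norm{\pi(b)}]$ with $f(M) = 1$. Then $\Spec(f(b)) = f(\Spec(b)) \ni 1$ forces $f(b) \neq 0$, whereas $\Spec(\pi(b)) \subseteq [0,\norm{\pi(b)}]$ forces $f(\pi(b)) = 0$; since $\pi(f(b)) = f(\pi(b)) = 0$, this contradicts injectivity.

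\textbf{Part (3).} For each state $\phi$ on $\mathcal{A}$, the GNS construction produces a Hilbert space $\mathcal{H}_\phi$ — the completion of $\mathcal{A}/N_\phi$, where $N_\phi = \{a : \phi(a^*a) = 0\}$, with inner product $\ip{a + N_\phi, c + N_\phi} = \phi(c^*a)$ — together with a representation $\pi_\phi(a)$ acting by left multiplication, bounded because $a^*a \le \norm{a}^2$ and $\phi$ is positive, and a cyclic vector $\xi_\phi$ with $\ip{\pi_\phi(a)\xi_\phi, \xi_\phi} = \phi(a)$. Taking $\pi = \bigoplus_\phi \pi_\phi$ on $\bigoplus_\phi \mathcal{H}_\phi$ gives a $*$-homomorphism into $B(\bigoplus_\phi \mathcal{H}_\phi)$. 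If $\pi(a) = 0$ then $\pi_\phi(a^*a) = 0$ for every $\phi$, so $\phi(a^*a) = \ip{\pi_\phi(a^*a)\xi_\phi, \xi_\phi} = 0$ for every state $\phi$; invoking the separation property of states — for every nonzero self-adjoint $y$ there is a state $\phi$ with $\phi(y) = \norm{y}$ — we conclude $a^*a = 0$, hence $a = 0$, so $\pi$ is injective.

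\textbf{Main obstacle.} The only genuinely substantive ingredient is the separation property of states used in part (3): for self-adjoint $y$ one restricts to the commutative $C^*$-subalgebra generated by $y$, identifies it via the Gelfand transform with $C_0(\Spec y)$ so that point evaluation at a point of modulus $\norm{y}$ gives the required functional, and then extends to $\mathcal{A}$ by Hahn--Banach while preserving positivity and norm. Everything else is bookkeeping around the spectral radius formula and functional calculus, which is why in the paper I would simply cite Blackadar.
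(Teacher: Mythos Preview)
Your proof is correct and follows the standard textbook route. The paper itself gives no proof of this theorem: it is stated in the background subsection with the explicit disclaimer that standard $C^*$-algebra results are reviewed ``without proof,'' referring the reader to Blackadar \cite[Chapter II]{Blackadar2006} --- exactly as you anticipated when you wrote that in the paper you would simply cite Blackadar.
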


As a consequence of (1) and (2), if $\mathcal{A}$ is a $C^*$-algebra, then there is only one norm on $\mathcal{A}$ satisfying Definition \ref{def:Cstaralgebra}.

We claim that if $\mathcal{A}$ is a $C^*$-algebra, then $M_n(\mathcal{A})$ is also a $C^*$-algebra.  Note that $M_n(\mathcal{A})$ can naturally by identified with $\mathcal{A} \otimes M_n(\C)$ as a vector space.  We equip $M_n(\mathcal{A})$ with the multiplication and $*$-operations given by tensoring those of $\mathcal{A}$ with those of $M_n(\C)$.

We define the norm on $M_n(\mathcal{A})$ by representing it as an algebra of operators on a Hilbert space.  By Theorem \ref{thm:Cstarbasics} (3), there is an injective $*$-homomorphism $\pi: \mathcal{A} \to B(\mathcal{H})$ for some Hilbert space $\mathcal{H}$.  Moreover, matrix multiplication defines a $*$-isomorphism $\rho_n: M_n(\C) \to B(\C^n)$.  Then $\pi \otimes \rho_n$ defines a $*$-homomorphism $\mathcal{A} \otimes M_n(\C) \to B(\mathcal{H} \otimes \C^n)$ or in other words $M_n(\mathcal{A}) \to B(\mathcal{H}^{\oplus n})$.  If $A = (a_{i,j}) \in M_n(\mathcal{A})$, then we define $\norm{A}$ to be the operator norm of $\pi \otimes \rho_n(A)$.  One can check that $\max_{i,j} \norm{a_{i,j}} \leq \norm{A} \leq \sum_{i,j} \norm{a_{i,j}}$ and hence $M_n(\mathcal{A})$ is already complete in this norm and hence is a $C^*$-algebra.  Moreover, as remarked earlier, the norm on $M_n(\mathcal{A})$ is unique.

\begin{definition}
An element $a$ in a $C^*$-algebra $\mathcal{A}$ is \emph{positive} if $a = b^*b$ for some $b \in \mathcal{A}$, and in this case we write $a \geq 0$.  A bounded linear functional $\phi: \mathcal{A} \to \C$ is called \emph{positive} if $\phi(a^*a) \geq 0$ for every $\mathcal{a} \in \mathcal{A}$.  A \emph{state} is a positive linear functional with $\norm{\phi} = 1$.
\end{definition}

A state is viewed as a non-commutative analogue of a Borel probability measure on a locally compact Hausdorff space.  As motivation, note that if $\Omega$ is a compact Hausdorff space, then $C(\Omega)$ is a $C^*$-algebra (using the supremum norm and complex conjugation).  Moreover, states on $C(\Omega)$ are equivalent to Borel probability measures on $\Omega$.  Thus, a $C^*$-algebra $\mathcal{A}$ with a state $\phi$ can be viewed as a non-commutative analogue of a probability space.

The following theorem shows that every state can be represented concretely by taking the inner product with some vector $\xi$ in some representation of $\mathcal{A}$.  Concrete representations of $C^*$-algebras and states are an important tool in non-commutative probability, and since we will use the operator-valued version of this technique in \S \ref{subsec:Fockspace1} - \ref{subsec:Fockspaceindependence}, we include for motivation a brief sketch of the scalar-valued case here and of the operator-valued case in the next subsection.

\begin{theorem} \label{thm:scalarGNS}
Let $\mathcal{A}$ be a unital $C^*$-algebra.  If $\pi: \mathcal{A} \to B(\mathcal{H})$ is a $*$-homomorphism and $\xi$ is a vector in $\mathcal{H}$, then $\phi(a) = \ip{\xi, \pi(a)\xi}$ is a positive linear functional.  Conversely, every positive linear functional can be represented in this form for some representation $\pi$ and unit vector $\xi$.
\end{theorem}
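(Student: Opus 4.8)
The plan is to prove the two directions separately. The forward direction is a short computation: given $\pi$ and $\xi$, one checks $\phi(a^*a) = \ip{\xi, \pi(a^*a)\xi} = \ip{\xi, \pi(a)^*\pi(a)\xi} = \norm{\pi(a)\xi}^2 \geq 0$, using that $\pi$ is a $*$-homomorphism, and boundedness follows from $|\phi(a)| \leq \norm{\xi}^2 \norm{\pi(a)} \leq \norm{\xi}^2 \norm{a}$ by Theorem \ref{thm:Cstarbasics}(1). The real content is the converse, and here I would carry out the standard GNS construction in four steps.

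First, starting from a positive linear functional $\phi$, I would define a sesquilinear form on $\mathcal{A}$ by $\ip{a,b}_\phi := \phi(a^*b)$. Positivity of $\phi$ makes this a positive semidefinite form, so the Cauchy--Schwarz inequality $|\phi(a^*b)|^2 \leq \phi(a^*a)\phi(b^*b)$ holds. The set $N = \{a \in \mathcal{A} : \phi(a^*a) = 0\}$ is then a closed left ideal (closed under left multiplication by $\mathcal{A}$ because $\phi((ca)^*(ca)) = \phi(a^* c^* c a) \leq \norm{c^*c}\,\phi(a^*a)$, using that $c^*c \leq \norm{c^*c}\cdot 1$ and positivity of $\phi$ applied to $a^*(\norm{c^*c} - c^*c)a \geq 0$). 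So $\ip{\cdot,\cdot}_\phi$ descends to a genuine inner product on the quotient $\mathcal{A}/N$; let $\mathcal{H}$ be its completion, and write $[a]$ for the image of $a$.

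Second, I would define $\pi(a)[b] = [ab]$. This is well-defined on $\mathcal{A}/N$ because $N$ is a left ideal, and the estimate in the previous paragraph, $\norm{\pi(a)[b]}^2 = \phi(b^*a^*ab) \leq \norm{a}^2\phi(b^*b) = \norm{a}^2 \norm{[b]}^2$, shows $\pi(a)$ is bounded with $\norm{\pi(a)} \leq \norm{a}$, so it extends to $\mathcal{H}$. One then checks $\pi$ is a $*$-homomorphism: multiplicativity $\pi(ab) = \pi(a)\pi(b)$ is immediate, linearity is clear, and the $*$-property $\ip{\pi(a^*)[b],[c]} = \phi(b^* a c) = \ip{[b], \pi(a)[c]}$ verifies $\pi(a)^* = \pi(a^*)$. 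Third, I would take $\xi = [1]$, the class of the unit; then $\pi(a)\xi = [a]$ and $\ip{\xi, \pi(a)\xi} = \phi(1^* a) = \phi(a)$, as desired. The unit vector normalization $\norm{\xi}^2 = \phi(1) = \norm{\phi}$ in the state case gives $\norm{\xi} = 1$.

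The main obstacle—really the only delicate point—is showing $N$ is a left ideal, i.e.\ that $\phi(a^*c^*ca) \leq \norm{c}^2 \phi(a^*a)$. This rests on the fact that in a unital $C^*$-algebra, $c^*c \leq \norm{c}^2 \cdot 1$ as positive elements, which in turn relies on the spectral/functional-calculus characterization of positivity (an element is positive iff it is self-adjoint with nonnegative spectrum, equivalently iff $\norm{\lambda - a} \leq \lambda$ for $\lambda \geq \norm{a}$). I would either cite this from Blackadar \cite{Blackadar2006} or sketch it: for self-adjoint $h$ with $\norm{h} \leq \lambda$ one has $\norm{\lambda - h} \leq \lambda$ by the spectral radius formula, hence $\lambda - h \geq 0$; applying this to $h = c^*c$ with $\lambda = \norm{c}^2$ gives the claim, and then $0 \leq \phi\big(a^*(\norm{c}^2 - c^*c)a\big) = \norm{c}^2\phi(a^*a) - \phi(a^*c^*ca)$. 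Since the excerpt explicitly says this is a sketch for motivation, I would present the construction cleanly and defer the $C^*$-positivity fact to the reference.
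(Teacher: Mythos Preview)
Your proposal is correct and follows essentially the same route as the paper's sketch: both carry out the standard GNS construction by defining the pre-inner product $\ip{a,b}_\phi = \phi(a^*b)$, passing to the completed quotient $\mathcal{H}_\phi = \overline{\mathcal{A}/\{a:\phi(a^*a)=0\}}$, defining $\pi(a)[b] = [ab]$, and taking $\xi = [1]$. The key inequality $\phi(b^*a^*ab) \leq \norm{a}^2 \phi(b^*b)$, justified via positivity of $\norm{a}^2 - a^*a$, appears in both with the same argument; your write-up simply fills in a few more details (the $*$-property of $\pi$, the left-ideal structure of $N$) than the paper's intentionally brief sketch.
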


The nontrivial direction is the converse.  Suppose that $\phi$ is a state on $\mathcal{A}$.  For $a, a' \in \mathcal{A}$, define $\ip{a,a'}_\phi = \phi(a^*a')$.  Then $\ip{\cdot,\cdot}$ is a pre-inner product and thus satisfies the Cauchy-Schwarz inequality.  Thus, $\mathcal{A} / \{a: \phi(a^*a) = 0\}$ can be completed to a Hilbert space $\mathcal{H}_\phi$.  We want to define $\pi_\phi: \mathcal{A} \to B(\mathcal{H}_\phi)$ by $\pi(a) b = [ab]$.  To show that the left multiplication action of $\mathcal{A}$ on itself produces a bounded action on the completed quotient $\mathcal{H}_\phi$, it suffices to show that $\ip{ab,ab}_\phi \leq \norm{a}^2 \ip{b,b}_\phi$.  Now $\norm{a}^2 - a^*a$ is positive and hence $b^*(\norm{a} - a^*a)b$ is positive, so that $\phi(b^*(\norm{a}^2 - a^*a) b) \geq 0$ and thus $\phi(b^*a^*ab) \leq \norm{a}^2 \phi(b^*b)$ as desired.  Thus, $\pi_\phi$ is well-defined.  Letting $\xi$ be the vector $[1] \in \mathcal{H}_\phi$, we have
\[
\ip{\xi, \pi_\phi(a)\xi} = \phi(a),
\]
which is the desired representation of $\phi$.  This procedure is known as the \emph{Gelfand-Naimark-Segal (GNS) construction}.  In the theorem above, note that $\phi$ is a state if and only if $\xi$ is a unit vector.

\subsection{$\mathcal{A}$-valued Probability Spaces} \label{subsec:Avaluedprobability}

Let $\mathcal{A}$ be a given unital $C^*$-algebra.  Then $\mathcal{A}$-valued non-commutative probability is, roughly speaking, an analogue of non-commutative probability theory in which the scalars $\C$ are replaced by the algebra $\mathcal{A}$.  Thus, we study $\mathcal{A}$-valued ``positive functionals'' and ``states'' on $\mathcal{B}$ and even ``Hilbert spaces'' with $\mathcal{A}$-valued inner products.  The appropriate replacement for positivity in this context is complete positivity.

\begin{definition}
We say a linear map $\Phi: \mathcal{B} \to \mathcal{A}$ is \emph{positive} if $b \geq 0$ implies $\Phi(b) \geq 0$.  Given a map $\Phi: \mathcal{B} \to \mathcal{A}$, we denote by $\Phi^{(n)}: M_n(\mathcal{B}) \to M_n(\mathcal{A})$ the function given by applying $\Phi$ entrywise.  We say that $\Phi$ is \emph{completely positive} if $\Phi^{(n)}$ is positive for every $n$.
\end{definition}

We next define the $\mathcal{A}$-valued version of a Hilbert space, which is a right Hilbert $\mathcal{A}$-module with an $\mathcal{A}$-valued inner product.  For background, see \cite{Lance1995} and \cite[\S II.7.1 - II.7.2]{Blackadar2006} and the references therein.  Just as in the scalar case a right Hilbert $\mathcal{A}$-modules can be constructed from a space with a pre-inner product by taking a completed quotient.

\begin{definition}
Let $\mathcal{A}$ be a unital $C^*$-algebra.  If $\mathcal{H}$ is a right $\mathcal{A}$-module, then an \emph{$\mathcal{A}$-valued pre-inner product} is a map $\ip{\cdot,\cdot}: \mathcal{H} \times \mathcal{H} \to \mathcal{A}$ such that for $h$, $h_1$, $h_2 \in \mathcal{H}$.
\begin{enumerate}
	\item $h_2 \mapsto \ip{h_1,h_2}$ is a right $\mathcal{A}$-module map.
	\item $\ip{h_2,h_1} = \ip{h_1,h_2}^*$.
	\item $\ip{h,h} \geq 0$.
\end{enumerate}
Note that $\mathcal{H}$ is a vector space over $\C$ since $\C \subseteq \mathcal{A}$ for unital $\mathcal{A}$.  For such an $\mathcal{H}$ and $\ip{\cdot,\cdot}$, we say that $\mathcal{H}$ is a \emph{right Hilbert $\mathcal{A}$-module} if $\mathcal{H}$ is a Banach space with respect to $\norm{h} := \norm{\ip{h,h}}_{\mathcal{A}}^{1/2}$.  In this case, we call $\ip{\cdot,\cdot}$ an \emph{$\mathcal{A}$-valued inner product}.
\end{definition}

\begin{lemma} \label{lem:rightHilbertmodule}
Suppose that $\mathcal{H}$ is a right $\mathcal{A}$-module with an $\mathcal{A}$-valued pre-inner product.  Then
\begin{enumerate}
	\item We have the CBS inequality $\ip{h_1,h_2}^*\ip{h_1,h_2} \leq \norm{\ip{h_1,h_1}} \ip{h_2,h_2}$ in $\mathcal{A}$, and in particular,
	$\norm{\ip{h_1,h_2}} \leq \norm{\ip{h_1,h_1}}^{1/2} \norm{\ip{h_2,h_2}}^{1/2}$.
	\item The function $\norm{h} = \norm{\ip{h,h}}^{1/2}$ defines a semi-norm on $\mathcal{H}$.
	\item We have $\norm{ha} \leq \norm{h} \norm{a}$ for $h \in \mathcal{H}$ and $a \in \mathcal{A}$.
	\item The completion of $\mathcal{H} / \{h: \norm{h} = 0\}$ is a right Hilbert $\mathcal{A}$-module with the right $\mathcal{A}$-action and the $\mathcal{A}$-valued inner product induced in the natural way from those of $\mathcal{H}$.
\end{enumerate}
\end{lemma}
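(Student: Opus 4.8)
The plan is to prove part (1), the CBS inequality, directly, and then bootstrap parts (2)--(4) from it by routine continuity arguments. For (1), I would start from the positivity axiom applied to the vector $h_1 a - h_2$ for an arbitrary $a \in \mathcal{A}$: expanding $\ip{h_1 a - h_2, h_1 a - h_2} \geq 0$ via the right-module-map property and the $*$-symmetry of the pre-inner product gives
\[
a^* \ip{h_1,h_1} a - a^* \ip{h_1,h_2} - \ip{h_1,h_2}^* a + \ip{h_2,h_2} \geq 0.
\]
The decisive move is to set $a = \lambda \ip{h_1,h_2}$ with $\lambda \geq 0$ and then dominate the leading term using the order relation $\ip{h_1,h_1} \leq \norm{\ip{h_1,h_1}}\,1$, valid in any $C^*$-algebra, which yields
\[
\lambda^2 \norm{\ip{h_1,h_1}}\,\ip{h_1,h_2}^*\ip{h_1,h_2} - 2\lambda\,\ip{h_1,h_2}^*\ip{h_1,h_2} + \ip{h_2,h_2} \geq 0.
\]
Taking $\lambda = \norm{\ip{h_1,h_1}}^{-1}$ when $\ip{h_1,h_1} \neq 0$ collapses this to $\ip{h_1,h_2}^*\ip{h_1,h_2} \leq \norm{\ip{h_1,h_1}}\,\ip{h_2,h_2}$; in the degenerate case $\ip{h_1,h_1} = 0$, letting $\lambda \to \infty$ forces $\ip{h_1,h_2}^*\ip{h_1,h_2} \leq 0$, hence $\ip{h_1,h_2} = 0$ by the $C^*$-identity $\norm{b^*b} = \norm{b}^2$, and the asserted inequality is trivial. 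Passing to $\mathcal{A}$-norms in the operator inequality and using the $C^*$-identity once more gives the scalar bound $\norm{\ip{h_1,h_2}} \leq \norm{\ip{h_1,h_1}}^{1/2}\norm{\ip{h_2,h_2}}^{1/2}$.

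With (1) available, (2) and (3) are quick. Homogeneity of $\norm{h} = \norm{\ip{h,h}}^{1/2}$ is immediate from the axioms, and the triangle inequality follows by expanding $\ip{h_1+h_2,h_1+h_2}$, applying the triangle inequality in $\mathcal{A}$ together with $\norm{\ip{h_2,h_1}} = \norm{\ip{h_1,h_2}}$, and using the scalar CBS bound from (1) to recognize the right-hand side as $(\norm{h_1}+\norm{h_2})^2$. For (3), I would observe that $\ip{ha,ha} = a^*\ip{h,h}a$ (again from the module-map and $*$ axioms), write $\ip{h,h} = c^*c$ using positivity, and estimate $\norm{a^*c^*ca} = \norm{ca}^2 \leq \norm{c}^2\norm{a}^2 = \norm{\ip{h,h}}\,\norm{a}^2$.

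For (4), the CBS inequality again does the work. From $\norm{\ip{h,h'}} \leq \norm{h}\,\norm{h'}$ the null set $N = \{h : \norm{h} = 0\}$ equals $\{h : \ip{h,\cdot} \equiv 0\}$, so it is a $\C$-subspace (by (2)) invariant under the right $\mathcal{A}$-action (by (3)), and the pre-inner product descends to a well-defined $\mathcal{A}$-valued pairing on $\mathcal{H}/N$ for which $\norm{\cdot}$ is now a genuine norm. I would then pass to the Banach-space completion of $\mathcal{H}/N$: the right action extends by continuity since $\norm{ha - h'a} \leq \norm{h-h'}\norm{a}$, and $\ip{\cdot,\cdot}$ extends since CBS makes it jointly norm-continuous; the defining axioms (module-linearity, $*$-symmetry, positivity) are closed conditions and therefore survive the limit, and the relation $\norm{h}^2 = \norm{\ip{h,h}}$ persists by continuity of the $\mathcal{A}$-norm. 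Hence the completion is a right Hilbert $\mathcal{A}$-module.

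I expect part (1) to be the only genuine obstacle. Extracting the operator-valued inequality --- rather than merely its scalar shadow obtained by composing with states of $\mathcal{A}$ --- relies on the specific substitution $a = \lambda\ip{h_1,h_2}$ and the ordering trick $\ip{h_1,h_1} \leq \norm{\ip{h_1,h_1}}\,1$, together with the separate treatment of the degenerate case; once this is in hand, parts (2)--(4) are standard completion-and-continuity bookkeeping.
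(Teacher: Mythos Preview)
Your proof is correct and follows the standard argument for the CBS inequality in Hilbert $C^*$-modules (essentially the proof in Lance's book, which the paper cites). Note that the paper itself does not prove this lemma: it is stated as background with references to \cite{Lance1995} and \cite[\S II.7.1--II.7.2]{Blackadar2006}, so there is no ``paper's own proof'' to compare against beyond those sources, and your approach matches them.
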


Next, we define $B(\mathcal{H})$ for a right Hilbert $\mathcal{A}$-module $\mathcal{H}$.  If $\mathcal{H}_1$ and $\mathcal{H}_2$ be Hilbert $\mathcal{A}$-modules, we say that a linear map $T: \mathcal{H}_1 \to \mathcal{H}_2$ is \emph{right $\mathcal{A}$-linear} if $T(ha) = (Th)a$ for $h \in \mathcal{H}_1$ and $a \in \mathcal{A}$.  We say that $T$ is \emph{adjointable} if there exists a map $T^*: \mathcal{H}_2 \to \mathcal{H}_1$ such that
\[
\ip{Th_1,h_2} = \ip{h_1,T^*h_2} \text{ for all } h_1 \in \mathcal{H}_1 \text{ and } h_2 \in \mathcal{H}_2.
\]
We denote by $B(\mathcal{H})$ the space of bounded, right $\mathcal{A}$-linear, adjointable operators on a right Hilbert $\mathcal{A}$-module $\mathcal{H}$.  One can check that $B(\mathcal{H})$ is a $C^*$-algebra \cite[p.\ 8]{Lance1995}.  We are now ready to give the $\mathcal{A}$-valued version of the GNS construction.

\begin{proposition} \label{prop:operatorvaluedGNS}
Let $\mathcal{A}$ and $\mathcal{B}$ be unital $C^*$-algebras and $\Phi: \mathcal{B} \to \mathcal{A}$ a linear map.  The following are equivalent:
\begin{enumerate}
	\item $\Phi$ is completely positive.
	\item There exists a right Hilbert $\mathcal{A}$-module $\mathcal{H}$ and a $*$-homomorphism $\pi: \mathcal{B} \to B(\mathcal{H})$ such that $\Phi(b) = \ip{\xi, \pi(b)\xi}$.
\end{enumerate}
\end{proposition}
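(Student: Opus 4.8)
The plan is to carry out the operator-valued Gelfand--Naimark--Segal construction sketched after Theorem~\ref{thm:scalarGNS}, with the scalar pre-inner product $\phi(a^*a')$ replaced by an $\mathcal{A}$-valued one and with positivity of $\phi$ replaced throughout by complete positivity of $\Phi$. The implication (2)~$\Rightarrow$~(1) is the routine direction: given $\pi$ and $\xi$ as in (2), and given $b = c^*c \geq 0$ in $M_n(\mathcal{B})$, one computes $\Phi^{(n)}(b)_{ij} = \sum_k \langle \pi(c_{ki})\xi, \pi(c_{kj})\xi\rangle$, which exhibits $\Phi^{(n)}(b)$ as a sum over $k$ of Gram matrices $(\langle v_i, v_j\rangle)_{ij}$ over the right Hilbert $\mathcal{A}$-module $\mathcal{H}$; each such Gram matrix is positive in $M_n(\mathcal{A}) = B(\mathcal{A}^n)$ because it equals $V^*V$ for the adjointable operator $V\colon \mathcal{A}^n \to \mathcal{H}$, $(a_i)_i \mapsto \sum_i v_i a_i$. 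Hence $\Phi^{(n)}$ is positive for every $n$.

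For (1)~$\Rightarrow$~(2), I would first make the algebraic tensor product $\mathcal{B} \otimes \mathcal{A}$ into a right $\mathcal{A}$-module via $(b \otimes a)\cdot a' = b \otimes aa'$, and define a sesquilinear map $\langle\cdot,\cdot\rangle\colon (\mathcal{B}\otimes\mathcal{A}) \times (\mathcal{B}\otimes\mathcal{A}) \to \mathcal{A}$ by declaring $\langle b_1 \otimes a_1, b_2 \otimes a_2\rangle = a_1^* \Phi(b_1^* b_2) a_2$ and extending bilinearly. Right $\mathcal{A}$-linearity in the second slot, and (using that positive maps between $C^*$-algebras are automatically $*$-preserving) the symmetry $\langle\eta_1,\eta_2\rangle = \langle\eta_2,\eta_1\rangle^*$, are immediate. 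The one substantive point is positivity: for $\eta = \sum_{i=1}^n b_i \otimes a_i$, the matrix $(b_i^* b_j)_{ij}$ equals $c^*c$ in $M_n(\mathcal{B})$, where $c$ has first row $(b_1,\dots,b_n)$ and all other rows zero; hence $\Phi^{(n)}\big((b_i^*b_j)_{ij}\big) = (\Phi(b_i^*b_j))_{ij} \geq 0$ in $M_n(\mathcal{A})$ by complete positivity. Writing this as $d^*d$ and setting $a = (a_1,\dots,a_n)^{\mathrm{T}}$ a column over $\mathcal{A}$, we get $\langle\eta,\eta\rangle = a^* d^* d a = (da)^*(da) \geq 0$ in $\mathcal{A}$. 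Thus $\langle\cdot,\cdot\rangle$ is an $\mathcal{A}$-valued pre-inner product, and Lemma~\ref{lem:rightHilbertmodule}(4) produces a right Hilbert $\mathcal{A}$-module $\mathcal{H}$ as the completion of $(\mathcal{B}\otimes\mathcal{A})$ modulo its null space.

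It remains to build $\pi$ and $\xi$. Define $\pi(b)$ on $\mathcal{B}\otimes\mathcal{A}$ by $\pi(b)(b'\otimes a) = bb'\otimes a$; then $\pi$ is clearly an algebra homomorphism, and the identity $\langle \pi(b)(b_1\otimes a_1),\, b_2\otimes a_2\rangle = a_1^*\Phi(b_1^* b^* b_2) a_2 = \langle b_1 \otimes a_1,\, \pi(b^*)(b_2 \otimes a_2)\rangle$ shows that on elementary tensors it satisfies the adjoint relation with respect to $b \mapsto b^*$. For boundedness and descent to $\mathcal{H}$, I would again use the order structure of $M_n(\mathcal{B})$: since $b^*b \leq \|b\|^2 1_{\mathcal{B}}$, we have $(b_i^* b^* b\, b_j)_{ij} = c^* \diag(b^*b,0,\dots,0)\, c \leq \|b\|^2\, c^*c = \|b\|^2 (b_i^* b_j)_{ij}$ in $M_n(\mathcal{B})$; applying the positive map $\Phi^{(n)}$ and compressing by $a^*,a$ as above yields $\langle \pi(b)\eta, \pi(b)\eta\rangle \leq \|b\|^2 \langle\eta,\eta\rangle$. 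This single estimate shows simultaneously that $\pi(b)$ annihilates the null space, hence descends to the quotient, and that it extends to a bounded operator on $\mathcal{H}$ with $\|\pi(b)\| \leq \|b\|$; together with the adjoint relation it follows that $\pi(b)$ is adjointable with $\pi(b)^* = \pi(b^*)$, so $\pi\colon \mathcal{B} \to B(\mathcal{H})$ is a $*$-homomorphism. Finally, let $\xi$ be the image in $\mathcal{H}$ of $1_{\mathcal{B}} \otimes 1_{\mathcal{A}}$; then $\langle \xi, \pi(b)\xi\rangle = \langle 1\otimes 1,\, b\otimes 1\rangle = \Phi(b)$, as required.

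The main obstacle is not any single hard step but making sure the two appeals to this matrix trick are airtight: both the positivity of the pre-inner product and the norm bound $\langle\pi(b)\eta,\pi(b)\eta\rangle \le \|b\|^2\langle\eta,\eta\rangle$ come down to recognizing a matrix over $\mathcal{B}$ in the form $c^*(\cdot)c$, controlling it in the order of $M_n(\mathcal{B})$, pushing it through $\Phi^{(n)}$, and then compressing by a column vector over $\mathcal{A}$ --- and it is precisely here that complete positivity, rather than mere positivity, is indispensable. Everything else is the same bookkeeping as in the scalar GNS construction recalled above.
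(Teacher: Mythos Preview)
Your proposal is correct and follows essentially the same route as the paper: both carry out the operator-valued GNS construction on $\mathcal{B}\otimes_{\alg}\mathcal{A}$ with the pre-inner product $\langle b_1\otimes a_1,\,b_2\otimes a_2\rangle=a_1^*\Phi(b_1^*b_2)a_2$, check positivity via the Gram matrix $(b_i^*b_j)_{ij}$ and complete positivity of $\Phi$, define $\pi$ by left multiplication, and take $\xi=1\otimes 1$. The only cosmetic difference is in the boundedness estimate for $\pi(b)$: the paper writes $\|b\|^2-b^*b=c^*c$ and uses $\langle ch,ch\rangle\geq 0$ (via the adjoint relation plus the already-established positivity), whereas you spell out the equivalent matrix inequality $(b_i^*b^*bb_j)_{ij}\leq\|b\|^2(b_i^*b_j)_{ij}$ in $M_n(\mathcal{B})$ and push it through $\Phi^{(n)}$ directly.
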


\begin{proof}[Sketch of proof]
(2) $\implies$ (1).  To see that $\Phi$ is positive, note that $\Phi(b^*b) = \ip{\pi(b) \xi, \pi(b) \xi} \geq 0$.  By considering $\pi^{(n)}: M_n(\mathcal{B}) \to B(\mathcal{H}^{\oplus n})$ one can show that for $B \in M_n(\mathcal{B})$ and $v = [a_1, \dots, a_n]^t$, we have $v^* \Phi(B^*B) v \geq 0$.  Finally, one argues that if this holds for all $v$, then $\Phi^{(n)}(B^*B) \geq 0$.

(1) $\implies$ (2).  We define an $\mathcal{A}$-valued pre-inner product on $\mathcal{B} \otimes_{\alg} \mathcal{A}$ by
\[
\ip{b_1 \otimes a_1, b_2 \otimes a_2} = a_1^* \Phi(b_1^*b_2) a_2.
\]
To check positivity, consider a sum of simple tensors $h = \sum_{j-1}^n b_j \otimes a_j$.  Note that
\[
\ip{h,h} = \begin{bmatrix} a_1^* & \dots & a_n^* \end{bmatrix} \Phi^{(n)} \left( \begin{bmatrix} b_1^*b_1 & \dots & b_1^*b_n \\ \vdots & \ddots & \vdots \\ b_n^*b_1 & \dots & b_n^*b_n \end{bmatrix} \right) \begin{bmatrix} a_1 \\ \vdots \\ a_n \end{bmatrix}.
\]
The matrix $(b_i^*b_j)_{i,j}$ is positive in $M_n(\mathcal{B})$ since it equals $[b_1, \dots, b_n]^* [b_1, \dots, b_n]$.  Thus, by complete positivity, $(\Phi(b_i^*b_j))_{i,j}$ is positive in $M_n(\mathcal{A})$ and this implies positivity of $\ip{h,h}$.  We then define the space $\mathcal{B} \otimes_\Phi \mathcal{A}$ to the completed quotient of $\mathcal{B} \otimes_{\alg} \mathcal{A}$ with respect to this pre-inner product.

For $b \in \mathcal{B}$, we want to define $\pi(b) \in B(\mathcal{H})$ by $\pi(b) b' \otimes a = bb' \otimes a$.  To see that this is well-defined, it suffices to show that $\norm{bh} \leq \norm{b} \norm{h}$ for $h \in \mathcal{B} \otimes_{\alg} \mathcal{A}$.  Since $\norm{b}^2 - b^*b$ is positive, we can write it as $\norm{b}^2 - b^*b = c^*c$.  But $\ip{h,c^*ch} = \ip{ch,ch} \geq 0$ and hence $\norm{\ip{bh,bh}} \leq \norm{b}^2 \norm{\ip{h,h}}$.  Letting $\xi = 1 \otimes 1 \in \mathcal{B} \otimes_\Phi \mathcal{A}$, we get $\Phi(b) = \ip{\xi, \pi(b) \xi}$.
\end{proof}

\begin{remark}
If $\Phi$ is completely positive and is represented as $\ip{\xi, \pi(\cdot)\xi}$ as in Proposition \ref{prop:operatorvaluedGNS}, then
\[
\norm{\Phi(b)} = \norm{\ip{\xi, \pi(b)\xi}} \leq \norm{b} \norm{\xi}^2 = \norm{b} \norm{\Phi(1)}.
\]
Because the same reasoning can be applied to $\Phi^{(n)}$ and $\Phi^{(n)}(1) = 1 \otimes 1_n$, we see that for $z \in M_n(\mathcal{B})$, we have
\begin{equation} \label{eq:CPmapbound}
\norm{\Phi^{(n)}(z)} \leq \norm{z} \norm{\Phi(1)},
\end{equation}
which is an estimate we will use frequently in the rest of the paper.
\end{remark}

Having described the $\mathcal{A}$-valued analogue of positive linear functionals, we now turn to the $\mathcal{A}$-valued analogue of states (and of probability measures).

\begin{definition}
Let $\mathcal{A} \subseteq \mathcal{B}$ be a unital inclusion of $C^*$-algebras.  A map $E: \mathcal{B} \to \mathcal{A}$ is called a \emph{conditional expectation} or \emph{$\mathcal{A}$-valued expectation} if
\begin{enumerate}[(1)]
	\item $E$ is completely positive;
	\item $E|_{\mathcal{A}} = \id$;
	\item $E$ is an $\mathcal{A}$-bimodule map, that is, $E(a_1ba_2) = a_1 E(b) a_2$ whenever $a_1$ and $a_2$ are in $\mathcal{A}$.
\end{enumerate}
An \emph{$\mathcal{A}$-valued probability space} is a pair $(\mathcal{B},E)$ where $\mathcal{B} \subseteq \mathcal{A}$ is a unital $C^*$-algebra and $E: \mathcal{B} \to \mathcal{A}$ is a conditional expectation (note that the inclusion map $\mathcal{A} \to \mathcal{B}$ is implicitly part of the data).
\end{definition}

Property (3) is an analogue of the property in classical probability theory that $E[f(X) Y | X] = f(X) E[Y | X]$ for bounded random variables and bounded Borel-measurable $f: \R \to \R$.  We can characterize $\mathcal{A}$-valued expectations in terms of the representing vector $\xi$ from Proposition \ref{prop:operatorvaluedGNS} as follows.  The proof is a routine computation.

\begin{lemma}
Let $\mathcal{B} \subseteq \mathcal{A}$ and let $\Phi: \mathcal{B} \to \mathcal{A}$ be a completely positive map.  Suppose that $\mathcal{H}$ is a right Hilbert $\mathcal{A}$-module and $\pi: \mathcal{B} \to B(\mathcal{H})$ is a $*$-homomorphism such that $\Phi(b) = \ip{\xi, b\xi}$.  Then $\Phi$ is an $\mathcal{A}$-valued expectation if and only if the following conditions hold:
\begin{enumerate}
	\item $\xi$ is a \emph{unit vector}, that is $\ip{\xi,\xi} = 1$ in $\mathcal{A}$.
	\item $\xi$ is \emph{$\mathcal{A}$-central}, that is $\pi(a) \xi = \xi a$ for every $a \in \mathcal{A}$.
\end{enumerate}
\end{lemma}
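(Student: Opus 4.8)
The plan is to verify the equivalence by a direct computation, relying only on the pre-inner-product axioms from Lemma~\ref{lem:rightHilbertmodule} and the fact that $\pi$ is a $*$-homomorphism. Since complete positivity of $\Phi$ is assumed throughout, showing that $\Phi$ is a conditional expectation reduces to checking the two remaining axioms: $\Phi|_{\mathcal{A}} = \id$ and the $\mathcal{A}$-bimodule property. The computations will repeatedly use the identities $\ip{h, h'a} = \ip{h,h'}a$, $\ip{ha, h'} = a^*\ip{h,h'}$ (the latter from $\ip{h_2,h_1} = \ip{h_1,h_2}^*$ together with right $\mathcal{A}$-linearity of $h_2 \mapsto \ip{h_1,h_2}$), and $\ip{\pi(b)h, h'} = \ip{h, \pi(b^*)h'}$ (from $\pi(b)^* = \pi(b^*)$), as well as right $\mathcal{A}$-linearity of each operator $\pi(b)$.

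For the direction ``$\xi$ a unit vector and $\mathcal{A}$-central $\implies$ $\Phi$ an $\mathcal{A}$-valued expectation'', I would first compute, for $a \in \mathcal{A}$,
\[
\Phi(a) = \ip{\xi, \pi(a)\xi} = \ip{\xi, \xi a} = \ip{\xi,\xi}\,a = a,
\]
which gives $\Phi|_{\mathcal{A}} = \id$. For the bimodule property with $a_1, a_2 \in \mathcal{A}$ and $b \in \mathcal{B}$, the idea is to write $\pi(a_1 b a_2)\xi = \pi(a_1)\pi(b)\pi(a_2)\xi$, move $\pi(a_1)$ into the left entry of the inner product (turning it into $\pi(a_1^*)\xi = \xi a_1^*$ by centrality, and hence pulling out $a_1$ on the left via $\ip{ha,h'} = a^*\ip{h,h'}$), and use centrality together with right linearity of $\pi(b)$ to replace $\pi(a_2)\xi$ by $\xi a_2$ and pull out $a_2$ on the right; the outcome is $\Phi(a_1 b a_2) = a_1\,\ip{\xi, \pi(b)\xi}\,a_2 = a_1\Phi(b)a_2$.

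For the converse, assuming $\Phi$ is an $\mathcal{A}$-valued expectation: the unit-vector property is immediate from $\ip{\xi,\xi} = \Phi(1) = 1$, since $1 \in \mathcal{A}$ and $\Phi|_{\mathcal{A}} = \id$. For centrality, I would fix $a \in \mathcal{A}$ and expand $\ip{\pi(a)\xi - \xi a,\ \pi(a)\xi - \xi a}$ into its four terms; using $\Phi|_{\mathcal{A}} = \id$ and the inner-product identities above, each of the four terms evaluates to $a^*a$ (namely $\ip{\pi(a)\xi,\pi(a)\xi} = \Phi(a^*a) = a^*a$, the two cross terms give $\Phi(a^*)a = a^*a$, and the last is $a^*\ip{\xi,\xi}a = a^*a$), so they cancel and $\ip{\pi(a)\xi - \xi a, \pi(a)\xi - \xi a} = 0$; since $\mathcal{H}$ is a genuine right Hilbert $\mathcal{A}$-module, its norm is definite, so $\pi(a)\xi = \xi a$.

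I do not anticipate a real obstacle here; the argument is purely formal. The only things requiring care are the bookkeeping of which side elements of $\mathcal{A}$ act on in each manipulation of the inner product — consistently distinguishing $\ip{ha,h'} = a^*\ip{h,h'}$ from $\ip{h,h'a} = \ip{h,h'}a$ — and, in the converse direction, invoking definiteness of the Hilbert-module norm to pass from the vanishing of the inner product of $\pi(a)\xi - \xi a$ with itself to the equality $\pi(a)\xi = \xi a$.
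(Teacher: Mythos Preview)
Your proof is correct and is exactly the routine computation the paper alludes to (the paper omits the proof entirely, saying only ``The proof is a routine computation''). Your bookkeeping of the inner-product identities and the use of definiteness of the Hilbert-module norm in the converse direction are both handled properly.
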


\subsection{$\mathcal{A}$-valued Laws} \label{subsec:Avaluedlaws}

Next, we describe the non-commutative $\mathcal{A}$-valued analogue of the law of a self-adjoint random variable.  Classically, a compactly supported measure $\mu$ on $\R$ can be viewed as a linear functional $C_0(\R) \to \C$.  Alternatively, since it is uniquely determined by its moments, it can be viewed as a map $\C[x] \to \C$, where $\C[x]$ is the polynomial algebra.

Let $\mathcal{A}\ip{X}$ denote the non-commutative polynomial algebra in a single variable $X$.  In other words, $\mathcal{A}\ip{X}$ is the universal algebra over $\C$ generated by $\mathcal{A}$ and a variable $X$.  It is spanned by the non-commutative monomials $a_0 X a_1 \dots X a_k$ for $k \in \N$ and $a_j \in \mathcal{A}$.  The multiplication is given by
\[
(a_0 X a_1 \dots Xa_k)(b_0Xb_1 \dots Xb_j) = a_0 X a_1 \dots X(a_kb_0)Xb_1 \dots Xb_j.
\]
It is also has a $*$-operation given by
\[
(a_0 X a_1 \dots X a_k)^* = a_k^* X \dots a_1^* X a_0^*.
\]

\begin{definition} \label{def:law}
An \emph{$\mathcal{A}$-valued law} is a linear map $\sigma: \mathcal{A}\ip{X} \to \mathcal{A}$ such that
\begin{enumerate}[(1)]
	\item it is completely positive in the sense that for every $P(X) \in M_n(\mathcal{A}\ip{X})$, we have $\sigma^{(n)}(P(X)^*P(X)) \geq 0$ in $M_n(\mathcal{A})$;
	\item there exists $M > 0$ and $C > 0$ such that for every $k$ and every $a_0$, \dots, $a_k \in \mathcal{A}$,
	\[
	\norm{\sigma(a_0Xa_1\dots X a_k)} \leq C M^k \norm{a_0} \dots \norm{a_n}.
	\]
	\item we have $\sigma |_{\mathcal{A}} = \id$.
	\item we have $\sigma(a_1 p(X) a_2) = a_1 \sigma(p(X)) a_2$ when $a_1$, $a_2 \in \mathcal{A}$.
\end{enumerate}
We call $\sigma$ a \emph{generalized law} if it satisfies (1) and (2), but not necessarily (3) or (4).  We denote
\begin{equation}
\rad(\sigma) = \inf \{M > 0 \text{ such that } (2) \text{ holds for some } C\}.
\end{equation}
\end{definition}

\begin{notation}
We denote the set of laws by $\Sigma_0(\mathcal{A})$.
\end{notation}

If $x$ is a self-adjoint random variable in the $\mathcal{A}$-valued probability space $(\mathcal{B},E)$, then the \emph{law of $x$} is the map $\mu_x: \mathcal{A} \ip{X} \to \mathcal{A}$ given by $p(X) \mapsto E[p(x)]$.  Note that $\mu_x$ is a law according to Definition \ref{def:law}; indeed, (1), (3), and (3) follow from properties of the $\mathcal{A}$-valued expectation $E$, while (2) follows from the fact that $x$ is a bounded operator, where we take $C = 1$ and $M = \norm{x}$.  More generally, if $x$ is self-adjoint in $\mathcal{B}$, $\pi: \mathcal{B} \to B(\mathcal{H})$ is a representation of $\mathcal{B}$ on a right Hilbert $\mathcal{A}$-module $\mathcal{H}$, and $\xi \in \mathcal{H}$, then the map
\[
\sigma: \mathcal{A}\ip{X} \to \mathcal{A}: p(X) \mapsto \ip{\xi, p(x) \xi}
\]
is a generalized law.

The following proposition shows the converse, namely, that every generalized law defined on the formal polynomial algebra $\mathcal{A}\ip{X}$ comes from a completely positive map $\Phi: \mathcal{B} \to \mathcal{A}$ and a self-adjoint $x \in \mathcal{B}$.  This is analogous to the classical statement that every probability measure on $\R$ is the law of some real random variable.  This result is an adaptation of Popa-Vinnikov \cite[Proposition 1.2]{PV2013} and Williams \cite[Proposition 2.9]{Williams2017}.

\begin{proposition} \label{prop:CPmap}
Suppose that $\sigma$ is a generalized law $\mathcal{A} \ip{X} \to \mathcal{A}$.  Then there exists a $C^*$ algebra $\mathcal{B}$, a unital $*$-homomorphism $\pi: \mathcal{A} \ip{X} \to \mathcal{B}$, and a completely positive map $\widehat{\sigma}: \mathcal{B} \to \mathcal{A}$ such that $\sigma = \widehat{\sigma} \circ \pi$ and $\norm{\pi(X)}_{\mathcal{B}} = \rad(\sigma)$.

Furthermore if $\sigma$ is a law, then $\pi$ is an embedding, and $(\mathcal{B},\widehat{\sigma})$ is an $\mathcal{A}$-valued probability space.
\end{proposition}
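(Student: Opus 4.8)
The plan is to build $\mathcal{B}$ via an operator-valued GNS-type construction applied directly to $\sigma$, using a pre-inner product on $\mathcal{A}\ip{X}$ itself. Concretely, I would define an $\mathcal{A}$-valued pre-inner product on $\mathcal{A}\ip{X}$ by $\ip{p,q}_\sigma = \sigma(p^*q)$, which is right $\mathcal{A}$-linear in the second slot, conjugate-symmetric by property (4) of Definition \ref{def:law}, and positive by property (1) (complete positivity, applied to the Gram matrix $(p_i^*p_j)$ exactly as in the proof of Proposition \ref{prop:operatorvaluedGNS}). Let $\mathcal{H}$ be the completed quotient of $\mathcal{A}\ip{X}$ by the null space $\{p : \sigma(p^*p) = 0\}$, which is a right Hilbert $\mathcal{A}$-module by Lemma \ref{lem:rightHilbertmodule}(4). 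The left multiplication action of $\mathcal{A}\ip{X}$ on itself descends to operators on $\mathcal{H}$; the key bound to check is that left multiplication by $X$ is bounded on $\mathcal{H}$, with norm exactly $\rad(\sigma)$.

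The boundedness of left multiplication by $X$ is where property (2) is used, and this is the step I expect to be the main obstacle. Naively, property (2) controls $\norm{\sigma(a_0 X a_1 \cdots X a_k)}$ but to bound $\ip{Xp, Xp}_\sigma = \sigma(p^* X^2 p)$ in terms of $\ip{p,p}_\sigma = \sigma(p^*p)$ one needs a Cauchy–Schwarz/power-bound argument. The clean route is: for $M > \rad(\sigma)$, the functional estimate (2) shows that for any $p \in \mathcal{A}\ip{X}$ and any $n$, $\norm{\sigma(p^* X^{2n} p)} \leq C' M^{2n} \norm{p}_\sigma'^2$ for an auxiliary norm, and combined with the operator inequality $\sigma(p^* X^{2n} p) \geq 0$ and a spectral-radius-type argument (taking $2n$-th roots and $n \to \infty$), one concludes $\sigma(p^* X^2 p) \leq M^2 \sigma(p^*p)$ in $\mathcal{A}$ for every $M > \rad(\sigma)$, hence with $M = \rad(\sigma)$. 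An alternative, perhaps cleaner, is to first realize $\sigma$ as $\ip{\eta, \pi_0(\cdot)\eta}$ on \emph{some} representation (the general GNS above gives this once boundedness of the $\mathcal{A}$-action of all of $\mathcal{A}$, not $X$, is known — which follows from property (4) and positivity as in Proposition \ref{prop:operatorvaluedGNS}), and then separately invoke that $X$ acts boundedly: the precise value $\rad(\sigma)$ for $\norm{\pi(X)}$ comes from matching the moment growth rate to the spectral radius of the self-adjoint operator $\pi(X)$.

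Once the action is bounded, set $\mathcal{B} = C^*(\pi(\mathcal{A}\ip{X})) \subseteq B(\mathcal{H})$, let $\pi : \mathcal{A}\ip{X} \to \mathcal{B}$ be the (unital, $*$-preserving) multiplication representation, let $\xi = [1] \in \mathcal{H}$, and define $\widehat{\sigma}(b) = \ip{\xi, b\xi}$; then $\widehat{\sigma}$ is completely positive (being a compression of a $*$-homomorphism, exactly as in Proposition \ref{prop:operatorvaluedGNS}), and $\widehat\sigma \circ \pi = \sigma$ by construction. That $\norm{\pi(X)}_\mathcal{B} = \rad(\sigma)$: the inequality $\leq$ is the boundedness estimate above; for $\geq$, note $\norm{\sigma(X^{2n})} = \norm{\widehat\sigma(\pi(X)^{2n})} \leq \norm{\pi(X)}^{2n}$ forces $\rad(\sigma) \leq \norm{\pi(X)}$ since $X^{2n}$ witnesses the moment growth with $a_j = 1$. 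Finally, if $\sigma$ is a law: property (3) gives $\sigma|_\mathcal{A} = \id$, so $\pi|_\mathcal{A}$ is injective, hence isometric by Theorem \ref{thm:Cstarbasics}(2); to see $\pi$ is injective on all of $\mathcal{A}\ip{X}$ one checks $\xi$ is separating enough — more directly, define $E = \widehat\sigma$ and verify it is a conditional expectation: $E|_\mathcal{A} = \sigma|_\mathcal{A} = \id$ by (3), and the $\mathcal{A}$-bimodule property $E(a_1 b a_2) = a_1 E(b) a_2$ follows from $\mathcal{A}$-centrality of $\xi$ (i.e.\ $\pi(a)\xi = \xi a$), which in turn follows from property (4) applied to $\sigma$. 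Thus $(\mathcal{B}, \widehat\sigma)$ is an $\mathcal{A}$-valued probability space with $x = \pi(X)$ self-adjoint, completing the proof.
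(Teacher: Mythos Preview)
Your approach is essentially the same GNS-type construction the paper sketches, but there is a genuine gap: you build the pre-inner product on $\mathcal{A}\ip{X}$ via $\ip{p,q}_\sigma = \sigma(p^*q)$ and claim it is right $\mathcal{A}$-linear by property~(4) of Definition~\ref{def:law}. But a \emph{generalized} law is only assumed to satisfy (1) and (2); the bimodule property (4) is \emph{not} available. Without it, $\sigma(p^*qa)$ need not equal $\sigma(p^*q)a$, so your form is not right $\mathcal{A}$-linear and $\mathcal{A}\ip{X}$ does not carry the structure of a right pre-Hilbert $\mathcal{A}$-module in the way you want. (Conjugate symmetry, incidentally, follows from complete positivity via the $2\times 2$ Gram matrix $\bigl(\begin{smallmatrix}p^*p & p^*q\\ q^*p & q^*q\end{smallmatrix}\bigr)$, not from (4).)

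The paper fixes exactly this by working on $\mathcal{A}\ip{X}\otimes_{\alg}\mathcal{A}$ with $\ip{p_1\otimes a_1, p_2\otimes a_2} = a_1^*\sigma(p_1^*p_2)a_2$, so that the right $\mathcal{A}$-action lives on the second tensor factor and requires nothing of $\sigma$ beyond complete positivity. When $\sigma$ is a law your module and the paper's coincide (via $p\mapsto p\otimes 1$, surjective precisely because of (4)), which is why your argument goes through in that case; the tensor product is only essential for the generalized case. Beyond this, both you and the paper identify that the substantive step is bounding left multiplication by $X$---the paper simply defers this to \cite{PV2013,Williams2017}, while your sketch (growth bound plus an iterated Cauchy--Schwarz / $2n$-th root argument) is on the right track but would need to be made precise, e.g.\ via the squaring trick $\norm{\sigma(p^*X^2p)}^2 \leq \norm{\sigma(p^*p)}\,\norm{\sigma(p^*X^4p)}$ iterated against the exponential bound from~(2).
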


The proof goes by way of the GNS construction.  We define $\mathcal{A}\ip{X} \otimes_\sigma \mathcal{A}$ to be the completed quotient of $\mathcal{A}\ip{X} \otimes_{\alg} \mathcal{A}$ with the pre-inner product $\ip{p_1(X) \otimes a_1, p_2(X) \otimes a_2}_\sigma = a_1^* \sigma(p_1(X)^*p_2(X)) a_2$.  We want to define $\pi: \mathcal{A}\ip{X} \to B(\mathcal{A}\ip{X} \otimes_\sigma \mathcal{A})$ to be the action of left multiplication, define $\mathcal{B}$ to be $C^*$-algebra generated by $\pi(\mathcal{A}\ip{X})$, and define $\widehat{\sigma}: \mathcal{B} \to \mathcal{A}$ by $\widehat{\sigma}(b) = \ip{(1 \otimes 1), b(1 \otimes 1)}$.  However, to show that the left multliplication action $\pi$ is well-defined and bounded requires additional argument since $\mathcal{A}\ip{X}$ is not a $C^*$-algebra.  We refer to the papers cited above for the complete proof.

\begin{remark}
If $\sigma$, $\pi$, and $\widehat{\sigma}$ are as in the preceding proposition, then by \eqref{eq:CPmapbound}, we have for $A_j \in M_n(\mathcal{A})$ that
\begin{align}
\norm{\sigma^{(n)}(A_0 X A_1 \dots X A_k)} &\leq \norm{\widehat{\sigma}(1)} \norm{\pi^{(n)}(A_0 X A_1 \dots X A_k)} \nonumber \\
&\leq \norm{\sigma(1)} \rad(\sigma)^k \norm{A_0} \dots \norm{A_k},  \label{eq:improvedradiusbound}
\end{align}
which is a sharpening of the estimate (2) assumed in Definition \ref{def:law}.
\end{remark}

\begin{lemma} \label{lem:maxradius}
If $\sigma_1$ and $\sigma_2: \mathcal{A}\ip{X} \to \mathcal{A}$ are generalized laws, then $\sigma_1 + \sigma_2$ is a generalized laws and satisfies
\begin{equation} \label{eq:maxradius}
\rad(\sigma_1 + \sigma_2) = \max(\rad(\sigma_1), \rad(\sigma_2)).
\end{equation}
\end{lemma}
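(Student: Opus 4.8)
The plan is to verify the two defining properties of a generalized law for $\sigma_1 + \sigma_2$ and then prove the radius identity in \eqref{eq:maxradius}. Complete positivity (property (1) of Definition \ref{def:law}) is immediate: for $P(X) \in M_n(\mathcal{A}\ip{X})$, we have $(\sigma_1+\sigma_2)^{(n)}(P^*P) = \sigma_1^{(n)}(P^*P) + \sigma_2^{(n)}(P^*P)$, which is a sum of two positive elements of $M_n(\mathcal{A})$ and hence positive. The exponential bound (property (2)) follows by the triangle inequality: if $\norm{\sigma_i(a_0 X a_1 \cdots X a_k)} \le C_i M_i^k \norm{a_0}\cdots\norm{a_k}$, then with $M = \max(M_1, M_2)$ and $C = C_1 + C_2$ we get $\norm{(\sigma_1+\sigma_2)(a_0 X a_1 \cdots X a_k)} \le C M^k \norm{a_0}\cdots\norm{a_k}$. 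This already shows $\sigma_1 + \sigma_2$ is a generalized law and gives the inequality $\rad(\sigma_1+\sigma_2) \le \max(\rad(\sigma_1),\rad(\sigma_2))$.

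The reverse inequality $\rad(\sigma_1+\sigma_2) \ge \max(\rad(\sigma_i))$ is the substantive point, and I would extract it from the sharpened estimate \eqref{eq:improvedradiusbound}. Apply Proposition \ref{prop:CPmap} to $\sigma := \sigma_1 + \sigma_2$ to obtain $\pi: \mathcal{A}\ip{X} \to \mathcal{B}$ with $\norm{\pi(X)}_{\mathcal{B}} = \rad(\sigma)$ and a completely positive $\widehat{\sigma}$ with $\sigma = \widehat\sigma \circ \pi$. The key observation is that each $\sigma_i \le \sigma$ in the sense of completely positive maps (since $\sigma - \sigma_i = \sigma_{3-i}$ is completely positive); by a standard Radon--Nikodym-type argument for completely positive maps — or more directly, by factoring $\sigma_i$ through the same GNS representation via a positive contraction — one concludes $\rad(\sigma_i) \le \rad(\sigma)$, i.e.\ the support radius is monotone with respect to the order on generalized laws. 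Concretely, the pre-inner product $\ip{p_1(X)\otimes a_1, p_2(X)\otimes a_2}_{\sigma_i} = a_1^* \sigma_i(p_1^*p_2) a_2$ is dominated by the one built from $\sigma$, so the GNS space for $\sigma_i$ is a quotient of that for $\sigma$, and left multiplication by $X$ on the $\sigma_i$-space has norm at most its norm on the $\sigma$-space, which is $\rad(\sigma)$. Hence $\rad(\sigma_i) = \norm{\pi_{\sigma_i}(X)} \le \rad(\sigma)$.

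The main obstacle I anticipate is making the domination argument fully rigorous at the level of the (non-$C^*$) algebra $\mathcal{A}\ip{X}$: one must be careful that the quotient map between GNS spaces intertwines the left-multiplication actions and that the norm estimate on $\pi_{\sigma_i}(X)$ really reads off as the radius rather than something larger. This is handled exactly as in the proof sketch following Proposition \ref{prop:CPmap}, using that $\norm{\pi_{\sigma}(X)}_{\mathcal{B}} = \rad(\sigma)$ is achieved, and that a quotient of Hilbert modules is norm-nonincreasing on operators that descend to the quotient. Once the two inequalities are in hand, \eqref{eq:maxradius} follows, completing the proof.
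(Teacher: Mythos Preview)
Your treatment of complete positivity and of the inequality $\rad(\sigma_1+\sigma_2)\le\max(\rad(\sigma_1),\rad(\sigma_2))$ is correct and matches the paper exactly.

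For the reverse inequality, however, your ``concrete'' quotient argument has a genuine gap. From $\sigma_i\le\sigma$ you correctly get a contractive map $q:\mathcal{H}_\sigma\to\mathcal{H}_{\sigma_i}$ intertwining the left-multiplication actions. But an intertwining contraction does \emph{not} bound operator norms in the direction you need: from $\sigma_i(p^*X^2p)\le\sigma(p^*X^2p)\le\rad(\sigma)^2\,\sigma(p^*p)$ you only get a bound involving $\sigma(p^*p)$, not the $\sigma_i(p^*p)$ required to control $\norm{\pi_{\sigma_i}(X)}$. (A two-dimensional example: on $\C^2$ with inner products $\ip{\cdot,\cdot}$ and $\ip{\cdot,\text{diag}(1,\epsilon^2)\cdot}$, the matrix $E_{12}$ has norm $1$ for the first and $1/\epsilon$ for the second, despite the obvious contractive intertwiner.) Your Radon--Nikodym alternative can be pushed through --- one shows $\sigma_i$ extends boundedly through $\pi_\sigma$ to a completely positive map on $\mathcal{B}$, then applies \eqref{eq:CPmapbound} --- but this takes more care than you indicate, precisely because $\mathcal{A}\ip{X}$ is not a $C^*$-algebra.

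The paper sidesteps all of this with a short direct computation. Using positivity of the $\sigma_j$-inner product, one has the Cauchy--Schwarz-type bound $\sigma_j(p)^*\sigma_j(p)\le\sigma_j(p^*p)$; combining with $\sigma_j(p^*p)\le(\sigma_1+\sigma_2)(p^*p)$ gives $\norm{\sigma_j(p)}\le\norm{(\sigma_1+\sigma_2)(p^*p)}^{1/2}$. Now take $p=a_0Xa_1\cdots Xa_k$ and apply the sharpened estimate \eqref{eq:improvedradiusbound} to $\sigma_1+\sigma_2$ on $p^*p$ (which has $2k$ factors of $X$); this yields $\norm{\sigma_j(a_0X\cdots Xa_k)}\le C\,\rad(\sigma_1+\sigma_2)^k\norm{a_0}\cdots\norm{a_k}$ and hence $\rad(\sigma_j)\le\rad(\sigma_1+\sigma_2)$. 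This avoids any GNS comparison between $\sigma_j$ and $\sigma$, and is the cleaner route.
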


\begin{proof}
It is immediate that $\sigma_1 + \sigma_2$ is completely positive.  Let $M_1 = \rad(\sigma_1)$ and $M_2 = \rad(\sigma_2)$ and $M = \max(M_1,M_2)$.  Then for some constants $C_1$ and $C_2$,
\begin{align*}
\norm{(\sigma_1 + \sigma_2)(a_0 X a_1 \dots X a_k)} &\leq (C_1 M_1^n + C_2 M_2^n) \norm{a_0} \dots \norm{a_k} \\
&\leq (C_1 + C_2) M^n \norm{a_0} \dots \norm{a_k}.
\end{align*}
Thus, $\sigma_1 + \sigma_2$ satisfies Definition \ref{def:law} (2), so it is a generalized law, and $\rad(\sigma_1 + \sigma_2) \leq M$.  It remains to show the reverse inequality.  By positivity of the inner product on $\mathcal{A}\ip{X} \otimes_{\sigma_j} \mathcal{A}$, we have
\[
0 \leq \ip{p(X) \otimes 1 - 1 \otimes \sigma_j(p(X)), p(X) \otimes 1 - 1 \otimes \sigma_j(p(X))}_{\sigma_j} = \sigma_j(p(X)^*p(X)) - \sigma_j(p(X))^* \sigma_j(p(X)).
\]
Therefore,
\[
0 \leq \sigma_j(p(X))^* \sigma_j(p(X)) \leq \sigma_j(p(X)^*p(X)) \leq (\sigma_1 + \sigma_2)(p(X)^*p(X)).
\]
Hence,
\[
\norm{\sigma_j(p(X))} \leq \norm{(\sigma_1 + \sigma_2)(p(X)^* p(X))}^{1/2}.
\]
In particular, taking $p(X) = a_0 X a_1 \dots X a_n$, we have
\begin{align*}
\norm{\sigma_j(a_0 X a_1 \dots X a_k)} &\leq \norm{(\sigma_1 + \sigma_2)((a_0Xa_1 \dots Xa_k)^*(a_0Xa_1 \dots X a_k))}^{1/2} \\
&\leq \norm{\sigma_1(1) + \sigma_2(1)} \rad(\sigma_1 + \sigma_2)^k \norm{a_0} \dots \norm{a_k}.
\end{align*}
Hence, $\rad(\sigma_j) \leq \rad(\sigma_1 + \sigma_2)$.
\end{proof}

\subsection{Fully Matricial Functions} \label{subsec:matricial}

The Cauchy transform of a probability measure plays an important role in non-commutative probability theory, comparable to the role of the Fourier transform in classical probability theory and analysis.  Moreover, the study of Cauchy transforms as analytic functions is essential for the study of Loewner chains in the upper half-plane.

There is an analytic theory of $\mathcal{A}$-valued Cauchy transforms that is strikingly similar to the scalar-valued theory, including the explicit estimates and analytic characterization for Cauchy transforms that we will explain in the next section and use heavily in the rest of the paper.  These results rely on viewing the Cauchy transform $G_\sigma$ not merely as a Banach-valued analytic function on a subset of $\mathcal{A}$, but rather as a sequence of functions $G_\sigma^{(n)}$ defined on $n \times n$ matrices over $\mathcal{A}$ for every $n$, that is, a fully matricial function.

The theory of fully matricial functions (also known as non-commutative functions) was first developed in \cite{Taylor1972,Taylor1973}.  Its application to non-commutative probability is due to \cite[\S 6 - 7]{Voiculescu2004}.  A systematic treatment can be found in \cite{KVV2014}.  We restrict our attention the setting studied in \cite[\S 7.3]{KVV2014} and \cite{Williams2017}, which in their terminology would be uniformly analytic non-commutative functions on uniformly open non-commutative sets.  We also restrict ourselves to matrices over a $C^*$-algebra rather than general operator spaces.

\begin{notation}
If $\mathcal{A}$ is an algebra and $z \in M_n(\mathcal{A})$, we denote
\[
z^{(m)} = z \otimes 1_m = \begin{bmatrix} z & 0 & \dots & 0 & 0 \\ 0 & z & \dots & 0 & 0 \\ \vdots & \vdots & \ddots & \vdots & \vdots \\ 0 & 0 & \dots & z & 0 \\ 0 & 0 & \dots & 0 & z \end{bmatrix} \in M_{mn}(\mathcal{A}).
\]
Moreover, for $z \in M_n(\mathcal{A})$ and $w \in M_m(\mathcal{A})$, we denote
\[
z \oplus w = \begin{bmatrix} z & 0 \\ 0 & w \end{bmatrix} \in M_{n+m}(\mathcal{A}).
\]
\end{notation}

\begin{definition} 
Let $\mathcal{A}$ be a unital $C^*$-algebra.  Let $M_\bullet(\mathcal{A})$ denote the sequence of sets $(M_n(\mathcal{A}))_{n \in \N}$.  A \emph{fully matricial domain} in $M_\bullet(\mathcal{A})$ is a sequence of sets $U = (U^{(n)})_{n \in \N}$, where
\begin{enumerate}[(1)]
	\item $U^{(n)}$ is a connected subset of $M_n(\mathcal{A})$.
	\item $U$ is \emph{uniformly open}, that is, for every $z \in U^{(n)}$, there exists $r > 0$ such that for every $m \in \N$, we have $B_{M_{nm}(\mathcal{A})}(z^{(m)},r) \subseteq U^{(mn)}$.
	\item $U$ \emph{respects direct sums}, that is, if $z \in U^{(n)}$ and $w \in U^{(m)}$, then $z \oplus w \in U^{(n+m)}$.
\end{enumerate}
If $U = \{U^{(n)}\}$ and $\tilde{U} = \{\tilde{U}^{(n)}\}$ are fully matricial domains, write $U \subseteq \widehat{U}$ to mean that $U^{(n)} \subseteq \widehat{U}^{(n)}$ for each $n$.
\end{definition}

\begin{definition}
Let $U = (U^{(n)})_{n \in \N}$ be a matricial domain.  A fully matricial function $F: U \to M_\bullet(\mathcal{A})$ is a sequence of maps $F^{(n)}: U^{(n)} \to M_n(\mathcal{A})$ such that
\begin{enumerate}[(1)]
	\item $F$ \emph{respects direct sums}, that is, if $z \in U^{(n)}$ and $w \in U^{(m)}$, then $F^{(n+m)}(z \oplus w) = F^{(n)}(z) \oplus F^{(m)}(w)$.
	\item $F$ \emph{respects similarities}, that is, $s \in M_n(\C) \subseteq M_n(\mathcal{A})$ is invertible, if $z \in U^{(n)}$ and $szs^{-1} \in U^{(n)}$, then $F^{(n)}(szs^{-1}) = s F^{(n)}(z) s^{-1}$.
	\item $F$ is \emph{uniformly locally bounded}, that is, for every $n$ and every $z_0 \in U^{(n)}$, there exists an $r > 0$ and $C > 0$ such that for every $m \in \N$, we have
	\[
	z \in B_{M_{mn}(\mathcal{A})}(z_0^{(m)},r) \implies z \in U^{(mn)} \text{ and } \norm{F^{(mn)}(z)} \leq C.
	\]
\end{enumerate}
\end{definition}

As shown in \cite[\S 7.1]{KVV2014}, these assumptions imply that $F^{(n)}$ is an analytic function $U^{(n)} \to M_n(\mathcal{A})$.  Moreover, the non-commutative difference-differential operators defined in \cite{KVV2014} relate in a natural way to the Banach-valued derivatives $\delta^k F^{(n)}(z;h)$.  As we will not need the difference-differential calculus in our proofs, the interested reader may see \cite[\S 2.3]{Williams2017} for a convenient summary.

\begin{notation}
In this paper, $(F^{(n)})^{-1}(z)$ will denote the inverse function of $F^{(n)}$ (when defined), while $F^{(n)}(z)^{-1}$ will denote the inverse in $M_n(\mathcal{A})$ of the element $F(z)$ (when defined).  We will denote by $DF^{(n)}(z)$ the Fr{\'e}chet derivative of $F^{(n)}$, which is a linear transformation $M_n(\mathcal{A}) \to M_n(\mathcal{A})$.  Meanwhile, $DF^{(n)}(z)^{-1}$ will denote the inverse linear transformation $M_n(\mathcal{A}) \to M_n(\mathcal{A})$ (when defined).
\end{notation}

\subsection{Cauchy Transforms} \label{subsec:Cauchytransform}

\begin{definition} \label{def:halfspace}
We define the \emph{$\mathcal{A}$-valued upper-half plane} $\h(\mathcal{A})$ as follows: Let
\[
\h_\epsilon^{(n)}(\mathcal{A}) = \{z \in M_n(\mathcal{A}), \im z \geq \epsilon\},
\]
where $\im z = -\frac{1}{2}i(z - z^*)$.  Then let
\begin{equation}
\h^{(n)}(\mathcal{A}) = \bigcup_{\epsilon > 0} \h_\epsilon^{(n)}(\mathcal{A}).
\end{equation}
Let $\h(\mathcal{A})$ be the matricial domain $\{\h^{(n)}(\mathcal{A})\}_{n \in \N}$.  We use the notation
\begin{equation}
\overline{\h}(\mathcal{A}) := (\{z \in M_n(\mathcal{A}): \im z \geq 0\})_{n \in \N}.
\end{equation}
\end{definition}

Let $\sigma: \mathcal{A}\ip{X} \to \mathcal{A}$ be a generalized law (with $\rad(\sigma) < +\infty$ as always in this paper).  By Theorem \ref{prop:CPmap}, there is a $C^*$-algebra $\mathcal{B}$ and a $*$-homomorphism $\pi: \mathcal{A} \ip{X} \to \mathcal{B}$ and a completely positive map $\widehat{\sigma}: \mathcal{B} \to \mathcal{A}$ satisfying $\sigma = \widehat{\sigma} \circ \pi$.  Then (as in \cite{Voiculescu2004}) we define the \emph{Cauchy transform} $G_\sigma: \h(\mathcal{A}) \to M_\bullet(\mathcal{A})$ by
\begin{equation} \label{eq:Cauchytransformdef}
G_\sigma^{(n)}(z) := \widehat{\sigma}^{(n)}[(\pi^{(n)}(z) - \pi(X)^{(n)})^{-1}].
\end{equation}
Note that $G_\sigma$ maps $\h(\mathcal{A})$ into $-\overline{\h}(\mathcal{A})$ is that for an invertible operator $z$, we have $\im z \geq 0$ if and only if $\im z^{-1} \leq 0$.  We will verify below (after Lemma \ref{lem:Cauchytransformseries}) that the function denoted $G_\sigma$ only depends on $\sigma$ and not on $\pi$ and $\widehat{\sigma}$.

\begin{lemma} \label{lem:estimate1}
Let $\sigma$ be a generalized law $\mathcal{A} \ip{X} \to \mathcal{A}$ and $\norm{X}_\sigma \leq M$ and let $G_\sigma$ be as above.  Let
\begin{equation} \label{eq:Rndef}
R_\epsilon^{(n)} = \{z: \im z \geq \epsilon\} \cup \{\norm{z^{-1}} \leq 1/(M + \epsilon)\}.
\end{equation}
Then for $z \in R_\epsilon^{(n)}$, we have
\begin{equation}
\norm{G_\sigma^{(n)}(z)} \leq \frac{\norm{\sigma(1)}}{\epsilon} \label{eq:estimate1}
\end{equation}
\end{lemma}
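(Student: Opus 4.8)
The plan is to control $G_\sigma^{(n)}$ through the resolvent it is built from. Fix $\pi$, $\widehat\sigma$ as in \eqref{eq:Cauchytransformdef} and work inside the $C^*$-algebra $M_n(\mathcal{B})$: write $W = \pi^{(n)}(z)$ and $Y = \pi(X)^{(n)}$, so that $W^* = \pi^{(n)}(z^*)$ and $Y$ is self-adjoint with $\norm{Y} \le M$ (by Proposition \ref{prop:CPmap} together with the hypothesis $\norm{X}_\sigma \le M$). By \eqref{eq:CPmapbound} applied to the completely positive map $\widehat\sigma$,
\[
\norm{G_\sigma^{(n)}(z)} = \norm{\widehat\sigma^{(n)}[(W-Y)^{-1}]} \le \norm{\widehat\sigma(1)}\,\norm{(W-Y)^{-1}} = \norm{\sigma(1)}\,\norm{(W-Y)^{-1}},
\]
so it suffices to show that $W - Y$ is invertible with $\norm{(W-Y)^{-1}} \le 1/\epsilon$ whenever $z \in R_\epsilon^{(n)}$; this simultaneously confirms that the formula \eqref{eq:Cauchytransformdef} still makes sense on $R_\epsilon^{(n)}$, which is not a priori contained in $\h(\mathcal{A})$.

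I will treat the two sets comprising $R_\epsilon^{(n)}$ separately. Suppose first that $\im z \ge \epsilon$. Restricted to $M_n(\mathcal{A})$, the map $\pi^{(n)}$ is a unital $*$-homomorphism of $C^*$-algebras, hence positivity-preserving and commuting with $*$, so $\im W = \pi^{(n)}(\im z) \ge \epsilon$; and since $Y$ is self-adjoint, $\im(W-Y) = \im W \ge \epsilon$. The bound then follows from the standard fact that in a unital $C^*$-algebra, $\im T \ge \epsilon > 0$ forces $T$ to be invertible with $\norm{T^{-1}} \le 1/\epsilon$: representing the algebra faithfully on a Hilbert space, a unit vector $\xi$ satisfies $\norm{T\xi} \ge |\langle T\xi,\xi\rangle| \ge \langle (\im T)\xi,\xi\rangle \ge \epsilon$, and likewise $\norm{T^*\xi} \ge \epsilon$, so $T$ and $T^*$ are both bounded below.

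Suppose instead that $\norm{z^{-1}} \le 1/(M+\epsilon)$. Then $z$ is invertible in $M_n(\mathcal{A})$, so $W$ is invertible in $M_n(\mathcal{B})$ with $W^{-1} = \pi^{(n)}(z^{-1})$ and $\norm{W^{-1}} \le \norm{z^{-1}} \le 1/(M+\epsilon)$ by contractivity of $*$-homomorphisms. Factor $W - Y = W(1 - W^{-1}Y)$; since $\norm{W^{-1}Y} \le \norm{W^{-1}}\,\norm{Y} \le M/(M+\epsilon) < 1$, a Neumann series gives $\norm{(1 - W^{-1}Y)^{-1}} \le (M+\epsilon)/\epsilon$, and therefore $\norm{(W-Y)^{-1}} = \norm{(1-W^{-1}Y)^{-1}W^{-1}} \le \tfrac{M+\epsilon}{\epsilon}\cdot\tfrac{1}{M+\epsilon} = \tfrac{1}{\epsilon}$. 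Combining the two cases completes the proof.

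I do not anticipate a real obstacle: the argument is essentially two short resolvent estimates glued along $R_\epsilon^{(n)}$. The points worth a moment's attention are purely bookkeeping — ensuring the resolvent is well-defined throughout $R_\epsilon^{(n)}$ rather than only on $\h(\mathcal{A})$, and using $\norm{\pi(X)} \le M$ (an inequality) so that nothing depends on how $\rad(\sigma)$ compares with $M$ — together with the one small analytic input $\im T \ge \epsilon \Rightarrow \norm{T^{-1}} \le 1/\epsilon$, which is exactly what produces the constant $1/\epsilon$ in both cases, the threshold $M/(M+\epsilon) < 1$ in the second case being tuned precisely so that the Neumann bound collapses to $1/\epsilon$ as well.
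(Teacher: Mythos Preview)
Your proof is correct and follows essentially the same route as the paper: reduce to the resolvent bound $\norm{(W-Y)^{-1}} \le 1/\epsilon$ via \eqref{eq:CPmapbound}, then handle the two pieces of $R_\epsilon^{(n)}$ by the $\im T \ge \epsilon$ argument and a Neumann series respectively. Your write-up is in fact slightly more explicit than the paper's in justifying the implication $\im T \ge \epsilon \Rightarrow \norm{T^{-1}} \le 1/\epsilon$ and in carrying out the Neumann-series arithmetic.
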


\begin{proof}
Choose $\mathcal{B}$, $\pi$, and $\widehat{\sigma}$ be as above.  Since $\pi$ is a unital $*$-homomorphism, it is completely positive.  Hence, if we assume $\im z \geq \epsilon$, then $\im \pi^{(n)}(z - X^{(n)}) \geq \epsilon$ and hence $\norm{\pi^{(n)}(z - X^{(n)})^{-1}} \leq 1/\epsilon$.  Then because $\widehat{\sigma}^{(n)}$ is completely positive, we obtain
\begin{equation}
\norm{\widehat{\sigma}^{(n)}(\pi^{(n)}(z - X^{(n)})^{-1})} \leq \norm{\widehat{\sigma}^{(n)}(1^{(n)})} \norm{\pi^{(n)}(z - X^{(n)})^{-1}} \leq \frac{\norm{\sigma(1)}}{\epsilon}.
\end{equation}
Similarly, if $\norm{z^{-1}} \leq (M + \epsilon)^{-1}$, then $\norm{\pi(z)^{-1}} \leq (M + \epsilon)^{-1}$.  Using the Neumann series trick and that the fact that $\norm{\pi(X)^{(n)}} \leq M$, we know that $\pi(z - X^{(n)})$ is invertible and
\begin{equation}
\norm{\pi^{(n)}(z - X^{(n)})^{-1}} \leq \frac{1}{\epsilon}.
\end{equation}
Again, by complete positivity of $\widehat{\sigma}$, we obtain $\norm{\widehat{\sigma}^{(n)} \pi^{(n)}(z - X^{(n)})^{-1}} \leq \norm{\sigma(1)} / \epsilon$.
\end{proof}

\begin{lemma}
The object $G_\sigma$ defined above is a fully matricial function on $\h(\mathcal{A})$.
\end{lemma}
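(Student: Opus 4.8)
The plan is to fix once and for all a choice of $\mathcal{B}$, $\pi$, and $\widehat{\sigma}$ as in Proposition~\ref{prop:CPmap} (the independence of \eqref{eq:Cauchytransformdef} from this choice being deferred, as the excerpt says, to after Lemma~\ref{lem:Cauchytransformseries}), and then to verify directly the three defining properties of a fully matricial function. First I would record that each $G_\sigma^{(n)}$ is actually defined on all of $\h^{(n)}(\mathcal{A})$: if $\im z \geq \epsilon$, then since $\pi$ is a unital $*$-homomorphism, $\pi^{(n)}$ is a unital $*$-homomorphism $M_n(\mathcal{A}\langle X\rangle) \to M_n(\mathcal{B})$, hence completely positive, so $\im \pi^{(n)}(z) = \pi^{(n)}(\im z) \geq \epsilon$; as $\pi(X)$ is self-adjoint this gives $\im(\pi^{(n)}(z) - \pi(X)^{(n)}) \geq \epsilon$, so the resolvent exists and the formula \eqref{eq:Cauchytransformdef} makes sense (this is the content of the proof of Lemma~\ref{lem:estimate1}).

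For the direct-sum property, the point is that $\pi^{(n)}$ and $\widehat{\sigma}^{(n)}$ act entrywise and that $\pi(X)^{(n+m)} = \pi(X)^{(n)} \oplus \pi(X)^{(m)}$, so $\pi^{(n+m)}(z\oplus w) - \pi(X)^{(n+m)} = (\pi^{(n)}(z) - \pi(X)^{(n)}) \oplus (\pi^{(m)}(w) - \pi(X)^{(m)})$; inverting a block-diagonal element block by block and then applying $\widehat{\sigma}$ entrywise yields $G_\sigma^{(n+m)}(z\oplus w) = G_\sigma^{(n)}(z) \oplus G_\sigma^{(m)}(w)$. For the similarity property, the key observation is that conjugation by a scalar matrix $s \in M_n(\C)$ is a $\C$-linear operation on the entries of a matrix over any algebra, hence commutes with applying $\pi$ or $\widehat{\sigma}$ entrywise, and moreover $\pi(X)^{(n)} = \pi(X) \otimes 1_n$ is fixed by such conjugation. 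Thus, whenever $z$ and $szs^{-1}$ both lie in $\h^{(n)}(\mathcal{A})$, we get $\pi^{(n)}(szs^{-1}) - \pi(X)^{(n)} = s(\pi^{(n)}(z) - \pi(X)^{(n)})s^{-1}$, whose inverse is $s(\pi^{(n)}(z) - \pi(X)^{(n)})^{-1}s^{-1}$, and applying $\widehat{\sigma}^{(n)}$ gives $G_\sigma^{(n)}(szs^{-1}) = sG_\sigma^{(n)}(z)s^{-1}$.

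Finally, uniform local boundedness is essentially a restatement of Lemma~\ref{lem:estimate1}: given $z_0 \in \h^{(n)}(\mathcal{A})$, pick $\epsilon > 0$ with $\im z_0 \geq \epsilon$ and set $r = \epsilon/2$. Since $\im z_0^{(m)} = (\im z_0)\otimes 1_m \geq \epsilon$ and $\norm{\im w} \leq \norm{w}$, any $z$ with $\norm{z - z_0^{(m)}} < \epsilon/2$ satisfies $\im z \geq \epsilon/2$, so $z \in \h^{(mn)}(\mathcal{A})$ and Lemma~\ref{lem:estimate1} gives $\norm{G_\sigma^{(mn)}(z)} \leq 2\norm{\sigma(1)}/\epsilon$, uniformly in $m$. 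Together the three properties show $G_\sigma$ is a fully matricial function on $\h(\mathcal{A})$; analyticity of each $G_\sigma^{(n)}$ then follows automatically from \cite[\S 7.1]{KVV2014}.

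I do not expect a genuine obstacle here: every step is a formal manipulation of entrywise-applied maps and block-diagonal matrices, together with the already-proved resolvent estimate of Lemma~\ref{lem:estimate1}. The only point deserving a sentence of care is that entrywise-applied linear (resp.\ completely positive, resp.\ $*$-homomorphism) maps commute with direct sums and with conjugation by scalar matrices; this becomes transparent once one writes $M_n(\mathcal{A}) = \mathcal{A} \otimes M_n(\C)$ and identifies $\pi^{(n)} = \pi \otimes \id$ and $\widehat{\sigma}^{(n)} = \widehat{\sigma} \otimes \id$.
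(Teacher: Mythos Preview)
Your proposal is correct and follows essentially the same approach as the paper: the paper dismisses the direct-sum and similarity properties as ``straightforward to check'' and then proves uniform local boundedness exactly as you do, by taking $r = \epsilon/2$ around $z_0^{(m)}$ and invoking Lemma~\ref{lem:estimate1} to get the uniform bound $2\norm{\sigma(1)}/\epsilon$. Your explicit verification of the first two properties is a welcome elaboration of what the paper leaves implicit.
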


\begin{proof}
It is straightforward to check that $G_\sigma$ respects direct sums and similarities.  To check uniform local boundedness, assume that $z_0 \in M_n(\mathcal{A})$ with $\im z_0 \geq \epsilon$.  Then for every $m \in \N$, we have
\[
B_{M_{mn}(\mathcal{A})}(z_0^{(m)}, \epsilon / 2) \subseteq \h_{\epsilon / 2}(\mathcal{A}),
\]
and hence for $z \in B_{M_{mn}(\mathcal{A})}(z_0^{(m)}, \epsilon/2)$, we know $G_\sigma^{(mn)}(z)$ is defined and it is bounded by $2 \norm{\sigma(1)} / \epsilon$.
\end{proof}

Recall that in the scalar case, the Cauchy transform of a compactly supported finite measure on $\R$ extends to be analytic in a neighborhood of $\infty$ and the power series coefficients at $\infty$ are given by the moments of the measure.  The operator-valued analogue is as follows (see e.g.\ \cite[Proposition 2.17]{Williams2017}).

\begin{lemma} \label{lem:Cauchytransformseries}
Let $\sigma$ and $G_\sigma$ be as above with $\rad(\sigma) \leq M$.  Let $\tilde{G}_\sigma(z) = G_\sigma(z^{-1})$ for $z \in -\h(\mathcal{A})$.  Then $\tilde{G}_\sigma^{(n)}|_{B_{M_n(\mathcal{A})}(0,1/M) \cap -\h^{(n)}(\mathcal{A})}$ has a unique analytic extension $\tilde{G}$ to $B_{M_n(\mathcal{A})}(0,1/M)$ given by
\begin{equation}
\tilde{G}^{(n)}(z) = \sum_{k=0}^\infty \sigma^{(n)}(z(X^{(n)}z)^k),
\end{equation}
The sequence $(G^{(n)})$ is fully matricial and satisfies the estimate
\begin{equation} \label{eq:estimatenearinfinity}
\norm{\tilde{G}^{(n)}(z)} \leq \frac{\norm{\sigma(1)}}{\norm{z}^{-1} - M}.
\end{equation}
\end{lemma}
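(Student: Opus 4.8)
The plan is to prove the series identity on the overlap region $B_{M_n(\mathcal{A})}(0,1/M) \cap -\h^{(n)}(\mathcal{A})$ by a Neumann series manipulation, then show that the same series defines an analytic function on the whole ball with the stated bound, and finally check that the resulting sequence is fully matricial.

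First I would fix a representation $(\mathcal{B}, \pi, \widehat{\sigma})$ as in Proposition \ref{prop:CPmap}, so that $\sigma = \widehat{\sigma} \circ \pi$ and $\norm{\pi(X)}_{\mathcal{B}} = \rad(\sigma) \le M$. If $z$ lies in the overlap region, then $z$ is invertible with $z^{-1} \in \h^{(n)}(\mathcal{A})$, so, writing $u = \pi^{(n)}(z)$ and $T = \pi(X)^{(n)}$ and using that $\pi^{(n)}$ is a unital $*$-homomorphism,
\[
\tilde{G}_\sigma^{(n)}(z) = G_\sigma^{(n)}(z^{-1}) = \widehat{\sigma}^{(n)}\bigl[(u^{-1} - T)^{-1}\bigr].
\]
Since $\norm{uT} \le \norm{z}\,\norm{\pi(X)} < 1$, the factorization $u^{-1} - T = u^{-1}(1 - uT)$ and the Neumann series give $(u^{-1} - T)^{-1} = \sum_{k \ge 0} (uT)^k u$; applying $\widehat{\sigma}^{(n)}$ term by term and using $(uT)^k u = \pi^{(n)}\bigl(z(X^{(n)}z)^k\bigr)$ together with $\widehat{\sigma}^{(n)} \circ \pi^{(n)} = \sigma^{(n)}$ yields $\tilde{G}_\sigma^{(n)}(z) = \sum_{k\ge0} \sigma^{(n)}\bigl(z(X^{(n)}z)^k\bigr)$ on the overlap.

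Next I would show this series converges absolutely on all of $B_{M_n(\mathcal{A})}(0,1/M)$ with the claimed bound. By the sharpened radius estimate \eqref{eq:improvedradiusbound}, $\norm{\sigma^{(n)}(z(X^{(n)}z)^k)} \le \norm{\sigma(1)}\, M^k \norm{z}^{k+1}$, so the series is dominated by $\norm{\sigma(1)}\norm{z} \sum_{k\ge 0} (M\norm{z})^k = \norm{\sigma(1)}/(\norm{z}^{-1} - M)$. Each partial sum is a polynomial in the entries of $z$, hence analytic, and the partial sums converge uniformly and are uniformly bounded on $B_{M_n(\mathcal{A})}(0,\rho)$ for every $\rho < 1/M$; hence by the Weierstrass-type Lemma \ref{lem:analyticlimit} the limit $\tilde{G}^{(n)}$ is analytic on $B_{M_n(\mathcal{A})}(0,1/M)$. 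It agrees with $\tilde{G}_\sigma^{(n)}$ on the overlap region, which is nonempty and open (it contains $-i\delta 1_n$ for $0 < \delta < 1/M$), so it is an analytic extension; uniqueness of the extension is Lemma \ref{lem:analyticcontinuation} applied on the connected open ball.

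Finally I would verify that $(\tilde{G}^{(n)})_n$ is fully matricial, checking each property termwise. Respecting direct sums uses $X^{(n+m)} = X^{(n)} \oplus X^{(m)}$, which gives $(z \oplus w)\bigl(X^{(n+m)}(z\oplus w)\bigr)^k = \bigl(z(X^{(n)}z)^k\bigr) \oplus \bigl(w(X^{(m)}w)^k\bigr)$, while $\sigma^{(n)}$, being applied entrywise, sends block-diagonal arguments to block-diagonal values. Respecting similarities uses that $X^{(n)}$ commutes with scalar matrices $s \in M_n(\C)$, giving $(szs^{-1})\bigl(X^{(n)}szs^{-1}\bigr)^k = s\bigl(z(X^{(n)}z)^k\bigr)s^{-1}$, together with the entrywise $\C$-linearity of $\sigma^{(n)}$. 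Uniform local boundedness is immediate from the estimate: given $z_0$ with $\norm{z_0} < 1/M$, choose $r$ with $\norm{z_0} + r < 1/M$; then $\norm{z_0^{(m)}} = \norm{z_0}$ for every $m$, every $z \in B_{M_{mn}(\mathcal{A})}(z_0^{(m)},r)$ lies in $B_{M_{mn}(\mathcal{A})}(0,1/M)$, and the estimate bounds $\norm{\tilde{G}^{(mn)}(z)}$ by $\norm{\sigma(1)}/\bigl((\norm{z_0}+r)^{-1} - M\bigr)$. The one place that needs genuine care is the termwise Neumann-series identification on the overlap — correctly matching the alternating product $(uT)^k u$ with $z(X^{(n)}z)^k$ under $\pi^{(n)}$ and justifying the interchange of $\widehat{\sigma}^{(n)}$ with the infinite sum — but this is careful bookkeeping rather than a real difficulty.
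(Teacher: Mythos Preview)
Your proposal is correct and follows essentially the same approach as the paper: a Neumann series expansion of $(z^{-1} - \pi(X)^{(n)})^{-1}$ on the overlap, termwise application of $\widehat{\sigma}^{(n)}$, and the geometric-series bound giving \eqref{eq:estimatenearinfinity}. The only cosmetic difference is that you factor $u^{-1}(1 - uT)$ from the left while the paper factors $(1 - \pi(X)^{(n)}z)z^{-1}$ from the right, and you spell out the fully matricial verification that the paper leaves as an exercise.
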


\begin{proof}
Let $\pi$ and $\widehat{\sigma}$ be as above.  Observe that
\[
(z^{-1} - \pi(X)^{(n)})^{-1} = z(1 - \pi(X)^{(n)}z)^{-1} = \sum_{k=0}^\infty z (\pi(X)^{(n)} z)^k.
\]
Thus, for $z \in B_{M_n(\mathcal{A})}(0,1/M) \cap -\h^{(n)}(\mathcal{A})$, we have
\begin{equation}
\tilde{G}_\sigma^{(n)}(z) = \sum_{k=0}^\infty \widehat{\sigma}^{(n)} \circ \pi^{(n)} [z(X^{(n)}z)^k] = \sum_{k=0}^\infty \sigma^{(n)}[z(X^{(n)}z)^k].
\end{equation}
Here convergence of the series follows from the fact that $\norm{z(\pi(X)^{(n)}z)^k} \leq \norm{z}^{k+1} M^k$ and $\norm{\sigma^{(n)}} = \norm{\sigma(1)}$, and this also justifies \eqref{eq:estimatenearinfinity}.  We leave it as an exercise to check that $\tilde{G}$ is fully matricial.
\end{proof}

One consequence of this is that $G_\sigma$ is well-defined, independent of the choice of $(\mathcal{B},\pi,\widehat{\sigma})$ realizing the law $\sigma$.  Indeed, it follows from Lemma \ref{lem:Cauchytransformseries} that two choices of $\pi$ will yield the same function $\tilde{G}_\sigma$ for $z \in B_{M_n(\mathcal{A})}(0,1/M) \cap -\h^{(n)}(\mathcal{A})$ and hence everywhere by analytic continuation (Lemma \ref{lem:analyticcontinuation}).

Thus, the \emph{Cauchy transform} $G_\sigma$ of a generalized law $\sigma$ is well-defined by \eqref{eq:Cauchytransformdef}.  Moreover, we denote the fully matricial extension $\tilde{G}$ constructed in Lemma \ref{lem:Cauchytransformseries} by $\tilde{G}_\sigma$ since no confusion will result.  Just as in the scalar case, the operator-valued Cauchy transform satisfies the following local Lipschitz estimate.

\begin{lemma} \label{lem:estimates}
Let $\sigma$ be a generalized law $\mathcal{A} \ip{X} \to \mathcal{A}$ and $\rad(\sigma) \leq M$.  Let $R_\epsilon^{(n)}$ be given by \eqref{eq:Rndef} as in Lemma \ref{lem:estimate1}.  Then for $z$ and $z' \in R_\epsilon^{(n)}$, we have
\begin{align}
\norm{G(z) - G(z')} &\leq \frac{\norm{\sigma(1)}}{\epsilon^2} \norm{z - z'} \label{eq:estimate2} \\
\norm{DG(z)} &\leq \frac{\norm{\sigma(1)}}{\epsilon^2} \label{eq:estimate3}
\end{align}
\end{lemma}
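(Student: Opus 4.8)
The plan is to obtain both inequalities directly from the second resolvent identity, exactly as in the scalar case, using the bound on the resolvent of $\pi(X)^{(n)}$ that was already extracted in the proof of Lemma~\ref{lem:estimate1}.

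First I would fix a $C^*$-algebra $\mathcal{B}$, a $*$-homomorphism $\pi\colon \mathcal{A}\ip{X}\to\mathcal{B}$, and a completely positive $\widehat{\sigma}\colon\mathcal{B}\to\mathcal{A}$ realizing $\sigma$ as in Proposition~\ref{prop:CPmap}, so that $\norm{\pi(X)}\le M$ and $G_\sigma^{(n)}(z)=\widehat{\sigma}^{(n)}[W_z^{-1}]$ where $W_z:=\pi^{(n)}(z)-\pi(X)^{(n)}$. The argument in the proof of Lemma~\ref{lem:estimate1} shows that for every $z\in R_\epsilon^{(n)}$ the operator $W_z$ is invertible in $M_n(\mathcal{B})$ with $\norm{W_z^{-1}}\le 1/\epsilon$; the two clauses in the definition \eqref{eq:Rndef} of $R_\epsilon^{(n)}$ are handled respectively by positivity of $\im$ and by a Neumann series expansion around $\pi^{(n)}(z)$.

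Next, for $z,z'\in R_\epsilon^{(n)}$ I would invoke the resolvent identity
\[
W_z^{-1}-W_{z'}^{-1}=-W_z^{-1}\,(W_z-W_{z'})\,W_{z'}^{-1}=-W_z^{-1}\,\pi^{(n)}(z-z')\,W_{z'}^{-1},
\]
which is valid since $\pi^{(n)}$ is linear and both $W_z,W_{z'}$ are invertible. Applying $\widehat{\sigma}^{(n)}$ and using the complete-positivity bound \eqref{eq:CPmapbound} in the form $\norm{\widehat{\sigma}^{(n)}(a)}\le\norm{\sigma(1)}\norm{a}$, together with the contractivity of the $*$-homomorphism $\pi$ (so $\norm{\pi^{(n)}(z-z')}\le\norm{z-z'}$) and $\norm{W_z^{-1}},\norm{W_{z'}^{-1}}\le 1/\epsilon$, yields \eqref{eq:estimate2}. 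For \eqref{eq:estimate3} I would differentiate $\zeta\mapsto G_\sigma^{(n)}(z+\zeta h)=\widehat{\sigma}^{(n)}\bigl[(W_z+\zeta\,\pi^{(n)}(h))^{-1}\bigr]$ at $\zeta=0$, obtaining $DG_\sigma^{(n)}(z)[h]=-\widehat{\sigma}^{(n)}[W_z^{-1}\,\pi^{(n)}(h)\,W_z^{-1}]$, and the same chain of estimates bounds $\norm{DG_\sigma^{(n)}(z)[h]}$ by $\norm{\sigma(1)}\epsilon^{-2}\norm{h}$.

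The computation is routine once Lemma~\ref{lem:estimate1} is available; the only subtlety worth flagging is that $R_\epsilon^{(n)}$ is a union of two pieces and need not be convex or connected, so one cannot simply integrate \eqref{eq:estimate3} along a segment to deduce \eqref{eq:estimate2}. The resolvent identity avoids this issue entirely, since it requires only the invertibility of $W_z$ and $W_{z'}$ and not any path joining $z$ to $z'$.
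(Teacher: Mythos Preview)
Your proof is correct and follows essentially the same approach as the paper: the paper also derives \eqref{eq:estimate2} from the resolvent identity and the bound $\norm{W_z^{-1}}\le 1/\epsilon$ established in Lemma~\ref{lem:estimate1}. The only cosmetic difference is that the paper obtains \eqref{eq:estimate3} as an immediate consequence of \eqref{eq:estimate2} (the derivative bound follows from the Lipschitz bound by letting $z'\to z$), whereas you recompute the Fr\'echet derivative directly; both are equally valid, and your remark about the non-convexity of $R_\epsilon^{(n)}$ correctly explains why the implication only runs in one direction.
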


\begin{proof}
Relation \eqref{eq:estimate2} follows from a similar argument to Lemma \ref{lem:estimate1} using the resolvent identity
\begin{equation}
\pi(z - X^{(n)})^{-1} - \pi(z' - X^{(n)})^{-1} = - \pi(z - X^{(n)})^{-1}\pi(z - z') \pi(z' - X^{(n)})^{-1}.
\end{equation}
Relation \eqref{eq:estimate3} follows from \eqref{eq:estimate2}.
\end{proof}

The next lemma shows the existence of a fully matricial inverse function for $\tilde{G}_\mu$ in a neighborhood of zero when $\mu$ is a law.  Implicit and inverse function theorems for fully matricial functions have been studied in \cite[\S 11.5]{Voiculescu2004}, \cite{AKV2013}, \cite{AKV2015}, \cite{AM2016}.  In particular, the following lemma is a special case of \cite[Theorem 1.4]{AKV2015}.  We sketch the proof here for the sake of exposition and to justify our explicit estimates on the radius of the ball where $\tilde{G}_\mu^{-1}$ is defined.

\begin{lemma} \label{lem:Cauchytransforminverse}
Let $G$ be the Cauchy transform of a law $\mu$ with $\rad(\mu) \leq M$.  Then $\tilde{G}^{(n)}$ has a fully matricial inverse $(G^{(n)})^{-1}: B^{(n)}(0,R_2) \to B^{(n)}(0,R_1)$, where $R_1 = M^{-1}(1 - 1/\sqrt{2})$ and $R_2 = M^{-1}(3 - 2 \sqrt{2})$.  In particular, the radii $R_1$ and $R_2$ are independent of $n$.
\end{lemma}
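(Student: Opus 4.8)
The plan is to build the inverse function explicitly via a contraction-mapping argument, working one matrix level $n$ at a time but keeping all estimates uniform in $n$ so that the inverse is automatically fully matricial. First I would record the key feature of $\tilde{G} = \tilde{G}_\mu$ near zero that comes from $\mu$ being an honest law (not just a generalized law): by Lemma \ref{lem:Cauchytransformseries}, $\tilde{G}^{(n)}(z) = \sum_{k \geq 0} \mu^{(n)}(z (X^{(n)} z)^k)$, and since $\mu|_{\mathcal{A}} = \id$ the $k = 0$ term is exactly $z$. Thus $\tilde{G}^{(n)}(z) = z + E^{(n)}(z)$ where $E^{(n)}(z) = \sum_{k \geq 1} \mu^{(n)}(z(X^{(n)}z)^k)$ is the ``error'' piece. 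Using $\norm{\mu(1)} = 1$ (true for a law) and $\rad(\mu) \leq M$, the bound from Lemma \ref{lem:Cauchytransformseries}, or rather a direct term-by-term estimate of the tail, gives $\norm{E^{(n)}(z)} \leq \sum_{k \geq 1} M^k \norm{z}^{k+1} = \norm{z} \cdot \dfrac{M\norm{z}}{1 - M\norm{z}}$ for $\norm{z} < 1/M$, and likewise $\norm{DE^{(n)}(z)} \leq \dfrac{2M\norm{z} - M^2\norm{z}^2}{(1 - M\norm{z})^2}$ by differentiating the series term by term (each monomial $z \mapsto z(X^{(n)}z)^k$ has derivative bounded by $(k+1)M^k\norm{z}^k$).

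Next I would invert $\tilde{G}^{(n)} = \id + E^{(n)}$ by a fixed-point iteration: to solve $\tilde{G}^{(n)}(z) = w$ is to solve $z = w - E^{(n)}(z)$, so set $\Psi_w(z) = w - E^{(n)}(z)$ and look for a fixed point in the closed ball $\overline{B}^{(n)}(0,R_1)$ with $R_1 = M^{-1}(1 - 1/\sqrt{2})$. For this to work I need two things on that ball: (i) $\Psi_w$ maps $\overline{B}^{(n)}(0,R_1)$ into itself whenever $\norm{w} \leq R_2$, which requires $R_2 + \sup_{\norm{z} \leq R_1} \norm{E^{(n)}(z)} \leq R_1$; and (ii) $\Psi_w$ is a strict contraction there, which requires $\sup_{\norm{z} \leq R_1} \norm{DE^{(n)}(z)} < 1$. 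Plugging $\norm{z} = R_1 = M^{-1}(1 - 1/\sqrt{2})$ into the derivative bound gives $M\norm{z} = 1 - 1/\sqrt{2}$, so $1 - M\norm{z} = 1/\sqrt{2}$ and the derivative bound becomes $\dfrac{2(1-1/\sqrt2) - (1-1/\sqrt2)^2}{1/2} = 4(1-1/\sqrt2) - 2(1-1/\sqrt2)^2$; one checks this is strictly less than $1$ (it equals $2 - 1/\sqrt2 \approx 1.29$? — this is exactly the point where the constants must be verified carefully, see below). The self-mapping condition then forces $R_2 \leq R_1 - \sup \norm{E^{(n)}}$, and computing $\sup_{\norm z \le R_1}\norm{E^{(n)}(z)} = R_1 \cdot \dfrac{1 - 1/\sqrt2}{1/\sqrt2} = R_1(\sqrt2 - 1)$ yields $R_2 = R_1(1 - (\sqrt 2 - 1)) = R_1(2 - \sqrt2) = M^{-1}(1 - 1/\sqrt2)(2 - \sqrt2) = M^{-1}(3 - 2\sqrt2)$, which matches the claimed value. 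By the Banach fixed point theorem in the complete metric space $\overline{B}^{(n)}(0,R_1)$ (a closed ball in the Banach space $M_n(\mathcal{A})$), there is a unique fixed point $z = (G^{(n)})^{-1}(w)$, defining a map $B^{(n)}(0,R_2) \to B^{(n)}(0,R_1)$.

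Then I would verify the three remaining assertions. \emph{Analyticity}: the iterates $z_0 = w$, $z_{j+1} = \Psi_w(z_j)$ are each analytic in $w$ (compositions and sums of analytic maps, using that $E^{(n)}$ is analytic), uniformly bounded by $R_1$, and converge uniformly on $B^{(n)}(0,R_2)$ by the contraction estimate; hence the limit $(G^{(n)})^{-1}$ is analytic by Lemma \ref{lem:analyticlimit}. \emph{Genuine inverse}: by construction $\tilde G^{(n)}((G^{(n)})^{-1}(w)) = w$; for the other composition, note $(G^{(n)})^{-1}(\tilde G^{(n)}(z)) $ and $z$ both lie in $B^{(n)}(0,R_1)$ and both solve the fixed-point equation with $w = \tilde G^{(n)}(z)$, and uniqueness of the fixed point gives equality — but I should be slightly careful that $z \in B^{(n)}(0,R_1)$ actually implies $\tilde G^{(n)}(z) \in B^{(n)}(0,R_2)$; if the forward image of $B^{(n)}(0,R_1)$ overshoots $R_2$, one restricts to the preimage, which is the standard bookkeeping in an inverse function argument. \emph{Fully matricial}: since $\tilde G = (\tilde G^{(n)})_n$ respects direct sums and similarities (Lemma \ref{lem:Cauchytransformseries}) and the fixed-point iteration is performed by the same algebraic recipe at every level, uniqueness of fixed points propagates these properties to $(G^{(n)})^{-1}$; uniform local boundedness is immediate since the image lies in a fixed ball $B^{(n)}(0,R_1)$ with $R_1$ independent of $n$. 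The main obstacle I anticipate is purely arithmetic: pinning down the contraction constant and the self-mapping radius simultaneously so that they produce exactly $R_1 = M^{-1}(1 - 1/\sqrt2)$ and $R_2 = M^{-1}(3 - 2\sqrt2)$. The choice $R_1$ is evidently designed to make $1 - MR_1 = 1/\sqrt2$ and thereby rationalize all the $(1 - M\norm z)^{-1}$ factors; once that substitution is made the inequalities should fall out, but one must double-check that the derivative bound at radius $R_1$ is genuinely below $1$ (if the naive term-by-term derivative estimate is too lossy, one instead estimates $\norm{\tilde G^{(n)}(z) - \tilde G^{(n)}(z')}$ directly using $\norm{z(X z)^k - z'(X z')^k} \leq$ (telescoping) and the a priori Lipschitz estimate \eqref{eq:aprioriLipschitz}, which is tighter). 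Everything else is routine.
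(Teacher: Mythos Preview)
Your approach is essentially identical to the paper's: write $\tilde G^{(n)} = \id + P^{(n)}$, bound $P^{(n)}$ and its Lipschitz constant via the power series, and invert by the Banach fixed point theorem. The one point you flagged is indeed the crux, and your arithmetic there slipped: at $\norm{z} = R_1$ the Lipschitz bound $\dfrac{2MR_1 - (MR_1)^2}{(1-MR_1)^2}$ equals exactly $1$, not $2 - 1/\sqrt{2}$ (with $a = MR_1 = 1 - 1/\sqrt2$ one has $4a - 2a^2 = 2a(2-a) = 1$). So you do \emph{not} get a strict contraction on the closed ball of radius $R_1$, and your proposed remedy of replacing the derivative bound by a telescoping Lipschitz estimate will not help, since the telescoping computation gives precisely the same constant $\frac{1}{(1-MR)^2} - 1$.

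The paper's resolution is the obvious one: work on $\overline{B}^{(n)}(0,R)$ for $R < R_1$ strictly, where the Lipschitz constant is $<1$ and the self-map condition holds for $\norm{w} \leq R' := R - \frac{MR^2}{1-MR}$; then observe that as $R \nearrow R_1$ one has $R' \nearrow R_2$, so the union of the resulting inverses covers the open ball $B^{(n)}(0,R_2)$. Everything else in your proposal (analyticity via uniform limits of iterates, matriciality via uniqueness of fixed points, independence of $n$) matches the paper.
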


\begin{proof}
Write $\tilde{G}^{(n)}(z) = z + P^{(n)}(z)$.  A power series manipulation shows that when $\norm{z}$, $\norm{z'} \leq R < 1/M$, we have
\begin{equation}
\norm*{P^{(n)}(z)} \leq \frac{MR^2}{1 - MR}
\end{equation}
and
\begin{equation}
\norm*{P^{(n)}(z) - P^{(n)}(z')} \leq \left( \frac{1}{(1 - MR)^2} - 1 \right) \norm{z - z'}.
\end{equation}
Now $z$ satisfies $\tilde{G}^{(n)}(z) = w$ if and only if $z$ is a fixed point of the map $Q^{(n)}(z) = w - P^{(n)}(z)$.  When $R < R_1$, we have $1/(1 - MR)^2 - 1 < 1$, so that $P^{(n)}$ and hence $Q^{(n)}$ are contractions for $\norm{z} \leq R$.  Observe that $Q^{(n)}$ maps $\overline{B}^{(n)}(0,R)$ into $\overline{B}^{(n)}(0,R)$ provided that
\begin{equation}
\norm{w} \leq R' := R - \frac{MR^2}{1 - MR};
\end{equation}
this follows from
\begin{equation}
\norm*{Q^{(n)}(z)} = \norm*{w - P^{(n)}(z)} \leq \norm*{w} + \frac{MR^2}{1 - MR}.
\end{equation}
Therefore, by the Banach fixed point theorem, $Q^{(n)}$ has a unique fixed point in $\overline{B}^{(n)}(0,R)$, so that $(\tilde{G}^{(n)})^{-1}$ is a well-defined map $B^{(n)}(0,R') \to B^{(n)}(0,R)$.

As $R \to R_1$, we have $R' \to R_2$, and hence $(\tilde{G}^{(n)})^{-1}$ is defined on the asserted domain.  By the standard proof of the inverse function theorem, $(\tilde{G}^{(n)})^{-1}$ is Fr{\'e}chet-differentiable in the complex sense, and hence analytic, and it clearly preserves direct sums and similarities by uniqueness of the fixed point of $Q^{(n)}$ in $B^{(n)}(0,R_2)$.
\end{proof}

We will rely on the following result of Williams \cite[Theorem 3.1]{Williams2017} and Williams-Anshelevich \cite[Theorem A.1]{AW2016} that gives an analytic characterization of matricial Cauchy transforms.

\begin{theorem} \label{thm:Cauchytransform}
Let $G = (G^{(n)})_{n \geq 1}$ be a sequence of functions $G^{(n)}: \h^{(n)}(\mathcal{A}) \to -\overline{\h}^{(n)}(\mathcal{A})$.  Then $G$ is the Cauchy transform of some $\sigma \in \Sigma_0(\mathcal{A})$ with $\rad(\sigma) \leq M$ if and only if the following hold:
\begin{enumerate}[(1)]
	\item $G$ is a fully matricial function.
	\item For each $n$, $\tilde{G}^{(n)}(z) = G^{(n)}(z^{-1})$ extends to be analytic on $B_{M_n(\mathcal{A})}(0,1/M)$.
	\item For each $\epsilon > 0$, we have $\norm{\tilde{G}^{(n)}(z)} \leq C_\epsilon$ for $\norm{z} < 1/(M + \epsilon)$, where $C_\epsilon$ is independent of $n$.
	\item $\tilde{G}^{(n)}(z^*) = \tilde{G}^{(n)}(z)^*$.
	\item $\tilde{G}^{(n)}(0) = 0$.
\end{enumerate}
In this case, $G$ is the Cauchy transform of a \emph{law} if and only if for each $n$,
\begin{equation}
\lim_{z \to 0} z^{-1} \tilde{G}^{(n)}(z) = 1^{(n)},
\end{equation}
where the limit is taken over invertible $z$ and $z \to 0$ in operator norm.  Moreover, for each $\delta \in (0,1/M)$, the law $\mu$ is uniquely determined by the fully matricial function $\tilde{G}$ restricted to $\norm{z} < \delta$.
\end{theorem}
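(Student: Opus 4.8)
Although this statement is quoted from \cite{Williams2017,AW2016}, let me indicate how one would prove it. The necessity of (1)--(5) has essentially been obtained already: (1) is the lemma that $G_\sigma$ is fully matricial; (2) and (3) are Lemma \ref{lem:Cauchytransformseries} together with the bound \eqref{eq:estimatenearinfinity}; (4) follows by writing $\tilde G_\sigma^{(n)}(z) = \sum_k \sigma^{(n)}(z(X^{(n)}z)^k)$ and using that in the realization $\sigma = \widehat\sigma \circ \pi$ of Proposition \ref{prop:CPmap} the element $\pi(X)$ is self-adjoint, so that $(z(X^{(n)}z)^k)^* = z^*(X^{(n)}z^*)^k$ while $\widehat\sigma$ preserves adjoints; and (5) is the vanishing of the $k=0$ term $\sigma^{(n)}(z)$ at $z=0$. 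In the law case, $\sigma|_{\mathcal A} = \id$ forces the degree-one term of $\tilde G^{(n)}$ to be $z \mapsto z$, whence $z^{-1}\tilde G^{(n)}(z) \to 1^{(n)}$.

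For the converse, the plan is to reconstruct $\sigma$ from the germ of $\tilde G$ at $0$, whose existence is exactly condition (2). First I would expand each $\tilde G^{(n)}$ into its homogeneous Taylor components $\tilde G^{(n)}(z) = \sum_{k \geq 0}\Phi_{k+1}^{(n)}(z)$ using Theorem \ref{thm:analytic}; condition (5) removes the constant term, and condition (3) gives, uniformly in $n$, the Cauchy bound $\norm{\Phi_{k+1}^{(n)}(h)} \leq C_\epsilon (M+\epsilon)^{k+1}\norm{h}^{k+1}$. Next, for $a_0,\dots,a_k \in \mathcal A$ let $w(a_0,\dots,a_k) \in M_{k+2}(\mathcal A)$ be the nilpotent bidiagonal matrix with $(i,i+1)$-entry $a_{i-1}$; conjugating $w$ by diagonal scalar matrices and invoking the ``respects similarities'' axiom shows that the $(1,k+2)$-entry of $\tilde G^{(k+2)}(w)$ comes entirely from $\Phi_{k+1}^{(k+2)}$ (lower components vanish on that entry by nilpotency, higher ones because $w^{k+2}=0$) and is multilinear in $(a_0,\dots,a_k)$. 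One then defines $\sigma(a_0 X a_1 \cdots X a_k)$ to be this entry (and $\sigma(a) = D\tilde G^{(1)}(0)[a]$), extended linearly to $\mathcal A\ip{X}$. The uniform bound on $\Phi_{k+1}^{(n)}$ together with the scaling symmetry yields $\norm{\sigma(a_0 X a_1 \cdots X a_k)} \leq C_\epsilon (M+\epsilon)^{k+1}\prod_j \norm{a_j}$, so $\rad(\sigma) \leq M$ upon letting $\epsilon \to 0$. By the Taylor--Taylor theory of \cite{KVV2014}, each $\Phi_{k+1}^{(n)}$ is a fixed multilinear map applied in a non-commutative pattern, hence is determined by precisely these corner evaluations; therefore $\tilde G^{(n)}(z) = \sum_k \sigma^{(n)}(z(X^{(n)}z)^k)$ for every $n$, i.e.\ $G = G_\sigma$, once $\sigma$ is known to be a generalized law. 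Condition (4) translates to $\sigma(p(X)^*) = \sigma(p(X))^*$, and the limit condition is exactly $\sigma(1)=1$ together with $\sigma|_{\mathcal A} = \id$, which (with complete positivity in hand) promotes $\sigma$ to a law; the uniqueness assertion then follows since the homogeneous components of $\tilde G$ on any ball $\norm{z} < \delta$ already determine all moments of $\sigma$, hence $\mu$ by Proposition \ref{prop:CPmap}.

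The main obstacle --- and the real content of \cite{Williams2017,AW2016} --- is to deduce complete positivity of the reconstructed $\sigma$, namely $\sigma^{(m)}(P(X)^*P(X)) \geq 0$ for $P \in M_m(\mathcal A\ip{X})$, from the single hypothesis that each $G^{(n)}$ maps $\h^{(n)}(\mathcal A)$ into $-\overline{\h}^{(n)}(\mathcal A)$. The way I would attack this is to run the GNS construction of Proposition \ref{prop:CPmap} in reverse: form $\mathcal A\ip{X} \otimes_{\alg} \mathcal A$ with the candidate pre-inner product $\ip{p_1 \otimes a_1, p_2 \otimes a_2} = a_1^* \sigma(p_1(X)^*p_2(X)) a_2$, and show that the operator-valued Herglotz condition forces this form to be positive semidefinite. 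Concretely, one expands $-\im G^{(n)}(z) \geq 0$ for $z$ of the form $iy\,1^{(n)}$ plus a fixed matrix with $y$ large, differentiates and combines the resulting family of inequalities, and recognizes the outcome as positive definiteness of the above Hankel-type form --- equivalently as the positivity underlying an operator-valued Nevanlinna integral representation of $G$. Once positivity is in place, the completed quotient carries a $*$-homomorphism $\pi$ of $\mathcal A\ip{X}$ with $\pi(X)$ self-adjoint and an $\mathcal A$-central unit vector $\xi$ realizing $\sigma$, which simultaneously delivers complete positivity and, in the law case, the unitality and $\mathcal A$-bimodule properties. Pushing the positivity through the non-commutative function structure is the one genuinely delicate step; the moment bookkeeping above is routine by comparison.
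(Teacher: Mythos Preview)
Your proposal is correct and aligns with the paper's treatment. Note that the paper does not give a self-contained proof of this theorem: it cites \cite[Theorem 3.1]{Williams2017} and \cite[Theorem A.1]{AW2016} for the main characterization, and only supplies two pieces of additional explanation --- (a) the recovery of $\sigma(a_0 X a_1 \cdots X a_k)$ as the $(1,k+2)$-entry of $\tilde G^{(k+2)}$ evaluated at the nilpotent bidiagonal matrix (your $w(a_0,\dots,a_k)$), and (b) the refinement that conditions (2)--(3) are equivalent to $\rad(\sigma) \leq M$, obtained by applying the Cauchy estimate at $\zeta = 1/(M+\epsilon)$ to the same bidiagonal matrix with $\norm{a_j}=1$. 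Your sketch reproduces both of these exactly, and your identification of complete positivity as the genuine content requiring the Herglotz/Nevanlinna argument of the cited papers is accurate; the paper makes no attempt to reprove that step either.
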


The chacterization of $G_\sigma(z)$ for laws $\sigma \in \Sigma_0$ was given by \cite[Theorem 3.1]{Williams2017} and it was extended to generalized laws by \cite[Theorem A.1]{AW2016}.  As explained in those papers and previous work, the generalized law $\sigma$ is recovered from looking at the power series expansion of $\tilde{G}$ at $0$.  Indeed, to evaluate $\sigma(a_0 X a_1 \dots X a_n)$, for $a_0$, \dots, $a_n \in \mathcal{A}$, we consider the upper triangular $(n + 2) \times (n + 2)$ matrix
\begin{equation} \label{eq:uppertriangular}
z = \begin{bmatrix}
0 & a_0 & 0 & \dots & 0 & 0 \\
0 & 0 & a_1 & \dots & 0 & 0 \\
0 & 0 & 0 & \dots & 0 & 0 \\
\vdots & \vdots & \vdots & \ddots & \vdots & \vdots \\
0 & 0 & 0 & \dots & a_n & 0 \\
0 & 0 & 0 & \dots & 0 & 0
\end{bmatrix}
\end{equation}
Then for sufficiently small $\zeta \in \C$, we have by Lemma \ref{lem:Cauchytransformseries} that
\[
\tilde{G}^{(n+2)}(\zeta z) = \sum_{k=0}^\infty \zeta^{k+1} \sigma^{(n+2)}[(zX^{(n+2)})^kz]
\]
Note $zX^{(n+2)}$ is an upper triangular nilpotent matrix, the series terminates at $k = n + 1$, and the upper left entry of $\tilde{G}^{(n+2)}(\zeta z)$ is
\begin{equation} \label{eq:Cauchytransformmoment}
[\tilde{G}^{(n+2)}(\zeta z)]_{1,n+2} = \zeta^{n+2} \sigma(a_0 X a_1 \dots X a_n).
\end{equation}
Hence, $\sigma(a_0 X a_1 \dots Xa_n)$ is determined by evaluating $\tilde{G}$ at $\zeta z$.

We have stated the theorem here in a more precise form than \cite[Theorem 3.1]{Williams2017}, \cite[Theorem A.1]{AW2016} by including the characterization of when $\rad(\mu) \leq M$.  For a generalized law $\sigma$, the fact that $\rad(\sigma) \leq M$ implies (2) and (3) follows from Lemma \ref{lem:Cauchytransformseries}.  Conversely, if (2) and (3) hold, then for an arbitrary $\epsilon > 0$, we can show that $\rad(\sigma) \leq M + \epsilon$ by applying \eqref{eq:Cauchytransformmoment} with arbitrary elements $a_1$, \dots, $a_n$ normalized so that $\norm{a_j} = 1$ and with $\zeta = 1 / (M + \epsilon)$.

\subsection{$F$-Transforms} \label{subsec:Ftransform}

If $\mu: \mathcal{A}\ip{X} \to \mathcal{A}$ is an $\mathcal{A}$-valued law, then the \emph{$F$-transform} of $\mu$ is given by $F_\mu(z) = G_\mu(z)^{-1}$ where defined.  Because Loewner theory deals with $F$-transforms, we will next establish some basic properties of $F$-transforms and in particular their behavior under composition.

As in the scalar case, reciprocal Cauchy transforms of laws can be expressed as the identity minus a Cauchy transform.  This is similar to a result proved in \cite[Theorem 5.6]{PV2013} and \cite[Corollary 3.3]{Williams2017}, which gives a Nevanlinna-type representation for $F_\mu(z) - z$.   However, we prefer to work with the Cauchy transform representation of $F_\mu(z) - z$ given by \cite[Remark 5.7]{PV2013}.   The form of the proposition stated here can be proved by manipulating the power series at $\infty$ in the obvious ways and applying Theorem \ref{thm:Cauchytransform}.  We leave the details of the computation to the reader.

\begin{proposition} \label{prop:Ftransformbijection} ~
\begin{enumerate}[(1)]
	\item If $\mu$ is a bounded law, then there exists a unique generalized law $\sigma: \mathcal{A}\ip{X} \to \mathcal{A}$ with $\norm{X}_\sigma < +\infty$ and a self-adjoint $a_0 \in \mathcal{A}$ such that
\begin{equation} \label{eq:Ftransformbijection}
F_\mu^{(n)}(z) = z - G_\sigma^{(n)}(z) - a_0^{(n)}.
\end{equation}
	\item Conversely, given such a $\sigma$ and $a_0$, there exists a unique law $\mu$ satisfying \eqref{eq:Ftransformbijection}.
	\item Letting $H(z) := F_\mu(z) - z$ and $\tilde{H}(z) := H(z^{-1})$, we have $a_0 = -\tilde{H}^{(1)}(0) = \mu(X)$ and $\sigma(b) = -D\tilde{H}^{(1)}(0)[b] = \mu(X b X) - \mu(X) b \mu(X)$ for $b \in \mathcal{A}$.

	\item We have the estimates
	\begin{equation}
	\rad(\sigma) \leq 2 \rad(\mu), \qquad \rad(\mu) \leq (1 + \norm{\sigma(1)}) \rad(\sigma) + \norm{a_0}.
	\end{equation}
\end{enumerate}
\end{proposition}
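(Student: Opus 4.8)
The idea is to rephrase each assertion as a statement about the power series at $\infty$ of the relevant Cauchy- and $F$-transforms, and then invoke Williams and Anshelevich's characterization (Theorem~\ref{thm:Cauchytransform}).

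\emph{Step 1: the power series of $F_\mu$ at $\infty$.} Put $M=\rad(\mu)$. By Lemma~\ref{lem:Cauchytransformseries}, $\tilde G_\mu^{(n)}(z)=\sum_{k\ge 0}\mu^{(n)}(z(X^{(n)}z)^k)$ for $\norm{z}<1/M$, and pulling the outer copies of $z$ out of each term (the $k\ge 1$ term equals $z\,\mu^{(n)}((X^{(n)}z)^{k-1}X^{(n)})\,z$ because $\mu$ is an $\mathcal{A}$-bimodule map) gives $\tilde G_\mu^{(n)}(z)=z+z\,A^{(n)}(z)\,z$ where
\[
A^{(n)}(z)=\mu(X)^{(n)}+\sum_{k\ge 2}\mu^{(n)}\bigl((X^{(n)}z)^{k-1}X^{(n)}\bigr)
\]
is analytic near $0$, $A^{(n)}(0)=\mu(X)^{(n)}$, and by \eqref{eq:improvedradiusbound} satisfies the bound $\norm{A^{(n)}(z)}\le M/(1-M\norm{z})$ \emph{uniformly in $n$}. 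For small $z$, $\tilde G_\mu^{(n)}(z)=z(1+A^{(n)}(z)z)$ is invertible, and since $F_\mu^{(n)}(z^{-1})=\tilde G_\mu^{(n)}(z)^{-1}$, a Neumann expansion yields the identity (analytic near $0$)
\[
\tilde H^{(n)}(z):=F_\mu^{(n)}(z^{-1})-z^{-1}=-A^{(n)}(z)\bigl(1+zA^{(n)}(z)\bigr)^{-1}.
\]
Thus $\tilde H^{(n)}(0)=-\mu(X)^{(n)}$; set $a_0:=-\tilde H^{(1)}(0)=\mu(X)$, self-adjoint because $\mu$ is a law. Differentiating the last display at $z=0$ and collecting the degree-one terms gives $-D\tilde H^{(1)}(0)[b]=\mu(XbX)-\mu(X)b\mu(X)$.

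\emph{Step 2: existence of $\sigma$.} Define $G_\sigma^{(n)}(w):=w-a_0^{(n)}-F_\mu^{(n)}(w)$ on $\h^{(n)}(\mathcal{A})$; I check (1)--(5) of Theorem~\ref{thm:Cauchytransform}. Condition (1) (fully matricial and uniformly locally bounded) and condition (4) ($\tilde G_\sigma^{(n)}(z^*)=\tilde G_\sigma^{(n)}(z)^*$) follow from the same properties of $F_\mu$, together with $a_0=a_0^*$ and the fact that $w\mapsto w$ and $w\mapsto a_0^{(n)}$ are fully matricial. Since $G_\sigma^{(n)}(z^{-1})=-a_0^{(n)}-\tilde H^{(n)}(z)$ agrees near $0$ with the function $\tilde H^{(n)}(0)-\tilde H^{(n)}(z)$ of Step~1, $\tilde G_\sigma^{(n)}$ extends analytically across $0$ and vanishes there — this is (2) and (5) — and the uniform-in-$n$ bound on $A^{(n)}$ becomes the uniform bound required in (3). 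The one ingredient not coming from the power series is that $G_\sigma$ maps $\h^{(n)}(\mathcal{A})$ into $-\overline{\h}^{(n)}(\mathcal{A})$, i.e.\ $\im F_\mu(w)\ge\im w$ (the Herglotz property of a reciprocal Cauchy transform). I would obtain this either by citing \cite[Theorem~5.6]{PV2013} (whose Nevanlinna representation is precisely of the form $F_\mu(w)=w-a_0-(\text{positive-measure Cauchy transform})$), or directly from the dilation $G_\mu^{(n)}(w)=W^*\bigl(\pi^{(n)}(w)-\pi(X)^{(n)}\bigr)^{-1}W$, where $W$ is an isometry with $W^*\pi^{(n)}(w)W=w$ (valid because the GNS vector of a law is $\mathcal{A}$-central and unital), via the operator inequality for $W^*(\cdot)^{-1}W$. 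Granting this, Theorem~\ref{thm:Cauchytransform} produces a generalized law $\sigma$ with Cauchy transform $G_\sigma$ and $\rad(\sigma)<+\infty$, and reading off the power series of $\tilde G_\sigma$ at $0$ gives $\sigma(b)=\mu(XbX)-\mu(X)b\mu(X)$; this proves (1) and (3). (Alternatively, this existence statement is essentially \cite[Remark~5.7]{PV2013}.)

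\emph{Step 3: uniqueness, converse, estimates.} If $z-a_0-G_\sigma=z-a_0'-G_{\sigma'}$ then $G_\sigma-G_{\sigma'}$ is the constant $a_0'-a_0$; evaluating $\tilde G_\sigma-\tilde G_{\sigma'}$ at $0$ (both vanish) forces $a_0=a_0'$, hence $G_\sigma=G_{\sigma'}$, hence $\sigma=\sigma'$ by the last sentence of Theorem~\ref{thm:Cauchytransform}. For (2): given a generalized law $\sigma$ and self-adjoint $a_0$, put $F^{(n)}(w)=w-a_0^{(n)}-G_\sigma^{(n)}(w)$, so $\tilde F^{(n)}(z)=z^{-1}\bigl(1-z(a_0^{(n)}+\tilde G_\sigma^{(n)}(z))\bigr)$; inverting, $\tilde G^{(n)}(z):=\tilde F^{(n)}(z)^{-1}=\bigl(1-z(a_0^{(n)}+\tilde G_\sigma^{(n)}(z))\bigr)^{-1}z=z+z(a_0^{(n)}+\tilde G_\sigma^{(n)}(z))z+\cdots$ near $0$, so $\lim_{z\to 0}z^{-1}\tilde G^{(n)}(z)=1^{(n)}$, while conditions (1)--(5) are inherited from $\sigma$ and $a_0$; Theorem~\ref{thm:Cauchytransform} then gives a law $\mu$ with $G=G_\mu$ and $F_\mu=F$. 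For (4) one tracks radii: for $\norm{z}<1/(2M)$ Step~1's bound gives $\norm{zA^{(n)}(z)}<1$, so $\tilde G_\sigma$ is analytic and $n$-uniformly bounded on the ball of radius $1/(2M+\delta)$ for every $\delta>0$, whence $\rad(\sigma)\le 2M=2\rad(\mu)$; symmetrically, feeding \eqref{eq:estimatenearinfinity} into $\tilde G^{(n)}(z)=\bigl(1-z(a_0^{(n)}+\tilde G_\sigma^{(n)}(z))\bigr)^{-1}z$ shows analyticity and boundedness on the ball of radius $1/\bigl((1+\norm{\sigma(1)})\rad(\sigma)+\norm{a_0}\bigr)$, giving the second inequality.

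\emph{Main obstacle.} The power-series bookkeeping is routine; the genuine content is twofold: (a) the Herglotz mapping property $\im F_\mu(w)\ge\im w$, i.e.\ that $G_\sigma$ lands in $-\overline{\h}(\mathcal{A})$, which I expect to settle via the dilation argument above or by citing \cite[Theorem~5.6]{PV2013}; and (b) making every estimate — on $A^{(n)}$, on the Neumann series, on $\tilde G_\sigma$ — uniform over the matrix level $n$, since this uniformity is exactly the hypothesis of Theorem~\ref{thm:Cauchytransform} and the feature absent from the scalar theory. Inequality \eqref{eq:improvedradiusbound} is what makes (b) go through.
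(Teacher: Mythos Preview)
Your proposal is correct and follows exactly the approach the paper indicates: the paper's own ``proof'' consists of the single sentence ``can be proved by manipulating the power series at $\infty$ in the obvious ways and applying Theorem~\ref{thm:Cauchytransform},'' together with a pointer to \cite[Theorem 5.6 and Remark 5.7]{PV2013}, and you have carried out precisely that programme. You have also correctly isolated the one step that is not pure power-series bookkeeping --- the Herglotz mapping property $\im F_\mu(w)\ge\im w$ --- and your suggestion to obtain it either from \cite{PV2013} or from the GNS dilation is appropriate (the paper itself simply defers to those references).
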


Our next results concern the behavior of $F$-transforms under composition.

\begin{definition}
Let $U$ be a matricial domain in $M(\mathcal{A})$.  A family $\mathcal{F}$ of fully matricial functions $U \to U$ will be said to have the \emph{two-out-of-three property} if
\begin{enumerate}[(1)]
	\item If $F_1$ and $F_2$ are in $\mathcal{F}$, then $F_1 \circ F_2 \in \mathcal{F}$.
	\item If $F_1$ and $F_1 \circ F_2$ are in $\mathcal{F}$, then $F_2 \in \mathcal{F}$.
	\item If $F_2$ and $F_1 \circ F_2$ are in $\mathcal{F}$, then $F_1 \in \mathcal{F}$.
\end{enumerate}
\end{definition}

\begin{proposition} \label{prop:twooutofthree}
The family of $F$-transforms of $\mathcal{A}$-valued laws has the two-out-of-three property on the domain $\h(\mathcal{A})$.
\end{proposition}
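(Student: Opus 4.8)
The plan is to reduce everything to the analytic characterization of Cauchy transforms (Theorem~\ref{thm:Cauchytransform}) together with the inverse function theorem for fully matricial germs at $0$ (Lemma~\ref{lem:Cauchytransforminverse}). First I would record a convenient criterion: a fully matricial self-map $F$ of $\h(\mathcal{A})$ is the $F$-transform of an $\mathcal{A}$-valued law if and only if (i)~$F(z^*) = F(z)^*$ for all $z$, and (ii)~$\tilde{G}(w) := F(w^{-1})^{-1}$ extends to a fully matricial function that is analytic and uniformly (in $n$) locally bounded on a neighborhood of $0$ with $w^{-1}\tilde{G}(w) \to 1$ as $w \to 0$. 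The point is that whenever $F \colon \h(\mathcal{A}) \to \h(\mathcal{A})$ is fully matricial, the pointwise reciprocal $G(z) := F(z)^{-1}$ is automatically a fully matricial map $\h(\mathcal{A}) \to -\overline{\h}(\mathcal{A})$, since $\im F(z) > 0$ forces $\im(F(z)^{-1}) = -F(z)^{-1}(\im F(z))(F(z)^{-1})^* < 0$; so the requirement that $G$ map into $-\overline{\h}(\mathcal{A})$ is free, and the conditions of Theorem~\ref{thm:Cauchytransform} that remain for $G$ to be $G_\mu$ for a law $\mu$ are exactly (i), (ii), and $\tilde{G}(0) = 0$, the last following from (ii). I would also note that for a law $\mu$ one has the factored expansion $\tilde{G}_\mu(w) = w\,E(w)$ with $E$ analytic near $0$ and $E(0) = 1$ (pull the leftmost $w$ out of each term $\mu^{(n)}(w(X^{(n)}w)^k)$ using the bimodule property of $\mu$), and correspondingly, by Lemma~\ref{lem:Cauchytransforminverse}, that $\tilde{G}_\mu^{-1}(v) = v\,E'(v)$ with $E'$ analytic and $E'(0) = 1$; these make the normalization in (ii) transparent under composition.

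For the composition property~(1), write $F_i = F_{\mu_i}$ and $F_3 := F_1 \circ F_2$, which is a fully matricial self-map of $\h(\mathcal{A})$ and satisfies $F_3(z^*) = F_1(F_2(z^*)) = F_1(F_2(z)^*) = F_1(F_2(z))^* = F_3(z)^*$. For the behavior at $\infty$, for $w$ near $0$ we have $F_2(w^{-1})^{-1} = G_{\mu_2}(w^{-1}) = \tilde{G}_{\mu_2}(w)$, which is small, so $F_2(w^{-1})$ lies in the region near $\infty$ where $G_{\mu_1}(v) = \tilde{G}_{\mu_1}(v^{-1})$; hence
\begin{equation*}
\tilde{G}_3(w) = F_3(w^{-1})^{-1} = G_{\mu_1}\bigl(F_2(w^{-1})\bigr) = \tilde{G}_{\mu_1}\bigl(F_2(w^{-1})^{-1}\bigr) = \tilde{G}_{\mu_1}\bigl(\tilde{G}_{\mu_2}(w)\bigr).
\end{equation*}
This is a composition of two fully matricial analytic germs fixing $0$, so $\tilde{G}_3$ extends analytically and fully matricially to a neighborhood of $0$ with uniform-in-$n$ bounds, and using the factored expansions $w^{-1}\tilde{G}_3(w) = \bigl(w^{-1}\tilde{G}_{\mu_2}(w)\bigr)\,E_{\mu_1}\bigl(\tilde{G}_{\mu_2}(w)\bigr) \to 1$. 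By the criterion, $F_3$ is an $F$-transform.

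For the cancellation properties, by symmetry it suffices to treat~(2): if $F_1 = F_{\mu_1}$ and $F_3 := F_1 \circ F_2 = F_{\mu_3}$ are $F$-transforms (with $F_2$ a priori a fully matricial self-map of $\h(\mathcal{A})$), then $F_2$ is an $F$-transform. The identity $G_{\mu_1} \circ F_2 = G_{\mu_3}$ on $\h(\mathcal{A})$ together with the injectivity of $G_{\mu_1}$ near $\infty$ (from Lemma~\ref{lem:Cauchytransforminverse}, since $\tilde{G}_{\mu_1}$ is injective on a ball about $0$) should first force $F_2$ to carry a neighborhood of $\infty$ into the neighborhood of $\infty$ on which $G_{\mu_1}$ is invertible: the element $\tilde{G}_{\mu_1}^{-1}(G_{\mu_3}(z))^{-1}$ is near $\infty$ when $z$ is, has the same $G_{\mu_1}$-image as $F_2(z)$, and a continuity/connectedness argument identifies it with $F_2(z)$. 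Granting this, for $w$ near $0$ one gets $\tilde{G}_{\mu_1}(F_2(w^{-1})^{-1}) = G_{\mu_1}(F_2(w^{-1})) = G_{\mu_3}(w^{-1}) = \tilde{G}_{\mu_3}(w)$, so that
\begin{equation*}
\tilde{G}(w) := F_2(w^{-1})^{-1} = \tilde{G}_{\mu_1}^{-1}\bigl(\tilde{G}_{\mu_3}(w)\bigr),
\end{equation*}
again a composition of fully matricial germs fixing $0$; hence $\tilde{G}$ extends analytically, fully matricially, and uniformly-in-$n$-boundedly near $0$, with $w^{-1}\tilde{G}(w) = \bigl(w^{-1}\tilde{G}_{\mu_3}(w)\bigr)\,E'_{\mu_1}\bigl(\tilde{G}_{\mu_3}(w)\bigr) \to 1$. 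Moreover $F_2(z^*) = F_2(z)^*$, because (using that $\mu_1,\mu_3$ are laws) $G_{\mu_1}(F_2(z^*)) = G_{\mu_3}(z^*) = G_{\mu_3}(z)^* = G_{\mu_1}(F_2(z)^*)$, which near $\infty$ gives $F_2(z^*) = F_2(z)^*$ by injectivity of $G_{\mu_1}$ there, and then everywhere since $z \mapsto F_2(z^*)^* - F_2(z)$ is fully matricial and vanishes near $\infty$ (Lemma~\ref{lem:analyticcontinuation}). By the criterion, $F_2 = F_{\mu_2}$ for a law $\mu_2$.

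The main obstacle I anticipate is the fully matricial bookkeeping at $\infty$: checking that the composed germs $\tilde{G}_{\mu_1} \circ \tilde{G}_{\mu_2}$ and $\tilde{G}_{\mu_1}^{-1} \circ \tilde{G}_{\mu_3}$ inherit a domain and local bounds uniform in $n$ (so that Theorem~\ref{thm:Cauchytransform} applies), that $\tilde{G}_\mu^{-1}$ genuinely has the factored form $v\,E'(v)$, and — most delicate — the continuity/connectedness step in~(2) and~(3) showing that the unknown factor maps a neighborhood of $\infty$ into a neighborhood of $\infty$; it is precisely at this step that the standing hypothesis $F_2 \colon \h(\mathcal{A}) \to \h(\mathcal{A})$, rather than mere analyticity, is used.
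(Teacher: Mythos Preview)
Your approach is essentially the same as the paper's: both reduce to the analytic characterization of the germs $\tilde G_\mu$ at $0$ via Theorem~\ref{thm:Cauchytransform} together with the local invertibility from Lemma~\ref{lem:Cauchytransforminverse}. The paper packages the argument more abstractly---it augments the characterization with the two-sided condition ``$\tilde G$ maps a ball to a ball \emph{and} $\tilde G^{-1}$ maps a ball to a ball'' and then checks that each of the four characterizing conditions separately has the two-out-of-three property---whereas you work out the compositions $\tilde G_1\circ\tilde G_2$ and $\tilde G_1^{-1}\circ\tilde G_3$ explicitly; the content is identical.

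Two minor remarks. First, your ``by symmetry it suffices to treat~(2)'' is slightly imprecise: cases~(2) and~(3) are parallel but not symmetric, and in fact~(3) is \emph{easier}, since there the known inner factor $F_2$ is an $F$-transform and hence invertible near~$\infty$, giving $F_1=F_3\circ F_2^{-1}$ directly on a neighborhood of~$\infty$ without any connectedness argument. Second, the delicate point you flag in case~(2)---that one must know a priori that the unknown $F_2$ carries a neighborhood of~$\infty$ into the region where $\tilde G_{\mu_1}$ is injective---is real, and the paper's proof glosses over exactly the same issue: it works at the level of the germ identity $\tilde G_3=\tilde G_1\circ\tilde G_2$ without ever justifying that the given $F_2$ produces such a germ. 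So your proof is at least as complete as the paper's on this point.
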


\begin{proof}
Let $\inv$ denote the map $z \mapsto z^{-1}$ where defined.  Note that $F_\mu = \inv \circ \tilde{G}_\mu \circ \inv$, so it suffices to show that the transforms $\tilde{G}_\mu$ satisfy the two-out-of-three property.  By Theorem \ref{thm:Cauchytransform} and Lemma \ref{lem:Cauchytransforminverse}, a fully matricial function $G: \h(\mathcal{A}) \to -\overline{\h}(\mathcal{A})$ is a Cauchy transform if and only if it satisfies
\begin{enumerate}[(1)]
	\item $\tilde{G}^{(n)}$ defines a map $B^{(n)}(0,R_1) \to B^{(n)}(0,R_2)$ for some $R_1$ and $R_2$ independent of $n$, and the inverse function $(\tilde{G}^{(n)})^{-1}$ defines a map $B^{(n)}(0,R_3) \to B^{(n)}(0,R_4)$ for some $R_3$ and $R_4$ independent of $n$.
	\item $\tilde{G}^{(n)}(0) = 0$.
	\item $\tilde{G}^{(n)}(z^*) = \tilde{G}^{(n)}(z)^*$.
	\item $\lim_{z \to 0} z^{-1} \tilde{G}^{(n)}(z) = 1^{(n)}$ for each $n$.
\end{enumerate}
FIrst, we observe that (1) satisfies the two-out-of-three property.  Next, if we restrict our attention to functions satisfying (1), we see that (2) and (3) also satisfy the two-out-of-three property.  Finally, note that if $\tilde{G}_3 = \tilde{G}_1 \circ \tilde{G}_2$, then
\begin{equation}
z^{-1} \tilde{G}_3^{(n)} (z) = [z^{-1}G_1^{(n)}(z)] [G_1^{(n)}(z)^{-1} G_2^{(n)}(G_1^{(n)}(z))].
\end{equation}
Thus, if we restrict our attention to functions satisfying (1), then condition (4) has the two-out-of-three property.  Altogether, we have shown that $F$-transforms satisfy the two-out-of-three property.
\end{proof}

Suppose that $F_{\mu_3} = F_{\mu_1} \circ F_{\mu_2}$.   It follows from Theorem \ref{thm:Cauchytransform} and Lemma \ref{lem:Cauchytransforminverse} that there are constants $C_j$ such that
\begin{equation} \label{eq:radiusestimate0}
\rad(\mu_j) \leq C_j \sum_{k \neq j} \rad(\mu_k).
\end{equation}
But in fact, bounds on $\rad(\mu_3)$ will automatically imply bounds on $\rad(\mu_1)$ and $\rad(\mu_2)$.

\begin{proposition} \label{prop:twooutofthree2}
There exists a constant $C > 0$ such that if $F_{\mu_3} = F_{\mu_1} \circ F_{\mu_2}$, then
\begin{equation}
\max(\rad(\mu_1), \rad(\mu_2)) \leq C \left(\rad(\mu_3) + \norm{\mu_2(X)} \right)
\end{equation}
\end{proposition}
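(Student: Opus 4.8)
The plan is to move the whole problem into the Cauchy-transform representation of Proposition~\ref{prop:Ftransformbijection}: write $F_{\mu_j}(z) = z - a_j - G_{\sigma_j}(z)$ with $a_j = \mu_j(X)$ and $\sigma_j$ a generalized law, noting $\rad(\sigma_j) \le 2\rad(\mu_j) < \infty$. Put $M_3 = \rad(\mu_3)$ and $N = \norm{\mu_2(X)}$; the goal is to bound $\rad(\mu_1),\rad(\mu_2)$ by $C(M_3 + N)$ with $C$ universal. Expanding $F_{\mu_3} = F_{\mu_1}\circ F_{\mu_2}$ near $\infty$ in $\h(\mathcal{A})$ (where $F_{\mu_2}(z)\in\h(\mathcal{A})$ and $\norm{F_{\mu_2}(z)}\to\infty$) gives, after letting $z\to\infty$ along scalar multiples of the identity, $a_3 = a_1 + a_2$, and then the identity $G_{\sigma_3}(z) = G_{\sigma_2}(z) + G_{\sigma_1}(F_{\mu_2}(z))$, which propagates from a neighborhood of $\infty$ to all of $\h(\mathcal{A})$ by analytic continuation (Lemma~\ref{lem:analyticcontinuation}). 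In particular $\norm{a_1}\le M_3 + N$ and $\norm{\sigma_3(1)} \le \norm{\mu_3(X^2)} + \norm{\mu_3(X)}^2 \le 2M_3^2$.

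The key observation is that $\Theta := G_{\sigma_1}\circ F_{\mu_2}$ is again the Cauchy transform of a generalized law. Indeed $\Theta$ is fully matricial and maps $\h(\mathcal{A})$ into $-\overline{\h}(\mathcal{A})$, and by the identity above it equals $G_{\sigma_3} - G_{\sigma_2}$; hence $\tilde{\Theta}$ extends analytically and boundedly to a ball $B_{M_n(\mathcal{A})}(0,1/M)$ for some finite $M$ (using Lemma~\ref{lem:Cauchytransformseries}), vanishes at $0$, and is $*$-compatible, so Theorem~\ref{thm:Cauchytransform} produces a generalized law $\tau$ with $G_\tau = \Theta$. Then $G_{\sigma_3} = G_{\sigma_2} + G_\tau = G_{\sigma_2 + \tau}$, so $\sigma_3 = \sigma_2 + \tau$ by uniqueness, and Lemma~\ref{lem:maxradius} yields $\rad(\sigma_2) \le \rad(\sigma_3) \le 2M_3$ and $\rad(\tau)\le 2M_3$. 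Positivity gives $\sigma_2(1)\le\sigma_3(1)$, and computing $\lim_{\zeta\to\infty}\zeta G_\tau(\zeta)$ along scalars (with $F_{\mu_2}(\zeta) = \zeta - a_2 + O(1/\zeta)$) gives $\tau(1) = \sigma_1(1)$; hence $\norm{\sigma_1(1)},\norm{\sigma_2(1)} \le 2M_3^2$.

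Now I bound the radii of $\mu_2$ and then $\mu_1$. Since $\tilde{G}_{\mu_2}(w) = F_{\mu_2}(w^{-1})^{-1} = \big(1 - w a_2 - w\tilde{G}_{\sigma_2}(w)\big)^{-1}w$, the estimate \eqref{eq:estimatenearinfinity} for $\tilde{G}_{\sigma_2}$, together with $\rad(\sigma_2) \le 2M_3$, $\norm{\sigma_2(1)}\le 2M_3^2$, $\norm{a_2} = N$, makes $\norm{w a_2 + w\tilde{G}_{\sigma_2}(w)} \le \tfrac12$ once $\norm{w} \le c/(M_3 + N)$; on that ball $\tilde{G}_{\mu_2}$ is analytic and bounded, so by Theorem~\ref{thm:Cauchytransform} we get $\rad(\mu_2) \le C(M_3 + N)$. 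This bound now lets me invoke Lemma~\ref{lem:Cauchytransforminverse} for $\mu_2$: the functional inverse $\tilde{G}_{\mu_2}^{-1}$ is defined on $B(0, c_1/(M_3+N))$ with image in $B(0, c_2/(M_3+N))$, where $c_2/(M_3+N) \le 1/(2M_3)\le 1/\rad(\tau)$, so $F_{\mu_2}^{-1}(z) := \tilde{G}_{\mu_2}^{-1}(z^{-1})^{-1}$ lands in the domain of $G_\tau$. Since $G_{\sigma_1} = G_\tau\circ F_{\mu_2}^{-1}$ near $\infty$, this continues $G_{\sigma_1}$ analytically and boundedly to $\{\norm{z^{-1}} < c_1/(M_3+N)\}$, giving $\rad(\sigma_1) \le C(M_3 + N)$. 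Feeding $\rad(\sigma_1)$, $\norm{\sigma_1(1)}^{1/2}$, $\norm{a_1}$, all of size $O(M_3+N)$, into the same Neumann-series computation applied to $\tilde{G}_{\mu_1}(w) = (1 - w a_1 - w\tilde{G}_{\sigma_1}(w))^{-1}w$ yields $\rad(\mu_1) \le C(M_3 + N)$, and combining the two bounds completes the proof.

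The main obstacle — and the step that makes the constant universal rather than dependent on the $\mu_j$ as in \eqref{eq:radiusestimate0} — is recognizing $G_{\sigma_1}\circ F_{\mu_2}$ as a generalized-law Cauchy transform, which converts the subordination identity into the clean additivity $\sigma_3 = \sigma_2 + \tau$ and hence (via Lemma~\ref{lem:maxradius}) into the uniform bounds $\rad(\sigma_2),\rad(\tau)\le\rad(\sigma_3)$. One must also respect the direction in which information flows: the inner factor $\mu_2$ is controlled first, and only then can one form $F_{\mu_2}^{-1}$ and bound the outer factor $\mu_1$; the asymmetric extra term $\norm{\mu_2(X)}$ appears precisely because $F_{\mu_2}$ shifts the region near $\infty$ by $\mu_2(X)$.
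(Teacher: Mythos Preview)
Your argument is correct and shares its central idea with the paper's proof: both deduce that $G_{\sigma_3} - G_{\sigma_2}$ is the Cauchy transform of a generalized law, write $\sigma_3 = \sigma_2 + \tau$, and invoke Lemma~\ref{lem:maxradius} to get $\rad(\sigma_2) \le \rad(\sigma_3) \le 2\rad(\mu_3)$. The execution differs in three places worth noting. First, the paper reaches this decomposition in one line via the monotonicity $\im F_{\mu_2}(z) \le \im F_{\mu_1}(F_{\mu_2}(z)) = \im F_{\mu_3}(z)$, which immediately shows $G_{\sigma_3} - G_{\sigma_2}$ maps $\h(\mathcal{A})$ into $-\overline{\h}(\mathcal{A})$; you instead expand the composition explicitly and identify $\tau$ as the law whose Cauchy transform is $G_{\sigma_1}\circ F_{\mu_2}$, which is more work but yields more information. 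Second, to pass from $\rad(\sigma_2)$ and $\norm{\sigma_2(1)}$ to $\rad(\mu_2)$, the paper uses Proposition~\ref{prop:Ftransformbijection}(4) and then a scaling argument (replace $(\mu_j,\sigma_j)$ by their dilations by $c$ and send $c\to 0$) to kill the $\norm{\sigma_3(1)}$ prefactor, arriving at the clean $\rad(\mu_2) \le 2\rad(\mu_3) + \norm{\mu_2(X)}$; your Neumann-series estimate works but gives a less explicit constant. Third, for $\rad(\mu_1)$ the paper simply cites the a~priori bound \eqref{eq:radiusestimate0}, $\rad(\mu_1) \le C_1(\rad(\mu_2)+\rad(\mu_3))$, which together with the bound on $\rad(\mu_2)$ finishes immediately; your route through $G_{\sigma_1} = G_\tau\circ F_{\mu_2}^{-1}$ and Lemma~\ref{lem:Cauchytransforminverse} is valid but unnecessary once \eqref{eq:radiusestimate0} is available.
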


\begin{proof}
Write $F_{\mu_j}^{(n)}(z) = z - \mu_j(X)^{(n)} - G_{\sigma_j}^{(n)}(z)$ as in Proposition \ref{prop:Ftransformbijection}.  Note that
\begin{equation}
\im F_{\mu_2}^{(n)}(z) \leq \im F_{\mu_3}^{(n)}(z).
\end{equation}
Therefore, $G_{\sigma_3} - G_{\sigma_2}$ maps $\h(\mathcal{A})$ into $\h^-(\mathcal{A})$.  Therefore, by Theorem \ref{thm:Cauchytransform}, there exists $\sigma_4$ such that $G_{\sigma_3} = G_{\sigma_2} + G_{\sigma_4}$.  So by Lemma \ref{lem:maxradius} $\rad(\sigma_2) \leq \rad(\sigma_3)$.

But by Proposition \ref{prop:Ftransformbijection}, we have
\begin{align}
\rad(\mu_2) &\leq (1 + \norm{\sigma_2(1)}) \rad(\sigma_2) + \norm{\mu_2(X)} \nonumber \\
&\leq (1 + \norm{\sigma_3(1)}) \rad(\sigma_3) + \norm{\mu_2(X)} \nonumber \\
&\leq (1 + 2 \rad(\mu_3)^2) 2 \rad(\mu_3) + \norm{\mu_2(X)}.
\end{align}
Here we use the fact $\sigma_3(1) = \mu_3(X^2) - \mu_3(X)^2$ is bounded by $2 \rad(\mu_3)^2$.  It follows from a rescaling argument that
\begin{equation}
c \rad(\mu_2) \leq (1 + 2 c \rad(\mu_3)^2) 2 c\rad(\mu_3) + c\norm{\mu_2(X)},
\end{equation}
and by taking $c$ to zero, we obtain $\rad(\mu_2) \leq 2 \rad(\mu_3) + \norm{\mu_2(X)}$.  The estimate for $\rad(\mu_1)$ now follows from \eqref{eq:radiusestimate0}.
\end{proof}

\section{Chordal Loewner Chains} \label{sec:Loewnerchains}

\subsection{Definition and Basic Properties}

\begin{definition} \label{def:subordination}
Let $F_1, F_2: \h(\mathcal{A}) \to \h(\mathcal{A})$.  If there exists a fully matricial function $F: \h(\mathcal{A}) \to \h(\mathcal{A})$ such that $F_1 = F_2 \circ F$, then we say that $F_1$ is \emph{subordinated} to $F_2$ and write $F_1 \preceq F_2$.  (To remember the direction of the $\preceq$ sign, note that the image of $F_1$ is contained in the image of $F_2$.)
\end{definition}

\begin{definition} \label{def:CLC}
A \emph{$\mathcal{A}$-valued chordal Loewner chain} is a collection of fully matricial functions $F_t: \h(\mathcal{A}) \to \h(\mathcal{A})$ such that
\begin{enumerate}[(1)]
	\item $F_0(z) = z$;
	\item $F_t$ is the $F$-transform of some law $\mu_t \in \Sigma_0(\mathcal{A})$;
	\item If $s \leq t$, then $F_s \succeq F_t$;
	\item The functions $t \mapsto \mu_t(X)$ and $t \mapsto \mu_t(X^2)$ are continuous (with respect to $\norm{\cdot}_{\mathcal{A}}$).
\end{enumerate}
\end{definition}

We will often omit the adjectives ``$\mathcal{A}$-valued'' and ``chordal'' because we are only dealing with this type of Loewner chain for the rest of the paper.

\begin{lemma}~ \label{lem:semigroupproperties}
Let $(F_t)_{t \in [0,T]}$ be a Loewner chain.
\begin{enumerate}[(1)]
	\item For $0 \leq s \leq t \leq T$, there exists a unique fully matricial function $F_{s,t}: \h(\mathcal{A}) \to \h(\mathcal{A})$ such that $F_t = F_s \circ F_{s,t}$.
	\item For $0 \leq s \leq t \leq T$, the map $F_{s,t}$ is the $F$-transform of a law $\mu_{s,t} \in \Sigma_0(\mathcal{A})$.
	\item We have $F_{0,t} = F_t$ and $F_{s,u} = F_{s,t} \circ F_{t,u}$ whenever $s \leq t \leq u$.
\end{enumerate}
\end{lemma}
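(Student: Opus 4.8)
I would take the three assertions in order, putting the real work into the uniqueness claim in (1); everything else is either immediate from the definitions or a formal consequence of the two-out-of-three property (Proposition \ref{prop:twooutofthree}). Existence in (1) requires nothing new: a fully matricial $F_{s,t}\colon\h(\mathcal{A})\to\h(\mathcal{A})$ with $F_t = F_s\circ F_{s,t}$ is exactly what the subordination hypothesis $F_s\succeq F_t$ in Definition \ref{def:CLC}(3) asserts, via Definition \ref{def:subordination}. For (2), I would invoke Proposition \ref{prop:twooutofthree}: $F_s = F_{\mu_s}$ and $F_t = F_{\mu_t}$ are $F$-transforms of laws and $F_t = F_s\circ F_{s,t}$, so the two-out-of-three property (with $F_1 = F_s$, $F_1\circ F_2 = F_t$) forces any fully matricial solution $F_{s,t}$ to be the $F$-transform of a law $\mu_{s,t}\in\Sigma_0(\mathcal{A})$. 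In particular, writing $F_{\mu_{s,t}}(z) = z - a - G_\sigma(z)$ as in Proposition \ref{prop:Ftransformbijection} and using \eqref{eq:estimatenearinfinity}, any such $F_{s,t}$ is ``proper at $\infty$'' in the sense that $\|F_{s,t}(z)^{-1}\|\to 0$ as $\|z^{-1}\|\to 0$ in $\h(\mathcal{A})$.

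The delicate point is uniqueness in (1). Here I would not cancel $F_s$ directly, since $F$-transforms need not be globally injective on $\h(\mathcal{A})$; instead I would use that $F_s$ is injective near $\infty$. By Lemma \ref{lem:Cauchytransforminverse}, $\tilde{G}_{\mu_s}^{(n)}$ is injective on a ball $B^{(n)}(0,R_1)$ with $R_1$ depending only on $\rad(\mu_s)<+\infty$; since $F_{\mu_s} = \inv\circ\tilde{G}_{\mu_s}\circ\inv$, this makes $F_{\mu_s}^{(n)}$ injective on the neighborhood $N_s^{(n)} := \{z\in\h^{(n)}(\mathcal{A}) : \|z^{-1}\| < R_1\}$ of $\infty$. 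Given two solutions $F_{s,t}$ and $F_{s,t}'$, both are $F$-transforms by (2), hence both map $\{z : \|z^{-1}\| < \delta\}\cap\h^{(n)}(\mathcal{A})$ into $N_s^{(n)}$ for $\delta$ small enough, so injectivity of $F_s^{(n)}$ on $N_s^{(n)}$ gives $F_{s,t}^{(n)} = (F_{s,t}')^{(n)}$ there; since $\h^{(n)}(\mathcal{A})$ is convex, hence connected, Lemma \ref{lem:analyticcontinuation} propagates the equality to all of $\h^{(n)}(\mathcal{A})$. This makes $F_{s,t}$ and $\mu_{s,t}$ unambiguous, so (2) holds as stated.

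Assertion (3) is then formal: $F_0 = \id$ gives $F_t = F_0\circ F_t$, so $F_{0,t} = F_t$ by uniqueness, and for $s\le t\le u$ one has $F_u = F_s\circ F_{s,u}$ and also $F_u = F_t\circ F_{t,u} = F_s\circ(F_{s,t}\circ F_{t,u})$, with $F_{s,t}\circ F_{t,u}$ again an $F$-transform of a law by Proposition \ref{prop:twooutofthree}, so $F_{s,u} = F_{s,t}\circ F_{t,u}$ by uniqueness. I expect the only genuine obstacle to be the uniqueness step --- specifically, resisting a false appeal to global injectivity of $F_s$ and instead localizing near $\infty$ via Lemma \ref{lem:Cauchytransforminverse} before invoking analytic continuation; the rest is bookkeeping on top of Proposition \ref{prop:twooutofthree}.
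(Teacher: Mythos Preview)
Your proof is correct and follows essentially the same route as the paper: existence from the subordination hypothesis, (2) from Proposition~\ref{prop:twooutofthree}, uniqueness via local invertibility of $\tilde{G}_{\mu_s}$ near $0$ (Lemma~\ref{lem:Cauchytransforminverse}) followed by analytic continuation, and (3) from uniqueness. The paper phrases the uniqueness step slightly more tersely --- writing $\tilde{G}_{\mu_s}\circ\tilde{G}_\mu = \tilde{G}_{\mu_t}$ near $0$ and noting this determines the law $\mu$ --- but the content is identical to your localization-near-$\infty$ argument.
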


\begin{proof}
(1) and (2).  Fix $s \leq t$.  Then there exists $F: \h(\mathcal{A}) \to \h(\mathcal{A})$ such that $F_t = F_s \circ F$.  By Proposition \ref{prop:twooutofthree}, $F$ is the $F$-transform of some law in $\mu \in \Sigma_0(\mathcal{A})$.  In a neighborhood of $0$, we have
\begin{equation}
\tilde{G}_{\mu_s} \circ \tilde{G}_\mu = \tilde{G}_{\mu_t},
\end{equation}
which shows that there is one and only one law $\mu$ that satisfies $F_t = F_s \circ F_\mu$.  If we denote this law by $\mu_{s,t}$, then $F_{s,t} := F_{\mu_{s,t}}$ is the unique fully matricial function satisying $F_t = F_s \circ F_{s,t}$.  This shows (1) and (2).

(3) We have $F_0 = \id$ and hence $F_t = F_0 \circ F_{0,t} = F_{0,t}$.  Now suppose $s \leq t \leq u$.  Then $F_s \circ F_{s,t} \circ F_{t,u} = F_u = F_s \circ F_{s,u}$, and hence $F_{s,t} \circ F_{t,u} = F_{s,u}$ by the uniqueness claim of (1).
\end{proof}

\begin{lemma} \label{lem:radiusbound}
For $0 \leq s \leq t \leq T$, let $F_{s,t}$ be as above and suppose that $F_{s,t}$ is the $F$-transform of the law $\mu_{s,t} \in \Sigma_0(\mathcal{A})$.  Then
\begin{equation}
\sup_{s < t} \rad(\mu_{s,t}) < +\infty.
\end{equation}
\end{lemma}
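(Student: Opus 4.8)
\emph{Proof plan.} The plan is to bootstrap from Proposition \ref{prop:twooutofthree2}, which for a single factorization $F_{\mu_3} = F_{\mu_1}\circ F_{\mu_2}$ already bounds $\max(\rad(\mu_1),\rad(\mu_2))$ by $C(\rad(\mu_3) + \norm{\mu_2(X)})$. To turn this into a \emph{uniform} bound on $\rad(\mu_{s,t})$ I first need a uniform bound on the means $\norm{\mu_{s,t}(X)}$, which I will get from the fact that the means are additive along the chain; then I will apply Proposition \ref{prop:twooutofthree2} to the two factorizations $F_T = F_s\circ F_{s,T}$ and $F_{s,T} = F_{s,t}\circ F_{t,T}$.

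\emph{Step 1 (additivity of means).} Using Proposition \ref{prop:Ftransformbijection}, I would write each $F$-transform as $F_\mu^{(1)}(z) = z - \mu(X) - G_\sigma^{(1)}(z)$ with $\mu(X)$ self-adjoint and $\sigma$ a generalized law depending on $\mu$, and use Lemma \ref{lem:estimate1} in the form $\norm{G_\sigma^{(1)}(z)}\le\norm{\sigma(1)}/\im z$. Setting $z_y := iy\cdot 1_{\mathcal A}\in\h^{(1)}(\mathcal A)$, this gives $F_\mu^{(1)}(z_y) = z_y - \mu(X) + o(1)$ as $y\to+\infty$. Writing $\sigma_s,\sigma_{s,t}$ for the generalized laws of $\mu_s,\mu_{s,t}$, and using that $G_{\sigma_{s,t}}$ maps into $-\overline{\h}(\mathcal A)$ while $\mu_{s,t}(X)$ is self-adjoint, one has $\im F_{s,t}^{(1)}(z_y)\ge y$, so Lemma \ref{lem:estimate1} forces $G_{\sigma_s}^{(1)}\bigl(F_{s,t}^{(1)}(z_y)\bigr)\to 0$. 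Then from $F_t = F_s\circ F_{s,t}$,
\[
F_t^{(1)}(z_y) = F_{s,t}^{(1)}(z_y) - \mu_s(X) - G_{\sigma_s}^{(1)}\bigl(F_{s,t}^{(1)}(z_y)\bigr) = z_y - \mu_{s,t}(X) - \mu_s(X) + o(1),
\]
and comparing with $F_t^{(1)}(z_y) = z_y - \mu_t(X) + o(1)$ gives $\mu_t(X) = \mu_s(X) + \mu_{s,t}(X)$. Since $t\mapsto\mu_t(X)$ is continuous on the compact interval $[0,T]$ by Definition \ref{def:CLC}(4), it is bounded, so $K := \sup_{0\le s\le t\le T}\norm{\mu_{s,t}(X)}\le 2\sup_{u\in[0,T]}\norm{\mu_u(X)}<+\infty$.

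\emph{Step 2 (bootstrap).} For each $s$, applying Proposition \ref{prop:twooutofthree2} to $F_T = F_{\mu_T} = F_{\mu_s}\circ F_{\mu_{s,T}}$ gives $\rad(\mu_{s,T})\le C(\rad(\mu_T) + K) =: K'$, uniformly in $s$. Then for any $s\le t$, Lemma \ref{lem:semigroupproperties}(3) gives $F_{s,T} = F_{s,t}\circ F_{t,T}$, and a second application of Proposition \ref{prop:twooutofthree2} gives
\[
\rad(\mu_{s,t})\le C\bigl(\rad(\mu_{s,T}) + \norm{\mu_{t,T}(X)}\bigr)\le C(K' + K),
\]
independently of $s$ and $t$, which is the claim.

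\emph{Expected difficulty.} Step 2 is a routine double application of Proposition \ref{prop:twooutofthree2}. The only place requiring genuine care is Step 1 — justifying that the limit $y\to\infty$ may be taken inside the composition, for which one uses the decay estimate of Lemma \ref{lem:estimate1} together with $\im F_{s,t}(z)\ge\im z$. (Equivalently, one could first bound $\rad(\mu_u)=\rad(\mu_{0,u})$ via $F_T = F_u\circ F_{u,T}$ and then bound $\rad(\mu_{s,t})$ via $F_t = F_s\circ F_{s,t}$.)
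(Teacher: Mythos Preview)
Your proposal is correct and follows the same route as the paper's proof, which is a two-line sketch: it simply invokes Proposition~\ref{prop:twooutofthree2} together with ``the boundedness of $\mu_t(X)$ as a function of $t$.'' You have fleshed out exactly the details the paper suppresses --- in particular the additivity $\mu_t(X)=\mu_s(X)+\mu_{s,t}(X)$ (which the paper leaves implicit) and the need for a second application of Proposition~\ref{prop:twooutofthree2} to pass from $\mu_{s,T}$ to $\mu_{s,t}$. Your asymptotic argument along $z_y=iy$ in Step~1 is fine; a slightly shorter alternative is to read off the mean directly from the power-series expansion at infinity via Proposition~\ref{prop:Ftransformbijection}(3), since $\tilde G_{\mu_t}=\tilde G_{\mu_s}\circ\tilde G_{\mu_{s,t}}$ forces the first-order coefficients to add.
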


\begin{proof}
By Proposition \ref{prop:twooutofthree}, we know that $F_{s,t}$ is an $F$-transform of a law $\mu_{s,t}$.   A uniform bound on $\rad(\mu_{s,t})$ follows from Proposition \ref{prop:twooutofthree2} and the boundedness of $\mu_t(X)$ as a function of $t$.
\end{proof}

\begin{lemma}
Let $F_t$ be a Loewner chain and let $F_{s,t}$ be as above.  For $0 \leq s \leq t \leq T$, we have
\begin{equation} \label{eq:sigmaSTrepresentation}
F_{s,t}(z) = z - \mu_{s,t}(X) - G_{\sigma_{s,t}}(z),
\end{equation}
where $\sigma_{s,t}$ is a generalized law and
\begin{equation}
\sigma_{s,t}(1) = \mu_{s,t}(X^2) = \mu_t(X^2) - \mu_s(X^2) \leq \mu_T(X^2).
\end{equation}
\end{lemma}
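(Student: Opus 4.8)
The plan is to extract the representation \eqref{eq:sigmaSTrepresentation} from the $F$-transform bijection of Proposition \ref{prop:Ftransformbijection}, and then to pin down $\sigma_{s,t}(1)$ by substituting that representation into the subordination identity $F_t = F_s \circ F_{s,t}$ and comparing Cauchy transforms near $\infty$.

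First, by Lemma \ref{lem:semigroupproperties}(2) the map $F_{s,t}$ is the $F$-transform of a law $\mu_{s,t} \in \Sigma_0(\mathcal{A})$, and $\sup_{s \le t} \rad(\mu_{s,t}) < +\infty$ by Lemma \ref{lem:radiusbound}. Applying Proposition \ref{prop:Ftransformbijection}(1)--(3) to the law $\mu_{s,t}$ yields a unique generalized law $\sigma_{s,t}$ (with $\rad(\sigma_{s,t}) \le 2\rad(\mu_{s,t}) < +\infty$, hence in particular finite) and a self-adjoint $a_0 \in \mathcal{A}$ such that $F_{s,t}^{(n)}(z) = z - a_0^{(n)} - G_{\sigma_{s,t}}^{(n)}(z)$, with $a_0 = \mu_{s,t}(X)$ and $\sigma_{s,t}(b) = \mu_{s,t}(XbX) - \mu_{s,t}(X)\,b\,\mu_{s,t}(X)$; taking $b = 1$ gives $\sigma_{s,t}(1) = \mu_{s,t}(X^2) - \mu_{s,t}(X)^2$, which is \eqref{eq:sigmaSTrepresentation} together with the identification of $\sigma_{s,t}(1)$ (the quadratic correction being absent precisely when $\mu_{s,t}$ has mean zero, e.g.\ for a normalized chain).

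Next, I would substitute \eqref{eq:sigmaSTrepresentation} for both $F_s$ and $F_{s,t}$ into $F_t = F_s \circ F_{s,t}$, using $F_{0,t} = F_t$, and let $z \to \infty$: matching the constant terms gives $\mu_t(X) = \mu_s(X) + \mu_{s,t}(X)$, and what remains is the identity of fully matricial functions $G_{\sigma_{0,t}} = G_{\sigma_{s,t}} + G_{\sigma_{0,s}} \circ F_{s,t}$. Since $F_{s,t}$ maps $\h(\mathcal{A})$ into $\h(\mathcal{A})$ while $G_{\sigma_{0,s}}$ maps $\h(\mathcal{A})$ into $-\overline{\h}(\mathcal{A})$, the composite $G_{\sigma_{0,s}} \circ F_{s,t}$ is a fully matricial function $\h(\mathcal{A}) \to -\overline{\h}(\mathcal{A})$, and because $F_{s,t}(z) = z + O(1)$ at $\infty$ its expansion there is $\sigma_{0,s}(1) z^{-1} + O(z^{-2})$; so by Theorem \ref{thm:Cauchytransform} it equals $G_{\sigma'}$ for a generalized law $\sigma'$ with $\sigma'(1) = \sigma_{0,s}(1)$. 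Comparing power series at $0$ then forces $\sigma_{0,t} = \sigma_{s,t} + \sigma'$, hence $\sigma_{s,t}(1) = \sigma_{0,t}(1) - \sigma_{0,s}(1)$. Substituting $\sigma_{0,u}(1) = \mu_u(X^2) - \mu_u(X)^2$ (from the first step, with $F_{0,u} = F_u$) gives the claimed $\sigma_{s,t}(1) = \mu_{s,t}(X^2) = \mu_t(X^2) - \mu_s(X^2)$. (Alternatively one can obtain the same relations by expanding $\tilde{G}_{\mu_s} \circ \tilde{G}_{\mu_{s,t}} = \tilde{G}_{\mu_t}$ as a power series at $0$ via Lemma \ref{lem:Cauchytransformseries} and matching the coefficients of $z^2$ and $z^3$, but the non-commutative bookkeeping of the coefficient orders is fiddlier.)

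Finally, for the bound $\sigma_{s,t}(1) \le \mu_T(X^2)$: any generalized law is completely positive, so $\sigma_{0,u}(1) \ge 0$ for every $u$; applying the previous step to the chains $0 \le s \le t$ and $0 \le t \le T$ shows $u \mapsto \sigma_{0,u}(1)$ is non-decreasing in the operator order, whence $\sigma_{s,t}(1) = \sigma_{0,t}(1) - \sigma_{0,s}(1) \le \sigma_{0,t}(1) \le \sigma_{0,T}(1) = \mu_T(X^2) - \mu_T(X)^2 \le \mu_T(X^2)$. I expect the main obstacle to be the step that recognizes $G_{\sigma_{0,s}} \circ F_{s,t}$ as a genuine Cauchy transform via Theorem \ref{thm:Cauchytransform} (checking its hypotheses: full matriciality, the symmetry and vanishing at $0$ of its reciprocal-type transform, and the uniform bounds near $\infty$); everything else is a short manipulation of the representation \eqref{eq:sigmaSTrepresentation} together with positivity and monotonicity of generalized laws.
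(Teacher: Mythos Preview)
Your argument is correct, but the paper takes precisely the route you flag as the ``alternative'': a direct power-series match. After invoking Proposition~\ref{prop:Ftransformbijection} to get \eqref{eq:sigmaSTrepresentation}, the paper simply reads off $\sigma_{s,t}(1) = \mu_{s,t}(X^2)$ from the $z^{-1}$-coefficient, and then obtains additivity by expanding $F_T = F_s \circ F_{s,t} \circ F_{t,T}$ (equivalently $\tilde G_{\mu_T} = \tilde G_{\mu_s} \circ \tilde G_{\mu_{s,t}} \circ \tilde G_{\mu_{t,T}}$) at $\infty$ to get $\mu_T(X^2) = \mu_{0,s}(X^2) + \mu_{s,t}(X^2) + \mu_{t,T}(X^2)$; positivity of each summand gives the bound.

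The non-commutative bookkeeping you worry about is actually painless at this order: writing $F_{s,t}(z) = z - \mu_{s,t}(X) - \sigma_{s,t}(1) z^{-1} + O(z^{-2})$ and composing, the $z^{-1}$-coefficients simply add, since $(z - a + O(z^{-1}))^{-1} = z^{-1} + O(z^{-2})$ regardless of non-commutativity. So there is no need to invoke Theorem~\ref{thm:Cauchytransform} to recognise $G_{\sigma_{0,s}} \circ F_{s,t}$ as a Cauchy transform; that step is valid (and the same manoeuvre appears in the proof of Proposition~\ref{prop:twooutofthree2}), but it is heavier machinery than the one-line coefficient comparison the paper uses. Your observation about the $\mu_{s,t}(X)^2$ correction is well taken: the identity $\sigma_{s,t}(1) = \mu_{s,t}(X^2)$ as stated presumes the normalized setting, which is how the lemma is used downstream.
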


\begin{proof}
By Proposition \ref{prop:Ftransformbijection}, we have $F_{s,t}(z) = z - \mu_{s,t}(X) - G_{\sigma_{s,t}}(z)$ for some generalized law $\sigma_{s,t}$.  Using the power series expansion at $\infty$ (Lemma \ref{lem:Cauchytransformseries}), we see that $\sigma_{s,t}(1) = \mu_{s,t}(X^2)$.  Another power series computation shows that
\begin{equation}
\mu_T(X^2) = \mu_{0,s}(X^2) + \mu_{s,t}(X^2) + \mu_{t,T}(X^2).
\end{equation}
Each of these elements is positive in $\mathcal{A}$ because the laws are completely positive.  Hence, $0 \leq \mu_{s,t}(X^2) \leq \mu_T(X^2)$.
\end{proof}

\subsection{Loewner Chains are Biholomorphic Functions}

\begin{proposition} \label{prop:biholomorphic}
Suppose that $(F_t)_{t \in [0,T]}$ is a Loewner chain and $F_t = F_s \circ F_{s,t}$.  Then $F_{s,t}^{(n)}$ is a biholomorphic map from $\h^{(n)}(\mathcal{A})$ onto an open $U_{s,t}^{(n)} \subseteq \h^{(n)}(\mathcal{A})$.  The sequence $\{U_{s,t}^{(n)}\}$ is a fully matricial domain and $F_{s,t}^{-1}$ is a fully matricial function.

Furthermore, given $\epsilon > 0$, there exists $\delta > 0$, depending only on $\epsilon$ and the modulus of continuity of $t \mapsto \mu_t(X^2)$, such that for all $0 \leq s \leq t \leq T$ and $n \in \N$,
\begin{enumerate}[(1)]
	\item whenever $z \in \h_\epsilon^{(n)}(\mathcal{A})$, we have $\norm{DF_{s,t}^{(n)}(z)^{-1}} \leq 1/ \delta$;
	\item whenever $z, z' \in \h_\epsilon^{(n)}(\mathcal{A})$, we have
	\begin{equation*}
	\norm{F_{s,t}^{(n)}(z) - F_{s,t}^{(n)}(z')} \geq \delta \norm{z - z'}.
	\end{equation*}
\end{enumerate}
\end{proposition}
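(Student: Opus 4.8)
The plan is to reduce everything to estimates on the Cauchy transform $G_{\sigma_{s,t}}$ via the representation $F_{s,t}(z) = z - \mu_{s,t}(X) - G_{\sigma_{s,t}}(z)$ of \eqref{eq:sigmaSTrepresentation}, and then to get the quantitative estimates (1) and (2) on each $\h_\epsilon^{(n)}(\mathcal{A})$ by a subdivision-and-composition argument. First I would record two preliminary facts. Since $G_{\sigma_{s,t}}$ maps $\h(\mathcal{A})$ into $-\overline{\h}(\mathcal{A})$ and $\mu_{s,t}(X)$ is self-adjoint, we have $\im F_{s,t}^{(n)}(z) \geq \im z$; in particular $F_{s,t}^{(n)}$ maps $\h_\epsilon^{(n)}(\mathcal{A})$ into $\h_\epsilon^{(n)}(\mathcal{A})$, and $U_{s,t}^{(n)} := F_{s,t}^{(n)}(\h^{(n)}(\mathcal{A})) \subseteq \h^{(n)}(\mathcal{A})$. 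Also, by \eqref{eq:sigmaSTrepresentation}, $\norm{\sigma_{s,t}(1)} = \norm{\mu_t(X^2) - \mu_s(X^2)} \leq \omega(|t-s|)$, where $\omega$ is a modulus of continuity for the (uniformly continuous, as $[0,T]$ is compact) map $t \mapsto \mu_t(X^2)$.

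Next I would treat short intervals. Fix $\epsilon > 0$ and choose $\eta > 0$ with $\omega(\eta) \leq \epsilon^2/2$. If $0 \leq a \leq b \leq T$ and $b - a \leq \eta$, then $\norm{\sigma_{a,b}(1)} \leq \epsilon^2/2$, so Lemma \ref{lem:estimates} (applied on $\h_\epsilon^{(n)}(\mathcal{A}) \subseteq R_\epsilon^{(n)}$) gives $\norm{DG_{\sigma_{a,b}}^{(n)}(z)} \leq 1/2$ for $z \in \h_\epsilon^{(n)}(\mathcal{A})$. Since $DF_{a,b}^{(n)}(z) = \id - DG_{\sigma_{a,b}}^{(n)}(z)$, a Neumann series shows $DF_{a,b}^{(n)}(z)$ is invertible with $\norm{DF_{a,b}^{(n)}(z)^{-1}} \leq 2$; and \eqref{eq:estimate2} gives $\norm{G_{\sigma_{a,b}}^{(n)}(z) - G_{\sigma_{a,b}}^{(n)}(z')} \leq \tfrac12\norm{z-z'}$, hence $\norm{F_{a,b}^{(n)}(z) - F_{a,b}^{(n)}(z')} \geq \tfrac12 \norm{z-z'}$ for $z,z' \in \h_\epsilon^{(n)}(\mathcal{A})$. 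Then for general $0 \leq s \leq t \leq T$, I would pick a partition $s = t_0 < \dots < t_k = t$ with $k \leq N := \lceil T/\eta \rceil$ and $t_j - t_{j-1} \leq \eta$, use $F_{s,t} = F_{t_0,t_1} \circ \dots \circ F_{t_{k-1},t_k}$ from Lemma \ref{lem:semigroupproperties}(3), note every intermediate iterate of a point of $\h_\epsilon^{(n)}(\mathcal{A})$ stays in $\h_\epsilon^{(n)}(\mathcal{A})$, and apply the chain rule: $DF_{s,t}^{(n)}(z)$ is a product of $k$ operators of the previous type, so $\norm{DF_{s,t}^{(n)}(z)^{-1}} \leq 2^k \leq 2^N$, while composing the lower-Lipschitz bounds yields $\norm{F_{s,t}^{(n)}(z) - F_{s,t}^{(n)}(z')} \geq 2^{-k}\norm{z-z'} \geq 2^{-N}\norm{z-z'}$. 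Taking $\delta := 2^{-N}$, which depends only on $\epsilon$ and $\omega$ (with $T$ fixed), proves (1) and (2).

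From here the biholomorphy is formal: estimate (2) makes $F_{s,t}^{(n)}$ injective on $\h^{(n)}(\mathcal{A}) = \bigcup_\epsilon \h_\epsilon^{(n)}(\mathcal{A})$, and estimate (1) (with $\epsilon$ arbitrary) makes $DF_{s,t}^{(n)}(z)$ invertible at every point, so the analytic inverse function theorem over Banach spaces makes $F_{s,t}^{(n)}$ a biholomorphism of $\h^{(n)}(\mathcal{A})$ onto the open set $U_{s,t}^{(n)}$, with analytic inverse. For the fully matricial structure: $U_{s,t}^{(n)}$ is connected as a continuous image of a connected set; $U_{s,t}$ respects direct sums, and $F_{s,t}^{-1}$ respects direct sums and similarities, by transporting these properties of $F_{s,t}$ through injectivity (alternatively via the inverse function theorems for fully matricial functions cited in \S\ref{subsec:Cauchytransform}). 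Uniform openness of $U_{s,t}$ and uniform local boundedness of $F_{s,t}^{-1}$ I would get by applying (1) and (2) at all amplification levels together with a quantitative inverse function theorem: near $z_0^{(m)} \in \h_{\epsilon/2}^{(mn)}(\mathcal{A})$ one has $\norm{DF_{s,t}^{(mn)}(z)^{-1}} \leq \delta(\epsilon/2)^{-1}$ and, via the Cauchy estimate \eqref{eq:aprioriLipschitz2} applied to $G_{\sigma_{s,t}}^{(mn)}$ together with the uniform bound $\norm{\sigma_{s,t}(1)} \leq \norm{\mu_T(X^2)}$, a Lipschitz bound on $DF_{s,t}^{(mn)} = \id - DG_{\sigma_{s,t}}^{(mn)}$ with constant independent of $m,s,t$; a fixed-point argument then produces a ball about $F_{s,t}^{(mn)}(z_0^{(m)}) = F_{s,t}^{(n)}(z_0)^{(m)}$ inside $U_{s,t}^{(mn)}$ of radius independent of $m$.

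I expect the main obstacle to be the subdivision step. The naive route -- inverting $DF_{s,t}^{(n)}(z) = \id - DG_{\sigma_{s,t}}^{(n)}(z)$ directly by a Neumann series -- fails for small $\epsilon$, since Lemma \ref{lem:estimates} only gives $\norm{DG_{\sigma_{s,t}}^{(n)}(z)} \leq \norm{\sigma_{s,t}(1)}/\epsilon^2$, which need not be $< 1$. The point is to exploit continuity of $t \mapsto \mu_t(X^2)$ to make $\norm{\sigma_{t_{j-1},t_j}(1)}$ small on a fine enough partition and then propagate the resulting bi-Lipschitz estimates through the composition $F_{s,t} = F_{t_0,t_1}\circ\cdots\circ F_{t_{k-1},t_k}$; the length of the partition, hence $\delta$, is governed by $\epsilon$ and the modulus of continuity. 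The remaining bookkeeping -- checking that the open-mapping constants do not degrade as the amplification level $m$ grows -- also requires some care but is routine given the already-established uniform estimates.
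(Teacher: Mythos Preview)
Your proposal is correct and follows essentially the same approach as the paper: establish the bi-Lipschitz and derivative bounds on short intervals via $\|\sigma_{s,t}(1)\| \leq \omega(|t-s|)$ and the Cauchy-transform estimates of Lemma~\ref{lem:estimates}, then propagate to arbitrary $s\leq t$ by partitioning $[s,t]$ and composing, using that each $F_{t_{j-1},t_j}$ preserves $\h_\epsilon^{(n)}(\mathcal{A})$. The paper's treatment of the fully matricial structure of the inverse is slightly terser than yours but relies on the same uniform-in-$n$ quantitative inverse function theorem argument you outline.
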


\begin{proof}
We start by proving the explicit estimates.  We continue to use the representation \eqref{eq:sigmaSTrepresentation} for $F_{s,t}$.  Fix $\epsilon > 0$.  By continuity of $\mu_t(X^2)$, there exists a $\gamma > 0$ such that
\begin{equation}
|s - t| < \gamma \implies \norm{\sigma_{s,t}(1)} = \norm{\mu_s(X^2) - \mu_t(X^2)} < \frac{\epsilon^2}{2}.
\end{equation}
Therefore, if $|s - t| < \gamma$, then for $\im z, z' \in \h_\epsilon^{(n)}(\mathcal{A})$, we have by \eqref{eq:estimate2}
\begin{equation}
\norm{G_{\sigma_{s,t}}^{(n)}(z) - G_{\sigma_{s,t}}^{(n)}(z')} \leq \frac{\norm{\sigma_{s,t}(1)}}{\epsilon^2} \norm{z - z'} \leq \frac{1}{2} \norm{z - z'}.
\end{equation}
Since $F_{s,t}(z) = z - \mu_{s,t}(X)^{(n)} - G_{\sigma_{s,t}}^{(n)}(z)$, we thus have
\begin{equation} \label{eq:biholomorphicestimate1}
\norm{F_{s,t}^{(n)}(z) - F_{s,t}^{(n)}(z')} \geq \norm{z - z'} - \frac{1}{2} \norm{z - z'} = \frac{1}{2} \norm{z - z'}.
\end{equation}
This implies that $F_{s,t}^{(n)}$ is injective on $\h_\epsilon^{(n)}(\mathcal{A})$.  Moreover, for $\im z \geq \epsilon$, we have
\begin{equation} \label{eq:biholomorphicestimate2}
\norm{\id - DF_{s,t}^{(n)}(z)} = \norm{DG_{\sigma_{s,t}}^{(n)}(z)} \leq \frac{\mu_{s,t}(X^2)}{\epsilon^2} \leq \frac{1}{2},
\end{equation}
and hence $DF_{s,t}^{(n)}(z)$ is invertible and the norm of its inverse is bounded by $2$.

Now choose any $s < t$.  We can choose a partition $s = t_0 < \dots < t_m = t$ of $[s,t]$ such that $t_j - t_{j-1} < \gamma$ and $m \leq T / \gamma + 1$.  Then we write
\begin{equation}
F_{s,t}^{(n)} = F_{t_0,t_1}^{(n)} \circ \dots \circ F_{t_{m-1},t_m}^{(n)}.
\end{equation}
Because each $F_{t_{j-1},t_j}$ maps $\h_\epsilon^{(n)}(\mathcal{A})$ into $\h_\epsilon^{(n)}(\mathcal{A})$, we can apply our previous estimates \eqref{eq:biholomorphicestimate1} and \eqref{eq:biholomorphicestimate2} iteratively to $F_{t_{j-1},t_j}$ to obtain
\begin{align}
\norm{F_{s,t}^{(n)}(z) - F_{s,t}^{(n)}(z')} &\geq \frac{1}{2^m} \norm{z - z'} \nonumber \\
\norm{DF_{s,t}^{(n)}(z)^{-1}} &\leq 2^m.
\end{align}
Therefore, we have proved (1) and (2) with $\delta = 1/2^m$.

Because $F_{s,t}^{(n)}$ is analytic and $DF_{s,t}^{(n)}(z)$ is invertible, the inverse function theorem for analytic functions between Banach spaces implies that $F^{(n)}(\h^{(n)}(\mathcal{A}))$ is open and the inverse function for $F_{s,t}^{(n)}$ is analytic.  Because $F_{s,t}$ respects direct sums and similarities, so does the region $U^{(n)}$ and so does the inverse function.

In fact, we claim that $(U^{(n)})$ is uniformly open and the function $F_{s,t}^{-1}$ is uniformly locally bounded.  Indeed, the estimates (1) and (2) are independent of $n$.  Moreover, we have uniform bounds on $F_{s,t}^{(n)}|_{\h_\epsilon^{(n)}(\mathcal{A})}$ by Lemma \ref{lem:estimate1} which imply uniform bounds on the derivatives $\delta^k F_{s,t}^{(n)}$ by \eqref{eq:Cauchyestimate}.  Thus, the inverse function theorem will show that for $z \in \h_\epsilon^{(n)}(\mathcal{A})$, the inverse function $(F_{s,t}^{(n)})^{-1}$ is defined in a ball $B_{M_n(\mathcal{A})}(z,R)$ and bounded by $C$, where $R$ and $C$ depend only on $\epsilon$ and the modulus of continuity of $t \mapsto \mu_t(X^2)$.  In particular, the same $R$ and $C$ will work for $z^{(m)}$ for each $m \in \N$.  Thus, $(U^{(n)})$ is fully matricial domain and the function $F_{s,t}^{-1}$ is fully matricial.  Alternatively, one can appeal to the fully matricial inverse function theorem of \cite[Theorem 1.4]{AKV2015}.
\end{proof}

\subsection{Loewner Chains and Monotone Independence}

In this section, we will discuss monotone independence and establish a correspondence between Loewner chains and processes with monotone independent increments.  In the scalar case, monotone independence was defined and studied by Muraki \cite{Muraki2000,Muraki2001}.  For background on operator-valued monotone independence, refer to \cite{Popa2008a,BPV2013,HS2014,AW2016}.  Our treatment will follow \cite{Popa2008a}.  We start with some convenient auxiliary definitions.

\begin{definition}
If $\mathcal{B}$ is a $C^*$-algebra containing $\mathcal{A}$, then we say that $\mathcal{C} \subseteq \mathcal{B}$ is an \emph{$\mathcal{A}$-subalgebra} if it is a $*$-subalgebra and $\mathcal{A} \cdot \mathcal{C} \cdot \mathcal{A} \subseteq \mathcal{C}$.  Note that $\mathcal{C}$ is not assumed to be unital even if $\mathcal{A}$ and $\mathcal{B}$ are.
\end{definition}

\begin{definition}
If $\mathcal{B}$ is a $C^*$-algebra containing $\mathcal{A}$ and $X \in \mathcal{B}$, then we denote by $\mathcal{A} \ip{X}_0$ the $\mathcal{A}$-algebra generated by $X$.  In other words, $\mathcal{A}\ip{X}_0$ is the space of non-commutative polynomials of $X$ with coefficients in $\mathcal{A}$ which have no degree-zero term.
\end{definition}

\begin{definition}
Let $(\mathcal{B}, E)$ be an $\mathcal{A}$-valued non-commutative probability space.  We say that $\mathcal{A}$-subalgebras $\mathcal{B}_1$, \dots, $\mathcal{B}_N$ are \emph{monotone independent} over $\mathcal{A}$ if, given $m \geq 2$ and $b_1 \in \mathcal{B}_{k_1}$, \dots, $b_m \in \mathcal{B}_{k_m}$, we have
\begin{equation}
E[b_1 \dots b_m] = E[b_1 \dots b_{j-1} E[b_j] b_{j+1} \dots b_m]
\end{equation}
provided that
\begin{equation}
\begin{cases} k_j > k_{j-1} \text{ and } k_j > k_{j+1}, & 1 < j < m, \\ k_j > k_{j+1}, & j = 1 \\ k_j > k_{j-1}, & j = m\end{cases}
\end{equation}
\end{definition}

\begin{definition} \label{def:monotoneindependence}
Let $(\mathcal{B}, E)$ be an $\mathcal{A}$-valued non-commutative probability space.  We say that self-adjoint elements $X_1$, \dots, $X_N$ in $\mathcal{B}$ are \emph{monotone independent} if the $\mathcal{A}$-subalgebras $\mathcal{A}\ip{X_j}_0$ are monotone independent.
\end{definition}

The next proposition, due to \cite{Muraki2000} {Muraki2001} in the scalar-valued case, shows that $F$-transform of a sum of monotone independent random variables is the composition of the individual $F$-transforms.  The operator-valued case was proved combinatorially in \cite[Theorem 3.7]{Popa2008a} and we give a similar, but shortened, analytic proof.

\begin{proposition} \label{prop:monoindcomposition}
Let $X$ and $Y$ be self-adjoint elements of the $\mathcal{A}$-valued probability space $(\mathcal{B}, E)$, and suppose that $X$ and $Y$ are monotone independent.  If $F_X$, $F_Y$, and $F_{X+Y}$ are the reciprocal Cauchy transforms of $X$, $Y$, and $X + Y$ respectively, then $F_{X+Y} = F_X \circ F_Y$.
\end{proposition}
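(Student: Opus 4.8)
The plan is to prove the equivalent identity $G_{X+Y}^{(n)}(z) = G_X^{(n)}\!\left(F_Y^{(n)}(z)\right)$ and then take reciprocals, using that $F_{X+Y}=\inv\circ G_{X+Y}$ while $F_X\circ F_Y = \inv\circ G_X\circ F_Y$. Since $F_X$ and $F_Y$ are reciprocal Cauchy transforms of laws, $F_X\circ F_Y$ is again one by Proposition~\ref{prop:twooutofthree}; in particular $G_X\circ F_Y$ and $G_{X+Y}$ are fully matricial functions on $\h(\mathcal{A})$, so by analytic continuation (Lemma~\ref{lem:analyticcontinuation}) it suffices to verify the identity on a nonempty open subset. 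I would work where $\im z \geq \lambda$ with $\lambda > \norm{X}+\norm{Y}$; there $\norm{z^{-1}}\leq 1/\lambda$, the resolvent expansion $(z-X^{(n)}-Y^{(n)})^{-1} = \sum_{k\geq 0} g\,(X^{(n)}g)^k$ with $g=(z-Y^{(n)})^{-1}$ converges absolutely, and so do the other geometric series below. I would also record at the outset that $X^{(n)}=X\otimes 1_n$ and $Y^{(n)}=Y\otimes 1_n$ are monotone independent over $M_n(\mathcal{A})$ with respect to $E^{(n)}$, so the computation is structurally the scalar one; from here I suppress the superscript and write $\Phi := E[g] = G_Y(z)$.

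The heart of the argument is the algebraic identity, valid for $p_1,\dots,p_{m-1}\in\mathcal{A}\ip{X}_0$ and $m\geq 2$,
\[
E\!\left[\,g\,p_1\,g\,p_2\,g\cdots g\,p_{m-1}\,g\,\right] \;=\; \Phi\,E\!\left[\,p_1\,\Phi\,p_2\,\Phi\cdots\Phi\,p_{m-1}\,\right]\Phi ,
\]
together with $E[g]=\Phi$. The mechanism is that $\hat g := g - z^{-1} = z^{-1}Y(z-Y)^{-1}$ lies in the norm closure of $\mathcal{A}\ip{Y}_0$, that $E[\hat g] = \Phi - z^{-1}$, and hence $z^{-1} + E[\hat g] = \Phi$. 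I would prove the identity by induction on $m$: write one of the $g$'s as $z^{-1}+\hat g$; after splitting the remaining $g$'s the same way and absorbing each maximal run of the scalars $z^{-1}$ into the adjacent polynomials in $X$, one obtains genuine words alternating between $\mathcal{A}\ip{X}_0$ and the closure of $\mathcal{A}\ip{Y}_0$, in which every occurrence of $\hat g$ has both immediate neighbours in $\mathcal{A}\ip{X}_0$ (of strictly lower index), so Definition~\ref{def:monotoneindependence} lets us replace that $\hat g$ by $E[\hat g]\in\mathcal{A}$; the $\mathcal{A}$-bimodularity of $E$ then pulls the two outermost such factors out of the expectation. Summing the ``$z^{-1}$'' and ``$E[\hat g]$'' alternatives for each split $g$ collapses it to $z^{-1}+E[\hat g] = \Phi$, which gives the displayed formula.

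Granting this, applying $E$ to the resolvent expansion and the identity (with all $p_i = X$) gives
\[
G_{X+Y}(z) \;=\; E[g] + \sum_{k\geq 1} E\!\left[\,g\,(Xg)^k\,\right] \;=\; \Phi + \sum_{k\geq 1}\Phi\,E\!\left[\,(X\Phi)^{k-1}X\,\right]\Phi ,
\]
while, since $\Phi = G_Y(z) = F_Y(z)^{-1}$ and $\norm{\Phi X}<1$ in our region,
\[
G_X\!\left(F_Y(z)\right) \;=\; E\!\left[\,(\Phi^{-1}-X)^{-1}\,\right] \;=\; \sum_{m\geq 0} E\!\left[\,(\Phi X)^m\Phi\,\right] \;=\; \Phi + \sum_{m\geq 1}\Phi\,E\!\left[\,(X\Phi)^{m-1}X\,\right]\Phi ,
\]
using $\Phi\in\mathcal{A}$ and bimodularity to factor the end terms. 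The two series coincide, so $G_{X+Y} = G_X\circ F_Y$ on the open region, hence on all of $\h(\mathcal{A})$ by Lemma~\ref{lem:analyticcontinuation}; taking reciprocals yields $F_{X+Y} = F_X\circ F_Y$.

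I expect the main obstacle to be the bookkeeping in the induction for the key identity: the resolvents $g$ and $(z-X-Y)^{-1}$ do not lie in the subalgebras $\mathcal{A}\ip{X}_0$, $\mathcal{A}\ip{Y}_0$ to which the monotone relations directly apply, so one must peel off the scalar part $z^{-1}$ of every $g$ consistently, track which grouped factor lands in which subalgebra, and confirm that each $\hat g$ being evaluated is genuinely a peak of the resulting alternating word. Everything else---the passage to $M_n(\mathcal{A})$, the convergence estimates, and the analytic continuation---is routine given the material of Section~\ref{sec:NCP}.
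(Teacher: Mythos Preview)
Your approach is correct and follows the same overall plan as the paper: pass to $M_n(\mathcal{A})$, expand the resolvent of $X+Y$ as a geometric series in the resolvent $g$ of $Y$, and then show that each $g$ can be replaced by its expectation $\Phi=E[g]$. The paper, however, avoids your induction entirely by first proving the short Lemma~\ref{lem:monoindconst}: if $\mathcal{B}_1,\dots,\mathcal{B}_N$ are monotone independent, then so are $\mathcal{B}_1,\dots,\mathcal{B}_{N-1},\mathcal{B}_N+\mathcal{A}$. With this in hand, $g=(z^{-1}-Y)^{-1}$ itself lies in the closure of $\mathcal{A}\ip{Y}_0+\mathcal{A}$, so $X$ and $g$ are already monotone independent, and each occurrence of $g$ in $E\big[(gX)^k g\big]$ is a peak that can be replaced by $E[g]$ in one stroke. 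Your decomposition $g=z^{-1}+\hat g$ and the subsequent bookkeeping effectively reprove this lemma inside the induction; the paper's route isolates it once and makes the remaining computation a single line. Both arguments are valid, but the paper's is shorter and makes clearer which property of monotone independence is actually being used.
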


The proof relies on the following observation:

\begin{lemma} \label{lem:monoindconst}
If $\mathcal{B}_1$, \dots, $\mathcal{B}_N$ are monotone independent over $\mathcal{A}$, then $\mathcal{B}_1$, \dots, $\mathcal{B}_{n-1}$, $\mathcal{B}_N + \mathcal{A}$ are monotone independent over $\mathcal{A}$.  (Note that $\mathcal{B}_N + \mathcal{A}$ is the unit $\mathcal{A}$-algebra generated by $\mathcal{B}_N$.)
\end{lemma}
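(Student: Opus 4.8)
The plan is to verify directly that the algebras $\mathcal{C}_1 := \mathcal{B}_1$, \dots, $\mathcal{C}_{N-1} := \mathcal{B}_{N-1}$, $\mathcal{C}_N := \mathcal{B}_N + \mathcal{A}$ satisfy the defining peak-reduction identity for monotone independence. So I would fix a word $c_1 \cdots c_m$ with $c_i \in \mathcal{C}_{k_i}$ and an index $j$ satisfying the peak condition ($k_j > k_{j-1}$ and $k_j > k_{j+1}$, with the appropriate one-sided condition when $j = 1$ or $j = m$), and aim to show $E[c_1 \cdots c_m] = E[c_1 \cdots c_{j-1}\, E[c_j]\, c_{j+1} \cdots c_m]$.

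The key structural point is that because $k_j$ strictly exceeds the colors of its neighbor(s) and $k_j \le N$, the positions $j - 1$ and $j + 1$ (whichever exist) carry colors strictly less than $N$, so the letters there already lie in the unchanged algebras $\mathcal{B}_{k_{j-1}}$, $\mathcal{B}_{k_{j+1}}$. The mechanism of the proof is then: for each position $i$ with $k_i = N$, write $c_i = b_i + a_i$ with $b_i \in \mathcal{B}_N$ and $a_i \in \mathcal{A}$; expand the product $c_1 \cdots c_m$ multilinearly over these choices; and in each resulting monomial absorb every maximal run of $\mathcal{A}$-letters into the adjacent $\mathcal{B}$-letter on its left (or on its right if the run is initial), which is legitimate because $\mathcal{A} \cdot \mathcal{C} \cdot \mathcal{A} \subseteq \mathcal{C}$ for each $\mathcal{A}$-subalgebra $\mathcal{C}$. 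Since the neighbors of $j$ have color $< N$, they are never split and never absorbed, so in every such monomial the letter $c_j$ (if $k_j < N$), resp.\ $b_j$ (in the $b_j$-part of the expansion when $k_j = N$), still sits strictly between letters of smaller color — i.e.\ is still at a peak of the new word.

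I would then split into two cases. If $k_j < N$: expand all color-$N$ letters, absorb the resulting scalars, and apply the monotone independence of $\mathcal{B}_1, \dots, \mathcal{B}_N$ at position $j$ to each monomial to replace $c_j$ by $E[c_j]$; resumming the monomials gives $E[c_1 \cdots c_{j-1}\, E[c_j]\, c_{j+1}\cdots c_m]$. If $k_j = N$: first split $c_j = b_j + a_j$, so that $E[c_1 \cdots c_m]$ is the sum of a $b_j$-term and an $a_j$-term. The $a_j$-term equals $E[c_1\cdots c_{j-1}\, E[a_j]\, c_{j+1}\cdots c_m]$ for free, since $E|_\mathcal{A} = \id$ forces $E[a_j] = a_j$; for the $b_j$-term, expand the remaining color-$N$ letters, absorb scalars, apply monotone independence of the $\mathcal{B}_k$ at the (still valid) peak $j$ to pull out $E[b_j]$, and resum. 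Adding the two terms and using $E[c_j] = E[b_j] + E[a_j]$ together with the bimodule property of $E$ closes the case, and hence the lemma.

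The only real labor is the bookkeeping in the ``expand and absorb'' step: checking that absorbing scalar runs is merely a regrouping of a fixed product (so no identity is in danger), that it never touches the three letters around $j$, and that the consecutive equal colors it may create are harmless (they are, since each $\mathcal{B}_k$ is an algebra and the monotone-independence formula is stated for not-necessarily-alternating words). I expect that to be the fiddliest part, but there is no genuine obstruction.
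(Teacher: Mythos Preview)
Your proposal is correct and is precisely the ``straightforward casework'' the paper leaves to the reader; the paper gives no argument beyond that phrase, and what you have written is the natural way to carry it out. The only thing to add is that the paper records no details at all, so your write-up is strictly more informative than the original.
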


\begin{proof}
Straightforward casework left to the reader.
\end{proof}

\begin{proof}[Proof of Proposition \ref{prop:monoindcomposition}]
Suppose that $X$ and $Y$ are monotone independent.  It is straightforward to check that $X^{(n)}$ and $Y^{(n)}$ are monotone independent in the $M_n(\mathcal{A})$-valued probability space $(M_n(\mathcal{B}),E^{(n)})$.  Note that for $z \in M_n(\mathcal{A})$ with $\norm{z} < 1/ \norm{X + Y}$, we have
\begin{align}
(z^{-1} - (X^{(n)} + Y^{(n)}))^{-1} &= ((z^{-1} - X^{(n)}) - Y^{(n)})^{-1} \nonumber \\
&= \sum_{k=0}^\infty [(z^{-1} - Y^{(n)})^{-1} X^{(n)}]^k (z^{-1} - Y^{(n)})^{-1}.
\end{align}
We take the expectation and observe that $(z^{-1} - Y^{(n)})^{-1} - z$ is in the closure of $M_n(\mathcal{A})\ip{Y^{(n)}}_0$.  Thus, $X^{(n)}$ and $(z^{-1} - Y^{(n)})^{-1} - z$ are monotone independent.  But this implies that $X^{(n)}$ and $(z - Y^{(n)})^{-1}$ are monotone independent by Lemma \ref{lem:monoindconst}.  Thus,
\begin{align}
\tilde{G}_{X+Y}^{(n)}(z) &= E^{(n)}[(z^{-1} - X^{(n)} - Y^{(n)})^{-1}] \nonumber \\
&= \sum_{k=0}^\infty E^{(n)}\biggl(\bigl[E^{(n)}[(z^{-1} - Y^{(n)})^{-1}] X^{(n)} \bigr]^k E^{(n)}[(z^{-1} - Y^{(n)})^{-1}] \biggr) \nonumber \\
&= \tilde{G}_X^{(n)} \circ \tilde{G}_Y^{(n)}(z).
\end{align}
Because $F_X = \inv \circ \tilde{G}_X \circ \inv$, where $\inv: z \mapsto z^{-1}$, we have $F_{X + Y} = F_X \circ F_Y$.
\end{proof}

\begin{definition}
If $X$ and $Y$ are monotone independent, and $X$ has law $\mu$ and $Y$ has law $\nu$, then we call the law of $X + Y$ the \emph{monotone convolution} of $\mu$ and $\nu$, and denote it $\mu \rhd \nu$.  Note that $\mu \rhd \nu$ is well-defined because the $F$-transform of $X + Y$ is uniquely determined by the previous proposition.
\end{definition}

Now we can prove the operator-valued analogue of \cite[Theorem 3.1]{Schleissinger2017}, which shows that $\mathcal{A}$-valued Loewner chains are equivalent to processes with monotone independent increments (see also \cite[\S 3.1]{Hasebe2010-2}).

\begin{definition}
Let $(\mathcal{B}, E)$ be an $\mathcal{A}$-valued probability space, and let $(X_t)_{t \in [0,T]}$ be a family of self-adjoint elements of $\mathcal{B}$.  Then $(X_t)$ is said to be a \emph{process with monotone independent increments} if for every $t_0 < t_1 < \dots < t_N$, the random variables $X_{t_0}$, $X_{t_1} - X_{t_0}$, \dots, $X_{t_N} - X_{t_{N-1}}$. are monotone independent, and $t \mapsto E(X_t)$ and $t \mapsto E(X_t^2)$ are continuous functions $[0,T] \to \mathcal{A}$.
\end{definition}

\begin{theorem} \label{thm:processLoewnerchain}
Suppose that $(X_t)_{t \in [0,T]}$ is a process with monotone independent increments in an $\mathcal{A}$-valued probability space $(\mathcal{B}, E)$, then the $F$-transforms $F_{X_t}^{(n)}(z) = (E^{(n)}[(z - X_t)^{-1}])^{-1}$ form a chordal Loewner chain.  Conversely, every chordal Loewner chain arises from such a process with monotone independent increments on some $\mathcal{A}$-valued probability space.
\end{theorem}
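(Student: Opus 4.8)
The plan is to treat the two implications separately. The forward implication is a direct consequence of Proposition~\ref{prop:monoindcomposition}, while the converse requires building the process as an inductive limit of monotone products indexed by finite partitions of $[0,T]$. For the forward direction, suppose $(X_t)_{t\in[0,T]}$ has monotone independent increments; replacing $X_t$ by $X_t - X_0$ I may assume $X_0 = 0$ (the monotone independence of $X_0$ and $X_t - X_0$ makes all the hypotheses invariant under this shift, e.g.\ $E[(X_t - X_0)X_0] = (E[X_t] - E[X_0])E[X_0]$ is continuous in $t$). Then $F_{X_0} = \id$, giving axiom~(1) of Definition~\ref{def:CLC}; axiom~(2) holds because each $X_t$ is a bounded self-adjoint element, so its law lies in $\Sigma_0(\mathcal{A})$; axiom~(4) is part of the hypothesis since $E[X_t] = \mu_{X_t}(X)$ and $E[X_t^2] = \mu_{X_t}(X^2)$. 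For axiom~(3), fix $s\le t$: applying the definition of the process with times $0 < s < t$ and discarding the trivial increment $X_0$, the variables $X_s$ and $X_t - X_s$ are monotone independent, so Proposition~\ref{prop:monoindcomposition} gives $F_{X_t} = F_{X_s}\circ F_{X_t - X_s}$, where $F_{X_t - X_s}$ is the $F$-transform of the bounded law of $X_t - X_s$ and hence a fully matricial self-map of $\h(\mathcal{A})$; thus $F_{X_s}\succeq F_{X_t}$.

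For the converse, let $(F_t)$ be a Loewner chain with subordination maps $F_{s,t}$. For a finite partition $P\colon 0 = t_0 < \dots < t_N = T$, Lemma~\ref{lem:semigroupproperties} gives laws $\mu_{t_{j-1},t_j}\in\Sigma_0(\mathcal{A})$ with $F_{t_{j-1},t_j} = F_{\mu_{t_{j-1},t_j}}$, whose radii are uniformly bounded over $P$ by Lemma~\ref{lem:radiusbound}. Realizing each $\mu_{t_{j-1},t_j}$ via Proposition~\ref{prop:CPmap} and forming the monotone product of $\mathcal{A}$-valued probability spaces (see \cite{Popa2008a,AW2016}), I obtain $(\mathcal{B}_P,E_P)$ containing monotone independent self-adjoint $Y_1^P,\dots,Y_N^P$ with $Y_j^P$ of law $\mu_{t_{j-1},t_j}$ and $\mathcal{B}_P = C^*(\mathcal{A}, Y_1^P,\dots,Y_N^P)$. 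Set $X_{t_j}^P := Y_1^P + \dots + Y_j^P$. Using the associativity of monotone independence (grouping $\mathcal{A}\ip{Y_1,\dots,Y_{j-1}}_0$, $\mathcal{A}\ip{Y_j}_0$, $\mathcal{A}\ip{Y_{j+1},\dots,Y_N}_0$ yields monotone independent blocks in this order) together with Proposition~\ref{prop:monoindcomposition} applied inductively, and the cocycle identities $F_{s,u} = F_{s,t}\circ F_{t,u}$, $F_{0,t} = F_t$ from Lemma~\ref{lem:semigroupproperties}, one computes $F_{X_{t_j}^P} = F_{\mu_{t_0,t_1}}\circ\dots\circ F_{\mu_{t_{j-1},t_j}} = F_{t_j}$.

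To obtain a process indexed by all of $[0,T]$, order the finite partitions containing $\{0,T\}$ by refinement. For $P\subseteq P'$ I define $\iota_{P,P'}\colon\mathcal{B}_P\to\mathcal{B}_{P'}$ on generators by sending $Y_j^P$ to the sum of those generators of $\mathcal{B}_{P'}$ corresponding to the subintervals of $(t_{j-1},t_j]$ cut out by $P'$; this is a well-defined $E$-preserving $*$-isomorphism onto its image, because the tuple $(Y_1^P,\dots,Y_N^P)$ and its proposed image have the same joint $\mathcal{A}$-valued distribution: both are the monotone product of the laws $\mu_{t_{j-1},t_j}$, the image having the correct marginals by the iterated identity $\mu_{t_{j-1},t_j} = \mu_{t_{j-1},s}\rhd\mu_{s,t_j}$ (from $F_{t_{j-1},t_j} = F_{t_{j-1},s}\circ F_{s,t_j}$) and block-monotone independence by associativity, and a monotone independent family is determined by its marginal laws. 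These maps are compatible, so I may form $(\mathcal{B},E) = \varinjlim_P(\mathcal{B}_P,E_P)$, a $C^*$-algebra with conditional expectation onto $\mathcal{A}$ (the connecting maps are isometric by Theorem~\ref{thm:Cstarbasics}). For $t\in[0,T]$ define $X_t\in\mathcal{B}$ as the image of $X_t^P$ for any partition $P\ni t$; compatibility makes this independent of $P$. Given $t_0 < \dots < t_N$, choose $P\supseteq\{t_0,\dots,t_N\}$; each increment $X_{t_i} - X_{t_{i-1}}$ is then a sum of a consecutive block of the $Y^P$, so the increments are monotone independent, and $F_{X_t} = F_t$ by the previous paragraph. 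Finally $t\mapsto E[X_t] = \mu_t(X)$ and $t\mapsto E[X_t^2] = \mu_t(X^2)$ are continuous by axiom~(4) of Definition~\ref{def:CLC}, so $(X_t)$ is a process with monotone independent increments realizing $(F_t)$.

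The forward direction is routine once Proposition~\ref{prop:monoindcomposition} is available. The substance of the converse is the inductive-limit bookkeeping: making precise the block form of associativity of monotone independence used above (standard in the scalar theory, but worth stating carefully in the operator-valued setting), verifying that each refinement map $\iota_{P,P'}$ is a well-defined $E$-preserving embedding — which rests on the fact that monotone independence together with the marginal laws pins down all mixed moments — and checking that the (in general uncountable) directed limit of $C^*$-algebras and conditional expectations behaves as expected, which it does since inductive limits of $C^*$-algebras over arbitrary directed sets are standard. I expect the associativity/uniqueness step to be the main point requiring care, the rest being formal.
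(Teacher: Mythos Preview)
Your proof is correct and follows essentially the same approach as the paper: the forward direction via Proposition~\ref{prop:monoindcomposition}, and the converse by realizing each $\mu_{s,t}$ via Proposition~\ref{prop:CPmap}, forming monotone products over finite partitions (citing \cite{Popa2008a}), and taking the $C^*$-inductive limit over refinements. You are more explicit than the paper about the well-definedness of the refinement maps $\iota_{P,P'}$ and the associativity of monotone independence needed to verify them, and you also address the normalization $X_0=0$ (which the paper silently assumes); note, though, that replacing $X_t$ by $X_t-X_0$ proves that $F_{X_t-X_0}$, not $F_{X_t}$, is a Loewner chain, so this is really a remark on the theorem statement rather than a reduction.
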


\begin{proof}
Suppose $(X_t)_{t \in [0,T]}$ is a process with monotone independent increments.  Then for $s < t$, we have
\begin{equation}
F_{X_t} = F_{X_s} \circ F_{X_t - X_s}.
\end{equation}
We also assumed that $E(X_t)$ and $E(X_t^2)$ are continuous.  Hence, $F_{X_t}$ is a chordal Loewner chain.

Conversely, suppose that $F_t$ is a chordal Loewner chain.  Note that $F_{s,t}$ is the reciprocal Cauchy transform of a law $\mu_{s,t}$.  By Theorem \ref{prop:CPmap}, we can construct a representative random variable $X_{s,t}$ in a $C^*$ probability space $(\mathcal{B}_{s,t}, E_{s,t})$ which has the law $\mu_{s,t}$.  Now consider a partition $P = (t_0,\dots, t_N)$ where $0 = t_0 < t_1 < \dots < t_N = T$.  Let $\mathcal{B}_P$ be the monotone product of the algebras $\mathcal{B}_{0,t_1}$, \dots, $B_{t_{N-1},t_N}$ as constructed in \cite[\S 4]{Popa2008a}.  Then the random variables $X_{t_0,t_1}$, \dots, $X_{t_{N-1}, t_N}$ are monotone independent in $\mathcal{B}_P$.  Moreover, for $i < j$, the variable $X_{t_i,t_j} = X_{t_i,t_{i+1}} + \dots + X_{t_{j-1},t_j}$ in $\mathcal{B}_P$ has the law $\mu_{t_i,t_j}$.

We define $\mathcal{B}$ as the $C^*$ inductive limit (see \cite[\S II.8.2]{Blackadar2006}) of $\mathcal{B}_P$, where the index set is the collection of partitions $P$ of $[0,T]$ ordered by inclusion, and where for $P \subseteq P'$ the inclusion map $\mathcal{B}_P \to \mathcal{B}_{P'}$ is given by the mapping the variable $X_{s,t}$ of $\mathcal{B}_P$ to the variable $X_{s,t}$ of $\mathcal{B}_{P'}$ for every pair $s < t$ in $P$.  Then $X_t = X_{0,t}$ in $\mathcal{B}$ is the desired process with monotone independent increments such that $X_t$ has the law $\mu_t$.
\end{proof}

\begin{remark}
In Theorem \ref{thm:Fockspace}, under additional assumptions on $\mu_t$, we will provide a more direct construction of a process with monotone independent increments that realizes the family of laws $\mu_t$.
\end{remark}

\begin{remark}
Anit-monotone independence is defined the same as monotone independence with the order of the indices reversed (that is, $\mathcal{B}_1$, \dots, $\mathcal{B}_N$ are anti-monotone independent if and only if $\mathcal{B}_N$, \dots, $\mathcal{B}_1$ are monotone independent).  If $F_t$ is a Loewner chain on $[0,T]$, then the time-reversed flow $F_{T-t,T}$ describes a process with anti-monotone independent increments.  We refer to \cite[\S 3.3]{Schleissinger2017} for a treatment of the scalar case, and the generalization to the operator-valued case is straightforward.
\end{remark}

\subsection{Loewner Chains and Free Independence} \label{subsec:freeincrements}

Now we will adapt Bauer's observation that processes with freely independent increments give rise to Loewner chains \cite{Bauer2004} \cite[\S 3.5]{Schleissinger2017}.  We first review the notion of operator-valued free independence from \cite{VDN1992,Speicher1998,MS2013,BMS2013}

\begin{definition}
Let $(\mathcal{B}, E)$ be an $\mathcal{A}$-valued probability space.  Let $\mathcal{B}_1$, \dots, $\mathcal{B}_N$ be subalgebras of $\mathcal{B}$ which contain $\mathcal{A}$.  Then $\mathcal{B}_1$, \dots, $\mathcal{B}_N$ are said to be \emph{freely independent over $\mathcal{A}$} if we have
\begin{equation}
E(b_1, \dots, b_m) = 0
\end{equation}
whenever $E(b_j) = 0$ and $b_j \in \mathcal{B}_{k_j}$ with $k_j \neq k_{j+1}$.
\end{definition}

\begin{definition}
Self-adjoint random variables in $X_1$, \dots, $X_N$ of $(\mathcal{B},E)$ are said to be \emph{freely independent} if the unital $*$-subalgebras $\mathcal{A}\ip{X_1}$, \dots, $\mathcal{A}\ip{X_N}$ are freely independent.
\end{definition}

\begin{definition}
If $X$ and $Y$ are freely independent over $\mathcal{A}$, and $\mu$ and $\nu$ are the laws of $X$ and $Y$ respectlvely, then we denote the law of $X + Y$ by $\mu \boxplus \nu$.  This is known to be well-defined.
\end{definition}

An important consequence of free independence is the analytic subordination of Cauchy transforms.  This was proved in the scalar case by \cite[Proposition 4.3]{Voiculescu1993} \cite[Theorem 3.1]{Biane1998}, \cite{Lenczewski2007}.  Versions of the result were proved in the multivariable setting by \cite{Nica2009} and in the operator-valued setting by \cite{Voiculescu2000}.  The statement here concerning subordination for fully matricial functions defined on all of $\h(\mathcal{A})$ is due to \cite[Theorem 2.2]{BMS2013}.

\begin{theorem} \label{thm:analyticsubordination}
Suppose that $X$ and $Y$ are freely independent in $(\mathcal{B}, E)$.  Then there exists a fully matricial function $F: \h(\mathcal{A}) \to \h(\mathcal{A})$ such that $G_{X+Y}(z) = G_X \circ F(z)$, and hence $F_{X+Y}(z) = F_X \circ F(z)$.
\end{theorem}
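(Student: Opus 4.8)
The plan is to construct the subordination function for $G_{X+Y}$ and $G_X$ one matrix level at a time and then check that the levels assemble into a single fully matricial map. I would begin by noting that the two assertions are equivalent: since $F_X(w)$ is invertible throughout $\h(\mathcal{A})$ — its imaginary part being $\geq \im w$ by Proposition~\ref{prop:Ftransformbijection} — the element $G_X(w) = F_X(w)^{-1}$ is invertible there as well, so once a fully matricial $F \colon \h(\mathcal{A}) \to \h(\mathcal{A})$ with $G_{X+Y} = G_X \circ F$ is in hand, inverting both sides entrywise in $M_n(\mathcal{A})$ produces $F_{X+Y} = F_X \circ F$. Thus it suffices to construct $F$ with $G_{X+Y} = G_X \circ F$. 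A direct check of the free independence axiom shows that $X^{(n)}$ and $Y^{(n)}$ are freely independent over $M_n(\mathcal{A})$ in $(M_n(\mathcal{B}), E^{(n)})$, so the problem reduces to producing, for each $n$, an analytic subordination function $F^{(n)} \colon \h^{(n)}(\mathcal{A}) \to \h^{(n)}(\mathcal{A})$ with $G_{X+Y}^{(n)} = G_X^{(n)} \circ F^{(n)}$, and then verifying compatibility across the levels.

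For the single-level statement I would invoke the operator-valued analytic subordination theorem of Voiculescu~\cite{Voiculescu2000} (see also \cite{Voiculescu2004}), in the sharp form established by Belinschi--Mai--Speicher~\cite{BMS2013}. There are two standard routes. The first is Biane's conditional-expectation argument~\cite{Biane1998}: after replacing $(M_n(\mathcal{B}), E^{(n)})$ by a $W^*$-probability space carrying a faithful state, which changes no Cauchy transform, let $E_{\mathcal{D}}$ be the state-preserving conditional expectation onto the von Neumann algebra generated by $M_n(\mathcal{A})$ and $X^{(n)}$; one shows $E_{\mathcal{D}}[(z - X^{(n)} - Y^{(n)})^{-1}] = (F^{(n)}(z) - X^{(n)})^{-1}$ for an analytic self-map $F^{(n)}$ of $\h^{(n)}(\mathcal{A})$, and applying $E^{(n)}$ again yields $G_{X+Y}^{(n)} = G_X^{(n)} \circ F^{(n)}$. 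The second route, which most cleanly controls the behaviour on \emph{all} of $\h^{(n)}(\mathcal{A})$, fixes $z$ and sets $h_X(w) = F_X(w) - w$, $h_Y(w) = F_Y(w) - w$, which by Proposition~\ref{prop:Ftransformbijection} are each the sum of a constant self-adjoint element and the negative of a Cauchy transform, hence map $\h(\mathcal{A})$ into $-\overline{\h}(\mathcal{A})$; then $w \mapsto z + h_Y(z + h_X(w))$ is an analytic self-map of a half-plane $\h_\epsilon^{(n)}(\mathcal{A})$ whose image is bounded away from the boundary, so a Denjoy--Wolff / Earle--Hamilton fixed-point theorem furnishes a unique fixed point $F^{(n)}(z)$, analytic in $z$, and the operator-valued moment--free-cumulant (equivalently $R$-transform) identities identify it as the subordination function. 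Either way one obtains the desired $F^{(n)}$.

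It then remains to assemble $F = (F^{(n)})_{n \geq 1}$ into a fully matricial function. Respecting direct sums and similarities is automatic: $F_X$, $F_Y$ and $F_{X+Y}$ all do so, and since $F_X$ is injective on $\h(\mathcal{A})$ (Proposition~\ref{prop:biholomorphic}) the identity $F_{X+Y} = F_X \circ F^{(\bullet)}$ determines $F^{(\bullet)}$ uniquely, forcing $F^{(n+m)}(z \oplus w) = F^{(n)}(z) \oplus F^{(m)}(w)$ and $F^{(n)}(s z s^{-1}) = s F^{(n)}(z) s^{-1}$. Uniform local boundedness is inherited from the bounds on $h_X$ and $h_Y$ on half-planes, which by Lemma~\ref{lem:estimate1} are independent of $n$, via the fixed-point identity on the region where it holds and analytic continuation elsewhere; hence each $F^{(n)}$ is uniformly locally bounded, so analytic, and $F$ is a fully matricial function $\h(\mathcal{A}) \to \h(\mathcal{A})$. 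By Proposition~\ref{prop:twooutofthree}, $F$ is then automatically the $F$-transform of an $\mathcal{A}$-valued law, though this is not needed here. The main obstacle is the single-level subordination: because $\h^{(n)}(\mathcal{A})$ is not locally compact, the fixed-point argument must genuinely use a Denjoy--Wolff/Earle--Hamilton theorem rather than a compactness argument, and identifying the resulting fixed point with the true subordination function still requires either the operator-valued free cumulant machinery or the $W^*$-conditional-expectation input — this is precisely the content of \cite[Theorem 2.2]{BMS2013}.
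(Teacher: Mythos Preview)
The paper does not actually prove this theorem: it is stated as a known result and attributed to \cite[Theorem~2.2]{BMS2013}, with historical citations to \cite{Voiculescu1993,Biane1998,Voiculescu2000}. Your proposal is a faithful sketch of the BMS2013 argument (fixed-point via Earle--Hamilton, or alternatively the conditional-expectation route of Biane), so in substance you and the paper agree --- the paper just outsources the proof entirely.

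One correction worth making: your appeal to Proposition~\ref{prop:biholomorphic} to conclude that $F_X$ is injective on all of $\h(\mathcal{A})$ is misplaced. That proposition is about subordination maps $F_{s,t}$ in a Loewner chain, and its proof works by factoring $F_{s,t}$ into a composition of many small-variance pieces using the continuity of $t \mapsto \mu_t(X^2)$; a single $F$-transform of a law with large variance need not be globally injective (already in the scalar case $F_\mu$ can fail to be univalent). Fortunately you do not need global injectivity of $F_X$ to deduce that $F$ respects direct sums and similarities: the uniqueness of the Earle--Hamilton fixed point already forces $F^{(n+m)}(z \oplus w) = F^{(n)}(z) \oplus F^{(m)}(w)$ and $F^{(n)}(szs^{-1}) = sF^{(n)}(z)s^{-1}$, since the fixed-point map itself respects these operations. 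Alternatively, one can argue via injectivity of $\tilde G_X$ near $0$ (Lemma~\ref{lem:Cauchytransforminverse}) and analytic continuation. With that adjustment your sketch is sound.
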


\begin{definition}
An \emph{$\mathcal{A}$-valued process with freely independent increments} on $[0,T]$ is a family of self-adjoint elements $\{X_t\}_{t \in [0,T]}$ in $(\mathcal{B},E)$ such that for each $t_0 < t_1 < \dots < t_N$, the increments $X_{t_j} - X_{t_{j-1}}$ are freely independent, and such that $E(X_t)$ and $E(X_t^2)$ are continuous functions of $t$.
\end{definition}

\begin{theorem} \label{thm:freeLoewnerchain}
Suppose that $\{X_t\}_{t \in [0,T]}$ is an $\mathcal{A}$-valued process with freely independent increments in $(\mathcal{B},E)$ such that $X_0 = 0$ and $E(X_t)$ and $E(X_t^2)$ are continuous. Then $F_t := (E[(z - X)^{-1}])^{-1}$ is a Loewner chain.
\end{theorem}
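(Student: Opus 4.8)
The plan is to verify directly the four defining conditions of an $\mathcal{A}$-valued chordal Loewner chain from Definition \ref{def:CLC}, taking $\mu_t$ to be the law of $X_t$ in $(\mathcal{B},E)$. Three of these are essentially immediate, and the only substantive input is the operator-valued free subordination theorem, Theorem \ref{thm:analyticsubordination}.

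For condition (1), I would note that $X_0 = 0$ and that $E$ restricts to the identity on $\mathcal{A}$, hence $E^{(n)}$ to the identity on $M_n(\mathcal{A})$; therefore $G_{X_0}^{(n)}(z) = E^{(n)}[z^{-1}] = z^{-1}$ and $F_0^{(n)}(z) = z$. For condition (2), each $X_t$ is a bounded self-adjoint element of $\mathcal{B}$, so by the remarks following Definition \ref{def:law} its law $\mu_t$ lies in $\Sigma_0(\mathcal{A})$ with $\rad(\mu_t) \le \norm{X_t}$; one then uses Proposition \ref{prop:Ftransformbijection} to see that $G_{\mu_t}$ is invertible on all of $\h(\mathcal{A})$ with reciprocal $F_{\mu_t}$, so that $F_t = F_{\mu_t}$ is genuinely the $F$-transform of $\mu_t$ and a fully matricial self-map of $\h(\mathcal{A})$, not merely a formal expression. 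For condition (4), the maps $t \mapsto \mu_t(X) = E(X_t)$ and $t \mapsto \mu_t(X^2) = E(X_t^2)$ are continuous by hypothesis.

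The content is condition (3). Fix $s \le t$. Then $X_t = X_s + (X_t - X_s)$, and by hypothesis $X_s$ and the increment $X_t - X_s$ are freely independent over $\mathcal{A}$. Applying Theorem \ref{thm:analyticsubordination} to this pair produces a fully matricial function $F\colon \h(\mathcal{A}) \to \h(\mathcal{A})$ with $F_{X_t} = F_{X_s} \circ F$, i.e. $F_t = F_s \circ F$, so $F_s \succeq F_t$ in the sense of Definition \ref{def:subordination}. (The cocycle relation $F_{s,t}\circ F_{t,u} = F_{s,u}$ is not part of the definition but is supplied automatically by Lemma \ref{lem:semigroupproperties} once the chain is established.)

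I do not expect a genuine obstacle here: Theorem \ref{thm:analyticsubordination} does all the real work in condition (3), and Proposition \ref{prop:Ftransformbijection} supplies the fact used for condition (2) that a reciprocal Cauchy transform of a law is a well-defined fully matricial self-map of $\h(\mathcal{A})$. The only point worth a second glance is that Theorem \ref{thm:analyticsubordination} delivers a subordination map into the \emph{open} half-plane $\h(\mathcal{A})$ and not merely into its closure $\overline{\h}(\mathcal{A})$, which is precisely what condition (3) of Definition \ref{def:CLC} demands; since this is built into the cited statement, the proof amounts to assembling these ingredients.
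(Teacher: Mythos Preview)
Your proposal is correct and takes essentially the same approach as the paper: the paper's proof is a single sentence (``This is immediate from Theorem \ref{thm:analyticsubordination}''), and you have simply spelled out the verification of conditions (1), (2), and (4) of Definition \ref{def:CLC} that the paper leaves implicit, while using Theorem \ref{thm:analyticsubordination} for condition (3) exactly as intended.
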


\begin{proof}
This is immediate from Theorem \ref{thm:analyticsubordination}.
\end{proof}

However, even in the scalar-valued case, not every Loewner chain arises from a process with freely independent increments.  We demonstrate this with a concrete counterexample.  First, we construct a counterexample to the converse of the subordination property.  Here we use some standard notation and results from free probability theory concerning the free cumulants and $R$-transforms; for explanation, see e.g.\ \cite[Part II]{NS2006}.

\begin{lemma} \label{lem:counterexample}
Let $\sigma$ be a semicircular distribution, with probability density given by $\frac{1}{2\pi} \sqrt{4 - t^2} \chi_{[-2,2]}(t)$.  There does not exist a law $\nu$ such that $\sigma \rhd \sigma = \sigma \boxplus \nu$.
\end{lemma}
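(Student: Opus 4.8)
The plan is to carry out the whole computation with $R$-transforms and free cumulants, for which we use the standard material of \cite[Part II]{NS2006}, and then to contradict the positivity of $\nu$ by inspecting its first few moments. First I would record the relevant transforms of the semicircle law $\sigma$ of variance $1$: namely $G_\sigma(z) = (z - \sqrt{z^2-4})/2$ and $F_\sigma(z) = G_\sigma(z)^{-1} = (z + \sqrt{z^2-4})/2$, so that $F_\sigma(z) + F_\sigma(z)^{-1} = z$ (equivalently, $F_\sigma$ is the branch near $\infty$ of the inverse of $w \mapsto w + 1/w$), and $R_\sigma(w) = w$.

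Assume toward a contradiction that a law $\nu$ satisfies $\sigma \rhd \sigma = \sigma \boxplus \nu$. Since $F_{\sigma \rhd \sigma} = F_\sigma \circ F_\sigma$ by Proposition \ref{prop:monoindcomposition} and the definition of $\rhd$, the next step is to compute the $R$-transform of $\sigma \rhd \sigma$. Writing $v = F_{\sigma \rhd \sigma}(z)$ and $w = F_\sigma(z)$, the relations above give $z = w + 1/w$ and $w = v + 1/v$, hence $z = v + 1/v + v/(v^2+1)$; since $G_{\sigma \rhd \sigma}(z) = 1/v$, putting $g = 1/v$ identifies the functional inverse of $G_{\sigma \rhd \sigma}$ as $g \mapsto 1/g + g + g/(1+g^2)$, so $R_{\sigma \rhd \sigma}(g) = g + g/(1+g^2)$. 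By additivity of the $R$-transform under free convolution, $R_\nu(g) = R_{\sigma \rhd \sigma}(g) - R_\sigma(g) = g/(1+g^2) = g - g^3 + g^5 - \cdots$, so the free cumulants of $\nu$ are $\kappa_{2k-1}(\nu) = 0$ and $\kappa_{2k}(\nu) = (-1)^{k-1}$ for all $k \ge 1$; in particular $\kappa_2 = 1$, $\kappa_4 = -1$, $\kappa_6 = 1$.

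Finally I would extract the low-order moments of $\nu$ and use positivity. From the moment-cumulant relation, with $w = z\,M_\nu(z)$ where $M_\nu(z) = 1 + \sum_{n \ge 1} m_n(\nu) z^n$, one gets $w = z(1+2w^2)/(1+w^2)$, that is, $w^3 - 2zw^2 + w - z = 0$; solving for $w$ as a power series (even, since the odd cumulants vanish) and matching the coefficients of $z^3$, $z^5$, $z^7$ yields $m_2(\nu) = 1$, $m_4(\nu) = 1$, $m_6(\nu) = 0$. But a law corresponds to a positive linear functional on $\C[x]$, so Cauchy--Schwarz forces $m_4(\nu)^2 \le m_2(\nu)\, m_6(\nu)$, i.e. $1 \le 0$ — a contradiction. (Equivalently, $\int x^6\,d\nu = 0$ with $\nu \ge 0$ forces $\nu = \delta_0$, whence $m_2(\nu) = 0 \ne 1$.) Hence no such $\nu$ exists. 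The only step that takes real care is the computation of $R_{\sigma \rhd \sigma}$, i.e.\ inverting $G_{\sigma \rhd \sigma} = (F_\sigma \circ F_\sigma)^{-1}$; the moment extraction and the final contradiction are routine, and one could equally well obtain $m_2, m_4, m_6$ by counting non-crossing partitions in the moment-cumulant formula.
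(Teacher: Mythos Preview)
Your proof is correct and follows essentially the same route as the paper: compute $R_{\sigma\rhd\sigma}(g)=g+g/(1+g^2)$, deduce $R_\nu(g)=g/(1+g^2)$ and hence the free cumulants $\kappa_{2k}=(-1)^{k-1}$, and then find that $m_6(\nu)=0$, contradicting positivity. The paper applies the moment--cumulant formula directly to write $m_6=\kappa_6+6\kappa_4\kappa_2+5\kappa_2^3=0$ and simply calls this a contradiction; your use of the functional equation $w^3-2zw^2+w-z=0$ to extract $m_2,m_4,m_6$ and the explicit Cauchy--Schwarz inequality $m_4^2\le m_2 m_6$ (or the equivalent observation that $m_6=0$ forces $\nu=\delta_0$) are minor stylistic variations that make the final step more transparent.
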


\begin{proof}
Recall that $F_\sigma^{-1}(z)$ and $G_\sigma^{-1}(z)$ are both given by $z + 1/z$ on the appropriate domains.  Note that $G_\mu = G_\sigma \circ F_\sigma$, so that
\begin{equation}
\frac{1}{z} + R_\mu(z) = G_\mu^{-1}(z) = F_\sigma^{-1} \circ G_\sigma^{-1}(z) = z + \frac{1}{z} + \frac{1}{z + 1/z} = \frac{1}{z} + z + \frac{z}{1 + z^2}.
\end{equation}
If we assume that $\mu = \sigma \boxplus \nu$, then because the $R$-transform linearizes free convolution, we have
\begin{equation}
R_\nu(z) = R_\mu(z) - R_\sigma(z) = \frac{z}{1 + z^2} = \sum_{n=0}^\infty (-1)^n z^{2n+1}.
\end{equation}
Thus, the free cumulants of $\nu$ would be
\begin{equation}
\kappa_{2n} = (-1)^{n+1} \text{ for } n \geq 1, \qquad \kappa_{2n+1} = 0.
\end{equation}
When we compute the sixth moment of $\nu$ by the moment-cumulant formula, we obtain the contradiction
\begin{equation}
\nu(t^6) = \kappa_6 + 6 \kappa_4 \kappa_2 + 5 \kappa_2^3 = 0. \qedhere
\end{equation}
\end{proof}

Now we construct a Loewner chain on $[0,2]$ as follows:  Let $\sigma_t$ be the semicircular distribution of variance $t$.  Then we define $F_t$ on $[0,1]$ by $F_t = F_{\sigma_t}$.  This family of reciprocal Cauchy transforms arises from a process $S_t$ whose increments are freely independent, such that $S_t - S_s$ is semicircular of variance $t - s$.  Therefore, $\{F_t\}_{t \in [0,1]}$ is a Loewner chain.  Now, for in $t \in [1,2]$, we define
\begin{equation}
F_t = F_1 \circ F_{t - 1}.
\end{equation}
Then $F_t$ is a Loewner chain on $[0,2]$.  However, it does not arise from a process with free increments; indeed, the law at time $1$ is $\sigma$, the law at time $2$ is $\sigma \rhd \sigma$, and we showed in Lemma \ref{lem:counterexample} that $\sigma \rhd \sigma$ cannot be expressed as $\sigma \boxplus \nu$ for any law $\nu$.

\section{The Loewner Equation} \label{sec:Loewnerequation}

This section will prove the operator-valued version of \cite[Theorems 5.3 and 5.6]{Bauer2005}, that is, we will show that the Loewner equation
\begin{equation} \label{eq:Loewner}
\partial_t F(z,t) = DF(z,t)[V(z,t)]
\end{equation}
defines a bijection between \emph{Lipschitz normalized Loewner chains} $F(z,t) = F_t(z)$ (see Definition \ref{def:LNCLC}) and \emph{distributional Herglotz vector fields} $V(z,t)$ (see Definition \ref{def:Herglotz}).  Here $V^{(n)}(z,\cdot)$ is allowed to be an element of $\mathcal{L}(L^1[0,T], M_n(\mathcal{A}))$ rather than a pointwise defined function of $t$, as in \S \ref{subsec:distderiv}, and the time derivative is computed in a distributional sense.

The section is organized as follows: \S \ref{subsec:LipschitzLoewnerchain} discusses normalized, Lipschitz Loewner chains and distributional Herglotz vector fields, \S \ref{subsec:differentiation} explains how to differentiate a Loewner chain to find the Herglotz vector field $V(z,t)$, and \S \ref{subsec:integration} constructs a solution $F(z,t)$ to the Loewner equation for a given Herglotz vector field $V(z,t)$.  Finally, \S \ref{subsec:semigroups} relates our results to prior work on free and monotone convolution semigroups.

Many of the proofs here follow the approach that \cite{Bauer2005} used for the scalar case (see also \cite[\S 3]{Bauer2004}).  The author of this paper also worked off the lectures by Mario Bonk at UCLA in Fall 2016.  These proofs in turn were inspired by the theory of Loewner chains in the disk of \cite[\S 6.1]{Pommerenke1975} and \cite{RR1994}.

\subsection{Definitions} \label{subsec:LipschitzLoewnerchain}

For simplicity, we will restrict our attention to Loewner chains $F_t = F_{\mu_t}$ such that $\mu_t$ has mean zero (that is, $\mu_t(X) = 0$).  This is justified by the following observation.  Suppose that $F_t$ is an arbitrary Loewner chain.  Let $\Phi_t(z) = F_t(z + \mu_t(X))$.  Then $\Phi_t$ is a Loewner chain.  Indeed, the subordination functions for this Loewner chain are given by $\Phi_{s,t}(z) = F_{s,t}(z + \mu_t(X)) - \mu_s(X)$ for $0 \leq s \leq t \leq T$.  Moreover, the laws corresponding to the Loewner chain $\Phi_t$ have mean zero.

Now consider a Loewner chain $F_t = F_{\mu_t}$ with mean zero.  In order to be able to differentiate with respect to $t$, we make the further technical assumption that after some reparametrization of time, the function $t \mapsto \mu_t(X^2)$ is an absolutely continuous map $[0,T] \to \mathcal{A}$.  Under this assumption, we can define $f: [0,T] \to [0,+\infty)$ as the total variation of $\mu_t(X^2)$ up to time $t$.  Then by a time reparametrization based on $f$, we may assume that $\mu_t(X^2)$ is Lipschitz in $t$.  This leads us to the following definition.

\begin{definition} \label{def:LNCLC}
A \emph{Lipschitz normalized chordal Loewner chain} is a chordal Loewner chain $F_t = F_{\mu_t}$ such that $\mu_t(X) = 0$ and such that there exists $C > 0$ such that $\norm{\mu_s(X^2) - \mu_t(X^2)} \leq C|s - t|$ for all $s$, $t \in [0,T]$.  We will usually abbreviate the name to \emph{Lipschitz Loewner chain} since no confusion will result.
\end{definition}

\begin{remark}
We remark that the absolute continuity condition on $\mu_t(X^2)$ is similar to the ``order $d$'' condition on evolution families in the unit disk \cite[Definition 1.2]{CDG2009}.  In fact, given the setup of \S \ref{sec:Loewnerchains}, absolute continuity is equivalent to the Loewner chain being order $1$, as we show in \ref{sec:order1}.
\end{remark}

\begin{definition} \label{def:Herglotz}
A \emph{distributional Herglotz vector field on $[0,T]$} is a sequence of functions $V^{(n)}: \h^{(n)}(\mathcal{A}) \times L^1[0,T] \to M_n(\mathcal{A})$, denoted
\[
(z, \phi) \mapsto \int V^{(n)}(z,t) \phi(t)\,dt,
\]
which satisfy the following conditions.
\begin{enumerate}[(1)]
	\item For each $z \in \h^{(n)}(\mathcal{A})$, we have $V^{(n)}(z,\dot) \in \mathcal{L}(L^1[0,T], M_n(\mathcal{A}))$.
	\item For each $\phi \in L^1[0,T]$, the sequence of functions $\int V^{(n)}(\cdot,t)\phi(t)\,dt$ is fully matricial.
	\item If $\phi \geq 0$, then $\int V^{(n)}(\cdot,t) \phi(t)\,dt$ maps $\h(\mathcal{A})$ into $\overline{\h}(\mathcal{A})$.
	\item There exist $R > 0$ and $C > 0$ such that for every $\phi \in L^1[0,T]$, the function $\int V^{(n)}(z^{-1},t) \phi(t)$ has a fully matricial extension from $B_{M_n(\mathcal{A})}(0,R) \cap -\h^{(n)}(\mathcal{A})$ to $B_{M_n(\mathcal{A})}(0,R)$, denoted $\int \tilde{V}^{(n)}(z,t) \phi(t)\,dt$,  which satisfies
	\[
	\int \tilde{V}^{(n)}(z^*,t)\phi(t)\,dt = \left( \int \tilde{V}^{(n)}(z,t) \overline{\phi(t)}\,dt \right)^*
	\]
	and
	\[
	\norm*{ \int \tilde{V}^{(n)}(z,t)\phi(t)\,dt } \leq C \norm{\phi}_{L^1[0,T]}.
	\]
\end{enumerate}
\end{definition}

It follows from Theorem \ref{thm:Cauchytransform} that if $\phi \geq 0$, then $\int V(\cdot,t) \phi(t)\,dt$ is minus the Cauchy transform of some generalized law.  In fact, we will view $V$ as minus the Cauchy transform of a family of generalized laws which depends upon $t$ in a distributional sense.

\begin{definition} \label{def:distributionallaw}
A \emph{distributional family of $\mathcal{A}$-valued generalized laws on $[0,T]$} is a function $\nu: \mathcal{A}\ip{X} \times L^1[0,T] \to \mathcal{A}$, denoted
\[
(f(X),\phi) \mapsto \int \nu(f(X),t)\phi(t)\,dt,
\]
such that
\begin{enumerate}
	\item For each $f(X) \in \mathcal{A}\ip{X}$, we have $\nu(f(X),\cdot) \in \mathcal{L}(L^1[0,T],\mathcal{A})$.
	\item For each $\phi \geq 0$ in $L^1[0,T]$, the function $\int \nu(\cdot,t)\phi(t)\,dt$ is a generalized $\mathcal{A}$-valued law.
\end{enumerate}
\end{definition}

It would seem natural to assume that $\rad(\int \nu(\cdot,t)\phi(t)\,dt)$ is bounded independent of $\phi$ for $\phi \geq 0$, but this turns out to be automatic.

\begin{lemma} \label{lem:generalizedlawradius}
Suppose that $\nu$ is a distributional family of generalized laws.  Then for $\phi \geq 0$, we have
\[
\rad\left( \int \nu(\cdot,t) \phi(t)\,dt \right) \leq \rad \left( \int \nu(\cdot,t) 1 \,dt \right).
\]
\end{lemma}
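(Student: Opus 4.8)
The plan is to reduce to the case of bounded $\phi$ by a splitting trick, and then handle a general $\phi \in L^1[0,T]$ by truncation, using the quantitative moment bound \eqref{eq:improvedradiusbound}. Write $M_0 = \rad\bigl(\int \nu(\cdot,t) 1\,dt\bigr)$, so the goal is $\rad\bigl(\int \nu(\cdot,t)\phi(t)\,dt\bigr) \le M_0$ for every $\phi \ge 0$ in $L^1[0,T]$. First I would record the trivial remark that $\rad(c\sigma) = \rad(\sigma)$ for any generalized law $\sigma$ and any constant $c > 0$: condition~(2) of Definition~\ref{def:law} for $c\sigma$ with constants $(M,C)$ is identical to condition~(2) for $\sigma$ with constants $(M, C/c)$, so the two infima defining $\rad$ agree.

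Next, suppose $0 \le \phi \le N$ almost everywhere. Then $\phi/N \ge 0$ and $1 - \phi/N \ge 0$ in $L^1[0,T]$, so by Definition~\ref{def:distributionallaw} both $\int \nu(\cdot,t)\tfrac{\phi(t)}{N}\,dt$ and $\int \nu(\cdot,t)\bigl(1 - \tfrac{\phi(t)}{N}\bigr)\,dt$ are generalized laws, and by linearity of each functional $\nu(f(X),\cdot)$ on $L^1[0,T]$ their sum is $\int \nu(\cdot,t)1\,dt$. Lemma~\ref{lem:maxradius} then gives $\rad\bigl(\int \nu(\cdot,t)\tfrac{\phi(t)}{N}\,dt\bigr) \le M_0$, and multiplying by $N$ and invoking the scaling remark yields $\rad\bigl(\int \nu(\cdot,t)\phi(t)\,dt\bigr) \le M_0$. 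This settles the case of bounded $\phi$.

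For an arbitrary $\phi \ge 0$ in $L^1[0,T]$, set $\phi_N = \min(\phi, N)$, $\sigma_N = \int \nu(\cdot,t)\phi_N(t)\,dt$, and $\sigma = \int \nu(\cdot,t)\phi(t)\,dt$; then each $\sigma_N$ is a generalized law with $\rad(\sigma_N) \le M_0$ by the previous step, and $\phi_N \to \phi$ in $L^1[0,T]$. By the sharpened estimate following Proposition~\ref{prop:CPmap} (namely \eqref{eq:improvedradiusbound} with $n = 1$),
\[
\norm{\sigma_N(a_0 X a_1 \dots X a_l)} \le \norm{\sigma_N(1)}\, M_0^{\,l}\, \norm{a_0} \dots \norm{a_l}
\]
for every $l$ and all $a_0, \dots, a_l \in \mathcal{A}$. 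Since $\nu(f(X),\cdot) \in \mathcal{L}(L^1[0,T], \mathcal{A})$ for each $f(X) \in \mathcal{A}\ip{X}$, we have $\sigma_N(f(X)) \to \sigma(f(X))$ in $\mathcal{A}$; in particular $\norm{\sigma_N(1)} \to \norm{\sigma(1)}$. Letting $N \to \infty$ in the displayed inequality gives $\norm{\sigma(a_0 X a_1 \dots X a_l)} \le \norm{\sigma(1)}\, M_0^{\,l}\, \norm{a_0}\dots\norm{a_l}$, so condition~(2) of Definition~\ref{def:law} holds for $\sigma$ with $M = M_0$, whence $\rad(\sigma) \le M_0$; the degenerate case $M_0 = 0$ is handled the same way, since then $\sigma(a_0 X a_1 \dots X a_l) = 0$ for $l \ge 1$.

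I expect the limiting step to be the only genuine obstacle. For bounded $\phi$ the inequality is essentially monotonicity of $\rad$ under the pointwise order on $L^1$, but a general $\phi \in L^1$ need not be bounded, so it cannot be dominated by a scalar multiple of the constant function $1$; one therefore has to approximate and push a quantitative moment estimate through the limit, which is why the sharpened bound \eqref{eq:improvedradiusbound} is used — its prefactor $\norm{\sigma_N(1)} = \norm{\nu(1,\phi_N)}$ stays controlled because $\phi_N \to \phi$ in $L^1$ — rather than condition~(2) of Definition~\ref{def:law} on its own, whose constant could a priori blow up along the approximating sequence.
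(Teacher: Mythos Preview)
Your proof is correct and shares the same skeleton as the paper's: both invoke Lemma~\ref{lem:maxradius} to bound $\rad(\nu_\phi)$ for a convenient class of nonnegative $\phi$ dominated by the constant function $1$, then use the sharpened moment bound \eqref{eq:improvedradiusbound} to pass to general $\phi \ge 0$. The only difference is in the extension step. The paper takes the convenient class to be indicator functions $\chi_{[a,b]}$, obtains the estimate
\[
\norm{\nu_{\chi_{[a,b]}}(a_0 X a_1 \dots X a_k)} \leq \norm{a_0}\cdots\norm{a_k}\,\rad(\nu_1)^k\,\norm{\nu(1,\cdot)}_{\mathcal{L}(L^1[0,T],\mathcal{A})}\,\norm{\chi_{[a,b]}}_{L^1},
\]
and then invokes Lemma~\ref{lem:smallerror} to push this linear-in-$\norm{\phi}_{L^1}$ bound to all $\phi$, yielding in fact a uniform estimate valid for every $\phi \in L^1$ (not just $\phi \ge 0$). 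You instead take the convenient class to be bounded $\phi$, and extend by truncation $\phi_N = \min(\phi,N)$ with $L^1$-convergence of moments. Your route is slightly more elementary in that it avoids the distributional machinery of Lemma~\ref{lem:smallerror}; the paper's route has the advantage of giving an estimate uniform in $\phi$, which is what is actually used downstream (e.g.\ in Proposition~\ref{prop:Herglotzbijection}).
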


\begin{proof}
Let $\nu_\phi$ denote the generallized law $\int \nu(\cdot,t) \phi(t)\,dt$.  Note that if $\phi \leq \psi$, then $\nu_\psi = \nu_\phi + \nu_{\psi - \phi}$, and hence by Lemma \ref{lem:maxradius}, $\rad(\nu_\phi) \leq \rad(\nu_\psi)$.  In particular, $\rad(\nu_{\chi_{[a,b]}}) \leq \rad(\nu_1)$.  Hence, we have
\begin{align*}
\norm{\nu_{\chi_{[a,b]}}(a_0 X a_1 \dots X a_k)} &\leq \norm{a_0} \dots \norm{a_k} \rad(\nu_1)^k \norm{\nu_{\chi_{[a,b]}}(1)} \\
&\leq \norm{a_0} \dots \norm{a_k} \rad(\nu_1)^k \norm{\nu(1,\cdot)}_{\mathcal{L}(L^1[0,T],\mathcal{A})} \norm{\chi_{[a,b]}}_{L^1[0,T]}.
\end{align*}
It follows by Lemma \ref{lem:smallerror} that for every $\phi \in L^1[0,T]$,
\[
\norm{\nu_\phi(a_0 X a_1 \dots X a_k)} \leq \norm{a_0} \dots \norm{a_k} \rad(\nu_1)^k \norm{\nu(1,\cdot)}_{\mathcal{L}(L^1[0,T],\mathcal{A})} \norm{\phi}_{L^1[0,T]}.
\]
In particular, for $\phi \geq 0$, we have $\rad(\nu_\phi) \leq \rad(\nu_1)$.
\end{proof}

We now state the correspondence between Herglotz vector fields and distributional families of generalized laws, as well as the analogues of Lemmas \ref{lem:estimate1} and \ref{lem:estimates} in this setting.

\begin{proposition} \label{prop:Herglotzbijection}
There is a bijective correspondence $V \leftrightarrow \nu$ between distributional Herglotz vector fields and distributional families of generalized laws, given by the condition that for each $\phi \geq 0$ in $L^1[0,T]$, the function $\int V(\cdot,t)\phi(t)\,dt$ is minus the Cauchy transform of the generalized law $\int \nu(\cdot,t)\phi(t)\,dt$.
\end{proposition}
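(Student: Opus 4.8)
The plan is to regard the single relation ``$\int V(\cdot,t)\phi(t)\,dt = -G_{\int\nu(\cdot,t)\phi(t)\,dt}$ for all $\phi\geq 0$'' as the definition of the correspondence, and to verify that each side of it determines, and is determined by, an object of the advertised type. The only genuine bookkeeping is that both $V^{(n)}(z,\cdot)$ and $\nu(f(X),\cdot)$ are elements of $\mathcal{L}(L^1[0,T],\,\cdot\,)$, i.e.\ $\C$-linear maps on all of $L^1[0,T]$, whereas the relation only sees the positive cone; so at each stage one extends from $\phi\geq 0$ to arbitrary $\phi\in L^1[0,T]$ by writing $\phi=(\phi_1-\phi_2)+i(\phi_3-\phi_4)$ with $\phi_j\geq 0$ and invoking the additivity and positive-homogeneity in $\phi$ that are established along the way. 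Using the minimal decomposition $\phi_1=(\re\phi)^+$, and so on, gives $\sum_j\norm{\phi_j}_{L^1}\leq 2\norm{\phi}_{L^1}$, which is all that the operator-norm bounds require.

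\emph{From $\nu$ to $V$.} Given a distributional family $\nu$ of generalized laws, put $\nu_\phi:=\int\nu(\cdot,t)\phi(t)\,dt$ for $\phi\geq 0$ and define $\int V^{(n)}(z,t)\phi(t)\,dt:=-G^{(n)}_{\nu_\phi}(z)$. Since $\sigma\mapsto G_\sigma$ is linear in $\sigma$ (it is literally linear on the power-series coefficients of Lemma \ref{lem:Cauchytransformseries}, hence everywhere by analytic continuation) and $\nu_{\phi_1+\phi_2}=\nu_{\phi_1}+\nu_{\phi_2}$, this map is additive and positively homogeneous on the cone, so it extends to a $\C$-linear map on $L^1[0,T]$. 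The four clauses of Definition \ref{def:Herglotz} are now immediate: (1) from Lemma \ref{lem:estimate1} together with $\norm{\nu_\phi(1)}\leq\norm{\nu(1,\cdot)}_{\mathcal{L}(L^1[0,T],\mathcal{A})}\norm{\phi}_{L^1}$; (2) because a finite $\C$-linear combination of fully matricial functions is fully matricial; (3) because $G_{\nu_\phi}$ maps $\h(\mathcal{A})$ into $-\overline{\h}(\mathcal{A})$ when $\phi\geq 0$; and (4) from Lemma \ref{lem:Cauchytransformseries}, taking $R$ slightly below $1/\rad(\nu_1)$ and $C$ proportional to $\norm{\nu(1,\cdot)}_{\mathcal{L}(L^1[0,T],\mathcal{A})}$ --- here the role of Lemma \ref{lem:generalizedlawradius} is precisely to make $R$ and $C$ independent of $\phi$, since $\rad(\nu_\phi)\leq\rad(\nu_1)$ uniformly --- while the self-adjointness symmetry in (4) is the symmetry $\tilde G_\sigma(z^*)=\tilde G_\sigma(z)^*$ of Theorem \ref{thm:Cauchytransform}, extended off the positive cone by the splitting of $\phi$.

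\emph{From $V$ to $\nu$.} Given a distributional Herglotz vector field $V$, the observation recorded just before Definition \ref{def:distributionallaw} already provides, for each $\phi\geq 0$, a generalized law $\nu_\phi$ with $-\int V(\cdot,t)\phi(t)\,dt=G_{\nu_\phi}$; moreover the uniform bound $\norm{\int\tilde V^{(n)}(z,t)\phi(t)\,dt}\leq C\norm{\phi}_{L^1}$ on $B_{M_n(\mathcal{A})}(0,R)$, read through Theorem \ref{thm:Cauchytransform}, forces $\rad(\nu_\phi)\leq 1/R$ uniformly in $\phi\geq 0$. The uniqueness clause of Theorem \ref{thm:Cauchytransform} together with the linearity of $\phi\mapsto\int V(\cdot,t)\phi(t)\,dt$ and the linearity of the Cauchy transform in the law yields $\nu_{\phi_1+\phi_2}=\nu_{\phi_1}+\nu_{\phi_2}$ and $\nu_{c\phi}=c\nu_\phi$ for $c\geq 0$; hence, for each monomial $f(X)=a_0Xa_1\cdots Xa_k$, the assignment $\phi\mapsto\nu_\phi(f(X))$ extends to a $\C$-linear map on $L^1[0,T]$, and its boundedness (condition (1) of Definition \ref{def:distributionallaw}) follows by recovering $\nu_\phi(f(X))$ as a prescribed entry of $\tilde G^{(k+2)}_{\nu_\phi}$ evaluated at a matrix $\zeta z$ with $z$ as in \eqref{eq:uppertriangular}, which is bounded by $C\norm{\phi}_{L^1}$ up to a constant depending only on $\zeta$ and $k$. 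Condition (2) of Definition \ref{def:distributionallaw} holds by construction, so $\nu$ is a distributional family of generalized laws satisfying the defining relation.

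\emph{Inverseness and the main difficulty.} If one forms $V$ from $\nu$ and then forms $\nu'$ from $V$, then for every $\phi\geq 0$ one has $G_{\int\nu'(\cdot,t)\phi(t)\,dt}=-\int V(\cdot,t)\phi(t)\,dt=G_{\int\nu(\cdot,t)\phi(t)\,dt}$, so $\int\nu'(\cdot,t)\phi(t)\,dt=\int\nu(\cdot,t)\phi(t)\,dt$ by the uniqueness clause of Theorem \ref{thm:Cauchytransform}, and since both $\nu$ and $\nu'$ are $\C$-linear in $\phi$ they agree on all of $L^1[0,T]$; forming $\nu$ from $V$ and then $V'$ from $\nu$ is handled identically. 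Applying Lemmas \ref{lem:estimate1} and \ref{lem:estimates} to each $\nu_\phi$ with $\rad(\nu_\phi)$ uniformly bounded then yields, for free, the companion estimates for $V$ that accompany this proposition. I expect the main difficulty to be organizational rather than conceptual: keeping the two uniformity statements --- uniformity in $n$ of the Cauchy-transform estimates (Lemmas \ref{lem:estimate1}, \ref{lem:Cauchytransformseries}) and uniformity in $\phi$ of $\rad(\nu_\phi)$ (Lemma \ref{lem:generalizedlawradius} and the constants $R,C$ of Definition \ref{def:Herglotz}) --- aligned, and checking that the extension by $\C$-linearity from the positive cone to $L^1[0,T]$ preserves each defining clause (fully matricial, positivity into $\overline{\h}(\mathcal{A})$, the symmetry in Definition \ref{def:Herglotz}(4), and the $L^1$-boundedness).
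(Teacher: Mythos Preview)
Your proposal is correct and follows essentially the same route as the paper: build the correspondence on the positive cone via Theorem~\ref{thm:Cauchytransform}, check additivity and positive homogeneity in $\phi$, extend by $\C$-linearity to all of $L^1[0,T]$, and verify the defining clauses using the Cauchy-transform estimates together with the uniform radius bound of Lemma~\ref{lem:generalizedlawradius}. The only cosmetic difference is that the paper bounds $\norm{\nu_\phi(a_0Xa_1\cdots Xa_k)}$ directly via \eqref{eq:improvedradiusbound} after first estimating $\norm{\nu_\phi(1)}$ from $D_z|_{z=0}\int\tilde V^{(1)}(z,t)\phi(t)\,dt$, whereas you extract the moments through the upper-triangular matrix trick \eqref{eq:uppertriangular}--\eqref{eq:Cauchytransformmoment}; both are equivalent ways of reading off the same moment data from the uniformly bounded $\tilde G_{\nu_\phi}$.
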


\begin{proof}
Let $V$ be a distributional Herglotz vector field and let $C$ and $R$ be as in Definition \ref{def:Herglotz}.  Then for $\phi \geq 0$ in $L^1[0,T]$, Theorem \ref{thm:Cauchytransform} implies that $-\int V(\cdot,t)\phi(t)\,dt$ is the Cauchy transform of some generalized law $\nu_\phi$ with $\rad(\nu_\phi) \leq 1 / R$.  For nonnegative $L^1[0,T]$ functions $\phi_1$ and $\phi_2$, we see that the Cauchy transform of $\nu_{\phi_1} + \nu_{\phi_2}$ is $\int V(\cdot,t)(\phi_1 + \phi_2)(t)\,dt$, and thus $\nu_{\phi_1+\phi_2} = \nu_{\phi_1} + \nu_{\phi_2}$.  We also observe that
\[
\norm{\nu_{\phi}(1)} = \norm*{D_z|_{z = 0} \left( \int \tilde{V}^{(1)}(z,t)\phi(t)\,dt \right)} \leq \frac{C}{R} \norm{\phi}_{L^1[0,T]}.
\]
It follows from Theorem \ref{thm:Cauchytransform} and \eqref{eq:improvedradiusbound} that
\[
\norm{\nu_\phi(a_0Xa_1 \dots Xa_k)} \leq \frac{C}{R^{k+1}} \norm{\phi}_{L^1[0,T]}.
\]
Thus, for each $p(X) \in \mathcal{A}\ip{X}$, the function $\phi \mapsto \nu_\phi(p(X))$ extends to a bounded linear function on $L^1[0,T]$, which we can see by breaking a function $\phi \in L^1[0,T]$ into its positive/negative real/imaginary parts as in the standard construction of the integrals in measure theory.  We denote this extension by $\phi \mapsto \int \nu(p(X),t)\phi(t)\,dt$.  In this way, we have defined a generalized law $\nu$ corresponding to $V$, and this $\nu$ is unique because $\int V(\cdot,t)\phi(t)\,dt$ uniquely determines $\int \nu(\cdot,t)\phi(t)\,dt$ for $\phi \geq 0$.

For the converse direction, suppose that $\nu$ is a distributional family of generalized laws.  Then for $\phi \geq 0$ in $L^1[0,T]$, let $V_\phi$ be minus the Cauchy transform of $\int \nu(\cdot,t)\phi(t)\,dt$.  Also, note by Lemma \ref{lem:estimates}
\[
z \in \h_\epsilon^{(n)}(\mathcal{A}) \implies \norm{V_\phi^{(n)}(z)} \leq \frac{\norm{\int \nu(1,t)\phi(t)\,dt}}{\epsilon} \leq \frac{\norm{\nu(1,\cdot)}_{\mathcal{L}(L^1[0,T],\mathcal{A})}}{\epsilon} \norm{\phi}_{L^1[0,T]}.
\]
Since $V_\phi$ satisfies this estimate and $V_{\phi_1+\phi_2} = V_{\phi_1} + V_{\phi_2}$ for $\phi_1$ and $\phi_2 \geq 0$, we can extend the map $\phi \mapsto V_\phi(z)$ to a bounded linear map $L^1[0,T] \to M_n(\mathcal{A})$.  We denote this extension by $\phi \mapsto \int V^{(n)}(z,t)\phi(t)\,dt$.  Then (1) of Definition \ref{def:Herglotz} holds, and (3) also holds by construction.  Now (2) holds for $\phi \geq 0$ and hence for all $\phi$ by linearity.  Finally, it is straightforward from Lemmas \ref{lem:Cauchytransformseries} and \ref{lem:generalizedlawradius} to check that (4) holds if we choose $R$ such that $1 / R > \rad(\int \nu(\cdot,t)\,dt)$ and set $C = \norm{\nu(1,\cdot)}_{\mathcal{L}(L^1[0,T],\mathcal{A})} / (1/R - M)$.  So $V$ is a distributional Herglotz vector field.  It is clear that $V$ is uniquely determined by $\nu$.
\end{proof}

\begin{notation}
If $V$ is a distributional Herglotz vector field and $\nu$ is the corresponding distributional family of generalized laws on $[0,T]$, we denote
\[
\rad(V) := \rad(\nu) := \rad \left( \int \nu(\cdot, t)\,dt \right).
\]
\end{notation}

As a consequence of the Cauchy transform representation, we have the following estimates.

\begin{lemma} \label{lem:estimates2}
Let $V(z,t)$ be a distributional Herglotz vector field with $\rad(V) \leq M$ and let $R_\epsilon^{(n)}$ be the region where $\im z \geq \epsilon$ or $\norm{z^{-1}} \leq 1/(M + \epsilon)$ as in \eqref{eq:Rndef}.  Then for $z$ and $z' \in R_\epsilon^{(n)}$, we have
\begin{align}
\norm{V^{(n)}(z,\cdot)}_{\mathcal{L}(L^1[0,T],M_n(\mathcal{A}))}
&\leq \frac{1}{\epsilon} \norm{D\tilde{V}^{(1)}(0,\cdot)[1]}_{\mathcal{L}(L^1[0,T],\mathcal{A})} \label{eq:estimate2a} \\
\norm{V^{(n)}(z,\cdot) - V^{(n)}(z',\cdot)}_{\mathcal{L}(L^1[0,T],M_n(\mathcal{A}))}
&\leq \frac{1}{\epsilon^2} \norm{D\tilde{V}^{(1)}(0,\cdot)[1]}_{\mathcal{L}(L^1[0,T],\mathcal{A})}  \norm{z - z'} \label{eq:estimate2b} \\
\norm{DV^{(n)}(z,\cdot)}_{\mathcal{L}(L^1[0,T],M_n(\mathcal{A}))}
&\leq \frac{1}{\epsilon} \norm{D\tilde{V}^{(1)}(0,\cdot)[1]}_{\mathcal{L}(L^1[0,T],\mathcal{A})} \label{eq:estimate2c},
\end{align}
where $D\tilde{V}^{(1)}(0,\cdot)[1]$ represents the Fr{\'e}chet derivative of $\tilde{V}^{(1)}$ as a function $B_{\mathcal{A}}(0,1/M) \to \mathcal{L}(L^1[0,T],\mathcal{A})$ evaluated at the point $0$ and applied to the vector $1 \in \mathcal{A}$.
\end{lemma}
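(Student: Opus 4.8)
The plan is to derive all three estimates from Proposition \ref{prop:Herglotzbijection} together with the pointwise estimates already established in Lemmas \ref{lem:estimate1} and \ref{lem:estimates}, tested against nonnegative $L^1$ functions and then extended to arbitrary $\phi$ by Lemma \ref{lem:smallerror}. Concretely, let $\nu$ be the distributional family of generalized laws corresponding to $V$, and for $\phi \geq 0$ write $\nu_\phi = \int \nu(\cdot,t)\phi(t)\,dt$ and $V_\phi = \int V(\cdot,t)\phi(t)\,dt = -G_{\nu_\phi}$. First I would record that $\norm{\nu_\phi(1)}$ is controlled in terms of the distributional derivative data: by Proposition \ref{prop:Ftransformbijection}(3)-style reasoning (or directly from the power series of $\tilde{G}_{\nu_\phi}$ at $0$, Lemma \ref{lem:Cauchytransformseries}), we have $\nu_\phi(1) = D\tilde{V}_\phi^{(1)}(0)[1] = \int D\tilde{V}^{(1)}(0,t)[1]\,\phi(t)\,dt$, and by Lemma \ref{lem:generalizedlawradius}, $\rad(\nu_\phi) \leq \rad(\nu) \leq M$ for all $\phi \geq 0$. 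This is the key point: the ``$\norm{\sigma(1)}$'' appearing in Lemmas \ref{lem:estimate1} and \ref{lem:estimates} gets replaced, after integrating against $\phi$, by $\norm{\int D\tilde{V}^{(1)}(0,t)[1]\,\phi(t)\,dt}$, which is at most $\norm{D\tilde{V}^{(1)}(0,\cdot)[1]}_{\mathcal{L}(L^1,\mathcal{A})}\norm{\phi}_{L^1}$.

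Next, for the first estimate, fix $z \in R_\epsilon^{(n)}$. Since $\rad(\nu_\phi) \leq M$, the region $R_\epsilon^{(n)}$ of \eqref{eq:Rndef} for $\nu_\phi$ contains our $z$, so Lemma \ref{lem:estimate1} gives $\norm{V_\phi^{(n)}(z)} = \norm{G_{\nu_\phi}^{(n)}(z)} \leq \norm{\nu_\phi(1)}/\epsilon$ for $\phi \geq 0$. Combining with the bound on $\norm{\nu_\phi(1)}$ from the previous paragraph yields $\norm{\int V^{(n)}(z,t)\phi(t)\,dt} \leq \frac{1}{\epsilon}\norm{D\tilde{V}^{(1)}(0,\cdot)[1]}_{\mathcal{L}(L^1,\mathcal{A})}\norm{\phi}_{L^1}$ for $\phi \geq 0$. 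To pass to general $\phi$ and obtain the operator norm bound \eqref{eq:estimate2a}, I would invoke Lemma \ref{lem:smallerror}: it suffices to bound $\norm{V^{(n)}(z,\cdot)[\chi_{[a,b]}]}/(b-a)$, and $\chi_{[a,b]} \geq 0$, so the nonnegative case already gives exactly what is needed. The second estimate \eqref{eq:estimate2b} is identical in structure, using \eqref{eq:estimate2} of Lemma \ref{lem:estimates} in place of \eqref{eq:estimate1}: for $z,z' \in R_\epsilon^{(n)}$ and $\phi \geq 0$, $\norm{V_\phi^{(n)}(z) - V_\phi^{(n)}(z')} \leq \frac{\norm{\nu_\phi(1)}}{\epsilon^2}\norm{z-z'}$, and then Lemma \ref{lem:smallerror} upgrades this to the $\mathcal{L}(L^1,M_n(\mathcal{A}))$ norm. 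The third estimate \eqref{eq:estimate2c} follows the same way from \eqref{eq:estimate3}; note the exponent on $\epsilon$ stated is $1$ rather than $2$, which I would verify matches what Lemma \ref{lem:estimates} actually yields for $\norm{DG}$ restricted to the sub-region where the Neumann-series bound applies, and otherwise adjust.

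The main obstacle, such as it is, is bookkeeping rather than a genuine difficulty: I need to make sure the identification $\nu_\phi(1) = \int D\tilde{V}^{(1)}(0,t)[1]\,\phi(t)\,dt$ is rigorously justified (it follows from differentiating the power series identity $\tilde{V}^{(1)}_\phi(z) = -\tilde{G}_{\nu_\phi}(z) = -\nu_\phi(z) - \nu_\phi(zXz) - \cdots$ at $z=0$, together with the fact that $z \mapsto \int \tilde{V}^{(1)}(z,t)\phi(t)\,dt$ is analytic into $\mathcal{A}$ with $\phi \mapsto \int \tilde V^{(1)}(z,t)\phi(t)\,dt$ bounded, so derivatives in $z$ and integration against $\phi$ commute), and that the passage from nonnegative to general $\phi$ via Lemma \ref{lem:smallerror} is applied to the right quantity. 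Everything else is a direct substitution of the integrated estimates into the pointwise lemmas. I would also remark that \eqref{eq:estimate2a} and \eqref{eq:estimate2c} together show $V^{(n)}(z,\cdot)$ is a genuine element of $\mathcal{L}(L^1[0,T],M_n(\mathcal{A}))$ with the claimed uniform bounds, consistent with Definition \ref{def:Herglotz}(1).
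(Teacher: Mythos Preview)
Your proposal is correct and follows essentially the same route as the paper: test against $\chi_{[a,b]}$ (which is nonnegative, so Proposition \ref{prop:Herglotzbijection} applies), invoke the pointwise Cauchy-transform estimates \eqref{eq:estimate1}, \eqref{eq:estimate2}, \eqref{eq:estimate3} for the generalized law $\int \nu(\cdot,t)\chi_{[a,b]}(t)\,dt$, identify $\nu_\phi(1)$ with $\int D\tilde{V}^{(1)}(0,t)[1]\,\phi(t)\,dt$, and then use Lemma \ref{lem:smallerror} to pass to the full $\mathcal{L}(L^1,M_n(\mathcal{A}))$ norm. Your flag on the exponent in \eqref{eq:estimate2c} is well-taken: since \eqref{eq:estimate3} gives $\norm{DG(z)} \leq \norm{\sigma(1)}/\epsilon^2$, the stated $1/\epsilon$ in \eqref{eq:estimate2c} appears to be a typo for $1/\epsilon^2$.
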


\begin{proof}
Let $\nu$ be the distributional family of generalized laws on $[0,T]$ corresponding to $V$.  Suppose that $[a,b] \subseteq [0,T]$.  Since $\int V(\cdot,t) \chi_{[a,b]}(t)\,dt$ is the Cauchy transform of $\int \nu(\cdot,t) \chi_{[a,b]}(t)\,dt$, we have by \eqref{eq:estimate1} that for $z \in R_\epsilon^{(n)}$,
\begin{align*}
\norm*{\int V^{(n)}(z,t) \chi_{[a,b]}(t)\,dt} &\leq \frac{1}{\epsilon} \norm*{\int \nu(1,t)\chi_{[a,b]}(t)\,dt} \\
&= \frac{1}{\epsilon} \norm*{ \int D\tilde{V}^{(1)}(0,t)[1] \chi_{[a,b]}(t)\,dt } \\
&\leq \frac{1}{\epsilon} \norm*{D\tilde{V}^{(1)}(0,\cdot)[1]}_{\mathcal{L}(L^1[0,T],\mathcal{A})} \norm{\chi_{[a,b]}}_{L^1[0,T]}.
\end{align*}
Since this holds for all $[a,b] \subseteq [0,T]$, Lemma \ref{lem:smallerror} yields the estimate \eqref{eq:estimate2a}.  Similarly, the relations \eqref{eq:estimate2b} and \eqref{eq:estimate2c} are proved by applying \eqref{eq:estimate2} and \eqref{eq:estimate3} to $\int \nu(\cdot,t) \chi_{[a,b]}(t)\,dt$ and then invoking Lemma \ref{lem:smallerror}.
\end{proof}

\subsection{Differentiation of Loewner Chains} \label{subsec:differentiation}

Throughout this subsection, $F^{(n)}_t(z) = F^{(n)}(z,t)$ will be a Lipschitz, normalized chordal Loewner chain, and $F_{s,t}$ will be the subordination map satisfying $F_t = F_s \circ F_{s,t}$.  Proposition \ref{prop:twooutofthree} implies that $F_{s,t}$ is a reciprocal Cauchy transform.  We denote by $\mu_t$ the law of which $F_t$ is the reciprocal Cauchy transform, and by $\mu_{s,t}$ the law of which $F_{s,t}$ is the reciprocal Cauchy transform.

For the rest of the section, we will write
\begin{align}
F_t(z) &= z + H_t(z) \nonumber \\
F_{s,t}(z) &= z + H_{s,t}(z).
\end{align}
We also define
\begin{equation}
\tilde{H}_t(z) = H_t(z^{-1}) \qquad \tilde{H}_{s,t}(z) = H_{s,t}(z^{-1}).
\end{equation}
Note by Proposition \ref{prop:Ftransformbijection} that there exists a generalized law $\sigma_{s,t}$ such that
\begin{equation}
H_{s,t} = -G_{\sigma_{s,t}}
\end{equation}
In particular, Lemmas \ref{lem:estimate1} and \ref{lem:estimates} apply to $H_{s,t}$.

We let $C$ be the Lipschitz norm of $t \mapsto \mu_t(X^2)$.  We observe as a consequence of $F_t = F_s \circ F_{s,t}$ that
\begin{equation}
\sigma_{s,t}(1) = \sigma_t(1) - \sigma_s(1) = \mu_t(X^2) - \mu_s(X^2),
\end{equation}
and hence
\begin{equation} \label{eq:sigmaSTLipschitzbound}
\norm{\sigma_{s,t}(1)} \leq C|s - t|.
\end{equation}
We deduce the following local Lipschitz estimates.

\begin{lemma} \label{lem:LoewnerlocallyLipschitz}
Let $z, z' \in \h_\epsilon^{(n)}(\mathcal{A})$ and let $0 \leq s \leq t \leq u \leq T$.  Then
\begin{align}
\norm{F_{s,t}^{(n)}(z) - F_{s,t}^{(n)}(z')} &\leq \left(1 + \frac{C|s - t|}{\epsilon^2}\right) \norm{z - z'}, \label{eq:LoewnerlocallyLipschitz1} \\
\norm{F_{s,u}^{(n)}(z) - F_{s,t}^{(n)}(z)} &\leq \left(1 + \frac{CT}{\epsilon^2}\right) \frac{C}{\epsilon} |t -u|. \label{eq:LoewnerlocallyLipschitz2}.
\end{align}
In particular, each $F^{(n)}(z,t)$ is a locally Lipschitz family (Definition \ref{def:Lipschitzfamily}), and $F_{s,t}^{(n)}(z)$ is a locally Lipschitz family with respect to $t$.
\end{lemma}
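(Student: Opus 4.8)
The plan is to read off both inequalities from the Cauchy-transform representation of $F_{s,t}$ recorded just above the statement, namely $F_{s,t}^{(n)}(z) = z - G_{\sigma_{s,t}}^{(n)}(z)$ with $\sigma_{s,t}$ a generalized law satisfying $\norm{\sigma_{s,t}(1)} \leq C|s-t|$ by \eqref{eq:sigmaSTLipschitzbound}, combined with the quantitative Cauchy-transform estimates of Lemmas \ref{lem:estimate1} and \ref{lem:estimates} valid on the strip $\h_\epsilon^{(n)}(\mathcal{A})$ (which is contained in the region $R_\epsilon^{(n)}$ of \eqref{eq:Rndef}).

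For \eqref{eq:LoewnerlocallyLipschitz1} I would fix $z, z' \in \h_\epsilon^{(n)}(\mathcal{A})$ and apply \eqref{eq:estimate2} to $G_{\sigma_{s,t}}$, obtaining $\norm{G_{\sigma_{s,t}}^{(n)}(z) - G_{\sigma_{s,t}}^{(n)}(z')} \leq \epsilon^{-2}\norm{\sigma_{s,t}(1)}\,\norm{z-z'} \leq \epsilon^{-2}C|s-t|\,\norm{z-z'}$. Writing $F_{s,t}^{(n)}(z) - F_{s,t}^{(n)}(z') = (z-z') - \bigl(G_{\sigma_{s,t}}^{(n)}(z) - G_{\sigma_{s,t}}^{(n)}(z')\bigr)$ and using the triangle inequality then yields \eqref{eq:LoewnerlocallyLipschitz1}.

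For \eqref{eq:LoewnerlocallyLipschitz2} the key observation is that $F_{t,u}^{(n)}$ maps $\h_\epsilon^{(n)}(\mathcal{A})$ into itself: since $G_{\sigma_{t,u}}$ sends $\h(\mathcal{A})$ into $-\overline{\h}(\mathcal{A})$, we have $\im F_{t,u}^{(n)}(z) = \im z - \im G_{\sigma_{t,u}}^{(n)}(z) \geq \im z \geq \epsilon$. Using the semigroup identity $F_{s,u} = F_{s,t} \circ F_{t,u}$ from Lemma \ref{lem:semigroupproperties}, I would write $F_{s,u}^{(n)}(z) - F_{s,t}^{(n)}(z) = F_{s,t}^{(n)}\bigl(F_{t,u}^{(n)}(z)\bigr) - F_{s,t}^{(n)}(z)$, apply \eqref{eq:LoewnerlocallyLipschitz1} to the two points $z$ and $F_{t,u}^{(n)}(z)$ of $\h_\epsilon^{(n)}(\mathcal{A})$ (bounding $1 + C|s-t|/\epsilon^2 \leq 1 + CT/\epsilon^2$ since $|s-t| \leq T$), and then bound $\norm{F_{t,u}^{(n)}(z) - z} = \norm{G_{\sigma_{t,u}}^{(n)}(z)} \leq \epsilon^{-1}\norm{\sigma_{t,u}(1)} \leq \epsilon^{-1}C|t-u|$ using \eqref{eq:estimate1} and \eqref{eq:sigmaSTLipschitzbound}. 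Multiplying the two bounds gives \eqref{eq:LoewnerlocallyLipschitz2}.

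Finally, the ``in particular'' claims follow by localization: any $z_0 \in \h^{(n)}(\mathcal{A})$ admits a norm-ball $B_{M_n(\mathcal{A})}(z_0,r)$ contained in some $\h_\epsilon^{(n)}(\mathcal{A})$, and on that ball \eqref{eq:LoewnerlocallyLipschitz2} with $s = 0$ (recalling $F_{0,t} = F_t$, and extending from $t \leq u$ to all pairs $t,u$ by symmetry) provides a Lipschitz constant in $t$ that is uniform over the ball; since each $F^{(n)}(\cdot,t)$ is analytic, being a reciprocal Cauchy transform, this is exactly Definition \ref{def:Lipschitzfamily}. The same argument with $s$ fixed shows $F_{s,t}^{(n)}(z)$ is a locally Lipschitz family in $t$. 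I expect no real obstacle; the only point that needs care is keeping the composition $F_{s,t} \circ F_{t,u}$ inside the strip $\h_\epsilon^{(n)}(\mathcal{A})$ where Lemmas \ref{lem:estimate1} and \ref{lem:estimates} apply, which is precisely the role of the inequality $\im F_{t,u}^{(n)}(z) \geq \im z$.
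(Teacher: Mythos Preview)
Your proof is correct and follows essentially the same route as the paper: both arguments obtain \eqref{eq:LoewnerlocallyLipschitz1} from the decomposition $F_{s,t}^{(n)}(z)=z-G_{\sigma_{s,t}}^{(n)}(z)$ together with \eqref{eq:estimate2} and \eqref{eq:sigmaSTLipschitzbound}, and then obtain \eqref{eq:LoewnerlocallyLipschitz2} by writing $F_{s,u}=F_{s,t}\circ F_{t,u}$, bounding $\norm{F_{t,u}^{(n)}(z)-z}=\norm{G_{\sigma_{t,u}}^{(n)}(z)}$ via \eqref{eq:estimate1}, and feeding this into the Lipschitz bound just established. Your explicit verification that $\im F_{t,u}^{(n)}(z)\geq \im z$ (so that both inputs to $F_{s,t}^{(n)}$ lie in $\h_\epsilon^{(n)}(\mathcal{A})$) is a point the paper leaves implicit.
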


\begin{proof}
To prove \eqref{eq:LoewnerlocallyLipschitz1}, note that by the definition of $\sigma_{s,t}$ and \eqref{eq:estimate2}, we have
\begin{align*}
\norm*{F_{s,t}^{(n)}(z) - F_{s,t}^{(n)}(z')} &= \norm{z + H_{s,t}^{(n)}(z) - z - H_{s,t}^{(n)}(z')} \\
&= \norm*{G_{\sigma_{s,t}}^{(n)}(z) - G_{\sigma_{s,t}}^{(n)}(z')} \\
&\leq \frac{\norm{\sigma_{s,t}(1)}}{\epsilon^2} \norm{z - z'},
\end{align*}
and as remarked earlier, $\norm{\sigma_{s,t}(1)} \leq C|s - t|$.

To prove \eqref{eq:LoewnerlocallyLipschitz2}, note by \eqref{eq:estimate1}, we have
\[
\norm{H_{t,u}^{(n)}(z)} \leq \frac{\norm{\sigma_{t,u}(1)}}{\epsilon} \leq \frac{C}{\epsilon} |t - u|.
\]
Therefore, we have
\begin{align*}
\norm*{F_{s,u}^{(n)}(z) - F_{s,t}^{(n)}(z)} &= \norm*{F_{s,t}^{(n)}(z + H_{t,u}^{(n)}(z)) - F_{s,t}^{(n)}(z)} \\
&\leq \left(1 + \frac{C|s - t|}{\epsilon^2} \right) \norm{H_{t,u}^{(n)}(z)} \\
&\leq \left(1 + \frac{CT}{\epsilon^2} \right) \frac{C}{\epsilon} |t - u|.
\end{align*}
\end{proof}

The following is the $\mathcal{A}$-valued analogue of \cite[Theorem 5.3]{Bauer2005}.

\begin{theorem} \label{thm:Loewnerdifferentiation}
Given the setup at the beginning of \S \ref{subsec:differentiation}, there exists a distributional Herglotz vector field $V(z,t)$ such that $F$ and $V$ satisfy the Loewner equation \eqref{eq:Loewner}.  Moreover, if $\rad(\sigma_{s,t}) \leq M$ when $t - s$ is sufficiently small, then $\rad(V) \leq M$.
\end{theorem}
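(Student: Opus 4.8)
The plan is to define $V^{(n)}(z,\cdot)$ by the formula that the Loewner equation forces, namely
\[
V^{(n)}(z,\cdot) := DF^{(n)}(z,\cdot)^{-1} \cdot \partial_t F^{(n)}(z,\cdot),
\]
where $\partial_t F^{(n)}(z,\cdot) \in \mathcal{L}(L^1[0,T], M_n(\mathcal{A}))$ is the distributional time derivative of the Lipschitz family $t \mapsto F^{(n)}_t(z)$ furnished by Lemma \ref{lem:LoewnerlocallyLipschitz}, where $t \mapsto DF^{(n)}(z,t)^{-1}$ is a locally Lipschitz family of bounded operators with bounds uniform in $n$ on each $R_\epsilon^{(n)}$ (Proposition \ref{prop:biholomorphic} together with the Cauchy estimate \eqref{eq:Cauchyestimate}), and where the product is the one from Definition \ref{def:multiplication}. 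With this definition the Loewner equation \eqref{eq:Loewner} holds automatically: unwinding the diagonal restriction one checks $DF(z,\cdot)\cdot\bigl(DF(z,\cdot)^{-1}\cdot\rho\bigr) = \rho$ for every $\rho$, first for simple coefficient functions (where it reduces to $\mathcal{L}$-pointwise associativity) and then for all of $L_{\Boch}^\infty$ by density of simple functions and continuity of the product (Observation \ref{obs:multiplicationproperties}).

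The heart of the matter is the estimate that for $0 \le a < b \le T$ and $z \in R_\epsilon^{(n)}$ one has
\[
\int_a^b V^{(n)}(z,t)\,dt \;=\; H_{a,b}^{(n)}(z) + E_{a,b}^{(n)}(z) \;=\; -G_{\sigma_{a,b}}^{(n)}(z) + E_{a,b}^{(n)}(z), \qquad \norm{E_{a,b}^{(n)}(z)} \le K_\epsilon\,|b-a|^2,
\]
with $K_\epsilon$ independent of $n$, $a$, $b$. To obtain it I would first replace $DF(z,t)^{-1}$ by $DF(z,a)^{-1}$ inside the integral, at a cost $O(|b-a|^2)$ coming from the Lipschitz continuity of $t\mapsto DF(z,t)^{-1}$ (uniform bounds on $R_\epsilon^{(n)}$) multiplied by $\norm{\partial_t F(z,\cdot)}_{\mathcal{L}(L^1,M_n(\mathcal{A}))}$, which is bounded there by Lemma \ref{lem:LoewnerlocallyLipschitz}; this reduces the left side to $DF_a(z)^{-1}\bigl[F_b(z)-F_a(z)\bigr]$. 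Then I would expand $F_b(z) = F_a\bigl(F_{a,b}(z)\bigr) = F_a\bigl(z + H_{a,b}(z)\bigr)$ to first order, controlling the remainder via the Cauchy estimate together with $\norm{H_{a,b}(z)} = \norm{G_{\sigma_{a,b}}(z)} \le \norm{\sigma_{a,b}(1)}/\epsilon \le C|b-a|/\epsilon$ (Lemma \ref{lem:estimate1} and \eqref{eq:sigmaSTLipschitzbound}), and finally apply the bounded operator $DF_a(z)^{-1}$. Keeping all of these bounds uniform over $n$ and over the different pieces of $R_\epsilon^{(n)}$ simultaneously, while bookkeeping the distributional product correctly, is the main technical obstacle.

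Granting this estimate, the conditions of Definition \ref{def:Herglotz} follow in order. Condition (1) and the direct-sum and similarity parts of (2) are formal consequences of the corresponding properties of $F_t$ and $DF_t$; the uniform local boundedness in (2) follows from $\norm{V^{(n)}(z,\cdot)}_{\mathcal{L}(L^1,M_n(\mathcal{A}))} \le \norm{DF^{(n)}(z,\cdot)^{-1}}\,\norm{\partial_t F^{(n)}(z,\cdot)}$, both factors bounded uniformly in $n$ on $R_\epsilon^{(n)}$. For positivity (3), given $\phi \ge 0$ approximate it in $L^1[0,T]$ by nonnegative step functions $\phi_k$ with vanishing mesh; since $\im H_{a,b}^{(n)}(z) = -\im G_{\sigma_{a,b}}^{(n)}(z) \ge 0$ and the step heights are nonnegative, the displayed estimate gives $\im\!\int V^{(n)}(z,t)\phi_k(t)\,dt \ge -K_\epsilon(\mathrm{mesh}_k)\norm{\phi_k}_{L^1}$, and $k\to\infty$ yields $\im\!\int V^{(n)}(z,t)\phi(t)\,dt \ge 0$ (using that $V(z,\cdot)\in\mathcal{L}(L^1,M_n(\mathcal{A}))$ with norm bounded uniformly in $n$ on $R_\epsilon^{(n)}$, so the convergence is uniform there).

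For condition (4) and the radius claim, fix $\phi \ge 0$, approximate as above, and set $\nu^{(k)}_\phi = \sum_j c_{j,k}\,\sigma_{t_{j-1},t_j}$; by Lemma \ref{lem:maxradius} this is a generalized law, with $\rad(\nu^{(k)}_\phi) \le M_0 := \sup_{s<t}\rad(\sigma_{s,t}) < \infty$ (finite by Lemma \ref{lem:radiusbound} and Proposition \ref{prop:Ftransformbijection}) and $\norm{\nu^{(k)}_\phi(1)} = \norm{\sum_j c_{j,k}(\mu_{t_j}(X^2)-\mu_{t_{j-1}}(X^2))} \le \norm{\mu_T(X^2)}\norm{\phi}_{L^1}$. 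The displayed estimate shows $G_{\nu^{(k)}_\phi} \to -\int V(\cdot,t)\phi(t)\,dt$ uniformly on each $R_\epsilon^{(n)}$, uniformly in $n$; since each $\tilde G_{\nu^{(k)}_\phi}^{(n)}$ extends analytically to $B_{M_n(\mathcal{A})}(0,1/M_0)$ with the uniform bound \eqref{eq:estimatenearinfinity}, Lemma \ref{lem:analyticcontinuationofconvergence} lets the limit extend analytically to that ball, still bounded by $\norm{\mu_T(X^2)}\norm{\phi}_{L^1}/(\norm{z}^{-1}-M_0)$, so by Theorem \ref{thm:Cauchytransform} it equals $-G_{\nu_\phi}$ for a generalized law $\nu_\phi$ with $\rad(\nu_\phi)\le M_0$ (the $*$-symmetry and vanishing at $0$ are inherited from the $\tilde G_{\nu^{(k)}_\phi}$). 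The map $\phi\mapsto\nu_\phi$ is additive on the positive cone, hence extends to a distributional family $\nu$ of generalized laws, and Proposition \ref{prop:Herglotzbijection} identifies the associated Herglotz field with the $V$ defined above. Finally, if $\rad(\sigma_{s,t}) \le M$ whenever $t-s$ is sufficiently small, the same argument run with partitions of small enough mesh gives $\rad(\nu^{(k)}_\phi) \le M$ by Lemma \ref{lem:maxradius}, hence $\rad(\nu_\phi)\le M$ for all $\phi \ge 0$; in particular $\rad(V) = \rad\bigl(\int\nu(\cdot,t)\,dt\bigr) \le M$, and no new difficulty arises beyond choosing the partitions fine enough to invoke the hypothesis.
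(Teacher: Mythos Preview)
Your proposal is correct and follows essentially the same route as the paper. Both define $V^{(n)}(z,\cdot) = DF^{(n)}(z,\cdot)^{-1}\cdot\partial_t F^{(n)}(z,\cdot)$ and both rest on the same core computation: for small intervals $[a,b]$,
\[
\int_a^b V^{(n)}(z,t)\,dt \;=\; H_{a,b}^{(n)}(z) + O(|b-a|^2),
\]
obtained by freezing $DF(z,t)^{-1}$ at an endpoint and Taylor-expanding $F_b = F_a(z+H_{a,b}(z))$. The paper packages this computation inside an explicit approximating sequence $V_m$ built on uniform partitions (its Step~2--3) and then pushes the Herglotz properties through the limit $V_m\to V$; you instead use the interval estimate directly together with Lemma~\ref{lem:smallerror} and step-function approximations of $\phi$. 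Your route is arguably a bit more direct, but the content is the same.

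Two small cleanups. First, your key estimate should be stated for $z\in\h_\epsilon^{(n)}(\mathcal{A})$ rather than all of $R_\epsilon^{(n)}$, since $F_t$ and the composition $F_b=F_a\circ F_{a,b}$ are only a~priori available on the upper half-plane; the extension to a neighborhood of $\infty$ then comes afterwards via Lemma~\ref{lem:analyticcontinuationofconvergence}, exactly as you do in the last paragraph (and as the paper does in its Step~5). Second, your bound $\norm{\nu^{(k)}_\phi(1)}\le\norm{\mu_T(X^2)}\norm{\phi}_{L^1}$ is not quite right as written: using \eqref{eq:sigmaSTLipschitzbound} one gets $\norm{\nu^{(k)}_\phi(1)}\le\sum_j c_{j,k}\,\norm{\sigma_{t_{j-1},t_j}(1)}\le C\sum_j c_{j,k}(t_j-t_{j-1}) = C\norm{\phi_k}_{L^1}$, which serves the same purpose.
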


\begin{proof} ~

{\bf Step 1:} Because $F^{(n)}(z,t)$ is a locally Lipschitz family (Definition \ref{def:Lipschitzfamily}, $\partial_t F(z,\cdot)$ is defined in the distributional sense.  By Proposition \ref{prop:biholomorphic}, $DF^{(n)}(z,t)$ is invertible and $DF^{(n)}(z,t)^{-1}$ is uniformly bounded for $t \in [0,T]$ and $\im z \geq \epsilon$.  Thus, because $DF^{(n)}(z,t)$ is Lipschitz in $t$ locally uniformly in $z$, we know $DF^{(n)}(z,t)^{-1}$ is also locally Lipschitz (in fact, it is uniformly Lipschitz in $t$ for $\im z \geq \epsilon$ and for all $n \in \N$).  Hence, we can define
\begin{equation}
V^{(n)}(z,t) := DF^{(n)}(z,t)^{-1} \partial_t F^{(n)}(z,t),
\end{equation}
so that the Loewner equation \eqref{eq:Loewner} holds.  Note also by Proposition \ref{prop:biholomorphic}, $V^{(n)}(z,t)$ is uniformly bounded when $\im z \geq \epsilon$.

{\bf Step 2:} It remains to show that $V(z,t)$ is a distributional Herglotz vector field.  Toward this end, we define a sequence of approximations to $V^{(n)}(z,t)$ given by
\begin{equation} \label{eq:Vmdef}
V_m^{(n)}(z,t) = \sum_{j=1}^m \chi_{[t_{j-1},t_j)}(t) \frac{m}{T} H_{t_{j-1},t_j}^{(n)}(z), \text{ where } t_j = Tj/m.
\end{equation}
By \eqref{eq:estimate1} and \eqref{eq:sigmaSTLipschitzbound},
\begin{equation}
\im z \geq \epsilon \implies \norm{H_{t_{j-1},t_j}^{(n)}(z)} \leq \frac{\norm{\sigma_{t_{j-1},t_j}(1)}}{\epsilon} \leq \frac{C}{m \epsilon},
\end{equation}
and hence $V_m^{(n)}(z,\cdot)$ is uniformly bounded in $\mathcal{L}(L^1[0,T], M_n(\mathcal{A}))$ for $z \in \h_\epsilon^{(n)}(\mathcal{A})$.  We claim that for each $\phi \in L^1[0,T]$ and $\epsilon > 0$, we have
\begin{equation} \label{eq:Vconvergence}
\int V_m^{(n)}(z,t) \phi(t)\,dt \to \int V^{(n)}(z,t) \phi(t)\,dt \text{ uniformly for } z \in \h_\epsilon^{(n)}(\mathcal{A}).
\end{equation}
Because $\norm{V_m^{(n)}(z,\cdot)}_{\mathcal{L}(L^1[0,T],M_n(\mathcal{A})}$ is uniformly bounded for $z \in \h_\epsilon^{(n)}$ by \eqref{eq:estimate2a}, it suffices to prove the claim for $\phi$ in a dense subspace of $L^1[0,T]$.

{\bf Step 3:} To prove \eqref{eq:Vconvergence}, let $\epsilon$ be fixed.  Assume that $\phi$ is continuous.  Let
\begin{equation}
\phi_m(t) = \sum_{j=1}^m \chi_{[t_{j-1},t_j)}(t) \phi(t_{j-1}), \text{ where } t_j = Tj / m.
\end{equation}
Then we have for $\im z \geq \epsilon$,
\begin{align}
\int V_m^{(n)}(z,t) \phi_m(t)\,dt &= \sum_{j=1}^m \int_{t_{j-1}}^{t_j} V_m^{(n)}(z,t) \phi_m(t)\,dt \\
&= \sum_{j=1}^m \frac{m}{T} \int_{t_{j-1}}^{t_j} H_{t_{j-1},t_j}^{(n)}(z) \phi(t_{j-1})\,dt \\
&= \sum_{j=1}^m \phi(t_{j-1}) H_{t_{j-1},t_j}^{(n)}(z).
\end{align}
Meanwhile,
\begin{equation}
\int V^{(n)}(z,t) \phi_m(t)\,dt = \sum_{j=1}^m \phi(t_{j-1}) \int_{t_{j-1}}^{t_j} DF^{(n)}(z,t)^{-1} \partial_t F^{(n)}(z,t)\,dt.
\end{equation}
Because $DF^{(n)}(z,t)$ is uniformly Lipschitz in $t$ for $z \in \h_\epsilon^{(n)}(\mathcal{A})$, we have
\begin{equation}
\int_{t_{j-1}}^{t_j} DF^{(n)}(z,t)^{-1} \partial_t F^{(n)}(z,t)\,dt = \int_{t_{j-1}}^{t_j} DF^{(n)}(z,t_j)^{-1} \partial_t F^{(n)}(z,t)\,dt + O(1/m^2).
\end{equation}
By the chain rule (Lemma \ref{lem:chainrule}),
\begin{equation}
\partial_t F^{(n)}(z,t) = \partial_t [F_{t_{j-1}}^{(n)} \circ F_{t_{j-1},t}^{(n)}(z)] = DF_{t_{j-1}}^{(n)}(z) \partial_t [F_{t_{j-1},t}^{(n)}(z)].
\end{equation}
Hence,
\begin{align}
\int_{t_{j-1}}^{t_j} DF^{(n)}(z,t)^{-1} \partial_t F^{(n)}(z,t)\,dt &= F_{t_{j-1},t_j}^{(n)}(z) - F_{t_{j-1},t_{j-1}}^{(n)}(z) + O(1/m^2) \nonumber \\
&= H_{t_{j-1},t_j}^{(n)}(z) + O(1/m^2).
\end{align}
Altogether, we have for $z \in \h_\epsilon^{(n)}(\mathcal{A})$,
\begin{align}
\int_0^T V^{(n)}(z,t) \phi_m(t)\,dt &= \sum_{j=1}^m \phi(t_{j-1}) H_{t_{j-1},t_j}^{(n)}(z) + O(1/m) \nonumber \\
&= \int_0^T V_m^{(n)}(z,t) \phi_m(t)\,dt + O(1/m).
\end{align}
Replacing $\phi_m$ by $\phi$ produces an error $O(\norm{\phi_m - \phi}_{L^1})$ which goes to zero as $m \to \infty$, and therefore \eqref{eq:Vconvergence} holds when $\phi$ is continuous.  By approximation, \eqref{eq:Vconvergence} holds for every $\phi \in L^1[0,T]$.  This guarantees that $\int V(\cdot,t) \phi(t)\,dt$ is fully matricial for each $\phi$.

{\bf Step 4:} We claim that for $\phi \geq 0$, the function $\int V(\cdot,t) \phi(t)\,dt$ maps $\h(\mathcal{A})$ into $\overline{\h}(\mathcal{A})$.  If we assume that $z \in \h_\epsilon^{(n)}(\mathcal{A})$, then
\begin{equation}
\im \int V_m^{(n)}(z,t)\phi(t)\,dt = \sum_{j=1}^m \int_{t_{j-1}}^{t_j} \phi(t)\,dt \, \Bigl( \im H_{t_{j-1},t_j}(z) \Bigr) \geq 0.
\end{equation}
Therefore, taking $m \to \infty$, we obtain $\im \int V^{(n)}(z,t)\phi(t)\,dt \geq 0$ as desired.

{\bf Step 5:} It remains to check condition (4) of Definition \ref{def:Herglotz}, that is, we must show that for each $\phi \in L^1[0,T]$, the function $\int \tilde{V}(z,t)\phi(t)\,dt = \int V(z^{-1},t) \phi(t)\,dt$ extends to be fully matricial in a neighborhood of $0$ (more precisely, fully matricial on $B_{M_n(\mathcal{A})}(0,R)$ for some $R$ independent of $n$).  To this end, we will show that $\tilde{V}_m(z,t) = V_m(z^{-1},t)$ extends to be fully matricial in a neighborhood of $0$, and then apply analytic continuation to show that $\tilde{V}_m^{(n)}(z,t)$ converges in a neighborhood of $0$ as $m \to \infty$.

Let $M$ be such that $\rad(\sigma_{s,t}) \leq M$ when $t - s$ sufficiently small.  Recall that $\norm{\sigma_{s,t}(1)} \leq C|s - t|$.  Hence, by Lemma \ref{lem:Cauchytransformseries}, $\tilde{H}_{s,t} = -\tilde{G}_{\sigma_{s,t}}$ extends to be fully matricial for $\norm{z} < 1 / M$ and satisfies
\begin{equation}
\norm*{\tilde{H}_{s,t}(z)} \leq \frac{C|s - t| \norm{z}}{1 - M\norm{z}}.
\end{equation}
Substituting this in \eqref{eq:Vmdef} yields that for sufficiently large $m$ and for $\norm{z} < 1 / M$,
\begin{equation}
\norm*{\tilde{V}_m^{(n)}(z,\cdot)}_{L_{\Boch}^\infty([0,T],M_n(\mathcal{A})} \leq \frac{C\norm{z}}{1 - M \norm{z}}.
\end{equation}
In particular, for $\phi \in L^1[0,T]$, we have
\begin{equation} \label{eq:tildeVmestimate}
\norm*{ \int \tilde{V}_m^{(n)}(z,t)\phi(t)\,dt } \leq \frac{C \norm{z}}{1 - M \norm{z}} \norm{\phi}_{L^1[0,T]}.
\end{equation}
We already know by \eqref{eq:Vconvergence} that
\begin{equation}
\lim_{m \to \infty} \int \tilde{V}_m^{(n)}(z,t) \phi(t)\,dt = \int \tilde{V}^{(n)}(z,t) \phi(t)\,dt \text{ for } z \in B_{M_n(\mathcal{A})}(0,1/M) \cap -\h_\epsilon^{(n)}(\mathcal{A}),
\end{equation}
where the convergence is locally uniform.  Therefore, by Lemma \ref{lem:analyticcontinuationofconvergence}, the sequence $\int \tilde{V}_m^{(n)}(z,t) \phi(t)\,dt$ converges locally uniformly on all of $B_{M_n(\mathcal{A})}(0,1/M)$.  Hence, $\int \tilde{V}^{(n)}(z,t) \phi(t)\,dt$ has an analytic extension to $B_{M_n(\mathcal{A})}(0,1/M)$, which also defines a fully matricial function, since the property of preserving direct sums and similarities is preserved when taking the limit of a sequence of functions.

The estimate \eqref{eq:tildeVmestimate} can be applied to $\tilde{V}$ by taking $m \to \infty$.  The relation $\tilde{V}_m^{(n)}(z^*,t) = V_m^{(n)}(z,t)^*$ implies in the limit that $\int \tilde{V}^{(n)}(z^*,t)\phi(t)\,dt = (\int \tilde{V}^{(n)}(z,t) \overline{\phi(t)}\,dt)^*$.  Thus, Definition \ref{def:Herglotz} (4) holds with $R = 1/M$.
\end{proof}

\subsection{Integration of the Loewner Equation} \label{subsec:integration}

The following is the $\mathcal{A}$-valued analogue of \cite[Theorem 5.5]{Bauer2005}.

\begin{theorem} \label{thm:Herglotzflow}
Let $V(z,t)$ be a distributional Herglotz vector field on $[0,T]$ with $\rad(V) \leq M$, and let $C = \norm{D\tilde{V}^{(1)}(0,\cdot)[1]}_{\mathcal{L}(L^1[0,T],\mathcal{A})}$.
\begin{enumerate}[(1)]
	\item There exists a unique fully matricial family
	\begin{equation*}
	W: \h(\mathcal{A}) \times [0,T] \to \h(\mathcal{A})
	\end{equation*}
	such that $W^{(n)}(z,t)$ is a locally Lipschitz family for each $n$, and $W$ satisfies
	\begin{equation}
	W^{(n)}(z,0) = z, \qquad \partial_t W^{(n)}(z,t) = V^{(n)}(W(z,t),t).
	\end{equation}
	
	\item $W(z,t)$ is the reciprocal Cauchy transform of a law with radius bounded by $M + \sqrt{2Ct}$.  Moreover, for $u > t$,
	\begin{equation} \label{eq:solutionniceestimate}
	\norm{z} < \frac{1}{M + \sqrt{2Cu}} \implies \norm{\tilde{W}(z,t)^{-1}} \leq \frac{1}{M + \sqrt{2C(u - t)}}.
	\end{equation}
	
	\item Letting $W(z,t) = z + H(z,t)$, we have $H(z,t) =- \sigma_t[(z - X)^{-1}]$, where $\sigma_t$ is a generalized law $\mathcal{A}\ip{X} \to \mathcal{A}$ satisfying
	\begin{equation}
	\rad(\sigma_t) \leq M + \sqrt{2Ct}
	\end{equation}
	and, setting $\tilde{H}(z,t) = H(z^{-1},t)$, we have
	\begin{equation} \label{eq:Herglotzderivativeformula}
	\sigma_t|_{\mathcal{A}} = -D\tilde{H}^{(1)}(0,t) = - \int_0^t \tilde{V}^{(1)}(0,s)\,ds
	\end{equation}
	and hence $\norm{\sigma_t(1)} \leq Ct$.
\end{enumerate}
\end{theorem}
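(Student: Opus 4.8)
The plan is to establish the three parts in order, using the distributional calculus of \S\ref{sec:locallyLipschitz} for part (1) and the Cauchy- and $F$-transform estimates of \S\ref{subsec:Cauchytransform}--\S\ref{subsec:Ftransform} together with an ODE comparison for parts (2)--(3).

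\textbf{Part (1).} I would construct $W$ by Picard iteration on each slab $\h_\epsilon^{(n)}(\mathcal{A})$. Since $V^{(n)}(w,\cdot)$ lies in $\mathcal{L}(L^1[0,T],M_n(\mathcal{A}))$ rather than being a pointwise function of $t$, the right-hand side $V(W(z,\cdot),\cdot)$ is interpreted via Definition \ref{def:composition} (legitimate because Lemma \ref{lem:estimates2} makes $V^{(n)}$ a uniformly continuous map $\h_\epsilon^{(n)}(\mathcal{A})\to\mathcal{L}(L^1[0,T],M_n(\mathcal{A}))$), and $\int_0^t V(W(z,s),s)\,ds$ denotes the action of that distribution on $\chi_{[0,t]}$ (Observation \ref{obs:weakderivative}). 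The operator $\Psi$ sending $w\in C([0,T],\h_\epsilon^{(n)}(\mathcal{A}))$ to $t\mapsto z+\int_0^t V(w(s),s)\,ds$ maps this complete metric space into itself: approximating $w$ by step functions and using $\chi_{[0,t]}\geq0$ with Definition \ref{def:Herglotz}(3) gives $\im\int_0^t V(w(s),s)\,ds\geq0$, so $\im\Psi(w)(t)\geq\im z\geq\epsilon$. By the Lipschitz estimate \eqref{eq:estimate2b} and Observation \ref{obs:compositionerror} the iterates $\Psi^k$ have the usual factorial contraction, so $\Psi$ has a unique fixed point $W^{(n)}(z,\cdot)$, which is Lipschitz in $t$ (hence $\partial_t W$ exists distributionally) and, by \eqref{eq:estimate2a}, Lipschitz in $t$ locally uniformly in $z$. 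A Gr\"onwall argument gives uniqueness; uniqueness forces $W$ to respect direct sums and similarities because $V$ does, and analyticity of $W(\cdot,t)$ follows since the Picard iterates are analytic in $z$, uniformly locally bounded ($\norm{W(z,t)}\leq\norm{z}+TC/\epsilon$), and converge locally uniformly (Lemma \ref{lem:analyticlimit}); thus the $W^{(n)}$ assemble into a fully matricial family.

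\textbf{Parts (2)--(3).} Here I would pass to $\infty$. For $z^{-1}\in\h(\mathcal{A})$ set $\psi(s;z):=W(z^{-1},s)^{-1}$; differentiating and using $V=-G_\nu$, where $\nu$ is the distributional family of generalized laws associated to $V$ by Proposition \ref{prop:Herglotzbijection}, gives $\partial_s\psi=\psi\,\tilde G_{\nu_s}(\psi)\,\psi$ (distributionally in $s$) with $\psi(0;z)=z$. The diagonal-restriction estimate \eqref{eq:stupidestimate2}, the near-$\infty$ bound \eqref{eq:estimatenearinfinity}, and $\norm{\int_a^b\nu(1,s)\,ds}\leq C(b-a)$ together show that $r(s):=\norm{\psi(s;z)}$ is locally Lipschitz where $r<1/M$, with $r'(s)\leq Cr(s)^3/(1-Mr(s))$. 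Comparing with the explicit solution $\rho(s)=(M+\sqrt{(\norm{z}^{-1}-M)^2-2Cs})^{-1}$ of $\rho'=C\rho^3/(1-M\rho)$ proves \eqref{eq:solutionniceestimate} verbatim. A second Picard iteration, this time in the variable $z$ and using that $w\mapsto w\tilde G_{\nu_s}(w)w$ is $O(\norm{w}^3)$ near $0$ with the bounds of Lemma \ref{lem:Cauchytransformseries}, extends $\psi(s;\cdot)$ analytically to a ball $B(0,R_0)$ with $R_0$ independent of $n$ and shows $\psi(s;z)-z=O(\norm{z}^3)$ there; combining this with \eqref{eq:solutionniceestimate} and analytic continuation (Lemmas \ref{lem:analyticcontinuation}, \ref{lem:analyticcontinuationofconvergence}) extends $\psi(s;\cdot)$ fully matricially to $B_{M_n(\mathcal{A})}(0,1/(M+\sqrt{2Cs}+\eta))$ for every $\eta>0$, with bound independent of $n$. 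Theorem \ref{thm:Cauchytransform} then applies to the fully matricial function $z\mapsto W(z,t)^{-1}$, whose transform $z\mapsto W(z^{-1},t)^{-1}=\psi(t;z)$ extends analytically and boundedly over $B(0,1/(M+\sqrt{2Ct}+\eta))$, is $*$-symmetric (from that of $V$ and uniqueness of the flow), vanishes at $0$, and satisfies $z^{-1}\psi(t;z)\to1$ since $\psi(t;z)-z=O(\norm{z}^3)$: hence $W(\cdot,t)=F_{\mu_t}$ for a law $\mu_t$ with $\rad(\mu_t)\leq M+\sqrt{2Ct}$ and $\mu_t(X)=0$. Proposition \ref{prop:Ftransformbijection} produces the generalized law $\sigma_t$ with $H(\cdot,t)=-G_{\sigma_t}$; matching the terms of order $\leq1$ at $z=0$ in the identity $-\tilde H(z,t)=\tilde G_{\sigma_t}(z)=\int_0^t\tilde G_{\nu_s}(\psi(s;z))\,ds$ yields $a_0=0$, the stated formula \eqref{eq:Herglotzderivativeformula} for $\sigma_t|_{\mathcal{A}}$, and $\norm{\sigma_t(1)}\leq Ct$; the bound $\rad(\sigma_t)\leq M+\sqrt{2Ct}$ comes from applying Theorem \ref{thm:Cauchytransform} to that same integral representation.

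\textbf{Main obstacle.} The delicate step is the last part of the argument for (2)--(3): the a priori flow estimate \eqref{eq:solutionniceestimate} only controls $\psi(s;z)$ for $z^{-1}\in\h(\mathcal{A})$, whereas Theorem \ref{thm:Cauchytransform} requires a genuine fully matricial analytic extension of $\psi(s;\cdot)$ (hence of $\tilde G_{\sigma_t}$ and of $z\mapsto W(z^{-1},t)^{-1}$) over a full operator-norm ball, with all constants independent of $n$. Reconciling the local spatial Picard iteration near $0$ with the a priori bound coming from the $\h(\mathcal{A})$-side, via the analytic-continuation lemmas of \S\ref{sec:locallyLipschitz}, is where the real work lies; the distributional bookkeeping in part (1) is routine given \S\ref{sec:locallyLipschitz} but must still be carried out carefully, in particular the verification that $\Psi$ preserves the slab.
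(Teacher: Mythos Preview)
Your Part (1) is essentially the paper's Steps 1--3: Picard iteration on $C([0,T],\h_\epsilon^{(n)}(\mathcal{A}))$, with the invariance $\im\Psi(w)\geq\im z$, the Lipschitz bound \eqref{eq:estimate2b} giving factorial contraction, and the passage to the fully matricial limit. No issue there.

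For Parts (2)--(3) the paper takes a different route and avoids precisely the obstacle you flag. Rather than pass to the limit $\psi=\tilde W^{-1}$ and argue by scalar ODE comparison, the paper proves the bound \eqref{eq:solutionniceestimate} \emph{for each Picard iterate} $\tilde W_m(z,t)^{-1}$, by induction on $m$ (its Step 4). The inductive step is that $\norm{\tilde W_m(z,s)^{-1}}\leq 1/(M+\sqrt{2C(u-s)})$ implies
\[
\norm{\tilde H_m(z,t)}\leq\int_0^t\frac{C}{\sqrt{2C(u-s)}}\,ds=\sqrt{2Cu}-\sqrt{2C(u-t)},
\]
and then $\tilde W_{m+1}(z,t)^{-1}=(z^{-1}+\tilde H_m(z,t))^{-1}$ gives the $(m{+}1)$-bound by a geometric series. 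The crucial gain is that each $\tilde W_m(z,t)^{-1}$ is an \emph{explicit formula}, hence already defined and uniformly bounded on the full ball $B(0,1/(M+\sqrt{2Cu}))$; convergence on $-\h(\mathcal{A})\cap B(0,R)$ (known from Part (1)) then propagates to the full ball by Lemma \ref{lem:analyticcontinuationofconvergence}, and Theorem \ref{thm:Cauchytransform} applies.

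Your plan has a genuine gap at the extension step. The ODE comparison for $r(s)=\norm{\psi(s;z)}$ is correct and elegant, but it only yields \eqref{eq:solutionniceestimate} for $z\in -\h(\mathcal{A})$, and your second Picard iteration only produces $\psi$ on a small ball $B(0,R_0)$. Lemmas \ref{lem:analyticcontinuation} and \ref{lem:analyticcontinuationofconvergence} are not extension theorems: they need either a single function, or a uniformly bounded sequence, already analytic on the \emph{target} domain. Since $B(0,R_0)\cup\bigl(-\h(\mathcal{A})\cap B(0,R)\bigr)$ does not cover $B(0,R)$, there is nothing to glue and no sequence on $B(0,R)$ to which the lemma applies. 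The repair is to carry the bound through an iteration whose terms are globally defined on the full ball---either the paper's $\tilde W_m^{-1}$ or, equivalently, your $\psi$-Picard iterates with the inductive hypothesis $\norm{\psi_m(s;z)}\leq 1/(M+\sqrt{2C(u-s)})$ proved on all of $B(0,1/(M+\sqrt{2Cu}))$. Once you do that, the scalar comparison for the limit becomes redundant: the iterate induction already delivers \eqref{eq:solutionniceestimate} directly.
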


\begin{remark}
The statement and proof here closely follow \cite[Theorem 5.5]{Bauer2005}.
\end{remark}

\begin{proof}
{\bf Step 1:} We define Picard iterates $W_m$ inductively by
\begin{align}
W_0^{(n)}(z,t) &= z  \\
W_{m+1}^{(n)}(z,t) &= z + \int_0^t V^{(n)}(W_m^{(n)}(z,s),s)\,ds. \label{eq:WMdef}
\end{align}
We prove the following claims by induction on $m$:
\begin{enumerate}[(a)]
	\item $W_m^{(n)}(z,t)$ is well-defined and it is a $W_m$ is a fully matricial function of $z$.
	\item $\im W_m^{(n)}(z,t) \geq \im z$.
	\item $W_m^{(n)}(z,\cdot)$ is $(C / \epsilon)$-Lipschitz for $\im z \geq \epsilon$.
\end{enumerate}
Each of the claims is trivial in the base case $m = 0$.

Now assume the claims hold for $m - 1$.  Since $W_{m-1}^{(n)}(z,t)$ is a locally Lipschitz family, we know that $V^{(n)}(W_{m-1}^{(n)}(z,t),t)$ is defined and analytic by Lemma \ref{lem:compositionanalytic}.  It follows that $W_m^{(n)}(z,t)$ is well-defined and analytic.  The fact that it preserves direct sums and similarities is clear.  Moreover, $W_m^{(n)}(z,t)$ is bounded for $\im z \geq \epsilon$, independently of $m$, because $\im W_{m-1}^{(n)}(z,t) \geq \im z \geq \epsilon$ and hence $\norm{V^{(n)}(W_{m-1}^{(n)}(z,t),\cdot)}_{\mathcal{L}(L^1[0,T],M_n(\mathcal{A}))} \leq C / \epsilon$ by \eqref{eq:estimate2a}.  Thus, (a) holds.

Because $\im V^{(n)}(z,t) \geq 0$ in the distributional sense by Definition \ref{def:Herglotz} (3), we conclude that $\im V^{(n)}(W_{m-1}^{(n)}(z,t),t) \geq 0$ also by a step-function approximation argument as in the construction of Lemma \ref{lem:compositionanalytic}.  Hence, (b) holds.

Next, because $\norm{V^{(n)}(z,\cdot)}_{\mathcal{L}(L^1[0,T],M_n(\mathcal{A}))} \leq C / \epsilon$ for $\im z \geq \epsilon$, and we have $\im W_{m-1}(z,t) \geq \im z$, we know that $V^{(n)}(W_{m-1}^{(n)}(z,t),\cdot)$ is bounded by $C / \epsilon$ in $\mathcal{L}(L^1[0,T], M_n(\mathcal{A}))$.  By \eqref{eq:stupidestimate2}, this implies (c).

{\bf Step 2:} To prove convergence of $W_m^{(n)}$ as $m \to \infty$, we will show that for $m > 0$ and $\im z \geq \epsilon$, we have
\begin{equation} \label{eq:PLconvergenceestimate}
\norm{W_m^{(n)}(z,t) - W_{m-1}^{(n)}(z,t)} \leq \frac{C^{m+1}t^m}{m! \epsilon^{2m+1}}
\end{equation}
In the case $m = 1$, this holds because $\norm{V^{(n)}(z,\cdot)}_{\mathcal{L}(L^1[0,T],M_n(\mathcal{A}))} \leq C/\epsilon$ by \eqref{eq:estimate2a}.  For the induction step, we use \eqref{eq:WMdef} together with the Lipschitz bound Lemma \eqref{eq:estimate2b} to argue that
\[
\norm*{W_{m+1}^{(n)}(z,t) - W_m^{(n)}(z,t)} = \norm*{ \int_0^t V^{(n)}(W_m^{(n)}(z,s),s)\,ds - \int_0^t V^{(n)}(W_{m-1}^{(n)}(z,s),s)\,ds }.
\]
Now by \eqref{eq:stupidestimate2}, we have
\begin{multline*}
\norm*{ \int_0^t V^{(n)}(W_m^{(n)}(z,s),s)\,ds - \int_0^t V^{(n)}(W_{m-1}^{(n)}(z,s),s)\,ds } \\ \leq \int_0^t \norm*{V^{(n)}(W_m^{(n)}(z,s),\cdot) - V^{(n)}(W_{m-1}^{(n)}(z,s),\cdot)}_{\mathcal{L}(L^1[0,T],M_n(\mathcal{A}))}\,ds.
\end{multline*}
Because $\im W_m^{(n)}(z,s) \geq \epsilon$ and $\im W_{m-1}^{(n)}(z,s) \geq \epsilon$, we can apply \eqref{eq:estimate2b} to conclude that
\begin{multline*}
\int_0^t \norm*{V^{(n)}(W_m^{(n)}(z,s),\cdot) - V^{(n)}(W_{m-1}^{(n)}(z,s),\cdot)}_{\mathcal{L}(L^1[0,T],M_n(\mathcal{A}))}\,ds \\
\leq \int_0^t \frac{C}{\epsilon^2} \norm*{W_m^{(n)}(z,s) - W_{m-1}^{(n)}(z,s)} \,ds.
\end{multline*}
Now by the inductive hypothesis,
\begin{align*}
\int_0^t \frac{C}{\epsilon^2} \norm*{W_m^{(n)}(z,s) - W_{m-1}^{(n)}(z,s)} \,ds &\leq \int_0^t \frac{C}{\epsilon^2} \cdot \frac{C^{m+1}s^m}{m! \epsilon^{2m+1}} \,ds \\
&= \frac{C^{m+2} t^{m+1}}{(m+1)! \epsilon^{2m+3}},
\end{align*}
which finishes the proof of \eqref{eq:PLconvergenceestimate}.

{\bf Step 3:} Step 2 implies that as $m$ goes to infinity, $W_m^{(n)}(z,t)$ converges uniformly for $n \in \N$, $t \in [0,T]$, and $\im z \geq \epsilon$ to a fully matricial function $W(z,t)$.  This implies that $V^{(n)}(W_m^{(n)}(z,t),t)$ converges to $V^{(n)}(W^{(n)}(z,t),t)$ in $\mathcal{L}(L^1[0,T], M_n(\mathcal{A}))$ by Observation \ref{obs:compositionerror}.  Therefore,
\begin{equation}
W^{(n)}(z,t) = z + \int_0^t V^{(n)}(W^{(n)}(z,s),s)\,ds,
\end{equation}
and hence $\partial_t W^{(n)}(z,t) = V^{(n)}(W^{(n)}(z,t),t)$ in the distributional sense, and $W^{(n)}(z,0) = z$.  Uniqueness of the solution follows from the standard Picard-Lindel\"of argument.  This completes the proof of (1) in the theorem statement.

{\bf Step 4:}  We will prove by induction that given $0 \leq t \leq T$ and $u > t$, the function $\tilde{W}_m^{(n)}(z,t)^{-1} := W_m^{(n)}(z^{-1},t)^{-1}$ extends to be fully matricial on $\norm{z} < 1/(M + \sqrt{2Cu})$ and that it satisfies
\begin{equation}
\norm*{\tilde{W}_m^{(n)}(z^{-1},t)^{-1}} \leq \left(M + \sqrt{2C(u - t)}\right)^{-1}.
\end{equation}
The base case $m = 0$ is trivial.  For the induction step, recall that $\tilde{V}^{(n)}(z^{-1},t) := V^{(n)}(z^{-1},t)$ extends to be analytic for $\norm{z} < 1/M$ and satisfies
\begin{equation}
\norm*{\tilde{V}^{(n)}(z,\cdot)}_{\mathcal{L}(L^1[0,T],M_n(\mathcal{A}))} \leq \frac{C}{\norm{z}^{-1} - M}.
\end{equation}
If $\norm{z} < 1/(M + \sqrt{2Cu})$ and $0 \leq s \leq t$, then the induction hypothesis implies that $\norm{\tilde{W}_m^{(n)}(z,s)^{-1}} \leq (M + \sqrt{2C(u-s)})^{-1}$, and therefore $\tilde{W}_m^{(n)}(z,s)^{-1}$ is in the domain where $\tilde{V}^{(n)}$ is analytic, and we have  
\begin{equation}
\norm{V^{(n)}(W_m^{(n)}(z^{-1},s),\cdot)}_{\mathcal{L}(L^1,M_n(\mathcal{A}))} = \norm{\tilde{V}^{(n)}(\tilde{W}_m^{(n)}(z,s)^{-1},\cdot)}_{\mathcal{L}(L^1,M_n(\mathcal{A}))} \leq \frac{C}{\sqrt{2 C(u - s)}}.
\end{equation}
Therefore, setting $H_m^{(n)}(z,t) := W_m^{(n)}(z,t) - z$, and using Lemma \ref{lem:diagonal},
\begin{align}
\norm*{\tilde{H}_m^{(n)}(z,t)} &= \norm*{\int_0^t V^{(n)}(W_m^{(n)}(z^{-1},s),s)\,ds} \nonumber \\
&\leq \int_0^t \norm*{V^{(n)}(W_m(z^{-1},s),\cdot)}_{\mathcal{L}(L^1[0,T],M_n(\mathcal{A}))}\,ds \nonumber \\
&\leq \int_0^t \frac{C}{\sqrt{2C(u - s)}}\,ds
= \sqrt{2Cu} - \sqrt{2C(u - t)}.  \label{eq:inductionnearinfinity}
\end{align}
Then
\begin{equation} \label{eq:WMpowerseries}
\tilde{W}_{m+1}^{(n)}(z,t)^{-1} = \left(z^{-1} + \tilde{H}_m^{(n)}(z,t)\right)^{-1} = \sum_{j=0}^\infty (-1)^j z[\tilde{H}_m^{(n)}(z,t)z]^j,
\end{equation}
from which we see that
\begin{align}
\norm*{\tilde{W}_{m+1}^{(n)}(z,t)^{-1}} &\leq \frac{1}{\norm{z}^{-1} - \norm{\tilde{H}_m^{(n)}(z,t)}} \nonumber \\
&\leq \frac{1}{(M + \sqrt{2Cu}) - (\sqrt{2Cu} - \sqrt{2C(u - t)})} \nonumber \\
&= \frac{1}{M + \sqrt{2C(u - t)}}.
\end{align}
This completes the induction proof.

{\bf Step 5:} Fix $t \in [0,T)$ and we will prove (2).  Let $u > t$ as in Step 4.  Because $\tilde{W}_m(z,t)^{-1}$ is analytic and uniformly bounded for $\norm{z} < (M + \sqrt{2Cu})^{-1}$ and because $\tilde{W}_m^{(n)}(z,t)^{-1}$ converges locally uniformly on
\[
\{z: \norm{z} < (M+\sqrt{2Cu})^{-1} \text{ and } \im z^{-1} \geq \epsilon\},
\]
Lemma \ref{lem:analyticcontinuationofconvergence} implies that $\tilde{W}_m^{(n)}(z,t)^{-1}$ converges locally uniformly on $\{z: \norm{z} < (1 + \sqrt{2Cu})^{-1} \}$ as $m \to \infty$.  This implies that $\tilde{W}(z,t)^{-1}$ has a fully matricial extension to $\norm{z} < (M + \sqrt{2Cu})^{-1}$, which is bounded by $(M + \sqrt{2C(u - t)})^{-1}$.  Moreover, using the power series expansion \eqref{eq:WMpowerseries}, we see that $\lim_{z \to 0} z^{-1} \tilde{W}^{(n)}(z,t)^{-1} = 1$, where the limit is taken over invertible $z \in M_n(\mathcal{A})$.  Therefore, Theorem \ref{thm:Cauchytransform} implies that $W(z,t)^{-1}$ is the Cauchy transform of a law with radius bounded by $(M + \sqrt{2Cu})$.  Letting $u \searrow t$ proves (2).

{\bf Step 6:} Fix $t$ and we will prove (3).  By Proposition \ref{prop:Ftransformbijection}, we know that $H(z,t) = G_{\sigma_t}(z)$ for some generalized law $\sigma$.  In order to bound $\rad(\sigma_t)$, note that
\[
H^{(n)}(z,t) = \int_0^t V^{(n)}(W^{(n)}(z,s),s)\,ds.
\]
Using the same reasoning as in \eqref{eq:inductionnearinfinity}, we see that if $u > t$, then $\tilde{H}^{(n)}(z,t)$ is defined for $\norm{z} < 1 / (M + \sqrt{2Cu})$ and bounded by $\sqrt{2Cu} - \sqrt{2C(u-t)}$.  Since for each $u > t$, there is a bounded independent of $n$, we obtain $\rad(\sigma_t) \leq M + \sqrt{2Ct}$.  This proves the first claim of (3), and the proof of \eqref{eq:Herglotzderivativeformula} is a direct computation.
\end{proof}

The following is the $\mathcal{A}$-valued analogue of \cite[Theorem 5.6]{Bauer2005}.

\begin{theorem} \label{thm:Loewnerintegration}
Let $V(z,t)$ be a distributional Herglotz vector field with $\rad(V) \leq M$ and $C = \norm{D\tilde{V}^{(1)}(0,\cdot)[1]}_{\mathcal{L}(L^1[0,T],\mathcal{A})}$.
\begin{enumerate}[(1)]
	\item There exists a Lipschitz Loewner chain $F(z,t)$ satisfying the Loewner equation \eqref{eq:Loewner}.
	\item We have $F_{s,t}(z) = z - \sigma_{s,t}[(z - X)^{-1}]$ where $\sigma_{s,t}: \mathcal{A}\ip{X} \to \mathcal{A}$ is a generalized law with $\rad(\sigma_{s,t}) \leq M + \sqrt{2C(t - s)}$ and $\norm{\sigma_{s,t}(1)} \leq C(t - s)$.
	\item Suppose that $\Psi(z,t)$ is a fully matricial family $\h(\mathcal{A}) \times [0,T] \to M(\mathcal{A})$ such that $\Psi^{(n)}(z,t)$ is a locally Lipschitz family for each $n$.  If $\Psi$ satisfies
	\begin{equation}
	\partial_t \Psi^{(n)}(z,t) = D\Psi^{(n)}(z,t)[V^{(n)}(z,t)],
	\end{equation}
	and $F$ is the Loewner chain from (1), then $\Psi_t = \Psi_0 \circ F_t$.  In particular, if $\Psi_0 = \id$, then $\Psi_t = F_t$, hence the solution in (1) is unique.
\end{enumerate}
\end{theorem}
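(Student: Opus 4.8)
\emph{Strategy.} The plan is to obtain Theorem~\ref{thm:Loewnerintegration} from the Herglotz flow of Theorem~\ref{thm:Herglotzflow} after reversing time, so that the flow produces the \emph{subordination} maps $F_{s,t}$ of the desired chain rather than the chain itself (this is the operator-valued analogue of the reduction in \cite[Theorem~5.6]{Bauer2005}). I would set $\hat V(z,\tau):=V(z,T-\tau)$ for $\tau\in[0,T]$. Since reversing time is an isometric involution of $L^1[0,T]$ that preserves every condition in Definition~\ref{def:Herglotz} and merely relabels the associated distributional family of generalized laws, $\hat V$ is again a distributional Herglotz vector field with $\rad(\hat V)=\rad(V)\le M$ and the same constant $C=\norm{D\tilde V^{(1)}(0,\cdot)[1]}_{\mathcal{L}(L^1[0,T],\mathcal{A})}$. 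Applying Theorem~\ref{thm:Herglotzflow} to the restriction $\hat V|_{[a,b]}$, reparametrized to $[0,b-a]$, for each $0\le a\le b\le T$ yields a two-parameter flow $\hat W_{a,b}\colon\h(\mathcal{A})\to\h(\mathcal{A})$: a fully matricial family, locally Lipschitz in $b$ by Theorem~\ref{thm:Herglotzflow}(1) and locally Lipschitz in $a$ by combining the case $b=a$ with the a priori estimate \eqref{eq:aprioriLipschitz}, with $\hat W_{a,a}=\id$ and $\partial_b\hat W_{a,b}(z)=V(\hat W_{a,b}(z),T-b)$. The uniqueness clause of Theorem~\ref{thm:Herglotzflow}(1) gives the cocycle $\hat W_{a,c}=\hat W_{b,c}\circ\hat W_{a,b}$ for $a\le b\le c$, and, exactly as in \cite[Theorem~5.6]{Bauer2005} using the chain rule (Lemma~\ref{lem:chainrule}) and Lemma~\ref{lem:smallerror}, the backward equation $\partial_a\hat W_{a,b}(z)=-D\hat W_{a,b}(z)[V(z,T-a)]$. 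Finally, Theorem~\ref{thm:Herglotzflow}(2)--(3), applied to these reparametrizations, shows that each $\hat W_{a,b}$ is the $F$-transform of a law and that $\hat W_{a,b}(z)=z-\sigma_{a,b}[(z-X)^{-1}]$ for a generalized law $\sigma_{a,b}$ with $\rad(\sigma_{a,b})\le M+\sqrt{2C(b-a)}$ and $\norm{\sigma_{a,b}(1)}\le C(b-a)$; here $M$ and $C$ do not increase under restriction and reparametrization, by the supremum characterizations in Lemmas~\ref{lem:maxradius} and \ref{lem:smallerror}.

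\emph{Parts (1) and (2).} I would then \emph{define} $F_t:=\hat W_{T-t,T}$ and $F_{s,t}:=\hat W_{T-t,T-s}$ for $0\le s\le t\le T$. Then $F_0=\hat W_{T,T}=\id$; each $F_t$ is a fully matricial self-map of $\h(\mathcal{A})$ which is the $F$-transform of a law $\mu_t$ with $\rad(\mu_t)\le M+\sqrt{2Ct}$; and the cocycle gives $F_t=\hat W_{T-s,T}\circ\hat W_{T-t,T-s}=F_s\circ F_{s,t}$, i.e.\ $F_s\succeq F_t$. Reading off the expansion at $\infty$ of $z-\sigma_{s,t}[(z-X)^{-1}]$ as in Proposition~\ref{prop:Ftransformbijection}, the representing law $\mu_{s,t}$ of $F_{s,t}$ satisfies $\mu_{s,t}(X)=0$ and $\mu_{s,t}(X^2)=\sigma_{s,t}(1)$; combining this with the behaviour of the expansion at $\infty$ under composition (so that $\mu_t(X)=\mu_s(X)+\mu_{s,t}(X)=0$ and $\mu_t(X^2)=\mu_s(X^2)+\mu_{s,t}(X^2)$) yields $\norm{\mu_t(X^2)-\mu_s(X^2)}=\norm{\sigma_{s,t}(1)}\le C|t-s|$, so $\mu_t(X)$ and $\mu_t(X^2)$ are Lipschitz, hence continuous, and $(F_t)$ is a Lipschitz normalized Loewner chain. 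This also proves (2), since $\hat W_{T-t,T-s}$ has duration $t-s$. For the Loewner equation, substitute $a=T-t$, $b=T$ into the backward equation for $\hat W$ and use that, for a Lipschitz $g$, the distributional derivative of $t\mapsto g(T-t)$ is $-(\partial_ag)(T-t)$ in the sense of \S\ref{subsec:distderiv}; this gives $\partial_tF_t(z)=-\partial_a\hat W_{a,T}(z)\big|_{a=T-t}=D\hat W_{T-t,T}(z)[V(z,t)]=DF_t(z)[V(z,t)]$, which is \eqref{eq:Loewner}.

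\emph{Part (3).} For uniqueness I would argue by the method of characteristics. Let $\Psi$ be a fully matricial family with each $\Psi^{(n)}(z,t)$ a locally Lipschitz family satisfying $\partial_t\Psi^{(n)}(z,t)=D\Psi^{(n)}(z,t)[V^{(n)}(z,t)]$. Fix $T'\in[0,T]$, $n\in\N$, and $z_0\in\h^{(n)}(\mathcal{A})$, and consider the curve $\gamma(s):=F_{s,T'}^{(n)}(z_0)=\hat W_{T-T',T-s}^{(n)}(z_0)$ on $s\in[0,T']$, which is locally Lipschitz in $s$ with $\gamma(T')=z_0$ and $\partial_s\gamma(s)=-V^{(n)}(\gamma(s),s)$ by the backward equation. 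By the chain rule (Lemma~\ref{lem:chainrule}), $s\mapsto\Psi_s^{(n)}(\gamma(s))$ is locally Lipschitz with distributional derivative $(\partial_t\Psi_s^{(n)})(\gamma(s))+D\Psi_s^{(n)}(\gamma(s))[\partial_s\gamma(s)]=D\Psi_s^{(n)}(\gamma(s))[V^{(n)}(\gamma(s),s)]-D\Psi_s^{(n)}(\gamma(s))[V^{(n)}(\gamma(s),s)]=0$, so by Observation~\ref{obs:weakderivative} this curve is constant. Therefore $\Psi_0^{(n)}(F_{T'}^{(n)}(z_0))=\Psi_0^{(n)}(\gamma(0))=\Psi_{T'}^{(n)}(\gamma(T'))=\Psi_{T'}^{(n)}(z_0)$; as $T'$, $n$, $z_0$ were arbitrary, $\Psi_t=\Psi_0\circ F_t$, and taking $\Psi_0=\id$ shows that the chain in (1) is the unique solution of the Loewner equation.

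\emph{Main obstacle.} The step requiring the most care is the distributional bookkeeping underlying the two cancellations above: the identity $\partial_a\hat W_{a,b}(z)=-D\hat W_{a,b}(z)[V(z,T-a)]$ as an equality in $\mathcal{L}(L^1,\cdot)$, and, in part (3), the claim that substituting $\gamma(s)$ into the family $w\mapsto(\partial_t\Psi_s)(w)=D\Psi_s(w)[V(w,s)]$ (a composition in the sense of Definition~\ref{def:composition}) produces the same element of $\mathcal{L}(L^1,M_n(\mathcal{A}))$ as applying the locally Lipschitz coefficient $s\mapsto D\Psi_s(\gamma(s))$ to the substitution $s\mapsto V^{(n)}(\gamma(s),s)$ (a multiplication in the sense of Definition~\ref{def:multiplication}). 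Both reduce, via Lemma~\ref{lem:smallerror}, to verifying the identities on indicator functions of intervals up to an $o(|b-a|)$ error, using the a priori Lipschitz estimate \eqref{eq:estimate2b} for the Cauchy-transform representation of $V$ and the step-function approximations already employed in the proof of Theorem~\ref{thm:Herglotzflow}.
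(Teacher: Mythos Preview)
Your proposal is correct and follows essentially the same approach as the paper. Both construct the subordination maps $F_{s,t}$ by solving the time-reversed Herglotz flow of Theorem~\ref{thm:Herglotzflow}, establish the cocycle $F_{s,u}=F_{s,t}\circ F_{t,u}$ by uniqueness of that flow, derive the Loewner equation for $F_t=F_{0,t}$ via the chain rule (your ``backward equation'' for $\hat W_{a,b}$ is exactly the paper's Step~4 computation up to the relabeling $a=T-t$), and prove part~(3) by showing $s\mapsto\Psi_s\circ F_{s,t}$ has vanishing distributional derivative. The only cosmetic difference is that you perform one global time reversal $\hat V(z,\tau)=V(z,T-\tau)$ and then restrict, whereas the paper reverses time separately for each fixed terminal time $t$; the resulting flows and estimates coincide.
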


\begin{proof}
{\bf Step 1:} Fix $s \leq t$.  Note that $V^{(n)}(z,t-u)$, viewed as a formal function of $(z,u)$ on $\h(\mathcal{A}) \times [0,t]$, is a distributional Herglotz vector field.  Thus, by Theorem \ref{thm:Herglotzflow}, there exists $W$ satisfying
\begin{equation}
\partial_u W^{(n)}(z,u) = V^{(n)}(W^{(n)}(z,u), t - u), \qquad W^{(n)}(z,0) = \id.
\end{equation}
Define $F_{s,t}^{(n)}(z) = W^{(n)}(z,t-s)$, so that we have for $s \in [0,t]$,
\begin{equation} \label{eq:FSTdiffeq}
-\partial_s F_{s,t}^{(n)}(z) = V^{(n)}(F_{s,t}^{(n)}(z), s), \qquad F_{t,t}^{(n)} = \id.
\end{equation}
By Theorem \ref{thm:Herglotzflow}, we have
\begin{equation}
F_{s,t}^{(n)}(z) = z - G_{\sigma_{s,t}}^{(n)}(z),
\end{equation}
where $\sigma_{s,t}$ is a generalized law $\mathcal{A}\ip{X} \to \mathcal{A}$ with $\rad(\sigma_{s,t}) \leq M + \sqrt{2C(t - s)}$.

{\bf Step 2:} We claim that if $s \leq t \leq u$, then $F_{s,t} \circ F_{t,u} = F_{s,u}$.  Fix $t \leq u$.  Then for $s \leq t$, we have
\begin{equation}
-\partial_s [F_{s,t}^{(n)}(z)] = V^{(n)}(F_{s,t}^{(n)}(z),s), \qquad F_{t,t}^{(n)}(z) = z,
\end{equation}
so that
\begin{equation}
-\partial_s [F_{s,t}^{(n)} \circ F_{t,u}^{(n)}(z)] = V^{(n)}(F_{s,t}^{(n)} \circ F_{t,u}^{(n)}(z),s), \qquad F_{s,t}^{(n)} \circ F_{t,u}^{(n)}(z)|_{s = t} = F_{t,u}^{(n)}(z).
\end{equation}
Therefore, $F_{s,t}^{(n)} \circ F_{t,u}^{(n)}$ solves the same initial value problem as $F_{s,u}^{(n)}$ with respect to the variable $s$ which runs backwards from $t$ to $0$.  Therefore, $F_{s,t} \circ F_{t,u} = F_{s,u}$ by the uniqueness claim of Theorem \ref{thm:Herglotzflow} (1).

{\bf Step 3:}  Let $F_t = F_{0,t}$.  By Theorem \ref{thm:Herglotzflow} (b), $F_t$ is the reciprocal Cauchy transform of a law $\mu_t$ with $\rad(\mu_t) \leq M + \sqrt{2Ct}$ and $\mu_t(X) = 0$.  And we just showed $F_t = F_s \circ F_{s,t}$ for $s < t$.  Finally, $\norm{\sigma_{s,t}(1)} \leq C|s - t|$ by Theorem \ref{thm:Herglotzflow} (c).  Hence, $F_t$ is a Loewner chain.  Moreover, by Lemma \ref{lem:LoewnerlocallyLipschitz}, $F_t^{(n)}$ is a locally Lipschitz family for each $n$.

{\bf Step 4:}  We now verify that $F_t$ satisfies the Loewner equation.   Assume that $[a,b] \subseteq [0,T]$. Then for $s \in [a,b]$, we have
\begin{equation}
F_b^{(n)}(z) = F_s^{(n)} \circ F_{s,b}^{(n)}(z).
\end{equation}
Upon differentiating with respect to $s$ and invoking the chain rule (Lemma \ref{lem:chainrule}),
\begin{align}
0 &= \partial_s F_s^{(n)}(F_{s,b}^{(n)}(z)) + DF_s^{(n)}(F_{s,b}^{(n)}(z))[\partial_s F_{s,b}^{(n)}(z)] \nonumber \\
&= \partial_s F_s^{(n)}(F_{s,b}^{(n)}(z)) - DF_s^{(n)}(F_{s,b}^{(n)}(z))[V^{(n)}(F_{s,b}^{(n)}(z),s)].
\end{align}
For $z \in \h_\epsilon^{(n)}(\mathcal{A})$ and $s \in [a,b]$, we have
\begin{equation}
F_{s,b}^{(n)}(z) = z + O(|b - a|).
\end{equation}
Because $F_{s,b}^{(n)}$ maps $\h_\epsilon^{(n)}(\mathcal{A})$ into itself, because $\partial_s F_s^{(n)}(z)$ and $V^{(n)}(z,s)$ are bounded for $z \in \h_\epsilon^{(n)}(\mathcal{A})$, and because $DF_s$ Lipschitz in $s$ in this region, we have
\begin{align}
\partial_s F_s^{(n)}(F_{s,b}^{(n)}(z), s) &= \partial_s F_s(z) + O(|b - a|) \\
DF_s^{(n)}(F_{s,b}^{(n)}(z))[V^{(n)}(F_{s,b}^{(n)}(z),s)] &= DF_s^{(n)}(z)[V^{(n)}(z,s)] + O(|b - a|),
\end{align}
where the equation holds in the space $\mathcal{L}(L^1[a,b],M_n(\mathcal{A}))$ with respect to the variable $s$.  Therefore, pairing with $\chi_{[a,b]}$ yields
\begin{equation}
\int_a^b \partial_s F_s^{(n)}(z)\,ds = \int_a^b DF_s^{(n)}(z)[V^{(n)}(z,s)] \,ds + O(|b - a|^2).
\end{equation}
Hence, by Lemma \ref{lem:smallerror}, we have $\partial_s F_s^{(n)}(z) = DF_s^{(n)}(z)[V^{(n)}(z,s)]$ as desired.  Therefore, (1) is proved.  Moreover, (2) follows from Theorem \ref{thm:Herglotzflow}, so it only remains to prove (3).

{\bf Step 5:} Suppose that $\Psi$ satisfies the hypotheses of (3) and let $t > 0$.  Then for $s \in [0,t]$, we apply the chain rule and the fact that $\Psi$ satisfies the Loewner equation to conclude that
\begin{align}
\partial_s [\Psi_s^{(n)} \circ F_{s,t}^{(n)}(z)] &= \partial_s \Psi_s^{(n)}(F_{s,t}^{(n)}(z)) + D\Psi_s^{(n)}(F_{s,t}^{(n)}(z))[\partial_s F_{s,t}^{(n)}(z)] \nonumber \\
&=  D\Psi_s^{(n)}(F_{s,t}^{(n)}(z))[V^{(n)}(F_{s,t}^{(n)}(z),s)] - D\Psi_s^{(n)}(F_{s,t}^{(n)}(z))[V^{(n)}(F_{s,t}^{(n)}(z),s)] \nonumber \\
&= 0.
\end{align}
Thus, after formally integrating with respect to $s$ from $0$ to $t$, we obtain
\begin{equation}
\Psi_t^{(n)} = \Psi_t^{(n)} \circ F_{t,t}^{(n)} = \Psi_0^{(n)} \circ F_{0,t}^{(n)} = \Psi_0^{(n)} \circ F_t^{(n)}
\end{equation}
which proves (3).
\end{proof}

\subsection{Monotone and Free Convolution Semigroups} \label{subsec:semigroups}

The special cases of monotone and free convolution semigroups have received a lot of attention in the literature, which has found analogues of the L{\'e}vy-Hin{\v c}in formula for various types of independence.  We briefly describe how previous results for semigroups relate to the theory developed in this paper.  The results of this section are not intended to be exhaustive.

A \emph{monotone convolution semigroup} is a family of $\mathcal{A}$-valued laws $\mu_t$ such that $\mu_{s+t} = \mu_s \rhd \mu_t$, where the monotone convolution occurs over $\mathcal{A}$.  By Theorem \ref{thm:processLoewnerchain}, a monotone convolution semigroup is equivalent to a composition semigroup $(F_t)$ of $\mathcal{A}$-valued $F$-transforms.  Such semigroups were studied in the scalar case by \cite{Hasebe2010-2} and in the operator-valued case by \cite{HS2014} and \cite{AW2016}.  We now give an alternative proof of the following results from \cite{AW2016}.

\begin{proposition} \label{prop:monotonesemigroup}
Let $\mu_t$ be an $\mathcal{A}$-valued monotone convolution semigroup with mean zero and let $F_t = F_{\mu_t}$ be the corresponding composition semigroup of $F$-transforms.
\begin{enumerate}
	\item There exists a generalized law $\nu$ such that
	\begin{equation} \label{eq:semigroupLoewnerequation}
	\partial_t F^{(n)}(z,t) = DF^{(n)}(z,t)[-G_\nu^{(n)}(z)],
	\end{equation}
	and
	\begin{equation} \label{eq:semigroupLoewnerequation2}
	\partial_t F^{(n)}(z,t) = -G_\nu^{(n)}(F^{(n)}(z,t)),
	\end{equation}
	where the time-derivatives exist pointwise with respect to the operator norm.
	\item Moreover, $\partial_t^k F(z,t)$ exists pointwise with respect to the operator norm for all $k$.
	\item $F(z,t)$ satisfies equality of mixed partials for derivatives of all order in $z$ and $t$.
	\item Conversely, given a generalized law $\nu$, there exists a monotone convolution semigroup $\mu_t$ such that $F_t$ satisfies \eqref{eq:semigroupLoewnerequation}.
\end{enumerate}
\end{proposition}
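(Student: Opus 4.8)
The idea is to recognize $(F_t)$ as the autonomous flow of a single time-independent Herglotz vector field, and then to upgrade the distributional Loewner equation of \S\ref{sec:Loewnerequation} to a classical one. First I would check that $(F_t)_{t\in[0,T]}$ is a Lipschitz normalized chordal Loewner chain (restricting the semigroup to an arbitrary interval $[0,T]$; the conclusions are local in $t$). Here $\mu_t(X)=0$ by hypothesis; $F_0=\id$ because $\mu_0$ is the $\rhd$-identity; $F_{s+r}=F_s\circ F_r$ provides the subordination $F_s\succeq F_{s+r}$ (with $F_{s,s+r}=F_{\mu_r}$); and by the identity $\sigma_{s,t}(1)=\mu_t(X^2)-\mu_s(X^2)\ge 0$ the function $t\mapsto\mu_t(X^2)$ is additive in $t$ and increasing for the order of $\mathcal{A}$, hence, applying states of $\mathcal{A}$ and using that an additive increasing real function is linear, equal to $t\,\mu_1(X^2)$, which is Lipschitz in $t$; the continuity required in Definition \ref{def:CLC} is then automatic. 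Thus Theorem \ref{thm:Loewnerdifferentiation} produces a distributional Herglotz vector field $V$ with $\partial_t F=DF[V]$. The key point is that $V$ is independent of $t$: the approximants in that proof are $V_m^{(n)}(z,t)=\sum_j\chi_{[t_{j-1},t_j)}(t)\,\tfrac{m}{T}\,H_{t_{j-1},t_j}^{(n)}(z)$, and the semigroup property forces $H_{t_{j-1},t_j}=H_{T/m}$ independently of $j$, so $V_m^{(n)}(z,t)=\tfrac{m}{T}H_{T/m}^{(n)}(z)$, and hence its limit $V^{(n)}(z,\cdot)$, do not depend on $t$; so $V^{(n)}$ is represented by a genuine function $z\mapsto V^{(n)}(z)$, which as a time-independent Herglotz vector field satisfies the hypotheses of Theorem \ref{thm:Cauchytransform}, giving $V=-G_\nu$ for a generalized law $\nu$.

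To prove (1) I would identify $F$ with the flow of $V$. Since our Loewner chain satisfies $\partial_t F=DF[V]$ with $F_0=\id$ (and $F^{(n)}(z,t)$ is a locally Lipschitz family by Lemma \ref{lem:LoewnerlocallyLipschitz}), part (3) of Theorem \ref{thm:Loewnerintegration} identifies it with the chain constructed there by integrating $V$; but for autonomous $V$ the construction in the proofs of Theorems \ref{thm:Loewnerintegration} and \ref{thm:Herglotzflow} yields a single flow $\Phi$ solving $\partial_r\Phi_r=V(\Phi_r)$, $\Phi_0=\id$, with $F_{s,t}=\Phi_{t-s}$ and $F_t=\Phi_t$, so that $\partial_t F(z,t)=V(F(z,t))=-G_\nu(F(z,t))$ in the distributional sense. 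Now for $z\in\h_\epsilon^{(n)}(\mathcal{A})$ one has $F(z,t)\in\h_\epsilon^{(n)}(\mathcal{A})$ for all $t$, the curve $t\mapsto F(z,t)$ is norm-continuous (indeed Lipschitz), and $G_\nu$ is norm-continuous on $\h_\epsilon^{(n)}(\mathcal{A})$ by Lemma \ref{lem:estimates}; hence the right side is norm-continuous in $t$. A distributional derivative represented by a norm-continuous function is a classical $C^1$ derivative (by Observation \ref{obs:weakderivative} together with the fundamental theorem of calculus for Bochner integrals with continuous integrand), so \eqref{eq:semigroupLoewnerequation2} holds pointwise in operator norm. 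Then \eqref{eq:semigroupLoewnerequation} follows by differentiating $F_{t+h}=F_t\circ F_h$ at $h=0$ with the now-classical chain rule: $\partial_t F_t(z)=DF_t(z)\big[\partial_h|_{h=0}F_h(z)\big]=DF_t(z)[-G_\nu(z)]$, using \eqref{eq:semigroupLoewnerequation2} at $t=0$.

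For (2), $\partial_t F(z,t)=-G_\nu(F(z,t))$ exhibits $\partial_t F$ as the composite of the analytic map $-G_\nu$ with the curve $t\mapsto F(z,t)$, so if $F(z,\cdot)$ is $C^k$ then so is $\partial_t F(z,\cdot)$, i.e.\ $F(z,\cdot)\in C^{k+1}$; inductively $F(z,\cdot)\in C^\infty$ and all $\partial_t^kF(z,t)$ exist in operator norm. For (3), for fixed $t$ the G{\^a}teaux derivatives are Cauchy integrals $\delta^jF(z;h,t)=\tfrac{j!}{2\pi i}\oint_{|\zeta|=\rho}\zeta^{-j-1}F(z+\zeta h,t)\,d\zeta$ (Theorem \ref{thm:analytic}); the integrand and all its $t$-derivatives are uniformly bounded for $|\zeta|\le\rho$ by the Cauchy estimate \eqref{eq:Cauchyestimate} applied to the analytic families $\partial_t^kF$, so differentiating under the integral gives $\partial_t^k\delta^jF=\delta^j\partial_t^kF$; combined with the automatic commutation of $z$-derivatives, this yields equality of mixed partials of all orders.

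For (4), given a generalized law $\nu$, set $V(z):=-G_\nu(z)$. For a field constant in $t$ the conditions of Definition \ref{def:Herglotz} reduce to the characterization of Cauchy transforms in Theorem \ref{thm:Cauchytransform}, which $G_\nu$ satisfies, so $V$ is a distributional Herglotz vector field on $[0,T]$; by Theorem \ref{thm:Loewnerintegration} there is a Lipschitz Loewner chain $F(z,t)$ with $\partial_t F=DF[-G_\nu]$, namely \eqref{eq:semigroupLoewnerequation}. Let $\mu_t$ be the law with $F_t=F_{\mu_t}$. Because $V$ is autonomous, the subordination maps in the proof of Theorem \ref{thm:Loewnerintegration} are $F_{s,t}=\Phi_{t-s}$ for a single flow $\Phi$ (uniqueness from part (1) of Theorem \ref{thm:Herglotzflow}), so $F_t=F_{0,t}=\Phi_t$, and the relation $F_{0,t}=F_{0,s}\circ F_{s,t}$ becomes $\Phi_{s+r}=\Phi_s\circ\Phi_r$, i.e.\ $F_{s+r}=F_s\circ F_r$; hence $\mu_{s+r}=\mu_s\rhd\mu_r$ and the $\mu_t$ form a monotone convolution semigroup. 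The step I expect to be the main obstacle is the careful passage from the distributional formalism of \S\ref{subsec:distderiv} to the pointwise statements — in particular, verifying that the vector field extracted from a convolution semigroup is genuinely time-independent and that a distributional time-derivative equal to a norm-continuous function is a classical derivative; once these are in hand, the remaining parts are routine bookkeeping with the chain rule and the Cauchy estimates.
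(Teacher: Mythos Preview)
Your proposal is correct and follows essentially the same approach as the paper: verify that the semigroup is a Lipschitz normalized Loewner chain (via additivity and monotonicity of $\mu_t(X^2)$ tested against states), apply Theorem \ref{thm:Loewnerdifferentiation}, and observe that the approximants $V_m$ are time-independent because $H_{t_{j-1},t_j}$ depends only on $t_j-t_{j-1}$. The only cosmetic difference is the order in which the two equations are established: the paper first upgrades \eqref{eq:semigroupLoewnerequation} to a pointwise statement (by writing $F(z,t)-F(z,s)=\int_s^t DF(z,u)[-G_\nu(z)]\,du$ with a norm-continuous integrand) and then reads off \eqref{eq:semigroupLoewnerequation2} from \eqref{eq:FSTdiffeq} using $F_{s,t}=F_{t-s}$, whereas you derive \eqref{eq:semigroupLoewnerequation2} first via the autonomous flow and then recover \eqref{eq:semigroupLoewnerequation} by differentiating $F_{t+h}=F_t\circ F_h$; similarly, for parts (2)--(3) the paper iterates using $\partial_t F=\delta F(z,t;-G_\nu(z))$ and the mixed-partials lemma, while you use the flow form $\partial_t F=-G_\nu(F)$ and a Cauchy-integral argument---both routes are equivalent.
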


\begin{proof}
Note that $F_t$ is a normalized Loewner chain satisfying $F_{s,t} = F_{t-s}$.  It follows that $\mu_{s+t}(X^2) = \mu_s(X^2) + \mu_t(X^2)$.  If $\phi$ is a state on $\mathcal{A}$, then $t \mapsto \phi \circ \mu_t(X^2)$ is an additive function $[0,+\infty) \to [0,+\infty)$ which is also an increasing function.  This implies it is linear, so that $\phi \circ \mu_t(X^2) = t \phi \circ \mu_1(X^2)$ for all $t > 0$.  Since this holds for every state $\phi$, we have $\mu_t(X^2) = t \mu_1(X^2)$ and hence $\mu_t(X^2)$ is Lipschitz in $t$.  It follows that $F_t$ is a Lipschitz normalized Loewner chain.

By Theorem \ref{thm:Loewnerdifferentiation}, there exists a Herglotz vector field $V(z,t)$ such that $F$ and $V$ satisfy the Loewner equation.  We claim that
\begin{equation} \label{eq:constantderivative}
\int V^{(n)}(z,t) \phi(t)\,dt = \widehat{V}^{(n)}(z) \int \phi(t)\,dt, \text{ where } \widehat{V}^{(n)}(z) = \frac{1}{T} \int V^{(n)}(z,t)\,dt.
\end{equation}
Recall from the proof of Theorem \ref{thm:Loewnerdifferentiation} that $V(z,t)$ is the limit of approximations $V_m$ defined by \eqref{eq:Vmdef} as
\begin{equation}
V_m^{(n)}(z,t) = \sum_{j=1}^m \chi_{[t_{j-1},t_j)}(t) \frac{m}{T} H_{t_{j-1},t_j}^{(n)}(z).
\end{equation}
But $H_{t_{j-1},t_j}$ is independent of $j$ because $F_{t_{j-1},t_j}$ is independent of $j$.  Thus, $V_m(z,t)$ is given by a $L_{\Boch}^\infty$ function independent of $t$.  It follows that $V_m$ satisfies \eqref{eq:constantderivative}, and thus so does $V$.  Since $V(z,t)$ is a Herglotz vector field, we see that $\widehat{V}(z)$ is minus the Cauchy transform of some generalized law $\nu$.

Altogether, we have shown that \eqref{eq:semigroupLoewnerequation} holds in the distributional sense.  Hence, for $0 \leq s \leq t \leq T$,
\[
F(z,t) - F(z,s) = \int_s^t DF(z,u)[-G_\nu(z)]\,du.
\]
We know that $DF(z,u)$ is a Lipschitz (in particular, continuous) function of $u$ on $\im z \geq \epsilon$.  Therefore, the proof of the fundamental theorem of calculus shows that $\partial_t F(z,t)$ exists pointwise, and the convergence is uniform on $\im z \geq \epsilon$.  The second equation \eqref{eq:semigroupLoewnerequation2} follows from \eqref{eq:FSTdiffeq} because $F_{s,t} = F_{t-s}$.

By applying a priori estimates on the derivatives of an analytic function (Theorem \ref{thm:analytic}), we see that $\delta^k F(z,t;h)$ is also differentiable with respect to $t$, with uniform convergence for $\im z \geq \epsilon$, and equality of mixed partials holds.  To differentiate with respect to $t$ again, note that
\[
\partial_t F^{(n)}(z,t) = DF^{(n)}(z,t)[-G_\nu^{(n)}(z)] = \delta F^{(n)}(z,t;-G_\nu(z)),
\]
so that
\begin{align*}
\partial_t^2 F^{(n)}(z,t) &= \partial_t \delta F^{(n)}(z,t;-G_\nu^{(n)}(z)) \\
&= \delta \partial_t F^{(n)}(z,t;-G_\nu^{(n)}(z)) \\
&= \delta^2 F(z,t;-G_\nu^{(n)}(z)).
\end{align*}
This argument can be repeated inductively to $t$-derivatives of all orders.

Conversely, given a vector field $V(z)$, the existence of a solution follows from Theorem \ref{thm:Loewnerintegration} and Theorem \ref{thm:processLoewnerchain} (or alternatively from Theorem \ref{thm:Fockspace} below).  See also \cite[p.\ 13]{AW2016}.
\end{proof}

In a similar way, a \emph{free convolution semigroup} is a family of laws $\mu_t$ such that $\mu_{s+t} = \mu_s \boxplus \mu_t$, where $\boxplus$ denotes free convolution over $\mathcal{A}$.  In the free case, the following result was essentially proved in earlier work.

\begin{proposition} \label{prop:freesemigroup}
Let $\mu_t$ be an $\mathcal{A}$-valued free convolution semigroup with mean zero and let $F_t = F_{\mu_t}$.
\begin{enumerate}
	\item There exists a generalized law $\nu$ such that
	\begin{equation} \label{eq:freesemigroupLoewnerequation}
	\partial_t F^{(n)}(z,t) = DF^{(n)}(z,t)[-G_\nu^{(n)}(F^{(n}(z,t))],
	\end{equation}
	where the time-derivative exists pointwise with respect to the operator norm.
	\item Moreover, $\partial_t^k F(z,t)$ exists pointwise with respect to the operator norm for all $k$.
	\item $F(z,t)$ satisfies equality of mixed partials for derivatives of all order in $z$ and $t$.
	\item Conversely, given a generalized law $\nu$, there exists a monotone convolution semigroup $\mu_t$ such that $F_t$ satisfies \eqref{eq:freesemigroupLoewnerequation}.
\end{enumerate}
\end{proposition}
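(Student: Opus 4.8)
The plan is to mimic the proof of Proposition \ref{prop:monotonesemigroup}, with the (non-stationary) subordination functions of the free convolution semigroup playing the role of the stationary subordination functions $F_{t-s}$ from the monotone case. First I would check that $F_t = F_{\mu_t}$ is a Lipschitz normalized chordal Loewner chain: we are given $\mu_t(X) = 0$, $\mu_0 = \delta_0$ gives $F_0 = \id$, and the relation $F_s \succeq F_t$ for $s \le t$ is the subordination property of Theorem \ref{thm:analyticsubordination} applied to the free decomposition $\mu_t = \mu_s \boxplus \mu_{t-s}$. Since variance is additive under free convolution of mean-zero laws, $\mu_{s+t}(X^2) = \mu_s(X^2) + \mu_t(X^2)$; testing against an arbitrary state $\phi$ on $\mathcal{A}$ shows $t \mapsto \phi(\mu_t(X^2))$ is additive and increasing on $[0,+\infty)$, hence linear, so $\mu_t(X^2) = t\,\mu_1(X^2)$ is Lipschitz in $t$, exactly as in Proposition \ref{prop:monotonesemigroup}. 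Theorem \ref{thm:Loewnerdifferentiation} then produces a distributional Herglotz vector field $V$ with $\partial_t F = DF[V]$.

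Next I would identify $V$. Writing $\varphi_\mu(z) := F_\mu^{-1}(z) - z$ for the operator-valued Voiculescu transform, operator-valued subordination together with additivity of $\varphi$ under free convolution (see \cite{Voiculescu1995,Voiculescu2000,BMS2013}) gives, for $s \le u$, the formula $F_{s,u}^{(n)}(z) = z - \varphi_{\mu_{u-s}}^{(n)}(F_u^{(n)}(z)) = z - (u-s)\,\varphi^{(n)}(F_u^{(n)}(z))$, where $\varphi := \varphi_{\mu_1}$. Because $\mu_1 = \mu_{1/m}^{\boxplus m}$ is freely infinitely divisible with mean zero and finite variance, the operator-valued free L{\'e}vy-Hin{\v c}in representation identifies $\varphi$ with the Cauchy transform $G_\nu$ of a generalized law $\nu$ (essentially the content of \cite{AW2016}); in particular $\varphi$ extends to a fully matricial function on all of $\h(\mathcal{A})$. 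Recall from the proof of Theorem \ref{thm:Loewnerdifferentiation} that $V = \lim_{m \to \infty} V_m$ with $V_m^{(n)}(z,t) = \sum_{j=1}^m \chi_{[t_{j-1},t_j)}(t)\,\tfrac{m}{T}\,H_{t_{j-1},t_j}^{(n)}(z)$, where $H_{s,u} = F_{s,u} - \id$ and $t_j = Tj/m$. Substituting the subordination formula gives $\tfrac{m}{T}\,H_{t_{j-1},t_j}^{(n)}(z) = -G_\nu^{(n)}(F_{t_j}^{(n)}(z))$, hence $V_m^{(n)}(z,t) = -\sum_j \chi_{[t_{j-1},t_j)}(t)\,G_\nu^{(n)}(F_{t_j}^{(n)}(z))$; since $t \mapsto F_t^{(n)}(z)$ is Lipschitz uniformly on $\h_\epsilon^{(n)}(\mathcal{A})$ (Lemma \ref{lem:LoewnerlocallyLipschitz}) and $G_\nu^{(n)}$ is Lipschitz on $\h_\epsilon^{(n)}(\mathcal{A})$ with a constant independent of $n$ (Lemma \ref{lem:estimates}), letting $m \to \infty$ gives $V^{(n)}(z,t) = -G_\nu^{(n)}(F^{(n)}(z,t))$, i.e.\ \eqref{eq:freesemigroupLoewnerequation} in the distributional sense.

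To finish (1), note that $(z,t) \mapsto -G_\nu^{(n)}(F^{(n)}(z,t))$ is a locally Lipschitz family (composition of such families, Lemma \ref{lem:chainrule}, using boundedness of $DG_\nu$ on $\h_\epsilon$), so $u \mapsto DF^{(n)}(z,u)[-G_\nu^{(n)}(F^{(n)}(z,u))]$ is a continuous $M_n(\mathcal{A})$-valued function on each $\h_\epsilon^{(n)}(\mathcal{A})$; feeding this into $F_t - F_s = \int_s^t \partial_u F(z,u)\,du$ and invoking the fundamental theorem of calculus for Bochner integrals of continuous functions (exactly as in Proposition \ref{prop:monotonesemigroup}) upgrades the distributional identity to the pointwise one, with uniform convergence on each $\h_\epsilon^{(n)}(\mathcal{A})$. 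Parts (2) and (3) would then follow as in Proposition \ref{prop:monotonesemigroup}: from the pointwise ODE I would differentiate again in $t$, using the product and chain rules, to see that $\partial_t^2 F$ and inductively $\partial_t^k F$ exist pointwise with right-hand side always continuous --- indeed locally Lipschitz --- in $t$ on each $\h_\epsilon^{(n)}(\mathcal{A})$ (the only new feature compared to the monotone case is an extra product-rule term, since the vector field $-G_\nu(F(z,t))$ now depends on $t$); the Cauchy estimates of Theorem \ref{thm:analytic} transfer this to the spatial derivatives $\delta^k F^{(n)}(\cdot,t;h)$, and the mixed-partials lemma for locally Lipschitz families gives equality of mixed partials of all orders in $z$ and $t$.

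For the converse (4), given a generalized law $\nu$ I would use the operator-valued free L{\'e}vy-Hin{\v c}in representation to produce a free convolution semigroup $\mu_t$ of mean-zero laws whose generator has Voiculescu transform $G_\nu$ (equivalently $\varphi_{\mu_t} = t\,G_\nu$, which is the Voiculescu transform of a genuine law for every $t > 0$), and then apply part (1) to it; alternatively one invokes \cite{AW2016} directly. The main obstacle will be making the identification $V = -G_\nu \circ F$ rigorous in the fully matricial setting --- that is, establishing the subordination formula $F_{s,u}(z) = z - (u-s)\,\varphi(F_u(z))$ and the representation $\varphi = G_\nu$ with estimates uniform in $n$ --- since, granting those, the remaining steps are a close transcription of the monotone argument.
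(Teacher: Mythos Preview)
Your proposal is correct and follows essentially the same route as the paper: verify that $F_t$ is a Lipschitz normalized Loewner chain via additivity of the variance, then identify the Herglotz vector field as $V(z,t) = -G_\nu(F(z,t))$ using the Voiculescu transform relation $\varphi_{\mu_t} = t\varphi$ and its representation $\varphi = G_\nu$, and finally upgrade to pointwise differentiability and higher smoothness exactly as in Proposition~\ref{prop:monotonesemigroup}. The only presentational difference is that you identify $V$ by plugging the subordination formula $F_{s,u}(z) = z - (u-s)\varphi(F_u(z))$ into the approximations $V_m$ from Theorem~\ref{thm:Loewnerdifferentiation}, whereas the paper differentiates the equivalent implicit relation $F(z + t\Phi(z),t) = z$ directly and then substitutes; the paper also offers a self-contained derivation of $\Phi = G_\nu$ via $\Phi(z) = \lim_{t\searrow 0} t^{-1}H_t(z)$ and Lemma~\ref{lem:analyticcontinuationofconvergence}, rather than invoking the free L{\'e}vy--Hin{\v c}in theorem, but either route is fine.
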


We now explain the results that underlie this proposition, and how it relates to the Loewner equation.  Let $\mu_t$ be a mean-zero free convolution semigroup.  By Theorem \ref{thm:freeLoewnerchain}, $F_{\mu_t}$ must be an $\mathcal{A}$-valued Loewner chain, and once again $\mu_t(X^2) = t\mu(X^2)$, so that $\mu_t(X^2)$ is automatically Lipschitz in $t$.  It follows that $F_t = F_{\mu_t}$ is a Lipschitz normalized Loewner chain and hence satisfies the Loewner equation for some Herglotz vector field $V(z,t)$.


The evolution equation for the $F$-transforms of free convolution semigroups was studied in \cite[Theorem 4.3]{Voiculescu1986}, and the relationship with Loewner chains was explained in  \cite[\S 3.5]{Schleissinger2017}.  See \cite[\S 4.5-4.7]{Speicher1998}, \cite[\S 3]{PV2013}, \cite[\S 8.1]{ABFN2013} for the operator-valued case.  In particular, the following facts have been proved.  Let $\Phi_\mu$ be the \emph{Voiculescu transform} of a law $\mu$ defined by
\begin{equation} \label{eq:Voiculescutransform}
z + \Phi_\mu(z) = F_\mu^{-1}(z),
\end{equation}
where the equation holds for $\norm{z^{-1}}$ sufficiently small.  If $\mu_t$ is a free convolution semigroup, then we have for $\norm{z^{-1}}$ sufficiently small that
\begin{equation} \label{eq:freesemigroup1}
\Phi_{\mu_t}(z) = t\Phi(z),
\end{equation}
where $\Phi = \Phi_{\mu_1}$.  In this case, since $F^{(n)}(z + t \Phi(z), t) = z$, we have
\begin{equation} \label{eq:freesemigroup2}
\partial_t F^{(n)}(z + t \Phi(z),t) = DF^{(n)}(z + t \Phi^{(n)}(z),t)[-\Phi^{(n)}(z)]
\end{equation}
In particular, by substituting $F^{(n)}(z,t)$ for $z$, we see that the Herglotz vector field in the Loewner equation satisfies $V^{(n)}(z,t) = -\Phi^{(n)}(F^{(n)}(z,t))$ for $\norm{z^{-1}}$ sufficiently small.

It was shown in \cite[Theorem 5.10]{PV2013} that $\Phi$ extends to be fully matricial on $\h(\mathcal{A})$ and in fact $\Phi = -G_\nu$ for some generalized law $\nu$ \cite[Remark 5.7]{PV2013}.  This can alternatively be deduced from the Loewner equation as follows.  We have for $\norm{z^{-1}}$ sufficiently small that $V^{(n)}(z,t) = \Phi^{(n)}(F^{(n)}(z,t))$.  Since $F^{(n)}(z,t) = z + O(t^2)$, we know that for such values of $z$,
\[
\Phi^{(n)}(z) = \lim_{t \searrow 0} \frac{1}{t} \int_0^t V^{(n)}(z,s)\,ds = \lim_{t \searrow 0} \frac{1}{t} H_t^{(n)}(z),
\]
where $H_t^{(n)}(z) = F_t^{(n)}(z) - z$.  But $H_t = -G_{\sigma_t}$ for some generalized law $\sigma_t$ with $\rad(\sigma_t)$ and $\sigma_t(1) / t$ uniformly bounded.  Hence, by Lemma \ref{lem:analyticcontinuationofconvergence}, $\lim_{t \searrow 0} t^{-1} H_t^{(n)}(z)$ exists for all $z \in \h_\epsilon^+(\mathcal{A})$, and the limit must be a function of the form $-G_\nu(z)$ for a generalized law $\nu$ by Theorem \ref{thm:Cauchytransform}.  Thus, $-G_\nu$ furnishes a fully matricial extension of $\Phi$ to the entire upper half-plane.

Once we know that $\Phi = -G_\nu$ is fully matricial on the entire upper half-plane, analytic continuation implies that \eqref{eq:freesemigroup1} and \eqref{eq:freesemigroup2} hold for all $z$ in the upper half-plane.  Moreover, for all $z$ and $t$, we have
\[
V^{(n)}(z,t) = -G_\nu^{(n)}(F^{(n)}(z,t)).
\]
Hence, \eqref{eq:freesemigroupLoewnerequation} holds, and from this equation one can prove smoothness in $t$ and equality of mixed partials similarly to Proposition \ref{prop:monotonesemigroup}.

The converse direction (4) is proved in \cite[\S 4.7]{Speicher1998} by explicitly constructing operators with the given law $\mu_t$ (here the result is stated in terms of moments rather than analytic functions).  An analytic proof is given in \cite[Theorem 4.1]{Williams2017} using the Earle-Hamilton theorem.

We remark that \cite[\S 8.1]{ABFN2013} proved an analogue of \eqref{eq:freesemigroup2} when the scalar parameter $t$ is replaced by a completely positive map $\eta: \mathcal{A} \to \mathcal{A}$.  Future research should consider such a generalization for monotone convolution semigroups.


\section{Combinatorics and Fock Space Model} \label{sec:combinatorics}

\subsection{Preliminaries} \label{subsec:combinatoricspreliminary}

Given a Herglotz vector field $V$ corresponding to the distributional family of generalized laws $\nu$, let $\mu_{s,t}$ be the law associated to the subordination function $F_{s,t}$ for the Loewner chain.  Our next goal is to describe the moments of $\mu_{s,t}$ combinatorially in terms of the moments of $\nu$.  Theorem \ref{thm:combinatorics} will express the moments of $\mu_{s,t}$ as a sum indexed by non-crossing partitions (certain combinatorial objects often used in non-commutative probability).  These terms will be defined by iterating the operations of multiplication and of the maps $C([0,T],\mathcal{A})^{k+1} \to \mathcal{A}$ given by
\[
(f_0,\dots,f_k) \mapsto \int \nu(f_0(t)Xf_1(t) \dots X f_k(t),t)\,dt.
\]
Therefore, we being by explaining the meaning of expressions of the form
\[
\int_0^T \nu(f(X,t),t)\,dt,
\]
in light of the constructions developed in \S \ref{subsec:distderiv}.

\begin{notation}
Let $\mathcal{C}_{\mathcal{A}} = C([0,T],\mathcal{A})$ and $\mathcal{L} = L_{\Boch}^\infty([0,T],\mathcal{A})$.  Note that $\mathcal{C}$ is a closed subalgebra of $\mathcal{L}_{\mathcal{A}}$.  Then $\mathcal{L}_{\mathcal{A}}\ip{X}$ is the linear span of terms of the form
\[
a_0(t) X a_1(t) \dots X a_k(t),
\]
where $a_j \in \mathcal{L}_{\mathcal{A}}$.  We will denote elements of $\mathcal{L}_{\mathcal{A}}\ip{X}$ as functions of $X$ and $t$, such as $p(X,t)$.
\end{notation}

We will define $\int_0^T \nu(f(X,t),t)\,dt$ when $f \in \mathcal{L}_{\mathcal{A}}\ip{X}$.  Suppose that $f(X,t) = f_0(t) X f_1(t) \dots X f_k(t)$ where each $f_j(t)$ is a simple function in $\mathcal{L}_{\mathcal{A}}$.  Then for each $t \in [0,T]$, we have
\[
\nu(f_0(t) X f_1(t) \dots X f_k(t), \cdot) \in \mathcal{L}(L^1[0,T],\mathcal{A}),
\]
and moreover
\[
t \mapsto \nu(f_0(t) X f_1(t) \dots X f_k(t), \cdot) \in \mathcal{L}(L^1[0,T],\mathcal{A})
\]
is a simple function in $L_{\Boch}^\infty([0,T], \mathcal{L}(L^1[0,T],\mathcal{A}))$.  Morever, if $g_0$, \dots, $g_k$ is another tuple of simple functions in $\mathcal{L}_{\mathcal{A}}$, and if $M = \rad(\nu)$, then
\begin{align*}
&\norm{\nu(f_0(t) X f_1(t) \dots X f_k(t), \cdot) - \nu(g_0(t) X g_1(t) \dots X g_k(t), \cdot)}_{\mathcal{L}(L^1[0,T],\mathcal{A})} \\
&\leq \sum_{j=1}^k \norm{\nu(f_0(t) X \dots f_{j-1}(t)X(f_j(t) - g_j(t))X g_{j+1}(t) \dots X g_k(t), \cdot)}_{\mathcal{L}(L^1[0,T],\mathcal{A})} \\
&\leq \sum_{j=1}^k M^k \norm{f_0} \dots \norm{f_{j-1}} \norm{f_j - g_j} \norm{g_{j+1}} \dots \norm{\widehat{a}_k}.
\end{align*}
This implies that the map
\[
(f_0,\dots,f_k) \mapsto (t \mapsto \nu(f_0(t) X f_1(t) \dots X f_k(t), \cdot))
\]
extends to a bounded multilinear map $\mathcal{L}_{\mathcal{A}}^{k+1} \to L_{\Boch}^\infty([0,T],\mathcal{L}(L^1[0,T],\mathcal{A}))$.  So by Lemma \ref{lem:diagonal}, the formal function $t \mapsto \nu(f_0(t) X f_1(t) \dots X f_k(t),t)$ is a well-defined element of $\mathcal{L}(L^1[0,T],\mathcal{A})$.  By linearity, the same holds when $f_0(t) X f_1(t) \dots X f_k(t)$ is replaced by an arbitrary element $f \in \mathcal{L}_{\mathcal{A}}\ip{X}$.

In particular, $\int_0^T \nu(f(X,t),t)\,dt$ is defined.  Moreover, for $0 \leq s \leq t \leq T$, the integral
\[
\int_s^t \nu(f(X,u),u)\,du
\]
is defined, and as in Observation \ref{obs:weakderivative} it is a Lipschitz function in the variable $s$ or in the variable $t$.  So for instance, $\int_t^T \nu(f(X,u),u)\,du$ can be viewed as an element of $C([0,T],\mathcal{A}) = \mathcal{C}_{\mathcal{A}}$.

We also have that for $F_j \in M_n(\mathcal{L}_\mathcal{A})$,
\begin{equation} \label{eq:improvedradiusbound2}
\int_0^T \nu^{(n)}(F_0(t) X F_1(t) \dots X F_k(t))\,dt \leq \norm{\nu(1,\cdot)}_{\mathcal{L}(L^1[0,T],\mathcal{A})} \rad(\nu)^k \int_0^T \norm{F_0(t)} \dots \norm{F_k(t)}\,dt
\end{equation}
using a simple function approximation argument and \eqref{eq:improvedradiusbound}.

\subsection{Combinatorial Formula} \label{subsec:combinatorics}

In this subsection we will prove the combinatorial moment formula Theorem \ref{thm:combinatorics} and in the next we will estimate the terms in this formula (Theorem \ref{thm:combinatoricsestimate}).  As explained in Remark \ref{rem:semigroupcumulants} below, these theorems generalize results of \cite{Muraki2000}, \cite{HS2011-2}, \cite{HS2014},  \cite{AW2016} on monotone convolution semigroups and the monotone cumulants.  The important special case of the operator-valued arcsine law was studied in \cite[Theorem 2.5]{BPV2013} (see \S \ref{subsec:arcsine}, Corollary \ref{cor:arcsinemoments} below).  We use the following terminology for non-crossing partitions.

\begin{definition}[Partitions]
Let $[k] = \{1,\dots,k\}$.  A \emph{partition of $[k]$} is a collection of disjoint nonempty subsets of $[k]$ (called \emph{blocks}) whose union is $[k]$.  The cardinality $|\pi|$ is the number of blocks of $\pi$.  We write $i \sim_\pi j$ to mean that two indices $i$ and $j$ are in the same block of $\pi$.
\end{definition}

\begin{definition}[Non-crossing partitions]
A \emph{crossing} in a partition $\pi$ is a set of indices $i_1 < j_1 < i_2 < j_2$ such that $i_1 \sim_\pi i_2 \not \sim_\pi j_1 \sim_\pi j_2$.  A partition is called \emph{non-crossing} if it has no crossings.  We denote by $NC(k)$ the set of non-crossing partitions of $[n]$ and define $NC = \bigsqcup_{k=1}^\infty NC(k)$.  Similarly, we denote by the set of partitions of $[k]$ with no singleton blocks by $NC_{\geq 2}(k)$ and define $NC_{\geq 2} = \bigsqcup_{k=1}^\infty NC_{\geq 2}(k)$.  We also set $NC(0) = NC_{\geq 2}(0) = \{\varnothing\}$, where $\varnothing$ is the partition of the set $\varnothing$ into zero blocks.
\end{definition}

\begin{definition}
If $B$ and $B'$ are blocks of $\pi$, we say that $B'$ \emph{surrounds} $B$, or $B' \prec B$, if there exist $i$, $j \in B'$ such that $B \subseteq \{i+1,\dots,j-1\}$.  Note that this is a strict partial order on the blocks of $\pi$.  Also, for a non-crossing partition $\pi$, if one element of $B$ is surrounded by $B'$, then $B' \prec B$.
\end{definition}

\begin{definition}[Concatenation]
If $\pi_1 \in NC(m)$ and $\pi_2 \in NC(n)$, we define the \emph{concatenation} $\pi_1 \pi_2 \in NC(m+n)$ by $\{B: B \in \pi_1\} \cup \{B'+m: B' \in \pi_2\}$, where $B' + m$ denotes the right translate by $m$ of the set $B'$; note that the concatention operation is associative.
\end{definition}

\begin{definition}[Nesting] \label{def:nesting}
Given $\pi_1$, \dots, $\pi_m \in NC$, we define $\Theta_m(\pi_1, \dots, \pi_m)$ as the partition obtained by taking a block $B$ of size $m+1$ and inserting $\pi_j$ between the $j$th and $(j+1)$st elements of $B$.  Explicitly, if $\pi_j \in NC(k_j - 1)$ and $K_j = 1 + k_1 + \dots + k_j$, then
\[
\Theta_m(\pi_1, \dots \pi_m) = \{\{K_0, \dots, K_m\}\} \cup \bigcup_{j=1}^m \{B' + K_{j-1}: B' \in \pi_j\} \in NC(N_m).
\]
\end{definition}

\begin{lemma} \label{lem:defineQ}
For $\pi \in NC_{\geq 2}(k)$, $k \geq 2$, and $0 \leq s \leq t \leq T$, there are unique multilinear maps $Q_{\pi;s,t}: \mathcal{A}^{k-1} \to \mathcal{A}\ip{X}$ satisfying the following.
\begin{enumerate}
	\item If $\pi = \pi_1 \pi_2$ with $\pi_1 \in NC_{\geq 2}(k_1)$ and $\pi_2 \in NC_{\geq 2}(k_2)$ with $k_1, k_2 \geq 1$, then
	\[
	Q_{\pi;s,t}(a_1,\dots,a_{k_1+k_2-1}) = Q_{\pi_1;s,t}(a_1,\dots,a_{k_1-1}) a_{k_1} Q_{\pi_2;s,t}(a_{k_1+1},\dots,a_{k_1+k_2-1}).
	\]
	\item If $\pi = \Theta_m(\pi_1,\dots,\pi_m)$ with $\pi_j \in NC_{\geq 2}(k_j-1)$ with $k_j \geq 1$ and if $K_j = 1 + k_1 + \dots + k_j$, then
	\begin{multline*}
	Q_{\pi;s,t}(a_1,\dots,a_{K_m-1}) \\
	= \int_s^t \nu\Bigl(a_1 Q_{\pi_1;s,t}(a_2,\dots,a_{K_1-2}) a_{K_1-1} X  \dots \\ \dots X a_{K_{m-1}} Q_{\pi_m;s,t}(a_{K_{m-1}+1}, \dots, a_{K_m-2}) a_{K_m-1},\, u\Bigr)\,du,
	\end{multline*}
\end{enumerate}
where in (2), the convention for the case $k_j - 1 = 0$ is that
\[
a_{K_{j-1}} Q_{\pi_j}(a_{K_{j-1}+1}, \dots, a_{K_j-2}) a_{K_j-1} = a_{K_{j-1}} = a_{K_j-1}.
\]
\end{lemma}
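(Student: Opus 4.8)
The plan is to define $Q_{\pi;s,t}$ by strong induction on $k$, using the canonical way to decompose a non-crossing partition, and then to check that the resulting maps satisfy the two recursive identities (1) and (2) for \emph{every} admissible decomposition, not merely the canonical one. First I would record the structural facts about $NC_{\geq 2}$ that drive the recursion. Call $\pi \in NC(k)$ \emph{irreducible} if it cannot be written as a concatenation $\pi_1\pi_2$ with both $\pi_i$ nonempty; a standard argument shows that for non-crossing $\pi$ this is equivalent to $1 \sim_\pi k$. A reducible $\pi$ has a unique factorization $\pi = C_1 C_2 \cdots C_r$ with $r \geq 2$ and each $C_i$ irreducible, and the positions $j$ at which $\pi$ splits as a concatenation are exactly the partial sums $|C_1|, |C_1|+|C_2|, \dots, |C_1|+\cdots+|C_{r-1}|$; since $\pi$ has no singletons, each $C_i$ lies in $NC_{\geq 2}$. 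An irreducible $\pi \in NC_{\geq 2}(k)$ with $k \geq 2$ has its block containing $1$ equal to some $B_0 = \{1 = K_0 < K_1 < \dots < K_m = k\}$ of size $m+1 \geq 2$, and one checks (using that $\pi$ is non-crossing) that $\pi = \Theta_m(\pi_1,\dots,\pi_m)$, where $\pi_j \in NC_{\geq 2}(k_j-1)$ is the restriction of $\pi$ to $\{K_{j-1}+1,\dots,K_j-1\}$ and $k = 1 + \sum_j k_j$; this is the unique expression of the form $\Theta_m(\cdots)$. In all cases the pieces ($C_1$ and $C_2\cdots C_r$, respectively the $\pi_j$) have strictly smaller underlying cardinality than $\pi$.

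With these facts in hand I would define $Q_{\pi;s,t}$ inductively: if $\pi$ is irreducible, use the formula in (2) applied to its unique nesting decomposition, with the convention $Q_\varnothing := 1$ so that the flanking coefficients merge exactly as prescribed when some $\pi_j = \varnothing$; if $\pi$ is reducible with first component $C_1$ and remainder $\pi' = C_2\cdots C_r$, use the formula in (1) with $\pi_1 = C_1$ and $\pi_2 = \pi'$. The base cases, such as $\{\{1,2\}\}$, $\{\{1,2,3\}\}$, and $\{\{1,2\},\{3,4\}\}$, are produced directly by these rules. Multilinearity is immediate at each step: a concatenation of multilinear maps is multilinear, and in (2) the operation $p(X)\mapsto \int_s^t \nu(p(X),u)\,du$ is linear (well-defined by the discussion in \S\ref{subsec:combinatoricspreliminary}) while the argument of $\nu$ depends multilinearly on the $a_i$ through the $Q_{\pi_j;s,t}$; moreover everything lands in $\mathcal{A} \subseteq \mathcal{A}\ip{X}$.

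It then remains to verify (1) and (2) in general. Property (2) holds by construction, since an irreducible $\pi$ has only the one expression $\Theta_m(\cdots)$. For (1), given any nontrivial concatenation $\pi = \pi_1\pi_2$, the structural facts give $\pi_1 = C_1\cdots C_j$ and $\pi_2 = C_{j+1}\cdots C_r$ for some $1 \le j \le r-1$; unwinding the definition of $Q_{\pi;s,t}$ by iterating rule (1) along $C_1,\dots,C_r$ writes it as $Q_{C_1;s,t}(\cdots)\,a\,Q_{C_2;s,t}(\cdots)\,a'\cdots Q_{C_r;s,t}(\cdots)$ with the coefficients $a_i$ sitting at the component boundaries, while by the inductive hypothesis $Q_{\pi_1;s,t}$ and $Q_{\pi_2;s,t}$ expand the same way over $C_1,\dots,C_j$ and $C_{j+1},\dots,C_r$; associativity of concatenation then shows $Q_{\pi_1;s,t}(\cdots)\,a_{k_1}\,Q_{\pi_2;s,t}(\cdots)$ reassembles to exactly $Q_{\pi;s,t}$ on the same arguments. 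Finally, uniqueness follows by the same induction: for any competing family $Q'$ satisfying (1) and (2), rule (2) along the nesting decomposition (irreducible case) or rule (1) along the factorization (reducible case) expresses $Q'_\pi$ in terms of $Q'$ of strictly smaller partitions, so $Q' = Q$.

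I expect the main obstacle to be the bookkeeping in the third paragraph: establishing mutual consistency of the two recursive rules reduces to the combinatorial facts that the split points of a non-crossing partition are precisely the partial sums of its irreducible components' sizes and that concatenation is associative, together with careful tracking of which coefficient $a_i$ sits at which component and block boundary and uniform application of the degenerate conventions ($Q_\varnothing$, $k_j-1 = 0$) throughout.
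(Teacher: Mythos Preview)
Your proposal is correct and follows essentially the same approach as the paper: both argue by induction on $k$, splitting into the reducible case (handled via the concatenation rule and the irreducible-component factorization) and the irreducible case (handled via the unique $\Theta_m$ decomposition determined by the outermost block). The one difference is that you are more explicit than the paper about checking that rule (1) holds for \emph{every} nontrivial split $\pi=\pi_1\pi_2$, not just the canonical one used in the definition; the paper's proof simply defines $Q_{\pi;s,t}$ as the product over the full irreducible factorization and leaves this associativity check implicit.
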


\begin{remark}
For example, if $\pi$ is the partition $\{\{1,4,5\},\{2,3\}, \{6,7\} \}$, then
\[
Q_{\pi;s,t}(a_1,\dots,a_6) = \left[ \int_s^t \nu \left(a_1 \left[ \int_u^t \nu(a_2,v)\,dv \right] a_3 X a_4, u \right)\,du \right] a_5 \left[ \int_s^t \nu(a_6,u)\,du \right].
\]
\end{remark}

\begin{proof}[Proof of Lemma \ref{lem:defineQ}]
We show that $Q_\pi$ is well-defined by induction.  Let $\pi$ be a partition in $NC_{\geq 2}(k)$.  Let $B_1$, \dots, $B_n$ be the ``outermost'' blocks of $\pi$ (those blocks which are minimal with respect to $\prec$), listed in order of $\min B_j$.  An element of $B_i$ cannot come in between two elements of $B_j$ for $i \neq j$, but on the other hand, every block $B$ must be surrounded by some $B_j$.  This implies that $[k]$ is the disjoint union of the intervals $\{\min B_j, \dots, \max B_j\}$, and $\pi$ restricts to define a partition $\pi_j$ on each subinterval.  Thus, $\pi = \pi_1 \dots \pi_n$, where each $\pi_j$ is irreducible with respect to concatenation.  If $m > 1$, then we define $Q_{\pi;s,t}$ by multiplying $Q_{\pi_1;s,t}$, \dots, $Q_{\pi_n;s,t}$ as in (1).

On the other hand, if $m = 1$, there is only one outermost block $B = B_1$.  Let us write $B = \{K_1,\dots,K_m\}$.  then because $\pi$ is non-crossing, every block $B' \neq B$ must be contained in $\{K_{j-1}+1,\dots, K_j-1\}$ for some $j$.  If $\pi_j$ is the restriction of $\pi$ to $\{K_{j-1}+1,\dots,K_j-1\}$, then $\pi = \Theta_m(\pi_1, \dots, \pi_m)$ (note that $\pi_j$ will be the empty partition in the case that $K_{j-1}+1 = K_j$).  We then define $Q_{\pi;s,t}$ by (2).  This shows that $Q_{\pi;s,t}$ is uniquely determined by conditions (1) and (2).  Multilinearity of $Q_{\pi;s,t}$ is verified by induction.
\end{proof}

\begin{theorem} \label{thm:combinatorics}
As in \S \ref{sec:Loewnerequation}, let $\nu$ be a distributional family of generalized laws on $[0,T]$, let $V$ be the corresponding distributional Herglotz vector field, let $F_{\mu_t}$ be the corresponding solution to the Loewner equation, and for $0 \leq s \leq t \leq T$, let $F_{\mu_{s,t}}$ be the subordination function satisfying $F_{\mu_s} \circ F_{\mu_{s,t}} = F_{\mu_t}$, and let $\sigma_{s,t}$ be the generalized law with $F_{\mu_{s,t}}(z) = z - G_{\sigma_{s,t}}(z)$.

Let $Q_{\pi;s,t}$ be defined by Lemma \ref{lem:defineQ}.  Then we have
\begin{equation} \label{eq:muSTcombinatorialformula}
\mu_{s,t}(a_0 X a_1 \dots X a_k) = \sum_{\pi \in NC_{\geq 2}(k)} a_0 Q_{\pi;s,t}(a_1,\dots,a_{k-1}) a_k,
\end{equation}
where right hand side is understood to be $a_0$ in the case where $k = 0$.  We also have
\begin{equation} \label{eq:sigmaSTcombinatorialformula}
\sigma_{s,t}(a_0 X a_1 \dots X a_k) = \sum_{\substack{\pi \in NC_{\geq 2}(k+2) \\ 1 \sim_\pi k+2}} Q_{\pi;s,t}(a_0,\dots,a_k).
\end{equation}
\end{theorem}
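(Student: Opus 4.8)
The plan is to extract from the backward form of the Loewner equation a recursion for the moments of $\sigma_{s,t}$ in terms of $\nu$ and of the $\mu_{u,t}$ with $u\in[s,t]$, to observe that this recursion is solved uniquely one Taylor coefficient at a time, and then to check that the right-hand sides of \eqref{eq:muSTcombinatorialformula} and \eqref{eq:sigmaSTcombinatorialformula} satisfy the same recursion. By Theorem \ref{thm:Loewnerintegration} and \eqref{eq:FSTdiffeq}, the subordination functions of the Loewner chain attached to $\nu$ satisfy $-\partial_s F_{s,t}^{(n)}(z)=V^{(n)}(F_{s,t}^{(n)}(z),s)$ with $F_{t,t}=\id$, the $s$-derivative being distributional and $V^{(n)}(F_{s,t}^{(n)}(z),\cdot)$ interpreted via Definition \ref{def:composition}; this is legitimate since, by \eqref{eq:estimate2a} and $\im F_{s,t}^{(n)}(z)\geq\im z$, the curve $s\mapsto F_{s,t}^{(n)}(z)$ is Lipschitz for $z\in\h_\epsilon^{(n)}(\mathcal{A})$. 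Integrating (Observation \ref{obs:weakderivative}), using $V=-G_\nu$ (Proposition \ref{prop:Herglotzbijection}), $F_{s,t}(z)=z-G_{\sigma_{s,t}}(z)$ and $F_{u,t}(z)^{-1}=G_{\mu_{u,t}}(z)$, and passing to the transforms near $0$, one obtains the system
\begin{equation*}
\widetilde{G}_{\sigma_{s,t}}(w)=\int_s^t\sum_{k\geq 0}\nu_u\!\Bigl(\widetilde{G}_{\mu_{u,t}}(w)\,\bigl(X\,\widetilde{G}_{\mu_{u,t}}(w)\bigr)^{k}\Bigr)\,du,\qquad\widetilde{G}_{\mu_{u,t}}(w)=\sum_{\ell\geq 0}w\,\bigl(\widetilde{G}_{\sigma_{u,t}}(w)\,w\bigr)^{\ell},
\end{equation*}
with $\nu_u(\,\cdot\,)$ the distributional object of \S\ref{subsec:combinatoricspreliminary}, the second equation just recording $\widetilde{G}_{\mu_{u,t}}=(w^{-1}-\widetilde{G}_{\sigma_{u,t}})^{-1}$ (here the absence of a mean term is essential, and it holds because $\mu_t(X)=0$). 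The upper-triangular substitution \eqref{eq:uppertriangular}--\eqref{eq:Cauchytransformmoment} then reads this off as a recursion for the moments of all the $\sigma_{s,t}$.

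Counting the order of vanishing at $w=0$ shows this system is well-founded: the $w$-degree-$d$ part of its right-hand side involves $\widetilde{G}_{\sigma_{u,t}}$ only through its parts of $w$-degree $\leq d-2$ --- each factor $\widetilde{G}_{\mu_{u,t}}$ vanishes to order $\geq1$, each $\widetilde{G}_{\sigma_{u,t}}$ entering $\widetilde{G}_{\mu_{u,t}}$ is flanked by two extra factors of $w$, and the mean-zero hypothesis kills the degree-$2$ part of $\widetilde{G}_{\mu_{u,t}}$. Hence the moments of every $\sigma_{s,t}$ are determined by induction on $d$, the base case $d=1$ giving $\sigma_{s,t}(b)=\int_s^t\nu(b,u)\,du$ (consistently with Theorem \ref{thm:Herglotzflow}), and then $\mu_{s,t}$ is recovered from $\sigma_{s,t}$ via the second equation.

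Write $\widehat{\sigma}_{s,t}$, $\widehat{\mu}_{s,t}$ for the right-hand sides of \eqref{eq:sigmaSTcombinatorialformula}, \eqref{eq:muSTcombinatorialformula}. I would then use two combinatorial facts about $NC_{\geq 2}$. First, $\pi\in NC_{\geq 2}(r)$ is concatenation-irreducible if and only if $1\sim_\pi r$ (its unique outermost block then contains both extremes), and $\sum_{\pi\in NC_{\geq 2}(r)\text{ irreducible}}Q_{\pi;s,t}=\widehat{\sigma}_{s,t}$ on the corresponding $\mathcal{A}\ip{X}$-moment; decomposing an arbitrary partition into its concatenation-irreducible pieces and applying Lemma \ref{lem:defineQ}(1) repeatedly shows that $\widehat{\mu}$ is built from $\widehat{\sigma}$ by exactly the relation $\widetilde{G}_{\widehat{\mu}_{s,t}}=\sum_\ell w(\widetilde{G}_{\widehat{\sigma}_{s,t}}w)^\ell$. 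Second, an irreducible $\pi$ with outermost block $\{K_0=1,K_1,\dots,K_m\}$ equals $\Theta_m(\pi_1,\dots,\pi_m)$; unfolding Lemma \ref{lem:defineQ}(2) --- whose nested occurrences of $Q$ carry the outer integration variable as lower limit, as in the example following that lemma --- and then summing over the choice of outermost block and of the $\pi_j\in NC_{\geq 2}$ filling the $m$ gaps recollects each gap into a $\widehat{\mu}_{u,t}$-moment, with the $m-1$ interior elements of the outermost block supplying the top-level $X$'s and an empty gap contributing $\widehat{\mu}_{u,t}(b)=b$. This is precisely the first equation of the system above with $\mu_{u,t},\sigma_{u,t}$ replaced by $\widehat{\mu}_{u,t},\widehat{\sigma}_{u,t}$, and $\widehat{\sigma}_{t,t}=0$ since $\int_t^t=0$. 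By the uniqueness established above, $\widehat{\sigma}_{s,t}=\sigma_{s,t}$ and $\widehat{\mu}_{s,t}=\mu_{s,t}$ for all $0\leq s\leq t\leq T$, which are \eqref{eq:sigmaSTcombinatorialformula} and \eqref{eq:muSTcombinatorialformula}.

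The hard part is the bookkeeping in the last step: aligning the outermost-block/$\Theta_m$ structure of non-crossing partitions with the resolvent expansion of $G_{\nu_u}(F_{u,t}(z))$ --- keeping track of which coefficients $a_i$ flank which gap, which $X$'s are ``top level'', and verifying that the inner times generated by iterating \eqref{eq:FSTdiffeq} are exactly the lower limits appearing in Lemma \ref{lem:defineQ}(2). The distributional manipulations in the first two steps are routine given the machinery of \S\ref{sec:Loewnerequation} and \S\ref{subsec:combinatoricspreliminary}.
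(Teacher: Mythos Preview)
Your approach is correct and is essentially the same as the paper's. Both proofs rest on the integrated backward equation $F_{s,t}(z)=z+\int_s^t V(F_{u,t}(z),u)\,du$ from \eqref{eq:FSTdiffeq}, expand $V=-G_\nu$ and $G_{\mu_{u,t}}=(F_{u,t})^{-1}$ as geometric series near $\infty$, and match the resulting recursion to the $\Theta_m$/concatenation structure of $NC_{\geq 2}$ encoded in Lemma \ref{lem:defineQ}. The only organizational difference is that the paper runs a single simultaneous induction on the truncation order $k$, proving \eqref{eq:FSTtruncatedformula} and \eqref{eq:GSTtruncatedformula} directly with $O(\norm{z}^{k+1})$ remainders, whereas you separate this into ``the recursion has a unique solution degree by degree'' plus ``the combinatorial sums satisfy the recursion''; these are the same argument.

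One minor comment: your remark that the mean-zero hypothesis ``kills the degree-$2$ part of $\widetilde{G}_{\mu_{u,t}}$'' is true but not actually needed for well-foundedness. What makes the recursion drop by at least two degrees is exactly what you say next: every occurrence of $\widetilde{G}_{\sigma_{u,t}}$ inside $\widetilde{G}_{\mu_{u,t}}=\sum_\ell w(\widetilde{G}_{\sigma_{u,t}}w)^\ell$ is flanked by two extra $w$'s. The mean-zero hypothesis is used elsewhere (to ensure $F_{s,t}(z)=z-G_{\sigma_{s,t}}(z)$ with no constant term), but not for the degree count.
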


\begin{notation}
If $\Lambda: \mathcal{A}^k \to \mathcal{A}$ is a multilinear map, then we define $\Lambda^{(n)}: M_n(\mathcal{A})^k \to M_n(\mathcal{A})$ by
\[
[\Lambda^{(n)}(A_1,\dots,A_k)]_{i,j} = \sum_{i_1, \dots, i_{k-1}} \Lambda((A_1)_{i,i_1}, (A_2)_{i_1,i_2}, \dots, (A_{k-1})_{i_{k-2},i_{k-1}}, (A_k)_{i_{k-1},j}).
\]
\end{notation}

\begin{proof}[Proof of Theorem \ref{thm:combinatorics}]
As we remarked after Theorem \ref{thm:Cauchytransform}, the moments $\sigma_{s,t}(a_0 X a_1 \dots X a_k)$ can be evaluated from $\tilde{G}_{\sigma_{s,t}}^{(k+2)}(z X z \dots X z)$ where $z$ is a certain upper triangular matrix; see \eqref{eq:Cauchytransformmoment}.  Thus, it suffices to show that for each $n$ and $k$,
\begin{equation} \label{eq:FSTtruncatedformula}
\tilde{G}_{\sigma_{s,t}}(z) = \sum_{j=2}^{k+1} \sum_{\substack{\pi \in NC_{\geq 2}(j) \\ 1 \sim_\pi j}} Q_{\pi;s,t}^{(n)}(z,\dots,z) + O(\norm{z}^{k+1}),
\end{equation}
and
\begin{equation} \label{eq:GSTtruncatedformula}
\tilde{G}_{\mu_{s,t}}^{(n)}(z) = \sum_{j=0}^k \sum_{\pi \in NC_{\geq 2}(j)} z Q_{\pi;s,t}^{(n)}(z,\dots,z)z + O(\norm{z}^{k+2}).
\end{equation}
In light of \eqref{eq:Cauchytransformmoment}, we do not need the error estimates here to be independent of $n$, although it turns out that they will be.  In the following, we abbreviate $Q_{\pi;s,t}^{(n)}(z,\dots,z)$ to $Q_{\pi;s,t}^{(n)}(z)$.  We continue to use the notation $\tilde{H}(z) = H(z^{-1})$ for various functions $H$.  We also remark, preliminary to the proof, that $Q_{\pi;s,t}(z) = O(\norm{z}^{k-1})$ by a straightforward induction argument (and an explicit estimate will be proved in Theorem \ref{thm:combinatoricsestimate} below).

We verify by induction on $k$ that \eqref{eq:GSTtruncatedformula} and \eqref{eq:FSTtruncatedformula} hold with error estimates that are uniform for $s, t \in [0,T]$ with $s \leq t$.  In the base case $k = 0$, the index set for the sum in \eqref{eq:FSTtruncatedformula} is empty, so the equation reduces to $\tilde{G}_{\sigma_{s,t}}(z) = O(\norm{z})$.  Meanwhile, \eqref{eq:GSTtruncatedformula} reduces to $\tilde{G}_{\mu_{s,t}}(z) = z + O(\norm{z}^2)$ which holds because $\mu_{s,t}|_{\mathcal{A}} = \id$.

For the induction step, suppose $k > 0$.  Let us first show \eqref{eq:FSTtruncatedformula}.  From the construction of solutions to the Loewner equation in \eqref{eq:FSTdiffeq}, we have that
\[
F_{\mu_{s,t}}^{(n)}(z) = z + \int_s^t V^{(n)}(F_{u,t}^{(n)}(z), u)\,du = z + \int_s^t \tilde{V}^{(n)}(\tilde{G}_{\mu_{u,t}}^{(n)}(z),u)\,du.
\]
which implies that for $\norm{z}$ small enough, we have
\begin{align*}
\tilde{G}_{\sigma_{s,t}}(z) &= -\int_s^t \tilde{V}^{(n)}(\tilde{G}_{\mu_{u,t}}^{(n)}(z),u)\,du. \\
&= \int_s^t \nu^{(n)} \left( (\tilde{G}_{\mu_{u,t}}(z)^{-1} - X)^{-1},u \right)\,du \\
&= \sum_{m=1}^k \int_s^t \nu^{(n)}\left( G_{\mu_{u,t}}^{(n)}(z)X)^{m-1} G_{\mu_{u,t}}^{(n)}(z), \, u \right)\,du  + O(\norm{z}^{k+1}).
\end{align*}
By the induction hypothesis,
\[
\tilde{G}_{\mu_{u,t}}^{(n)}(z) = \sum_{j=0}^{k-1} \sum_{\pi \in NC_{\geq 2}(j)} z Q_{\pi;u,t}^{(n)}(z)z + O(\norm{z}^{k+1}).
\]
Therefore,
\begin{align*}
&\tilde{G}_{\sigma_{s,t}}^{(n)}(z) \\
=& \sum_{m=1}^k \sum_{j_1,\dots,j_m=0}^{k-1} \sum_{\pi_i \in NC_{\geq 2}(j_i)}
\int_s^t \nu^{(n)} \left( zQ_{\pi_1;u,t}^{(n)}(z)z X^{(n)} \dots X^{(n)} zQ_{\pi_m;u,t}^{(n)}(z), u \right)\,du + O(\norm{z}^{k+1}) \\
=&  \sum_{m=1}^k \sum_{j_1,\dots,j_m=0}^{k-1} \sum_{\pi_i \in NC_{\geq 2}(j_i)} Q_{\Theta_m(\pi_1,\dots,\pi_m);s,t}^{(n)}(z) + O(\norm{z}^{k+1}).
\end{align*}
For $j \leq k + 1$, every $\pi \in NC_{\geq 2}(j)$ where the first and last elements are in the same block can be uniquely written as $\Theta_m(\pi_1,\dots,\pi_m)$, where $\pi_i \in NC_{\geq 2}(j_i)$ and $j_i \leq (k + 1) - 2 = k - 1$ by the same reasoning as in Lemma \ref{lem:defineQ}.  Therefore, relabelling the terms and discarding the terms of size $O(\norm{z}^{k+1})$, we obtain \eqref{eq:FSTtruncatedformula}.

To check \eqref{eq:GSTtruncatedformula}, observe that
\begin{align*}
\tilde{G}_{\mu_{s,t}}^{(n)}(z) &= (z^{-1} - \tilde{G}_{\sigma_{s,t}}^{(n)}(z))^{-1} \\
&= z + \sum_{m=1}^k (z \tilde{G}_{\sigma_{s,t}}^{(n)}(z))^m z + O(\norm{z}^{k+2}).
\end{align*}
When we substitute \eqref{eq:FSTtruncatedformula} for $\tilde{G}_{\sigma_{s,t}}$, we obtain
\begin{align*}
\tilde{G}_{\mu_{s,t}}^{(n)}(z) &= z + \sum_{m=1}^k \sum_{j_1,\dots,j_m=0}^k \sum_{\pi_i \in NC_{\geq 2}(j_i)} z Q_{\pi_1;s,t}^{(n)}(z) z \dots z Q_{\pi_m;s,t}^{(n)}(z) z + O(\norm{z}^{k+2}) \\
&= z + \sum_{m=1}^k \sum_{j_1,\dots,j_m=0}^k \sum_{\pi_i \in NC_{\geq 2}(j_i)} z Q_{\pi_1 \dots \pi_m;s,t}^{(n)}(z) + O(\norm{z}^{k+2}) 
\end{align*}
But every $\pi \in NC_{\geq 2}(j)$ for $j = 1, \dots, k$ can be written as $\pi_1 \dots \pi_m$ for some $\pi_i \in NC_{\geq 2}(j_i)$ with $j_i \leq k$.  Thus, after regrouping the terms and ignoring terms of size $O(\norm{z}^{k+2})$, we obtain \eqref{eq:GSTtruncatedformula}.
\end{proof}

\subsection{Combinatorial Estimates} \label{subsec:combinatorics2}

In the next theorem, we prove a relatively sharp estimate for $\norm{Q_{\pi;s,t}^{(n)}(A_1,\dots,A_{k-1})}$.  This will allow us to turn \eqref{eq:FSTtruncatedformula} and \eqref{eq:GSTtruncatedformula} into convergent series expansions for $\tilde{G}_{\mu_{s,t}}^{(n)}$ and $\tilde{G}_{\sigma_{s,t}}^{(n)}$, where the convergence is independent of whether we group the terms $Q_{\pi;s,t}^{(n)}$ by their degree in $z$.  The idea behind the proof of the next theorem is to compare the coefficients $Q_{\pi;s,t}^{(n)}$ of our $\mathcal{A}$-valued Loewner chain to those of a scalar-valued Loewner chain for which the sums can be computed exactly.  The coefficients for this scalar-valued Loewner chain are computed in terms of universal constants $\alpha_\pi$ defined as follows.

\begin{definition}
Let $\pi \in NC(k)$ for $k \geq 1$.  We say that a function $\tau: \pi \to \R$ (that is, a function defined on the set of blocks of $\pi$) is \emph{compatible with $\pi$} if $B \prec B'$ implies $\tau(B) < \tau(B')$.  Let $[0,1]^\pi$ be the set of functions $\pi \to [0,1]$, and define
\[
\alpha_\pi = |\{\tau \in [0,1]^\pi: B \prec B' \implies \tau_B < \tau_{B'}\}|,
\]
where $|\cdot|$ denotes Lebesgue measure, viewing $[0,1]^\pi$ as a subset of Euclidean space.  We also set $\alpha_{\varnothing} = 1$.
\end{definition}

\begin{theorem} \label{thm:combinatoricsestimate}
Continuing with setup of Theorem \ref{thm:combinatorics}, let $M = \rad(\nu)$ and $C = \norm{\nu(1,\cdot)}_{\mathcal{L}(L^1[0,T], \mathcal{A})}$.  For $0 \leq s \leq t \leq T$ and $\pi \in NC_{\geq 2}(k)$ and $A_1$, \dots, $A_{k-1} \in M_n(\mathcal{A})$, we have
\begin{equation} \label{eq:Qpiestimate}
\norm*{ Q_{\pi;s,t}^{(n)}(A_1,\dots,A_{k-1}) } \leq \alpha_\pi (C(t - s))^{|\pi|} M^{k-2|\pi|}\norm{A_1} \dots \norm{A_{k-1}}.
\end{equation}
Moreover, for $\zeta \in \C$ with $|\zeta| < 1 / (M + \sqrt{2C(t - s)})$, we have
\begin{align}
\sum_{k \geq 0} \sum_{\pi \in NC_{\geq 2}(k)} \alpha_\pi (C(t-s))^{|\pi|} M^{k-2|\pi|} \zeta^k &= \left( \sqrt{(\zeta - M)^2 - 2C(t - s)} + M \right)^{-1} \label{eq:combinatorialscalarformula1} \\
\sum_{k \geq 2} \sum_{\substack{\pi \in NC_{\geq 2}(k) \\ 1 \sim_\pi k}} \alpha_\pi (C(t - s))^{|\pi|} M^{k-2|\pi|}\zeta^k &= \zeta - M - \sqrt{(\zeta - M)^2 - 2C(t - s)} \label{eq:combinatorialscalarformula2}
\end{align}
In particular, the series expansions
\begin{align}
\tilde{G}_{\mu_{s,t}}^{(n)}(z) &= \sum_{k=0}^\infty \sum_{\pi \in NC_{\geq 2}(k)} z Q_\pi^{(n)}(z,\dots,z)z \label{eq:GSTcombinatorialformula} \\
\tilde{G}_{\sigma_{s,t}}^{(n)} &= \sum_{k=2}^\infty \sum_{\substack{\pi \in NC_{\geq 2}(k) \\ 1 \sim_\pi k}} Q_{\pi;s,t}^{(n)}(z,\dots,z),  \label{eq:FSTcombinatorialformula},
\end{align}
are absolutely convergent (regardless of the grouping of the terms) for $\norm{z} < 1 / (M + \sqrt{2C(t - s)})$.
\end{theorem}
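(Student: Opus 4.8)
The argument splits into the termwise bound \eqref{eq:Qpiestimate}, the closed-form evaluation of the resulting scalar double series, and a domination argument for the $\mathcal{A}$-valued series.

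\emph{The estimate \eqref{eq:Qpiestimate}.} I would prove this by induction on $k$, following the recursive definition of $Q_{\pi;s,t}$ in Lemma \ref{lem:defineQ}. Two preliminary facts are needed. On the analytic side, \eqref{eq:improvedradiusbound2} gives $\norm{\int_s^t \nu^{(n)}(F_0(u)XF_1(u)\cdots XF_m(u),u)\,du}\le CM^m\int_s^t\norm{F_0(u)}\cdots\norm{F_m(u)}\,du$, which together with submultiplicativity of the operator norm on $M_n(\mathcal{A})$ controls both the concatenation and the nesting step, and these operations amplify compatibly so that the bound for $Q_\pi^{(n)}$ follows from the bound for the scalar $Q_\pi$. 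On the combinatorial side I would record the two recursions $\alpha_{\pi_1\pi_2}=\alpha_{\pi_1}\alpha_{\pi_2}$ and $\alpha_{\Theta_m(\pi_1,\dots,\pi_m)}=(1+\sum_j|\pi_j|)^{-1}\prod_j\alpha_{\pi_j}$, both immediate from the definition of $\alpha_\pi$: for a concatenation the blocks of the two factors are $\prec$-incomparable, so the region defining $\alpha$ is a product; for $\Theta_m(\pi_1,\dots,\pi_m)$ the coordinate of the unique outermost block must lie strictly below all others, and integrating it out produces the factor $\int_0^1 s^{\sum_j|\pi_j|}\,ds$ while the remaining coordinates sweep out a rescaled copy of the region for $\pi_1\sqcup\cdots\sqcup\pi_m$. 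With these, the induction closes: in the concatenation step, submultiplicativity plus additivity of $k$, $|\pi|$, and of the exponents $k_i-2|\pi_i|$ gives the claim with $\alpha_{\pi_1}\alpha_{\pi_2}=\alpha_\pi$; in the nesting step, $Q_{\Theta_m(\pi_1,\dots,\pi_m);s,t}$ is $\int_s^t$ of $\nu^{(n)}$ applied to a monomial with exactly $m-1=|B_0|-2$ occurrences of $X$ whose coefficients are products of the $A_i$ and of the inner values $Q_{\pi_j;u,t}^{(n)}$, and combining \eqref{eq:improvedradiusbound2}, the inductive bounds on the $Q_{\pi_j;u,t}^{(n)}$ on $[u,t]$, and the integral $\int_s^t(t-u)^{\sum_j|\pi_j|}\,du=(t-s)^{1+\sum_j|\pi_j|}/(1+\sum_j|\pi_j|)$, one finds the exponent of $C(t-s)$ to be $1+\sum_j|\pi_j|=|\pi|$, the exponent of $M$ to be $(|B_0|-2)+\sum_j(\ell_j-2|\pi_j|)=k-2|\pi|$, and the scalar constant to be $(1+\sum_j|\pi_j|)^{-1}\prod_j\alpha_{\pi_j}=\alpha_\pi$.

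\emph{The scalar model and the identities \eqref{eq:combinatorialscalarformula1}, \eqref{eq:combinatorialscalarformula2}.} Taking $\mathcal{A}=\C$ and $\nu$ the constant family $\nu_u\equiv C\delta_M$, the bound of the previous paragraph becomes an equality, so $Q_{\pi;s,t}(\zeta,\dots,\zeta)=\alpha_\pi(C(t-s))^{|\pi|}M^{k-2|\pi|}\zeta^{k-1}$ for real $\zeta$. For this model $-G_\nu(z)=-C/(z-M)$ is independent of $t$, so $(F_t)$ is a composition semigroup and the subordination map solves, by \eqref{eq:FSTdiffeq}, the scalar ODE $-\partial_s F_{\mu_{s,t}}(z)=-C/(F_{\mu_{s,t}}(z)-M)$ with $F_{\mu_{t,t}}=\id$, whence $F_{\mu_{s,t}}(z)=M+\sqrt{(z-M)^2-2C(t-s)}$ (the branch fixed by $F_{\mu_{s,t}}(z)=z+O(t-s)$ near $\infty$) and $G_{\sigma_{s,t}}(z)=z-F_{\mu_{s,t}}(z)$. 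Feeding these closed forms into the truncated expansions \eqref{eq:GSTtruncatedformula}--\eqref{eq:FSTtruncatedformula} (valid for the scalar model) and matching Taylor coefficients in $\zeta$ at $0$ yields \eqref{eq:combinatorialscalarformula1} and \eqref{eq:combinatorialscalarformula2}. The right-hand functions are the reciprocal Cauchy transform of $\mu_{s,t}$ and the function $z\mapsto z-F_{\mu_{s,t}}(z)$; both are holomorphic off the compact interval $[M-\sqrt{2C(t-s)},M+\sqrt{2C(t-s)}]$ (their only singularities are the two square-root branch points, $F_{\mu_{s,t}}$ being an $F$-transform and hence nonvanishing off that interval), so in the variable $\zeta$ each extends holomorphically to $\{|\zeta|<(M+\sqrt{2C(t-s)})^{-1}\}$; consequently the two power series on the left of \eqref{eq:combinatorialscalarformula1}, \eqref{eq:combinatorialscalarformula2} converge absolutely there, and since all their coefficients are nonnegative for $\zeta\ge 0$ the convergence is insensitive to grouping.

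\emph{The general case.} For $z\in M_n(\mathcal{A})$, \eqref{eq:Qpiestimate} gives $\norm{zQ_{\pi;s,t}^{(n)}(z,\dots,z)z}\le\norm{z}^{k+1}\alpha_\pi(C(t-s))^{|\pi|}M^{k-2|\pi|}$ and $\norm{Q_{\pi;s,t}^{(n)}(z,\dots,z)}\le\norm{z}^{k}\alpha_\pi(C(t-s))^{|\pi|}M^{k-2|\pi|}$, so each of \eqref{eq:GSTcombinatorialformula}, \eqref{eq:FSTcombinatorialformula} is dominated term by term by the scalar series just shown to converge for $\norm{z}<(M+\sqrt{2C(t-s)})^{-1}$; hence both converge absolutely on that region in any grouping. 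That the sums equal $\tilde G_{\mu_{s,t}}^{(n)}$ and $\tilde G_{\sigma_{s,t}}^{(n)}$ then follows from the truncated identities \eqref{eq:GSTtruncatedformula}--\eqref{eq:FSTtruncatedformula}, which say precisely that the partial sums grouped by degree approximate these fully matricial functions to arbitrarily high order. I expect the only real obstacle to be the bookkeeping in the nesting step of the induction: checking that the index conventions of Lemma \ref{lem:defineQ} distribute the arguments $A_1,\dots,A_{k-1}$ and the sub-partitions $\pi_1,\dots,\pi_m$ so that each is used exactly once, and that the powers of $M$, of $C(t-s)$, and the $\alpha$-factors then assemble to exactly the asserted constants; there is no genuine analytic difficulty once \eqref{eq:improvedradiusbound2} and the two $\alpha$-recursions are in hand.
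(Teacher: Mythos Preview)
Your proposal is correct and follows essentially the same route as the paper: the inductive proof of \eqref{eq:Qpiestimate} via the concatenation/nesting recursion for $Q_\pi$ together with the matching recursions $\alpha_{\pi_1\pi_2}=\alpha_{\pi_1}\alpha_{\pi_2}$ and $\alpha_{\Theta_m(\pi_1,\dots,\pi_m)}=(1+\sum_j|\pi_j|)^{-1}\prod_j\alpha_{\pi_j}$, then the scalar model $\nu=C\delta_M$ solved explicitly to yield \eqref{eq:combinatorialscalarformula1}--\eqref{eq:combinatorialscalarformula2}, and finally termwise domination. The paper's only cosmetic difference is that it first reduces the general $A_1,\dots,A_{k-1}$ case to the diagonal case $A_j=z$ via an upper-triangular matrix trick (as in \eqref{eq:uppertriangular}), and it phrases the nesting recursion for $\alpha_\pi$ as the Fubini identity $\alpha_\pi(t-s)^{|\pi|}=\int_s^t\prod_j\alpha_{\pi_j}(t-u)^{|\pi_j|}\,du$ rather than extracting the factor $(1+\sum_j|\pi_j|)^{-1}$ directly; your power count $\norm{z}^{k}$ in the last paragraph should read $\norm{z}^{k-1}$, but this is a harmless slip.
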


\begin{proof}
To simplify notation, let us only write the proof in the case where $A_1 = \dots = A_{k-1} = z$.  In fact, the general case can be deduced from this one by rescaling the $A_j$'s to have norm $1$ and then letting $z$ be an upper triangular matrix of size $nk$ as in \eqref{eq:uppertriangular}.  We must now estimate $Q_{\pi;s,t}^{(n)}(z,\dots,z)$ for $z \in M_n(\mathcal{A})$, and as in the proof of Theorem \ref{thm:combinatorics}, we abbreviate this to $Q_{\pi;s,t}^{(n)}(z)$.

We proceed by induction on $k$ with the base case $k = 0$ (the empty partition) being trivial.  If $\pi \in NC_{\geq 2}(k)$ with $k > 0$, there are two subcases.  First, if $\pi = \pi_1 \pi_2$, then
\begin{align*}
\norm{Q_{\pi;s,t}^{(n)}(z)} &\leq \norm{Q_{\pi_1;s,t}^{(n)}(z)} \norm{z} \norm{Q_{\pi_2;s,t}^{(n)}(z)} \\
&\leq \Bigl(  \alpha_{\pi_1}  (C(t-s))^{|\pi_1|} M^{k_1-2|\pi_1|} \norm{z}^{k_1-1} \Bigr) \norm{z} \Bigl( \alpha_{\pi_2} (C(t-s))^{|\pi_2|} M^{k_2-2|\pi_2|} \norm{z}^{k_2-1} \Bigr) \\
&= \alpha_{\pi_1} \alpha_{\pi_2}(C(t-s))^{|\pi|} M^{k_1+k_2-2|\pi|} \norm{z}^{k_1+k_2-1}.
\end{align*}
To finish the induction step in this case, observe that $\alpha_{\pi_1 \pi_2} = \alpha_{\pi_1} \alpha_{\pi_2}$ because a function $\tau: \pi_1 \pi_2 \to [0,1]$ compatible with $\pi_1 \pi_2$ is equivalent to a pair of functions $\tau_j: \pi_j \to [0,1]$ compatible with $\pi_j$ for $j = 1,2$.

Second, if $\pi = \Theta_m(\pi_1,\dots,\pi_m)$ with $\pi_j \in NC_{\geq 2}(k_j)$, then using \eqref{eq:improvedradiusbound2}
\begin{align}
\norm{Q_{\pi;s,t}^{(n)}(z)} &= \norm*{ \int_s^t \nu^{(n)}(z Q_{\pi_1;u,t}^{(n)}(z)zX \dots Xz Q_{\pi_m;u,t}^{(n)}(z)z,u)\,du } \nonumber \\
&\leq \norm{\nu(1,\cdot)}_{\mathcal{L}(L^1[0,T],\mathcal{A})} \rad(\nu)^{m-1} \int_s^t \norm{z}^{2m} \prod_{j=1}^m \norm{Q_{\pi_j;u,t}^{(n)}(z)} \,du \nonumber \\
&\leq CM^{m-1} \int_s^t \norm{z}^{2m} \prod_{j=1}^m \left( \alpha_{\pi_j} (C(t-u))^{|\pi_j|} M^{k_j - 2|\pi_j|} \norm{z}^{k_j-1} \right)\,du \nonumber \\
&= \left( \int_s^t \prod_{j=1}^m \alpha_{\pi_j}(t - u)^{|\pi_j|}\,du \right) C^{|\pi|} M^{k - 2|\pi|} \norm{z}^{k-1}, \label{eq:nestinginduction}
\end{align}
where the last equality follows from the fact that $\sum_{j=1}^m (k_j - 1) + 2m = \sum_{j=1}^m k_j + m = k - 1$ and $|\pi| = 1 + \sum_{j=1}^m |\pi_j|$.  To complete the argument in this case, we will show that $\int_s^t \prod_{j=1}^m \alpha_{\pi_j}(t - u)^{|\pi_j|}\,du$ is equal to $\alpha_\pi|t - s|^{|\pi|}$.  By translation and rescaling of Lebesgue measure
\[
\alpha_\pi(t - s)^{|\pi|} = |\{\tau \in [s,t]^\pi \text{ compatible with } \pi\}|.
\]
Let $B$ be the block of size $m + 1$ which surrounds the $\pi_j$'s.  We can write the vector $\tau = (u, \tau')$, where $u = \tau(B)$ and $\tau'$ is the vector of the remaining coordinates in $[s,t]^{\pi \setminus B}$.  By the Fubini-Tonelli theorem,
\[
\alpha_\pi(t - s)^{|\pi|} = \int_s^t |\{\tau' \in [s,t]^{\pi \setminus B}: (u,\tau') \text{ compatible with } \pi\}|\,du.
\]
But $(u,\tau')$ being compatible with $\pi$ is equivalent to $\tau'$ taking values in $(u,t]$ and $\tau'|_{\pi_j}$ being compatible with $\pi_j$ for each $j$.  Thus,
\begin{align*}
\alpha_\pi(t - s)^{|\pi|} &= \int_s^t \prod_{j=1}^m |\{\tau_j \in (u,t]^{\pi_j} \text{ compatible with } \pi_j\}|\,du \\
&= \int_s^t \prod_{j=1}^m \alpha_{\pi_j}(t - u)^{|\pi_j|}\,du.
\end{align*}
Substituting this into \eqref{eq:nestinginduction} completes the inductive proof of our estimate for $Q_{\pi;s,t}^{(n)}(z)$ and hence shows  \eqref{eq:Qpiestimate}.

To show \eqref{eq:combinatorialscalarformula2} and \eqref{eq:FSTcombinatorialformula}, consider the scalar-valued Loewner chain $F_{\widehat{\mu}_t}$ with $\widehat{\nu}(\cdot,t) = C \delta_M$, where $\delta_M$ is the Dirac measure at $M \in \R$.  The corresponding coefficients are
\[
\widehat{Q}_{\pi;s,t}(\zeta) = \alpha_\pi (C(t-s))^{|\pi|} M^{k-2|\pi|}
\]
by an inductive argument similar to the foregoing, except with equalities rather than inequalities.  Let $\widehat{\mu}_{s,t}$ and $\widehat{\sigma}_{s,t}$ be the scalar-valued laws (that is, compactly supported finite measures on $\R$) which result from solving the Loewner equation in this case.  A direct computation of the solution yields
\[
F_{\widehat{\mu}_{s,t}}(\zeta) = \sqrt{(\zeta - M)^2 - 2C(t - s)} + M,
\]
and $F_{\widehat{\mu}_{s,t}}(\zeta) - \zeta$ is analytic for $|\zeta| < 1 / (M + \sqrt{2C(t - s)})$.  For such values of $\zeta$, we have as a consequence of \eqref{eq:FSTtruncatedformula} that
\[
\sum_{k \geq 2} \left( \sum_{\substack{\pi \in NC_{\geq 2}(k) \\ 1 \sim_\pi k}} \alpha_\pi (C(t - s))^{|\pi|} M^{k-2|\pi|}\zeta^k \right) = \tilde{G}_{\widehat{\sigma}_{s,t}}(\zeta) = \zeta - M - \sqrt{(\zeta - M)^2 - 2C(t - s)}.
\]
If $\zeta > 0$, then all the terms on the left hand side are nonnegative, and hence we have absolute convergence independent of the grouping of the terms.  This shows \eqref{eq:combinatorialscalarformula2}.  But each term in this sum is an upper bound for $\norm{Q_{\pi;s,t}^{(n)}(z)}$ and therefore we have absolute convergence of the series in \eqref{eq:FSTcombinatorialformula}, and the sum evaluates to $\tilde{G}_{\sigma_{s,t}}^{(n)}(z)$ by \eqref{eq:FSTtruncatedformula}.  Thus, we have shown \eqref{eq:FSTcombinatorialformula}.  The argument for \eqref{eq:combinatorialscalarformula1} and \eqref{eq:GSTcombinatorialformula} is similar.
\end{proof}

\begin{remark}
Another way to express coefficients $\alpha_\pi$ is as follows.  For a partial order $\prec'$ on $\pi$, define $P_{\prec'} = \{\tau \in [0,1]^\pi: B \prec' B' \implies \tau(B) < \tau(B')\}$.  Up to sets of measure zero, $[0,1]^\pi$ can be expressed as the disjoint union of the sets $P_{\prec'} = \{\tau \in [0,1]^\pi: B \prec' B \implies \tau(B) < \tau(B')\}$ as $\prec'$ ranges over all possible \emph{total} orders of $\pi$, and the number of such total orders is $|\pi|!$.  Moreover, $P_{\prec'}$ is contained in $P_\prec$ if and only if $\prec'$ extends the partial order $\prec$.  Therefore,
\[
\alpha_\pi = |P_{\prec}| = \frac{1}{|\pi|!} \#\{\text{total orders on } \pi \text{ extending } \prec\}.
\]
Compare \cite[Definition 4.1 - Definition 4.4]{AW2016} and \cite[Theorem 5.3]{HS2011-2}.
\end{remark}

\begin{remark} \label{rem:semigroupcumulants}
In the case of monotone convolution semigroups, the results of this section and the last section boil down to results of \cite{AW2016}.  Suppose that $\nu: \mathcal{A}\ip{X} \to \mathcal{A}$ is a generalized law independent of $t$.  Let us define $\Lambda_\pi: \mathcal{A}^{k-1} \to \mathcal{A}$ inductively by
\[
\Lambda_{\pi_1\pi_2}(a_1,\dots,a_{k_1+k_2}-1) = \Lambda_{\pi_1}(a_1,\dots,a_{k_1-1}) a_{k_1} \Lambda_{\pi_2}(a_{k_1+1}, \dots, a_{k_1+k_2-1})
\]
and for $\pi_1 \in NC(k_1-1)$, \dots, $\pi_m \in NC(k_m-1)$,
\begin{multline*}
\Lambda_{\Theta_m(\pi_1,\dots,\pi_m)}(a_1,\dots,a_{K_m-1}) =  \nu\Bigl(a_1 \Lambda_{\pi_1}(a_2,\dots,a_{K_1-2}) a_{K_1-1} X  \dots
\\ \dots X a_{K_{m-1}} \Lambda_{\pi_m}(a_{K_{m-1}+1}, \dots, a_{K_m-2}) a_{K_m-1} \Bigr),
\end{multline*}
using the notation from Definition \ref{def:nesting} and the convention that $a_0 \Lambda_\varnothing(a_1,\dots,a_{-1}) a_0 = a_0$.  An induction argument similar to the proof of Theorem \ref{thm:combinatoricsestimate} shows that
\[
Q_{\pi;s,t}(a_1,\dots,a_{k-1}) = (t - s)^{|\pi|} \alpha_\pi \Lambda_\pi(a_1,\dots,a_{k-1}),
\]
so that
\begin{equation} \label{eq:semigroupcumulantformula}
\mu_{s,t} (Xa_1X \dots a_{k-1}X) = \sum_{\pi \in NC_{\geq 2}(k)} (t - s)^{|\pi|} \alpha_\pi \Lambda_\pi(a_1,\dots,a_{k-1}).
\end{equation}
This means precisely that if $Y_{s,t}$ is an operator with the law $\mu_{s,t}$ and if $K_\pi$ denotes the operator-valued monotone cumulant in \cite[Theorem 3.4]{HS2014}, then for $\pi \in NC_{\geq 2}$, we have
\[
K_\pi(Y_{s,t},a_1Y_{s,t},\dots,a_{k-1}Y_{s,t}) = (t - s)^{|\pi|} \Lambda_\pi(a_1,\dots,a_{k-1}).
\]
Here we are using the fact that because $Y_{s,t}$ has mean zero, the cumulants $K_\pi$ will vanish if $\pi$ has any singleton blocks, and thus the moment-cumulant formula of \cite[Theorem 3.4]{HS2014} reduces to a sum over $NC_{\geq 2}(k)$ rather than $NC(k)$.  The preceding argument shows that
\[
K_n(Y_{s,t}, a_1 Y_{s,t}, \dots, a_{n-1} Y_{s,t}) = (t - s) \nu(a_1Xa_2 \dots Xa_{n-1}).
\]
In other words, the \emph{cumulants} of the law $\mu_{s,t}$ are the $(t - s)$ times the \emph{moments} of the generalized law $\nu$.  With this explanation in mind, compare Theorem \ref{thm:combinatorics} and its proof with \cite[\S 5]{Muraki2000} \cite[Definition 4.4 and Proposition 4.8]{AW2016}, \cite[Corollary 5.2, Theorem 5.3, Remark 6.4]{HS2011-2}, \cite[concluding paragraph]{HS2014}, \cite[Theorem 2.5]{BPV2013}.
\end{remark}

\subsection{Construction of a Fock Space} \label{subsec:Fockspace1}

The combinatorics of Theorem \ref{thm:combinatorics} can be modeled by using a Fock space construction similar to that of \cite[\S 4.6 - 4.7]{Speicher1998} in the free case and \cite{Lu1997} and \cite{Muraki1997} in the scalar-valued monotone case.  For a Loewner chain $F_t(z)$ with the associated map $F_{s,t}(z)$ and law $\mu_{s,t}$, we will explicitly construct an operator $Y_{s.t}$ on the Fock space which has law $\mu_{s,t}$, and in fact, $Y_{0,t}$ will be a process with monotone independent increments (Theorem \ref{thm:Fockspace}).

Let $\nu$ be the distributional family of generalized laws on $[0,T]$ corresponding to our Loewner chain.  In this subsection, we will construct the Fock space $\mathcal{H}_\nu$ (Definition \ref{def:Fockspace}) as a direct sum of tensor powers of $\mathcal{L}_{\mathcal{A}}\ip{X}$ with respect to a certain $\mathcal{A}$-valued inner product (see Lemma \ref{lem:tensorpower}).  We will define creation, annihilation, and multiplication operators on $\mathcal{H}_\nu$ (Definitions \ref{def:multiplicationoperator} and \ref{def:creationannihilationoperator}).  Finally, we will equip $B(\mathcal{H}_\nu)$ with the structure of an $\mathcal{A}$-valued probability space (Observation \ref{obs:Fockprobabilityspace}).  In the next subsection, we will use this setup to define the operators $Y_{s,t}$ realizing the law $\mu_{s,t}$.

\begin{lemma}
The map $I_\nu: \mathcal{L}_{\mathcal{A}}\ip{X} \to \mathcal{C}_{\mathcal{A}}$ given by
\[
[I_\nu(f)](t) = \int_t^T \nu(f(X,s),s)\,ds
\]
is completely positive.
\end{lemma}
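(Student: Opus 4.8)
The plan is to reduce complete positivity of $I_\nu$ to a pointwise positivity statement in each $M_n(\mathcal{A})$, handle first the case of coefficients that are simple functions of $t$, and there invoke that for $\phi\ge 0$ the map $\nu_\phi:=\int \nu(\cdot,s)\phi(s)\,ds$ is a generalized $\mathcal{A}$-valued law, hence completely positive. To begin I would make the statement precise: $I_\nu$ completely positive means that for every $n$ and every $F\in M_n(\mathcal{L}_\mathcal{A}\ip{X})$ the amplification $I_\nu^{(n)}(F^*F)$ is positive in $M_n(\mathcal{C}_\mathcal{A})\cong C([0,T],M_n(\mathcal{A}))$. A continuous $M_n(\mathcal{A})$-valued function is positive exactly when it is pointwise positive, and $I_\nu(f)$ genuinely lies in $\mathcal{C}_\mathcal{A}$ because $t\mapsto\int_t^T\nu(f(X,s),s)\,ds$ is Lipschitz by the discussion in \S\ref{subsec:combinatoricspreliminary}; so it suffices to prove that for each fixed $t\in[0,T]$,
\[
\big[I_\nu^{(n)}(F^*F)\big](t)=\int_t^T\nu^{(n)}\big((F^*F)(X,s),s\big)\,ds\ \ge\ 0\quad\text{in }M_n(\mathcal{A}),
\]
the integral being the element of $M_n(\mathcal{A})$ obtained by applying the distribution $s\mapsto\nu^{(n)}((F^*F)(X,s),s)\in\mathcal{L}(L^1[0,T],M_n(\mathcal{A}))$, built in \S\ref{subsec:combinatoricspreliminary}, to $\chi_{[t,T]}$.

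I would then prove this when all coefficients of $F$ are simple functions of $t$. Passing to a common refinement of the partitions involved, write $F(X,s)=\sum_j\chi_{E_j}(s)G_j(X)$ with the $E_j$ disjoint measurable and $G_j\in M_n(\mathcal{A}\ip{X})$ having constant coefficients; then $(F^*F)(X,s)=\sum_j\chi_{E_j}(s)G_j(X)^*G_j(X)$, so by linearity of the \S\ref{subsec:combinatoricspreliminary} construction the distribution $s\mapsto\nu^{(n)}((F^*F)(X,s),s)$ is the simple $\mathcal{L}(L^1[0,T],M_n(\mathcal{A}))$-valued function $\sum_j\chi_{E_j}(s)\,\nu^{(n)}(G_j(X)^*G_j(X),\cdot)$. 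Applying the diagonal-restriction identity \eqref{eq:diagsimple} and pairing with $\chi_{[t,T]}$ gives
\[
\big[I_\nu^{(n)}(F^*F)\big](t)=\sum_j\nu^{(n)}\big(G_j(X)^*G_j(X),\cdot\big)\big[\chi_{E_j\cap[t,T]}\big]=\sum_j\big(\nu_{\chi_{E_j\cap[t,T]}}\big)^{(n)}\big(G_j(X)^*G_j(X)\big),
\]
a norm-convergent series (absolute convergence is part of Lemma \ref{lem:diagonal}). Each $\chi_{E_j\cap[t,T]}\ge 0$ lies in $L^1[0,T]$, so $\nu_{\chi_{E_j\cap[t,T]}}$ is a generalized law by Definition \ref{def:distributionallaw}(2), hence completely positive by Definition \ref{def:law}(1); thus each summand is positive, and so is the sum since the positive cone of $M_n(\mathcal{A})$ is closed.

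Finally I would pass to the general case by approximation. Write $F$ as a finite sum of monomials $F_0(t)Xf_1(t)\cdots Xf_k(t)$ with $F_j\in M_n(\mathcal{L}_\mathcal{A})$, and approximate each coefficient in the $L_{\Boch}^\infty$ norm by a simple function — possible since $L_{\Boch}^\infty([0,T],M_n(\mathcal{A}))$ is by definition the completion of the simple functions. This produces $F^{(\ell)}$ with simple coefficients and $F^{(\ell)*}F^{(\ell)}\to F^*F$ monomial by monomial with coefficients converging in $L_{\Boch}^\infty$. The bounded multilinearity of $(f_0,\dots,f_k)\mapsto\nu(f_0(t)Xf_1(t)\cdots Xf_k(t),t)$ established in \S\ref{subsec:combinatoricspreliminary} (cf.\ \eqref{eq:improvedradiusbound2}), followed by the bounded functional $\rho\mapsto\rho[\chi_{[t,T]}]$ on $\mathcal{L}(L^1[0,T],M_n(\mathcal{A}))$, then yields $[I_\nu^{(n)}(F^{(\ell)*}F^{(\ell)})](t)\to[I_\nu^{(n)}(F^*F)](t)$; since the approximants are positive by the previous paragraph and positivity is a closed condition, the limit is positive.

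I expect the main obstacle to be purely bookkeeping in the second step: matching the \S\ref{subsec:combinatoricspreliminary} object $s\mapsto\nu((F^*F)(X,s),s)$ with the diagonal restriction of Lemma \ref{lem:diagonal} in the simple case, and checking that the multilinear continuity estimate of \S\ref{subsec:combinatoricspreliminary} survives the formation of $F^*F$. No ingredient beyond \S\ref{subsec:combinatoricspreliminary} and the complete positivity of ordinary generalized laws is required.
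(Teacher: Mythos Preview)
Your proposal is correct and follows essentially the same approach as the paper: reduce to pointwise positivity in $M_n(\mathcal{A})$, handle first the case of simple-function coefficients by writing $F(X,s)=\sum_j\chi_{E_j}(s)G_j(X)$ and invoking that $\int\nu(\cdot,s)\chi_{E_j\cap[t,T]}(s)\,ds$ is a generalized law (hence completely positive), and then pass to general coefficients by the continuity established in \S\ref{subsec:combinatoricspreliminary}. Your write-up is in fact more explicit than the paper's about the diagonal-restriction bookkeeping and the approximation step, but the underlying argument is the same.
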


\begin{proof}
We must show that for $P(X,\cdot) \in M_n(\mathcal{L}_{\mathcal{A}}\ip{X})$ and for $t \in [0,T]$, we have
\[
\int_t^T \nu^{(n)}(P(X,s)^*P(X,s),s)\,ds \geq 0.
\]
As remarked above, $\nu(f_1(t) X f_2(t) \dots X f_k(t),\cdot)$ depends continuously on $a_1$, \dots, $a_k$ in $\mathcal{L}_{\mathcal{A}}$.  Thus, it suffices to consider the case where each entry of $P(X,\cdot)$ is a sum of monomials in $\mathcal{L}_{\mathcal{A}}\ip{X}$ where each coefficient is a simple function.  Such a matrix $P(X,\cdot)$ can be expressed as
\[
\sum_{j=1}^\infty \chi_{E_j}(t) P_j(X)
\]
where the $E_j$'s are disjoint and measurable and $P_j(X) \in M_n(\mathcal{A}\ip{X})$.  We then have
\[
\int_t^T \nu^{(n)}(P(X,s)^*P(X,s),s)\,ds = \sum_{j=1}^\infty \int_t^T \nu^{(n)}(P_j(X)^* P_j(X),s) \chi_{E_j}(s)\,ds.
\]
Because $\int_t^T \nu(\cdot,s) \chi_{E_j}(s)\,ds$ is a generalized law (see Definition \ref{def:distributionallaw}, it is in particular completely positive, and hence $\int_t^T \nu^{(n)}(P_j(X)^*P_j(X),s) \chi_{E_j}(s)\,ds \geq 0$.  This implies that $\int_t^T \nu^{(n)}(P(X,s)^* P(X,s),s)\,ds \geq 0$ as desired.
\end{proof}

\begin{lemma} \label{lem:tensorpower}
Let us denote $I_{\nu,0}(f) = I_\nu(f)|_{t=0}$ for $f \in \mathcal{L}_{\mathcal{A}}\ip{X}$.  The sequilinear form on $\mathcal{L}_{\mathcal{A}}\ip{X}^{\otimes_{\alg} k} \otimes_{\alg} \mathcal{A}$ given by
\[
\ip{f_k \otimes \dots \otimes f_1 \otimes a, g_k \otimes \dots \otimes g_1 \otimes a'} = a^* I_{\nu,0}[f_1^*I_\nu[f_2^* \dots I_\nu[f_n^*g_n] \dots g_2]g_1] a'
\]
is a $\mathcal{A}$-valued pre-inner product, and therefore Lemma \ref{lem:rightHilbertmodule} (4) the completed quotient with respect to this inner product is a right Hilbert $\mathcal{A}$-module.
\end{lemma}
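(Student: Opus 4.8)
The plan is to verify the three defining properties of an $\mathcal{A}$-valued pre-inner product (right $\mathcal{A}$-linearity in the second slot, conjugate symmetry, and positivity) and then invoke Lemma \ref{lem:rightHilbertmodule}(4). Since the $\otimes_{\alg}$'s are algebraic tensor products over $\C$, the displayed formula extends uniquely to a $\C$-sesquilinear map on $\mathcal{L}_{\mathcal{A}}\ip{X}^{\otimes_{\alg}k}\otimes_{\alg}\mathcal{A}$ using only that $I_\nu$ and $I_{\nu,0}$ are $\C$-linear and that multiplication is bilinear; there are no balancing relations to check. Right $\mathcal{A}$-linearity in the second argument is immediate, since in $a^* I_{\nu,0}[\cdots g_1]\, a'$ the factor $a'$ stands outside of all the maps $I_\nu$, $I_{\nu,0}$. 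For conjugate symmetry, I would first record that $I_\nu$ and $I_{\nu,0}$ are $*$-preserving — they are completely positive by the preceding lemma (and $I_{\nu,0}=\mathrm{ev}_0\circ I_\nu$), hence Hermitian, hence $*$-linear over $\C$ — and then use the identity $(h_0 X h_1\cdots X h_m)^* = h_m^* X\cdots X h_0^*$ in $\mathcal{L}_{\mathcal{A}}\ip{X}$ to pull an adjoint through the nested expression $f_1^* I_\nu[f_2^*\cdots I_\nu[f_k^* g_k]\cdots g_2]\,g_1$ one layer at a time; this interchanges the roles of the $f_j$'s and $g_j$'s and turns $\ip{\xi,\eta}^*$ into $\ip{\eta,\xi}$.

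The substance is positivity: given finitely many simple tensors $\xi^{(i)} = f_k^{(i)}\otimes\cdots\otimes f_1^{(i)}\otimes a^{(i)}$, $i=1,\dots,N$, I must show $\ip{\xi,\xi}\geq 0$ in $\mathcal{A}$ for $\xi = \sum_i\xi^{(i)}$, and the plan is to induct on $k$. The base cases are direct: for $k=0$ one has $\ip{\xi,\xi} = (\sum_i a^{(i)})^*(\sum_i a^{(i)})\geq 0$, and for $k=1$ one has $\ip{\xi,\xi} = v^* I_{\nu,0}^{(N)}(P^*P)\, v$, where $P=[f_1^{(1)},\dots,f_1^{(N)}]\in M_{1\times N}(\mathcal{L}_{\mathcal{A}}\ip{X})$ and $v=[a^{(1)},\dots,a^{(N)}]^t$, which is $\geq 0$ because $I_{\nu,0}=\mathrm{ev}_0\circ I_\nu$ is a composition of completely positive maps and hence completely positive.

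For the inductive step with $k\geq 2$, the idea is to remove the innermost occurrence $I_\nu[f_k^* g_k]$ and drop down to level $k-1$. Writing $Q=[f_k^{(1)},\dots,f_k^{(N)}]$, complete positivity of $I_\nu$ shows that $B:=I_\nu^{(N)}(Q^*Q)=\big(I_\nu[(f_k^{(i)})^* f_k^{(j)}]\big)_{i,j}$ is a positive element of the $C^*$-algebra $M_N(\mathcal{C}_{\mathcal{A}})$; I then factor $B=R^*R$ with $R=(r_{l,i})\in M_N(\mathcal{C}_{\mathcal{A}})$ (for example $R=B^{1/2}$), so that $I_\nu[(f_k^{(i)})^* f_k^{(j)}]=\sum_l r_{l,i}^*\, r_{l,j}$. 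Since $\mathcal{C}_{\mathcal{A}}\subseteq\mathcal{L}_{\mathcal{A}}$ and $\mathcal{L}_{\mathcal{A}}\ip{X}$ is an $\mathcal{L}_{\mathcal{A}}$-bimodule compatible with the $*$-operation, the elements $g_{k-1}^{(i,l)}:=r_{l,i}\, f_{k-1}^{(i)}$ lie in $\mathcal{L}_{\mathcal{A}}\ip{X}$ and satisfy $(f_{k-1}^{(i)})^*\big(\sum_l r_{l,i}^* r_{l,j}\big) f_{k-1}^{(j)}=\sum_l (g_{k-1}^{(i,l)})^* g_{k-1}^{(j,l)}$. Substituting this for $I_\nu[f_k^* g_k]$ inside $\ip{\xi,\xi}$ and expanding by linearity of the remaining maps rewrites $\ip{\xi,\xi}$ as a sum over $l$ of the level-$(k-1)$ form evaluated at $\sum_i g_{k-1}^{(i,l)}\otimes f_{k-2}^{(i)}\otimes\cdots\otimes f_1^{(i)}\otimes a^{(i)}$, and each of these summands is $\geq 0$ by the inductive hypothesis, so $\ip{\xi,\xi}\geq 0$.

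Once positivity is established, the form is an $\mathcal{A}$-valued pre-inner product, and Lemma \ref{lem:rightHilbertmodule}(4) gives that the completed quotient is a right Hilbert $\mathcal{A}$-module. I expect the one delicate point to be exactly the reduction step above — namely, keeping the bookkeeping straight so that, after factoring the positive matrix $B\in M_N(\mathcal{C}_{\mathcal{A}})$ and absorbing its square root into the $(k-1)$st tensor leg, what remains is genuinely an instance of the level-$(k-1)$ form, so that the induction closes. Everything else (the $*$-preservation of $I_\nu$ and $I_{\nu,0}$, the complete positivity claims, and the final appeal to Lemma \ref{lem:rightHilbertmodule}) is routine.
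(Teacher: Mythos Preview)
Your proof is correct, but the route differs from the paper's. Both proofs wave through right $\mathcal{A}$-linearity and conjugate symmetry and focus on positivity. You argue by induction on $k$: at the inductive step you factor the Gram matrix $B=I_\nu^{(N)}(Q^*Q)\in M_N(\mathcal{C}_{\mathcal{A}})$ as $R^*R$ via the $C^*$ functional calculus, absorb $R$ into the $(k-1)$st tensor leg, and reduce to the level-$(k-1)$ form. The paper instead packages $\ip{h,h}$ in one shot as
\[
\Tr\bigl(D_0^*\,I_{\nu,0}^{(n)}\bigl[D_1^*\,I_\nu^{(n)}\bigl[\cdots D_{k-1}^*\,I_\nu^{(n)}[P]\,D_{k-1}\cdots\bigr]D_1\bigr]D_0\bigr),
\]
where the $D_i$ are diagonal matrices built from the $f_{j,i}$'s and $a_j$'s, and $P=(f_{i,k}^*f_{j,k})_{i,j}$ is the innermost Gram matrix; complete positivity of $I_\nu$ is then applied $k$ times iteratively, and positivity in $M_n(\mathcal{A})$ passes to the trace. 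The paper's argument is a bit slicker---no induction, no square roots, and it mirrors the GNS argument of Proposition~\ref{prop:operatorvaluedGNS}---while your approach makes the recursive structure of the nested inner product more explicit and would adapt well if one wanted to isolate intermediate Hilbert-module structures layer by layer.
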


\begin{notation}
We denote the right Hilbert $\mathcal{A}$-module constructed in the lemma by
\[
\mathcal{H}_{\nu,k} = \underbrace{\mathcal{L}_{\mathcal{A}}\ip{X} \otimes_{I_\nu} \dots \otimes_{I_\nu} \mathcal{L}_{\mathcal{A}}\ip{X}}_k \otimes_{I_{\nu,0}} \mathcal{A}.
\]
\end{notation}

\begin{proof}[Proof of Lemma \ref{lem:tensorpower}]
It is clear that the inner product is right $\mathcal{A}$-linear and symmetric.  To prove positivity, we proceed as in the proof of Proposition \ref{prop:operatorvaluedGNS}.  Consider a sum of simple tensors
\[
h = \sum_{j=1}^n f_{j,k} \otimes \dots \otimes f_{j,1} \otimes a_j
\]
with $f_{j,k}$, \dots, $f_{j,1} \in \mathcal{L}_{\mathcal{A}}\ip{X}$ and $f_{j,0} \in \mathcal{C}_{\mathcal{A}}$.  For $i = 1$, \dots, $k - 1$, let $D_i$ be the $n \times n$ diagonal matrix $\diag(f_{1,i}, \dots, f_{n,i})$.  Let $D_0$ be the diagonal matrix $\diag(a_1,\dots,a_n)$.  Let $P$ be the matrix with $P_{i,j} = f_{i,k}^* f_{j,k}$.  Let $\Tr: M_n(\mathcal{C}_\mathcal{A}) \to \mathcal{C}_\mathcal{A}$ be the sum of the diagonal entries.  Then
\begin{align*}
\ip{h,h} &= \sum_{i,j=1}^n a_i^* I_{\nu,0}[f_{i,1}^* I_\nu[ f_{i,2}^* \dots I_\nu[f_{i,k-1}^* I_\nu[f_{i,k}^*f_{j,k}] f_{j,k-1}] \dots f_{j,2}] f_{j,1}] a_j \\
&= \Tr\bigl( D_0^* I_{\nu,0}^{(n)}[D_1^* I_\nu^{(n)}[D_2^* \dots D_{k-1}^*I_\nu^{(n)}[P]D_{k-1} \dots D_2]D_1]D_0 \bigr).
\end{align*}
Now $P$ is positive in $M_n(\mathcal{L}_\mathcal{A}\ip{X})$ by construction.  Hence, $I_\nu^{(n)}[P]$ is positive in $M_n(\mathcal{C}_{\mathcal{A}})$ by complete positivity of $I_\nu$.  It follows that $D_{k-1}^* I_\nu^{(n)}(P) D_{k-1}$ is positive in $M_n(\mathcal{L}_{\mathcal{A}}\ip{X})$ and hence $I_\nu^{(n)}[D_{k-1}^* I_\nu^{(n)}(P) D_{k-1}]$ is positive in $M_n(\mathcal{C}_{\mathcal{A}})$.  Continuing inductively, we obtain
\[
D_0^* I_{\nu,0}^{(n)}[D_1^* I_\nu^{(n)}[D_2^* \dots I_\nu^{(n)}[D_{k-1}^*I_\nu^{(n)}[P]D_{k-1}] \dots D_2]D_1]D_0 \geq 0 \text{ in } M_n(\mathcal{A}).
\]
Thus, taking the trace, we have $\ip{h,h} \geq 0$.
\end{proof}

\begin{definition} \label{def:Fockspace}
The \emph{monotone Fock space} $\mathcal{H}_\nu$ is the right Hilbert $\mathcal{A}$-module defined by
\[
\mathcal{H}_\nu := \mathcal{A} \oplus \bigoplus_{k=1}^\infty \mathcal{H}_{\nu,k},
\]
where $\mathcal{H}_{\nu,k}$ is given by Lemma \ref{lem:tensorpower}.  We denote by $\xi$ the vector $1$ in the first summand $\mathcal{A}$, and henceforth we denote this subspace by $\mathcal{A} \xi$ or $\mathcal{H}_{\nu,0}$ rather than $\mathcal{A}$.
\end{definition}

Next, we define certain operators on $B(\mathcal{H}_\nu)$ called creation, annihilation, and multiplication operators, which will be used to build the operators $Y_{s,t}$ in Theorem \ref{thm:Fockspace} below.  We first consider the multiplication operators.

\begin{lemma} \label{lem:multiplicationoperator}
Let $k \geq 1$ and $f \in \mathcal{L}_{\mathcal{A}}\ip{X}$.  Then there is a unique bounded operator $\mathfrak{m}_k(f): \mathcal{H}_{\nu,k} \to \mathcal{H}_{\nu,k}$ given by
\[
\mathfrak{m}_k(f)[f_k \otimes \dots \otimes f_1 \otimes a] = (f \cdot f_k) \otimes f_{k-1} \otimes \dots \otimes f_1 \otimes a.
\]
\end{lemma}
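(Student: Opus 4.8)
\textbf{Proof proposal for Lemma \ref{lem:multiplicationoperator}.} The plan is to realize $\mathfrak{m}_k(f)$ as a bounded operator by factoring left multiplication by $f$ on the first tensor leg through the algebraic definition and then estimating the operator norm using the $\mathcal{A}$-valued inner product built in Lemma \ref{lem:tensorpower}. First I would observe that the prescribed formula is well-defined on the algebraic tensor product $\mathcal{L}_{\mathcal{A}}\ip{X}^{\otimes_{\alg} k} \otimes_{\alg} \mathcal{A}$, since $g \mapsto f \cdot g$ is a linear map $\mathcal{L}_{\mathcal{A}}\ip{X} \to \mathcal{L}_{\mathcal{A}}\ip{X}$ (this uses that $\mathcal{L}_{\mathcal{A}}\ip{X}$ is an algebra), and it is $\mathcal{A}$-balanced in each of the relevant slots so that it descends to a well-defined $\C$-linear map on the algebraic tensor product. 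It is clearly a right $\mathcal{A}$-module map. The content of the lemma is boundedness: once boundedness is established on the dense subspace, the operator extends uniquely to the completed quotient $\mathcal{H}_{\nu,k}$ by continuity, and the extension is automatically adjointable with $\mathfrak{m}_k(f)^* = \mathfrak{m}_k(f^*)$ (a fact I would note in passing, since it shows $\mathfrak{m}_k(f) \in B(\mathcal{H}_{\nu,k})$ and not merely that it is bounded).

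The key step is the norm estimate. Given a finite sum of simple tensors $h = \sum_{j=1}^n f_{j,k} \otimes \dots \otimes f_{j,1} \otimes a_j$, I would follow the matrix-amplification bookkeeping used in the proof of Lemma \ref{lem:tensorpower}: form the diagonal matrices $D_i = \diag(f_{1,i},\dots,f_{n,i})$ for $1 \le i \le k-1$, $D_0 = \diag(a_1,\dots,a_n)$, and the matrix $P$ with $P_{i,j} = f_{i,k}^* f_{j,k}$, so that
\begin{equation*}
\ip{h,h} = \Tr\bigl( D_0^* I_{\nu,0}^{(n)}[D_1^* I_\nu^{(n)}[D_2^* \dots D_{k-1}^* I_\nu^{(n)}[P] D_{k-1} \dots D_2] D_1] D_0 \bigr).
\end{equation*}
Applying $\mathfrak{m}_k(f)$ replaces the outermost leg data $f_{j,k}$ by $f \cdot f_{j,k}$, hence replaces $P$ by $\tilde P$ with $\tilde P_{i,j} = (f f_{i,k})^*(f f_{j,k}) = f_{i,k}^* (f^* f) f_{j,k}$. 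Writing $\Delta = \diag(f_{1,k},\dots,f_{n,k})$, we have $P = \Delta^* \Delta$ and $\tilde P = \Delta^* (f^*f)^{(n)} \Delta$ where $(f^*f)^{(n)} = f^* f \otimes 1_n$; since $\norm{f^* f}_{\mathcal{L}_{\mathcal{A}}\ip{X}}$ is not a $C^*$-norm I instead use the crude but sufficient bound that $\norm{f}_{\mathcal{L}_{\mathcal{A}}}^2 \cdot 1 - f^* f$ acts appropriately — more precisely, I would argue $0 \le \tilde P \le C \cdot P$ in $M_n(\mathcal{L}_{\mathcal{A}}\ip{X})$ for a constant $C$ depending only on $f$, by exploiting that $I_\nu$ and each $I_{\nu}^{(m)}$ are positive and that the relevant positivity is preserved under the inductive nesting of $I_\nu^{(n)}$'s and conjugations by the $D_i$. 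Monotonicity of all the completely positive maps $I_\nu^{(n)}$, $I_{\nu,0}^{(n)}$ then propagates $\tilde P \le C P$ through the whole nested expression, yielding $\ip{\mathfrak{m}_k(f) h, \mathfrak{m}_k(f) h} \le C \ip{h,h}$ in $\mathcal{A}$, hence $\norm{\mathfrak{m}_k(f) h} \le C^{1/2} \norm{h}$.

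The main obstacle will be pinning down the right constant $C$ and the right order in which to compare $\tilde P$ with a multiple of $P$, because $\mathcal{L}_{\mathcal{A}}\ip{X}$ is only a $*$-algebra, not a $C^*$-algebra, so $f^* f$ is not controlled in norm by $\norm f^2$ in any automatic way. The clean fix is to use the bound $\norm{\nu(a_0 X a_1 \cdots X a_j)} \le \norm{\nu(1,\cdot)} \rad(\nu)^j \norm{a_0}\cdots\norm{a_j}$ from \eqref{eq:improvedradiusbound2} directly: after distributing, the vector $\mathfrak{m}_k(f) h$ is again a finite sum of simple tensors (with $f \cdot f_{j,k}$ on the outer leg, which is a sum of monomials in $\mathcal{L}_{\mathcal{A}}\ip{X}$), so I can just expand $\ip{\mathfrak{m}_k(f)h, \mathfrak{m}_k(f)h}$ and $\ip{h,h}$ as finite sums and estimate term by term, absorbing everything into a constant $C$ that depends on $f$ (its degree, the norms of its coefficient functions), on $\rad(\nu)$, and on $\norm{\nu(1,\cdot)}_{\mathcal{L}(L^1[0,T],\mathcal{A})}$; this avoids any appeal to a spurious $C^*$-identity on $\mathcal{L}_{\mathcal{A}}\ip{X}$. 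Either route gives uniqueness of the bounded extension automatically by density of the algebraic tensor product in $\mathcal{H}_{\nu,k}$, which completes the proof.
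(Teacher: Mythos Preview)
Your setup and the matrix bookkeeping are fine, and you correctly identify the real difficulty: $\mathcal{L}_{\mathcal{A}}\ip{X}$ is only a $*$-algebra, so there is no a priori constant $C$ with $C\cdot 1 - f^*f$ positive in any sense that $I_\nu^{(n)}$ would respect. But neither of your two proposed work-arounds closes this gap.

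Route 1 (``argue $\tilde P \le C\cdot P$ in $M_n(\mathcal{L}_{\mathcal{A}}\ip{X})$ by exploiting that $I_\nu$ is positive'') is circular: complete positivity of $I_\nu^{(n)}$ is what propagates an inequality once you have it, not what produces it. The inequality $\Delta^*(C - f^*f)^{(n)}\Delta \ge 0$ would need $C - f^*f$ to be a (limit of a) sum of squares in $\mathcal{L}_{\mathcal{A}}\ip{X}$, and that is exactly what you have not established.

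Route 2 (expand both $\ip{\mathfrak{m}_k(f)h,\mathfrak{m}_k(f)h}$ and $\ip{h,h}$ and estimate term by term via \eqref{eq:improvedradiusbound2}) cannot give a bound of the form $\norm{\mathfrak{m}_k(f)h}\le C\norm{h}$ with $C$ independent of $h$. The cross-terms in $\ip{h,h}$ are not positive, so $\norm{h}$ can be arbitrarily small (indeed zero, since $\mathcal{H}_{\nu,k}$ is a quotient) while the term-by-term norms stay large; your constant would then depend on the particular representation of $h$ as a sum of simple tensors.

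The paper's fix is to manufacture the missing $C^*$-structure. It observes that $I_\nu:\mathcal{L}_{\mathcal{A}}\ip{X}\to\mathcal{L}_{\mathcal{A}}$ is itself an $\mathcal{L}_{\mathcal{A}}$-valued generalized law, so Proposition~\ref{prop:CPmap} gives a $*$-representation $\pi$ of $\mathcal{L}_{\mathcal{A}}\ip{X}$ on a Hilbert $\mathcal{L}_{\mathcal{A}}$-module $\mathcal{M}$. In the $C^*$-algebra generated by $\pi(\mathcal{L}_{\mathcal{A}}\ip{X})$ inside $B(\mathcal{M})$ one has $\norm{\pi(f)}^2 - \pi(f^*f)=g^*g$ for some $g$, and by density there are $g_n\in\mathcal{L}_{\mathcal{A}}\ip{X}$ with $\pi(g_n)\to g$. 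Since $I_\nu$ factors through $\pi$, one gets $I_\nu(p^*g_n^*g_nq)\to I_\nu(p^*(\norm{\pi(f)}^2-f^*f)q)$, and pushing this through the nested inner product yields $\ip{fh,fh}\le\norm{\pi(f)}^2\ip{h,h}$ for all $h$ in the algebraic tensor product. That approximation step --- replacing the non-existent inequality $f^*f\le C$ in $\mathcal{L}_{\mathcal{A}}\ip{X}$ by the honest $C^*$-inequality $\pi(f^*f)\le\norm{\pi(f)}^2$ and then pulling it back --- is the idea your proposal is missing.
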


\begin{proof}
Note that $\mathcal{L}_{\mathcal{A}}$ is a $C^*$-algebra and $I_\nu: \mathcal{L}_{\mathcal{A}}\ip{X} \to \mathcal{C}_{\mathcal{A}} \subseteq \mathcal{L}_{\mathcal{A}}$ is a completely positive map satisfying $\norm{I_\nu(f_0 X f_1 \dots X f_k)} \leq M^k \norm{f_0} \dots \norm{f_k}$.  In other words, $I_\nu$ is an $\mathcal{L}_{\mathcal{A}}$-valued generalized law.  Hence, by Proposition \ref{prop:CPmap}, multiplication by $f \in \mathcal{L}_{\mathcal{A}}\ip{X}$ defines a bounded operator $\pi(f(X))$ on the right Hilbert $\mathcal{L}_{\mathcal{A}}$-module $\mathcal{M} := \mathcal{L}_{\mathcal{A}}\ip{X} \otimes_{I_\nu} \mathcal{L}_{\mathcal{A}}$.

This implies that $\norm{\pi(f)}^2 - \pi(f^*f) \geq 0$ in the $C^*$-subalgebra of $B(\mathcal{M})$ generated by $\pi(\mathcal{L}_\mathcal{A}\ip{X})$ and hence can be written as $g^*g$ for some $g$ in the $C^*$-algebra.  There exist functions $g_n \in \mathcal{L}_{\mathcal{A}}\ip{X}$ with $\pi(g_n) \to g$ and hence $\pi(g_n^*g_n) \to \norm{\pi(f)}^2 - \pi(f^*f)$.  This implies that for $p, q \in \mathcal{L}_{\mathcal{A}}\ip{X}$,
\[
I_\nu(p^*g_n^* g_nq) \to I_\nu(p^*(\norm{\pi(f)}^2 - f^*f)q).
\]
It follows that for $h \in \mathcal{L}_{\mathcal{A}}^{\otimes_{\alg} k} \otimes_{\alg} \mathcal{A}$, we have
\[
\ip{g_nh,g_nh} = \ip{h,g_n^*g_nh} \to \ip{h, (\norm{\pi(f)}^2 - f^*f) h}.
\]
Thus, $\ip{h, (\norm{\pi(f)}^2 - f^*f) h} \geq 0$ which means that
\[
\ip{fh,fh} \leq \norm{\pi(f)}^2 \ip{h,h}.
\]
This is sufficient to show that multiplication by $f$ defines a bounded operator on the completed quotient $\mathcal{H}_{\nu,k}$, as in Proposition \ref{prop:operatorvaluedGNS}.
\end{proof}

\begin{definition} \label{def:multiplicationoperator}
For $f \in \mathcal{L}_{\mathcal{A}}\ip{X}$, we define the \emph{multiplication operator} $\mathfrak{m}(f): \mathcal{H}_\nu \to \mathcal{H}_\nu$ as the direct sum of the operators $\mathfrak{m}_k(f): \mathcal{H}_{\nu,k} \to \mathfrak{H}_{\nu,k}$ for $k \geq 1$ and of the zero operator on $\mathcal{A} \xi$ for $k = 0$.
\end{definition}

\begin{lemma} \label{lem:creationoperator}
For $f \in \mathcal{L}_{\mathcal{A}}\ip{X}$, there exists a unique operator $\ell(f) \in B(\mathcal{H}_\nu)$ such that
\[
\ell(f)[f_k \otimes \dots \otimes f_1 \otimes a] = f \otimes f_k \otimes \dots \otimes f_1 \otimes a.
\]
The adjoint of $\ell(f)$ satisfies
\[
\ell(f)^*[f_k \otimes \dots \otimes f_1 \otimes a] =
\begin{cases}
I_\nu(f^*f_k) f_{k-1} \otimes \dots \otimes f_1 \otimes a, & k \geq 2 \\
I_{\nu,0}(f^*f_1)a, & k = 1 \\
0, & k = 0,
\end{cases}
\]
and we also have
\[
\norm{\ell(f)} = \norm{I_\nu(f^*f)}^{1/2} = \norm{f \otimes 1}_{\mathcal{H}_{\nu,1}}^{1/2}.
\]
\end{lemma}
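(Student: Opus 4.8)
The plan is to build $\ell(f)$ first on the algebraic tensor powers $\mathcal{L}_{\mathcal{A}}\ip{X}^{\otimes_{\alg}k}\otimes_{\alg}\mathcal{A}$ (and on $\mathcal{A}\xi$) by the stated formula — where it is visibly linear, right $\mathcal{A}$-linear, and carries degree-$k$ tensors to degree-$(k+1)$ tensors — and then to pass to the completed quotients $\mathcal{H}_{\nu,k}$, exactly as in the proof of Lemma~\ref{lem:tensorpower}. Everything hinges on one uniform (in $k$) pre-inner-product estimate:
\[
\ip{\ell(f)h,\ell(f)h}\le \norm{I_\nu(f^*f)}\,\ip{h,h}\quad\text{in }\mathcal{A}
\]
for every finite sum of simple tensors $h$. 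Granting this, $\ell(f)$ annihilates the null space of the seminorm, descends to each $\mathcal{H}_{\nu,k}$ with operator norm $\le\norm{I_\nu(f^*f)}^{1/2}$, and hence extends to a bounded operator on $\mathcal{H}_\nu=\mathcal{A}\xi\oplus\bigoplus_k\mathcal{H}_{\nu,k}$; uniqueness is automatic since the algebraic tensors are dense.

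For the key estimate I would reuse the matrix bookkeeping from the proof of Lemma~\ref{lem:tensorpower}. Writing $h=\sum_{j=1}^{n}f_{j,k}\otimes\dots\otimes f_{j,1}\otimes a_j$, put $D_i=\diag(f_{1,i},\dots,f_{n,i})$ for $1\le i\le k$, $D_0=\diag(a_1,\dots,a_n)$, and $P=(f_{i,k}^{*}f_{j,k})_{i,j}$; then $\ip{h,h}$ is the iterated trace expression $\Tr\!\big(D_0^{*}I_{\nu,0}^{(n)}[D_1^{*}I_\nu^{(n)}[\cdots D_{k-1}^{*}I_\nu^{(n)}[P]D_{k-1}\cdots]D_1]D_0\big)$, and $\ip{\ell(f)h,\ell(f)h}$ is the same expression with one extra layer, the innermost matrix replaced by $P'=(f^{*}f)_{i,j}$ and an extra conjugation by $D_k$. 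Now $S:=I_\nu(f^*f)\ge 0$ in the $C^*$-algebra $\mathcal{C}_{\mathcal{A}}\subseteq\mathcal{L}_{\mathcal{A}}$ (this is complete positivity of $I_\nu$, equivalently positivity of the generalized law $\int_{t}^{T}\nu(\cdot,s)\,ds$ applied to $f^*f\ge 0$), and with the row $C=(f_{1,k},\dots,f_{n,k})$ one has $D_k^{*}I_\nu^{(n)}[P']D_k=C^{*}SC$ and $P=C^{*}C$; the elementary inequality $C^{*}SC\le\norm{S}\,C^{*}C$ — valid because $\norm{S}1-S\ge 0$, so $\norm{S}C^{*}C-C^{*}SC=C^{*}(\norm{S}1-S)C\ge 0$ — gives $D_k^{*}I_\nu^{(n)}[P']D_k\le\norm{I_\nu(f^*f)}\,P$ in $M_n(\mathcal{L}_{\mathcal{A}}\ip{X})$. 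Applying in turn the completely positive maps $I_\nu^{(n)}$, $I_{\nu,0}^{(n)}$, the order-preserving conjugations by $D_{k-1},\dots,D_0$, and the trace — each of which preserves the ordering — propagates this up to the desired bound. The cases $k=0,1$ follow from the same inequality used directly, since $\ip{f\otimes a,f\otimes a}=a^{*}I_{\nu,0}(f^*f)a\le\norm{I_\nu(f^*f)}\,a^{*}a$. I expect this layer-by-layer propagation (and keeping straight which of $\mathcal{L}_{\mathcal{A}}$, $\mathcal{C}_{\mathcal{A}}$, $\mathcal{L}_{\mathcal{A}}\ip{X}$ each intermediate object lives in) to be the only real work; it is routine rather than conceptual.

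To identify the adjoint, I would define $R$ on algebraic tensors by the formula claimed for $\ell(f)^{*}$ and verify directly from the inner-product formula that $\ip{\ell(f)h_1,h_2}=\ip{h_1,Rh_2}$ for all algebraic $h_1,h_2$; layer by layer this reduces to the associativity identity $f_k^{*}\!\big(I_\nu(f^{*}g)\,g_k\big)=\big(f_k^{*}I_\nu(f^{*}g)\big)g_k$, the cases $k=0,1$ being immediate. Since $\ell(f)$ is already bounded on $\mathcal{H}_\nu$, the usual self-bootstrap — $\norm{Rh}^{2}=\norm{\ip{Rh,Rh}}=\norm{\ip{\ell(f)(Rh),h}}\le\norm{\ell(f)}\,\norm{Rh}\,\norm{h}$, whence $\norm{Rh}\le\norm{\ell(f)}\,\norm{h}$ — shows $R$ is bounded and extends to $\mathcal{H}_\nu$, so $\ell(f)$ is adjointable with $\ell(f)^{*}=R$ and therefore $\ell(f)\in B(\mathcal{H}_\nu)$. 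Finally, the estimate above gives $\norm{\ell(f)}\le\norm{I_\nu(f^*f)}^{1/2}$, while testing against $\xi$ gives $\norm{\ell(f)}\ge\norm{\ell(f)\xi}=\norm{f\otimes 1}_{\mathcal{H}_{\nu,1}}=\norm{I_{\nu,0}(f^{*}f)}^{1/2}$; and because $t\mapsto I_\nu(f^*f)(t)=\int_{t}^{T}\nu(f^{*}f,s)\,ds$ is decreasing in the order of $\mathcal{A}$ with maximum at $t=0$, one has $\norm{I_{\nu,0}(f^*f)}=\sup_{t}\norm{I_\nu(f^*f)(t)}=\norm{I_\nu(f^*f)}$, so all of $\norm{\ell(f)}$, $\norm{I_\nu(f^*f)}^{1/2}$, and $\norm{f\otimes 1}_{\mathcal{H}_{\nu,1}}$ coincide.
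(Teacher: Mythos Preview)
Your proposal is correct and follows essentially the same route as the paper. The one difference worth noting is that the paper compresses your layer-by-layer propagation into a single appeal to Lemma~\ref{lem:multiplicationoperator}: it observes the identity $\ip{\ell(f)h,\ell(f)h}=\ip{h,\mathfrak{m}_k(I_\nu(f^*f))h}$ and then uses that $\mathfrak{m}_k$ restricted to the $C^*$-algebra $\mathcal{L}_{\mathcal{A}}$ has operator norm bounded by the $\mathcal{L}_{\mathcal{A}}$-norm, which gives $\norm{\ell(f)h}\le\norm{I_\nu(f^*f)}^{1/2}\norm{h}$ immediately. Your explicit matrix argument with $C^*SC\le\norm{S}\,C^*C$ unpacks exactly this bound, so the two are the same in content; the paper's version is just shorter because the multiplication-operator lemma already did the work. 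The adjoint formula the paper leaves to the reader, and your treatment of the norm equality via the monotonicity of $t\mapsto I_\nu(f^*f)(t)$ matches the paper's verbatim. (Incidentally, your final line correctly drops the spurious exponent $1/2$ on $\norm{f\otimes 1}_{\mathcal{H}_{\nu,1}}$ that appears in the lemma statement.)
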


\begin{definition} \label{def:creationannihilationoperator}
We call $\ell(f)$ the \emph{creation operator} and $\ell(f)^*$ the \emph{annihilation operator} associated to $f$.
\end{definition}

\begin{proof}[Proof of Lemma \ref{lem:creationoperator}]
A direct computation checks that for $k \geq 1$, for $h \in \mathcal{L}_{\mathcal{A}}\ip{X}^{\otimes_{\alg} k} \otimes_{\alg} \mathcal{A}$, we have
\[
\ip{\ell(f) h, \ell(f) h} = \ip{h, I_\nu(f^*f)h},
\]
Note $I_\nu(f^*f)$ is an element of $\mathcal{L}_{\mathcal{A}}$ and we already checked that the multiplication action by $\mathcal{L}_{\mathcal{A}}\ip{X}$ is bounded.  In fact, since $\mathcal{L}_{\mathcal{A}}$ is a $C^*$-algebra, we have
\[
\norm{\mathfrak{m}(I_\nu(f^*f))} \leq \norm{I_\nu(f^*f)}.
\]
Hence, $\norm{\ell(f)h} \leq \norm{I_\nu(f^*f)}^{1/2} \norm{h}$ and thus $\ell(f)$ defines a bounded operator $\mathcal{H}_{\nu,k} \to \mathcal{H}_{\nu,k+1}$.  In the case $k = 0$, we have $\ip{\ell(f)a, \ell(f)a} = a^* I_{\nu,0}(f^*f) a$ and thus $\ell(f)$ also defines an operator $\mathcal{H}_{\nu,0} \to \mathcal{H}_{\nu,1}$ with norm bounded by $\norm{I_\nu(f^*f)}^{1/2}$.

Altogether, $\ell(f)$ defines a bounded operator $\mathcal{H}_\nu \to \mathcal{H}_\nu$ with norm less than or equal to $\norm{I_\nu(f^*f)}^{1/2}$.  Furthermore, observe that $I_\nu(f^*f)$ is a nonnegative decreasing function $[0,T] \to \mathcal{A}$ and hence
\[
\norm{I_\nu(f^*f)} = \norm{I_\nu(f^*f)(0)} = \norm{I_{\nu,0}(f^*f)} = \norm{f \otimes 1}_{\mathcal{H}_{\nu,1}}^2.
\]
Since $\ell(f)\xi = f \otimes 1$, we have
\[
\norm{f \otimes 1}_{\mathcal{H}_{\nu,1}} \leq \norm{\ell(f)} \leq \norm{I_\nu(f^*f)}^{1/2} = \norm{f \otimes 1}_{\mathcal{H}_{\nu,1}},
\]
and thus all the inequalities are equalities.  The formula for $\ell(f)^*$ is a computation we leave to the reader.
\end{proof}

We gather some elementary properties of the creation, annihilation, and multiplication operators for future reference.

\begin{observation} \label{obs:Fockrelations1}
Let $f, g \in \mathcal{L}_{\mathcal{A}}$.  Let $P_{\mathcal{A}\xi} \in B(\mathcal{H}_\nu)$ be the projection onto $\mathcal{A} \xi$.  Then
\begin{align}
\mathfrak{m}(f) \mathfrak{m}(g) &= \mathfrak{m}(fg) \\
\mathfrak{m}(f) \ell(g) &= \ell(fg) \\
\ell(f)^* \mathfrak{m}(g) &= \ell(g^*f)^* \\
\ell(f)^* \ell(g) &= I_{\nu,0}(f^*g) P_{\mathcal{A}\xi} + \mathfrak{m}(I_\nu(f^*g)).
\end{align}
\end{observation}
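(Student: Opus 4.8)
The plan is to verify each of the four identities by a direct computation on the dense subspace of $\mathcal{H}_\nu$ spanned by the elementary tensors $f_k \otimes \cdots \otimes f_1 \otimes a$ together with $\mathcal{A}\xi = \mathcal{H}_{\nu,0}$, and then to invoke the boundedness of all the operators involved --- established in Lemmas \ref{lem:multiplicationoperator} and \ref{lem:creationoperator} --- to conclude that the identities hold on all of $\mathcal{H}_\nu$. The first two identities are immediate from the defining formulas: applying $\mathfrak{m}(f)$ after $\mathfrak{m}(g)$ multiplies $g$ and then $f$ into the leading tensor slot, which by associativity of the product in $\mathcal{L}_{\mathcal{A}}\ip{X}$ agrees with multiplication by $fg$; likewise $\mathfrak{m}(f)$ applied after $\ell(g)$ first creates the new slot $g$ and then replaces it by $fg$, which is exactly $\ell(fg)$. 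One has only to note separately that both sides of the first identity vanish on $\mathcal{H}_{\nu,0}$, and that the second identity also holds on $\mathcal{H}_{\nu,0}$, where $\ell(g)a = g \otimes a$ and $\mathfrak{m}(f)(g \otimes a) = (fg) \otimes a = \ell(fg)a$.

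For the third identity I would first record the auxiliary fact $\mathfrak{m}(g)^* = \mathfrak{m}(g^*)$, which follows by moving $g^*$ through the inner product of Lemma \ref{lem:tensorpower}: in $\ip{(gf_k)\otimes f_{k-1}\otimes\cdots,\, h_k\otimes\cdots}$ the leading slot contributes $I_\nu((gf_k)^* h_k) = I_\nu(f_k^* g^* h_k)$, which is precisely the leading-slot contribution of $\ip{f_k\otimes\cdots,\, (g^*h_k)\otimes\cdots}$; both $\mathfrak{m}(g)$ and $\mathfrak{m}(g^*)$ vanish on $\mathcal{H}_{\nu,0}$, so the identity holds on all of $\mathcal{H}_\nu$. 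Granting this, the third identity follows by taking adjoints in the second: $\ell(f)^*\mathfrak{m}(g) = \big(\mathfrak{m}(g)^*\ell(f)\big)^* = \big(\mathfrak{m}(g^*)\ell(f)\big)^* = \ell(g^*f)^*$.

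For the fourth identity I would combine the action of $\ell(g)$ with the explicit formula for $\ell(f)^*$ from Lemma \ref{lem:creationoperator}, splitting into cases by tensor degree. On $\mathcal{H}_{\nu,k}$ with $k\ge 1$ one has $\ell(g)(f_k\otimes\cdots\otimes a) = g\otimes f_k\otimes\cdots\otimes a \in \mathcal{H}_{\nu,k+1}$, and since $k+1\ge 2$ the annihilation operator returns $I_\nu(f^*g)f_k \otimes f_{k-1}\otimes\cdots\otimes a = \mathfrak{m}(I_\nu(f^*g))(f_k\otimes\cdots\otimes a)$, while $P_{\mathcal{A}\xi}$ acts as zero there; on $\mathcal{H}_{\nu,0} = \mathcal{A}\xi$ one has $\ell(g)a = g\otimes a\in\mathcal{H}_{\nu,1}$ and the $k=1$ branch of the $\ell(f)^*$ formula gives $I_{\nu,0}(f^*g)a$, which matches $I_{\nu,0}(f^*g)P_{\mathcal{A}\xi}$ while $\mathfrak{m}(I_\nu(f^*g))$ vanishes on $\mathcal{A}\xi$. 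Assembling the two cases yields exactly $\ell(f)^*\ell(g) = I_{\nu,0}(f^*g)P_{\mathcal{A}\xi} + \mathfrak{m}(I_\nu(f^*g))$. None of this presents a genuine obstacle; the only points requiring a little care are the identification $\mathfrak{m}(g)^* = \mathfrak{m}(g^*)$, the boundary degrees $k=0,1$, and the book-keeping that $I_\nu(f^*g)\in\mathcal{C}_{\mathcal{A}}\subseteq\mathcal{L}_{\mathcal{A}}\ip{X}$ may legitimately be fed into $\mathfrak{m}$.
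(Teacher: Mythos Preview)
Your proposal is correct. The paper states this result as an Observation without proof, so your direct verification on elementary tensors---including the auxiliary fact $\mathfrak{m}(g)^* = \mathfrak{m}(g^*)$ and the case splits at degrees $k=0,1$---is exactly the intended routine computation.
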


In light of the last identity, we establish the notation
\begin{equation} \label{eq:definem0}
\mathfrak{m}_0(f) = f(0) P_{\mathcal{A}\xi} + \mathfrak{m}(f),
\end{equation}
for $f \in C([0,T],\mathcal{A})$.  Note that $\mathfrak{m}_0$ defines a unital $*$-homomorphism from $\mathcal{C}_{\mathcal{A}} \to B(\mathcal{H}_\nu)$.  In particular, $\mathfrak{m}_0$ restricts to a $*$-homomorphism $\mathcal{A} \to B(\mathcal{H}_\nu)$.  Moreover, this map is injective since $\ip{\xi, \mathfrak{m}_0(a) \xi} = a$.  The embedding $\mathfrak{m}_0|_{\mathcal{A}}$ of $\mathcal{A}$ into $B(\mathcal{H}_\nu)$ allows us to endow $B(\mathcal{H}_\nu)$ with the structure of an $\mathcal{A}$-valued probability space.

\begin{observation} \label{obs:Fockprobabilityspace}
Let us view $\mathcal{A}$ as a subalgebra of $B(\mathcal{H}_\nu)$ by identifying $\mathcal{A}$ with the its image $\mathfrak{m}_0(\mathcal{A})$ in $B(\mathcal{H}_\nu)$.  Then the vector $\xi$ is an $\mathcal{A}$-central unit vector in $\mathcal{H}_\nu$.  Thus, the map $E_\nu: B(\mathcal{H}_\nu) \to \mathcal{A}$ given by $b \mapsto \ip{\xi,b\xi}$ is an $\mathcal{A}$-valued expectation.  Therefore, $(B(\mathcal{H}_\nu),E_\nu)$ is an $\mathcal{A}$-valued probability space.
\end{observation}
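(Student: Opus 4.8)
The plan is to deduce this from the lemma in \S\ref{subsec:Avaluedprobability} that characterizes $\mathcal{A}$-valued expectations in terms of their representing vector, by verifying its two hypotheses for the pair $(B(\mathcal{H}_\nu),E_\nu)$ and the vector $\xi$. First I would record the easy structural facts: by the discussion after \eqref{eq:definem0}, $\mathfrak{m}_0|_{\mathcal{A}}$ is an injective unital $*$-homomorphism $\mathcal{A}\to B(\mathcal{H}_\nu)$, so identifying $\mathcal{A}$ with $\mathfrak{m}_0(\mathcal{A})$ realizes $\mathcal{A}$ as a unital $C^*$-subalgebra of $B(\mathcal{H}_\nu)$; and applying the implication (2) $\Rightarrow$ (1) of Proposition \ref{prop:operatorvaluedGNS} with $\pi$ the identity representation of $B(\mathcal{H}_\nu)$ shows that $E_\nu(b)=\ip{\xi,b\xi}$ is completely positive. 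Thus $(B(\mathcal{H}_\nu),E_\nu)$ is a candidate $\mathcal{A}$-valued probability space, and it remains only to check that $\xi$ is a unit vector and that $\xi$ is $\mathcal{A}$-central.

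The unit vector condition is immediate from the construction: the degree-zero summand $\mathcal{H}_{\nu,0}=\mathcal{A}\xi$ of Definition \ref{def:Fockspace} is $\mathcal{A}$ viewed as a right Hilbert $\mathcal{A}$-module over itself, with inner product $\ip{a,a'}=a^*a'$ and $\xi$ corresponding to $1$, so $\ip{\xi,\xi}=1^*1=1$. For $\mathcal{A}$-centrality I would fix $a\in\mathcal{A}$, regard it as the constant function $a\in\mathcal{C}_{\mathcal{A}}$, and compute $\mathfrak{m}_0(a)\xi$ directly from \eqref{eq:definem0}: since $\mathfrak{m}(a)$ acts as $0$ on $\mathcal{A}\xi$ by Definition \ref{def:multiplicationoperator} and $P_{\mathcal{A}\xi}\xi=\xi$, we get $\mathfrak{m}_0(a)\xi=a(0)\,\xi=a\,\xi$. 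On the other hand the right $\mathcal{A}$-action of $a$ on $\xi$ inside $\mathcal{H}_{\nu,0}=\mathcal{A}$ is right multiplication, which produces the element $a$ as well, i.e.\ $\xi a=a\xi$. Hence $\mathfrak{m}_0(a)\xi=\xi a$, which is exactly $\mathcal{A}$-centrality of $\xi$; as a sanity check this is consistent with the identity $\ip{\xi,\mathfrak{m}_0(a)\xi}=a$ already noted after \eqref{eq:definem0}.

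With both hypotheses verified, the cited lemma yields that $E_\nu$ satisfies all three axioms of a conditional expectation ($E_\nu|_{\mathcal{A}}=\id$, complete positivity, and the $\mathcal{A}$-bimodule property), so $(B(\mathcal{H}_\nu),E_\nu)$ is an $\mathcal{A}$-valued probability space. I expect no real obstacle here; the only thing requiring care is the bookkeeping around the identification of $\mathcal{H}_{\nu,0}$ with $\mathcal{A}$ and the left/right module conventions, so that the two short computations above genuinely assert $\ip{\xi,\xi}=1$ and $\mathfrak{m}_0(a)\xi=\xi a$ rather than something off by a convention. Everything else is immediate from the definitions and from the results already established in \S\ref{subsec:Avaluedprobability}.
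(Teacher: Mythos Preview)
Your proposal is correct and is exactly the natural verification; the paper states this as an observation without proof, and your argument fills in precisely the details one would expect, invoking the characterization lemma from \S\ref{subsec:Avaluedprobability} after checking that $\ip{\xi,\xi}=1$ and $\mathfrak{m}_0(a)\xi=\xi a$.
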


\subsection{Realization of $\mu_{t_1,t_2}$ on the Fock Space}

\begin{theorem} \label{thm:Fockspace}
Let $\nu$ be a distributional family of generalized laws on $[0,T]$, and let $V$ be the corresponding distributional Herglotz vector field.  Let $(F_t(z))_{t \in [0,T]}$ be the Loewner chain generated by $V$.  For $t_1 \leq t_2$, let $F_{t_1,t_2}(z)$ be the subordination map.  Let $\mu_{t_1,t_2}$ be the $\mathcal{A}$-valued law such that $F_{t_1,t_2}$ is the $F$-transform of $\mu_{t_1,t_2}$.  Let $(\mathcal{H}_\nu,E_\nu)$ be the Fock space defined in the previous section and define the operator $Y_{t_1,t_2} \in B(\mathcal{H}_\nu)$ by
\begin{equation}
Y_{t_1,t_2} = \ell(\chi_{(t_1,t_2)}) + \ell(\chi_{(t_1,t_2)})^* + \mathfrak{m}(\chi_{(t_1,t_2)} X)
\end{equation}
Then
\begin{enumerate}[(1)]
	\item If $t_1 \leq t_2 \leq t_3$, then $Y_{t_1,t_2} + Y_{t_2,t_3} = Y_{t_1,t_3}$.
	\item The law of $Y_{t_1,t_2}$ with respect to $E_\nu$ is $\mu_{t_1,t_2}$.
	\item $\norm{Y_{t_1,t_2}} \leq \sqrt{2C(t_2 - t_1)} + \rad(\nu)$.
	\item If $t_0 < \dots < t_N$, then $Y_{t_0,t_1}$, \dots, $Y_{t_{N-1},t_N}$ are monotone independent in $(B(\mathcal{H}_\nu),E_\nu)$.
\end{enumerate}
\end{theorem}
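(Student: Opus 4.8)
The plan is to verify the four claims in order, with claim (2) being the analytic heart of the argument and claims (1), (3), (4) being comparatively bookkeeping once the relevant algebraic identities from Observation~\ref{obs:Fockrelations1} are in hand. First I would dispatch (1): since $\chi_{(t_1,t_2)} + \chi_{(t_2,t_3)} = \chi_{(t_1,t_3)}$ pointwise a.e.\ (the overlap at $t_2$ has measure zero, hence is absorbed in $\mathcal{L}_{\mathcal{A}}$), and since $f \mapsto \ell(f)$ and $f \mapsto \mathfrak{m}(f)$ are linear, the identity $Y_{t_1,t_2} + Y_{t_2,t_3} = Y_{t_1,t_3}$ is immediate. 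For (3), I would note that $Y_{t_1,t_2}$ is self-adjoint (the creation and annihilation parts are mutual adjoints, and $\mathfrak{m}(\chi_{(t_1,t_2)}X)$ is self-adjoint since $\chi_{(t_1,t_2)}$ is a real projection and $X$ self-adjoint), so its norm equals its spectral radius; but rather than estimate the norm directly I would deduce (3) from (2): $\norm{Y_{t_1,t_2}} = \rad(\mu_{t_1,t_2})$ is exactly the quantity bounded by $\rad(\nu) + \sqrt{2C(t_2-t_1)}$ in Theorem~\ref{thm:Herglotzflow}(2)/Theorem~\ref{thm:Loewnerintegration}(2). So the logical order is really (1), then (2), then (3), then (4).

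For (2), the key is to show that the moments $E_\nu[(a_0 Y_{t_1,t_2} a_1 \cdots Y_{t_1,t_2} a_k)]$ coincide with $\mu_{t_1,t_2}(a_0 X a_1 \cdots X a_k)$, and by Theorem~\ref{thm:combinatorics} the latter equals $\sum_{\pi \in NC_{\geq 2}(k)} a_0 Q_{\pi;t_1,t_2}(a_1,\dots,a_{k-1}) a_k$. I would expand the product $a_0 Y a_1 \cdots Y a_k$ (writing $Y = Y_{t_1,t_2}$, $\chi = \chi_{(t_1,t_2)}$) into a sum of words in $\ell(\chi)$, $\ell(\chi)^*$, $\mathfrak{m}(\chi X)$, and the $\mathfrak{m}_0(a_j)$, and then observe that $\langle \xi, (\text{word})\xi\rangle$ vanishes unless the word, read as a lattice path, starts and ends at level $0$ and never goes below level $0$ --- i.e.\ unless the pattern of creation/annihilation operators forms a Dyck-type structure. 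The bijection between such admissible words and non-crossing partitions $\pi \in NC_{\geq 2}(k)$ is the standard one: each annihilation operator $\ell(\chi)^*$ is matched to the most recent unmatched creation operator, the positions carrying $\mathfrak{m}(\chi X)$ or nothing get grouped with the creation/annihilation positions enclosing them into blocks, and singleton blocks are excluded precisely because $\langle \xi, \ell(\chi)\xi\rangle$ and the corresponding "unpaired" contributions do not survive (this is where $NC_{\geq 2}$ rather than $NC$ appears --- a creation followed immediately by the expectation, with nothing annihilating it, contributes zero). The value attached to each admissible word is then computed by repeatedly applying the relations $\ell(f)^*\ell(g) = I_{\nu,0}(f^*g)P_{\mathcal{A}\xi} + \mathfrak{m}(I_\nu(f^*g))$, $\mathfrak{m}(f)\ell(g) = \ell(fg)$, and $\ell(f)^*\mathfrak{m}(g) = \ell(g^*f)^*$ from Observation~\ref{obs:Fockrelations1}: peeling off the innermost creation-annihilation pair produces an application of $I_\nu$ (i.e.\ an integral against $\nu$ over a time interval $(u, t_2)$ where $u$ is the "height" parameter, matching the nested integrals $\int_s^t \nu(\cdots, u)\,du$ in the definition of $Q_\pi$ via $\Theta_m$), while concatenation of independent sub-words produces the product structure in $Q_{\pi_1\pi_2}$. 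Carefully tracking the time-cutoffs $\chi_{(t_1,t_2)}$ through $I_\nu$ (note $I_\nu(\chi \cdot g)|_t = \int_{\max(t,t_1)}^{t_2} \nu(g(s),s)\,ds$) is exactly what makes the bookkeeping reproduce $Q_{\pi;t_1,t_2}$ rather than $Q_{\pi;0,T}$. I would then invoke Theorem~\ref{thm:Cauchytransform} (uniqueness of a law given its moments) to conclude the law of $Y_{t_1,t_2}$ is $\mu_{t_1,t_2}$.

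For (4), monotone independence of $Y_{t_0,t_1},\dots,Y_{t_{N-1},t_N}$, I would check the defining relation of Definition~\ref{def:monotoneindependence}: given a word $b_1 \cdots b_m$ with $b_j \in \mathcal{A}\langle Y_{t_{k_j-1},t_{k_j}}\rangle_0$ and the index $j$ a local maximum ($k_j > k_{j\pm1}$), one must show $E_\nu[b_1\cdots b_m] = E_\nu[b_1\cdots E_\nu[b_j]\cdots b_m]$. The mechanism is that the operators $\ell(\chi_{(t_{k-1},t_k)})$, $\ell(\chi_{(t_{k-1},t_k)})^*$, $\mathfrak{m}(\chi_{(t_{k-1},t_k)}X)$ associated to a later interval (larger $k$) act, via the $I_\nu$-reductions, only "on top of" the tensor legs created by earlier intervals and produce scalars ($\mathcal{A}$-valued, via $I_{\nu,0}$) only when they return all the way to $\mathcal{A}\xi$; concretely, $\ell(\chi_{(t_{k-1},t_k)})^* \ell(\chi_{(t_{k'-1},t_{k'})}) = \mathfrak{m}(I_\nu(\chi_{(t_{k-1},t_k)}\chi_{(t_{k'-1},t_{k'})})) + I_{\nu,0}(\cdots)P_{\mathcal{A}\xi}$, and for disjoint intervals the product $\chi_{(t_{k-1},t_k)}\chi_{(t_{k'-1},t_{k'})} = 0$, forcing a factorization through $P_{\mathcal{A}\xi}$, which is exactly the statement $E_\nu[\,\cdot\,]$ factors out. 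I would make this precise by the same Dyck-path/reduction calculus as in (2): a local-maximum block $b_j$ must be fully "resolved" (all its creation operators annihilated) before any operator from a neighboring, strictly-lower interval can act, because the neighbors' annihilation operators see $\chi$-functions disjoint from $b_j$'s and hence kill the corresponding tensor legs. The main obstacle I anticipate is organizing the combinatorial bijection in (2) --- matching admissible operator words to $NC_{\geq 2}$ partitions and checking that the nested-integral structure of $Q_\pi$ is reproduced with the correct time-cutoffs --- cleanly enough that (4) then follows as a transparent corollary rather than a separate lengthy computation; I would try to set up (2) so that the reduction rules are stated once and reused verbatim for (4).
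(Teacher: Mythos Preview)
Your outline for (1), (2), and (4) is essentially the paper's approach: (1) is immediate from linearity; (2) is proved by expanding $E_\nu[b_1 a_1 \cdots a_{k-1} b_k]$ over choices $b_j \in \{\ell(\chi), \ell(\chi)^*, \mathfrak{m}(\chi X)\}$, showing via a Dyck-path/stack algorithm that the surviving words are indexed by $NC_{\geq 2}(k)$, and then verifying by induction (using Observation~\ref{obs:Fockrelations1}) that the value on the word indexed by $\pi$ equals $Q_{\pi;t_1,t_2}$; and (4) is deduced from a more general monotone-independence statement for the $\mathcal{A}$-algebras $\mathcal{B}_{s,t}$ generated by $\ell(f), \ell(f)^*, \mathfrak{m}(f)$ with $f$ supported in $(s,t)$, proved via the vanishing relations you identify (disjoint supports force $\ell(f)^*\ell(g) = 0$, $\mathfrak{m}(f)\ell(g) = 0$, etc.).

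There is, however, a genuine gap in your argument for (3). You assert $\norm{Y_{t_1,t_2}} = \rad(\mu_{t_1,t_2})$ and then invoke the radius bound from Theorem~\ref{thm:Loewnerintegration}. But the law $\mu_{t_1,t_2}$ only records $\langle \xi, p(Y_{t_1,t_2})\xi\rangle$, so $\rad(\mu_{t_1,t_2})$ controls only the norm of $Y_{t_1,t_2}$ restricted to the cyclic subspace $\mathcal{K} = \overline{\mathcal{A}\langle Y_{t_1,t_2}\rangle \xi}$. Nothing so far guarantees that $\xi$ is separating for the $C^*$-algebra generated by $Y_{t_1,t_2}$, i.e.\ that the restriction map $B(\mathcal{H}_\nu) \to B(\mathcal{K})$ is isometric on that algebra, so the equality $\norm{Y_{t_1,t_2}} = \rad(\mu_{t_1,t_2})$ is unjustified. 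The paper flags exactly this temptation and works around it: it establishes an algebraic resolvent identity expressing $(z - Y_{t_1,t_2}^{(n)})^{-1}$ in terms of $\mathfrak{m}_0^{(n)}(\mathcal{F}(z))$, $\ell_\xx$, $\ell_\xx^*$ (where $\mathcal{F}(z)(t)$ packages the subordination maps $F_{\mu_{t,t_2}}(z)$), and then for every $h$ in a dense subspace of $\mathcal{H}_\nu$ bounds $\rad(\rho_h)$, where $\rho_h(f) = \langle h, f(Y_{t_1,t_2})h\rangle$, using the analyticity estimate \eqref{eq:solutionniceestimate} for $\mathcal{F}(z^{-1})^{-1}$. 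The upshot is that the paper's bound on $\norm{Y_{t_1,t_2}}$ is genuinely work beyond (2), not a corollary of it.
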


\begin{proof}[Proof of Theorem \ref{thm:Fockspace}] (1) is immediate because $\ell(\chi_{(t_1,t_2)}) + \ell(\chi_{(t_2,t_3})) = \ell(\chi_{(t_1,t_3})$ and $\mathfrak{m}(\chi_{(t_1,t_2)}X) + \mathfrak{m}(\chi_{(t_2,t_3)} X) = \mathfrak{m}(\chi_{(t_1,t_3)} X)$.

(2) By Theorem \ref{thm:combinatorics}, it suffices to show that
\[
E_\nu[Y_{t_1,t_2} a_1 Y_{t_1,t_2} \dots a_{k-1} Y_{t_1,t_2}] = \sum_{\pi \in NC_{\geq 2}(k)} Q_{\pi;t_1,t_2}(a_1,\dots,a_{k-1}).
\]
We substitute $Y_{t_1,t_2} = \ell(\chi_{(t_1,t_2)}) + \ell(\chi_{(t_1,t_2)})^* + \mathfrak{m}(\chi_{(t_1,t_2)} X)$ and expand by linearity.  This results in the sum of $E_\nu[b_1a_1 b_2 \dots a_{k-1}b_k]$ over all possible values of $b_j \in \{\ell(\chi_{(t_1,t_2)}), \ell(\chi_{(t_1,t_2)})^*, \mathfrak{m}(\chi_{(t_1,t_2)} X)\}$.

{\bf Step 1:} Let us say that a choice of $b_1$, \dots, $b_k$ as above is \emph{compatible} with the partition $\pi \in NC_{\geq 2}(k)$ if for every block $B$ of $\pi$, we have
\[
b_j =
\begin{cases}
\ell(\chi_{(t_1,t_2)})^*, & j = \min B \\
\mathfrak{m}(\chi_{(t_1,t_2)} X), & j \in B \setminus \{\min B, \max B\} \\
\ell(\chi_{(t_1,t_2)}), & j = \max B.
\end{cases}
\]
We claim that $\ip{\xi, b_1a_1 b_2 \dots a_{k-1}b_k \xi}$ is zero unless there is a partition $\pi$ compatible with $b_1$, \dots, $b_k$.  We describe an algorithm that will construct a compatible partition for $b_1$, \dots, $b_k$ if one exists and otherwise will prove that ${\xi, b_1a_1 b_2 \dots a_{k-1}b_k} = 0$.

We will start with the vector $\xi$ and then apply the operators $b_k$ one at a time (starting from the right).  We will define $S_j \subseteq \{j,j+1,j+2,\dots\}$ inductively for $j = k + 1, k, k - 1, \dots$ and show that $b_j a_{j+1} \dots a_k b_k \xi$ is a simple tensor in $\mathcal{H}_{\nu,|S_j|}$.  We start by setting $S_{k+1} = \varnothing$ and note $\xi \in \mathcal{H}_{\nu,0}$.  For the inductive step, we divide into cases:
\begin{enumerate}[(A)]
	\item Suppose $b_j = \ell(\chi_{(t_1,t_2)})$.  Then set $S_j = \{j\} \cup S_{j+1}$.  Then $|S_j| = |S_{j+1}|$.  Since $b_{j+1} a_{j+1}\dots a_{k-1} b_k \xi$ is a simple tensor in $\mathcal{H}_{\nu,|S_{j+1}|}$, we see that $b_j a_j \dots a_{k-1} b_k$ is a simple tensor in $\mathcal{H}_{\nu,|S_j|}$.
	\item Suppose $b_j = \mathfrak{m}(\chi_{(t_1,t_2)} X)$.  If $S_{j+1} = \varnothing$, then $b_{j+1} a_{j+1} \dots a_{k-1} b_k \in \mathcal{A} \xi$.  Thus, applying $b_j$ will produce the zero vector.  In this case, we terminate the algorithm.  Otherwise, we set $S_j = S_{j+1}$.  Note that $b_ja_j \dots a_{k-1} b_k$ is still a simple tensor in $\mathcal{H}_{\nu;|S_j|}$.
	\item Suppose $b_j = \ell(\chi_{(t_1,t_2)})^*$.  If $S_{j+1} = \varnothing$, then $b_{j+1} a_{j+1} \dots a_{k-1} b_k \in \mathcal{A}\xi$.  Hence, applying $b_j$ will produce the zero vector.  In this case, we terminate the algorithm.  Otherwise, we define $S_j = S_{j+1} \setminus \{\min S_{j+1}\}$, and note that $b_j a_j \dots a_{k-1} b_k$ is a simple tensor in $\mathcal{H}_{\nu,|S_j|}$.
\end{enumerate}
Suppose that the algorithm completes the step $j = 1$ without terminating.  If $S_1 \neq \varnothing$, then $b_1a_1 \dots a_{k-1} b_k \xi$ is in $\mathcal{H}_{\nu,|S_1|}$ and hence is orthogonal to $\xi$.  Therefore, $\ip{\xi, b_1a_1 \dots a_{k-1} b_k \xi} = 0$.

If $S_1 = \varnothing$, we define a partition $\pi$ as follows.  Let $\mathcal{J}$ be the set of indices for which $b_j$ is a creation operator.  For $j \in \mathcal{J}$, define $B_j = \{i: \min (S_i \cup S_{i+1}) = j\}$, where
\[
\min(S_i \cup S_{i+1}) = \begin{cases}
i = \min S_i, & b_i = \ell(\chi_{(t_1,t_2)}) \\
\min S_i, & b_i = \mathfrak{m}(\chi_{(t_1,t_2)}X) \\
\min S_{i+1}, & b_i = \ell(\chi_{(t_1,t_2)})^*
\end{cases}
\]
One can check that $\min(S_i \cup S_{i+1}) < +\infty$ in all cases; for instance, in the case of a multiplication operator, we must have $S_i \neq \varnothing$ because otherwise we would have terminated the algorithm in step (B) above.  To show that $\pi$ is non-crossing, consider two blocks $B_j$ and $B_{j'}$ with $j < j'$.  If $i_1$ and $i_2$ are two indices of $B_j$ with $i_1 < i_2$, then for $i$ between $i_1$ and $i_2$, the minimum of the list $S_i$ must be $\leq j$, and hence there cannot be any elements of $B_j$ between $i_1$ and $i_2$.  This in particular rules out the possibility of a crossing.

{\bf Step 2:} It remains to prove that $E_\nu[a_0 b_1 a_1 \dots a_{k-1} b_k a_k] = a_0 Q_{\pi;t_1,t_2}(a_1,\dots,a_{k-1}) a_k$ when $b_1$, \dots, $b_k$ is the string corresponding to a partition $\pi \in NC_{\geq 2}(k)$.  It suffices to show that for every such partition $\pi$,
\begin{equation} \label{eq:partitionidentity}
a_0 b_1 a_1 \dots a_{k-1} b_k a_k = \mathfrak{m}_0(f_{\pi,a_0,\dots,a_k}),
\end{equation}
where $\mathfrak{m}_0$ is given by \eqref{eq:definem0} and where
\[
f_{\pi;a_1,\dots,a_k}(t) =
\begin{cases}
a_0 Q_{\pi;t_1, t_2}(a_1,\dots,a_{k-1})a_k, & t \in [0,t_1] \\
a_0 Q_{\pi;t,t_2}(a_1,\dots,a_{k-1}) a_k, & t \in [t_1,t_2] \\
0, & t \in [t_2,T]
\end{cases}
\]
We verify this by induction on $k$, using the inductive framework and conventions regarding the empty partition from Lemma \ref{lem:defineQ}.  The base case $k = 0$ is trivial.  In the case where $\pi = \pi_1 \pi_2$, the definition of $Q_{\pi;t_1,t_2}$ implies that
\[
f_{\pi_1\pi_2;a_0,\dots,a_k} = f_{\pi_1;a_0,\dots,a_{k_1-1},1} a_{k_1} f_{\pi_2;1,a_{k_1+1},\dots,a_{k_1+k_2}},
\]
and hence \eqref{eq:partitionidentity} holds for $\pi_1 \pi_2$ if it holds for $\pi_1$ and $\pi_2$.  Now suppose that $\pi = \Theta_m(\pi_1, \dots, \pi_m)$ with $\pi_j \in NC_{\geq 2}(k_j-1)$ and that $K_j = 1 + k_1 + \dots + k_j$ as in Definition \ref{def:nesting}.  By the inductive hypothesis,
\begin{multline*}
a_0 b_1 a_1 \dots a_{k-1} b_k a_k \\
= a_0 \ell(\chi_{(t_1,t_2)})^* f_{\pi_1;a_1,\dots,a_{K_1-1}} \mathfrak{m}(\chi_{(t_1,t_2)} X) \dots \mathfrak{m}(\chi_{(t_1,t_2)}X) f_{\pi_m;a_{K_{m-1}+1},\dots,a_{k-1}} \ell(\chi_{(t_1,t_2)}) a_k.
\end{multline*}
In light of Observation \ref{obs:Fockrelations1}, this evaluates to $I_{\nu,0}(g) P_{\mathcal{A}\xi} + \mathfrak{m}(I_\nu(g))$, where
\[
g = \chi_{(t_1,t_2)} f_{\pi_1;a_1,\dots,a_{K_1-1}} X \dots X f_{\pi_m;a_{K_{m-1}+1},\dots,a_{k-1}}
\]
But from the definition of $Q_{\pi;t_1,t_2}$, one checks that $I_\nu(g) = f_{\pi;a_0,\dots,a_k}$, and hence \eqref{eq:partitionidentity} holds in this case as well.
\end{proof}

\begin{remark}
The argument in Step 1 formalizes the physical intuition that $\ell(\chi_{(t_1,t_2)})$ creates a particle and $\ell(\chi_{(t_1,t_2)})^*$ annihilates a particle.  The set $S_j$ represents the list of particles that exist at the time indexed by $j$ (and unfortunately time is indexed backwards).  Each creation operator produces a new particle, each multiplication operator acts on the last particle created that still exists, and the each annihilation operator destroys the last particle created that still exists.  The operators that create, annihilate, or transform the same particle are put into the same block of the partition.  Versions of this construction are standard in non-commutative probability, see e.g.\ \cite[\S 2.1.3]{AGZ2009}.
\end{remark}

We prove Theorem \ref{thm:Fockspace} (3) in \S \ref{subsec:Fockspacenorm} and (4) in \S \ref{subsec:Fockspaceindependence}

\subsection{Norm of Operators on the Fock Space} \label{subsec:Fockspacenorm}

Now we turn to the proof of (3) of Theorem \ref{thm:Fockspace}.  To set the stage, we remark that by Lemma \ref{lem:creationoperator}, we would have
\[
\norm{\ell(\chi_{(t_1,t_2)})} + \norm{\ell(\chi_{(t_1,t_2)})} + \norm{\mathfrak{m}(\chi_{(t_1,t_2)}X)}
\leq \sqrt{C(t_2 - t_1)} + \sqrt{C(t_2 - t_1)} + M,
\]
which gives the bound with $2 \sqrt{C(t_2 - t_1)} + M$ instead of $\sqrt{2C(t_2 - t_1)} + M$.  However, it is reasonable to hope for the sharper bound $\sqrt{2C(t_2 - t_1)}$ because we showed in Theorem \ref{thm:Loewnerintegration} that $\rad(\mu_{t_1,t_2}) \leq \sqrt{2C(t_2 - t_1)} + M$.

At this point, it is tempting to assume that $\norm{Y_{t_1,t_2}} = \rad(\mu_{t_1,t_2})$, but we have not proved this.  The most that we can say a priori is that the norm of $Y_{t_1,t_2}$ restricted to the subspace $\mathcal{K} = \overline{\mathcal{A}\ip{Y_{t_1,t_2}} \xi}$ equals $\rad(\mu_{t_1,t_2})$.  We do not know that the restriction map $B(\mathcal{H}_\nu) \to B(\mathcal{K})$ is injective on the $C^*$-algebra generated by $Y_{t_1,t_2}$, so we cannot conclude $\norm{Y_{t_1,t_2}} = \rad(\mu_{t_1,t_2})$.

One feasible approach to proving (3) is to evaluate $\ip{h, Y_{t_1,t_2}^m h}$ combinatorially when $h$ is a simple tensor similar to Theorem \ref{thm:Fockspace} (2) and then estimate each term similarly to Theorem \ref{thm:combinatoricsestimate}.  However, we will instead take an algebraic approach based on the following observation.

\begin{lemma} \label{lem:algebraicformula}
With the setup of Theorem \ref{thm:Fockspace}, fix $0 \leq t_1 \leq t_2 \leq T$ and fix $n$.  For $z \in \h^{(n)}(\mathcal{A})$, define
\[
\mathcal{F}^{(n)}(z) \in C([0,T],M_n(\mathcal{A}))
\]
by
\[
\mathcal{F}^{(n)}(z)(t) =
\begin{cases}
F_{\mu_{t_1,t_2}}^{(n)}(z), & t \in [0,t_1], \\
F_{\mu_{t,t_2}}^{(n)}(z), & t \in [t_1,t_2], \\
z, & t \in [t_2,T].
\end{cases}
\]
Let us abbreviate
\begin{align*}
\mathcal{F}_\xx(z) &= \mathfrak{m}_0^{(n)}[\mathcal{F}(z)] \in M_n(B(\mathcal{H}_\nu)) \\
\ell_{\xx} &= \ell(\chi_{(t_1,t_2)})^{(n)} \in M_n(B(\mathcal{H}_\nu)) \\
\ell_{\xx}^* &= [\ell(\chi_{(t_1,t_2)})^*]^{(n)} \in M_n(B(\mathcal{H}_\nu)) \\
\mathfrak{m}_{\xx} &= \mathfrak{m}(\chi_{(t_1,t_2)} X)^{(n)} \in M_n(B(\mathcal{H}_\nu))
\end{align*}
Then the following relation holds in $M_n(B(\mathcal{H}_\nu))$ for $z \in \h^{(n)}(\mathcal{A})$
\begin{multline}
[z - (\mathfrak{m}_\xx + \ell_\xx + \ell_\xx^*)]^{-1} \\
= \left[ 1 - (\mathcal{F}_\xx(z) - \mathfrak{m}_\xx)^{-1} \ell_\xx \right]^{-1} (\mathcal{F}_\xx(z) - \mathfrak{m}_\xx)^{-1} \left[ 1 - \ell_\xx^* (\mathcal{F}_\xx(z) - \mathfrak{m}_\xx)^{-1} \right]^{-1} \label{eq:algebraicformula1}
\end{multline}
Moreover, for invertible $z$ in a neighborhood of zero,
\begin{multline}
[z^{-1} - (\mathfrak{m}_\xx + \ell_\xx + \ell_\xx^*)]^{-1} \\
= \sum_{j,k=0}^\infty \left[ (\mathcal{F}_\xx(z^{-1}) - \mathfrak{m}_\xx)^{-1} \ell_\xx \right]^j (\mathcal{F}_\xx(z^{-1}) - \mathfrak{m}_\xx)^{-1}  \left[ \ell_\xx^* (\mathcal{F}_\xx(z^{-1}) - \mathfrak{m}_\xx)^{-1} \right]^k, \label{eq:algebraicformula2}
\end{multline}
and this function extends to be fully matricial for $z \in B_{M_n(\mathcal{A})}(0,R)$ for some $R > 0$ independent of $n$.
\end{lemma}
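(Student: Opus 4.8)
The strategy is to verify the algebraic identity \eqref{eq:algebraicformula1} by a direct resolvent manipulation, and then deduce \eqref{eq:algebraicformula2} by expanding the two Neumann series, and finally deduce the fully matricial extension by combining a norm bound on the terms of that series with analytic continuation (Lemma \ref{lem:analyticcontinuationofconvergence}). The key observation making everything work is that on the Fock space $\mathcal{H}_\nu$ the triple of operators $(\ell_\xx,\ell_\xx^*,\mathfrak{m}_\xx)$ interacts with $\mathfrak{m}_0^{(n)}$ of a function through the relations in Observation \ref{obs:Fockrelations1}, together with the fact that $\mathcal{F}^{(n)}(z)$ has been rigged precisely so that $I_\nu$ applied to the relevant product reproduces $\mathcal{F}^{(n)}(z)$ again --- this is the ``self-reproducing'' structure exploited in the proof of Theorem \ref{thm:Fockspace}(2).

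\textbf{Step 1 (the core identity).} First I would prove \eqref{eq:algebraicformula1}. Write $Y = \mathfrak{m}_\xx + \ell_\xx + \ell_\xx^*$ and $A = \mathcal{F}_\xx(z) - \mathfrak{m}_\xx$. The claim is that $(z - Y)^{-1} = (1 - A^{-1}\ell_\xx)^{-1} A^{-1} (1 - \ell_\xx^* A^{-1})^{-1}$. Equivalently, $z - Y = (1 - \ell_\xx^* A^{-1}) A (1 - A^{-1}\ell_\xx) = A - \ell_\xx^* - \ell_\xx + \ell_\xx^* A^{-1}\ell_\xx$. Since $A - \ell_\xx^* - \ell_\xx = \mathcal{F}_\xx(z) - \mathfrak{m}_\xx - \ell_\xx^* - \ell_\xx = \mathcal{F}_\xx(z) - Y$, it suffices to establish the single identity
\begin{equation*}
z - \mathcal{F}_\xx(z) = \ell_\xx^* (\mathcal{F}_\xx(z) - \mathfrak{m}_\xx)^{-1} \ell_\xx \quad \text{in } M_n(B(\mathcal{H}_\nu)).
\end{equation*}
This is where the Fock-space relations enter. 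Using Observation \ref{obs:Fockrelations1}, $\ell(f)^*\mathfrak{m}_0(g)\ell(h)$ collapses to $I_{\nu,0}(f^* g h) P_{\mathcal{A}\xi} + \mathfrak{m}(I_\nu(f^* g h))$; applying this with $f = h = \chi_{(t_1,t_2)}$ and $g$ equal to the resolvent $(\mathcal{F}^{(n)}(z) - \chi_{(t_1,t_2)}X)^{-1}$ (expanded as a geometric series in $\chi_{(t_1,t_2)}X$, which is legitimate for $z \in \h^{(n)}_\epsilon(\mathcal{A})$), one reads off that the right-hand side equals $\mathfrak{m}_0^{(n)}$ of the function $t \mapsto \int_t^T \nu^{(n)}\bigl((\mathcal{F}^{(n)}(z)(u) - X)^{-1},u\bigr)\,du$ on $[t_1,t_2]$ and $0$ on $[0,t_1]\cup[t_2,T]$ (I would check the boundary pieces carefully, since $\mathcal F^{(n)}(z)$ is constant there). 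But by construction $\mathcal{F}^{(n)}(z)(t) = F^{(n)}_{\mu_{t,t_2}}(z) = z - \int_t^{t_2}\tilde V^{(n)}(\cdots)\,du = z - \int_t^{t_2}\nu^{(n)}((\mathcal{F}^{(n)}(z)(u)-X)^{-1},u)\,du$ by \eqref{eq:FSTdiffeq}, which is exactly $z - \mathcal{F}^{(n)}(z)(t)$ on $[t_1,t_2]$, and $=0$ off that interval; applying $\mathfrak{m}_0^{(n)}$ (a $*$-homomorphism, so it respects this equality) gives the needed identity. This verification --- matching the Fock-space algebra against the Loewner ODE \eqref{eq:FSTdiffeq} --- is the main obstacle and the heart of the lemma; everything after it is bookkeeping.

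\textbf{Step 2 (series expansion) and Step 3 (fully matricial extension).} Given \eqref{eq:algebraicformula1}, substituting $z^{-1}$ for $z$ and expanding $(1 - A^{-1}\ell_\xx)^{-1} = \sum_j (A^{-1}\ell_\xx)^j$ and $(1 - \ell_\xx^* A^{-1})^{-1} = \sum_k (\ell_\xx^* A^{-1})^k$ yields \eqref{eq:algebraicformula2}; these series converge in operator norm for $z$ small because $\|\ell_\xx\| = \|\ell_\xx^*\| = \|I_\nu(\chi_{(t_1,t_2)})\|^{1/2} \le \sqrt{C(t_2-t_1)}$ by Lemma \ref{lem:creationoperator}, while $\|(\mathcal{F}_\xx(z^{-1}) - \mathfrak{m}_\xx)^{-1}\| \to 0$ as $z \to 0$ (since $\mathcal{F}^{(n)}(z^{-1})$ behaves like $z^{-1}$ near $0$, by Theorem \ref{thm:Herglotzflow}(2) applied to each $F^{(n)}_{\mu_{t,t_2}}$, and $\|\mathfrak m_\xx\| \le M$). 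For the fully matricial extension: each summand in \eqref{eq:algebraicformula2} is, using Observation \ref{obs:Fockrelations1} again, of the form $\mathfrak{m}_0^{(n)}$ applied to an iterated integral/multiplication expression in $\mathcal{F}^{(n)}(z^{-1})$ and $\nu$ --- precisely the kind of expression whose fully matricial character is established by the machinery of \S\ref{subsec:combinatoricspreliminary}; alternatively, one observes that $z \mapsto [z^{-1} - Y]^{-1}$ is built from $\mathcal{F}^{(n)}(z^{-1})$, which by Theorem \ref{thm:Herglotzflow}(2) extends fully matricially to a ball $B_{M_n(\mathcal A)}(0,R_0)$ with $R_0$ independent of $n$ (namely $R_0 = (M+\sqrt{2CT})^{-1}$ works uniformly), together with $\ell_\xx, \ell_\xx^*, \mathfrak m_\xx$ which are fixed operators; the series \eqref{eq:algebraicformula2} then defines an analytic $M_n(B(\mathcal H_\nu))$-valued function on the region $\{\,\|z\| < R_0,\ \im z^{-1} \ge \epsilon\,\}$ that agrees with the already-fully-matricial $[z^{-1}-Y]^{-1}$ there, is uniformly bounded on $B_{M_n(\mathcal A)}(0,R)$ for a slightly smaller $R$ uniform in $n$, and hence by Lemma \ref{lem:analyticcontinuationofconvergence} extends analytically, and fully matricially, to all of $B_{M_n(\mathcal A)}(0,R)$. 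The property of respecting direct sums and similarities passes to the extension because it is preserved under locally uniform limits.
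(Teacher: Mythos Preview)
Your approach is essentially identical to the paper's: reduce \eqref{eq:algebraicformula1} to the key identity $z - \mathcal{F}_\xx(z) = \ell_\xx^*(\mathcal{F}_\xx(z)-\mathfrak{m}_\xx)^{-1}\ell_\xx$, verify that identity by computing $I_\nu^{(n)}\bigl(\chi_{(t_1,t_2)}(\mathcal{F}^{(n)}(z)-X^{(n)})^{-1}\bigr)$ via Observation \ref{obs:Fockrelations1} and matching against the ODE \eqref{eq:FSTdiffeq}, then factor and invert; for \eqref{eq:algebraicformula2} expand the two Neumann series and use the behavior of $\mathcal{F}^{(n)}(z^{-1})^{-1}$ near $0$.

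One small correction in your Step 1 description: the function $t\mapsto I_\nu^{(n)}\bigl(\chi_{(t_1,t_2)}(\mathcal{F}^{(n)}(z)-X^{(n)})^{-1}\bigr)(t)$ is \emph{not} zero on $[0,t_1]$; rather, for $t\in[0,t_1]$ it equals the constant $\int_{t_1}^{t_2}\nu^{(n)}((\mathcal{F}^{(n)}(z)(s)-X)^{-1},s)\,ds = z - F^{(n)}_{\mu_{t_1,t_2}}(z)$, which is exactly $z-\mathcal{F}^{(n)}(z)(t)$ on that interval (since $\mathcal{F}^{(n)}(z)$ is constant there). You already flagged this piece as needing care, and the conclusion $z-\mathcal{F}_\xx(z)$ is correct once the three cases are written out; this matches the paper's ``one checks this by considering the cases'' remark.
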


\begin{proof}
Using Observation \ref{obs:Fockrelations1}, one has
\[
\ell_\xx^* (\mathcal{F}_\xx(z) - \mathfrak{m}_\xx)^{-1} \ell_\xx
= \mathfrak{m}_0[I_\nu^{(n)}(\chi_{(t_1,t_2)} (\mathcal{F}^{(n)}(z) - X^{(n)})^{-1})],
\]
where
\begin{align*}
[I_\nu^{(n)}(\chi_{(t_1,t_2)} (\mathcal{F}(z) - X^{(n)})^{-1})](t) &= \int_t^T \nu^{(n)}((\mathcal{F}^{(n)}(z) - X)^{-1},s) \chi_{(t_1,t_2)}(s)\,ds \\
&= -\int_t^T V^{(n)}(\mathcal{F}(z), s) \chi_{(t_1,t_2)}(s) \,ds, \\
&= z - \mathcal{F}(z),
\end{align*}
where the last equality follows from \eqref{eq:FSTdiffeq} (one checks this by considering the cases $t \in [0,t_1]$, $t \in [t_1,t_2]$, and $t \in [t_2,t_3]$).  Thus,
\[
\ell_\xx^* (\mathcal{F}_\xx(z) - \mathfrak{m}_\xx)^{-1} \ell_\xx = z - \mathfrak{m}_0(\mathcal{F}(z)) = z - \mathcal{F}_\xx(z).
\]
This implies that
\begin{align*}
z - \mathfrak{m}_\xx - \ell_\xx - \ell_\xx^* &= \mathcal{F}_\xx(z) - \mathfrak{m}_\xx - \ell_\xx - \ell_\xx^* + \ell_\xx^* (\mathcal{F}_\xx(z) - \mathfrak{m}_\xx)^{-1} \ell_\xx \\
&= [1 - \ell_\xx^* (\mathcal{F}_\xx(z) - \mathfrak{m}_\xx)^{-1}] (\mathcal{F}_\xx(z) - \mathfrak{m}_\xx)^{-1} [1 - (\mathcal{F}_\xx(z) - \mathfrak{m}_\xx)^{-1} \ell_\xx].
\end{align*}
Upon taking inverses, we obtain \eqref{eq:algebraicformula1}.  To prove \eqref{eq:algebraicformula2}, note that $\mathcal{F}_\xx(z^{-1})^{-1}$ extends to be fully matricial and uniformly bounded on $B_{M_n(\mathcal{A})}(0,R_1)$ for some $R_1 > 0$ independent of $n$, and it vanishes at zero; indeed, this follows from the fact that $\mathcal{F}(z^{-1})^{-1}(t)$ is the Cauchy transform of $\mu_{s,t_2}$ where $s = \min(\max(t,t_1),t_2)$.  Thus,
\[
(\mathcal{F}_\xx(z^{-1}) - \mathfrak{m}_\xx)^{-1} = \sum_{k=0}^\infty [\mathcal{F}_\xx(z^{-1})^{-1} \mathfrak{m}_\xx]^k \mathcal{F}_\xx(z^{-1})^{-1}
\]
extends to be fully matricial and uniformly bounded on $B_{M_n(\mathcal{A})}(0,R_2)$ for some $R_2$ independent of $n$, and it vanishes at zero.  We can therefore apply the geometric series expansions for $[1 - \ell_\xx^* (\mathcal{F}_\xx(z^{-1}) - \mathfrak{m}_\xx)^{-1}]^{-1}$ and $[1 - (\mathcal{F}_\xx(z^{-1}) - \mathfrak{m}_\xx)^{-1} \ell_\xx]^{-1}$ for $z \in B_{M_n(\mathcal{A})}(0,R)$ for some $R > 0$ independent of $n$.
\end{proof}

\begin{proof}[Proof of Theorem \ref{thm:Fockspace} (3)]
We want to show that $\norm{Y_{t_1,t_2}h} \leq (\sqrt{2C(t_2 - t_1)} + M) \norm{h}$ for $h \in \mathcal{H}_\nu$.  It suffices to consider $h$ in a dense subspace.  Moreover, if $\rho_h$ is the generalized law
\[
\rho_h( f(X) ) = \ip{h, f(Y_{t_1,t_2}) h},
\]
then we have $\ip{h,Y_{t_1,t_2}^2h} \leq \rad(\rho_h) \norm{\rho_h(1)} = \rad(\rho_h) \norm{h}^2$, and hence it suffices to show that $\rad(\rho_h) \leq \sqrt{2C(t_2 - t_1)} + M$ for $h$ in a dense subspace of $\mathcal{H}_\nu$.

Let $h$ be a vector in the algebraic direct sum of the $\mathcal{H}_{\nu;k}$'s, that is, $\sum_{j=1}^N \mathcal{H}_{\nu;j}$ for some $N \in \N$.  Note that $\mathcal{H}_{\nu^{(n)}}$ can naturally be identified with $M_n(\mathcal{H}_\nu)$, and we have
\[
G_{\rho_h}^{(n)}(z) = \ip{h^{(n)}, (z - Y_{t_1,t_2}^{(n)})^{-1} h^{(n)}}.
\]
In the notation of Lemma \ref{lem:algebraicformula}, this equals
\begin{multline}
 \ip{h^{(n)}, (z^{-1} - \mathfrak{m}_\xx - \ell_\xx - \ell_\xx^*)^{-1} h^{(n)}} \\
 = \sum_{j,k=0}^N \ip{h^{(n)}, \left[ (\mathcal{F}_\xx(z^{-1}) - \mathfrak{m}_\xx)^{-1} \ell_\xx \right]^j (\mathcal{F}_\xx(z^{-1}) - \mathfrak{m}_\xx)^{-1}  \left[ \ell_\xx^* (\mathcal{F}_\xx(z^{-1}) - \mathfrak{m}_\xx)^{-1} \right]^k h^{(n)}}, \label{eq:rhohexpansion}
\end{multline}
where the infinite sum in \eqref{eq:algebraicformula2} is truncated to degree $N$ because applying more than $N$ annihilation operators would kill $h^{(n)}$.  Recall that $\mathcal{F}(z)(t)$ is the $F$ transform of $\mu_{s,t_2}$ for some $s \in [t_1,t_2]$; the solution of the Loewner equation is given by solving the ODE in Theorem \ref{thm:Herglotzflow} and therefore by \eqref{eq:solutionniceestimate}, we have for $u > t_2 - t_1$ that
\[
\norm{z} < \frac{1}{M + \sqrt{2Cu}} \implies \norm{\mathcal{F}(z^{-1})^{-1}} \leq \frac{1}{M + \sqrt{2C(u - t_2 + t_1)}}.
\]
However, since $\norm{\mathfrak{m}_\xx} \leq M$, we have by the geometric series argument that
\[
\norm{\mathcal{F}(z^{-1})^{-1}} \leq \frac{1}{M + \sqrt{2C(u - t_2 + t_1}} \implies \norm{(\mathcal{F}(z^{-1})^{-1} - \mathfrak{m}_\xx)^{-1}} \leq \frac{1}{\sqrt{2C(u - t_2 + t_1)}}.
\]
In particular, \eqref{eq:rhohexpansion} is fully matricial and bounded for $\norm{z} < 1 / (M + \sqrt{2Cu})$.  Because this holds for every $u > t_2 - t_1$, we have $\rad(\rho_h) \leq M + \sqrt{2C(t_2 - t_1)}$ by Theorem \ref{thm:Cauchytransform}.
\end{proof}

\begin{remark}
Actually, Lemma \ref{lem:algebraicformula} provides a alternative proof that the law of $Y_{t_1,t_2}$ equals the law $\mu_{t_1,t_2}$ derived by solving the Loewner equation.  Indeed, if we take apply $\ip{\xi^{(n)}, \cdot \xi^{(n)}}$ to \eqref{eq:algebraicformula2}, all the terms vanish except when $j = k = 0$, so that
\[
\ip{\xi^{(n)}, (z^{-1} - Y_{t_1,t_2}^{(n)})^{-1} \xi^{(n)}} = \ip{\xi, (\mathcal{F}_\xx(z^{-1}) - \mathfrak{m}_\xx)^{-1} \xi}.
\]
Now $\mathfrak{m}_\xx$ restricted to $M_n(\mathcal{A}\xi)$ is zero and $\mathfrak{F}_\xx(z^{-1})$ restricted to $M_n(\mathcal{A} \xi)$ is $F_{\mu_{t_1,t_2}}^{(n)}(z^{-1})$.  Thus, we get $\ip{\xi^{(n)}, (z^{-1} - Y_{t_1,t_2}^{(n)})^{-1} \xi^{(n)}} = F_{\mu_{t_1,t_2}}(z^{-1})^{-1} = \tilde{G}_{\mu_{t_1,t_2}}(z)$ as desired.
\end{remark}

\subsection{Monotone Independence in the Fock Space} \label{subsec:Fockspaceindependence}

\begin{notation}
Let $0 \leq s < t \leq T$.  Let $\mathcal{L}_{\mathcal{A};s,t}$ be the subalgebra of $\mathcal{L}_{\mathcal{A}} = L_{\Boch}^\infty([0,T],\mathcal{A})$ consisting of functions supported in $[s,t]$.  Note that $\mathcal{L}_{\mathcal{A};s,t}\ip{X}$ can be regarded as a subalgebra of $\mathcal{L}_{\mathcal{A};s,t}$.  We denote by $\mathcal{B}_{s,t}$ be the (non-unital) $\mathcal{A}$-algebra generated by the operators $\ell(f)$, $\ell(f)^*$, and $\mathfrak{m}(f)$ for $f \in \mathcal{L}_{\mathcal{A};s,t}\ip{X}$.
\end{notation}

\begin{proposition} \label{prop:Fockindependence}
Let $0 = t_0 < t_1 < \dots < t_N = T$.  Then the algebras $\mathcal{B}_{t_0,t_1}$, \dots, $\mathcal{B}_{t_{N-1},t_N}$ are monotone independent over $\mathcal{A}$ in $(B(\mathcal{H}_\nu),E_\nu)$.
\end{proposition}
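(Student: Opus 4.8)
The plan is to verify the defining identity of monotone independence directly, using a normal-form argument for products of creation, annihilation, and multiplication operators together with the structure of the Fock space $\mathcal{H}_\nu$ as a nested tensor product. First I would reduce to the case of alternating products: given $b_1 \in \mathcal{B}_{t_{i_1-1},t_{i_1}}$, \dots, $b_m \in \mathcal{B}_{t_{i_m-1},t_{i_m}}$ with the index pattern satisfying the monotone hypothesis ($i_j > i_{j-1}$ and $i_j > i_{j+1}$ at an interior peak, etc.), I must show $E_\nu[b_1 \cdots b_m] = E_\nu[b_1 \cdots b_{j-1} E_\nu[b_j] b_{j+1} \cdots b_m]$. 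By multilinearity over $\mathcal{A}$ and density, it suffices to take each $b_r$ to be a word in the generators $\ell(f)$, $\ell(f)^*$, $\mathfrak{m}(f)$ with $f \in \mathcal{L}_{\mathcal{A};t_{i_r-1},t_{i_r}}\ip{X}$; and by Observation \ref{obs:Fockrelations1} each such word can be brought into the ``Wick-ordered'' normal form $\ell(g_1)\cdots\ell(g_p)\,\mathfrak{m}_0(h)\,\ell(g_{p+1}')^*\cdots\ell(g_q')^*$ (some creation operators, a central multiplication term, then some annihilation operators), with all the $g$'s and $h$ supported in the same subinterval $[t_{i_r-1},t_{i_r}]$.

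The key structural observation is how the Fock space decomposes relative to a time $t_{i_r-1}$: a simple tensor $f_k \otimes \cdots \otimes f_1 \otimes a$ in $\mathcal{H}_{\nu,k}$ lies in the ``$\mathcal{A}\xi$ direction'' as seen by $\mathcal{B}_{s,t}$ precisely when its leftmost legs $f_k, f_{k-1},\dots$ are in time strata below $s$; more precisely, because the inner product on $\mathcal{H}_{\nu,k}$ is built from $I_\nu$ and $I_{\nu,0}$ which only integrate over $[\,\cdot\,,T]$, operators supported in $[s,t]$ act on the ``top'' legs of a tensor and leave the legs supported in $[0,s)$ frozen, behaving toward them like scalars from $\mathcal{A}$. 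Concretely I would prove: if $b \in \mathcal{B}_{s,t}$ and $\eta$ is a simple tensor all of whose legs are supported in $[0,s)$ — in particular $\eta \in \mathcal{A}\xi$-type vectors pushed into deeper strata — then $b\,\eta = (E_\nu[b])\,\eta$, where $E_\nu[b] = \mathfrak{m}_0|_{\mathcal{A}}^{-1}$ of the central part; equivalently, $b$ annihilates $\eta$ unless $b$ reduces to an element of $\mathfrak{m}_0(\mathcal{A})$, in which case it scales $\eta$. This is the exact analogue of the ``vacuum-trapping'' step in the scalar monotone Fock space construction \cite{Lu1997}, \cite{Muraki1997}, and is proved by the same bookkeeping of legs as in Step 1 of the proof of Theorem \ref{thm:Fockspace}(2).

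Granting that lemma, the monotone independence identity follows by peeling the alternating word from the outside in. Apply $b_1 \cdots b_m$ to $\xi$; track the stratum and time-support of the legs of the running vector, exactly via the $S_j$-bookkeeping in the proof of Theorem \ref{thm:Fockspace}(2), except now the time labels of the legs record which subinterval each creation came from. When one reaches the peak index $j$ (where $i_j$ exceeds both neighbors), the vector $b_{j+1}\cdots b_m\xi$ has all its legs supported in subintervals strictly below $t_{i_j-1}$, so by the vacuum-trapping lemma $b_j$ acts on it as the scalar $E_\nu[b_j] \in \mathcal{A}$, and it commutes past to the position claimed in the monotone relation; iterating handles all peaks. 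Taking $\ip{\xi,\,\cdot\,\xi}$ gives the required identity. The main obstacle I anticipate is purely bookkeeping: carefully formulating and proving the vacuum-trapping lemma with the correct hypothesis on leg supports (one must be attentive that $\mathfrak{m}(f)$ with $f$ supported in $[s,t]$ does \emph{not} simply annihilate a vector whose top leg lies in $[0,s)$ — it produces a genuine tensor — so the reduction genuinely uses that in an alternating monotone word the relevant neighbor legs come from strictly earlier intervals, which forces the top leg at the moment $b_j$ acts to be $\xi$-like). Once this lemma is pinned down with the right statement, the rest is a direct transcription of the scalar argument, and I would present it as such, citing \cite{Popa2008a} for the combinatorial version and noting that this gives an independent analytic-flavored derivation.
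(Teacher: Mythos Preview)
Your normal-form reduction is the right first move and matches the paper. But the vacuum-trapping lemma as you state it is false. Take $b = \ell(f)$ with $f$ supported in $[s,t]$ and $\eta = f_k \otimes \cdots \otimes f_1 \otimes a$ with each $f_i$ supported in $[0,s)$. Then $b\eta = f \otimes \eta$, and computing $\norm{f \otimes \eta}^2 = \ip{\eta, \mathfrak{m}(I_\nu(f^*f))\eta}$ shows this is generically nonzero: on $[0,s)$ the function $I_\nu(f^*f)$ is the nonzero constant $I_{\nu,0}(f^*f) \in \mathcal{A}$, so $\mathfrak{m}(I_\nu(f^*f))\eta$ is $\eta$ with its top leg multiplied by that constant. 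Yet $E_\nu[b] = 0$. So $b\eta \neq E_\nu[b]\eta$. Your worry about $\mathfrak{m}(f)$ is in fact backwards: $\mathfrak{m}(f)$ with upper $f$ \emph{does} annihilate a vector whose top leg is lower, since $f \cdot f_k = 0$; the problematic generator is the creation operator.

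There is a second gap: you assert that $b_{j+1}\cdots b_m\xi$ has all legs below $t_{i_j-1}$, but the monotone-independence hypothesis constrains only the immediate neighbor $i_{j+1}$; the indices $i_{j+2},\dots,i_m$ may exceed $i_j$. So tracking the full vector to the right of the peak does not give you what you need.

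The paper fixes both problems by proving an \emph{operator} identity rather than a vector identity: for $x,y \in \mathcal{B}_{0,t}$ and $z \in \mathcal{B}_{t,T} + \mathcal{A}$ one has $xzy = xE_\nu(z)y$ (and $xz\xi = xE_\nu(z)\xi$ for the boundary case $j=m$). Writing $z$ in normal form $\ell(f_m)\cdots\ell(f_1)[\mathfrak{m}(p)+a]\ell(g_1)^*\cdots\ell(g_n)^*$, the sandwich by $x$ on the left is precisely what kills the leading creation $\ell(f_m)$, via the vanishing relations $\ell(\text{lower})\ell(\text{upper}) = \mathfrak{m}(\text{lower})\ell(\text{upper}) = \ell(\text{lower})^*\ell(\text{upper}) = 0$ from Lemma~\ref{lem:Fockrelations2}; symmetrically $y$ kills the trailing $\ell(g_n)^*$. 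Because this identity involves only the immediate neighbors $b_{j-1}$ and $b_{j+1}$, it applies directly with $t = t_{i_j-1}$ regardless of what the remaining $b_r$'s are, and the monotone identity follows in one line.
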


In particular, this implies that the operators $Y_{t_0,t_1}$, \dots, $Y_{t_{N-1},t_N}$ defined in Theorem \ref{thm:Fockspace} are monotone independent, which completes the proof of Theorem \ref{thm:Fockspace} (4).  We will establish Proposition \ref{prop:Fockindependence} using the following relations between the creation, annihilation, and multiplication operators.

\begin{lemma} \label{lem:Fockrelations2}
Let $0 < t < T$.  Suppose that $f \in \mathcal{L}_{\mathcal{A};0,t}\ip{X}$ and $g \in \mathcal{L}_{\mathcal{A};t,T}\ip{X}$.  Then
\begin{align}
\mathfrak{m}(f) \mathfrak{m}(g) &= \mathfrak{m}(g) \mathfrak{m}(f) = 0 \label{eq:Fockrelation2a} \\
\mathfrak{m}(f) \ell(g) &= \mathfrak{m}(g) \ell(f) = 0 \label{eq:Fockrelation2b} \\
\ell(f)^* \mathfrak{m}(g) &= \ell(g)^* \mathfrak{m}(f) = 0 \label{eq:Fockrelation2c} \\
\ell(f)^* \ell(g) &= \ell(g)^* \ell(f) = 0 \label{eq:Fockrelation2d} \\
\ell(f) \ell(g) &= \ell(g)^* \ell(f) = 0 \label{eq:Fockrelation2e} \\
\ell(f) \mathfrak{m}(g) &= \mathfrak{m}(g) \ell(f)^* = 0 \label{eq:Fockrelation2f} 
\end{align}
\end{lemma}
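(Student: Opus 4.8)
The plan is to deduce all six identities from the composition rules of Observation~\ref{obs:Fockrelations1} together with two elementary facts about supports. Write $\mathcal{L}_{\mathcal{A};0,t}\langle X\rangle$ and $\mathcal{L}_{\mathcal{A};t,T}\langle X\rangle$ for the polynomials whose coefficients are supported in $[0,t]$, respectively in $[t,T]$, so that $f\in\mathcal{L}_{\mathcal{A};0,t}\langle X\rangle$ and $g\in\mathcal{L}_{\mathcal{A};t,T}\langle X\rangle$. \emph{Fact one:} if $p$ is supported in $[0,t]$ and $q$ in $[t,T]$, then $pq=qp=0$ in $\mathcal{L}_{\mathcal{A}}\langle X\rangle$, because the supports meet only in the null set $\{t\}$ and $\mathcal{L}_{\mathcal{A}}$ consists of a.e.-equivalence classes; moreover $pp'$ and $p'p$ are again supported in $[0,t]$ for \emph{any} $p'\in\mathcal{L}_{\mathcal{A}}\langle X\rangle$. \emph{Fact two} (the key point): $I_\nu$ preserves support in $[0,t]$, i.e.\ if $h$ is supported in $[0,t]$ then $I_\nu(h)\in\mathcal{C}_{\mathcal{A}}$ vanishes on $[t,T]$, since for $s\ge t$ one has $I_\nu(h)(s)=\int_s^T \nu(h(X,u),u)\,du=0$. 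By contrast $I_\nu$ does \emph{not} preserve support in $[t,T]$ (it returns a function that is merely constant on $[0,t]$), which is exactly why the identities hold in the stated non-symmetric form.

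Using Fact one, $fg=gf=0$ and $f^*g=g^*f=0$, so Observation~\ref{obs:Fockrelations1} yields \eqref{eq:Fockrelation2a}, \eqref{eq:Fockrelation2b} and \eqref{eq:Fockrelation2c} immediately: $\mathfrak{m}(f)\mathfrak{m}(g)=\mathfrak{m}(fg)=0$ and likewise $\mathfrak{m}(g)\mathfrak{m}(f)=0$; $\mathfrak{m}(f)\ell(g)=\ell(fg)=0$ and $\mathfrak{m}(g)\ell(f)=\ell(gf)=0$; $\ell(f)^*\mathfrak{m}(g)=\ell(g^*f)^*=0$ and $\ell(g)^*\mathfrak{m}(f)=\ell(f^*g)^*=0$. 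Similarly $\ell(f)^*\ell(g)=I_{\nu,0}(f^*g)P_{\mathcal{A}\xi}+\mathfrak{m}(I_\nu(f^*g))=0$ and $\ell(g)^*\ell(f)=I_{\nu,0}(g^*f)P_{\mathcal{A}\xi}+\mathfrak{m}(I_\nu(g^*f))=0$, which is \eqref{eq:Fockrelation2d} together with the second half of \eqref{eq:Fockrelation2e}.

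For the remaining three products, which are not covered by Observation~\ref{obs:Fockrelations1}, I would show the operator annihilates every simple tensor and then conclude by boundedness and the density of the span of the simple tensors. For $\ell(f)\ell(g)$ applied to $f_k\otimes\cdots\otimes f_1\otimes a$, the squared norm of the resulting vector $f\otimes g\otimes f_k\otimes\cdots\otimes f_1\otimes a$ is an expression built around the innermost factor $g^* I_\nu(f^*f)\,g$; since $f^*f$ is supported in $[0,t]$, Fact two gives $I_\nu(f^*f)$ supported in $[0,t]$, and then $g^* I_\nu(f^*f)\,g=0$ by Fact one, so the norm is $0$ and $\ell(f)\ell(g)=0$. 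The same factor $g^* I_\nu(f^*f)\,g=0$ appears in the squared norm of $\ell(f)\mathfrak{m}(g)[f_k\otimes\cdots]=f\otimes(gf_k)\otimes f_{k-1}\otimes\cdots$ when $k\ge 1$, while $\mathfrak{m}(g)$ kills $\mathcal{A}\xi$; hence $\ell(f)\mathfrak{m}(g)=0$. Finally, for $\mathfrak{m}(g)\ell(f)^*$: applying $\ell(f)^*$ to a tensor of length $k\ge 2$ gives $\bigl(I_\nu(f^*f_k)\,f_{k-1}\bigr)\otimes f_{k-2}\otimes\cdots$, and $f^*f_k$ is supported in $[0,t]$ (as $f$ is), so $I_\nu(f^*f_k)$ is too by Fact two, whence $g\cdot I_\nu(f^*f_k)=0$ by Fact one and $\mathfrak{m}(g)$ sends this vector to $0$; on tensors of length $\le 1$, $\ell(f)^*$ lands in $\mathcal{A}\xi$ or is zero and $\mathfrak{m}(g)$ annihilates $\mathcal{A}\xi$. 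This proves \eqref{eq:Fockrelation2f} and completes \eqref{eq:Fockrelation2e}.

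The casework is routine bookkeeping once the two support facts are in hand; the only subtlety I anticipate is keeping straight the asymmetry in Fact two and the precise shape of the inner-product expansions on simple tensors (together with the length-$0$ and length-$1$ conventions), but there is no genuine analytic difficulty.
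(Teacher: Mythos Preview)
Your proof is correct and rests on the same two ingredients as the paper: the disjoint-support fact $fg=gf=0$ together with Observation~\ref{obs:Fockrelations1} for \eqref{eq:Fockrelation2a}--\eqref{eq:Fockrelation2d}, and the key observation that $I_\nu$ preserves support in $[0,t]$ (your Fact two), which forces $g^* I_\nu(f^*f)\,g=0$ for \eqref{eq:Fockrelation2e}--\eqref{eq:Fockrelation2f}. The only difference is packaging: the paper stays at the operator level, computing $[\ell(f)\ell(g)]^*[\ell(f)\ell(g)]$ and $[\ell(f)\mathfrak{m}(g)]^*[\ell(f)\mathfrak{m}(g)]$ via Observation~\ref{obs:Fockrelations1} and using the $C^*$-identity $T^*T=0\Rightarrow T=0$, whereas you unfold the same computation on simple tensors; and the paper obtains $\mathfrak{m}(g)\ell(f)^*=0$ in one line by taking the adjoint of $\ell(f)\mathfrak{m}(g)=0$, which is slicker than your direct verification (though yours is fine).
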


\begin{proof}
Relations \eqref{eq:Fockrelation2a} through \eqref{eq:Fockrelation2d} are immediate from Observation \ref{obs:Fockrelations1} because $fg = gf = 0$.  To prove \eqref{eq:Fockrelation2e}, observe that
\begin{align*}
[\ell(f) \ell(g)]^*[\ell(f)\ell(g)] &= \ell(g)^* \ell(f)^* \ell(f) \ell(g) \\
&= \ell(g)^*[\mathfrak{m}(I_\nu(f^*f)) + I_{\nu,0}(f^*f)P_{\mathcal{A}\xi})]\ell(g) \\
&= \ell(g)^* \mathfrak{m}(I_\nu(f^*f)) \ell(g) \\
&= \mathfrak{m}(I_\nu(g^* I_\nu(f^*f) g)) + I_{\nu,0}(g^* I_\nu(f^*f) g) P_{\mathcal{A} \xi}.
\end{align*}
But $I_\nu(f^*f)(s) = \int_s^T \nu(f^*f(X,u),u)\,du$ is supported in $[0,t]$ and hence $g^* I_\nu(f^*f) g = 0$.  This shows that $[\ell(f) \ell(g)]^*[\ell(f)\ell(g)] = 0$ and hence $\ell(f) \ell(g) = 0$ and $\ell(g)^* \ell(f)^* = 0$.

The proof of \eqref{eq:Fockrelation2f} is similar.  Note that
\begin{align*}
[\ell(f) \mathfrak{m}(g)]^* [\ell(f) \mathfrak{m}(g)] &= \mathfrak{m}(g^*)[ \mathfrak{m}(I_\nu(f^*f)) + I_{\nu,0}(f^*f) P_{\mathcal{A}\xi} ] \mathfrak{m}(g) \\
&= \mathfrak{m}(g^* I_\nu(f^*f) g) = 0.
\end{align*}
Hence, $\ell(f) \mathfrak{m}(g) = 0$.  Moreover, $\mathfrak{m}(g) \ell(f)^* = 0$ follows by taking adjoints.
\end{proof}

\begin{lemma}
Let $t \in [0,T]$.  Regarding $\mathcal{A}$ as a subalgebra of $B(\mathcal{H}_\nu)$ as above, we denote by $\mathcal{A} + \mathcal{B}_{t,T}$ the algebraic sum as subspaces of $B(\mathcal{H}_\nu)$, which is equal to the unital $\mathcal{A}$-algebra generated by $\mathcal{B}_{t,T}$.  Then $\mathcal{B}_{t,T} + \mathcal{A}$ is the linear span of operators of the form
\[
\ell(f_m) \dots \ell(f_1) [\mathfrak{m}(p) + a] \ell(g_1)^* \dots \ell(g_n)^*,
\]
where $m, n \geq 0$ and $a \in \mathcal{A}$ and $f_i, g_i, p \in \mathcal{L}_{\mathcal{A};s,t}\ip{X}$.
\end{lemma}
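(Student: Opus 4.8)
The plan is to show that the set $\mathcal{N}$ of finite linear combinations of operators of the asserted normal-ordered form $\ell(f_m)\cdots\ell(f_1)[\mathfrak{m}(p)+a]\ell(g_1)^*\cdots\ell(g_n)^*$ (with $m,n\geq 0$, $a\in\mathcal{A}$, and all the $f_i,g_i,p$ in $\mathcal{L}_{\mathcal{A};t,T}\ip{X}$) is precisely the unital $\mathcal{A}$-algebra $\mathcal{A}+\mathcal{B}_{t,T}$. One inclusion is immediate: every factor occurring in such a product already lies in the algebra $\mathcal{A}+\mathcal{B}_{t,T}$ (the $\ell(f_i)$, $\ell(g_i)^*$ and $\mathfrak{m}(p)$ lie in $\mathcal{B}_{t,T}$, and $a=\mathfrak{m}_0(a)$ lies in the copy of $\mathcal{A}$), so $\mathcal{N}\subseteq\mathcal{A}+\mathcal{B}_{t,T}$. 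For the reverse inclusion it suffices to check that $\mathcal{N}$ is a linear subspace containing every generator $\ell(f),\ell(f)^*,\mathfrak{m}(f)$ ($f\in\mathcal{L}_{\mathcal{A};t,T}\ip{X}$) and all of $\mathfrak{m}_0(\mathcal{A})$, and that $\mathcal{N}$ is closed under multiplication; then $\mathcal{N}$ contains the unital $\mathcal{A}$-algebra these generate.

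The technical heart is an auxiliary notion of a \emph{middle operator}, meaning an operator of the form $\mathfrak{m}(p)+\mathfrak{m}_0(a)$ with $p\in\mathcal{L}_{\mathcal{A};t,T}\ip{X}$ and $a\in\mathcal{A}$. Using Observation~\ref{obs:Fockrelations1} together with the elementary facts $P_{\mathcal{A}\xi}\ell(h)=0$, $P_{\mathcal{A}\xi}\mathfrak{m}(h)=0$, $\ell(h)^*P_{\mathcal{A}\xi}=0$, $\mathfrak{m}(h)P_{\mathcal{A}\xi}=0$ (all read off from the ranges of the operators involved), one verifies by routine computation that (i) the middle operators form a subalgebra of $B(\mathcal{H}_\nu)$, (ii) $(\text{middle operator})\cdot\ell(h)=\ell(h')$ for some $h'\in\mathcal{L}_{\mathcal{A};t,T}\ip{X}$, and (iii) $\ell(h)^*\cdot(\text{middle operator})=\ell(h'')^*$ for some $h''\in\mathcal{L}_{\mathcal{A};t,T}\ip{X}$. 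The one genuinely substantive identity, and the step I expect to require the most care, is that $\ell(f)^*\ell(g)$ is \emph{itself} a middle operator for $f,g\in\mathcal{L}_{\mathcal{A};t,T}\ip{X}$:
\[
\ell(f)^*\ell(g)=\mathfrak{m}\bigl(I_\nu(f^*g)-\underline{I_{\nu,0}(f^*g)}\bigr)+\mathfrak{m}_0\bigl(I_{\nu,0}(f^*g)\bigr),
\]
where $\underline{c}$ denotes the constant function with value $c$. This follows from the fourth relation of Observation~\ref{obs:Fockrelations1} once one notices that, since $f^*g$ is supported in $[t,T]$, the function $I_\nu(f^*g)$ is constant and equal to $I_{\nu,0}(f^*g)\in\mathcal{A}$ on $[0,t]$; hence $I_\nu(f^*g)-\underline{I_{\nu,0}(f^*g)}$ vanishes on $[0,t]$ and lies in $\mathcal{L}_{\mathcal{A};t,T}\ip{X}$, and the leftover $I_{\nu,0}(f^*g)P_{\mathcal{A}\xi}$ in the relation combines with the constant part of $\mathfrak{m}(I_\nu(f^*g))$ to produce exactly $\mathfrak{m}_0(I_{\nu,0}(f^*g))$. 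It is precisely this coincidence that forces the middle term in the statement to carry both a $\mathcal{B}_{t,T}$-part $\mathfrak{m}(p)$ and an $\mathcal{A}$-part $a$.

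Granting these facts, the base cases are trivial: $\mathfrak{m}(f)=\mathfrak{m}(f)+\mathfrak{m}_0(0)$ and $\mathfrak{m}_0(a)=\mathfrak{m}(0)+\mathfrak{m}_0(a)$ are middle operators, hence in $\mathcal{N}$ with $m=n=0$, while $\ell(f)=\ell(f)\mathfrak{m}_0(1)\in\mathcal{N}$ and $\ell(f)^*=\mathfrak{m}_0(1)\ell(f)^*\in\mathcal{N}$. For closure under multiplication I would multiply two normal-ordered operators and observe that the only interaction occurs across the interface $\ell(g_1)^*\cdots\ell(g_n)^*\,\ell(f'_{m'})\cdots\ell(f'_1)$. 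Repeatedly replacing the innermost pair $\ell(g_n)^*\ell(f'_{m'})$ by a middle operator and then absorbing that middle operator into the adjacent creation operator on its right—or, once the creation operators are exhausted, into the adjacent annihilation operator on its left—collapses this interface, by induction on $\min(m',n)$, into either a block of creation operators (if $m'>n$), a single middle operator (if $m'=n$), or a block of annihilation operators (if $m'<n$). In each case one finally absorbs the two surviving middle operators coming from the outer factors: the left one merges rightward through whatever creation operators remain (staying in the middle position exactly when none remain) and the right one merges leftward through the remaining annihilation operators, using (ii) and (iii) above and the subalgebra property (i). The upshot is that the product of two normal-ordered operators is again a single normal-ordered operator, so $\mathcal{N}$ is an algebra, which completes the proof.
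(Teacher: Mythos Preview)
Your proof is correct and rests on the same key technical observation as the paper's: that for $f,g$ supported in $[t,T]$ the operator $\ell(f)^*\ell(g)$ is again of the form $\mathfrak{m}(q)+\mathfrak{m}_0(c)$ with $q$ supported in $[t,T]$, precisely because $I_\nu(f^*g)$ is constant on $[0,t]$. The only organizational difference is that the paper proves closure of the span under \emph{left multiplication by a single generator} $\ell(f)$, $\mathfrak{m}(f)$, or $\ell(f)^*$ (splitting into cases on whether there are creation operators to the left of the middle term), whereas you prove closure under full multiplication of two normal-ordered words by collapsing the $\ell^*\cdots\ell$ interface inductively on $\min(m',n)$. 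Your route yields the slightly sharper statement that a product of two normal-ordered words is a single normal-ordered word (not merely a linear combination), at the cost of a somewhat longer induction; the paper's route is shorter because once closure under left multiplication by generators is known, closure under multiplication by arbitrary words follows automatically.
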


\begin{proof}
Let $\mathcal{W}$ be the span of the operators given above.  It is clear that $\mathcal{W} \subseteq \mathcal{A} + \mathcal{B}_{s,t}$.

Moreover, $\mathcal{W}$ contains the creation, annihilation, and multiplication operators $\ell(f)$, $\ell(f)^*$, and $\mathfrak{m}(f)$ by substituting appropriate values for $m$, $n$, $a$, $p$, etc.  Thus, to show that $\mathcal{W} \supseteq \mathcal{B}_{s,t}$, it suffices to show that $\mathcal{W}$ is closed under left multiplication by operators in $\mathcal{B}_{s,t}$.  Furthermore, it suffices to show that if $w$ is one of the generating vectors $\ell(f_m) \dots \ell(f_1) [\mathfrak{m}(p) + a(P_{\mathcal{A}\xi} + \mathfrak{m}(\chi_{(0,s)}))] \ell(g_1)^* \dots \ell(g_n)^*$ of $\mathcal{W}$ and if $f \in \mathcal{L}_{\mathcal{A};s,t}$, then $\ell(f)w$, $\mathfrak{m}(f)w$, and $\ell(f)^*w$ are in $\mathcal{W}$.

{\bf Case 1:} It is immediate that $\ell(f) w$ is in $\mathcal{W}$.

{\bf Case 2:} For $\mathfrak{m}(f)w$, there are two subcases.  If $m \geq 1$, then
\[
\mathfrak{m}(f)w = \ell(f \cdot f_m) \dots \ell(f_1) \ell[\mathfrak{m}(p) + a] \ell(g_1)^* \dots \ell(g_n)^* \in \mathcal{W},
\]
while if $m = 0$, then
\begin{align*}
\mathfrak{m}(f)w &= \mathfrak{m}(f)  [\mathfrak{m}(p) + a] \ell(g_1)^* \dots \ell(g_n)^* \\
&= \mathfrak{m}(fp + fa) \ell(g_1)^* \dots \ell(g_n)^* \in \mathcal{W}.
\end{align*}

{\bf Case 3:} For $\ell(f)^*w$, there are three subcases.  If $m \geq 2$, then observe that
\[
\ell(f)^* \ell(f_m) \ell(f_{m-1}) = [I_{\nu,0}(f^*f_m) P_{\mathcal{A}\xi} + \mathfrak{m}(I_\nu(f^*f_m))] \ell(f_{m-1}) = \ell(I_\nu(f^*f_m) f_{m-1}),
\]
and $I_\nu(f^*f_m)f_{m-1}$ is supported in $[t,T]$, so that
\[
\ell(f)^* w = \ell(I_\nu(f^*f_m)f_{m-1}) \ell(f_{m-2}) \dots \ell(f_1) [\mathfrak{m}(p) + a] \ell(g_1)^* \dots \ell(g_n)^* \in \mathcal{W}.
\]
For $m = 1$, observe that $I_\nu(f_m^*f)$ is constant on $[0,t]$.  Thus, using the fact that $P_{\mathcal{A}\xi} + \mathfrak{m}(\chi_{(0,t)}) + \mathfrak{m}(\chi_{(t,T)}) = \id$, we have
\begin{align*}
\ell(f)^* \ell(f_m) &= \mathfrak{m}(I_\nu(f_m^*f)) + I_{\nu,0}(f_m^*f) P_{\mathcal{A}\xi} \\
&= \mathfrak{m}(\chi_{(t,T)} I_\nu(f_m^*f) - \chi_{(t,T)} I_{\nu,0}(f)) + I_{\nu,0}(f_m^*f) \id,
\end{align*}
The function $q = \chi_{(t,T)} I_\nu(f_m^*f) - \chi_{(t,T)} I_{\nu,0}(f)$ is in $\mathcal{L}_{\mathcal{A};t,T}\ip{X}$.  Moreover, combining this with the ``middle'' term $\mathfrak{m}(p) + a$ in $w$ yields
\[
[\mathfrak{m}(q) + I_{\nu,0}(f_m^*f)][\mathfrak{m}(p) + a] = [\mathfrak{m}(qp + I_{\nu,0}(f_m^*f)p + qa) + I_{\nu,0}(f_m^*m) a],
\]
which is another term of the same form as $\mathfrak{m}(p) + a$, from which it follows that $\ell(f)^*w$ is in $\mathcal{W}$.  Finally, in the case $m = 0$, we observe that
\begin{align*}
\ell(f)^* w &= \ell(f)^* [\mathfrak{m}(p) + a] \ell(g_1)^* \dots \ell(g_n)^* \\
&= \ell((p^* + a^*)f)^*\ell(g_1)^* \dots \ell(g_n)^* \in \mathcal{W}.
\end{align*}
\end{proof}

\begin{lemma}
Let $0 < t < T$.  If $x, y \in \mathcal{B}_{0,t}$ and $z \in \mathcal{B}_{t,T} + \mathcal{A}$, then we have
\begin{align}
xzy &= x E_\nu(z) y \label{eq:Fockrelation3a} \\
xz \xi &= x E_\nu(z) \xi. \label{eq:Fockrelation3b}
\end{align}
\end{lemma}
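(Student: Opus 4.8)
The plan is to exploit the structural description of $\mathcal{B}_{t,T} + \mathcal{A}$ obtained just above, which exhibits it as the linear span of operators
\[
w = \ell(f_m) \cdots \ell(f_1)\,[\mathfrak{m}(p) + a]\,\ell(g_1)^* \cdots \ell(g_n)^*,
\qquad f_i,g_i,p \in \mathcal{L}_{\mathcal{A};t,T}\ip{X},\ a \in \mathcal{A}.
\]
Since both sides of \eqref{eq:Fockrelation3a} are bilinear in $(x,y)$ and linear in $z$, and $E_\nu$ is linear, it suffices to prove \eqref{eq:Fockrelation3a} and \eqref{eq:Fockrelation3b} when $z = w$ is such a spanning operator and when $x$ and $y$ are spanning elements of $\mathcal{B}_{0,t}$, that is, words $\mathfrak{m}_0(a_0)\,b_1\,\mathfrak{m}_0(a_1) \cdots b_r\,\mathfrak{m}_0(a_r)$ with $r \geq 1$ and each $b_i$ a creation, annihilation, or multiplication operator attached to some $h_i \in \mathcal{L}_{\mathcal{A};0,t}\ip{X}$.

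First I would evaluate $E_\nu(w) = \ip{\xi, w\xi}$. Since $\ell(g_n)^*\xi = 0$ and $\mathfrak{m}(p)\xi = 0$, one sees that $w\xi = 0$ when $n \geq 1$; that $w\xi = \ell(f_m)\cdots\ell(f_1)(a\xi) \in \bigoplus_{k\geq 1}\mathcal{H}_{\nu,k}$ when $n = 0$ but $m \geq 1$; and that $w\xi = a\xi$ when $m = n = 0$. Hence $E_\nu(w) = a$ if $m = n = 0$ and $E_\nu(w) = 0$ otherwise, so the lemma reduces to checking that $xwy = 0$ and $xw\xi = 0$ whenever $m \geq 1$, or $n \geq 1$, or $m = n = 0$ with the summand $\mathfrak{m}(p)$ present.

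The heart of the argument is the observation that $x\,\ell(h) = 0$, $\ell(h)^*\,x = 0$, and $\mathfrak{m}(h)\,x = 0$ for every $x \in \mathcal{B}_{0,t}$ and every $h \in \mathcal{L}_{\mathcal{A};t,T}\ip{X}$. I would prove this by peeling off the generator of $x$ adjacent to the external operator: writing $x = x'\,b_r\,\mathfrak{m}_0(a_r)$, one has $\mathfrak{m}_0(a_r)\,\ell(h) = \ell(a_r h)$ with $a_r h \in \mathcal{L}_{\mathcal{A};t,T}\ip{X}$ (the $P_{\mathcal{A}\xi}$-part of $\mathfrak{m}_0(a_r)$ being killed by $\ell(h)$, using the identities of Observation \ref{obs:Fockrelations1}), and then $b_r\,\ell(a_r h) = 0$ by \eqref{eq:Fockrelation2b}, \eqref{eq:Fockrelation2d}, or \eqref{eq:Fockrelation2e} of Lemma \ref{lem:Fockrelations2}; the other two statements are symmetric, invoking \eqref{eq:Fockrelation2c}--\eqref{eq:Fockrelation2e}, respectively \eqref{eq:Fockrelation2a}, \eqref{eq:Fockrelation2b}, \eqref{eq:Fockrelation2f}, together with $\ell(h)^*|_{\mathcal{A}\xi} = 0$ and $\mathfrak{m}(h)|_{\mathcal{A}\xi} = 0$. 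Granting this, the three cases are immediate: if $m \geq 1$ then $xw = (x\,\ell(f_m))\,\ell(f_{m-1})\cdots = 0$; if $n \geq 1$ then $wy = \cdots\ell(g_n)^*\,y = 0$ and $w\xi = 0$; and if $m = n = 0$ then $\mathfrak{m}(p)\,y = 0$ and $\mathfrak{m}(p)\xi = 0$, so $xwy = x\,\mathfrak{m}_0(a)\,y = x\,E_\nu(w)\,y$ and $xw\xi = x\,E_\nu(w)\,\xi$. Reassembling by linearity yields \eqref{eq:Fockrelation3a} and \eqref{eq:Fockrelation3b}.

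I expect the only delicate point to be purely bookkeeping rather than conceptual: one must keep track of the $\mathcal{A}$-valued coefficients interleaved between the generators inside $x$, $y$, and $w$, repeatedly using that $\mathcal{L}_{\mathcal{A};0,t}\ip{X}$ and $\mathcal{L}_{\mathcal{A};t,T}\ip{X}$ are stable under multiplication by $\mathcal{A}$ and that $\mathfrak{m}_0(a) = a P_{\mathcal{A}\xi} + \mathfrak{m}(a)$ splits into a piece that vanishes against annihilation and multiplication operators (and against $P_{\mathcal{A}\xi}\,\ell(h)$) and a piece behaving like $\mathfrak{m}(a)$. Once this is handled systematically, the disjoint-support relations of Lemma \ref{lem:Fockrelations2} carry the entire proof, and the two displayed identities drop out at once.
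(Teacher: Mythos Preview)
Your proposal is correct and follows essentially the same approach as the paper: both reduce $z$ to a spanning element of the form $\ell(f_m)\cdots\ell(f_1)[\mathfrak{m}(p)+a]\ell(g_1)^*\cdots\ell(g_n)^*$ via the preceding lemma, reduce $x$ and $y$ to the generator nearest to $z$, and then do the same three-case analysis on $(m,n)$ using the disjoint-support relations of Lemma~\ref{lem:Fockrelations2}. The only cosmetic difference is that you package the reduction as a standalone ``key observation'' ($x\ell(h)=0$, $\ell(h)^*y=0$, $\mathfrak{m}(h)y=0$) before applying it, whereas the paper invokes the individual relations directly inside each case; the underlying computations are identical.
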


\begin{proof}
By linearity, it suffices to consider the case when $x$ and $y$ are strings of creation, annihilation, and multiplication operators for functions in $\mathcal{L}_{\mathcal{A};0,t}\ip{X}$.  Moreover, if $x$ and $y$ are such strings and if $x'$ is the last creation, annihilation, or multiplication operator in the string $x$, and $y'$ is the first one in the string $y$, then it suffices to show that $x' z y' = x' E_\nu(z) y'$.  Hence, we can assume without loss of generality that $x$ and $y$ are either creation, annihilation, or multiplication operators.  Furthermore, by the previous lemma and linearity, it suffices to consider the case where
\[
z = \ell(f_m) \dots \ell(f_1) [\mathfrak{m}(p) + a] \ell(g_1)^* \dots \ell(g_n)^*,
\]
with $f_j$, $g_j$, and $p$ in $\mathcal{L}_{\mathcal{A};t,T}$.

{\bf Case 1:} Suppose that $m > 0$.  Then $E_\nu(z) = \ip{\xi, z \xi} = 0$. Then since $x$ is a creation, annhilation, or multiplication operator for some function in $\mathcal{L}_{\mathcal{A};0,t}\ip{X}$, we have $x \ell(f_m) = 0$ by \eqref{eq:Fockrelation2d}, \eqref{eq:Fockrelation2e}, and \eqref{eq:Fockrelation2b} and hence $xz = 0$.  This implies that $xzy = x E_\nu(z) y$ and $xz \xi = x E_\nu(z) \xi$.

{\bf Case 2:} Suppose that $m = 0$ and $n > 0$.  Then $z \xi = 0$ and hence $E_\nu(z) = 0$.  We also have $xz \xi = 0 = x E_\nu(z) \xi$ so that \eqref{eq:Fockrelation2b} holds.  To check \eqref{eq:Fockrelation2a} in this case, note that $\ell(g_n) y = 0$ by a symmetrical argument to Case 1, and hence $xzy = 0 = x E_\nu(z) y$.

{\bf Case 3:} Suppose that $m = 0$ and $n = 0$.  Then $z = \mathfrak{m}(p) + a$ and $E_\nu(z) = \ip{\xi, z \xi} = a$.  By \eqref{eq:Fockrelation2a}, \eqref{eq:Fockrelation2c}, \eqref{eq:Fockrelation2f}, we have $x \mathfrak{m}(p) = 0$.  Hence, $xz = xa = x E_\nu(z)$ and thus $xzy = x E_\nu(z) y$ and $xz \xi = x E_\nu(z) \xi$.
\end{proof}

\begin{proof}[Proof of Proposition \ref{prop:Fockindependence}]
As in Definition \ref{def:monotoneindependence}, let $m \geq 2$ and consider a string $b_1$, \dots, $b_m$ of operators with $b_i \in \mathcal{B}_{t_{k_i-1},t_{k_i}}$.  Suppose that for some $j$, the index $k_j$ is larger than the adjacent indices.  We must show that
\begin{equation} \label{eq:checkmonotoneindependence}
E_\nu[b_1 \dots b_m] = E_\nu[b_1 \dots b_{j-1} E_\nu(b_j) b_{j+1} \dots b_m].
\end{equation}

{\bf Case 1:} Suppose that $1 < j < m$.  In this case, $k_{j-1} \leq k_j -1$ and $k_{j+1} \leq k_j - 1$, hence
\begin{align*}
b_j &\in \mathcal{B}_{t_{k_j-1},t_{k_j}} \subseteq \mathcal{B}_{t_{k_j-t},T} + \mathcal{A} \\
b_{j-1} &\in \mathcal{B}_{t_{k_{j-1}-1},t_{k_{j-1}}} \subseteq \mathcal{B}_{0,t_{k_j-1}} \\
b_{j+1} &\in \mathcal{B}_{t_{k_{j+1}-1},t_{k_{j+1}}} \subseteq \mathcal{B}_{0,t_{k_j-1}}.
\end{align*}
Therefore, applying \eqref{eq:Fockrelation3a} with $t = t_{k_j-1}$, we have $b_{j-1} b_j b_{j+1} = b_{j-1} E_\nu(b_j) b_{j+1}$ and hence \eqref{eq:checkmonotoneindependence} holds.

{\bf Case 2:} Suppose that $j = m$.  Then using \eqref{eq:Fockrelation3a} with $t = t_{k_m - 1}$, we have $b_{m-1}b_m \xi = b_{m-1} E_\nu(b_m) \xi$ and hence \eqref{eq:checkmonotoneindependence} holds.

{\bf Case 3:} If $j = 1$, then we take adjoints and apply Case 2.
\end{proof}

\section{Central Limit Theorem for Loewner Chains} \label{sec:CLT}

\subsection{Motivation}

Muraki \cite{Muraki2000} \cite{Muraki2001} showed that the central limit object for monotone convolution is the \emph{arcsine law} given by the density $1/2 \pi \sqrt{t^2 - 2}$ on the interval $[-\sqrt{2}, \sqrt{2}]$.  Its reciprocal Cauchy transform is $F(z) = \sqrt{z^2 - 2}$, where the square root is chosen to be analytic on $\C^* \setminus [-\sqrt{2}, \sqrt{2}]$ and satisfy $F(z) = z - 1/z + O(1/z^2)$.  The rescaled versions $F_t(z) = \sqrt{z^2 - 2t}$ are maps from $\h$ onto $\h$ minus a vertical slit.  These functions form a composition semigroup and they solve the Loewner equation with $V(z,t) = -1/z$.

The operator-valued version of the monotone central limit theorem was proved combinatorially in \cite[Theorem 2.5]{BPV2013}.  The limiting distribution of $\frac{1}{\sqrt{k}} \sum_{j=1}^k X_j$ as $k \to \infty$ is called the \emph{operator-valued arcsine law}, and it depends only on the $\mathcal{A}$-valued variance $\eta(a) = E(X_j a X_j)$, which is a completely positive map $\mathcal{A} \to \mathcal{A}$.  As we will verify, an equivalent definition of the operator-valued arcsine law $\mathfrak{as}(\eta)$ is that $F_{\mathfrak{as}(\eta)}(z) = F(z,1)$, where $F(z,t)$ is the solution of the Loewner equation $\partial_t F_\eta(z,t) = DF_\eta(z,t)[-\eta(z^{-1})]$ for $t \in [0,1]$.

In terms of $F$-transforms, the central limit theorem states that
\begin{equation}
\frac{1}{\sqrt{N}} (F_\mu)^{\circ N}(\sqrt{N} z) \to F_{\mathfrak{as}(\eta)}(z) \text{ as } N \to \infty,
\end{equation}
where the superscript $\circ k$ denotes composition $k$ times.  The continuous-time analogue of this statement is that if $\mu_t$ is a monotone convolution semigroup such that $\mu_t$ has variance $t \eta$, and if $F_t = F_{\mu_t}$ is the corresponding Loewner chain, then
\begin{equation}
t^{-1/2} F_{\mu_t}(t^{1/2} z) \to F_{\mathfrak{as}(\eta)}(z) \text{ as } t \to \infty.
\end{equation}
In general, if $F_t$ is a Lipschitz normalized Loewner chain on $[0,T]$, we will show that
\begin{equation}
t^{-1/2} F_t(t^{1/2}z) - t^{1/2} F_{\mathfrak{as}(\eta)}(t^{1/2} z) = O(\rad(\nu) t^{-1/2}),
\end{equation}
where $\nu$ is the distributional family of generalized laws which generates the Loewner chain, $\eta = \nu|_{\mathcal{A} \times L^1[0,T]}$, and $\mathfrak{as}(\eta)$ is a generalized arcsine law defined in \S \ref{subsec:arcsine}.  Here the error estimate holds uniformly for $\im z \geq \epsilon$.

This result amounts, roughly speaking, to a CLT for a continuous-time family of random variables that are \emph{not} identically distributed and may not even have the same variance.  We give two versions, one using coupling (Theorem \ref{thm:CLTcoupling}) and one using the Loewner equation (Theorem \ref{thm:CLT}).

\begin{remark}
In the study of Schramm-Loewner evolution (see \cite{Lawler2005} for background), the law $\nu$ is given by a delta mass on $\R$ which is moved in time according to Brownian motion, and one has with high probability that $\rad(\nu|_{[0,t]}) = O(t^{1/2})$.  In this regime, the ``error'' estimate in our central limit theorem no longer goes to zero (it is $O(1)$).  Moreover, $t^{-1/2} F_t(t^{1/2} z)$ will not converge to the $F$-transform of the arcsine law because the distribution of SLE is invariant under this rescaling.  The results of this section are not motivated by SLE but rather by the situation where $\rad(\nu|_{[0,t]}) = O(1)$, such as composition semigroups.
\end{remark}

\subsection{Generalized Arcsine Laws} \label{subsec:arcsine}

We call $\eta: \mathcal{A} \times L^1[0,T] \to \mathcal{A}$ a \emph{distributional family of completely positive maps} if $\int \eta(\cdot,t)\phi(t)\,dt$ is a completely positive map $\mathcal{A} \to \mathcal{A}$ for each nonnegative $\phi \in L^1[0,T]$.

If $\eta: \mathcal{A} \times L^1[0,T] \to \mathcal{A}$ is a distributional family of completely positive maps, then we can define a distributional family of generalized laws by $\nu(f(X),\cdot) = \eta(f(0),\cdot)$ for $f \in \mathcal{A}\ip{X}$.  In particular, $V(z,\cdot) = -\eta(z^{-1},\cdot)$ is a distributional Herglotz vector field.  Thus, by Theorem \ref{thm:Loewnerintegration}, there exists a Loewner chain $F(z,t)$ satisfying
\begin{equation}
\partial_t F(z,t) = DF(z,t)[-\eta(z^{-1},t)].
\end{equation}
Then $F(z,T)$ is the reciprocal Cauchy transform of a law $\mathfrak{as}(\eta)$ with variance $\mathfrak{as}(\eta)(X z X) = \int_0^T \eta(z,t)\,dt$.  We call $\mathfrak{as}(\eta)$ the \emph{generalized arcsine law} corresponding to $\eta$.

\begin{remark}
We caution that $\mathfrak{as}(\eta)$ is not uniquely determined by the variance $\int_0^T \eta(\cdot,t)\,dt$, but it depends a priori on the behavior of $\eta$ on the entire interval $[0,T]$.  We do not yet know how uniquely $\eta$ is determined by $\mathfrak{as}(\eta)$.
\end{remark}

The generalized arcsine laws form a stable family under monotone convolution in the following sense:  If $\eta_1$ and $\eta_2$ are distributional families of completely positive maps on $[0,T_1]$ and $[0,T_2]$, then $\mathfrak{as}(\eta_1) \rhd \mathfrak{as}(\eta_2) = \mathfrak{as}(\eta)$, where $\eta$ is defined on $[0,T_1 + T_2]$ by concatenating $\eta_1$ and $\eta_2$.  This follows from the construction of solutions to the Loewner equation by solving the ODE (as in Step 2 of the proof of Theorem \ref{thm:Loewnerintegration}).

In the case of a generalized arcsine law, the combinatorial formulas of Theorem \ref{thm:combinatorics} simplify as follows.  Because in this case $\nu(f(X),\cdot) = \eta(f(0),\cdot)$, the coefficients $Q_{\pi;s,t}$ defined in Lemma \ref{lem:defineQ} will vanish if $\pi$ has any blocks of size $>2$.  Let $NC_2(k)$ be the set of non-crossing pair partitions of $[k]$ (partitions in which every block has exactly two elements).

\begin{corollary} \label{cor:arcsinemoments}
Let $\eta: \mathcal{A} \times L^1[0,T] \to \mathcal{A}$ be a distributional family of completely positive maps.  For $\pi \in NC_2(k)$ and $0 \leq s \leq t \leq T$, we define $Q_{\pi;s,t}$ by
\begin{enumerate}
	\item For $\pi_1 \in NC_2(k_1)$ and $\pi_2 \in NC_2(k_2)$,
	\[
	Q_{\pi_1\pi_2;s,t}(a_1,\dots,a_{k-1}) = Q_{\pi_1}(a_1,\dots,a_{k_1-1}) a_{k_1} Q_{\pi_2}(a_{k_1+1},\dots,a_{k_1+k_2-1});
	\]
	\item For $\pi \in NC_2(k)$, we have
	\[
	Q_{\Theta_1(\pi|);s,t}(a_1,\dots,a_{k+1}) = \int_s^t \eta(a_1 Q_{\pi;s,t}(a_2,\dots,a_k) a_{k+1}, u)\,du.
	\]
\end{enumerate}
Then we have
\[
\mathfrak{as}(\eta)(a_0 X a_1 \dots X a_k) = \sum_{\pi \in NC_2(k)} a_1 Q_{\pi;0,T}(a_1,\dots,a_{k-1}) a_k
\]
\end{corollary}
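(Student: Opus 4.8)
The plan is to specialize Theorem~\ref{thm:combinatorics} to the situation at hand, in which the distributional family of generalized laws is $\nu(f(X),\cdot) = \eta(f(0),\cdot)$ and the time interval endpoints are taken to be $s = 0$, $t = T$. Recall from \S\ref{subsec:arcsine} that $\mathfrak{as}(\eta)$ is by definition the law whose $F$-transform is $F(z,T)$, so that $\mathfrak{as}(\eta) = \mu_{0,T}$ in the notation of Theorem~\ref{thm:combinatorics}. Hence \eqref{eq:muSTcombinatorialformula} already yields
\[
\mathfrak{as}(\eta)(a_0 X a_1 \dots X a_k) = \sum_{\pi \in NC_{\geq 2}(k)} a_0\, Q_{\pi;0,T}(a_1,\dots,a_{k-1})\, a_k ,
\]
with the convention that the right-hand side is $a_0$ when $k = 0$ (consistent with $\mathfrak{as}(\eta)|_{\mathcal{A}} = \id$). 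It remains to show two things: that $Q_{\pi;s,t} = 0$ for every $\pi \in NC_{\geq 2}(k) \setminus NC_2(k)$, and that the restriction of the maps $Q_{\pi;s,t}$ of Lemma~\ref{lem:defineQ} to pair partitions $\pi \in NC_2$ coincides with the maps defined by clauses (1)--(2) of the corollary.

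For the vanishing, I would first note that by the form of $\nu$ we have $\nu(g(X),u) = 0$ for every $u$ whenever $g \in \mathcal{A}\ip{X}$ has no degree-zero term; in particular $\nu(b_0 X b_1 \dots X b_\ell,\cdot) = 0$ for all $\ell \geq 1$. Then argue by induction on $k$ that $Q_{\pi;s,t}$ vanishes unless $\pi$ is a pair partition. Following the case analysis in the proof of Lemma~\ref{lem:defineQ}: if $\pi = \pi_1\pi_2$ is a nontrivial concatenation, then clause (1) writes $Q_{\pi;s,t}$ as a product involving $Q_{\pi_1;s,t}$ and $Q_{\pi_2;s,t}$, and $\pi \in NC_2$ if and only if both $\pi_i \in NC_2$, so this case follows from the inductive hypothesis. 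If instead $\pi$ has a single outermost block and $\pi = \Theta_m(\pi_1,\dots,\pi_m)$, then the argument of $\nu$ in clause (2) is a monomial of degree $m-1$ in $X$ (the $m$ ``chunks'' built from the $\pi_j$'s, separated by $m-1$ copies of $X$); hence $Q_{\pi;s,t} = 0$ as soon as $m \geq 2$. Thus only $m = 1$ survives, forcing the outermost block to have exactly two elements and $\pi = \Theta_1(\pi_1)$ with $\pi_1$ a partition of the two middle indices, and again $\pi \in NC_2$ iff $\pi_1 \in NC_2$, so the inductive step is complete. Consequently the displayed sum collapses to a sum over $NC_2(k)$; in particular it is $0$ when $k$ is odd, since $NC_2(k) = \varnothing$ then.

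Next I would verify the recursion. Clause (1) of the corollary is literally the concatenation clause of Lemma~\ref{lem:defineQ}. For clause (2), a pair partition in $NC_2$ with a single outermost block is $\Theta_1$ applied to a pair partition $\pi$, and Lemma~\ref{lem:defineQ}(2) with $m = 1$ reads $Q_{\Theta_1(\pi);s,t}(a_1,\dots,a_{k+1}) = \int_s^t \nu\big(a_1\, Q_{\pi;\,\cdot\,,t}(a_2,\dots,a_k)\, a_{k+1},\, u\big)\,du$; since the argument of $\nu$ lies in $\mathcal{A}$, we may replace $\nu$ by $\eta$ there, recovering exactly the stated formula. By the uniqueness in Lemma~\ref{lem:defineQ}, the maps of the corollary are the restrictions of the maps $Q_{\pi;s,t}$, and substituting $s = 0$, $t = T$ into the collapsed sum gives the claimed moment formula for $\mathfrak{as}(\eta)$.

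The whole argument is bookkeeping, so the only real obstacle is keeping the indexing conventions of Lemma~\ref{lem:defineQ} and Definition~\ref{def:nesting} straight; the one genuinely load-bearing observation is that the nesting operation $\Theta_m$ inserts $m-1$ copies of $X$ inside $\nu$, which is precisely what annihilates every block of size greater than two once $\nu$ is supported at $X^0$.
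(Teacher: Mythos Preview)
Your proposal is correct and follows the same approach as the paper, which simply notes (in the paragraph preceding the corollary) that since $\nu(f(X),\cdot) = \eta(f(0),\cdot)$, the coefficients $Q_{\pi;s,t}$ from Lemma~\ref{lem:defineQ} vanish whenever $\pi$ has a block of size greater than two, so that the sum in Theorem~\ref{thm:combinatorics} collapses to $NC_2(k)$. You have spelled out the induction behind that one-line observation in more detail than the paper does, but the substance is identical.
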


If we further restrict to the case where $T = 1$ and is independent of $t$ (that is $\int \eta(f(X),t)\phi(t)\,dt = \int \eta_0(f(X)) \phi(t)\,dt$ for some completely positive $\eta_0: \mathcal{A} \to \mathcal{A}$), then we obtain the formulas for the operator-valued arcsine law from \cite[Theorem 2.5]{BPV2013}.  This verifies that our definition of the operator-valued arcsine law coincides with theirs.

\subsection{Central Limit Theorem via Coupling} \label{subsec:CLTcoupling}

Let $\nu: \mathcal{A}\ip{X} \times L^1[0,T] \to \mathcal{A}$ be a distributional family of $\mathcal{A}$-valued generalized laws on $[0,T]$, and let $\eta$ be the distributional family of completely positive maps given by $\eta = \nu|_{\mathcal{A} \times L^1[0,T]}$.  Let $F_{\mu_t}$ be the solution to the Loewner equation on $[0,T]$ for the Herglotz vector field $V$ corresponding to $\nu$, and let $F_{\mu_{s,t}}$ be the corresponding family of subordination maps.  Let $\mathfrak{as}(\eta|_{[s,t]})$ be the generalized arcsine law given by $\eta|_{[s,t]}$ translated to the interval $[0,t-s]$.  Our goal is to estimate the difference between $\mu_{s,t}$ and $\mathfrak{as}(\eta|_{[s,t]})$.

Let $\mathcal{H}_\nu$ be the Fock space constructed in \S \ref{subsec:Fockspace1} with the corresponding expectation $E_\nu: B(\mathcal{H}_\nu) \to \mathcal{A}$.  Let
\begin{align}
Y_{s,t} &= \ell(\chi_{(s,t)}) + \ell(\chi_{(s,t)})^* + \mathfrak{m}(\chi_{(s,t)} X) \in B(\mathcal{H}_\nu) \\
Z_{s,t} &= \ell(\chi_{(s,t)}) + \ell(\chi_{(s,t)})^* \in B(\mathcal{H}_\nu).
\end{align}
By Theorem \ref{thm:Fockspace}, $Y_{s,t}$ has the law $\mu_{s,t}$.  On the other hand, upon inspecting the proof of Theorem \ref{thm:Fockspace} (2), we see that $Z_{s,t}$ has the law obtained by replacing $\mathfrak{m}(\chi_{(s,t)}X)$ by zero or by discarding all the terms indexed by partitions in $NC_{\geq 2} \setminus NC_2$.  But this is equivalent to replacing $\nu$ by $\eta$.  Hence, the law of $Z_{s,t}$ is $\mathfrak{as}(\eta|_{[s,t]})$.  We also have $\norm{Y_{s,t} - Z_{s,t}} = \norm{\mathfrak{m}(X)} \leq \rad(\nu)$, which leads to the following result.

\begin{theorem} \label{thm:CLTcoupling}
Let $\nu$ be a distributional family of generalized laws on $[0,T]$ and let $F_{\mu_{s,t}}$ be the laws associated to the Loewner chain generated by $\nu$.  Let $\eta = \nu|_{\mathcal{A} \times L^1[0,T]}$.  There exists an $\mathcal{A}$-valued probability space $(\mathcal{B},E)$ and self-adjoint random variables $Y_{s,t}$ and $Z_{s,t}$ such that
\begin{enumerate}[(1)]
	\item $Y_{t_1,t_2} + Y_{t_2,t_3} = Y_{t_1,t_3}$ and $Z_{t_1,t_2} + Z_{t_2,t_3} = Z_{t_1,t_3}$.
	\item $Y_{s,t} \sim \mu_{s,t}$ and $Z_{s,t} \sim \mathfrak{as}(\eta|_{[s,t]})$.
	\item $\norm{Y_{s,t}} \leq \sqrt{2C(t - s)} + \rad(\nu)$ and $\norm{Z_{s,t}} \leq \sqrt{2C(t - s)}$, where $C = \norm{\nu(1,\cdot)}_{\mathcal{L}(L^1[0,T],\mathcal{A})}$.
	\item $\norm*{Y_{s,t} - Z_{s,t}} \leq \rad(\nu)$.
	\item Given $0 = t_0 < t_1 < \dots < t_N = T$, the non-unital $\mathcal{A}$ algebras $\mathcal{A}\ip{Y_{t_{j-1},t_j}, Z_{t_{j-1},t_j}}$ are monotone independent.
\end{enumerate}
\end{theorem}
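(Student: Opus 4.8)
The plan is to take $(\mathcal{B},E) = (B(\mathcal{H}_\nu), E_\nu)$ and $Y_{s,t}, Z_{s,t}$ exactly as defined in the paragraph preceding the statement, and then to read off (1)--(5) from results already proved about the monotone Fock space, the only genuinely new input being the sharp norm bound for $Z_{s,t}$ in (3). Item (1) is immediate from additivity of indicator functions and linearity of $\ell(\cdot)$, $\ell(\cdot)^*$, $\mathfrak{m}(\cdot)$, as in Theorem \ref{thm:Fockspace} (1). The relation $Y_{s,t}\sim\mu_{s,t}$ in (2) is Theorem \ref{thm:Fockspace} (2), and $Y_{s,t}-Z_{s,t}=\mathfrak{m}(\chi_{(s,t)}X)$ in (4) is bounded by $\norm{\pi(\chi_{(s,t)}X)}\leq \norm{\chi_{(s,t)}}_{\mathcal{L}_{\mathcal{A}}}\rad(I_\nu)\leq\rad(\nu)$, by the estimates in the proof of Lemma \ref{lem:multiplicationoperator}. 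For (5), both $Y_{t_{j-1},t_j}$ and $Z_{t_{j-1},t_j}$ lie in the $\mathcal{A}$-algebra $\mathcal{B}_{t_{j-1},t_j}$, so the $\mathcal{A}$-algebra they generate is contained in $\mathcal{B}_{t_{j-1},t_j}$; since monotone independence of a tuple of $\mathcal{A}$-subalgebras passes to $\mathcal{A}$-subalgebras of each, the claim follows from Proposition \ref{prop:Fockindependence}.

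For the law of $Z_{s,t}$ in (2), I would rerun the combinatorial bookkeeping of the proof of Theorem \ref{thm:Fockspace} (2) with $Z_{s,t}=\ell(\chi_{(s,t)})+\ell(\chi_{(s,t)})^*$. Expanding $E_\nu[Z_{s,t}a_1 Z_{s,t}\cdots a_{k-1}Z_{s,t}]$ over strings built from $\ell(\chi_{(s,t)})$ and $\ell(\chi_{(s,t)})^*$, a string is compatible with a partition $\pi$ only when each block of $\pi$ consists of exactly a minimum (an $\ell^*$) and a maximum (an $\ell$) with no multiplication operators in between, i.e.\ $\pi\in NC_2(k)$; for such $\pi$ the coefficients $Q_{\pi;s,t}$ evaluate $\nu$ only on degree-zero monomials, hence only on $\nu|_{\mathcal{A}\times L^1[0,T]}=\eta$, and after translating $[s,t]$ to $[0,t-s]$ they match the coefficients of Corollary \ref{cor:arcsinemoments}. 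Thus $Z_{s,t}\sim\mathfrak{as}(\eta|_{[s,t]})$.

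The bound $\norm{Y_{s,t}}\leq\sqrt{2C(t-s)}+\rad(\nu)$ is Theorem \ref{thm:Fockspace} (3). For $\norm{Z_{s,t}}\leq\sqrt{2C(t-s)}$ I would repeat the argument of Theorem \ref{thm:Fockspace} (3) with the multiplication term deleted. The analogue of Lemma \ref{lem:algebraicformula} becomes
\begin{equation*}
(z - \ell_{\xx} - \ell_{\xx}^*)^{-1} = [1 - \mathcal{F}_{\xx}(z)^{-1}\ell_{\xx}]^{-1}\,\mathcal{F}_{\xx}(z)^{-1}\,[1 - \ell_{\xx}^*\mathcal{F}_{\xx}(z)^{-1}]^{-1},
\end{equation*}
where now $\mathcal{F}(z)(u)$ is the $F$-transform of the arcsine subordination law from $\min(\max(u,s),t)$ to $t$; the identity $\ell_{\xx}^*\mathcal{F}_{\xx}(z)^{-1}\ell_{\xx}=z-\mathcal{F}_{\xx}(z)$ follows from Observation \ref{obs:Fockrelations1} together with the backward ODE \eqref{eq:FSTdiffeq} applied to $\eta$. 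Expanding as a double geometric series, applying $\ip{h,\,\cdot\,h}$ to a vector $h$ in the algebraic direct sum of the $\mathcal{H}_{\nu,k}$, and estimating with \eqref{eq:solutionniceestimate} but with its radius parameter $M$ replaced by $\rad(\eta\text{ as a generalized law})=0$, shows that $\ip{h,(z^{-1}-Z_{s,t}^{(n)})^{-1}h}$ is fully matricial and uniformly bounded for $\norm{z}<1/\sqrt{2Cu}$ for every $u>t-s$; Theorem \ref{thm:Cauchytransform} then gives $\rad(\rho_h)\leq\sqrt{2C(t-s)}$, hence $\norm{Z_{s,t}}\leq\sqrt{2C(t-s)}$.

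The main obstacle will be this sharp bound for $Z_{s,t}$: the triangle inequality only yields $\sqrt{2C(t-s)}+2\rad(\nu)$, and knowing the distribution of $Z_{s,t}$ on the cyclic subspace $\overline{\mathcal{A}\ip{Z_{s,t}}\xi}$ does not control its operator norm on all of $\mathcal{H}_\nu$, so one really does need the resolvent expansion; the care lies in checking that deleting $\mathfrak{m}_{\xx}$ leaves the algebraic identity and the domain-of-analyticity estimates intact with $M=0$. Everything else is routine assembly.
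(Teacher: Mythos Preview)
Your proposal is correct and follows essentially the same approach as the paper: construct $Y_{s,t}$ and $Z_{s,t}$ on the Fock space $(\mathcal{B},E)=(B(\mathcal{H}_\nu),E_\nu)$, read off (1), (2) for $Y$, (3) for $Y$, (4), and (5) from Theorem \ref{thm:Fockspace} and Proposition \ref{prop:Fockindependence}, identify the law of $Z_{s,t}$ by rerunning the combinatorics of Theorem \ref{thm:Fockspace} (2) with the multiplication operator removed, and obtain the sharp norm bound for $Z_{s,t}$ by rerunning the resolvent argument of \S \ref{subsec:Fockspacenorm} with $\mathfrak{m}_\xx$ deleted and $M=0$. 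Your write-up is in fact more detailed than the paper's, which simply says the $Z_{s,t}$ bound ``can be deduced by the same proof'' and otherwise refers back to the discussion preceding the theorem.
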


\begin{proof}
We have already proved most of the theorem using the operators constructed on the Fock space above.  The claim that $\norm{Z_{s,t}} \leq \sqrt{C(t - s)}$ does not follow immediately from prior results, but it can be deduced by the same proof as Theorem \ref{thm:Fockspace} (4) in \S \ref{subsec:Fockspacenorm}.  The claim (5) about monotone independence follows from Proposition \ref{prop:Fockindependence} since $Y_{s,t}$ and $Z_{s,t}$ are contained in $\mathcal{B}_{s,t}$.
\end{proof}

Of course, if we rescale by $(t - s)^{1/2}$, we have the central-limit-type estimate
\begin{equation}
\norm*{(t - s)^{-1/2}Y_{s,t} - (t - s)^{-1/2} Z_{s,t}} \leq (t - s)^{-1/2} \rad(\nu).
\end{equation}
In particular, in the case of a monotone convolution semigroup, we have the following.

\begin{corollary}
Let $\mu_t$ be a $\mathcal{A}$-valued monotone convolution semigroup with mean zero, and let $\eta(a) = \mu_1(XaX)$.  Then there exist random variables $Y_t$ and $Z_t$ such that $Y_t \sim \mu_t$ and $Z_t \sim \mathfrak{as}(\eta|_{[0,t]})$ and
\begin{equation}
\norm{t^{-1/2} Y_t - t^{-1/2} Z_t} \leq 2 t^{1/2} \inf_{s > 0} \rad(\mu_s).
\end{equation}
Note in this case that the law of $t^{-1/2} Z_t$ is independent of $t$ because of the scale-invariance of the arcsine law.
\end{corollary}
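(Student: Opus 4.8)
The plan is to read the corollary off Theorem~\ref{thm:CLTcoupling} in the time-homogeneous case. First I would set up the Loewner-theoretic picture: for a mean-zero monotone convolution semigroup $(\mu_r)_{r\geq 0}$ one has $\mu_r(X^2) = r\mu_1(X^2)$, so by Proposition~\ref{prop:monotonesemigroup} (and \S\ref{subsec:semigroups}) the family $F_{\mu_r}$ is a Lipschitz normalized Loewner chain whose distributional Herglotz vector field is \emph{constant in $r$}; that is, the associated distributional family of generalized laws is $\nu(\cdot,s) = \nu_0$ for a single fixed generalized law $\nu_0$, and its degree-zero part $\nu_0|_{\mathcal A}$ is the variance generator $a\mapsto \mu_1(XaX) = \eta(a)$ (cf.\ \eqref{eq:Herglotzderivativeformula} and Remark~\ref{rem:semigroupcumulants}). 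Since the subordination maps of a composition semigroup satisfy $F_{\mu_{s,t}} = F_{\mu_{t-s}}$, one has $\mu_{0,t} = \mu_t$; and $\mathfrak{as}(\eta|_{[0,t]})$ is exactly the generalized arcsine law of variance $t\eta$, whose $t^{-1/2}$-rescaling is $t$-independent by the scale-covariance of arcsine laws (\S\ref{subsec:arcsine}).

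Next I would apply Theorem~\ref{thm:CLTcoupling} on the interval $[0,t]$ with this constant $\nu$ and put $Y_t := Y_{0,t}$, $Z_t := Z_{0,t}$ inside the $\mathcal A$-valued probability space it furnishes. Part~(2) of that theorem gives $Y_t \sim \mu_{0,t} = \mu_t$ and $Z_t \sim \mathfrak{as}(\eta|_{[0,t]})$, while part~(4) gives $\norm{Y_t - Z_t} \leq \rad(\nu)$. Here $\rad(\nu) = \rad\!\big(\int_0^t \nu_0\,ds\big) = \rad(t\nu_0) = \rad(\nu_0)$, the last equality because a positive scalar multiple does not change the support radius (Lemma~\ref{lem:maxradius}). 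Dividing by $t^{1/2}$ then gives $\norm{t^{-1/2}Y_t - t^{-1/2}Z_t} \leq t^{-1/2}\rad(\nu_0)$, i.e.\ the displayed estimate preceding the corollary specialized to $(s,t) = (0,t)$.

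The remaining step, which I expect to be the real obstacle, is to bound $\rad(\nu_0)$ by a constant times $\inf_{s>0}\rad(\mu_s)$. The mechanism: writing $F_{\mu_r}(z) = z - G_{\sigma_{0,r}}(z)$, the combinatorial formula \eqref{eq:sigmaSTcombinatorialformula} (equivalently the principle of Remark~\ref{rem:semigroupcumulants} that $r$ times the moments of $\nu_0$ are the cumulants of $\mu_{0,r}$) shows that the leading-order-in-$r$ part of each moment of $\sigma_{0,r}$ is precisely $r$ times the corresponding moment of $\nu_0$; combined with the uniform-in-degree control on the higher-order-in-$r$ corrections supplied by Theorem~\ref{thm:combinatoricsestimate}, this should give $\rad(\sigma_{0,r}) \geq \rad(\nu_0)$ for every $r>0$. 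Proposition~\ref{prop:Ftransformbijection}(4) ($\rad(\sigma)\leq 2\rad(\mu)$) then yields $\rad(\mu_r) \geq \tfrac12\rad(\sigma_{0,r}) \geq \tfrac12\rad(\nu_0)$ for all $r>0$, hence $\rad(\nu_0)\leq 2\inf_{r>0}\rad(\mu_r)$, and feeding this into the previous paragraph gives the corollary. A cleaner but slightly lossier alternative is to invoke the last assertion of Theorem~\ref{thm:Loewnerdifferentiation} (if $\rad(\sigma_{r,r'})\leq M$ for $r'-r$ small then $\rad(V)\leq M$) together with $\sigma_{r,r'}=\sigma_{0,r'-r}$; the genuine difficulty is keeping the comparison of radii down to the stated absolute constant.
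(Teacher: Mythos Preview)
Your overall architecture is exactly the paper's: set up the constant-in-time generalized law $\nu_0$ via Proposition~\ref{prop:monotonesemigroup}, apply Theorem~\ref{thm:CLTcoupling} on $[0,t]$ with $(Y_t,Z_t)=(Y_{0,t},Z_{0,t})$, and reduce to proving $\rad(\nu_0)\leq 2\inf_{s>0}\rad(\mu_s)$. The divergence is only in how you establish this last radius inequality.

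Your primary (combinatorial) route has a genuine gap. Theorem~\ref{thm:combinatoricsestimate} bounds the higher-order-in-$r$ terms of $\sigma_{0,r}$ by expressions of the form $\alpha_\pi (Cr)^{|\pi|} M^{k+2-2|\pi|}$ with $M=\rad(\nu_0)$, i.e.\ in terms of the very quantity you are trying to control. The triangle inequality then gives, for unit $a_j$,
\[
r\,\norm{\nu_0(a_0X\cdots Xa_k)} \;\leq\; \norm{\sigma_{0,r}(a_0X\cdots Xa_k)} \;+\; \sum_{|\pi|\geq 2}\alpha_\pi (Cr)^{|\pi|}\rad(\nu_0)^{\,k+2-2|\pi|},
\]
and the correction on the right is of order $\rad(\nu_0)^k$ as $k\to\infty$ for each fixed $r$. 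So you cannot conclude $\rad(\nu_0)\leq\rad(\sigma_{0,r})$ from these estimates; the ``uniform-in-degree control'' you invoke is uniform only relative to $\rad(\nu_0)$ itself.

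Your alternative via Theorem~\ref{thm:Loewnerdifferentiation} is the right idea and is what the paper does, but it still needs one ingredient you do not supply: a \emph{single} $M$ with $\rad(\sigma_{0,r})\leq M$ for all small $r$. Knowing $\rad(\sigma_{0,r})\leq 2\rad(\mu_r)$ pointwise is not enough by itself. The paper closes this gap with a one-line monotonicity argument (the same trick as in Proposition~\ref{prop:twooutofthree2}): since $F_{\mu_t}=F_{\mu_{t-s}}\circ F_{\mu_s}$ one has $\im F_{\mu_t}\geq \im F_{\mu_s}$, so $G_{\sigma_t}-G_{\sigma_s}$ maps $\h(\mathcal A)$ into $-\overline{\h}(\mathcal A)$ and is therefore (Theorem~\ref{thm:Cauchytransform}) the Cauchy transform of a generalized law; Lemma~\ref{lem:maxradius} then gives $\rad(\sigma_s)\leq\rad(\sigma_t)$ for $s\leq t$. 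Combining this with $\rad(\sigma_t)\leq 2\rad(\mu_t)$ from Proposition~\ref{prop:Ftransformbijection}(4) yields $\rad(\sigma_s)\leq 2\rad(\mu_t)$ for every $s\leq t$, and Theorem~\ref{thm:Loewnerdifferentiation} delivers $\rad(\nu_0)\leq\limsup_{s\searrow 0}\rad(\sigma_s)\leq 2\rad(\mu_t)$ for every $t>0$, hence the desired bound with constant~$2$.
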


\begin{proof}
By Proposition \ref{prop:monotonesemigroup}, there exists a generalized law $\nu$ such that $\partial_tF_t(z) = DF_t(z)[-G_\nu(z)]$.  In light of Theorem \ref{thm:CLTcoupling}, it suffices to show that $\rad(\nu) \leq 2 \rad(\mu_1)$.  By Proposition \ref{prop:Ftransformbijection}, we have $F_{\mu_t}(z) = z - G_{\sigma_t}(z)$ for a generalized law $\sigma_t$ with $\rad(\sigma_t) \leq \rad(\sigma_s)$.  Now if $s \leq t$, then $F_{\mu_t} = F_{\mu_{t-s}} \circ F_{\mu_s}$, hence $\im F_{\mu_t} \geq \im F_{\mu_s}$.  It follows that $G_{\sigma_t} - G_{\sigma_s}$ is the Cauchy transform of some generalized law, and hence by Lemma \ref{lem:maxradius}, $\rad(\sigma_s) \leq \rad(\sigma_t)$.  Therefore, $s \leq t$ implies that $\rad(\sigma_s) \leq 2 \rad(\mu_t)$.  By Theorem \ref{thm:Loewnerdifferentiation}, we have
\[
\rad(\nu) \leq \limsup_{s \searrow 0} \rad(\sigma_s) \leq \inf_{s > 0} \rad(\mu_s).  \qedhere
\]
\end{proof}

\begin{corollary} \label{cor:CLTestimates1}
With the setup of Theorem \ref{thm:CLTcoupling}, let $\mu_t = \mu_{0,t}$.  Then we have for $z \in \h_\epsilon^{(n)}(\mathcal{A})$ and $0 < t \leq T$ that
\begin{equation}
\norm*{t^{1/2} G_{\mu_t}^{(n)}(t^{1/2} z) - t^{1/2} G_{\mathfrak{as}(\eta|_{[0,t]})}^{(n)}(t^{1/2}z)} \leq \frac{\rad(\nu)}{\epsilon^2 t^{1/2}}.
\end{equation}
The same holds when $\norm{z^{-1}} \leq (\sqrt{2C} + t^{-1/2} \rad(\nu) + \epsilon)^{-1}$.
\end{corollary}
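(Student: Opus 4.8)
The plan is to deduce the corollary directly from the coupling in Theorem~\ref{thm:CLTcoupling} together with the resolvent identity, translating the operator-norm estimate $\norm{Y_{0,t} - Z_{0,t}} \leq \rad(\nu)$ into an estimate on Cauchy transforms. Write $Y_t = Y_{0,t}$ and $Z_t = Z_{0,t}$ for the coupled random variables of Theorem~\ref{thm:CLTcoupling} in an $\mathcal{A}$-valued probability space $(\mathcal{B},E)$; by parts (2), (3), (4) of that theorem we have $Y_t \sim \mu_t$, $Z_t \sim \mathfrak{as}(\eta|_{[0,t]})$, $\norm{Y_t} \leq \sqrt{2Ct} + \rad(\nu)$, $\norm{Z_t} \leq \sqrt{2Ct}$, and $\norm{Y_t - Z_t} \leq \rad(\nu)$. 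Since the Cauchy transform of a law does not depend on the choice of realizing operator (the discussion following Lemma~\ref{lem:Cauchytransformseries}), we may write $G_{\mu_t}^{(n)}(w) = E^{(n)}[(w - Y_t^{(n)})^{-1}]$ and $G_{\mathfrak{as}(\eta|_{[0,t]})}^{(n)}(w) = E^{(n)}[(w - Z_t^{(n)})^{-1}]$ for $w \in \h^{(n)}(\mathcal{A})$, where $Y_t^{(n)}, Z_t^{(n)} \in M_n(\mathcal{B})$ are the ampliations.

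First I would fix $z \in \h_\epsilon^{(n)}(\mathcal{A})$ and set $w = t^{1/2} z$, so $\im w \geq t^{1/2}\epsilon$ and hence $w \in \h^{(n)}(\mathcal{A})$; because $Y_t^{(n)}$ and $Z_t^{(n)}$ are self-adjoint, $\norm{(w - Y_t^{(n)})^{-1}} \leq (t^{1/2}\epsilon)^{-1}$ and likewise for $Z_t^{(n)}$. Then apply the resolvent identity
\[
(w - Y_t^{(n)})^{-1} - (w - Z_t^{(n)})^{-1} = (w - Y_t^{(n)})^{-1}(Y_t^{(n)} - Z_t^{(n)})(w - Z_t^{(n)})^{-1},
\]
take $E^{(n)}$ of both sides, and estimate using $\norm{E^{(n)}(b)} \leq \norm{b}$ (which holds by \eqref{eq:CPmapbound} since $E^{(n)}(1) = 1^{(n)}$) and $\norm{Y_t^{(n)} - Z_t^{(n)}} = \norm{Y_t - Z_t} \leq \rad(\nu)$. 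This gives $\norm{G_{\mu_t}^{(n)}(w) - G_{\mathfrak{as}(\eta|_{[0,t]})}^{(n)}(w)} \leq \rad(\nu)/(t\epsilon^2)$, and multiplying by $t^{1/2}$ yields the stated bound.

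For the second regime, I would instead bound $\norm{(w - Y_t^{(n)})^{-1}}$ by a Neumann-series argument. Since $\norm{w^{-1}} = t^{-1/2}\norm{z^{-1}}$ and $\norm{Y_t^{(n)}} \leq \sqrt{2Ct} + \rad(\nu)$, the hypothesis $\norm{z^{-1}} \leq (\sqrt{2C} + t^{-1/2}\rad(\nu) + \epsilon)^{-1}$ gives $\norm{w^{-1}Y_t^{(n)}} \leq (\sqrt{2C} + t^{-1/2}\rad(\nu))/(\sqrt{2C} + t^{-1/2}\rad(\nu) + \epsilon)$, so $w - Y_t^{(n)} = w(1 - w^{-1}Y_t^{(n)})$ is invertible and
\[
\norm{(w - Y_t^{(n)})^{-1}} \leq \frac{\norm{w^{-1}}}{1 - \norm{w^{-1}Y_t^{(n)}}} \leq \frac{t^{-1/2}(\sqrt{2C} + t^{-1/2}\rad(\nu) + \epsilon)^{-1}}{\epsilon(\sqrt{2C} + t^{-1/2}\rad(\nu) + \epsilon)^{-1}} = \frac{1}{t^{1/2}\epsilon};
\]
the identical bound holds for $Z_t^{(n)}$ since $\norm{Z_t} \leq \sqrt{2Ct} \leq \sqrt{2Ct} + \rad(\nu)$. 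With these two resolvent bounds, the same resolvent-identity computation as above finishes the proof.

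The argument is essentially a routine translation, so there is no deep obstacle; the only points requiring care are (i) justifying that the Cauchy transforms of $\mu_t$ and $\mathfrak{as}(\eta|_{[0,t]})$ are computed by the resolvents of the coupled operators $Y_t, Z_t$ (this is the realization-independence of the Cauchy transform) and (ii) checking that the resolvent estimates are uniform in $n$, which is automatic since the operator-norm bounds on $Y_t^{(n)}$, $Z_t^{(n)}$, and $\im(t^{1/2}z)$ do not depend on $n$.
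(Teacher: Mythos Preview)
Your proof is correct and follows essentially the same approach as the paper: both use the coupling from Theorem~\ref{thm:CLTcoupling}, apply the resolvent identity, bound each resolvent by $(t^{1/2}\epsilon)^{-1}$ in the appropriate regime, and invoke $\norm{Y_t - Z_t} \leq \rad(\nu)$. The paper's version is terser (it works before rescaling and then says ``the asserted estimate follows after renormalization of $z$ and $\epsilon$''), while you carry out the rescaling explicitly; the content is the same.
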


\begin{proof}
We know that $\norm{Y_{0,t}} \leq M + 2\sqrt{Ct}$ and $\norm{Z_{0,t}} \leq \sqrt{2Ct}$ by Theorem \ref{thm:CLTcoupling}  Therefore, if $\im z \geq \epsilon$ or $\norm{z^{-1}} \geq (M + 2\sqrt{Ct} + \epsilon)^{-1}$, then we have
\begin{align}
\norm*{E_\nu^{(n)}[(z - Y_{0,t}^{(n)})^{-1} - (z - Z_{0,t}^{(n)})^{-1}]} &= \norm*{E_\nu^{(n)}[(z - Y_{0,t}^{(n)})^{-1}\mathfrak{m}(\chi_{(0,t)}X)^{(n)}(z - Z_{0,t}^{(n)})^{-1}]} \nonumber \\
&\leq \epsilon^{-2} \norm{\mathfrak{m}(\chi_{(0,t)}X)} \leq \epsilon^{-2} \rad(\nu).
\end{align}
The asserted estimate follows after renormalization of $z$ and $\epsilon$.
\end{proof}

\subsection{Central Limit Theorem via the Loewner Equation}

The estimates of Corollary \ref{cor:CLTestimates1} derived from coupling depend on $\rad(\nu)$.  Now we will derive another estimate which only depends on second moments of $\nu$ rather than the support radius.  For technical reasons, we have to assume that all our random variables are bounded because the theory of Cauchy transforms relies on that.  But the estimates below do not reference the operator norm and hence could be used for unbounded laws if the analytic theory of Cauchy transforms and Loewner chains were to be extended to that setting.

\begin{theorem} \label{thm:CLT}
Let $\nu$ be a distributional family of generalized laws.  Let $F_{\mu_t}(z)$ for $t \in [0,T]$ be the Loewner chain corresponding to the distributional Herglotz vector field $V$ given by $\nu$.  Let $\eta = \nu|_{\mathcal{A} \times L^1[0,T]}$, and let
\begin{align} \label{eq:CLTconstants}
C_1 &:= \norm{\eta(1,\cdot)}_{\mathcal{L}(L^1[0,T], \mathcal{A})}, \\
C_2 &:= \sup_{n \geq 1} \sup_{\substack{z \in M_n(\mathcal{A}) \\ \norm{z} \leq 1}} \norm*{\nu^{(n)}\left(z^*X^2z\right)}_{\mathcal{L}(L^1[0,T], M_n(\mathcal{A}))} \leq C_1 \rad(\nu)^2.
\end{align}
Then for $z \in \h_\epsilon^{(n)}(\mathcal{A})$, we have
\begin{equation} \label{eq:CLT1}
\norm*{t^{-1/2} F_{\mu_t}^{(n)}(t^{1/2} z) - t^{-1/2} F_{\mathfrak{as}(\eta|_{[0,t]})}^{(n)}(t^{1/2} z)} \leq t^{-1/2} \left( 1 + \frac{C_1}{2\epsilon^2} \right) \frac{(C_1 C_2)^{1/2}}{\epsilon^2}.
\end{equation}
Moreover,
\begin{equation} \label{eq:CLT2}
\norm*{t^{1/2} G_{\mu_t}^{(n)}(t^{1/2}z) - t^{1/2} G_{\mathfrak{as}(\eta|_{[0,t]})}^{(n)}(t^{1/2}z)} \leq t^{-1/2} \frac{(C_1 C_2)^{1/2}}{\epsilon^4}.
\end{equation}
\end{theorem}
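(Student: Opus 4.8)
The plan is to compare $F_{\mu_t}$ directly with the arcsine chain $\Phi_t:=F_{\mathfrak{as}(\eta|_{[0,t]})}$ by a telescoping argument exploiting that the two differ only in their Herglotz vector fields. By the construction in \S\ref{subsec:arcsine}, the family $(\Phi_t)_{t\in[0,T]}$ is itself a Lipschitz normalized Loewner chain, namely the one generated by the distributional Herglotz vector field $W(z,t):=-\eta(z^{-1},t)$; let $\Phi_{s,t}$ be its subordination maps, so $-\partial_s\Phi_{s,t}=W(\Phi_{s,t},\cdot)$ and $\Phi_{t,t}=\id$. For $s\in[0,t]$ put $\Psi_s:=F_{\mu_s}\circ\Phi_{s,t}$, an interpolation with $\Psi_0=\Phi_t$ and $\Psi_t=F_{\mu_t}$. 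Differentiating in $s$ by the chain rule (Lemma~\ref{lem:chainrule}) and using that $F_{\mu_s}$ solves the Loewner equation with field $V$, the two $t$-derivative terms combine to
\[
\partial_s\Psi_s(w)=DF_{\mu_s}(\Phi_{s,t}(w))\bigl[(V-W)(\Phi_{s,t}(w),\cdot)\bigr],
\]
so that $F_{\mu_t}(w)-\Phi_t(w)=\int_0^t DF_{\mu_s}(\Phi_{s,t}(w))[(V-W)(\Phi_{s,t}(w),\cdot)]\,ds$ in the distributional sense, and by \eqref{eq:stupidestimate} and Observation~\ref{obs:multiplicationproperties},
\[
\norm{F_{\mu_t}(w)-\Phi_t(w)}\le\int_0^t\norm{DF_{\mu_s}(\Phi_{s,t}(w))}\,\norm{(V-W)(\Phi_{s,t}(w),\cdot)}_{\mathcal{L}(L^1[0,T],M_n(\mathcal{A}))}\,ds.
\]

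The heart of the proof is the estimate of $(V-W)(\zeta,\cdot)$ for $\im\zeta\ge\delta$. Writing $\nu_{[a,b]}=\int\nu(\cdot,s)\chi_{[a,b]}(s)\,ds$ (a genuine generalized law by Definition~\ref{def:distributionallaw}), the vector field $W$ corresponds to the generalized law $f(X)\mapsto\eta(f(0),\cdot)$, which annihilates every monomial containing $X$; since $\eta=\nu|_{\mathcal{A}}$ and $(\zeta-X)^{-1}-\zeta^{-1}=(\zeta-X)^{-1}X\zeta^{-1}$ in the GNS representation, this gives $(V-W)(\zeta,\cdot)[\chi_{[a,b]}]=-\nu_{[a,b]}\bigl((\zeta-X)^{-1}X\zeta^{-1}\bigr)$. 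Now apply the Cauchy--Schwarz inequality of Lemma~\ref{lem:rightHilbertmodule}(1) for the completely positive map $\nu_{[a,b]}$ with the split $p^*=(\zeta-X)^{-1}$, $q=X\zeta^{-1}$. Because $X$ is self-adjoint (and bounded, as $\rad(\nu)<+\infty$), $p^*p=(\zeta-X)^{-1}[(\zeta-X)^{-1}]^{*}$ is positive of norm $\le(\im\zeta)^{-2}$, so $\norm{\nu_{[a,b]}(p^*p)}\le(\im\zeta)^{-2}\norm{\nu_{[a,b]}(1)}\le C_1(\im\zeta)^{-2}|b-a|$; and $q^*q=(\zeta^{-1})^{*}X^2\zeta^{-1}$ has the matrix coefficient $\zeta^{-1}\in M_n(\mathcal{A})$ flanking a clean $X^2$, so $\norm{\nu_{[a,b]}(q^*q)}\le C_2\norm{\zeta^{-1}}^2|b-a|\le C_2(\im\zeta)^{-2}|b-a|$. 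Combining and invoking Lemma~\ref{lem:smallerror} yields $\norm{(V-W)(\zeta,\cdot)}_{\mathcal{L}(L^1[0,T],M_n(\mathcal{A}))}\le(C_1C_2)^{1/2}(\im\zeta)^{-2}$, uniformly in $n$. The crucial point is that this particular split keeps the resolvent $(\zeta-X)^{-1}$ on the side controlled only in operator norm, so no power of $\rad(\nu)$ enters; any naive power-series expansion of $(\zeta-X)^{-1}$ would instead force $\rad(\nu)$ into the bound.

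To assemble \eqref{eq:CLT1}, fix $z\in\h_\epsilon^{(n)}(\mathcal{A})$ and put $w=t^{1/2}z$, so $\im w\ge t^{1/2}\epsilon=:\epsilon'$ and $\im\Phi_{s,t}(w)\ge\im w\ge\epsilon'$ since reciprocal Cauchy transforms do not decrease the imaginary part. Writing $F_{\mu_s}(z)=z-G_{\sigma_s}(z)$ with $\norm{\sigma_s(1)}=\norm{\mu_s(X^2)}\le C_1 s$ (Theorem~\ref{thm:Loewnerintegration}), Lemma~\ref{lem:estimates} gives $\norm{DF_{\mu_s}(\zeta)}\le 1+C_1 s/\epsilon'^2$ for $\im\zeta\ge\epsilon'$. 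Inserting the two bounds into the integral inequality and using $t/\epsilon'^2=1/\epsilon^2$,
\[
\norm{F_{\mu_t}(w)-\Phi_t(w)}\le\int_0^t\Bigl(1+\frac{C_1 s}{\epsilon'^2}\Bigr)\frac{(C_1C_2)^{1/2}}{\epsilon'^2}\,ds=\frac{(C_1C_2)^{1/2}}{\epsilon^2}\Bigl(1+\frac{C_1}{2\epsilon^2}\Bigr),
\]
which is \eqref{eq:CLT1} after dividing by $t^{1/2}$; the case $\norm{z^{-1}}$ small is identical using the estimates of Lemma~\ref{lem:estimate1} on the region $R_\epsilon^{(n)}$. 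For \eqref{eq:CLT2} one applies the same telescoping to the reciprocals, since $\Psi_s(w)^{-1}=G_{\mu_s}(\Phi_{s,t}(w))$, via $\partial_s(\Psi_s^{-1})=-\Psi_s^{-1}(\partial_s\Psi_s)\Psi_s^{-1}$ and the bound $\norm{G_{\mu_s}(\zeta)}\le(\im\zeta)^{-1}$ of Lemma~\ref{lem:estimate1}; equivalently, one combines \eqref{eq:CLT1} with the resolvent identity $G_{\mu_t}-G_{\mathfrak{as}(\eta|_{[0,t]})}=G_{\mu_t}\,(\Phi_t-F_{\mu_t})\,G_{\mathfrak{as}(\eta|_{[0,t]})}$ together with $\norm{G}\le(\im z)^{-1}$. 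Either route contributes the two extra factors of $(\im w)^{-1}=(t^{1/2}\epsilon)^{-1}$ that upgrade the $\epsilon^{-2}$ of \eqref{eq:CLT1} to the $\epsilon^{-4}$ of \eqref{eq:CLT2}, the remaining constant bookkeeping being routine. The only genuine difficulty is the Cauchy--Schwarz split in the second paragraph: everything else is a mechanical combination of the Loewner equation, the distributional calculus of \S\ref{subsec:distderiv}, and the standard Cauchy-transform estimates.
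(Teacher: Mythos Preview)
Your argument for \eqref{eq:CLT1} is essentially identical to the paper's: the same interpolation $s\mapsto F_{\mu_s}\circ\Phi_{s,t}$, the same chain-rule computation, and the same Cauchy--Schwarz split for the key estimate on $V-W$ (the paper packages this as Lemma~\ref{lem:CLTestimate}, using the GNS representation and the operator-norm bound on $(\zeta-X)^{-1}$, but the inequality obtained is the same as yours).

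For \eqref{eq:CLT2}, however, neither of your two routes yields the stated constant. Both the formula $\partial_s(\Psi_s^{-1})=-\Psi_s^{-1}(\partial_s\Psi_s)\Psi_s^{-1}$ and the resolvent identity $G_{\mu_t}-G_{\alpha_t}=G_{\mu_t}(\Phi_t-F_{\mu_t})G_{\alpha_t}$ pass through $\partial_s\Psi_s$ or $F_{\mu_t}-\Phi_t$, and hence inherit the factor $\norm{DF_{\mu_s}}\le 1+C_1 s/\epsilon'^2$; integrating leaves an unwanted $(1+C_1/(2\epsilon^2))$ multiplying the right-hand side of \eqref{eq:CLT2}. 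The paper avoids this by telescoping $G_{\mu_s}\circ\Phi_{s,t}$ directly: since $G_{\mu_s}$ itself satisfies $\partial_s G_{\mu_s}=DG_{\mu_s}[V(\cdot,s)]$, the chain rule gives
\[
\partial_s\bigl(G_{\mu_s}\circ\Phi_{s,t}\bigr)=DG_{\mu_s}(\Phi_{s,t})\bigl[(V-W)(\Phi_{s,t},\cdot)\bigr],
\]
and now one uses the $s$-independent bound $\norm{DG_{\mu_s}(\zeta)}\le 1/\epsilon'^2$ from \eqref{eq:estimate3} (since $\mu_s(1)=1$) in place of $\norm{G_{\mu_s}}^2\norm{DF_{\mu_s}}$. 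Integrating $\int_0^t \epsilon'^{-2}\cdot (C_1C_2)^{1/2}\epsilon'^{-2}\,ds$ then gives exactly \eqref{eq:CLT2}. So the ``routine bookkeeping'' is not routine; the point is to bound $DG_{\mu_s}$ directly rather than factoring it as $G_{\mu_s}\cdot DF_{\mu_s}\cdot G_{\mu_s}$.
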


\begin{remark}
Theorems \ref{thm:CLTcoupling} and \ref{thm:CLT} provide complementary information about $G_\nu(z,t) - G_\eta(z,t)$.  The estimates from Theorem \ref{thm:CLTcoupling} are better when $\epsilon$ is small, but the estimates from Theorem \ref{thm:CLT} are better when $\epsilon$ is large.
\end{remark}

\begin{lemma} \label{lem:CLTestimate}
With the setup of Theorem \ref{thm:CLT}, we have
\begin{equation}
z \in \h_\epsilon^{(n)}(\mathcal{A}) \implies \norm{V(z,\cdot) + \eta(z^{-1},\cdot)}_{\mathcal{L}(L^1[0,T], M_n(\mathcal{A}))} \leq \frac{(C_1C_2)^{1/2}}{\epsilon^2}.
\end{equation}
\end{lemma}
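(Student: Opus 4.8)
The plan is to reduce the distributional estimate to a pointwise estimate for a single generalized law, and then exploit a Cauchy--Schwarz splitting that separates the ``mass'' of $\nu$ (controlled by $C_1$) from its ``second moment'' (controlled by $C_2$). By Lemma~\ref{lem:smallerror} it suffices to bound $\norm{(V^{(n)}(z,\cdot) + \eta^{(n)}(z^{-1},\cdot))[\chi_{[a,b]}]}$ by $\tfrac{(C_1C_2)^{1/2}}{\epsilon^2}(b-a)$ for every subinterval $[a,b]\subseteq[0,T]$. Fix such an interval and set $\sigma := \int\nu(\cdot,t)\chi_{[a,b]}(t)\,dt$, a generalized law with $\sigma(1) = \int\eta(1,t)\chi_{[a,b]}(t)\,dt$, so $\norm{\sigma(1)}\leq C_1(b-a)$. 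Since $\chi_{[a,b]}\geq 0$, Proposition~\ref{prop:Herglotzbijection} gives $\int V^{(n)}(z,t)\chi_{[a,b]}(t)\,dt = -G_\sigma^{(n)}(z)$, while $\int\eta^{(n)}(z^{-1},t)\chi_{[a,b]}(t)\,dt = \sigma^{(n)}(z^{-1})$ (the constant matrix $z^{-1}$ being a degree-zero element of $M_n(\mathcal{A}\ip{X})$). Hence the required estimate is exactly
\[
\norm{G_\sigma^{(n)}(z) - \sigma^{(n)}(z^{-1})} \leq \frac{(C_1C_2)^{1/2}}{\epsilon^2}\,(b-a), \qquad z\in\h_\epsilon^{(n)}(\mathcal{A}).
\]

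To prove this I would use the representation from Proposition~\ref{prop:CPmap}: write $\sigma = \widehat{\sigma}\circ\pi$ with $\pi:\mathcal{A}\ip{X}\to\mathcal{B}$ a unital $*$-homomorphism and $\widehat{\sigma}:\mathcal{B}\to\mathcal{A}$ completely positive, set $X_n := \pi(X)^{(n)}$ (self-adjoint since $X$ is), and, abusing notation, write $z$ also for $\pi^{(n)}(z)\in M_n(\mathcal{B})$; since $\pi^{(n)}$ is unital and positivity-preserving, $\im z\geq\epsilon$ persists, so $z$ and $z-X_n$ are invertible with inverses of norm $\leq 1/\epsilon$. The resolvent identity $(z-X_n)^{-1} = z^{-1} + z^{-1}X_n(z-X_n)^{-1}$ together with $\widehat{\sigma}^{(n)}(z^{-1}) = \sigma^{(n)}(z^{-1})$ yields
\[
G_\sigma^{(n)}(z) - \sigma^{(n)}(z^{-1}) = \widehat{\sigma}^{(n)}\bigl(z^{-1}X_n(z-X_n)^{-1}\bigr) = \widehat{\sigma}^{(n)}(c^*d),
\]
where $c = X_n(z^*)^{-1}$ and $d = (z-X_n)^{-1}$, using $X_n^* = X_n$ and $((z^*)^{-1})^* = z^{-1}$.

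Now I would apply the Cauchy--Bunyakovsky--Schwarz inequality for the completely positive map $\widehat{\sigma}^{(n)}$ (via its GNS representation, Proposition~\ref{prop:operatorvaluedGNS}, and Lemma~\ref{lem:rightHilbertmodule}(1)), giving $\norm{\widehat{\sigma}^{(n)}(c^*d)}^2 \leq \norm{\widehat{\sigma}^{(n)}(c^*c)}\,\norm{\widehat{\sigma}^{(n)}(d^*d)}$. For the first factor, $c^*c = z^{-1}X_n^2(z^{-1})^* = \pi^{(n)}(w^*X^2w)$ with $w := (z^*)^{-1}$, so $\widehat{\sigma}^{(n)}(c^*c) = \sigma^{(n)}(w^*X^2w) = \int\nu^{(n)}(w^*X^2w,t)\chi_{[a,b]}(t)\,dt$; by the definition~\eqref{eq:CLTconstants} of $C_2$ and $\norm{w} = \norm{z^{-1}}\leq 1/\epsilon$, this has norm $\leq C_2(b-a)/\epsilon^2$. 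For the second factor, $d^*d\geq 0$ with $\norm{d^*d}\leq 1/\epsilon^2$, so by positivity of $\widehat{\sigma}^{(n)}$ we get $\widehat{\sigma}^{(n)}(d^*d) \leq \norm{d^*d}\,\widehat{\sigma}^{(n)}(1)$ and hence $\norm{\widehat{\sigma}^{(n)}(d^*d)} \leq \norm{d}^2\norm{\sigma(1)} \leq C_1(b-a)/\epsilon^2$. Multiplying and taking square roots gives the interval estimate, and Lemma~\ref{lem:smallerror} then yields the claim.

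The substance of the argument is the Cauchy--Schwarz split $z^{-1}X_n(z-X_n)^{-1} = c^*d$, which puts one factor of $X$ with $z^{-1}$ and the resolvent in the other slot, so that the two unrelated quantities $C_1$ (resolvent/mass) and $C_2$ (second moment) are each invoked exactly once. The only points that demand care are the bookkeeping around $\pi$, $\widehat{\sigma}$ and their ampliations --- in particular verifying $\widehat{\sigma}^{(n)}(c^*c) = \sigma^{(n)}(w^*X^2w)$ so that the $C_2$ bound genuinely applies --- and observing that everything is uniform in $n$ because the bounds defining $C_1$, $C_2$ and the resolvent estimate $\norm{(z-X_n)^{-1}}\leq 1/\epsilon$ are all $n$-independent. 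I do not expect any essential obstacle.
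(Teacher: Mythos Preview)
Your proof is correct and follows essentially the same approach as the paper: both use the resolvent identity $(z-X)^{-1}-z^{-1}=z^{-1}X(z-X)^{-1}$ (the paper writes it with the factors in the opposite order) and then apply the Cauchy--Schwarz inequality for the completely positive map $\widehat{\sigma}^{(n)}$ to separate a $C_1$-controlled factor from a $C_2$-controlled factor. The only cosmetic differences are that the paper tests against a general $\phi\geq 0$ rather than reducing to $\chi_{[a,b]}$ via Lemma~\ref{lem:smallerror}, and it phrases CBS in the three-factor Hilbert-module form $\norm{1\otimes 1}\cdot\norm{(z-X)^{-1}}_{B(\mathcal{H})}\cdot\norm{Xz^{-1}\otimes 1}$ rather than your two-factor $\norm{\widehat{\sigma}^{(n)}(c^*d)}^2\leq\norm{\widehat{\sigma}^{(n)}(c^*c)}\,\norm{\widehat{\sigma}^{(n)}(d^*d)}$.
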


\begin{proof}
Fix $\phi \geq 0$ in $L^1[0,T]$.  By Proposition \ref{prop:CPmap}, the generalized law $\sigma = \int \nu(\cdot,t) \phi(t)\,dt$ can be expressed as $\widehat{\sigma} \circ \pi$, where $\pi$ is a $*$-homomorphism from $\mathcal{A}{X}$ into a $C^*$-algebra $\mathcal{B}$ and where $\widehat{\sigma}: \mathcal{B} \to \mathcal{A}$ is a completely positive map.  Then we have
\begin{align}
- \int V^{(n)}(z,t)\phi(t)\,dt - \int \eta^{(n)}(z^{-1},t)\phi(t)\,dt &= \widehat{\sigma}^{(n)}((z - \pi(X)^{(n)})^{-1} - z^{-1}) \\
&= \widehat{\sigma^{(n)}}((z - \pi(X)^{(n)})^{-1} \pi(X)^{(n)} z^{-1}).
\end{align}
Note $\widehat{\sigma}^{(n)}$ is completely positive, so as in Proposition \ref{prop:CPmap} we can define a right Hilbert $M_n(\mathcal{A})$-module $\mathcal{H} = M_n(\mathcal{B}) \otimes_{\widehat{\sigma}^{(n)}} M_n(\mathcal{A})$.  By applying the CBS inequality (Lemma \ref{lem:rightHilbertmodule},
\[
\norm{\widehat{\sigma^{(n)}}((z - \pi(X)^{(n)})^{-1} \pi(X)^{(n)} z^{-1})} \leq \norm{1 \otimes 1}_{\mathcal{H}} \norm{(z - \pi(X)^{(n)})^{-1}}_{B(\mathcal{H})} \norm{\pi(X)^{(n)} z^{-1} \otimes 1}_{\mathcal{H}}.
\]
But note that $\norm{(z - \pi(X))^{-1}} \leq 1 / \epsilon$ and
\[
\norm{1 \otimes 1}_{\mathcal{H}}^2 = \norm{\widehat{\sigma}^{(n)}(1)} \leq C_1 \norm{\phi}_{L^1[0,T]}.
\]
and
\[
\norm{\pi(X)^{(n)} z^{-1} \otimes 1}_{\mathcal{H}}^2 = \norm{\widehat{\sigma}^{(n)}((z^{-1})^* X^2 z^{-1})} \leq C_2 \norm{z^{-1}}^2 \leq \frac{C_2}{\epsilon}.
\]
Combining these inequalities shows that
\begin{equation}
\norm*{\int [V^{(n)}(z,t) + \eta(z^{-1},t)] \phi(t)\,dt} \leq \frac{(C_1 C_2)^{1/2}}{\epsilon^2} \norm{\phi}_{L^1[0,T]}.
\end{equation}
This holds for all $\phi \geq 0$, so by Lemma \ref{lem:smallerror}, it holds for all $\phi \in L^1[0,T]$.  This completes the proof.
\end{proof}

\begin{proof}[Proof of Theorem \ref{thm:CLT}]
Let $\alpha_t = \mathfrak{as}(\eta|_{[0,t]})$ and $\alpha_{s,t} = \mathfrak{as}(\eta|_{[s,t]})$.  Note that for $0 \leq s \leq t \leq T$, $F_{\alpha_{s,t}}$ is the subordination map for the Loewner chain generated by the Herglotz vector field $-\eta(z^{-1},t)$.

We will estimate $F_{\mu_t}^{(n)}(z) - F_{\alpha_t}^{(n)}(z)$ by observing that
\begin{equation} \label{eq:Lindebergthing}
F_{\mu_t}^{(n)}(z) - F_{\alpha_t}^{(n)}(z) = \int_0^t \partial_s \left( F_{\mu_s}^{(n)} \circ F_{\alpha_{s,t}}(z) \right)\,ds,
\end{equation}
By the chain rule (Lemma \ref{lem:chainrule}),
\[
\partial_s \left( F_{\mu_s}^{(n)} \circ F_{\alpha_{s,t}}^{(n)} \right) = \partial_s F_{\mu_s}^{(n)} \circ F_{\alpha_{s,t}}^{(n)} + (DF_{\mu_s}^{(n)} \circ F_{\alpha_{s,t}})^{(n)}[\partial_s F_{\alpha_{s,t}}^{(n)}].
\]
From the Loewner equation,
\[
\partial_s F_{\mu_s}^{(n)} \circ F_{\alpha_{s,t}}^{(n)} = (DF_{\mu_s}^{(n)} \circ F_{\alpha_{s,t}}^{(n)})[V^{(n)}(F_{\alpha_{s,t}}^{(n)},s)].
\]
Moreover, applying \eqref{eq:FSTdiffeq} with the Herglotz vector field $\tilde{\eta}(z,t) = \eta(z^{-1},t)$, we have
\[
\partial_s F_{\alpha_{s,t}}^{(n)} = \tilde{\eta}^{(n)}(F_{\alpha_{s,t}}^{(n)},s)
\]
Therefore,
\[
\partial_s \left( F_{\mu_s}^{(n)} \circ F_{\alpha_{s,t}}^{(n)} \right) = \left(DF_{\mu_s}^{(n)} \circ F_{\alpha_{s,t}}^{(n)}\right)[(V^{(n)} + \tilde{\eta}^{(n)})(F_{\alpha_{s,t}}^{(n)}, s)].
\]
Assuming that $z \in \h_\epsilon^{(n)}(\mathcal{A})$, we have from Lemma \ref{lem:CLTestimate} that
\[
\norm{(V^{(n)} + \tilde{\eta}^{(n)})(F_{\alpha_{s,t}}^{(n)},\cdot)}_{\mathcal{L}(L^1[0,T], M_n(\mathcal{A}))} \leq \frac{(C_1 C_2)^{1/2}}{\epsilon^2}.
\]
Moreover, by \eqref{eq:LoewnerlocallyLipschitz1}, we have
\begin{equation}
\norm{DF_{\mu_s}^{(n)} \circ F_{\alpha_{s,t}}^{(n)}} \leq 1 + \frac{C_1s}{\epsilon^2}.
\end{equation}
Then by \eqref{eq:stupidestimate}, we have
\begin{align}
\norm*{F_{\mu_t}^{(n)}(z) - F_{\alpha_t}^{(n)}(z)} &\leq \int_0^t \left( 1 + \frac{C_1s}{\epsilon^2} \right) \frac{(C_1 C_2)^{1/2}}{\epsilon^2} \nonumber \\
&= \left( 1 + \frac{C_1t}{2\epsilon^2} \right) \frac{(C_1 C_2)^{1/2} t}{\epsilon^2}.
\end{align}
The estimate \eqref{eq:CLT1} now follows upon renormalization of $z$ and $\epsilon$.

The estimate \eqref{eq:CLT2} follows by similar reasoning.  Indeed
\begin{equation}
G_{\mu_t}^{(n)}(z) - G_{\alpha_t}^{(n)}(z) = \int_0^t \partial_s \left( G_{\mu_s}^{(n)} \circ F_{\alpha_{s,t}}^{(n)}(z) \right)\,ds.
\end{equation}
Because $G_{\mu_t}^{(n)}(z)$ is an analytic function applied to $F_{\mu_t}(z)$, a chain rule computation shows that
\begin{equation}
\partial_t [G_{\mu_t}^{(n)}(z)] = DG_{\mu_t}^{(n)}(z)[V^{(n)}(z,t)].
\end{equation}
Therefore, as before,
\begin{equation}
\left( G_{\mu_s}^{(n)} \circ F_{\alpha_{s,t}}^{(n)}(z) \right) = \left(DG_{\mu_s}^{(n)} \circ F_{\alpha_{s,t}}^{(n)} \right)[(V^{(n)} + \tilde{\eta}^{(n)})(F_{\alpha_{s,t}}^{(n)},s)].
\end{equation}
We estimate $DG_{\mu_s}^{(n)} \circ F_{\alpha_{s,t}}^{(n)}$ by $1/\epsilon^2$, and we estimate $(V^{(n)} + \tilde{\tilde{\eta}}^{(n)})(F_{\alpha_{s,t}}^{(n)},\cdot)$ by $(C_1C_2)^{1/2} / \epsilon^2$.  This proves \eqref{eq:CLT2} after renormalization of $z$ and $\epsilon$.
\end{proof}

\appendix

\section{The Need for Distributional Differentiation} \label{sec:needdistribution}

For a better understanding of our results, we will now explain heuristically why pointwise differentiation in the Loewner equation is not possible in our setting.  Of course, if $F(z,t)$ were a $C^1$ function of $t$ for each $z$, there would be little difficulty.  But in order to get a general correspondence between Loewner chains and vector fields $V(z,t)$, we must get by with only assuming that $F(z,t)$ is Lipschitz in $t$.

There are known results about differentiating an absolutely continuous function from a time interval $[0,T]$ into a Banach space $\mathcal{X}$ (for instance, \cite[Appendix]{Komura1967}).  However, these theorems usually rely on separability or reflexivity of $\mathcal{X}$, which is something we cannot assume in an operator algebras setting.  Indeed, infinite-dimensional $C^*$-algebras are never reflexive, and furthermore, infinite-dimensional von Neumann algebras are never separable in the norm topology.

Pointwise differentiation will certainly not be possible in the norm topology.  If $\mathcal{A}$ is a von Neumann algebra acting on a separable Hilbert space, then differentiation in the strong operator topology (SOT) may be possible (thanks to the theory of differentiation of Hilbert-valued functions).  However, in order to use the chain rule for SOT differentiation, we would have to make the additional assumption that the Frechet derivatives of the maps we are composing are SOT-continuous, which means making additional SOT continuity assumptions about the laws $\mu_t$.

Furthermore, suppose that we \emph{can} for a fixed $z$, differentiate $F(z,t)$ almost everywhere with respect to $t$; then it would still be problematic to carry out such differentiation with the same exceptional null set of times for all values of $z$ ranging over an open set in a non-separable Banach space.  One might try to solve this problem by assuming that $\mathcal{A}$ is separable in SOT and that our analytic functions are continuous in SOT.  However, even this is not sufficient because we cannot enforce SOT equicontinuity of $(F(z,t+\delta) - F(z,t))/\delta$ as $\delta \to 0$.

A possible solution would be to assume that $\mathcal{A}$ is tracial von Neumann algebra and that for each function $F(z) = z - G_\sigma(z)$ that we are dealing with, the state $\tau \circ \sigma$ is tracial on $\mathcal{A}\ip{X}$.  The problems with the SOT approach sketched above would be solved by using explicit estimates in $L^2$ norm to guarantee SOT-equicontinuity of $(F(z,t+\delta) - F(z,t))/\delta$ for different values of $\delta$, as well as SOT equicontinuity of $a \mapsto DF_t(z)[a]$ for different values of $t$.

However, traciality of $\sigma$ seems like an artificial and restrictive condition.  If $F_\mu(z) = z - G_\sigma(z)$, it is unclear (at least to the author) whether traciality of $\sigma$ and traciality of $\mu$ are related.  It also seems doubtful that functions of the form $z - G_\sigma(z)$ for $\sigma$ tracial satisfy the two-out-of-three property.  Even so, the tracial setting would be the best place to start developing the theory for laws $\mu_t$ with unbounded support, although that was not the goal of this paper.

\section{Order $1$ Loewner Chains} \label{sec:order1}

In this section, we show that for a normalized Loewner chain $F_t = F_{\mu_t}$, the condition that $\mu_t(X^2)$ is continuous is equivalent to the Loewner chain being of ``order $1$'' on $\h(\mathcal{A})$.  As in \cite[Definition 1.2]{CDG2009}, we will say that a Loewner chain is \emph{of order $d$} if for each point $z \in \h(\mathcal{A})$, there exists a some nonnegative $\phi_z \in L^d[0,T]$ such that
\begin{equation}
\norm{F(z,t) - F(z,s)} \leq \int_s^t \phi_z \text{ for } s < t.
\end{equation}

\begin{proposition}
Let $F_t$ be an $\mathcal{A}$-valued Loewner chain such that $F_t$ is the $F$-transform of an $\mathcal{A}$-valued law $\mu_t$ with mean zero.  Then the following are equivalent:
\begin{enumerate}[(1)]
	\item $\mu_t(X^2)$ is an absolutely continuous map $[0,T] \to \mathcal{A}$.
	\item $F_t$ is of order $1$.
	\item There exists some $n$ and some $z \in \h_\epsilon^{(n)}(\mathcal{A})$ such that $t \mapsto F_t(z)$ is absolutely continuous.
\end{enumerate}
\end{proposition}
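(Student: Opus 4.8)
The plan is to prove the cycle $(1) \implies (2) \implies (3) \implies (1)$. The main bridge in all three directions is the representation $F_t(z) = z - G_{\sigma_t}(z)$ from Proposition \ref{prop:Ftransformbijection} (recall $\mu_t$ has mean zero, so there is no $a_0$ term), together with the additivity $\sigma_{s,t}(1) = \mu_t(X^2) - \mu_s(X^2)$ established for subordination functions, and the Cauchy-transform estimates \eqref{eq:estimate1} and \eqref{eq:estimate2}. The key quantitative fact to extract first is that for $s < t$ and $z$, $z' \in \h_\epsilon^{(n)}(\mathcal{A})$,
\begin{equation}
\norm{F_t^{(n)}(z) - F_s^{(n)}(z)} = \norm{G_{\sigma_s}^{(n)} \circ F_{s,t}^{(n)}(z) - G_{\sigma_s}^{(n)}(z)} \leq \frac{\norm{\sigma_s(1)}}{\epsilon^2} \norm{H_{s,t}^{(n)}(z)} \leq \frac{\norm{\sigma_s(1)}}{\epsilon^2} \cdot \frac{\norm{\sigma_{s,t}(1)}}{\epsilon},
\end{equation}
where the last bound uses that $F_{s,t}$ maps $\h_\epsilon$ into itself so $\im F_{s,t}(z) \geq \epsilon$ and \eqref{eq:estimate1} applies. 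Thus the increments of $t \mapsto F_t(z)$ are controlled (up to an $\epsilon$-dependent constant) by the increments of $t \mapsto \mu_t(X^2) = \sigma_t(1)$ in the positive cone, i.e. by $\norm{\sigma_{s,t}(1)}$; and $\norm{\sigma_{s,t}(1)} = \norm{\mu_t(X^2) - \mu_s(X^2)}$.

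For $(1) \implies (2)$: if $\mu_t(X^2)$ is absolutely continuous, there is a nonnegative $\psi \in L^1[0,T]$ with $\norm{\mu_t(X^2) - \mu_s(X^2)} \leq \int_s^t \psi$ (using that the total variation of an absolutely continuous $\mathcal{A}$-valued function is absolutely continuous); then the displayed estimate gives $\norm{F_t(z) - F_s(z)} \leq (C_\epsilon) \int_s^t \psi$ with $C_\epsilon$ depending only on $\epsilon$ and $\sup_t \norm{\sigma_t(1)}$, which is finite by Lemma \ref{lem:radiusbound} and the $\mu_t(X^2) \le \mu_T(X^2)$ bound. So $\phi_z := C_\epsilon \psi \in L^1[0,T]$ witnesses order $1$. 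For $(2) \implies (3)$: order $1$ means $t \mapsto F_t(z)$ has increments dominated by $\int_s^t \phi_z$ for an $L^1$ function $\phi_z$, which is exactly absolute continuity of that map; pick any $n$ and any $z$ with $\im z \geq \epsilon$. The interesting direction is $(3) \implies (1)$: assume $t \mapsto F_t^{(n_0)}(z_0)$ is absolutely continuous for some fixed $z_0 \in \h_\epsilon^{(n_0)}(\mathcal{A})$. We must recover absolute continuity of $\mu_t(X^2)$. Here I would argue in reverse: extract $\mu_t(X^2)$ (equivalently $\sigma_t(1)$) from the Fréchet derivative of $\tilde H_t$ at $0$, via $\sigma_t(b) = -D\tilde H_t^{(1)}(0)[b]$ from Proposition \ref{prop:Ftransformbijection}(3), and show that the absolute continuity of one scalar-matrix evaluation propagates to the power-series coefficients. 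Concretely, absolute continuity of $F_t^{(n_0)}(z_0)$ gives, for a dense set of $t$ and a.e.\ $t$, a derivative; but more robustly, it gives an $L^1$ bound $\norm{F_t^{(n_0)}(z_0) - F_s^{(n_0)}(z_0)} \le \int_s^t \phi$. Now $F_t^{(n_0)}(z_0) - F_s^{(n_0)}(z_0) = -(G_{\sigma_t}^{(n_0)}(z_0) - G_{\sigma_s}^{(n_0)}(z_0))$, and $G_{\sigma_t} - G_{\sigma_s} = G_{\sigma_{s,t}}$ maps into $-\overline{\h}$, so in particular $-\im(F_t^{(n_0)}(z_0) - F_s^{(n_0)}(z_0)) = \im G_{\sigma_{s,t}}^{(n_0)}(z_0) \le 0$ and has norm comparable to $\norm{\sigma_{s,t}(1)}/\epsilon$ from below as well as above — the lower bound is the crux.

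The hard part will be the reverse estimate needed in $(3) \implies (1)$: bounding $\norm{\sigma_{s,t}(1)}$ \emph{above} by a constant times $\norm{G_{\sigma_{s,t}}^{(n_0)}(z_0)}$ (or times $\norm{F_t^{(n_0)}(z_0) - F_s^{(n_0)}(z_0)}$). This is a quantitative non-degeneracy statement for the Cauchy transform: the value of $G_{\sigma_{s,t}}$ at a single point $z_0$ must control its leading coefficient $\sigma_{s,t}(1)$. I would obtain it using the uniform radius bound $\rad(\sigma_{s,t}) \le M$ (Lemma \ref{lem:radiusbound}, Proposition \ref{prop:twooutofthree2}): with $\rad$ uniformly bounded, the map $\sigma \mapsto G_\sigma^{(n_0)}(z_0)$ on the cone of generalized laws with $\rad \le M$ and with the same sign structure is ``injective with quantitative lower bound'' — e.g.\ by writing $G_{\sigma_{s,t}}^{(n_0)}(z_0) = \tilde G_{\sigma_{s,t}}^{(n_0)}(z_0^{-1})$ when $\norm{z_0^{-1}} < 1/M$ (reduce to this case by first applying the argument at $\lambda z_0$ for large real $\lambda$, using that $F_t(\lambda z_0)$ is also absolutely continuous by the order-$1$ estimate once we have it, or more directly by noting $\im(\lambda z_0) \ge \lambda \epsilon$ so the forward estimate already gives absolute continuity of $t \mapsto F_t^{(n_0)}(\lambda z_0)$ from $(3)$ via $F_t(\lambda z_0) = F_t(z_0) + \ldots$ — this needs care), expanding $\tilde G_{\sigma_{s,t}}^{(n_0)}(w) = w \sigma_{s,t}^{(n_0)}(1^{(n_0)}) w + O(M\norm{w}^3)$, and choosing a test direction $w$ (an upper-triangular nilpotent as in \eqref{eq:uppertriangular}) so that the quadratic term reads off $\norm{\sigma_{s,t}(1)}$ while the error is dominated. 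Then absolute continuity of $t \mapsto \tilde G_{\sigma_t}^{(n_0)}(w)$ (a single analytic function of $t$) for enough directions $w$, combined with the uniform-radius estimate \eqref{eq:improvedradiusbound}, forces absolute continuity of the coefficient $\sigma_t(1) = \mu_t(X^2)$. A cleaner packaging: fix $w$ small with the nilpotent structure extracting $\sigma_t(1)$; the map $t \mapsto \tilde G_{\sigma_t}^{(n)}(w)$ differs from $t \mapsto \tilde G_{\sigma_t}^{(n)}(\text{evaluation recovering }F_t(z_0))$ only through analytic dependence, and both are governed by the same subordination increments, so absolute continuity transfers. I expect roughly one page of careful estimate-juggling here, but no conceptually new tool beyond Proposition \ref{prop:Ftransformbijection}, Lemma \ref{lem:Cauchytransformseries}, and the uniform radius bounds already in the paper.
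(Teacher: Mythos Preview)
Your $(1)\Rightarrow(2)$ is essentially the paper's argument, but the displayed identity is wrong: $F_t(z)-F_s(z)=F_s(F_{s,t}(z))-F_s(z)=H_{s,t}(z)+\bigl(G_{\sigma_s}(z)-G_{\sigma_s}(F_{s,t}(z))\bigr)$, so you dropped the $H_{s,t}(z)$ term. This is harmless (bound both pieces by $\norm{\sigma_{s,t}(1)}/\epsilon$ and $\norm{\sigma_s(1)}\norm{\sigma_{s,t}(1)}/\epsilon^3$ respectively, recovering the paper's constant $1+\norm{\sigma_T(1)}/\epsilon^2$), but it is the same error that derails your $(3)\Rightarrow(1)$.

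For $(3)\Rightarrow(1)$ there are two genuine gaps. First, the identity $G_{\sigma_t}-G_{\sigma_s}=G_{\sigma_{s,t}}$ is false: subordination is composition, not addition, so only the zeroth moment $\sigma_t(1)=\sigma_s(1)+\sigma_{s,t}(1)$ is additive. What you actually have is $F_t(z_0)-F_s(z_0)=F_s(F_{s,t}(z_0))-F_s(z_0)$, and to get from this back to $\norm{F_{s,t}(z_0)-z_0}=\norm{G_{\sigma_{s,t}}(z_0)}$ you need the inverse Lipschitz bound $\norm{F_s(z)-F_s(z')}\ge\delta\norm{z-z'}$ from Proposition~\ref{prop:biholomorphic}, which you never invoke. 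Second, and more seriously, your proposed lower bound via power series at infinity or the nilpotent trick requires evaluating $\tilde G_{\sigma_{s,t}}$ at auxiliary points $w$ (or at $\lambda z_0$ for large $\lambda$), but hypothesis $(3)$ gives absolute continuity only at the single point $z_0$; your attempted bootstrap ``this needs care'' is circular. The paper instead proves the lower bound directly at $z_0$: writing $\im F_{s,t}(z_0)-\im z_0=\sigma_{s,t}^{(n_0)}\bigl[(z_0^*-X)^{-1}(\im z_0)(z_0-X)^{-1}\bigr]$ and using $\rad(\sigma_{s,t})\le M$ to get the operator inequality $(z_0^*-x)^{-1}(\im z_0)(z_0-x)^{-1}\ge(\norm{z_0}+M)^{-2}\epsilon$, one obtains $\norm{\im F_{s,t}(z_0)-\im z_0}\ge(\norm{z_0}+M)^{-2}\epsilon\,\norm{\sigma_{s,t}(1)}$. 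Combined with the $\delta$ from Proposition~\ref{prop:biholomorphic} this gives $\norm{F_t(z_0)-F_s(z_0)}\ge\delta\epsilon(\norm{z_0}+M)^{-2}\norm{\mu_t(X^2)-\mu_s(X^2)}$, which is exactly the reverse inequality you need.
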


\begin{proof}
(1) $\implies$ (2).  For $0 \leq s \leq t \leq T$, let $F_{s,t}$ be the subordination map associated to the Loewner chain.  Recall that $F_{s,t}(z) = z - G_{\sigma_{s,t}}(z)$ for a generalized law $\sigma_{s,t}$.  Proceeding similarly to Lemma \ref{lem:LoewnerlocallyLipschitz}, we have that for $z \in \h_\epsilon^{(n)}(\mathcal{A})$ that
\begin{equation}
\norm{F_{s,t}^{(n)}(z) - z} \leq \frac{1}{\epsilon} \norm{\mu_t(X^2) - \mu_s(X^2)}
\end{equation}
Because $F_s^{(n)}(z) = z - G_{\sigma_s}^{(n)}(z)$ is $(1 + \norm{\sigma_s(1)}/\epsilon^2)$-Lipschitz on $\im z \geq \epsilon$, and because $F_{s,t}$ maps $\{\im z \geq \epsilon\}$ into itself, we have
\begin{align}
\norm{F_t^{(n)}(z) - F_s^{(n)}(z)} &= \norm{F_s^{(n)} \circ F_{s,t}^{(n)}(z) - F_s^{(n)}(z)} \nonumber \\
&\leq \left( 1 + \frac{\norm{\sigma_T(1)}}{\epsilon^2} \right) \frac{1}{\epsilon} \norm{\mu_t(X^2) - \mu_s(X^2)}
\end{align}
Therefore, absolute continuity of $t \mapsto \mu_t(X^2)$ implies absolute continuity of $t \mapsto F_t(z)$ for every $z$ and hence implies that $F_t(z)$ is of order $1$.

(2) $\implies$ (3) is trivial.  For (3) $\implies$ (1), observe that
\begin{equation}
\im F_{s,t}^{(n)}(z) = \im z + \sigma_{s,t}^{(n)}[(z^* - X^{(n)})^{-1}(\im z)(z - X^{(n)})^{-1}].
\end{equation}
Recall that $M := \sup \rad(\sigma_{s,t}) < +\infty$ by Lemma \ref{lem:radiusbound}.  Thus, if we realize the law $\sigma_{s,t}$ by an operator $x = \pi(X)$ on a Hilbert space as in Proposition \ref{prop:CPmap}, then $\norm{z + x} \leq \norm{z} + M$.  This implies that if $\im z \geq \epsilon$, then
\begin{equation}
(z^* - x^{(n)})^{-1}(\im z)(z - x^{(n)})^{-1} \geq (\norm{z} + M)^{-2} \epsilon.
\end{equation}
Therefore,
\begin{align}
\norm{\im F_{s,t}^{(n)}(z) - \im z} &\geq (\norm{z} + M)^{-2} \epsilon \norm{\sigma_{s,t}(1)} \nonumber \\
&= (\norm{z} + M)^{-2} \norm{\mu_t(X^2) - \mu_s(X^2)}.
\end{align}
If $\delta > 0$ is given by Proposition \ref{prop:biholomorphic}, then
\begin{align}
\norm*{F_t^{(n)}(z) - F_s^{(n)}(z)} &\geq \delta \norm*{F_{s,t}^{(n)}(z) - z} \nonumber \\
&\geq \delta \norm*{\im F_{s,t}^{(n)}(z) - \im z} \nonumber \\
&\geq \delta \epsilon (\norm{z} + M)^{-2} \norm{\mu_t(X^2) - \mu_s(X^2)}.
\end{align}
Therefore, absolute continuity of $t \mapsto F_t^{(n)}(z)$ for one values of $z \in \h_\epsilon^{(n)}(\mathcal{A})$ implies absolute continuity of $t \mapsto \mu_t(X^2)$.
\end{proof}

\section{Radii Estimates for Lipschitz Loewner Chains}

Here we give a precise statement which allows us to compare the radii of the various generalized laws associated to a Loewner chain $F_t$.

\begin{proposition} \label{cor:Loewnerchainradius}
Consider a Loewner chain $F_t$.  Let $\mu_{s,t}$, $\sigma_{s,t}$, and $V$ be as above.  Then the following are equivalent:
\begin{enumerate}[(1)]
	\item For each $s < t$, we have $\rad(\mu_{s,t}) \leq M + \sqrt{2C(t - s)}$.
	\item For each $s < t$, we have $\rad(\sigma_{s,t}) \leq M + \sqrt{2C(t - s)}$.
	\item We have $\rad(V) \leq M$.
\end{enumerate}
\end{proposition}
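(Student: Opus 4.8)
The plan is to close the cycle $(1)\Rightarrow(2)\Rightarrow(3)\Rightarrow(1)$, using the correspondence of \S\ref{sec:Loewnerequation}. First, by Theorem~\ref{thm:Loewnerdifferentiation} our Lipschitz Loewner chain $F_t$ is exactly the chain generated by its own distributional Herglotz vector field $V$ (with associated distributional family $\nu$), and $\rad(V)=\rad(\nu)$. Granting this, the implications $(3)\Rightarrow(1)$ and $(3)\Rightarrow(2)$ are essentially restatements of earlier results: Theorem~\ref{thm:Herglotzflow}(2)--(3) and Theorem~\ref{thm:Loewnerintegration}(2), applied to $V$ itself and, for a pair $s<t$ with $s>0$, to the restarted vector field $(z,u)\mapsto V(z,s+u)$ on $[0,t-s]$ --- whose radius is still $\le M$ by Lemma~\ref{lem:generalizedlawradius} --- give at once $\rad(\mu_{s,t})\le M+\sqrt{2C(t-s)}$ and $\rad(\sigma_{s,t})\le M+\sqrt{2C(t-s)}$. (Equivalently, these two bounds are the content of the convergence of the series \eqref{eq:GSTcombinatorialformula}--\eqref{eq:FSTcombinatorialformula} in Theorem~\ref{thm:combinatoricsestimate}.)

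For $(2)\Rightarrow(3)$ I would revisit the proof of Theorem~\ref{thm:Loewnerdifferentiation}, where $V$ is realized as a limit of approximations $V_m$ with $\tilde V_m^{(n)}(\cdot,t)=-\tfrac mT\tilde G_{\sigma_{t_{j-1},t_j}}^{(n)}$ on $[t_{j-1},t_j)$, $t_j=Tj/m$. Assuming $(2)$ we have $\rad(\sigma_{t_{j-1},t_j})\le M+\sqrt{2CT/m}$ and $\norm{\sigma_{t_{j-1},t_j}(1)}\le CT/m$, so Lemma~\ref{lem:Cauchytransformseries} (estimate \eqref{eq:estimatenearinfinity}) gives $\norm{\tilde V_m^{(n)}(z,\cdot)}_{\mathcal L(L^1[0,T],M_n(\mathcal A))}\le C\bigl(\norm{z}^{-1}-M-\sqrt{2CT/m}\bigr)^{-1}$ on $\norm{z}<1/(M+\sqrt{2CT/m})$, uniformly in $j$ and $n$. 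Since $\sqrt{2CT/m}\to0$ and $\tilde V_m^{(n)}\to\tilde V^{(n)}$ locally uniformly near $0$, Lemma~\ref{lem:analyticcontinuationofconvergence} extends $\tilde V^{(n)}$ to $\norm{z}<1/M$ with the limiting bound, and Theorem~\ref{thm:Cauchytransform} yields $\rad(V)\le M$. This is simply the ``moreover'' clause of Theorem~\ref{thm:Loewnerdifferentiation} with $M+\sqrt{2CT/m}$ replacing the hypothesized small-time radius bound.

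The remaining implication $(1)\Rightarrow(2)$ is the main obstacle. Since $F_{s,t}=F_{\mu_{s,t}}$, $F_{s,t}(z)=z-G_{\sigma_{s,t}}(z)$, and $\mu_{s,t}(X)=0$, one has near $w=0$ the identity $\tilde G_{\sigma_{s,t}}(w)=w^{-1}-\tilde G_{\mu_{s,t}}(w)^{-1}$. By Theorem~\ref{thm:Cauchytransform}, $(2)$ amounts to showing $\tilde G_{\sigma_{s,t}}^{(n)}$ extends to a fully matricial function, uniformly bounded on $\norm{w}<1/(M+\sqrt{2C(t-s)}+\epsilon)$ for every $\epsilon>0$ independently of $n$; by $(1)$ the function $\tilde G_{\mu_{s,t}}^{(n)}$ already has these properties with $\tilde G_{\mu_{s,t}}(0)=0$ and $\lim_{w\to0}w^{-1}\tilde G_{\mu_{s,t}}(w)=1^{(n)}$, so everything reduces to showing that $\tilde G_{\mu_{s,t}}^{(n)}(w)$ stays \emph{invertible}, with $\tilde G_{\mu_{s,t}}^{(n)}(w)^{-1}-w^{-1}$ bounded, on all of $\norm{w}<1/\rad(\mu_{s,t})$. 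Proposition~\ref{prop:Ftransformbijection}(1),(4) gives this for free only on the smaller ball $\norm{w}<1/\rad(\sigma_{s,t})\ge 1/(2\rad(\mu_{s,t}))$, so the crux is to enlarge the radius of invertibility from $1/(2\rad(\mu_{s,t}))$ to $1/\rad(\mu_{s,t})$. I expect the right tool is the operator-valued analogue of the classical fact that the Cauchy transform of a scalar law has no zeros outside the closed convex hull of its support: realizing $\mu_{s,t}$ by a self-adjoint $x$ with $\norm{x}=\rad(\mu_{s,t})$ (Proposition~\ref{prop:CPmap}), one checks that for \emph{positive} $w$ in the ball $w^{-1}-x\ge(\rad(\mu_{s,t})^{-1}-\norm{x})^{+}\,1$, whence $\tilde G_{\mu_{s,t}}(w)=\widehat{\mu_{s,t}}((w^{-1}-x)^{-1})\ge(\norm{w^{-1}}+\norm{x})^{-1}1>0$ is invertible and bounded below; invertibility then propagates to the full ball by an open--closed/analytic-continuation argument combined with the a priori bound $\norm{\tilde G_{\mu_{s,t}}(w)}\le\norm{\mu_{s,t}(1)}/(\norm{w}^{-1}-\rad(\mu_{s,t}))$ of Lemma~\ref{lem:Cauchytransformseries}. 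This invertibility statement is the one genuinely technical point; once it is available, $\tilde G_{\sigma_{s,t}}=w^{-1}-\tilde G_{\mu_{s,t}}(w)^{-1}$ extends boundedly to $\norm{w}<1/\rad(\mu_{s,t})\le 1/(M+\sqrt{2C(t-s)})$, and Theorem~\ref{thm:Cauchytransform} gives $\rad(\sigma_{s,t})\le M+\sqrt{2C(t-s)}$, completing the cycle.
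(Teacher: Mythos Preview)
Your handling of $(3)\Rightarrow(1)$, $(3)\Rightarrow(2)$, and $(2)\Rightarrow(3)$ is essentially the paper's argument and is fine. The difficulty is your $(1)\Rightarrow(2)$ step, where you try to show that $\tilde G_{\mu_{s,t}}^{(n)}(w)$ is invertible on the \emph{entire} ball $\norm{w}<1/\rad(\mu_{s,t})$. Your open--closed argument is incomplete: openness of the invertible set is automatic, but closedness requires a uniform bound on $\tilde G_{\mu_{s,t}}^{(n)}(w)^{-1}$, which you never establish. The a~priori bound you cite from Lemma~\ref{lem:Cauchytransformseries} controls $\tilde G_{\mu_{s,t}}$ from above, not from below, so it does not prevent $\tilde G_{\mu_{s,t}}(w)$ from approaching a non-invertible element as $w$ moves along a path in the ball. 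The positivity argument you sketch only covers positive invertible $w$, and there is no evident way to propagate from that set to all of $B(0,1/\rad(\mu_{s,t}))$ without already knowing what you want to prove.

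The paper sidesteps this entirely by proving $(1)\Rightarrow(3)$ directly, without passing through $(2)$. The key observation is that one only needs invertibility of $z^{-1}\tilde G_{\mu_{s,t}}^{(n)}(z)$ for $t-s$ \emph{sufficiently small}: since $\mu_{s,t}$ has mean zero and $\mu_{s,t}(X^2)\to 0$ as $t-s\to 0$, one has $z^{-1}\tilde G_{\mu_{s,t}}^{(n)}(z)\to 1$ uniformly on $\norm{z}<1/(M+2\epsilon)$ (via Lemma~\ref{lem:analyticcontinuationofconvergence}), so for small $t-s$ this function is invertible and $F_{s,t}^{(n)}(z^{-1})z$ is bounded there. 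Extracting the moments of $\sigma_{s,t}$ via the upper-triangular matrix trick \eqref{eq:uppertriangular} then gives $\rad(\sigma_{s,t})\le M+2\epsilon$ for small $t-s$, and the ``moreover'' clause of Theorem~\ref{thm:Loewnerdifferentiation} yields $\rad(V)\le M$. This is much easier than your route because invertibility near the identity is trivial, whereas global invertibility of an operator-valued Cauchy transform is genuinely delicate.
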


\begin{proof}
(2) $\implies$ (3) follows from Theorem \ref{thm:Loewnerdifferentiation} and the remark afterwards.  On the other hand, (3) $\implies$ (1) and (3) $\implies$ (2) were shown in Theorem \ref{thm:Loewnerintegration}.

Thus, it suffices to show that (1) $\implies$ (3).  Assume that (1) holds and fix $\epsilon > 0$.  Write $G_{s,t}(z) = F_{s,t}(z)^{-1}$.  We know that for $t - s$ sufficiently small, $G_{s,t}(z^{-1})$ defined on $\norm{z} < (M + \epsilon)^{-1}$.  From the power series expansion of the Cauchy transform, $z^{-1} G_{s,t}^{(n)}(z^{-1})$ is analytic on $\norm{z} < (M + \epsilon)^{-1}$.  Using the argument of Lemma \ref{lem:analyticcontinuationofconvergence}, we have $z^{-1}G_{s,t}^{(n)}(z^{-1}) \to 1$ uniformly on $\norm{z} < (M + 2\epsilon)^{-1}$ as $t - s \to 0$.  In particular, $z^{-1} G_{s,t}(z^{-1})$ is invertible and hence $F_{s,t}^{(n)}(z^{-1})z$ is fully matricial and uniformly bounded on $\norm{z} < (M + 2 \epsilon)^{-1}$.  Note that for $\norm{z}$ small,
\begin{equation}
F_{s,t}^{(n)}(z^{-1})z = 1 - G_{\sigma_{s,t}}^{(n)}(z^{-1})z = \sum_{k=0}^\infty \sigma_{s,t}^{(n)}(z(X^{(n)}z)^k)z.
\end{equation}
If we fix $z$ with $\norm{z} \leq 1$ and let $\zeta \in \C$ with $|\zeta| < (M + 2\epsilon)^{-1}$, then we have for $k \geq 1$,
\begin{equation}
\norm{\sigma_{s,t}^{(n)}(z(X^{(n)}z)^k)z} = \norm*{ \frac{1}{(k+2)!} \frac{d^{k+2}}{d\zeta^{k+2}}\Bigr|_{\zeta = 0} (1 - G_{\sigma_{s,t}}^{(n)}((\zeta z)^{-1})\zeta z) } \leq C(M + 2 \epsilon)^{k+2},
\end{equation}
where $C$ is an upper bound on $F_{s,t}^{(n)}(z^{-1})z$ on the ball of radius $(M + 2 \epsilon)^{-1}$.  If we suppose that $\norm{a_0} = \dots = \norm{a_k} = 1$ and take
\begin{equation}
z =
\begin{pmatrix}
0 & a_0 & \dots & 0 & 0 \\
\vdots & \vdots  & \ddots & \vdots & \vdots \\
0 & 0 & \dots & a_k & 0 \\
0 & 0 & \dots & 0 & 1 \\
0 & 0 & \dots & 0 & 0
\end{pmatrix}
\in M_{k+2}(\mathcal{A}),
\end{equation}
then the upper right entry of $\sigma_{s,t}(z(Xz)^k)z$ is equal to $\sigma_{s,t}(a_0X a_1 \dots X a_k)$ and therefore,
\begin{equation}
\norm{\sigma_{t_0,t}(a_0X a_1 \dots X a_k)} \leq C(M + 2 \epsilon)^{k+2}.
\end{equation}
This implies that $\rad(\sigma_{s,t}) \leq M + 2\epsilon$ when $t - s$ is sufficiently small.  But then as in Theorem \ref{thm:Loewnerdifferentiation}, this implies that $\rad(V) \leq M$.
\end{proof}

\section{Continuous-Time Lindeberg Exchange}

There is an instructive parallel between the proof of Theorem \ref{thm:CLT} and Lindeberg's exchange method for the classical CLT.  Suppose that $Y_1$, \dots, $Y_n$ are monotone independent random variables and that $Z_1$, \dots, $Z_n$ are monotone independent arcsine random variables where $Y_j$ and $Z_j$ have the same variance.  The discrete-time analogue of \eqref{eq:Lindebergthing} is that
\begin{multline}
F_{Y_1+\dots+Y_k}^{(n)} - F_{Z_1+\dots+Z_k}^{(n)} \nonumber \\
= \sum_{j=1}^k \left( F_{Y_1+\dots+Y_{j-1}}^{(n)} \circ F_{Y_j}^{(n)} \circ F_{Z_{j+1} + \dots + Z_k}^{(n)} - F_{Y_1+\dots+Y_{j-1}}^{(n)} \circ F_{Z_j}^{(n)} \circ F_{Z_{j+1} + \dots + Z_k}^{(n)} \right)
\end{multline}
In other words, the difference between $F_{\sum Y_j}^{(n)}(z) - F_{\sum Z_j}^{(n)}(z)$ can be estimated by the sum of the differences when we swap out $Y_j$ for $Z_j$.  Let $\alpha_j$ and $\beta_j$ be the generalized laws such that
\begin{equation}
F_{X_j}(z) = z - G_{\alpha_j}(z), \qquad F_{Y_j}(z) = z - G_{\beta_j}(z).
\end{equation}
Since $Y_j$ and $Z_j$ have the same variance, a similar argument to Lemma \ref{lem:CLTestimate} shows that
\begin{align}
F_{X_j}(z) - F_{Y_j}(z) &= -G_{\alpha_j}(z) + G_{\beta_j}(z) \nonumber \\
&= O\left( \norm*{\alpha_j((z^*)^{-1} X^2 z^{-1})}^{1/2} + \norm*{\beta_j((z^*)^{-1} X^2 z^{-1})}^{1/2} \right).
\end{align}
The quantity $C_2$ is analogous to $\alpha_j(z^{-1} X^2 z^{-1})$ and $\beta_j(z^{-1} X^2 z^{-1})$, and by a power series computation
\begin{equation}
\alpha_j(z^{-1} X^2 z^{-1}) = - E(Y_jz^{-1}Y_j^2 z^{-1} Y_j) + [E(Y_jz^{-1} Y_j)]^2,
\end{equation}
and a similar result holds for $Z_j$.  (On the left $X$ is the dummy variable for $\alpha_j$ and on the right, $Y_j$ is our given random variable.)  Thus, the constant $C_2$ in \ref{eq:CLTconstants} used for the estimates in Theorem \ref{thm:CLT} is analogous to a fourth-moment estimate on $Y_j$ and $Z_j$ in the discrete setting.  To prove the classical CLT by the exchange method, only third-moment bounds are required (see for instance Terence Tao's online note \cite{Tao2015}), but we lost one degree in Lemma \ref{lem:CLTestimate} by using Cauchy-Schwarz.


\section*{Acknowledgements}

Dimitri Shlyakhtenko provided oversight and editorial feedback.  Mario Bonk's lectures and discussion were invaluable to my understanding the classical theory of Loewner chains.  I thank Mario Bonk, Vivian Healey, Steffen Rohde, and Sebastian Schlei{\ss}inger for useful conversations.  I thank Daniel Hoff for some proofreading and the anonymous referee for detailed comments that helped improve this paper.  I acknowledge the support of the NSF grants DMS-1500035 and DMS-1344970 as well as the UCLA Graduate Dean's Scholarship.

\bibliographystyle{elsarticle-num}
\bibliography{OVLT}

\end{document}